\newtheorem{proposition}{Proposition}[section]
\newtheorem{definition}[proposition]{Definition}
\newtheorem{lemma}[proposition]{Lemma}
\newtheorem{theorem}[proposition]{Theorem}
\newtheorem{remark}[proposition]{Remark}
\newtheorem{example}[proposition]{Example}
\newtheorem{corollary}[proposition]{Corollary}
\newtheorem*{theorem*}{Theorem}
\newtheorem*{example*}{Example}
\newtheorem{propositionApp}{Proposition}[chapter]
\newtheorem{definitionApp}[propositionApp]{Definition}
\newtheorem{lemmaApp}[propositionApp]{Lemma}
\title{
        {Braided Commutative Geometry and Drinfel'd Twist Deformations}\\
        {~}\\
        \includegraphics[width=0.30\textwidth]{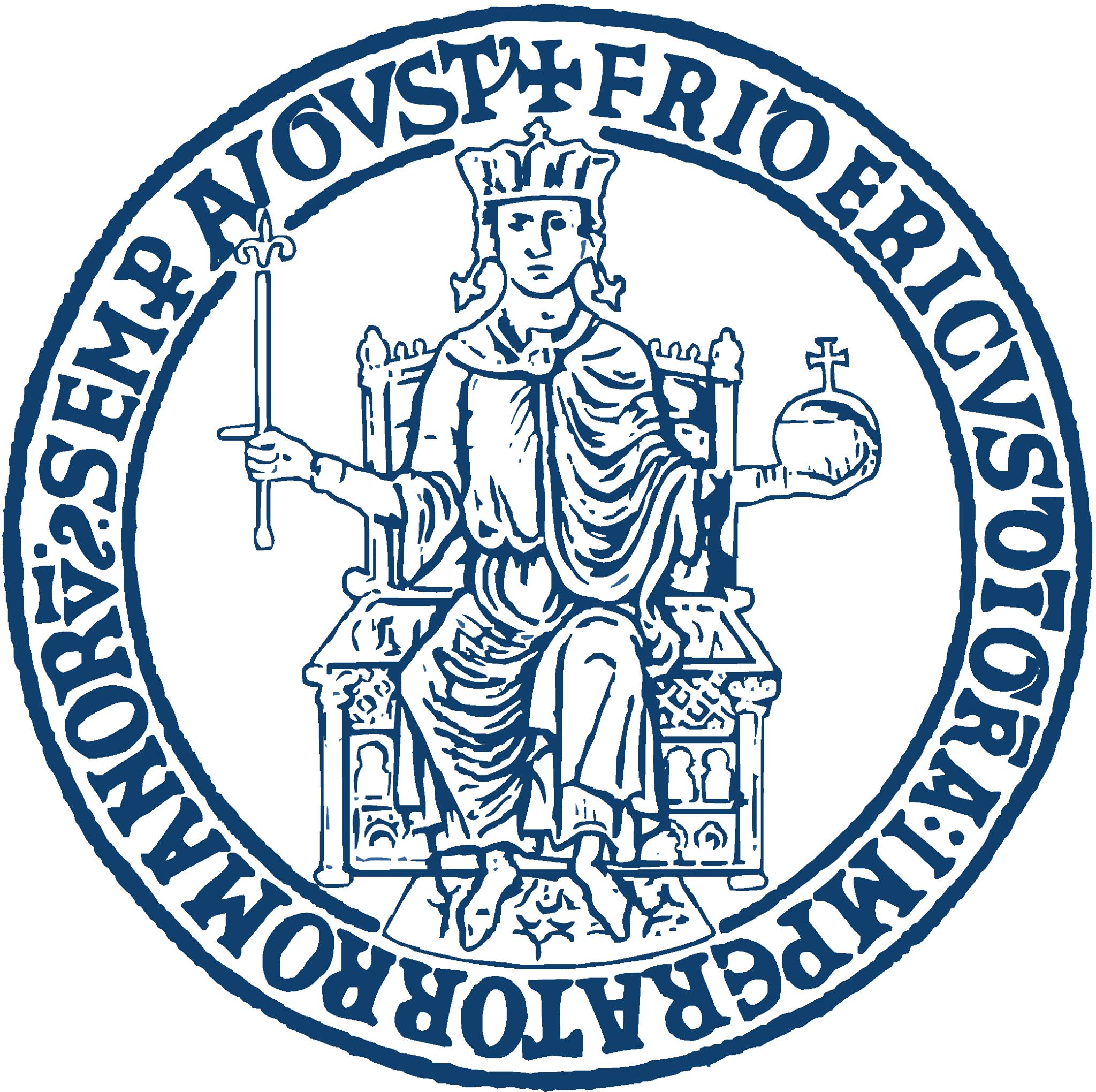}\\
        \vspace{0.5cm}
        \large{Università degli Studi di Napoli Federico II\\
        Dottorato di ricerca in Scienze Matematiche e Informatiche\\
        XXXII Ciclo}\\
        \vspace{0.3cm}
        \large{Tesi di Dottorato di Ricerca}\\
        \vspace{1.8cm}
        \textbf{\large{Thomas Weber}}
\vspace{1.5cm}
\begin{center}
\begin{tabular}{lcl}
\textbf{Advisors:}  &  \hspace{4.5cm} & \textbf{Referees:} \\
Prof. Francesco D'Andrea & & Prof. Paolo Aschieri \\
Prof. Gaetano Fiore & & Prof. Andrzej Borowiec
\end{tabular}
\end{center} 
}
\author{~}
\date{\normalsize Dicembre 2019}
\begin{document}

\maketitle

\newpage
\thispagestyle{plain} 
\mbox{}

\chapter*{Abstract}
This thesis revolves around the notion of twist star product, which is a certain
type of deformation quantization induced by quantizations of a symmetry of the system.
On one hand we discuss obstructions of twist star products, while on the other hand
we provide a recipe to obtain new examples as projections from known ones.
Furthermore, we construct a noncommutative Cartan calculus on braided commutative algebras,
generalizing the calculus on twist star product algebras. The starting point is the
observation that Drinfel'd twists not only deform the algebraic structure of
quasi-triangular Hopf algebras and their representations but also induce star products on
Poisson manifolds with symmetry. We further investigate the
correspondence of Drinfel'd twists and classical $r$-matrices as well as twist
deformation of Morita equivalence bimodules. It turns out that connected compact
symplectic manifolds are homogeneous spaces if they admit a twist star product and that
we can assume the corresponding classical $r$-matrix to be non-degenerate. Furthermore,
invariant line bundles with non-trivial Chern class and twists star products cannot
coexist if they are based on the same symmetry. In particular, the symplectic $2$-sphere
and the connected orientable symplectic Riemann surfaces of genus $g>1$ do not admit 
a star product induced by a Drinfel'd twist, while the complex projective spaces cannot
be endowed with a twist star product based on a matrix Lie algebra.
Another goal of the thesis is
to study braided commutative algebras and provide a noncommutative Cartan calculus on them,
in complete analogy to differential geometry. Notably, this recovers the calculus
on twist star product algebras. We further discuss equivariant covariant
derivatives and metrics, resulting in the existence and uniqueness of an
equivariant Levi-Civita covariant derivative for any non-degenerate equivariant metric.
We also verify that the constructions are compatible with Drinfel'd twist gauge equivalences.
Under certain conditions, the braided geometry projects to submanifold algebras and 
twist deformation commutes with the projection. As a consequence, twisted products
can be projected to submanifold algebras if the latter are respected by the symmetry.
The thesis is partially based on the following two papers and two preprints.
\begin{enumerate}
\item[i.)] \textsc{{D'Andrea}, F. and {Weber}, T.:}
\newblock\emph{Twist star products and Morita equivalence}.
\newblock C. R. Acad. Sci. Paris, 355(11):1178-1184, 2017.

\item[ii.)] \textsc{{Bieliavsky}, P. and {Esposito}, C. and {Waldmann}, S. and {Weber}, T.:}
\newblock\emph{Obstructions for twist star products}.
\newblock Lett. Math. Phys., 108(5):1341–1350, 2018.

\item[iii.)] \textsc{{Weber}, T.:}
\newblock\emph{Braided Cartan Calculi and Submanifold Algebras}.
\newblock Preprint arXiv:1907.13609, 2019.

\item[iv.)] \textsc{{Fiore}, G. and {Weber}, T.:}
\newblock In preparation, 2019.
\end{enumerate}

\chapter*{}

\begin{flushright}
Für meine Großeltern Hedi und Toni
\end{flushright}


\chapter*{Declaration}
I declare that this thesis was composed by myself, that the work contained herein is
my own except where explicitly stated otherwise in the text, and that this work has
not been submitted for any other degree or professional qualification.
\vspace{2cm}
\begin{flushright}
Thomas Weber
\end{flushright}

\chapter*{Acknowledgements}

First of all I have to thank my advisors Francesco and Gaetano for their patience and
constant support. Their guidance had a great influence on the outcome of the thesis.
In the same way I have to mention Chiara Esposito, Luca Vitagliano and Stefan Waldmann,
who acted as if they were my co-supervisors and supported me throughout many years.
Thank you! I am also grateful to the PhD committee for giving me the opportunity to
spend my PhD in such a beautiful and interesting city. Studying alongside Jonas Schnitzer,
a fellow student of mine from Würzburg, made it a lot easier for me to gain ground in Naples.
In his own words, it is not clear if this instance was "blessing or curse". The same applies to
my flatmates Valentino, Luca (alias Lupo di Mare) and Nicola, who introduced me to the city,
the Southern Italian cuisine and lifestyle.
I want to thank all my colleagues for many interesting conversations and discussions at conferences,
schools and workshops. I am happy that I was able to visit so many fascinating places.
Last but not least I want to thank my family and friends for keeping in contact and caring
about "the emigrant". It was amazing that so many of you were visiting me
(some even more than once or for a whole semester) despite the distance. 
You make me feel home wherever I go.

\tableofcontents

\chapter{Introduction}
In this thesis we employ techniques from quantum group theory to
give obstructions for twist deformation quantization
on several classes of symplectic manifolds, while
new examples of twist star products are obtained via submanifold
algebra projection. Motivated from this quantization procedure,
we further construct a noncommutative
Cartan calculus on any braided commutative algebra, as well as an 
equivariant Levi-Civita covariant derivative. This generalizes
and unifies the classical Cartan calculus of differential geometry and
the calculus on twist star product algebras (see e.g. \cite{Aschieri2006}).
This first chapter will
serve as a motivation, while we also lay down the agenda of the thesis and notation.

\section*{Hamiltonian Mechanics}

Hamilton's equations of motion provide a good description of macroscopic objects.
We sketch how this axiomatic framework can be
implemented in terms of Poisson geometry, following \cite{B2001}~Chap.~0 
and \cite{WaldmannBuch}~Chap.~1.
The dynamics of a \textit{Hamiltonian system} are controlled by a Hamiltonian function
$H(q,p)\in\mathbb{R}$ via \textit{Hamilton's equations of motion}
\begin{equation}\label{Ham}
    \begin{pmatrix}
    \dot{q}(t)\\
    \dot{p}(t)
    \end{pmatrix}
    =\underbrace{\begin{pmatrix}
    0 & 1_n\\
    -1_n & 0
    \end{pmatrix}
    \begin{pmatrix}
    \frac{\partial H}{\partial q}\\
    \frac{\partial H}{\partial p}
    \end{pmatrix}}_{=X_H}
    (q(t),p(t)),
\end{equation}
where $(q,p)\in\mathbb{R}^{2n}$ are phase-space coordinates. 
A solution $x(t)=(q(t),p(t))$ of (\ref{Ham}) is given by the flow of the
Hamiltonian vector field $X_H\colon\mathbb{R}^{2n}\rightarrow
\mathbb{R}^{2n}$, with respect to some initial conditions
$x(0)=(q_0,p_0)\in\mathbb{R}^{2n}$.
For example, we may consider a particle with mass $m$ influenced by a conservative
force $F\colon\mathbb{R}^3\rightarrow\mathbb{R}^3$. The movement of the particle 
is represented by a smooth curve $q\colon\mathbb{R}\rightarrow\mathbb{R}^3$, where
$q(t)\in\mathbb{R}^3$ is the position of the particle at time $t\in\mathbb{R}$.
The velocity $\dot{q}$ and acceleration $\ddot{q}$ of the particle are defined 
by the first and second derivative of $q$, respectively. Its momentum is given by
the product $p=m\dot{q}$. Since $F$ is conservative, there is a potential
$V\colon\mathbb{R}^3\rightarrow\mathbb{R}$ such that $F(q(t))=-(\nabla V)(q(t))$, 
where $\nabla$ denotes the gradient. Then, \textit{Newton's second} law
\begin{equation}\label{New}
    F(q(t))=m\ddot{q}(t)
\end{equation}
determines the dynamics of the particle. Employing the
Hamiltonian function $H(q,p)=\frac{p^2}{2m}+V(q)$, the second order
partial differential equation
(\ref{New}) is equivalent to the system (\ref{Ham}) of differential equations
of order one. In other words, in this case Hamilton's equations of motion constitute
Newtonian mechanics.
There is a notion of symplectic manifolds and more general of Poisson manifolds
(see Section~\ref{SecObs1}) that allow the formulation of a Hamiltonian formalism.
The crucial ingredient is the \textit{Poisson bracket} $\{\cdot,\cdot\}$, which is 
a Lie bracket on the algebra of smooth functions, satisfying a Leibniz
rule in each entry. For every function $f$ on a Poisson manifold $(M,\{\cdot,\cdot\})$
with Hamiltonian function $H$ and corresponding Hamiltonian flow
$\Phi_t^*\colon\mathscr{C}^\infty(M)\rightarrow\mathscr{C}^\infty(M)$, the time evolution $f(t)=\Phi^*_t(f)$ of $f$ is determined by
\begin{equation}
    \frac{\mathrm{d}}{\mathrm{d}t}\Phi_t^*(f)
    =\{\Phi^*_t(f),H\}.
\end{equation}
We regain (\ref{Ham}) if $M=\mathbb{R}^{2n}$ and $f=(q,p)$.
In the next section we examine the passage from
classical mechanics to quantum mechanics, in the lines of
\cite{WaldmannBuch}~Chap.~5.

\section*{Deformation Quantization}

At the beginning of the $20$th century \textit{quantum mechanics} 
revolutionized the conceptional and philosophical understanding of physics.
It recovers classical mechanics via a limit procedure.
In \cite{Dirac} Dirac discussed the \textit{canonical commutation relations}
\begin{equation}\label{Dirac}
    [q^j,p_k]=\mathrm{i}\hbar\delta^j_k,
\end{equation}
stating that position and momentum do not commute as quantum observables and
their noncommutativity depends on the \textit{Planck constant} $\hbar$.
In this picture, states are square integrable \textit{wave functions} in
$L^2(\mathbb{R}^3,\mathrm{d}^3q)$ and observables are (unbounded) self-adjoint
operators of wave functions. 
Interpreting position and momentum of a particle as
operators $\hat{q}^j$ and $\hat{p}_k$, which act on a wave function $\psi$
as $(\hat{q}^j\psi)(q)=q^j\psi(q)$ and $(\hat{p}_k\psi)(q)
=-\mathrm{i}\hbar\frac{\partial\psi}{\partial q^k}(q)$, we obtain,
\begin{equation}\label{CanComRel}
    [\hat{q}^j,\hat{p}_k]\psi(q)
    =-\mathrm{i}\hbar q^j\frac{\partial\psi}{\partial q^k}(q)
    +\mathrm{i}\hbar\bigg(\delta^j_k\psi(q)+q^j\frac{\partial\psi}{\partial q^k}(q)\bigg)
    =\mathrm{i}\hbar\delta^j_k\psi(q),
\end{equation}
which is a representation of (\ref{Dirac}).
In the \textit{classical
limit}, where $\hbar$ is considered to be arbitrarily small,
the operators $\hat{q}^j$ and $\hat{p}_k$ become commutative and we recover
the situation known from classical mechanics. This procedure of 
\textit{canonical quantization} can be formalized in the following way:
a \textit{quantization map}
\begin{equation}\label{QuantMap}
    Q\colon\mathscr{C}^\infty(M)\rightarrow\mathrm{End}_\mathbb{C}(\mathcal{H})
\end{equation}
assigns to any smooth complex-valued function $f$ on a Poisson manifold
$(M,\{\cdot,\cdot\})$ a $\mathbb{C}$-linear operator $Q(f)=\hat{f}
\colon\mathcal{H}\rightarrow\mathcal{H}$ on a complex Hilbert space $\mathcal{H}$.
The map (\ref{QuantMap}) should be $\mathbb{C}$-linear, satisfy
$Q(1)=\mathrm{id}_\mathcal{H}$ and
\begin{equation}\label{QLieAlg}
    Q(\{f,g\})=\frac{1}{\mathrm{i}\hbar}[Q(f),Q(g)]
\end{equation}
on generators $f,g\in\mathscr{C}^\infty(M)$.
However, it was pointed out in \cite{Dirac}, and later by \cite{Groenewold,vHove},
that (\ref{QLieAlg}) does not hold on the whole algebra of functions but
instead only up to higher orders of $\hbar$, i.e.
\begin{equation}\label{HighOrd}
    Q(\{f,g\})=\frac{1}{\mathrm{i}\hbar}[Q(f),Q(g)]+\mathcal{O}(\hbar).
\end{equation}
A particular quantization in accordance with (\ref{HighOrd}) is given by the notion of
deformation quantization: in \cite{Bayen1978} the authors suggested to 
\textit{deform} the algebra
structure of $\mathscr{C}^\infty(M)$ rather than the observables itself.
Namely, one considers $\hbar$ as a formal parameter and calls an associative
product
\begin{equation}
    f\star g
    =\sum_{n=0}^\infty C_n(f,g)
    \in\mathscr{C}^\infty(M)[[\hbar]]
\end{equation}
on the formal power series $\mathscr{C}^\infty(M)[[\hbar]]$
a quantization or \textit{star product}
on $(M,\{\cdot,\cdot\})$ if $C_n$ are bidifferential operators on $M$ and
$\star$ deforms the pointwise product of
functions, i.e. $f\star g=fg+\mathcal{O}(\hbar)\in\mathscr{C}^\infty(M)[[\hbar]]$,
such that $1\star f=f=f\star 1$ and
\begin{equation}
    \{f,g\}=\frac{1}{\mathrm{i}\hbar}[f,g]_\star+\mathcal{O}(\hbar)
\end{equation}
hold for all $f,g\in\mathscr{C}^\infty(M)$, where
$[f,g]_\star=f\star g-g\star f$. 
In other words, this amounts to consider
a noncommutative algebra $(\mathscr{C}^\infty(M)[[\hbar]],\star)$ rather
than endomorphisms on a Hilbert space and to set $Q=\mathrm{id}$. This agenda
is know as \textit{deformation quantization} and has proven its profundity by
many publications and a great interest in the community of mathematical physics.
We discuss this in more detail in Section~\ref{SecObs1}.

\section*{Drinfel'd Twists and Quantization}

The term \textit{quantum group} was proposed by Drinfel'd to refer to
Hopf algebras in the context of quantum integrable systems. 
In \cite{Dr83} he related solutions $r\in\Lambda^2\mathfrak{g}$ of the 
\textit{classical Yang-Baxter equation} $\llbracket r,r\rrbracket=0$
(see Section~\ref{Sec-r-matrix})
on a Lie algebra $\mathfrak{g}$ to normalized $2$-cocycles 
\begin{equation}
    \mathcal{F}=1\otimes 1+\hbar r+\mathcal{O}(\hbar^2)
    \in(\mathscr{U}\mathfrak{g}\otimes\mathscr{U}\mathfrak{g})[[\hbar]]
\end{equation}
on formal power series of the universal enveloping algebra
$\mathscr{U}\mathfrak{g}$. Note that $\mathscr{U}\mathfrak{g}$ is a Hopf
algebra with coproduct, counit and antipode defined on primitive elements
$x\in\mathfrak{g}$ by $\Delta(x)=x\otimes 1+1\otimes x$, $\epsilon(x)=0$
and $S(x)=-x$ (see Section~\ref{Sec2.1}).
Such a classical $r$-matrix is equivalent to
a $G$-invariant Poisson bivector $\pi_r$ on the Lie group $G$ corresponding to
$\mathfrak{g}$, while $\mathcal{F}$ corresponds to a $G$-invariant star
product $\star$ on $G$ quantizing $\pi_r$, where $G$-invariance means that
the pullback of the left multiplication
$\ell_g\colon G\ni h\mapsto g\cdot h\in G$ satisfies
$\ell^*_g\pi_r=\pi_r$ and $\ell^*_g(f_1\star f_2)
=(\ell^*_gf_1)\star(\ell^*_gf_2)$ for all $f_1,f_2\in\mathscr{C}^\infty(G)$ and
$g\in G$. Accordingly, the \textit{$2$-cocycle condition}
\begin{equation}\label{twist1}
    (\Delta\otimes\mathrm{id})(\mathcal{F})\cdot(\mathcal{F}\otimes 1)
    =(\mathrm{id}\otimes\Delta)(\mathcal{F})\cdot(1\otimes\mathcal{F})
\end{equation}
of $\mathcal{F}$ reflects associativity of $\star$, while
the \textit{normalization property}
\begin{equation}\label{twist2}
    (\epsilon\otimes\mathrm{id})(\mathcal{F})
    =1
    =(\mathrm{id}\otimes\epsilon)(\mathcal{F})
\end{equation}
is synonymous to the star product being unital with respect to the unit
function on $G$. Furthermore, $\mathcal{F}$ equals $1\otimes 1$ in zero order
of $\hbar$ if and only if $\star$ deforms the pointwise product of functions.
In a next step we want to induce star products on a Poisson manifold
$(M,\{\cdot,\cdot\})$ via a Drinfel'd twist $\mathcal{F}$
on $\mathscr{U}\mathfrak{g}$. For this we only need a Lie algebra action
$\phi\colon\mathfrak{g}\rightarrow\Gamma^\infty(TM)$ by derivations.
Such a Lie algebra action extends to a $\mathscr{U}\mathfrak{g}$-module algebra action
$\rhd\colon\mathscr{U}\mathfrak{g}\otimes\mathscr{C}^\infty(M)
\rightarrow\mathscr{C}^\infty(M)$ and
\begin{equation}\label{TwistStarProduct}
    f\star_\mathcal{F}g
    =\mu_0(\mathcal{F}^{-1}\rhd(f\otimes g)),
\end{equation}
is a star product on $M$, where $f,g\in\mathscr{C}^\infty(M)$ and $\mu_0
\colon\mathscr{C}^\infty(M)^{\otimes 2}\rightarrow\mathscr{C}^\infty(M)$
denotes the pointwise multiplication. A star product which can be expressed
as (\ref{TwistStarProduct}) is said to be a \textit{twist star product}.
The corresponding Poisson bracket is induced by the $r$-matrix of $\mathcal{F}$,
namely,
\begin{equation}
    \{f,g\}
    =\mu_0(r\rhd(f\otimes g))
\end{equation}
for all $f,g\in\mathscr{C}^\infty(M)$. Chapter~\ref{chap03} is devoted to
Drinfel'd twists, $r$-matrices and twist star products and to
obstructions of the latter. The main theorems provide a class of
symplectic manifolds that do not admit a twist star product deformation.
\begin{theorem*}[\cite{Thomas2016,dAWe17}]
There are no twist star products on
\begin{enumerate}
\item[i.)] the symplectic connected orientable Riemann surfaces of genus $g>1$;

\item[ii.)] the symplectic $2$-sphere $\mathbb{S}^2$;

\item[iii.)] the symplectic projective spaces $\mathbb{CP}^n$ for a Drinfel'd twist
based on $\mathscr{U}\mathfrak{gl}_{n+1}(\mathbb{C})$;
\end{enumerate} 
\end{theorem*}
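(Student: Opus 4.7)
For (i) I would reduce to the homogeneity obstruction of Chapter~\ref{chap03}, which asserts that any connected compact symplectic manifold admitting a twist star product must be a homogeneous space of a connected Lie group. A compact orientable surface $\Sigma_g$ with $g>1$ has Euler characteristic $\chi(\Sigma_g)=2-2g<0$. By Mostow's theorem, any transitive action of a connected Lie group on a compact manifold reduces to a transitive action of a maximal compact subgroup $K$, realizing $\Sigma_g=K/L$ for some closed $L\subseteq K$; the Hopf--Samelson formula then forces $\chi(K/L)\geq 0$, a contradiction. Hence $\Sigma_g$ is not homogeneous and admits no twist star product.

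For (ii) the $2$-sphere is itself homogeneous, so the first obstruction does not apply and I would instead invoke the \emph{line bundle obstruction} proved earlier, stating that an invariant line bundle with non-trivial Chern class cannot coexist with a twist star product based on the same symmetry algebra. Assuming a twist star product on $\mathbb{S}^2$ based on some $\mathfrak{g}$, the non-degeneracy of the induced $r$-matrix (available by the preceding results of Chapter~\ref{chap03}) forces the action of $\mathfrak{g}$ to be transitive, so $\mathbb{S}^2=G/H$ for a connected Lie group $G$. Identifying $\mathbb{S}^2$ with $\mathbb{CP}^1$ equips the tangent bundle with a $G$-invariant complex structure, making $T\mathbb{S}^2$ a $G$-equivariant complex line bundle with first Chern class $\chi(\mathbb{S}^2)=2\neq 0$, contradicting the obstruction.

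For (iii) the same obstruction applies via the tautological bundle $\mathcal{O}(-1)\to\mathbb{CP}^n$, whose total space $\{(\ell,v)\in\mathbb{CP}^n\times\mathbb{C}^{n+1}\mid v\in\ell\}$ is manifestly $\mathrm{GL}_{n+1}(\mathbb{C})$-equivariant and satisfies $c_1(\mathcal{O}(-1))=-1\neq 0$. Thus $\mathcal{O}(-1)$ realizes an invariant line bundle with non-trivial Chern class for the $\mathfrak{gl}_{n+1}(\mathbb{C})$-symmetry, ruling out any twist star product on $\mathbb{CP}^n$ based on $\mathscr{U}\mathfrak{gl}_{n+1}(\mathbb{C})$. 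I expect the genuine difficulty of the theorem to lie not in these concluding topological checks---the Euler characteristic of higher-genus surfaces, the Euler number of $T\mathbb{S}^2$, and the equivariance of $\mathcal{O}(-1)$---but in the prior establishment of the two meta-obstructions, namely that compact symplectic twist-quantizable manifolds are homogeneous with non-degenerate $r$-matrix, and that invariant line bundles with non-trivial Chern class are incompatible with twist star products of the same symmetry. These are the results that will carry the real analytic and cohomological content of Chapter~\ref{chap03}.
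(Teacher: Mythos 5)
Parts (i) and (iii) of your proposal follow the paper's own route and are correct: for (i), the homogeneity obstruction (Corollary~\ref{cor02}) combined with Mostow's result that connected compact homogeneous spaces have non-negative Euler characteristic, and for (iii), the Morita/line-bundle obstruction (Theorem~\ref{thm08}) applied to the $\mathrm{GL}_{n+1}(\mathbb{C})$-equivariant tautological bundle with $c_1\neq 0$.

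Part (ii) is where you diverge from the paper, and your argument has a genuine gap. The paper does not use the line-bundle obstruction for $\mathbb{S}^2$ at all: it sharpens homogeneity to Corollary~\ref{cor03} (a twist star product on a connected compact symplectic manifold produces a \emph{non-degenerate} classical $r$-matrix on a Lie algebra $\mathfrak{g}$ whose connected, simply connected group acts transitively \emph{and effectively}), then invokes Onishchik's classification to conclude $\mathfrak{g}$ is semisimple, and finally uses that semisimple Lie algebras carry no non-degenerate $r$-matrices. Your alternative hinges on the assertion that identifying $\mathbb{S}^2$ with $\mathbb{CP}^1$ makes $T\mathbb{S}^2$ a $G$-equivariant \emph{complex} line bundle. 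That is not automatic: the group $G$ arising from the hypothetical twist is a priori arbitrary, and the tangent maps $T_pg\colon T_pM\rightarrow T_{g\cdot p}M$ are fibrewise complex-linear only if $g$ acts by biholomorphisms of the chosen complex structure; an arbitrary transitive smooth action need not preserve any complex (or even conformal) structure. To verify that the actions which actually occur do preserve one, you would have to classify the transitive effective actions on $\mathbb{S}^2$ --- precisely the Onishchik input the paper uses, after which its semisimplicity argument is shorter. Averaging only produces a $K$-invariant structure for a maximal compact $K\subseteq G$, hence a $K$-equivariant bundle, which does not feed into Theorem~\ref{thm08} for a twist based on $\mathscr{U}\mathfrak{g}$. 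You also drop the effectiveness of the action, which is needed for the classification step; transitivity alone does not suffice.
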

Note that in the third class of examples only twists on universal enveloping algebras
of matrix Lie algebras are obstructed, while the first two classes prohibit
twist deformation based on any universal enveloping algebra. In fact, we are
providing more general results which imply the above examples.
For i.) and ii.) we prove
that connected compact symplectic manifolds endowed with a twist star product are
in fact homogeneous spaces and that the classical $r$-matrix corresponding to the
twist is non-degenerate (see Section~\ref{Sec-r-matrix}).
The third obstruction utilizes twist deformations of Morita equivalence
bimodules, which are studied in Section~\ref{SecObs4}. In a nutshell, we prove
that a symplectic manifold cannot inherit both a complex line bundle which is
invariant under a Lie group action and has non-trivial Chern class and a twist
star product with Drinfel'd twist based on the universal enveloping algebra of
the corresponding Lie algebra.

\section*{Braided Symmetries and Cartan Calculi}

Motivated by the example of universal enveloping algebras, one defines
Drinfel'd twists on an arbitrary Hopf algebra $H$ to be
invertible elements $\mathcal{F}\in H\otimes H$
such that (\ref{twist1}) and (\ref{twist2}) are satisfied. They lead to a deformed
Hopf algebra $H_\mathcal{F}$ with structure
\begin{equation}
    \Delta_\mathcal{F}(\cdot)
    =\mathcal{F}\Delta(\cdot)\mathcal{F}^{-1},~
    S_\mathcal{F}(\cdot)
    =\beta S(\cdot)\beta^{-1},
\end{equation}
where $\beta=\mu((\mathrm{id}\otimes S)(\mathcal{F}))\in H$ and 
$\mu\colon H^{\otimes 2}\rightarrow H$ is
the multiplication. Furthermore,
any left $H$-module algebra $(\mathcal{A},\cdot)$,
i.e. any associative unital algebra together with a Hopf algebra action which
respects the algebra structure,
can be deformed into a
left $H_\mathcal{F}$-module algebra $\mathcal{A}_\mathcal{F}
=(\mathcal{A},\cdot_\mathcal{F})$, where
\begin{equation}
    a\cdot_\mathcal{F}b
    =\mu_0(\mathcal{F}^{-1}\rhd(a\otimes b))
\end{equation}
for all $a,b\in\mathcal{A}$ and $\mu_0$ denotes the undeformed multiplication
on $\mathcal{A}$. More general, an $H$-equivariant $\mathcal{A}$-bimodule
$\mathcal{M}$ (see Section~\ref{Sec2.4})
is deformed into an $H_\mathcal{F}$-equivariant 
$\mathcal{A}_\mathcal{F}$-bimodule $\mathcal{M}_\mathcal{F}$. 
The category ${}_\mathcal{A}^H\mathcal{M}_\mathcal{A}$ of equivariant bimodules
is particularly interesting in the case of \textit{triangular Hopf algebras}.
The latter are Hopf algebras $H$ equipped with a 
\textit{universal $\mathcal{R}$-matrix} $\mathcal{R}\in H\otimes H$, which is
an invertible element controlling the noncocommutativity of $H$, namely
\begin{equation}\label{QuasiCoComm}
    \mathcal{R}^{-1}
    =\mathcal{R}_{21}
    \text{ and }
    \Delta_{21}(\cdot)
    =\mathcal{R}\Delta(\cdot)\mathcal{R}^{-1},
\end{equation}
such that the hexagon relations (see Section~\ref{Sec2.3}) are satisfied.
In (\ref{QuasiCoComm}), $\mathcal{R}_{21}$ denotes the tensor flip of $\mathcal{R}$ and 
$\Delta_{21}$ the flipped coproduct of $H$. The universal $\mathcal{R}$-matrix
encodes a braiding on the categorical level.
Any cocommutative Hopf algebra is triangular with universal $\mathcal{R}$-matrix
$1\otimes 1$. 
Let $(H,\mathcal{R})$ be triangular,
$\mathcal{A}$ \textit{braided commutative} and
$\mathcal{M}$ \textit{braided symmetric} in addition, i.e.
\begin{equation}
    b\cdot a
    =(\mathcal{R}_1^{-1}\rhd a)\cdot(\mathcal{R}_2^{-1}\rhd b),~
    m\cdot a
    =(\mathcal{R}_1^{-1}\rhd a)\cdot(\mathcal{R}_2^{-1}\rhd m)
\end{equation}
for all $a,b\in\mathcal{A}$ and $m\in\mathcal{M}$ in leg notation. Then, 
the twist deformed Hopf algebra $(H_\mathcal{F},\mathcal{R}_\mathcal{F})$ is triangular,
$\mathcal{A}_\mathcal{F}$ is braided commutative and the twisted module
$\mathcal{M}_\mathcal{F}$ braided symmetric with respect to the twisted
universal $\mathcal{R}$-matrix $\mathcal{R}_\mathcal{F}
=\mathcal{F}_{21}\mathcal{R}\mathcal{F}^{-1}$.
\begin{example*}
The smooth functions $\mathcal{A}=(\mathscr{C}^\infty(M),\cdot)$
on a Poisson manifold $(M,\{\cdot,\cdot\})$ with the pointwise product
are commutative, in other words braided commutative with respect to
a triangular Hopf algebra $H=(\mathscr{U}\mathfrak{g},1\otimes 1)$.
Then also twist star products (\ref{TwistStarProduct}) on a Poisson manifold $M$
determine a braided commutative algebra 
$\mathcal{A}_\mathcal{F}
=(\mathscr{C}^\infty(M)[[\hbar]],\star_\mathcal{F})$ with symmetry
$H_\mathcal{F}=(\mathscr{U}\mathfrak{g}_\mathcal{F},
\mathcal{F}_{21}\mathcal{F}^{-1})$.
\end{example*}
In categorical language, the Drinfel'd twist defines a braided monoidal functor
\begin{equation}
    \mathrm{Drin}_\mathcal{F}\colon
    {}_\mathcal{A}^H\mathcal{M}_\mathcal{A}^\mathcal{R}\rightarrow
    {}_{\mathcal{A}_\mathcal{F}}^{H_\mathcal{F}}
    \mathcal{M}_{\mathcal{A}_\mathcal{F}}^{\mathcal{R}_\mathcal{F}}
\end{equation}
between the representation theories of $H$ and $H_\mathcal{F}$. Such a functor is a 
braided monoidal equivalence of categories,
which implies that the two algebras in the example are equivalent on
categorical level (where one extends the pointwise product $\hbar$-bilinearly).
Braided multivector fields $\mathfrak{X}^\bullet_\mathcal{R}(\mathcal{A})$
and braided differential forms $\Omega^\bullet_\mathcal{R}(\mathcal{A})$ are
canonical examples of objects in ${}_\mathcal{A}^H\mathcal{M}_\mathcal{A}^\mathcal{R}$
(c.f. Chapter~\ref{chap04}) and we construct four $H$-equivariant maps
(i.e. maps commuting with the Hopf algebra action)
$\mathscr{L}^\mathcal{R},\mathrm{i}^\mathcal{R},\mathrm{d},
\llbracket\cdot,\cdot\rrbracket_\mathcal{R}$
%
%
in analogy to the Lie derivative, insertion of multivector fields, de Rham differential
and Schouten-Nijenhuis bracket, for any braided commutative algebra $\mathcal{A}$.
Their relations are clarified in the following theorem, providing a noncommutative
Cartan calculus.
\begin{theorem*}[\cite{Weber2019} Braided Cartan Calculus]
Let $H$ be a triangular Hopf algebra with universal $\mathcal{R}$-matrix
$\mathcal{R}$. For every braided commutative left $H$-module algebra
$\mathcal{A}$ the graded maps
$\mathscr{L}^\mathcal{R}_X\colon
\Omega^\bullet_\mathcal{R}(\mathcal{A})
\rightarrow\Omega^{\bullet-(k-1)}_\mathcal{R}(\mathcal{A})$,
$\mathrm{i}^\mathcal{R}_X\colon
\Omega^\bullet_\mathcal{R}(\mathcal{A})
\rightarrow\Omega^{\bullet-k}_\mathcal{R}(\mathcal{A})$,
where $X\in\mathfrak{X}^k_\mathcal{R}(\mathcal{A})$ and
$\mathrm{d}\colon\Omega^\bullet_\mathcal{R}(\mathcal{A})
\rightarrow\Omega^{\bullet+1}_\mathcal{R}(\mathcal{A})$,
satisfy
\begin{equation}
\begin{split}
        [\mathscr{L}^\mathcal{R}_X,\mathscr{L}^\mathcal{R}_Y]_\mathcal{R}
        =&\mathscr{L}^\mathcal{R}_{\llbracket X,Y\rrbracket_\mathcal{R}},\\
        [\mathscr{L}^\mathcal{R}_X,\mathrm{i}^\mathcal{R}_Y]_\mathcal{R}
        =&\mathrm{i}^\mathcal{R}_{\llbracket X,Y\rrbracket_\mathcal{R}},\\
        [\mathscr{L}^\mathcal{R}_X,\mathrm{d}]_\mathcal{R}=&0,
\end{split}
\hspace{1cm}
\begin{split}
        [\mathrm{i}^\mathcal{R}_X,\mathrm{i}^\mathcal{R}_Y]_\mathcal{R}=&0,\\
        [\mathrm{i}^\mathcal{R}_X,\mathrm{d}]_\mathcal{R}
        =&\mathscr{L}^\mathcal{R}_X,\\
        [\mathrm{d},\mathrm{d}]_\mathcal{R}=&0,
\end{split}
\end{equation}
for all $X,Y\in\mathfrak{X}^\bullet_\mathcal{R}(\mathcal{A})$,
where $[\cdot,\cdot]_\mathcal{R}$ denotes the graded braided commutator.
The twist deformation of this braided Cartan calculus
(induced by the Drinfel'd functor $\mathrm{Drin}_\mathcal{F}$) is
isomorphic to the braided Cartan calculus on $\mathcal{A}_\mathcal{F}$
with respect to $\mathcal{R}_\mathcal{F}$.
\end{theorem*}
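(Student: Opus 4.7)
The plan is to treat the braided Cartan calculus as the internal, diagrammatic version of the classical one, where every occurrence of a tensor flip is replaced by the braiding induced by $\mathcal{R}$, and where the graded braided commutator $[A,B]_\mathcal{R}=A\circ B-(-1)^{|A||B|}(\mathcal{R}_1^{-1}\rhd B)\circ(\mathcal{R}_2^{-1}\rhd A)$ replaces the ordinary graded commutator. Since $(H,\mathcal{R})$ is triangular, the representation category ${}_\mathcal{A}^H\mathcal{M}_\mathcal{A}^\mathcal{R}$ is symmetric braided monoidal, and the braided multivectors and forms are defined as quotients (resp.~subobjects) of tensor powers modulo the braided (anti)symmetrizer. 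My first step is to prove the \emph{braided graded commutativity} of $\Omega^\bullet_\mathcal{R}(\mathcal{A})$ and the braided graded Lie structure on $\mathfrak{X}^\bullet_\mathcal{R}(\mathcal{A})$; from the former the identity $[\mathrm{i}^\mathcal{R}_X,\mathrm{i}^\mathcal{R}_Y]_\mathcal{R}=0$ is immediate, and $[\mathrm{d},\mathrm{d}]_\mathcal{R}=0$ is just $\mathrm{d}^2=0$ after verifying that $\mathrm{d}$ has odd degree and $H$-equivariance makes the braiding phase trivial on the diagonal.

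Next I would take Cartan's magic formula as the \emph{definition} $\mathscr{L}^\mathcal{R}_X:=[\mathrm{i}^\mathcal{R}_X,\mathrm{d}]_\mathcal{R}$ for $X\in\mathfrak{X}^1_\mathcal{R}(\mathcal{A})$, extended to higher-degree multivector fields by the braided Leibniz rule. This instantly gives $[\mathrm{i}^\mathcal{R}_X,\mathrm{d}]_\mathcal{R}=\mathscr{L}^\mathcal{R}_X$, and $[\mathscr{L}^\mathcal{R}_X,\mathrm{d}]_\mathcal{R}=0$ follows formally from $[\mathrm{d},\mathrm{d}]_\mathcal{R}=0$ via the braided graded Jacobi identity for $[\cdot,\cdot]_\mathcal{R}$ on the endomorphism operad, once the latter is established. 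Thus the bulk of the argument reduces to proving the graded braided Jacobi identity
\begin{equation}
    [A,[B,C]_\mathcal{R}]_\mathcal{R}=[[A,B]_\mathcal{R},C]_\mathcal{R}+(-1)^{|A||B|}[(\mathcal{R}_1^{-1}\rhd B),[(\mathcal{R}_2^{-1}\rhd A),C]_\mathcal{R}]_\mathcal{R}
\end{equation}
for $H$-equivariant graded endomorphisms; this is the categorical content of the symmetric braided structure and can be unwound using the hexagon axioms together with triangularity $\mathcal{R}_{21}=\mathcal{R}^{-1}$.

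The remaining two identities involving $\llbracket\cdot,\cdot\rrbracket_\mathcal{R}$ I would treat as definitional-plus-derivation: define the braided Schouten–Nijenhuis bracket on $\mathfrak{X}^\bullet_\mathcal{R}(\mathcal{A})$ by extending the commutator on $\mathfrak{X}^1_\mathcal{R}(\mathcal{A})$ and the module structure on $\mathfrak{X}^0_\mathcal{R}(\mathcal{A})=\mathcal{A}$ with the braided graded Leibniz rule, then \emph{characterize} $\llbracket X,Y\rrbracket_\mathcal{R}$ as the unique multivector field satisfying $\mathrm{i}^\mathcal{R}_{\llbracket X,Y\rrbracket_\mathcal{R}}=[\mathscr{L}^\mathcal{R}_X,\mathrm{i}^\mathcal{R}_Y]_\mathcal{R}$, which makes the second relation tautological once well-definedness is checked by induction on degree. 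Finally $[\mathscr{L}^\mathcal{R}_X,\mathscr{L}^\mathcal{R}_Y]_\mathcal{R}=\mathscr{L}^\mathcal{R}_{\llbracket X,Y\rrbracket_\mathcal{R}}$ drops out by applying $[\cdot,\mathrm{d}]_\mathcal{R}$ to the insertion identity and using $[\mathscr{L}^\mathcal{R}_X,\mathrm{d}]_\mathcal{R}=0$. The most delicate computational obstacle is the braided Jacobi identity above, where keeping the coproduct insertions, the Koszul signs and the $\mathcal{R}$-legs all consistent requires systematic use of the hexagon and of the identity $\mathcal{R}_{21}\mathcal{R}=1$ at every step.

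For the second assertion, the argument is essentially abstract: the whole construction above lives inside the symmetric braided monoidal category ${}_\mathcal{A}^H\mathcal{M}_\mathcal{A}^\mathcal{R}$, and $\mathrm{Drin}_\mathcal{F}$ is a braided monoidal equivalence onto ${}_{\mathcal{A}_\mathcal{F}}^{H_\mathcal{F}}\mathcal{M}_{\mathcal{A}_\mathcal{F}}^{\mathcal{R}_\mathcal{F}}$. I would check first that $\mathrm{Drin}_\mathcal{F}$ sends braided commutative algebras to braided commutative algebras and braided symmetric bimodules to braided symmetric bimodules (which is part of the theory of Section on twist deformation), and that it maps the braided symmetrizer/antisymmetrizer on tensor powers to the corresponding one for $\mathcal{R}_\mathcal{F}=\mathcal{F}_{21}\mathcal{R}\mathcal{F}^{-1}$. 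It then follows that $\mathrm{Drin}_\mathcal{F}(\mathfrak{X}^\bullet_\mathcal{R}(\mathcal{A}))\cong\mathfrak{X}^\bullet_{\mathcal{R}_\mathcal{F}}(\mathcal{A}_\mathcal{F})$ and analogously for forms. Since $\mathrm{d}$, $\mathrm{i}^\mathcal{R}$, $\mathscr{L}^\mathcal{R}$ and $\llbracket\cdot,\cdot\rrbracket_\mathcal{R}$ are constructed entirely from the symmetric braided monoidal data (and from the algebra multiplication and unit, both of which are transported by the functor), their images under $\mathrm{Drin}_\mathcal{F}$ are forced to coincide with the corresponding operations on $\mathcal{A}_\mathcal{F}$; the remaining labour is to make this functorial transport precise by chasing the twist cocycle through the defining formulas, which is routine once the first part is in place.
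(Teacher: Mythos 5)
Your proposal is correct and, for the six commutation relations, follows essentially the same route as the paper: take $\mathscr{L}^\mathcal{R}_X=[\mathrm{i}^\mathcal{R}_X,\mathrm{d}]_\mathcal{R}$ as the definition, get $[\mathrm{d},\mathrm{d}]_\mathcal{R}=0$ from $\mathrm{d}^2=0$, $[\mathrm{i}^\mathcal{R}_X,\mathrm{i}^\mathcal{R}_Y]_\mathcal{R}=0$ from the graded braided commutativity of the multivector wedge together with $\mathrm{i}^\mathcal{R}_{X\wedge_\mathcal{R}Y}=\mathrm{i}^\mathcal{R}_X\mathrm{i}^\mathcal{R}_Y$, $[\mathscr{L}^\mathcal{R}_X,\mathrm{d}]_\mathcal{R}=0$ from the graded braided Jacobi identity (noting $\mathrm{d}$ is equivariant, so no braiding actually appears there), and the two relations involving $\llbracket\cdot,\cdot\rrbracket_\mathcal{R}$ by checking low degrees and inducting with the graded braided Leibniz rule. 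The one organizational difference in this part is that you propose to \emph{characterize} $\llbracket X,Y\rrbracket_\mathcal{R}$ by $\mathrm{i}^\mathcal{R}_{\llbracket X,Y\rrbracket_\mathcal{R}}=[\mathscr{L}^\mathcal{R}_X,\mathrm{i}^\mathcal{R}_Y]_\mathcal{R}$, whereas the paper defines the bracket by an explicit formula on factorizing elements and then verifies this identity; your route shifts the work into proving well-definedness and that the resulting operation is the braided Gerstenhaber bracket extending $[\cdot,\cdot]_\mathcal{R}$ and the evaluation on $\mathcal{A}$, which amounts to the same induction. For the twist-deformation statement your argument is genuinely more abstract: you transport everything through the braided monoidal equivalence $\mathrm{Drin}_\mathcal{F}$, while the paper constructs the explicit isomorphism sending $X$ to $X^\mathcal{F}$ with $X^\mathcal{F}(a)=(\mathcal{F}_1^{-1}\rhd X)(\mathcal{F}_2^{-1}\rhd a)$ and checks by hand that it intertwines $\wedge$, $\llbracket\cdot,\cdot\rrbracket$, $\mathrm{i}$, $\mathscr{L}$ and $\mathrm{d}$. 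The categorical argument is cleaner in principle, but note that braided derivations are not literally preserved by the identity map (a braided derivation of $\mathcal{A}$ is not a braided derivation of $\mathcal{A}_\mathcal{F}$), so the nontrivial identification of $\mathfrak{X}^1_\mathcal{R}(\mathcal{A})_\mathcal{F}$ with $\mathfrak{X}^1_{\mathcal{R}_\mathcal{F}}(\mathcal{A}_\mathcal{F})$ cannot be skipped; your closing remark about chasing the twist cocycle through the defining formulas is exactly where that explicit isomorphism must be produced, and this is what the paper's Lemma on $X\mapsto X^\mathcal{F}$ does.
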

Applying the theorem to the two algebras from our examples we recover the classical
Cartan calculus of differential geometry and the \textit{twisted Cartan calculus}
(c.f. \cite{Aschieri2006}, see also Section~\ref{Sec3.6}).
This explains the braided symmetries appearing in the
latter, e.g. that vector fields $X\in\mathfrak{X}^1(M)$ act
rather as braided derivations
\begin{equation}
    \mathscr{L}^\mathcal{F}_X(fg)
    =(\mathscr{L}_Xf)g
    +(\mathcal{R}_1^{-1}\rhd f)\mathscr{L}_{\mathcal{R}_2^{-1}\rhd X}g
\end{equation}
on functions $f,g\in\mathscr{C}^\infty(M)$, than as derivations.
We show the utility of the braided Cartan calculus and its similarity to the classical 
Cartan calculus by further discussing
equivariant covariant derivatives and submanifold algebras. An
\textit{equivariant covariant derivative} on $\mathcal{A}$ is an $H$-equivariant map
$\nabla^\mathcal{R}\colon\mathfrak{X}^1_\mathcal{R}(\mathcal{A})\otimes
\mathfrak{X}^1_\mathcal{R}(\mathcal{A})\rightarrow
\mathfrak{X}^1_\mathcal{R}(\mathcal{A})$,
which is left $\mathcal{A}$-linear in the first argument and satisfies
a braided Leibniz rule
\begin{equation}
    \nabla^\mathcal{R}_X(a\cdot Y)
    =(\mathscr{L}^\mathcal{R}_Xa)\cdot Y
    +(\mathcal{R}_1^{-1}\rhd a)\cdot(\nabla_{\mathcal{R}_2^{-1}\rhd X}Y)
\end{equation}
in the second argument, where $X,Y\in\mathfrak{X}^1_\mathcal{R}(\mathcal{A})$
and $a\in\mathcal{A}$. After defining their curvature and torsion we prove
that these objects behave similarly to their
counterparts from differential geometry.
\begin{theorem*}[\cite{Weber2019}]
Any equivariant covariant derivative on $\mathcal{A}$ extends to an equivariant
covariant derivative on $\mathfrak{X}^\bullet_\mathcal{R}(\mathcal{A})$ and
$\Omega^\bullet_\mathcal{R}(\mathcal{A})$. For every
non-degenerate equivariant metric ${\bf g}$ there exists a unique torsion-free
equivariant covariant derivative $\nabla^{\mathrm{LC}}$ on $\mathcal{A}$, such that
$\nabla^{\mathrm{LC}}{\bf g}=0$.
\end{theorem*}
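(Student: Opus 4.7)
The plan is to handle the two claims separately, since the Levi-Civita theorem will rely on the extension to covariant tensors of degree two.

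For the extension to $\mathfrak{X}^\bullet_\mathcal{R}(\mathcal{A})$ I proceed by induction on the grading. On $\mathfrak{X}^1_\mathcal{R}(\mathcal{A})$ the map $\nabla^\mathcal{R}$ is given; on wedge products I impose the braided Leibniz rule
\[
\nabla^\mathcal{R}_X(Y\wedge Z)
= (\nabla^\mathcal{R}_X Y)\wedge Z
+ (\mathcal{R}_1^{-1}\rhd Y)\wedge \nabla^\mathcal{R}_{\mathcal{R}_2^{-1}\rhd X} Z,
\]
modelled on the braided derivation rule already satisfied by $\mathscr{L}^\mathcal{R}$. Well-definedness requires compatibility with the braided graded antisymmetry of $\wedge$, which follows from the hexagon identities together with $H$-equivariance of $\nabla^\mathcal{R}$. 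The extension to $\Omega^\bullet_\mathcal{R}(\mathcal{A})$ is dictated by duality with multivector fields: on a one-form $\omega$ one sets
\[
(\nabla^\mathcal{R}_X\omega)(Y)
= \mathscr{L}^\mathcal{R}_X(\omega(Y))
- (\mathcal{R}_1^{-1}\rhd\omega)(\nabla^\mathcal{R}_{\mathcal{R}_2^{-1}\rhd X}Y),
\]
and one then extends to higher forms by the analogous wedge Leibniz rule. In both cases $H$-equivariance of the extension is automatic because every ingredient entering the formulae is $H$-equivariant.

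For the Levi-Civita statement my strategy is a braided Koszul formula. I define $\omega(X,Y,Z)$ as the braided analogue of the classical cyclic expression, namely a sum of three terms of type $\mathscr{L}^\mathcal{R}_\bullet(\mathbf{g}(\bullet,\bullet))$ and three terms of type $\mathbf{g}(\llbracket\bullet,\bullet\rrbracket_\mathcal{R},\bullet)$, each decorated with the unique $\mathcal{R}$-matrix factors needed to make the individual summands well defined under the braided symmetry of $\mathbf{g}$ and the braided antisymmetry of $\llbracket\cdot,\cdot\rrbracket_\mathcal{R}$. Non-degeneracy of $\mathbf{g}$ then produces a unique vector field $\nabla^{\mathrm{LC}}_X Y$ characterised by $2\,\mathbf{g}(\nabla^{\mathrm{LC}}_X Y,Z)=\omega(X,Y,Z)$ for every $Z$. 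Uniqueness is standard Koszul: if a torsion-free $\nabla$ satisfies $\nabla\mathbf{g}=0$, the three cyclic metric-compatibility identities and the three vanishing-torsion identities combine, after using braided symmetry of $\mathbf{g}$, to reproduce $\omega(X,Y,Z)$, and non-degeneracy forces $\nabla=\nabla^{\mathrm{LC}}$.

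Existence amounts to verifying that the $\nabla^{\mathrm{LC}}$ just defined is $\mathcal{A}$-linear in $X$, satisfies the braided Leibniz rule in $Y$, is $H$-equivariant, torsion-free and metric-compatible. Linearity and the braided Leibniz rule follow by substituting $aX$ resp.\ $aY$ into the Koszul formula and using the braided derivation property of $\mathscr{L}^\mathcal{R}$ together with the braided Leibniz rule for $\llbracket\cdot,\cdot\rrbracket_\mathcal{R}$ established in the braided Cartan calculus theorem; equivariance is automatic because the formula is assembled from $H$-equivariant maps and $\mathcal{R}$; torsion-freeness and metric compatibility are read off from the braided (anti)\-symmetrisation of $\omega$ in suitable pairs of arguments. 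The main obstacle I expect is bookkeeping of $\mathcal{R}$-matrix factors: they must be placed so that each summand of $\omega(X,Y,Z)$ is insensitive to the order in which paired elements are swapped, the resulting $\nabla^{\mathrm{LC}}$ has the correct braided $\mathcal{A}$-linearity and Leibniz behaviour, and the uniqueness computation actually reproduces $\omega$. The triangularity axiom $\mathcal{R}^{-1}=\mathcal{R}_{21}$ and the hexagon identities are the essential organising tools; once they are applied systematically, the argument collapses to the classical Koszul calculation.
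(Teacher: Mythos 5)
Your proposal follows essentially the same route as the paper: the extension to $\mathfrak{X}^\bullet_\mathcal{R}(\mathcal{A})$ and $\Omega^\bullet_\mathcal{R}(\mathcal{A})$ is defined exactly as you describe (braided Leibniz rule on wedge products, duality pairing for one-forms, well-definedness from braided commutativity and equivariance), and the Levi-Civita statement is proved via the same braided Koszul formula, with uniqueness from non-degeneracy and existence by verifying left $\mathcal{A}$-linearity and the braided Leibniz rule through cancellation of the non-linear terms. The only cosmetic difference is that the paper states and proves the Koszul argument for an arbitrary prescribed torsion $T$, of which the torsion-free case is the specialization you need.
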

We call $\nabla^{\mathrm{LC}}$ the \textit{equivariant Levi-Civita covariant derivative}
corresponding to ${\bf g}$. If the twist deformation ${\bf g}_\mathcal{F}$ of ${\bf g}$ is non-degenerate,
it follows that the twist deformation of $\nabla^{\mathrm{LC}}$ is the equivariant
Levi-Civita covariant derivative corresponding to ${\bf g}_\mathcal{F}$.
Phrasing the notion of submanifold in algebraic
terms, we prove that the braided Cartan calculus on a submanifold algebra coincides
with the projection of the calculus on the ambient algebra. Under some
hypotheses we are able to project equivariant covariant derivatives and
metrics to the submanifold algebra. An important observation is that these
projections commute with twist deformation. 

\section*{Organization of the Thesis}

In Chapter~\ref{chap02} we recall some notions concerning Hopf algebras. The strategy is
to develop the algebraic data parallel to the categorical data on representations.
A quasi-triangular Hopf algebra corresponds to the rigid braided monoidal
category of its finitely generated projective representations. Accordingly,
Drinfel'd twists can be understood as algebraic deformation tool and braided monoidal
functor in the category of equivariant braided symmetric bimodules.
We further give examples of Drinfel'd twists and include ${}^*$-involutions in the
picture. The case of Drinfel'd twist deformation of Poisson manifolds is studied
in Chapter~\ref{chap03}. We define star products as formal deformations and investigate
consequences if they are induced by Drinfel'd twists. In particular, the relation
of twists on formal power series of universal enveloping algebras and classical
$r$-matrices is pointed out. Another important result is that the deformation
theory of a commutative algebra is a Morita invariant, i.e. Morita equivalent
algebras share the same deformation theories. In the case of twist star products
this sets severe conditions on equivariant line bundles over the manifold.
Both approaches lead to obstructions of twist star products on several classes
of symplectic manifolds. Chapter~\ref{chap04} covers some topics in braided
geometry. We construct the braided Cartan calculus on any braided
commutative algebra and prove that it respects gauge equivalence classes of
the Drinfel'd functor. Furthermore, equivariant covariant derivatives are defined on
equivariant braided symmetric bimodules. In complete analogy to differential geometry
we study their properties and prove e.g. the existence and uniqueness of an
equivariant Levi-Civita covariant derivative for a fixed non-degenerate equivariant
metric. The Drinfel'd functor intertwines all constructions.
Finally, in Chapter~\ref{chap05}, we show that the braided Cartan calculus on
a submanifold algebra is in fact the projection of the braided Cartan calculus of
the ambient space. Employing certain assumptions we are able to project
equivariant metrics and covariant derivatives. As a central observation we
point out that the projection and twist deformation commute. In addition, an
explicit example of twist deformation quantization on a quadric surface is given.
There are two appendices, Appendix~\ref{App01} briefly covering the material
on category theory which is necessary to understand the thesis and
Appendix~\ref{App02} providing some additional material on braided exterior algebras
and braided Gerstenhaber algebras.

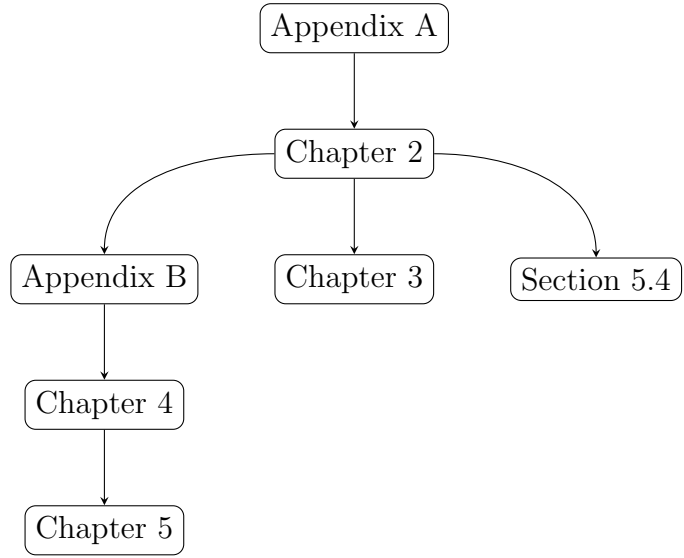
\begin{figure}
\centering
\begin{tikzpicture}[>=stealth,every node/.style={shape=rectangle,draw,rounded corners},]
    \node (c0) {Appendix \ref{App01}};
    \node (c2) [below =of c0]{Chapter \ref{chap02}};
    \node (c3) [below =of c2]{Chapter \ref{chap03}};
    \node (c1) [left =of c3]{Appendix \ref{App02}};
    \node (c4) [below =of c1]{Chapter \ref{chap04}};
    \node (c5) [below =of c4]{Chapter \ref{chap05}};
    \node (c6) [right =of c3]{Section~\ref{Sec4.4}};
    \draw[->] (c0) -- (c2);
    \draw[->] (c2) -- (c3);
    \draw[->] (c1) -- (c4);
    \draw[->] (c4) -- (c5);
    \draw[->] (c2.east) to[out=0,in=90] (c6);
    \draw[->] (c2.west) to[out=180,in=90] (c1);
\end{tikzpicture}
\caption{How to read the thesis} \label{HowToRead}
\end{figure}

Depending on the personal background and intentions we recommend to read the thesis
in the following ways (see also Figure~\ref{HowToRead}). In Appendix~\ref{App01}
the basic categorical language, which is used throughout the thesis, is provided.
If the reader is familiar with these concepts the appendix can be omitted.
Chapter~\ref{chap02} contains the algebraic
concepts of Hopf algebra and Drinfel'd twist which are central to this work.
The advanced reader might also skip this chapter. Section~\ref{Sec2.6} is mainly
relevant for the twist deformation of quadric surfaces in Section~\ref{Sec4.4},
where the latter can be understood independently of 
Chapter~\ref{chap03}-Section~\ref{Sec5.3}.
Furthermore, the braided Cartan calculus (Chapter~\ref{chap04}) and its compatibility
with submanifold algebras (Chapter~\ref{chap05}) can be considered independently
of Chapter~\ref{chap03}, which treats obstructions of twist star products and
deformation quantization. In other words, if the reader is not interested in
deformation quantization there is a shortcut from Chapter~\ref{chap02} to
Chapter~\ref{chap04}, where one might consider Appendix~\ref{App02} before,
to learn about braided Graßmann algebras and braided Gerstenhaber algebras.

\section*{Notation}

Throughout the thesis $\Bbbk$ denotes a commutative ring with unit $1$. If the
situation requires to work over a field we write $\mathbb{K}$ instead.
A \textit{$\Bbbk$-module} is an Abelian group $\mathcal{M}$ together with a
distributive left $\Bbbk$-action. Typically we write $\lambda\cdot m$ or simply
$\lambda m$ for the action of an element $\lambda\in\Bbbk$ on an element
$m\in\mathcal{M}$ of a $\Bbbk$-module.
A map between the Abelian groups of two
$\Bbbk$-modules is said to be \textit{$\Bbbk$-linear} or \textit{$\Bbbk$-module
homomorphism} if it intertwines the $\Bbbk$-actions. If such a map is invertible
in addition we call it \textit{$\Bbbk$-module isomorphism}.
The tensor product of $\Bbbk$-modules is denoted by $\otimes$. It is a monoidal
structure on the category ${}_\Bbbk\mathcal{M}$ of $\Bbbk$-modules.
The \textit{flip isomorphism} of two $\Bbbk$-modules $\mathcal{M}$ 
and $\mathcal{N}$ is defined by
\begin{equation}
    \tau_{\mathcal{M},\mathcal{N}}\colon\mathcal{M}\otimes\mathcal{N}\ni(m\otimes n)
    \mapsto(n\otimes m)\in\mathcal{N}\otimes\mathcal{M}.
\end{equation}
Furthermore we write $\mathcal{M}^{\otimes k}
=\mathcal{M}\otimes\ldots\otimes\mathcal{M}$ for the tensor product of
$k>1$ copies of a $\Bbbk$-module $\mathcal{M}$.
Let $\mathcal{M}$ be a $\Bbbk$-module. Then any element
$\mathcal{F}\in\mathcal{M}^{\otimes 2}$ can be expressed as a
finite sum $\sum_{i=1}^n\mathcal{F}^i_1\otimes\mathcal{F}^i_2$
of factorizing elements $\mathcal{F}^i_1\otimes\mathcal{F}^i_2$.
Omitting this sum and the summation indices we end up with
$\mathcal{F}=\mathcal{F}_1\otimes\mathcal{F}_2$, which is known
as \textit{leg notation}. Dealing with several copies of $\mathcal{F}$
we write $\mathcal{F}=\mathcal{F}'_1\otimes\mathcal{F}'_2$, etc.
to distinguish the summations. If $\mathcal{M}$ is an algebra with unit $1$
we further write
$\mathcal{F}_{21}=\tau_{\mathcal{M},\mathcal{M}}(\mathcal{F})$,
$\mathcal{F}_{12}=\mathcal{F}\otimes 1$,
$\mathcal{F}_{13}
=(\mathrm{id}_\mathcal{M}\otimes\tau_{\mathcal{M},\mathcal{M}})
(\mathcal{F}\otimes 1)$ and similarly for other permutations of the
\textit{legs} of $\mathcal{F}$.
If not stated otherwise every $\Bbbk$-algebra $\mathcal{A}$ is
assumed to be associative and unital. This means there are $\Bbbk$-linear
maps $\mu\colon\mathcal{A}\otimes\mathcal{A}\rightarrow\mathcal{A}$ and
$\eta\colon\Bbbk\rightarrow\mathcal{A}$, called \textit{product} and
\textit{unit}, respectively, such that
\begin{equation}
    \mu\circ(\mu\otimes\mathrm{id})=\mu\circ(\mathrm{id}\otimes\mu)
    \colon\mathcal{A}^{\otimes 3}\rightarrow\mathcal{A}
\end{equation}
and
\begin{equation}\label{eq28}
    \mu\circ(\eta\otimes\mathrm{id})
    =\mathrm{id}
    =\mu\circ(\mathrm{id}\otimes\eta)
    \colon\mathcal{A}\rightarrow\mathcal{A}
\end{equation}
hold. The first property is said to be the \textit{associativity} of $\mu$.
In equation (\ref{eq28}) we employed the $\Bbbk$-module isomorphisms
$\Bbbk\otimes\mathcal{A}\cong\mathcal{A}\cong\mathcal{A}\otimes\Bbbk$.
The product of two elements $a$ and $b$ of an algebra 
$(\mathcal{A},\mu,\eta)$ is sometimes denoted by $a\cdot b$ or $ab$ if the
reference to $\mu$ is not essential. Since $\eta$ is determined by its value at
the unit of $\Bbbk$ we often write $1=\eta(1)\in\mathcal{A}$, calling this element
the unit of $\mathcal{A}$ as well. The algebra is said to be
\textit{commutative} if $ab=ba$ for all $a,b\in\mathcal{A}$. An
\textit{algebra homomorphism} is a $\Bbbk$-linear map
$\Phi\colon\mathcal{A}\rightarrow\mathcal{B}$ between algebras such that
$\Phi(ab)=\Phi(a)\Phi(b)$ for all $a,b\in\mathcal{A}$ and $\Phi(1)=1$. 
If $\Phi$ is invertible
in addition it is said to be an \textit{algebra isomorphism}. The category
of $\Bbbk$-algebras is denoted by ${}_\Bbbk\mathcal{A}$. It is monoidal with
respect to $\otimes$, since $\Bbbk$ is an algebra and
the tensor product $\mathcal{A}\otimes\mathcal{B}$
can be structured as a $\Bbbk$-algebra with unit $1\otimes 1$ and product
\begin{equation}\label{AlgTimesAlg}
    (a_1\otimes b_1)\cdot(a_2\otimes b_2)
    =(a_1a_2)\otimes(b_1b_2),
\end{equation}
where $a_1,a_2\in\mathcal{A}$ and $b_1,b_2\in\mathcal{B}$.

\chapter{Quasi-Triangular Hopf Algebras and their Representations}\label{chap02}
In this preliminary chapter we recall the notion of Hopf algebra
together with its category of representations. By adding more and more
algebraic structure we successively discuss coalgebras, bialgebras and
finally Hopf algebras, together with some fundamental examples. Depicting
those algebraic properties in terms of commutative diagrams is a
comfortable way of compressing the relations, furthermore revealing
their duality. This, together with proofs of some fundamental properties
of the Hopf algebra structure, is the agenda of Section~\ref{Sec2.1}.
Afterwards, in Section~\ref{SectionHopfAlgebraModules}, we focus on
modules of Hopf algebras. It turns out that, unlike for general algebras,
the representation theory of Hopf algebras has many additional features.
In fact, it is exactly the bialgebra structure which
shapes the corresponding modules as a monoidal category, while the
antipode gives rise to an additional rigidity property of the
monoidal subcategory of finitely generated projective modules.
Consequently, we introduce quasi-triangular Hopf algebras in Section~\ref{Sec2.3}
as those Hopf algebras whose monoidal category of representations is braided
and describe the corresponding algebraic structure on the algebraic level.
It is interesting that this so-called universal $\mathcal{R}$-matrix 
satisfies the quantum Yang-Baxter equation, which connects our considerations
to the field of integrable systems. There is a notion of gauge equivalence
respecting both pictures in equal measure: on the algebraic side,
Drinfel'd twists deform the quasi-triangular Hopf algebra structure such
that the result is still a quasi-triangular Hopf algebra, while on the
categorical side the Drinfel'd functor leads to a braided monoidal
equivalence of the representations of the deformed and undeformed Hopf algebra.
This is what we discuss in Section~\ref{Sec2.4}. In Section~\ref{Sec2.5},
we refine the mentioned braided monoidal equivalence
to braided symmetric equivariant bimodules of braided commutative
module algebras of a triangular Hopf algebra.
At the end of this chapter, in Section~\ref{Sec2.6}, it is discussed how
${}^*$-involutions fit into the picture and how they can be deformed using
unitary Drinfel'd twists. The main references for the first
four sections are \cite{ChPr94,Ka95,Ma95,Mo93}, while the last two sections are
inspired by \cite{AsSh14,Schenkel2015,GiZh98} and \cite{Fiore2010},
respectively.

\section{Hopf Algebras}\label{Sec2.1}

The first formal definition
of Hopf algebra goes to back to early works of Cartier (see 
\cite{andruskiewitsch09} and references therein for a discussion
on the origin of Hopf algebras). Hopf algebras are essential objects in
a broad spectrum of disciplines in mathematics and likewise in theoretical
physics. In this thesis Hopf algebras incarnate symmetries of the
algebra of observables of our interest. This point of view was first promoted
in early articles \cite{Dr83,Dr86,Dr89,ReTaFa1990} about quantum groups. 
Since then, numerous
works on algebraic deformation proved the fruitfulness of this approach.
From an algebraic point of view, Hopf algebras are natural specifications
of algebras: a coalgebra is the dual object to an algebra and the notion
of bialgebra describes objects which inherit both structures in a
compatible way. The antipode, completing the Hopf algebra structure, is an
inverse of the corresponding convolution algebra of the bialgebra.
We add more substance to this line of thought in this section.
Namely, we give the definition of Hopf algebra building on intermediate
steps and concrete examples. Several fundamental properties of the Hopf
algebra structure are discussed. Since they are used throughout the thesis
we give full proofs, also to get used to the developed notation.
We refer to the textbooks \cite{ChPr94}~Sec.~4.1, \cite{Ka95}~Chap.~III,
\cite{Ma95}~Chap.~1 and \cite{Mo93}~Chap.~1 for excellent
introductions to Hopf algebras.

Let us start this section by discussing two fundamental examples of Hopf algebras:
the universal enveloping algebra of a Lie algebra and the group algebra.
Having concrete examples in mind the abstract definitions of the following
sections are easier to digest.
Consider a Lie algebra $\mathfrak{g}$ over a field $\mathbb{K}$, with Lie bracket
denoted by $[\cdot,\cdot]$. Its \textit{universal enveloping algebra}
$\mathscr{U}\mathfrak{g}$ is defined to be the tensor algebra 
$$
\mathrm{T}\mathfrak{g}
=\bigoplus_{k\geq 0}\mathfrak{g}^{\otimes k}
=\mathbb{K}\oplus\mathfrak{g}\oplus(\mathfrak{g}\otimes\mathfrak{g})\oplus\cdots
$$
of $\mathfrak{g}$ modulo the ideal generated by
$x\otimes y-y\otimes x-[x,y]$ for all $x,y\in\mathfrak{g}$. It inherits the
structure of an associative unital algebra from $\mathrm{T}\mathfrak{g}$,
with product induced by the tensor product and unit induced by $1\in\mathbb{K}$.
The universal enveloping algebra is, up to isomorphism, uniquely determined by
the following universal property: for any associative unital algebra
$\mathcal{A}$, seen as a Lie algebra with the commutator, and any Lie algebra
homomorphism $\phi\colon\mathfrak{g}\rightarrow\mathcal{A}$, there is a unique
homomorphism $\Phi\colon\mathscr{U}\mathfrak{g}\rightarrow\mathcal{A}$ of
associative unital algebras such that $\phi=\Phi\circ\iota$, where
$\iota\colon\mathfrak{g}\rightarrow\mathscr{U}\mathfrak{g}$ denotes the canonical
embedding of $\mathfrak{g}$ in $\mathscr{U}\mathfrak{g}$. Making use of this
universal property we can define three $\mathbb{K}$-linear maps 
$\Delta\colon\mathscr{U}\mathfrak{g}\rightarrow\mathscr{U}\mathfrak{g}^{\otimes 2}$,
$\epsilon\colon\mathscr{U}\mathfrak{g}\rightarrow\mathbb{K}$ and
$S\colon\mathscr{U}\mathfrak{g}\rightarrow\mathscr{U}\mathfrak{g}$ by declaring
them to satisfy
$
\Delta(x)=x\otimes 1+1\otimes x,~
\epsilon(x)=0
\text{ and }
S(x)=-x
$
on elements $x\in\mathfrak{g}$ and extending $\Delta$ and $\epsilon$ as
algebra homomorphisms and $S$ as algebra anti-homomorphism. A calculation shows
that the equations 
\begin{equation}\label{eq01}
    (\Delta\otimes\mathrm{id})\circ\Delta
    =(\mathrm{id}\otimes\Delta)\circ\Delta\text{ and }
    (\epsilon\otimes\mathrm{id})\circ\Delta
    =\mathrm{id}
    =(\mathrm{id}\otimes\epsilon)\circ\Delta    
\end{equation}
hold in addition as maps $\mathscr{U}\mathfrak{g}\rightarrow
\mathscr{U}\mathfrak{g}^{\otimes 3}$ and $\mathscr{U}\mathfrak{g}\rightarrow
\mathscr{U}\mathfrak{g}$, respectively. By the universal
property of $\mathscr{U}\mathfrak{g}$ it is sufficient to prove this on
elements of $\mathfrak{g}$. If one denotes the product and unit of 
$\mathscr{U}\mathfrak{g}$ by 
$\mu\colon\mathscr{U}\mathfrak{g}^{\otimes 2}\rightarrow\mathscr{U}\mathfrak{g}$
and $\eta\colon\mathbb{K}\rightarrow\mathscr{U}\mathfrak{g}$, respectively, one
easily verifies that
\begin{equation}\label{eq02}
    \mu\circ(S\otimes\mathrm{id})\circ\Delta
    =\eta\circ\epsilon
    =\mu\circ(\mathrm{id}\otimes S)\circ\Delta
    \colon\mathscr{U}\mathfrak{g}\rightarrow\mathscr{U}\mathfrak{g}
\end{equation}
holds. Before commenting further on
these maps and their properties we introduce another algebra with additional maps
$\Delta$, $\epsilon$ and $S$ obedient to the same relations. It is the
\textit{group algebra} $\mathbb{K}[G]$ of a finite group $G$, which is defined as 
the free $\mathbb{K}$-module generated by the elements of $G$. Its associative product
is given by the $\mathbb{K}$-linearly extended group multiplication and the unit
is the neutral element of $G$. On elements $g\in G$ we define
$
\Delta(g)=g\otimes g,~
\epsilon(g)=1
\text{ and }
S(g)=g^{-1}
$
and extend those maps as algebra (anti)-homomorphisms to 
$\Delta\colon\mathbb{K}[G]\rightarrow\mathbb{K}[G]^{\otimes 2}$,
$\epsilon\colon\mathbb{K}[G]\rightarrow\mathbb{K}$ and
$S\colon\mathbb{K}[G]\rightarrow\mathbb{K}[G]$. It is easy to verify that they
satisfy (\ref{eq01}) and (\ref{eq02}).
The natural question arises if there are more examples of associative unital
algebras allowing for such additional structure, maybe even revealing a greater
concept. Besides, the equations~(\ref{eq01}) seem to mimic the axioms of an
associative unital algebra in a dual fashion and the map $S$ reminds of
some kind of inverse. In fact it is the notion of Hopf algebra which unites 
and generalizes those two examples. To state a rigorous definition we need some
preparation. First of all we operate slightly more general by considering
commutative rings $\Bbbk$ instead of fields $\mathbb{K}$ 
and $\Bbbk$-modules rather than $\mathbb{K}$-vector spaces. The reason is that
we obviously enrich the number of examples in this way, in particular allowing
for formal power series $V[[\hbar]]$ with coefficients in a $\mathbb{K}$-vector
space $V$ in this way: $V[[\hbar]]$ is a $\Bbbk=\mathbb{K}[[\hbar]]$-module.
In the next definition we axiomatize equations~(\ref{eq01}).
\begin{definition}[Coalgebra]
Let $\mathcal{C}$ be a $\Bbbk$-module for a commutative ring $\Bbbk$.
It is said to be a $\Bbbk$-\textit{coalgebra} if there are
$\Bbbk$-linear maps $\Delta\colon\mathcal{C}
\rightarrow\mathcal{C}\otimes\mathcal{C}$ and
$\epsilon\colon\mathcal{C}\rightarrow\Bbbk$ such that (\ref{eq01}) hold.
In this case $\Delta$ is called \textit{coproduct} and $\epsilon$
\textit{counit} of $\mathcal{C}$,
while to axioms (\ref{eq01}) are called coassociativity and counitality,
respectively. A $\Bbbk$-linear map
$\phi\colon\mathcal{C}\rightarrow\mathcal{C}'$ between
$\Bbbk$-coalgebras $(\mathcal{C},\Delta,\epsilon)$ and
$(\mathcal{C}',\Delta',\epsilon')$ is said
to be a coalgebra homomorphism if $(\phi\otimes\phi)\circ\Delta
=\Delta'\circ\phi$ and $\epsilon'\circ\phi=\epsilon$. The category of
$\Bbbk$-coalgebras is denoted by ${}_\Bbbk\mathcal{C}$. 
\end{definition}
We introduce \textit{Sweedler's notation} $\Delta(c)
=c_{(1)}\otimes c_{(2)}$ to denote the coproduct of an element $c$ of a
coalgebra $\mathcal{C}$. Namely, we omit a possibly finite sum of 
factorizing elements
in the tensor product, similar to the leg notation which we introduced in the
introduction. Using the coassociativity of $\Delta$ we define
$$
c_{(1)}\otimes c_{(2)}\otimes c_{(3)}
=c_{(1)(1)}\otimes c_{(1)(2)}\otimes c_{(2)}
=c_{(1)}\otimes c_{(2)(1)}\otimes c_{(2)(2)}
$$
and similarly for higher coproducts of $c$. Combining Sweedler's notation
with leg notation we further write 
$\mathcal{F}_{12,3}=(\Delta\otimes\mathrm{id})(\mathcal{F})$,
$\mathcal{F}_{1,23}=(\mathrm{id}\otimes\Delta)(\mathcal{F})$
for any element $\mathcal{F}=\mathcal{F}_1\otimes\mathcal{F}_2\in
\mathcal{C}^{\otimes 2}$ and
$\mathcal{F}_{3,12}=(\mathrm{id}\otimes\Delta)(\tau_{\mathcal{C},\mathcal{C}}
(\mathcal{F}))$,
$\mathcal{F}_{21,3}=(\tau_{\mathcal{C},\mathcal{C}}\otimes\mathrm{id})(\Delta
(\mathcal{F}))$, etc.
if we consider permutations of the legs of $\mathcal{F}$.
A coalgebra $\mathcal{C}$ is said to be
\textit{cocommutative} if $c_{(2)}\otimes c_{(1)}=c_{(1)}\otimes c_{(2)}$
for all $c\in\mathcal{C}$. This is the case for our previous examples of the
universal enveloping algebra $\mathscr{U}\mathfrak{g}$ and the group algebra
$\mathbb{K}[G]$. Furthermore, motivated from exactly these two examples we
call an element $c\in\mathcal{C}$ \textit{$\xi$-$\chi$-primitive} for two
elements $\xi,\chi\in\mathcal{C}$, if
$\Delta(c)=c\otimes\xi+\chi\otimes c$.
We call $c\in\mathcal{C}$ \textit{group-like} if $\Delta(c)=c\otimes c$.
The $1$-$1$-primitive elements of $\mathscr{U}\mathfrak{g}$ are exactly the
elements of $\mathfrak{g}$, usually they are said to be \textit{primitive}
for short in this case, while the group-like elements of $\mathbb{K}[G]$ are
exactly the elements of $G$. 
Depicting the axioms of a coalgebra $(\mathcal{C},\Delta,\epsilon)$ and a
coalgebra homomorphism $\phi\colon\mathcal{C}\rightarrow\mathcal{C}'$ via
commutative diagrams
\begin{equation*}
\begin{tikzcd}
\mathcal{C}  \arrow{r}{\Delta} \arrow{d}[swap]{\Delta}
& \mathcal{C}\otimes\mathcal{C} \arrow{d}{\mathrm{id\otimes\Delta}} \\
\mathcal{C}\otimes\mathcal{C} \arrow{r}{\Delta\otimes\mathrm{id}}
& \mathcal{C}\otimes\mathcal{C}\otimes\mathcal{C}
\end{tikzcd},
\begin{tikzcd}
\Bbbk\otimes\mathcal{C} \arrow{rd}[swap]{\cong} 
& \mathcal{C}\otimes\mathcal{C}  \arrow{l}[swap]{\epsilon\otimes\mathrm{id}}
\arrow{r}{\mathrm{id}\otimes\epsilon} 
& \mathcal{C}\otimes\Bbbk \arrow{ld}{\cong}\\
& \mathcal{C} \arrow{u}[swap]{\Delta} &
\end{tikzcd}
\end{equation*}
and
\begin{equation*}
\begin{tikzcd}
\mathcal{C}\arrow{r}{\Delta} \arrow{d}[swap]{\phi} 
& \mathcal{C}\otimes\mathcal{C} \arrow{d}{\phi\otimes\phi} \\
\mathcal{C}' \arrow{r}{\Delta'}
& \mathcal{C}'\otimes\mathcal{C}'
\end{tikzcd},
\begin{tikzcd}
\mathcal{C} \arrow{r}{\phi} \arrow{rd}[swap]{\epsilon} 
& \mathcal{C}' \arrow{d}{\epsilon'} \\
& \Bbbk
\end{tikzcd},
\end{equation*}
respectively, the duality with algebras and algebra homomorphisms becomes visible:
the corresponding diagrams in the category ${}_\Bbbk\mathcal{A}$
of algebras are obtained by reversing the arrows, replacing
$\Delta$ by the product and $\epsilon$ by the unit in the above diagrams.
Another observation affirming this duality is given by the following lemma
(c.f. \cite{Mo93}~Lem.~1.2.2 and the subsequent discussion).
\begin{lemma}\label{lemma02}
Let $(\mathcal{C},\Delta,\epsilon)$ be a $\Bbbk$-coalgebra.
Its dual $\Bbbk$-module
$\mathcal{C}^*=\mathrm{Hom}_\Bbbk(\mathcal{C},\Bbbk)$ is a $\Bbbk$-algebra
with product and unit given by the dual maps 
$\Delta^*\colon\mathcal{C}^*\otimes\mathcal{C}^*\rightarrow\mathcal{C}^*$
and $\epsilon^*\colon\Bbbk\rightarrow\mathcal{C}^*$, respectively.
If $(\mathcal{A},\mu,\eta)$ is a finite-dimensional algebra over a field
$\mathbb{K}$, its dual
$\mathcal{A}^*=\mathrm{Hom}_\mathbb{K}(\mathcal{A},\mathbb{K})$ is
a coalgebra with coproduct and counit given by the dual maps 
$\mu^*\colon\mathcal{A}^*\rightarrow\mathcal{A}^*\otimes\mathcal{A}^*$ and 
$\eta^*\colon\mathcal{A}^*\rightarrow\mathbb{K}$, respectively.
\end{lemma}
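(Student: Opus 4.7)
The proof proposal is to exploit the observation, already indicated by the commutative diagrams preceding the statement, that the axioms of a coalgebra are obtained from those of an algebra by reversing arrows. Dualisation of $\Bbbk$-linear maps should therefore convert one structure into the other, provided we are careful about how duals interact with tensor products.

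First I would fix the canonical $\Bbbk$-linear map
\begin{equation}
\iota_{\mathcal{M},\mathcal{N}}\colon\mathcal{M}^*\otimes\mathcal{N}^*
\rightarrow(\mathcal{M}\otimes\mathcal{N})^*,\quad
\iota(f\otimes g)(m\otimes n)=f(m)g(n),
\end{equation}
defined for any two $\Bbbk$-modules, and record two facts: (a) $\iota$ always exists, and (b) when $\mathcal{M}$ and $\mathcal{N}$ are finite-dimensional vector spaces over a field $\mathbb{K}$, $\iota$ is an isomorphism. Fact~(a) already suffices for part~(i); fact~(b) will be essential and will single out the main obstacle in part~(ii).

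For part~(i), I would define the convolution product on $\mathcal{C}^*$ as $\Delta^*\circ\iota_{\mathcal{C},\mathcal{C}}$, so that in Sweedler notation $(f\cdot g)(c)=f(c_{(1)})g(c_{(2)})$, and the unit as $\epsilon^*\colon\Bbbk\rightarrow\mathcal{C}^*$, sending $1\in\Bbbk$ to the functional $\epsilon$. Associativity is then the direct dual of coassociativity: both $(f\cdot g)\cdot h$ and $f\cdot(g\cdot h)$ evaluated on $c$ equal $f(c_{(1)})g(c_{(2)})h(c_{(3)})$ by virtue of~(\ref{eq01}). Unitality follows in the same way from the counit axiom, since $(\epsilon\cdot f)(c)=\epsilon(c_{(1)})f(c_{(2)})=f(\epsilon(c_{(1)})c_{(2)})=f(c)$ and symmetrically on the other side. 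No finiteness is needed, as $\iota$ is only used on the domain side of dualisation.

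For part~(ii) the strategy is the same in spirit, but now I would dualise $\mu\colon\mathcal{A}\otimes\mathcal{A}\rightarrow\mathcal{A}$ to obtain $\mu^*\colon\mathcal{A}^*\rightarrow(\mathcal{A}\otimes\mathcal{A})^*$ and then compose with $\iota_{\mathcal{A},\mathcal{A}}^{-1}$ to land in $\mathcal{A}^*\otimes\mathcal{A}^*$. This is precisely where finite-dimensionality over a field is indispensable: without it, $\iota$ fails to be invertible (elements of $(\mathcal{A}\otimes\mathcal{A})^*$ are not in general expressible as finite sums of simple tensors of functionals), and $\mu^*$ lands outside the image of $\iota$. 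Granted invertibility, I would set $\Delta_{\mathcal{A}^*}=\iota^{-1}\circ\mu^*$ and $\epsilon_{\mathcal{A}^*}=\eta^*$, and then verify coassociativity and counitality by dualising the commutative diagrams expressing associativity of $\mu$ and unitality of $\eta$; this amounts to the same calculation as in part~(i) run backwards, once one observes the compatibilities $\iota_{\mathcal{A},\mathcal{A}\otimes\mathcal{A}}\circ(\mathrm{id}\otimes\iota_{\mathcal{A},\mathcal{A}}) = \iota_{\mathcal{A}\otimes\mathcal{A},\mathcal{A}}\circ(\iota_{\mathcal{A},\mathcal{A}}\otimes\mathrm{id})$ under the canonical associator of the tensor product. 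The principal obstacle, therefore, is not computational but conceptual: justifying the use of $\iota^{-1}$, which is exactly what forces the finite-dimensional hypothesis and the restriction to a field.
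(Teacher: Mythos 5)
Your proposal is correct and follows essentially the same route as the paper: the key point in both is that $(\mathcal{A}\otimes\mathcal{A})^*\cong\mathcal{A}^*\otimes\mathcal{A}^*$ for finite-dimensional vector spaces, after which coassociativity and counitality of $\iota^{-1}\circ\mu^*$ and $\eta^*$ are the dualised associativity and unitality of $\mu$ and $\eta$. The only difference is that you also spell out part (i), which the paper omits here and instead recovers later as the special case $\mathcal{A}=\Bbbk$ of the convolution algebra (Lemma~\ref{lemma04}); your direct argument for it is the same computation.
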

\begin{proof}
We only prove the second statement. For finite-dimensional vector spaces
there is an isomorphism $(\mathcal{A}\otimes\mathcal{A})^*
\cong\mathcal{A}^*\otimes\mathcal{A}^*$. This implies that the dual of the
multiplication
$$
(\mu^*(\alpha))(a\otimes b)
=\alpha(\mu(a\otimes b)),
$$
where $\alpha\in\mathcal{A}^*$ and $a,b\in\mathcal{A}$, is a
$\mathbb{K}$-linear map $\mu^*\colon\mathcal{A}^*\rightarrow
\mathcal{A}^*\otimes\mathcal{A}^*$. It is coassociative, since
\begin{align*}
    (\alpha_{(1)(1)}\otimes\alpha_{(1)(2)}\otimes\alpha_{(2)})
    (a\otimes b\otimes c)
    =&\mu(\mu(a\otimes b)\otimes c)\\
    =&\mu(a\otimes\mu(b\otimes c))\\
    =&(\alpha_{(1)}\otimes\alpha_{(2)(1)}\otimes\alpha_{(2)(2)})
    (a\otimes b\otimes c)
\end{align*}
for all $\alpha\in\mathcal{A}^*$ and $a,b,c\in\mathcal{A}$ by the
associativity of $\mu$, where we denoted $\alpha_{(1)}\otimes\alpha_{(2)}
=\mu^*(\alpha)$. Furthermore, the dual $\eta^*(\alpha)=\alpha(1)$
of the unit satisfies the counit axiom
\begin{align*}
    ((\eta^*\otimes\mathrm{id})\mu^*(\alpha))(a)
    =&\alpha_{(1)}(1)\otimes\alpha_{(2)}(a)\\
    =&\alpha(\mu(1\otimes a))\\
    =&\alpha(a)\\
    =&((\mathrm{id}\otimes\eta^*)\mu^*(\alpha))(a)
\end{align*}
for all $a\in\mathcal{A}$.
\end{proof}
However, the second part of the lemma indicates that the duality of algebras and
coalgebras should be understood with a grain of salt. For a infinite-dimensional
algebra $\mathcal{A}$ it is known that $\mathcal{A}^*\otimes\mathcal{A}^*$ is a
proper subspace of $(\mathcal{A}\otimes\mathcal{A})^*$ and we can not expect
$\mathcal{A}^*$ to be a coalgebra in general. A way out of this problem is given
by a stronger notion of duality which is presented later in this section.
Besides the dual $\Bbbk$-module, there is another fundamental construction given
by the tensor product $\mathcal{C}\otimes\mathcal{D}$ of coalgebras. It is
a coalgebra with coproduct 
$$
\Delta_{\mathcal{C}\otimes\mathcal{D}}(c\otimes d)
=(\mathrm{id}_\mathcal{C}\otimes\tau_{\mathcal{C},\mathcal{D}}
\otimes\mathrm{id}_\mathcal{D})
(\Delta_\mathcal{C}\otimes\Delta_\mathcal{D})(c\otimes d)
=(c_{(1)}\otimes d_{(1)})\otimes(c_{(2)}\otimes d_{(2)})
$$
and counit $\epsilon_{\mathcal{C}\otimes\mathcal{D}}
=\epsilon_\mathcal{C}\otimes\epsilon_\mathcal{D}$. Remark the duality of
this construction to the tensor product of two algebras (see eq.(\ref{AlgTimesAlg})).
Furthermore, any commutative ring $\Bbbk$ is a $\Bbbk$-coalgebra
with coproduct $\Delta_\Bbbk(\lambda)=\lambda\cdot(1\otimes 1)$
and counit $\epsilon_\Bbbk=\mathrm{id}_\Bbbk$. In fact, coassociativity
is trivial and counitality follows from the isomorphism
$\Bbbk\otimes\Bbbk\cong\Bbbk$.
We introduce an algebra which is useful in the theory of quantum groups. In fact,
it is an essential tool in the subsequent proofs of this section.
\begin{lemma}[Convolution Algebra]\label{lemma04}
Consider a $\Bbbk$-algebra $(\mathcal{A},\mu,\eta)$ and a $\Bbbk$-coalgebra
$(\mathcal{C},\Delta,\epsilon)$.
Then, the $\Bbbk$-linear maps $\mathrm{Hom}_\Bbbk(\mathcal{C},\mathcal{A})$
from $\mathcal{C}$ to $\mathcal{A}$ form an algebra with associative
product given by
\begin{equation}
    f\star g=\mu\circ(f\otimes g)\circ\Delta
\end{equation}
for all $f,g\in\mathrm{Hom}_\Bbbk(\mathcal{C},\mathcal{A})$
and with unit given by
$
\eta\circ\epsilon.
$
\end{lemma}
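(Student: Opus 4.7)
The plan is to verify directly that $(\mathrm{Hom}_\Bbbk(\mathcal{C},\mathcal{A}),\star,\eta\circ\epsilon)$ satisfies the axioms of an associative unital $\Bbbk$-algebra, by exploiting the duality between the defining diagrams of $\mathcal{C}$ and $\mathcal{A}$. First I would note that $\star$ is manifestly $\Bbbk$-bilinear, since it is built out of compositions and tensor products of $\Bbbk$-linear maps, so that $\star$ indeed defines a $\Bbbk$-linear map $\mathrm{Hom}_\Bbbk(\mathcal{C},\mathcal{A})\otimes\mathrm{Hom}_\Bbbk(\mathcal{C},\mathcal{A})\to\mathrm{Hom}_\Bbbk(\mathcal{C},\mathcal{A})$.

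For associativity, the plan is to write out both $(f\star g)\star h$ and $f\star(g\star h)$ applied to an element $c\in\mathcal{C}$ using Sweedler's notation, obtaining $\mu((f\star g)(c_{(1)})\otimes h(c_{(2)}))$ and $\mu(f(c_{(1)})\otimes (g\star h)(c_{(2)}))$ respectively. Expanding one further level and using coassociativity $c_{(1)(1)}\otimes c_{(1)(2)}\otimes c_{(2)}=c_{(1)}\otimes c_{(2)(1)}\otimes c_{(2)(2)}$ together with associativity of $\mu$, both sides collapse to $\mu\circ(\mu\otimes\mathrm{id})\circ(f\otimes g\otimes h)\circ(\Delta\otimes\mathrm{id})\circ\Delta$. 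Equivalently, at the diagrammatic level, $(f\star g)\star h=\mu\circ(\mu\otimes\mathrm{id})\circ(f\otimes g\otimes h)\circ(\Delta\otimes\mathrm{id})\circ\Delta$ while $f\star(g\star h)=\mu\circ(\mathrm{id}\otimes\mu)\circ(f\otimes g\otimes h)\circ(\mathrm{id}\otimes\Delta)\circ\Delta$, and the two agree by the first equation in (\ref{eq01}) and the associativity of $\mu$.

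For unitality, I would compute $(\eta\circ\epsilon)\star f$ directly: unfolding the definition gives $\mu\circ((\eta\circ\epsilon)\otimes f)\circ\Delta=\mu\circ(\eta\otimes\mathrm{id})\circ(\epsilon\otimes\mathrm{id})\circ(\mathrm{id}\otimes f)\circ\Delta$. Using the left counitality $(\epsilon\otimes\mathrm{id})\circ\Delta=\mathrm{id}$ (after identifying $\Bbbk\otimes\mathcal{C}\cong\mathcal{C}$) and the left unit axiom $\mu\circ(\eta\otimes\mathrm{id})=\mathrm{id}$ from (\ref{eq28}) reduces this to $f$. The identity $f\star(\eta\circ\epsilon)=f$ follows symmetrically from the right counitality and right unit axiom.

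The computations are entirely routine; the only mild subtlety is bookkeeping the natural identifications $\Bbbk\otimes\mathcal{A}\cong\mathcal{A}\cong\mathcal{A}\otimes\Bbbk$ (and likewise for $\mathcal{C}$) so that $(\eta\circ\epsilon)\star f$ lands in $\mathrm{Hom}_\Bbbk(\mathcal{C},\mathcal{A})$ rather than in a formally distinct tensor space. No step presents a genuine obstacle; the statement is essentially the dualization, at the level of the Hom-set, of the fact that the tensor product of an algebra and a coalgebra admits compatible algebra data.
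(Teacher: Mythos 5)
Your proposal is correct and follows essentially the same route as the paper's proof: both verify associativity by expanding $(f\star g)\star h$ and $f\star(g\star h)$ on an element $c$ in Sweedler's notation and invoking coassociativity of $\Delta$ together with associativity of $\mu$, and both verify unitality via the counit and unit axioms. The only cosmetic difference is that you additionally record the diagrammatic form of the computation, which the paper leaves implicit.
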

The algebra $(\mathrm{Hom}_\Bbbk(\mathcal{C},\mathcal{A}),\star,
\eta\circ\epsilon)$ is said
to be the \textit{convolution algebra} and $\star$ the \textit{convolution
product}.
Using Sweedler's notation and omitting the product in $\mathcal{A}$ the
convolution product reads $(f\star g)(c)=f(c_{(1)})g(c_{(2)})$ for all
$f,g\in\mathrm{Hom}_\Bbbk(\mathcal{C},\mathcal{A})$ and $c\in\mathcal{C}$.
As a reference consider \cite{Ka95}~Prop.~III.3.1.(a).
Setting $\mathcal{A}=\Bbbk$ we recover the situation of the first
part of Lemma~\ref{lemma02}.
\begin{proof}
Let $f,g,h\in\mathrm{Hom}_\Bbbk(\mathcal{C},\mathcal{A})$ and
$c\in\mathcal{C}$ be arbitrary.
As concatenation of $\Bbbk$-(bi)linear maps, $\star$ and $\eta\circ\epsilon$
are $\Bbbk$-(bi)linear and 
$f\star g,\eta\circ\epsilon\in\mathrm{Hom}_\Bbbk(\mathcal{C},\mathcal{A})$.
Then
\begin{align*}
    ((f\star g)\star h)(c)
    =&(f(c_{(1)(1)})g(c_{(1)(2)}))h(c_{(2)})\\
    =&f(c_{(1)})(g(c_{(2)(1)})h(c_{(2)(2)}))\\
    =&(f\star(g\star h))(c)
\end{align*}
proves that $\star$ is associative and
\begin{align*}
    ((\eta\circ\epsilon)\star f)(c)
    =&\epsilon(c_{(1)})f(c_{(2)})\\
    =&f(\epsilon(c_{(1)})c_{(2)})\\
    =&f(c)\\
    =&(f\star(\eta\circ\epsilon))(c)
\end{align*}
shows that $\eta\circ\epsilon$ is a unit, concluding the proof of the lemma.
\end{proof}
Focusing again on our motivating examples we
realize that their coalgebra structures are not independent from their algebra
structures. In fact, $\Delta$ and $\epsilon$ are algebra homomorphisms and
$\mu$ and $\eta$ are coalgebra homomorphisms, where we endow the tensor
product and $\Bbbk$ with the corresponding (co)algebra structure.
This indicates that we are not finished in formalizing our examples.
\begin{definition}[Bialgebra]\label{def03}
A $\Bbbk$-module $\mathcal{B}$ is said to be a \textit{bialgebra} if it is an
algebra and a coalgebra such that the coproduct and the counit are
algebra homomorphisms and the product and unit are coalgebra homomorphisms.
A homomorphism of bialgebras is an algebra
homomorphism, which is a coalgebra homomorphism in addition. The category
of bialgebras is denoted by ${}_\Bbbk\mathcal{B}$.
\end{definition}
In Section~\ref{SectionHopfAlgebraModules} we connect the notion
of bialgebra with properties of its representation theory.
The conditions on an algebra $(\mathcal{B},\mu,\eta)$ with coalgebra
structures $\Delta$ and $\epsilon$ to be a bialgebra is depicted
in the commutativity of
\begin{equation}\label{eq22}
\begin{tikzcd}
\mathcal{B}\otimes\mathcal{B} 
\arrow{r}{\mu} 
\arrow{d}[swap]{\Delta\otimes\Delta}
& \mathcal{B} \arrow{r}{\Delta}
& \mathcal{B}\otimes\mathcal{B} \\
\mathcal{B}\otimes\mathcal{B}\otimes\mathcal{B}\otimes\mathcal{B} 
\arrow{rr}{\mathrm{id}_\mathcal{B}\otimes
\tau_{\mathcal{B}\otimes\mathcal{B}}\otimes\mathrm{id}_\mathcal{B}}
&
& \mathcal{B}\otimes\mathcal{B}\otimes\mathcal{B}\otimes\mathcal{B}
\arrow{u}[swap]{\mu\otimes\mu}
\end{tikzcd},
\begin{tikzcd}
\Bbbk \arrow{r}{\eta} \arrow{d}[swap]{\cong}
& \mathcal{B}\arrow{d}{\Delta} \\
\Bbbk\otimes\Bbbk \arrow{r}{\eta\otimes\eta}
& \mathcal{B}\otimes\mathcal{B}
\end{tikzcd},
\end{equation}
\begin{equation}\label{eq23}
\begin{tikzcd}
\mathcal{B}\otimes\mathcal{B} \arrow{r}{\mu}
\arrow{d}[swap]{\epsilon\otimes\epsilon}
& \mathcal{B} \arrow{d}{\epsilon} \\
\Bbbk\otimes\Bbbk \arrow{r}{\cong}
& \Bbbk
\end{tikzcd}
\text{ and }
\begin{tikzcd}
\Bbbk \arrow{r}{\eta} \arrow{rd}[swap]{\mathrm{id}_\Bbbk}
& \mathcal{B} \arrow{d}{\epsilon} \\
 & \Bbbk
\end{tikzcd}.
\end{equation}
Remark that it is sufficient to demand
$\mu$ and $\eta$ to be coalgebra homomorphisms or $\Delta$ and $\epsilon$
to be algebra homomorphisms in Definition~\ref{def03},
which is clear from the symmetry of the diagrams (\ref{eq22}) and (\ref{eq23}),
see also \cite{ChPr94}~Sec.~4.1 Rem.~1.
\begin{lemma}
Let $(\mathcal{B},\mu,\eta,\Delta,\epsilon)$ be a $\Bbbk$-algebra and
a $\Bbbk$-coalgebra. Then $\mu$ and $\eta$ are coalgebra homomorphisms
if and only if $\Delta$ and $\epsilon$ are algebra homomorphisms.
\end{lemma}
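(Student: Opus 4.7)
The plan is to unpack each of the four conditions on both sides of the equivalence into explicit equalities of $\Bbbk$-linear maps, and then observe that each condition on one side is literally the same equation as a condition on the other side—no nontrivial argument is needed, only careful bookkeeping using the (co)algebra structures on $\mathcal{B}\otimes\mathcal{B}$ and on $\Bbbk$.

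More precisely, I would first spell out what it means for $\mu$ and $\eta$ to be coalgebra homomorphisms. Using the coalgebra structure on $\mathcal{B}\otimes\mathcal{B}$, namely
\[
\Delta_{\mathcal{B}\otimes\mathcal{B}}=(\mathrm{id}\otimes\tau_{\mathcal{B},\mathcal{B}}\otimes\mathrm{id})\circ(\Delta\otimes\Delta),\qquad \epsilon_{\mathcal{B}\otimes\mathcal{B}}=\epsilon\otimes\epsilon,
\]
and the coalgebra structure on $\Bbbk$ given by $\Delta_\Bbbk(\lambda)=\lambda(1\otimes 1)$ and $\epsilon_\Bbbk=\mathrm{id}_\Bbbk$, the statement that $\mu$ is a coalgebra homomorphism amounts to the two equalities
\[
\Delta\circ\mu=(\mu\otimes\mu)\circ(\mathrm{id}\otimes\tau_{\mathcal{B},\mathcal{B}}\otimes\mathrm{id})\circ(\Delta\otimes\Delta),\qquad \epsilon\circ\mu=\mu_\Bbbk\circ(\epsilon\otimes\epsilon),
\]
while $\eta$ being a coalgebra homomorphism amounts to $\Delta\circ\eta=(\eta\otimes\eta)\circ\Delta_\Bbbk$ and $\epsilon\circ\eta=\mathrm{id}_\Bbbk$.

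Next, I would unpack the condition that $\Delta$ and $\epsilon$ are algebra homomorphisms, using the algebra structure on $\mathcal{B}\otimes\mathcal{B}$ given by $\mu_{\mathcal{B}\otimes\mathcal{B}}=(\mu\otimes\mu)\circ(\mathrm{id}\otimes\tau_{\mathcal{B},\mathcal{B}}\otimes\mathrm{id})$ with unit $1\otimes 1$. This gives: $\Delta\circ\mu=\mu_{\mathcal{B}\otimes\mathcal{B}}\circ(\Delta\otimes\Delta)$, $\Delta\circ\eta=\eta\otimes\eta$ (viewed as a map $\Bbbk\to\mathcal{B}\otimes\mathcal{B}$), $\epsilon\circ\mu=\mu_\Bbbk\circ(\epsilon\otimes\epsilon)$, and $\epsilon\circ\eta=\mathrm{id}_\Bbbk$.

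Matching the two lists, the four equations encoding \textquotedblleft$\mu$ and $\eta$ are coalgebra homomorphisms\textquotedblright\ coincide, one-for-one, with the four equations encoding \textquotedblleft$\Delta$ and $\epsilon$ are algebra homomorphisms\textquotedblright. This is precisely the symmetry visible in the diagrams of equations~(\ref{eq22}) and~(\ref{eq23}): each diagram simultaneously expresses compatibility in both directions. There is no genuine obstacle here; the only subtlety is making sure the flip $\tau_{\mathcal{B},\mathcal{B}}$ is placed identically in the definitions of $\Delta_{\mathcal{B}\otimes\mathcal{B}}$ and $\mu_{\mathcal{B}\otimes\mathcal{B}}$, so that the two quadruple composites on $\mathcal{B}^{\otimes 4}$ really do agree as maps. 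Once that identification is recorded, the lemma follows immediately.
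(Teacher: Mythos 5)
Your proposal is correct and is exactly the argument the paper has in mind: the paper gives no explicit proof, merely remarking that the lemma is "clear from the symmetry of the diagrams (\ref{eq22}) and (\ref{eq23})", and your careful unpacking of the four conditions on each side into the same four equations on $\mathcal{B}^{\otimes 2}$, $\mathcal{B}^{\otimes 4}$ and $\Bbbk$ is precisely what that symmetry amounts to. Your attention to placing the flip $\tau_{\mathcal{B},\mathcal{B}}$ identically in $\Delta_{\mathcal{B}\otimes\mathcal{B}}$ and $\mu_{\mathcal{B}\otimes\mathcal{B}}$ is the one point worth recording, and you have it right.
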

We did not abstract the map $S$ yet, which becomes the main character
in the following definition, finally completing the notion of Hopf algebra.
\begin{definition}[Hopf Algebra]
A $\Bbbk$-bialgebra $(H,\mu,\eta,\Delta,\epsilon)$ is said to be a
$\Bbbk$-\textit{Hopf algebra} if there is a
$\Bbbk$-linear bijection $S\colon H\rightarrow H$ such that (\ref{eq02}) holds
and we call $S$ an antipode of $H$ in that case.
A bialgebra homomorphism between Hopf algebras is said to be a Hopf algebra
homomorphism if it intertwines the antipodes in addition.
We denote the category of Hopf algebras by ${}_\Bbbk\mathcal{H}$.
\end{definition}
In the following we often drop the reference to the commutative ring $\Bbbk$
and simply refer to Hopf algebras, etc.
Remark that there are slightly weaker definitions of Hopf algebra, not
assuming the antipode to have an inverse (see \cite{Ka95,Ma95,Mo93}).
We follow the convention of \cite{ChPr94}, arguing that in all
examples which are relevant for us the antipode is invertible
and we do not want to state this as an additional condition throughout
the thesis. 
The \textit{antipode axioms} (\ref{eq02}) can be reformulated in
pictorial language by the commutativity of
\begin{equation*}
\begin{tikzcd}
& H\otimes H \arrow{rr}{S\otimes\mathrm{id}_H} & 
& H\otimes H \arrow{rd}{\mu} & \\
H \arrow{ru}{\Delta} \arrow{rd}[swap]{\Delta} \arrow{rr}{\epsilon} & 
& \Bbbk \arrow{rr}{\eta} & & H \\
& H\otimes H \arrow{rr}{\mathrm{id}_H\otimes S} & 
& H\otimes H \arrow{ru}[swap]{\mu} &
\end{tikzcd}.
\end{equation*}
Besides $\mathscr{U}\mathfrak{g}$ and $\mathbb{K}[G]$
there are further interesting examples of Hopf algebras taken from
\cite{ChPr94}~Sec.~4.1~B and \cite{Mo93}~Ex.~1.5.6, respectively.
\begin{example}\label{exa01}
\begin{enumerate}
\item[i.)] Let $G$ be a finite group with neutral element $e\in G$ and
consider the $\Bbbk$-module $\mathcal{F}(G)$ of functions on
$G$ with values in $\Bbbk$. It is a commutative $\Bbbk$-algebra,
where the product $F_1\cdot F_2$ of two functions $F_1,F_2\in\mathcal{F}(G)$ is
defined by
$$
(F_1\cdot F_2)(g)=F_1(g)F_2(g)
$$
for all $g\in G$ and with unit function defined by
$G\ni g\mapsto 1\in\Bbbk$. We further define two $\Bbbk$-linear maps
$\Delta\colon\mathcal{F}(G)\rightarrow\mathcal{F}(G\times G)$ and
$\epsilon\colon\mathcal{F}(G)\rightarrow\Bbbk$ as
$\Delta(F)(g,h)=F(gh)$ and $\epsilon(F)=F(e)$ for all
$F\in\mathcal{F}(G)$ and $g,h\in G$.
Note that there is an isomorphism
$\mathcal{F}(G)\otimes\mathcal{F}(G)\cong\mathcal{F}(G\times G)$
of $\Bbbk$-modules given by
$$
F_1\otimes F_2\mapsto\widehat{F_1\otimes F_2}
\colon G\times G\ni(g,h)\mapsto F_1(g)F_2(h)\in\Bbbk.
$$
Using this identification it follows that
$$
((\Delta\otimes\mathrm{id})\Delta(F))(g,h,\ell)
=F((gh)\ell)=F(g(h\ell))
=((\mathrm{id}\otimes\Delta)\Delta(F))(g,h,\ell)
$$
and
$
((\epsilon\otimes\mathrm{id})\Delta(F))(g)
=F(eg)=F(g)=F(ge)
=((\mathrm{id}\otimes\epsilon)\Delta(F))(g)
$
for all $F\in\mathcal{F}(G)$ and $g,h,\ell\in G$. Furthermore
$$
\Delta(F_1F_2)(g,h)=(F_1F_2)(gh)=F_1(gh)F_2(gh)=(\Delta(F_1)\Delta(F_2))(g,h),
$$
$\Delta(1)(g,h)=1$,
$\epsilon(F_1F_2)=(F_1F_2)(e)=F_1(e)F_2(e)=\epsilon(F_1)\epsilon(F_2)$
and $\epsilon(1)=1$
hold for all $F_1,F_2\in\mathcal{F}(G)$ and $g,h\in G$, proving that
$(\mathcal{F}(G),\Delta,\epsilon)$ is a bialgebra. Finally, one defines
an antipode on $\mathcal{F}(G)$ as the $\Bbbk$-linear map
$S\colon\mathcal{F}(G)\rightarrow\mathcal{F}(G)$ such that
$S(F)(g)=F(g^{-1})$ for all $F\in\mathcal{F}(G)$ and $g\in G$. In fact
$$
(S(F_{(1)})F_{(2)})(g)=F(g^{-1}g)=F(e)
=\epsilon(F)1
=(F_{(1)}S(F_{(2)}))(g)
$$
for all $F\in\mathcal{F}(G)$ and $g\in G$, where we used Sweedler's notation
for the coproduct. This describes the Hopf algebra $\mathcal{F}(G)$ of
$\Bbbk$-valued functions on $G$. It is cocommutative if and only if
$G$ is Abelian. The same computations hold if $G$ is an
affine algebraic group over a field. If $G$ is a compact topological group,
the finite-dimensional real representations $\mathrm{Rep}(G)$ of $G$ are not
only a dense subalgebra of $\mathcal{F}(G)$ but even a Hopf algebra, since
$\mathrm{Rep}(G\times G)\cong\mathrm{Rep}(G)\otimes\mathrm{Rep}(G)$.
This is called the Hopf algebra of representative functions on $G$.
Note that in general $\mathcal{F}(G)$ is not a Hopf algebra for a
compact topological group $G$;

\item[ii.)] As an algebra, Sweedler's Hopf algebra $H_4$ is generated by
a unit element $1$ and three elements $g,x$ and $gx$ such that the relations
$$
g^2=1,~
x^2=0
\text{ and }
xg=-gx
$$
hold. The coproduct and counit are defined on generators by
$$
\Delta(g)=g\otimes g,~
\Delta(x)=x\otimes 1+g\otimes x,~
\epsilon(g)=1
\text{ and }
\epsilon(x)=0,
$$
respectively, while the antipode reads $S(g)=g$ and $S(x)=-gx$
on generators. $g$ is group-like, while $x$ is $(1,g)$-primitive.
In fact, all relations are easily verified on generators.
Obviously, $H_4$ is neither commutative nor cocommutative. It is the smallest
Hopf algebra with this property;
\end{enumerate}
\end{example}
Let $G$ be a finite group and consider the two Hopf algebras
$\mathcal{F}(G)$ and $\mathbb{K}[G]$ arising from it. There is a
non-degenerate dual pairing 
$$
\langle\cdot,\cdot\rangle
\colon\mathcal{F}(G)\otimes\mathbb{K}[G]\rightarrow\mathbb{K}
$$
between them and in a remarkable way it mirrors the algebra structure
of $\mathcal{F}(G)$ with the coalgebra structure of $\mathbb{K}[G]$
and vice versa. Namely,
\begin{align*}
    \langle F_1F_2,g\rangle
    =&F_1(g)F_2(g)=\langle F_1\otimes F_2,g_{(1)}\otimes g_{(2)}\rangle,\\
    \langle 1,g\rangle
    =&1=\epsilon(g),\\
    \langle F,gh\rangle
    =&F(gh)=\langle F_{(1)}\otimes F_{(2)},g\otimes h\rangle \text{ and} \\
    \langle F,e\rangle
    =&F(e)=\epsilon(F)
\end{align*}
for all $F,F_1,F_2\in\mathcal{F}(G)$ and $g,h\in G$. Furthermore
the pairing mirrors the antipodes, i.e.
$\langle S(F),g\rangle=F(g^{-1})=\langle F,S(g)\rangle$. Recalling 
Lemma~\ref{lemma02} for a finite-dimensional Hopf algebra $H$
over a field $\mathbb{K}$ encourages this
duality: the bialgebra structure of $H$ is the transpose of the
bialgebra structure of $H^*$ via the dual pairing of (finite-dimensional)
vector spaces. It is not hard to prove that the transpose of an antipode
on $H$ leads to an antipode on $H^*$ and vice versa. Let us formalize
these observations.
\begin{definition}[Dual Pair of Hopf Algebras]
Consider two Hopf algebras $H$ and $H'$ over a commutative ring $\Bbbk$.
They are called \textit{dual pair of Hopf algebras} if there is a
$\Bbbk$-bilinear map $\langle\cdot,\cdot\rangle\colon H'\otimes H
\rightarrow\Bbbk$ such that
\begin{align*}
    \langle ab,\xi\rangle
    =&\langle a\otimes b,\xi_{(1)}\otimes\xi_{(2)}\rangle,\\
    \langle 1,\xi\rangle
    =&\epsilon(\xi),\\
    \langle a,\xi\chi\rangle
    =&\langle a_{(1)}\otimes a_{(2)},\xi\otimes\chi\rangle,\\
    \langle a,1\rangle
    =&\epsilon(a)
\end{align*}
and
$$
\langle S(a),\xi\rangle
=\langle a,S(\xi)\rangle
$$
for all $\xi,\chi\in H$ and $a,b\in H'$. They are called
\textit{strict dual pair of Hopf algebras} if the pairing 
$\langle\cdot,\cdot\rangle$ is \textit{non-degenerate} in addition, i.e.
if $\langle a,\xi\rangle=0$ for all $\xi\in H$ implies $a=0$ and
$\langle a,\xi\rangle=0$ for all $a\in H'$ implies $\xi=0$.
\end{definition}
We already observed that $(\mathbb{K}[G],\mathcal{F}(G))$, where
$G$ is a finite group, and $(H,H^*)$ for a finite-dimensional
$\mathbb{K}$-Hopf algebra $H$, are examples of strict dual pairs of
Hopf algebras. However, in the infinite-dimensional setting or more
general for commutative rings $\Bbbk$ there are issues as already
indicated by Lemma~\ref{lemma02}. A solution helping to avoid these
problems is given by not considering the whole dual $\Bbbk$-module
$\mathrm{Hom}_\Bbbk(H,\Bbbk)$ of an arbitrary $\Bbbk$-Hopf algebra
$H$, but rather its \textit{finite-dual}
$$
H^\circ=\{\alpha\in H^*~|~\exists\text{ algebra ideal }
I\subseteq H\text{ such that }\dim(H/I)<\infty\text{ and }\alpha(I)=0\}.
$$
It is a Hopf algebra with respect to the transposed Hopf algebra structure
(c.f. \cite{Mo93}~Thm.~9.1.3). In particular, $(H,H^\circ)$ is a dual pair
of Hopf algebras.
After this short excursus on duality we return to discuss general properties
of Hopf algebras. In particular we intend to prove additional features of
antipodes, beginning with uniqueness, c.f. \cite{ChPr94}~Sec.~4.1 Rem.~4.
\begin{lemma}\label{lemma03}
Let $(\mathcal{B},\mu,\eta,\Delta,\epsilon)$ be a $\Bbbk$-bialgebra.
A $\Bbbk$-linear bijection $S\colon H\rightarrow H$
is an antipode for $H$ if and only if $S$ is the convolution inverse
of the identity $\eta\circ\epsilon$ in the convolution algebra
$\mathrm{Hom}_\Bbbk(H,H)$.
\end{lemma}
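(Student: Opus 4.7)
The strategy is to observe that the antipode axioms (\ref{eq02}) are, after translation via Lemma~\ref{lemma04}, literally the equations defining a two-sided $\star$-inverse of $\mathrm{id}_H$ in the convolution algebra $(\mathrm{Hom}_\Bbbk(H,H),\star,\eta\circ\epsilon)$. Since this translation is an equivalence of statements, both implications of the biconditional follow simultaneously.

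First I would recall from Lemma~\ref{lemma04} that for any $\Bbbk$-linear map $S\colon H\rightarrow H$ the compositions $\mu\circ(S\otimes\mathrm{id}_H)\circ\Delta$ and $\mu\circ(\mathrm{id}_H\otimes S)\circ\Delta$ are by definition the convolution products $S\star\mathrm{id}_H$ and $\mathrm{id}_H\star S$, respectively, while $\eta\circ\epsilon$ is the unit of this algebra. Under this identification, the antipode axioms (\ref{eq02}) read exactly
\begin{equation*}
    S\star\mathrm{id}_H=\eta\circ\epsilon=\mathrm{id}_H\star S,
\end{equation*}
which is the assertion that $S$ is a two-sided $\star$-inverse of $\mathrm{id}_H$. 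Reading this equivalence in each direction proves both implications of the lemma.

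There is essentially no obstacle once Lemma~\ref{lemma04} is in place: the content of the statement is really the observation that the convolution product repackages the antipode axioms into a single algebraic relation $S \star \mathrm{id}_H = \mathrm{id}_H \star S = \eta \circ \epsilon$. A useful immediate consequence, which is presumably what motivates stating the lemma here, is uniqueness of the antipode, since two-sided inverses in an associative algebra are unique whenever they exist.
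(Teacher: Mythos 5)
Your proposal is correct and follows exactly the paper's argument: both identify the antipode axioms (\ref{eq02}), via Lemma~\ref{lemma04}, with the two equations $S\star\mathrm{id}_H=\eta\circ\epsilon=\mathrm{id}_H\star S$ stating that $S$ is a two-sided convolution inverse of $\mathrm{id}_H$, so the biconditional is immediate. The remark on uniqueness is likewise what the paper draws as a corollary right after the lemma.
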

\begin{proof}
By Lemma~\ref{lemma04} the $\Bbbk$-linear maps 
$\mathrm{Hom}_\Bbbk(H,H)$ from the coalgebra $(H,\Delta,\epsilon)$
to the algebra $(H,\mu,\eta)$ form an associative algebra with
respect to the convolution product $\star$ and with unit $\eta\circ\epsilon$.
Let $S\colon H\rightarrow H$ be a $\Bbbk$-linear bijection. Then
$S\star\mathrm{id}=\eta\circ\epsilon$ if and only if
$S(\xi_{(1)})\xi_{(2)}=\epsilon(\xi)1$ for all $\xi\in H$ and 
$\mathrm{id}\star S=\eta\circ\epsilon$ if and only 
$\xi_{(1)}S(\xi_{(2)})=\epsilon(\xi)1$ for all $\xi\in H$.
\end{proof}
Since the inverse element of an algebra element is unique,
Lemma~\ref{lemma03} implies the following statement.
\begin{corollary}
Let $(H,\mu,\eta,\Delta,\epsilon,S)$ be a $\Bbbk$-Hopf algebra.
Then the underlying bialgebra structure of $H$ admits a unique
antipode $S$.
\end{corollary}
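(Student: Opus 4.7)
The plan is to leverage Lemma~\ref{lemma03} directly: it recasts the antipode axioms as the statement that $S$ is a two-sided inverse of $\mathrm{id}_H$ in the convolution algebra $\mathrm{Hom}_\Bbbk(H,H)$ (whose unit is $\eta\circ\epsilon$). Once the question is phrased in these terms, uniqueness of $S$ reduces to the standard fact that in any associative unital algebra, a two-sided inverse of a given element, if it exists, is unique.

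Concretely, I would proceed as follows. Suppose $S$ and $S'$ are both antipodes for the bialgebra $(H,\mu,\eta,\Delta,\epsilon)$. By Lemma~\ref{lemma03}, both are convolution inverses of $\mathrm{id}_H$ in $\mathrm{Hom}_\Bbbk(H,H)$; that is,
\begin{equation*}
S\star\mathrm{id}_H=\eta\circ\epsilon=\mathrm{id}_H\star S,\qquad
S'\star\mathrm{id}_H=\eta\circ\epsilon=\mathrm{id}_H\star S'.
\end{equation*}
Then the usual one-line argument applies: using associativity of $\star$ and the unit property of $\eta\circ\epsilon$, one computes
\begin{equation*}
S=S\star(\eta\circ\epsilon)=S\star(\mathrm{id}_H\star S')=(S\star\mathrm{id}_H)\star S'=(\eta\circ\epsilon)\star S'=S'.
\end{equation*}
This yields $S=S'$, so the antipode is unique.

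There is essentially no obstacle here, since the real content has already been absorbed into Lemma~\ref{lemma03}: once antipodes are identified with convolution inverses of $\mathrm{id}_H$, the corollary is just the standard uniqueness-of-inverses lemma in the associative unital algebra $(\mathrm{Hom}_\Bbbk(H,H),\star,\eta\circ\epsilon)$ guaranteed by Lemma~\ref{lemma04}. The only mild subtlety worth noting is that the definition of Hopf algebra in this thesis requires $S$ to be a \emph{bijection}, while the convolution-inverse characterization does not a priori care about bijectivity; however, this plays no role in the uniqueness argument, which is purely about the convolution identities, so no further discussion is needed.
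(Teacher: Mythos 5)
Your proposal is correct and follows exactly the paper's route: the paper also derives uniqueness immediately from Lemma~\ref{lemma03} together with the uniqueness of inverses in the convolution algebra, the only difference being that you spell out the standard one-line computation $S=S\star(\mathrm{id}_H\star S')=(S\star\mathrm{id}_H)\star S'=S'$ explicitly. Your closing remark about bijectivity being irrelevant to the uniqueness argument is accurate and harmless.
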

Moreover, \textit{the} antipode of a Hopf algebra respects the
underlying bialgebra structure in the sense that it is an
\textit{anti-bialgebra homomorphism}, i.e. a bialgebra homomorphism in a
contravariant way. In detail, $S\colon H\rightarrow H$ is an 
anti-algebra homomorphism if
\begin{equation}\label{eq16}
    S(\xi\chi)=S(\chi)S(\xi)
    \text{ and }S(1)=1
\end{equation}
hold for all $\xi,\chi\in H$ and it is an anti-coalgebra homomorphism if
\begin{equation}\label{eq17}
    S(\xi)_{(1)}\otimes S(\xi)_{(2)}
    =S(\xi_{(2)})\otimes S(\xi_{(1)})
    \text{ and }
    \epsilon(S(\xi))=\epsilon(\xi)
\end{equation}
hold for all $\xi\in H$.
Furthermore, the examples we mentioned suggest that the
inverse of the antipode is given by the antipode itself. Even if this
does not hold in general, it is the case for a huge class of Hopf algebras,
namely for commutative or cocommutative ones, including our examples
besides Example~\ref{exa01}~ii.).
Both statements are discussed in the following proposition
(c.f. \cite{Ka95}~Thm.~III.3.4. and \cite{Ma95}~Prop.~1.3.1).
\begin{proposition}\label{prop07}
Let $H$ be a $\Bbbk$-Hopf algebra. Then its antipode $S$ is an anti-bialgebra
homomorphism. If $H$ is either commutative or cocommutative
$S^2=\mathrm{id}$ follows.
\end{proposition}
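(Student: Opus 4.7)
The plan is to exploit the convolution algebra machinery of Lemma~\ref{lemma04} together with the uniqueness of two-sided inverses in any associative algebra. In each of the three statements I want to prove, I will exhibit two candidate maps and show that both serve as convolution inverses of a common third map with respect to a carefully chosen convolution algebra; uniqueness of inverses (which is the abstraction of Lemma~\ref{lemma03}) will then force them to agree.

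For the anti-algebra property I would work in $\mathrm{Hom}_\Bbbk(H\otimes H,H)$, where $H\otimes H$ carries its tensor-product coalgebra structure and the convolution unit is $\eta\circ(\epsilon\otimes\epsilon)$. Define three $\Bbbk$-linear maps $f,g,h\colon H\otimes H\to H$ by $f(\xi\otimes\chi)=S(\xi\chi)$, $g(\xi\otimes\chi)=\xi\chi$ and $h(\xi\otimes\chi)=S(\chi)S(\xi)$. A short computation using the fact that $\Delta$ is an algebra homomorphism (so that $\Delta(\xi\chi)=\xi_{(1)}\chi_{(1)}\otimes\xi_{(2)}\chi_{(2)}$) and the antipode axiom yields $f\star g=\eta\circ\epsilon_{H\otimes H}$, while the antipode axiom applied to $\chi$ together with the multiplicativity of $\epsilon$ gives $g\star h=\eta\circ\epsilon_{H\otimes H}$. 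Thus $g$ has $f$ as a left convolution inverse and $h$ as a right convolution inverse, so $f=h$, which is exactly $S(\xi\chi)=S(\chi)S(\xi)$. The identity $S(1)=1$ follows by applying the antipode axiom to $1\in H$ together with $\Delta(1)=1\otimes 1$ and $\epsilon(1)=1$.

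For the anti-coalgebra property I would dualize the above argument inside the convolution algebra $\mathrm{Hom}_\Bbbk(H,H\otimes H)$, where the coalgebra is $H$ and the algebra is $H\otimes H$, with convolution unit $(\eta\otimes\eta)\circ\epsilon$. Take $f(\xi)=\Delta(S(\xi))$, $g(\xi)=\Delta(\xi)$ and $h(\xi)=\tau\circ(S\otimes S)\circ\Delta(\xi)=S(\xi_{(2)})\otimes S(\xi_{(1)})$. Using once more that $\Delta$ is an algebra homomorphism, I compute $f\star g=(\eta\otimes\eta)\epsilon$ directly from the antipode axiom. The harder computation is $g\star h$: expanding with the four-fold Sweedler notation $\Delta^{(3)}(\xi)=\xi_{(1)}\otimes\xi_{(2)}\otimes\xi_{(3)}\otimes\xi_{(4)}$ gives $\xi_{(1)}S(\xi_{(4)})\otimes\xi_{(2)}S(\xi_{(3)})$, and I would collapse this by recognising $\xi_{(2)}\otimes\xi_{(3)}$ as the coproduct of a middle leg, applying the antipode axiom there to produce a $1$, and then using counitality to return to a single Sweedler index; the final expression is $\epsilon(\xi)(1\otimes 1)$. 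This identifies $f$ and $h$ as left and right inverses of $g$, respectively, giving $\Delta\circ S=\tau\circ(S\otimes S)\circ\Delta$. The identity $\epsilon\circ S=\epsilon$ then falls out by applying $\epsilon$ to the antipode axiom $\xi_{(1)}S(\xi_{(2)})=\epsilon(\xi)1$ and using $\epsilon(\xi_{(1)})S(\xi_{(2)})=S(\xi)$.

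For the statement $S^2=\mathrm{id}$ in the (co)commutative case I would work in $\mathrm{Hom}_\Bbbk(H,H)$ and show that $S^2$ is a one-sided convolution inverse of $S$. The computation is
\begin{equation*}
(S^2\star S)(\xi)=S^2(\xi_{(1)})S(\xi_{(2)})=S\bigl(\xi_{(2)}S(\xi_{(1)})\bigr),
\end{equation*}
using the anti-algebra property proven above. If $H$ is cocommutative then $\xi_{(2)}S(\xi_{(1)})=\xi_{(1)}S(\xi_{(2)})=\epsilon(\xi)1$; if $H$ is commutative then $\xi_{(2)}S(\xi_{(1)})=S(\xi_{(1)})\xi_{(2)}$, and applying $S$ and then the anti-algebra property converts this into $S(\xi_{(1)}\xi_{(2)})\cdot$... actually I would rather compute $S^2\star S$ by writing it as $S^2(\xi_{(1)})S(\xi_{(2)})=S(\xi_{(2)})S^2(\xi_{(1)})=S(S(\xi_{(1)})\xi_{(2)})=S(\epsilon(\xi)1)=\epsilon(\xi)1$. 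Either way $S^2\star S=\eta\circ\epsilon$, so $S^2$ agrees with the unique convolution inverse of $S$, which is $\mathrm{id}_H$ by Lemma~\ref{lemma03}. The main delicacy in the whole proof is the bookkeeping of iterated Sweedler indices in the anti-coalgebra computation; everything else is driven by the same mechanical pattern of convolution inverses.
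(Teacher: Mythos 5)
Your proof is correct and follows essentially the same route as the paper's: the same convolution algebras $\mathrm{Hom}_\Bbbk(H\otimes H,H)$ and $\mathrm{Hom}_\Bbbk(H,H\otimes H)$, the same candidate maps exhibited as one-sided convolution inverses of $\mu$ resp.\ $\Delta$, and the same uniqueness-of-inverse argument for the involutivity statement. The only (immaterial) deviation is that you verify $S^2\star S=\eta\circ\epsilon$ where the paper verifies $S\star S^2=\eta\circ\epsilon$; both suffice because the antipode axioms already make $\mathrm{id}_H$ a two-sided convolution inverse of $S$.
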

\begin{proof}
Consider the coalgebra $(H\otimes H,\Delta_{H\otimes H},
\epsilon_{H\otimes H})$ and the convolution algebra
$\mathrm{Hom}_\Bbbk(H\otimes H,H)$ with convolution product $\star$
and unit $\eta\circ\epsilon_{H\otimes H}$. It is an algebra according to
Lemma~\ref{lemma04}. We are going to prove the first equation of (\ref{eq16})
by defining the left- and right-hand side to be
$\Bbbk$-linear maps $f,g\colon H\otimes H\rightarrow H$, respectively,
and showing that $f\star h=\eta\circ\epsilon_{H\otimes H}
=h\star g$ for a $\Bbbk$-linear map $h\colon H\otimes H\rightarrow H$.
In fact this is sufficient, since the left and right inverse of an algebra
element coincide if both exist. For the first equation of (\ref{eq17}) we
use a similar strategy, considering the convolution algebra of
$\Bbbk$-linear maps from the coalgebra $(H,\Delta,\epsilon)$ to the algebra
$(H\otimes H,\mu_{H\otimes H},\eta_{H\otimes H})$.
We divide the proof in three parts.
\begin{enumerate}
\item[i.)] \textbf{$S$ is an anti-algebra homomorphism: }
define $f(\xi\otimes\chi)=S(\xi\chi)$ and $g(\xi\otimes\chi)=S(\chi)S(\xi)$
for all $\xi,\chi\in H$. Then, using that $\Delta$ and $\epsilon$ are algebra 
homomorphisms and the antipode properties, we obtain
\begin{align*}
    (f\star\mu)(\xi\otimes\chi)
    =&f(\xi_{(1)}\otimes\chi_{(1)})\mu(\xi_{(2)}\otimes\chi_{(2)})\\
    =&S(\xi_{(1)}\chi_{(1)})\xi_{(2)}\chi_{(2)}\\
    =&S((\xi\chi)_{(1)})(\xi\chi)_{(2)}\\
    =&\eta(\epsilon(\xi\chi))\\
    =&\eta(\epsilon(\xi)\epsilon(\chi))\\
    =&\eta(\epsilon_{H\otimes H}(\xi\otimes\chi))
\end{align*}
and
\begin{allowdisplaybreaks}
\begin{align*}
    (\mu\star g)(\xi\otimes\chi)
    =&\mu(\xi_{(1)}\otimes\chi_{(1)})g(\xi_{(2)}\otimes\chi_{(2)})\\
    =&\xi_{(1)}\chi_{(1)}S(\chi_{(2)})S(\xi_{(2)})\\
    =&\xi_{(1)}\eta(\epsilon(\chi))S(\xi_{(2)})\\
    =&\xi_{(1)}S(\xi_{(2)})\eta(\epsilon(\chi))\\
    =&\eta(\epsilon(\xi))\eta(\epsilon(\chi))\\
    =&\eta(\epsilon(\xi)\epsilon(\chi))\\
    =&\eta(\epsilon_{H\otimes H}(\xi\otimes\chi))
\end{align*}
\end{allowdisplaybreaks}
for all $\xi,\chi\in H$, implying that $f=g$. Furthermore,
$$
S(1)=S(1_{(1)})1_{(2)}=\eta(\epsilon(1))=1
$$
implies (\ref{eq16}).

\item[ii.)]\textbf{ $S$ is an anti-coalgebra homomorphism: }
define $f(\xi)=S(\xi)_{(1)}\otimes S(\xi)_{(2)}$ and
$g(\xi)=S(\xi_{(2)})\otimes S(\xi_{(1)})$ for all $\xi\in H$. Then
\begin{align*}
    (f\star\Delta)(\xi)
    =&\mu_{H\otimes H}(f(\xi_{(1)})\otimes\Delta(\xi_{(2)}))\\
    =&S(\xi_{(1)})_{(1)}\xi_{(2)(1)}\otimes S(\xi_{(1)})_{(2)}\xi_{(2)(2)}\\
    =&(S(\xi_{(1)})\xi_{(2)})_{(1)}\otimes(S(\xi_{(1)})\xi_{(2)})_{(2)}\\
    =&(\eta(\epsilon(\xi)))_{(1)}\otimes(\eta(\epsilon(\xi)))_{(2)}\\
    =&\epsilon(\xi)\eta(1)\otimes\eta(1)\\
    =&\eta_{H\otimes H}(\epsilon(\xi))
\end{align*}
and
\begin{align*}
    (\Delta\otimes g)(\xi)
    =&\xi_{(1)(1)}S(\xi_{(2)(2)})\otimes\xi_{(1)(2)}S(\xi_{(2)(1)})\\
    =&\xi_{(1)}S(\xi_{(2)(2)(2)})\otimes\xi_{(2)(1)}S(\xi_{(2)(2)(1)})\\
    =&\xi_{(1)}S(\xi_{(2)(2)})\otimes\xi_{(2)(1)(1)}S(\xi_{(2)(1)(2)})\\
    =&\xi_{(1)}S(\xi_{(2)(2)})\otimes\eta(\epsilon(\xi_{(2)(1)}))\\
    =&\xi_{(1)}S(\xi_{(2)})\otimes\eta(1)\\
    =&\eta(\epsilon(\xi))\otimes\eta(1)\\
    =&\eta_{H\otimes H}(\epsilon(\xi))
\end{align*}
imply $f=g$. Furthermore,
$$
\epsilon(S(\xi))
=\epsilon(S(\epsilon(\xi_{(1)})\xi_{(2)}))
=\epsilon(\xi_{(1)}S(\xi_{(2)}))
=\epsilon(\epsilon(\xi)1)
=\epsilon(\xi)
$$
for all $\xi\in H$, implying (\ref{eq17}).

\item[iii.)]\textbf{For a commutative or cocommutative $H$ the antipode is
an involution: } for all $\xi\in H$ we prove
\begin{align*}
    (S\star S^2)(\xi)
    =S(\xi_{(1)})S^2(\xi_{(2)})
    =S(S(\xi_{(2)})\xi_{(1)})
    =S(\eta(\epsilon(\xi)))
    =\eta(\epsilon(\xi)),
\end{align*}
using i.),
where the third equality holds if $H$ is commutative or cocommutative.
Since $S\star\mathrm{id}=\eta\circ\epsilon$ by the antipode
property we obtain $S^2=\mathrm{id}$, because the right inverse
of an algebra element is unique if it exists.
\end{enumerate}
This concludes the proof of the proposition.
\end{proof}
As a terminal observation we want to prove that
commutativity of a bialgebra homomorphism with the antipodes is in fact
a redundant condition in the definition of Hopf algebra homomorphism
(compare to \cite{ChPr94}~Sec.~4.1 Rem.~2).
\begin{lemma}
Let $\phi\colon H\rightarrow H'$ be a bialgebra homomorphism between
$\Bbbk$-Hopf algebras
$(H,\mu,\eta,\Delta,\epsilon,S)$ and $(H',\mu',\eta',\Delta',\epsilon',S')$.
Then $\phi_S=\phi\circ S$ and $\phi_{S'}=S'\circ\phi$ are $\Bbbk$-linear maps
which are convolution inverse to $\phi$. In particular
$\phi_S=\phi_{S'}$ and $\phi$ is a Hopf algebra homomorphism if and only
if it is a bialgebra homomorphism.
\end{lemma}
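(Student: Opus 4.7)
The plan is to exploit the convolution algebra $\mathrm{Hom}_\Bbbk(H,H')$ of $\Bbbk$-linear maps from the coalgebra $(H,\Delta,\epsilon)$ to the algebra $(H',\mu',\eta')$, introduced in Lemma~\ref{lemma04}. Since inverses in an associative unital algebra are unique whenever they exist, it suffices to prove that both $\phi_S=\phi\circ S$ and $\phi_{S'}=S'\circ\phi$ are two-sided convolution inverses of $\phi$ with respect to the unit $\eta'\circ\epsilon$. Equality $\phi_S=\phi_{S'}$ and therefore the commutativity $\phi\circ S=S'\circ\phi$ will then be automatic.

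First I would verify that $\phi_S$ is convolution inverse to $\phi$, using that $\phi$ is an \emph{algebra} homomorphism. Namely, for every $\xi\in H$,
\begin{equation*}
(\phi\star\phi_S)(\xi)
=\phi(\xi_{(1)})\phi(S(\xi_{(2)}))
=\phi(\xi_{(1)}S(\xi_{(2)}))
=\phi(\eta(\epsilon(\xi)))
=\eta'(\epsilon(\xi)),
\end{equation*}
where the last equality uses $\phi\circ\eta=\eta'$. The computation of $\phi_S\star\phi$ is symmetric and uses the other antipode axiom. Next I would verify that $\phi_{S'}$ is convolution inverse to $\phi$, this time using that $\phi$ is a \emph{coalgebra} homomorphism, so that $\phi(\xi)_{(1)}\otimes\phi(\xi)_{(2)}=\phi(\xi_{(1)})\otimes\phi(\xi_{(2)})$ and $\epsilon'\circ\phi=\epsilon$. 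Concretely,
\begin{equation*}
(\phi\star\phi_{S'})(\xi)
=\phi(\xi_{(1)})S'(\phi(\xi_{(2)}))
=\phi(\xi)_{(1)}S'(\phi(\xi)_{(2)})
=\eta'(\epsilon'(\phi(\xi)))
=\eta'(\epsilon(\xi)),
\end{equation*}
and analogously for $\phi_{S'}\star\phi$.

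Having established that $\phi_S$ and $\phi_{S'}$ are both two-sided inverses of $\phi$ in $\mathrm{Hom}_\Bbbk(H,H')$, uniqueness of the inverse in an associative unital algebra forces $\phi_S=\phi_{S'}$, i.e.\ $\phi\circ S=S'\circ\phi$. This shows that a bialgebra homomorphism between Hopf algebras automatically intertwines the antipodes; the converse implication is trivial from the definition of Hopf algebra homomorphism.

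There is no real obstacle: both computations are direct applications of Sweedler's notation, the antipode axioms~(\ref{eq02}), and the respective homomorphism properties of $\phi$. The only subtle point is bookkeeping, namely that the two parallel computations draw on \emph{different} halves of the bialgebra-homomorphism hypothesis (the algebra half for $\phi_S$, the coalgebra half for $\phi_{S'}$), so one must be careful to invoke the correct property at the correct step.
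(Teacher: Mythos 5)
Your proposal is correct and follows essentially the same route as the paper: both work in the convolution algebra $\mathrm{Hom}_\Bbbk(H,H')$, use the algebra-homomorphism half of the hypothesis for $\phi_S$ and the coalgebra-homomorphism half for $\phi_{S'}$, and conclude by uniqueness of inverses. The only (inessential) difference is that the paper is slightly more economical, checking just $\phi_S\star\phi$ and $\phi\star\phi_{S'}$ and invoking the fact that a left inverse and a right inverse must coincide, whereas you verify that each is a two-sided inverse.
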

\begin{proof}
The $\Bbbk$-linearity of $\phi_S$ and $\phi_{S'}$ is clear since they
are defined as a concatenation of $\Bbbk$-linear maps. For all $\xi\in H$
one obtains
\begin{align*}
    (\phi_S\star\phi)(\xi)
    =\phi(S(\xi_{(1)}))\phi(\xi_{(2)})
    =\phi(S(\xi_{(1)})\xi_{(2)})
    =\phi(\eta(\epsilon(\xi)))
    =\eta'(\epsilon(\xi))
\end{align*}
using that $\phi$ is an algebra homomorphism and
\begin{align*}
    (\phi\star\phi_{S'})(\xi)
    =\phi(\xi_{(1)})S'(\phi(\xi_{(2)}))
    =\phi(\xi)_{(1)}S'(\phi(\xi)_{(2)})
    =\eta'(\epsilon'(\phi(\xi)))
    =\eta'(\epsilon(\xi))
\end{align*}
using that $\phi$ is a coalgebra homomorphism,
implying $\phi_S=\phi_{S'}$ by the uniqueness of the inverse element.
This shows that commuting with the antipode is a redundant condition
for a Hopf algebra homomorphism.
\end{proof}
In the next section we focus on algebra representations
and characterize bialgebras and Hopf algebras via additional properties of the
category of representations of the underlying algebras. These considerations
further lead to the definition of quasi-triangular structures in 
Section~\ref{Sec2.3}.

\section{Hopf Algebra Modules}\label{SectionHopfAlgebraModules}

We introduce Hopf algebra modules and prove that they form a monoidal
category with monoidal structure given by the tensor product of
$\Bbbk$-modules. In fact it turns out that bialgebras are those
algebras whose category of representations is monoidal with
respect to the usual associativity and unit constraints, leading
to an important characterization of bialgebras. The antipode
of a Hopf algebra gives rise to an additional duality property on
the categorical level, however only for finitely generated projective
modules. All definitions and statements can also be found in
\cite{ChPr94}~Sec.~5.1, \cite{Ka95}~Sec.~XI.3 and
\cite{Ma95}~Sec.~9.1.

Consider a $\Bbbk$-algebra $(\mathcal{A},\mu,\eta)$
for a commutative ring $\Bbbk$. A $\Bbbk$-module $\mathcal{M}$
is said to be a \textit{left $\mathcal{A}$-module} if there exists a
$\Bbbk$-linear map $\lambda\colon\mathcal{A}\otimes\mathcal{M}
\rightarrow\mathcal{M}$ such that the diagrams
\begin{equation*}
\begin{tikzcd}
\mathcal{A}\otimes\mathcal{A}\otimes\mathcal{M}
\arrow{r}{\mathrm{id}_\mathcal{A}\otimes\lambda}
\arrow{d}[swap]{\mu\otimes\mathrm{id}_\mathcal{M}}
& \mathcal{A}\otimes\mathcal{M} \arrow{d}{\lambda} \\
\mathcal{A}\otimes\mathcal{M} \arrow{r}{\lambda}
& \mathcal{M}
\end{tikzcd}
\text{ and }
\begin{tikzcd}
\Bbbk\otimes\mathcal{M} \arrow{r}{\eta\otimes\mathrm{id}_\mathcal{M}}
\arrow{rd}[swap]{\cong}
& \mathcal{A}\otimes\mathcal{M} \arrow{d}{\lambda} \\
& \mathcal{M}
\end{tikzcd}
\end{equation*}
commute. The tuple $(\mathcal{M},\lambda)$ is called a
\textit{left representation of $\mathcal{A}$ on $\mathcal{M}$} and
$\lambda$ is said to be a \textit{left $\mathcal{A}$-module action} or
\textit{left $\mathcal{A}$-module structure}.
The left $\mathcal{A}$-modules form a category ${}_\mathcal{A}\mathcal{M}$
with morphisms given by \textit{left $\mathcal{A}$-module homomorphism},
where a $\Bbbk$-linear map $\phi\colon\mathcal{M}\rightarrow\mathcal{M}'$
between two left representations $(\mathcal{M},\lambda_\mathcal{M})$
and $(\mathcal{M}',\lambda_{\mathcal{M}'})$ of $\mathcal{A}$ is said to be a
left $\mathcal{A}$-module homomorphism if the diagram
\begin{equation*}
\begin{tikzcd}
\mathcal{A}\otimes\mathcal{M} \arrow{r}{\lambda_\mathcal{M}}
\arrow{d}[swap]{\mathrm{id}_\mathcal{A}\otimes\phi}
& \mathcal{M} \arrow{d}{\phi} \\
\mathcal{A}\otimes\mathcal{M}' \arrow{r}{\lambda_{\mathcal{M}'}}
& \mathcal{M}'
\end{tikzcd}
\end{equation*}
commutes. This means that $\phi$ respects the left $\mathcal{A}$-module
structures of $\mathcal{M}$ and $\mathcal{M}'$ or that $\phi$ is
left $\mathcal{A}$-linear in other words. If $\mathcal{A}=H$ is a Hopf algebra
we often refer to left $H$-module homomorphisms as \textit{$H$-equivariant maps}.
The algebra $\mathcal{A}$ and the corresponding category
${}_\mathcal{A}\mathcal{M}$ can be seen as two sides of the same coin
via the so called \textit{Tannaka-Krein Duality}. By assigning
to any left $\mathcal{A}$-module its underlying $\Bbbk$-module and to any left
$\mathcal{A}$-module homomorphism itself, now seen as a
$\Bbbk$-linear map, we obtain a
functor $F\colon{}_\mathcal{A}\mathcal{M}\rightarrow{}_\Bbbk\mathcal{M}$.
There is a $1:1$-correspondence between $\mathcal{A}$ and the natural 
transformations $\mathrm{Nat}(F,F)$ of $F$. This allows us to
reconstruct $\mathcal{A}$ from its representation theory
${}_\mathcal{A}\mathcal{M}$ and the functor $F$.
On the other hand, we are able to structure
the natural transformations of $F$ as an algebra in this way.
The functor $F$ is called the \textit{forgetful functor} for obvious reasons.
\begin{proposition}[Tannaka Reconstruction of Algebras]\label{prop08}
Let $\mathcal{A}$ be an algebra and consider the forgetful functor $F$.
For every $a\in\mathcal{A}$ there is a natural transformation $\Theta^a$ of
$F$ given on objects $\mathcal{M}$ of ${}_\mathcal{A}\mathcal{M}$ by
$$
\Theta^a_\mathcal{M}\colon
F(\mathcal{M})\ni m\mapsto a\cdot m\in F(\mathcal{M})
$$
and which is the identity on morphisms. This leads to an algebra isomorphism
$\mathcal{A}\cong\mathrm{Nat}(F,F)$.
\end{proposition}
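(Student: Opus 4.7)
The plan is to construct an explicit map $\Theta\colon\mathcal{A}\to\mathrm{Nat}(F,F)$ sending $a\mapsto\Theta^a$, verify it is an algebra homomorphism, and prove bijectivity via a Yoneda-type argument using the regular module $\mathcal{A}\in{}_\mathcal{A}\mathcal{M}$.

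First I would check that each $\Theta^a$ is indeed a natural transformation. Given any morphism $\phi\colon(\mathcal{M},\lambda_\mathcal{M})\to(\mathcal{M}',\lambda_{\mathcal{M}'})$ of left $\mathcal{A}$-modules, the naturality square for $\Theta^a$ reads $\phi\circ\Theta^a_\mathcal{M}=\Theta^a_{\mathcal{M}'}\circ\phi$, which on an element $m\in\mathcal{M}$ becomes $\phi(a\cdot m)=a\cdot\phi(m)$; this is precisely the $\mathcal{A}$-linearity of $\phi$, so naturality holds. Next I would recall that $\mathrm{Nat}(F,F)$ carries a natural $\Bbbk$-algebra structure: componentwise addition and scalar multiplication inherited from ${}_\Bbbk\mathcal{M}$, with product given by vertical composition $(\theta\cdot\theta')_\mathcal{M}=\theta_\mathcal{M}\circ\theta'_\mathcal{M}$ and unit $\mathrm{id}_F$.

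Then I would verify that $\Theta$ respects this structure. $\Bbbk$-linearity is immediate from the $\Bbbk$-bilinearity of $\lambda$. For multiplicativity and unitality, for all $a,b\in\mathcal{A}$, $\mathcal{M}\in{}_\mathcal{A}\mathcal{M}$ and $m\in\mathcal{M}$, the module axioms give
\begin{equation*}
    (\Theta^a\cdot\Theta^b)_\mathcal{M}(m)=a\cdot(b\cdot m)=(ab)\cdot m=\Theta^{ab}_\mathcal{M}(m),\qquad\Theta^1_\mathcal{M}(m)=1\cdot m=m,
\end{equation*}
so $\Theta$ is an algebra homomorphism.

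The main obstacle, and the content of the theorem, is bijectivity. Injectivity is easy: view $\mathcal{A}$ itself as a left $\mathcal{A}$-module via $\mu$; then $\Theta^a=\Theta^{a'}$ forces $a=a\cdot 1=\Theta^a_\mathcal{A}(1)=\Theta^{a'}_\mathcal{A}(1)=a'$. The hard part is surjectivity. Given $\theta\in\mathrm{Nat}(F,F)$, I would set $a:=\theta_\mathcal{A}(1)\in\mathcal{A}$ and, for each pair $(\mathcal{M},m)$ with $m\in\mathcal{M}$, consider the left $\mathcal{A}$-linear map $\phi_m\colon\mathcal{A}\to\mathcal{M}$, $b\mapsto b\cdot m$. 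Applying naturality of $\theta$ to $\phi_m$ yields
\begin{equation*}
    \theta_\mathcal{M}(m)=\theta_\mathcal{M}\bigl(\phi_m(1)\bigr)=\phi_m\bigl(\theta_\mathcal{A}(1)\bigr)=\phi_m(a)=a\cdot m=\Theta^a_\mathcal{M}(m),
\end{equation*}
so $\theta=\Theta^a$. The crux is recognising that the regular module together with the family of cyclic homomorphisms $\phi_m$ suffices to pin down any natural transformation of the forgetful functor; once this Yoneda-style reconstruction is in place, the algebra isomorphism $\mathcal{A}\cong\mathrm{Nat}(F,F)$ follows.
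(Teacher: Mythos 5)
Your proposal is correct and follows essentially the same route as the paper: naturality of $\Theta^a$ from the $\mathcal{A}$-linearity of morphisms, and bijectivity via $a=\Theta_\mathcal{A}(1)$ together with the cyclic homomorphisms $\phi_m\colon b\mapsto b\cdot m$ applied to the regular module. The only cosmetic difference is that you fix the algebra structure on $\mathrm{Nat}(F,F)$ as vertical composition up front and check $\Theta$ is a homomorphism, while the paper transports the product from $\mathcal{A}$ and then verifies it coincides with composition — the content is identical.
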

\begin{proof}
The proof is taken from \cite{Ma95}~Ex.~9.1.1.
Recall that $F(\mathcal{M})=\mathcal{M}$ since $F$ is the
forgetful functor. $\Theta^a$ is a natural transformation
since for any left $\mathcal{A}$-module homomorphism 
$\phi\colon\mathcal{M}\rightarrow\mathcal{M}'$
we obtain
\begin{align*}
    F(\phi)(\Theta^a_\mathcal{M}(m))=\phi(a\cdot m)
    =a\cdot\phi(m)
    =\Theta^a_{\mathcal{M}'}(F(\phi)(m))
\end{align*}
for all $m\in\mathcal{M}$. On the other hand, we can select an element
$a$ in $\mathcal{A}$
starting from a natural transformation $\Theta$ of $F$, by defining
$a=\Theta_\mathcal{A}(1)$, where $\mathcal{A}$ is a left $\mathcal{A}$-module
via the left multiplication and $1$ denotes the unit in $\mathcal{A}$. We prove
that these constructions are inverse to each other, for which the 
$1:1$-correspondence follows. Any $a\in\mathcal{A}$ can be recovered via
$\Theta^a_\mathcal{A}(1)=a\cdot 1=a$. On the other hand let $\Theta$
be a natural transformation of $F$ and consider the corresponding element
$a=\Theta_\mathcal{A}(1)\in\mathcal{A}$. Then
\begin{align*}
    \Theta^a_\mathcal{M}(m)
    =a\cdot m
    =F(\phi_m)(\Theta_\mathcal{A}(1))
    =\Theta_\mathcal{M}(F(\phi_m)(1))
    =\Theta_\mathcal{M}(1\cdot m)
    =\Theta_\mathcal{M}(m)
\end{align*}
for all $m\in\mathcal{M}$ and any left $\mathcal{A}$-module $\mathcal{M}$, where
$\phi_m\colon\mathcal{A}\ni b\mapsto b\cdot m\in\mathcal{M}$ is a left
$\mathcal{A}$-module
homomorphism. The associative unital algebra structure on $\mathrm{Nat}(F,F)$
is describe as follows: two natural transformations $\Theta^a$ and $\Theta^b$ 
of $F$ are specified by two elements $a,b\in\mathcal{A}$, while their product
$\Theta^a\cdot\Theta^b$ is defined as the natural transformation
$\Theta^{a\cdot b}$. Explicitly, for any left $\mathcal{A}$-module
$\mathcal{M}$ one has
$$
(\Theta^{a}\cdot\Theta^b)_\mathcal{M}(m)
=\Theta^{a\cdot b}_\mathcal{M}(m)
=(a\cdot b)\cdot m
=a\cdot(b\cdot m)
=\Theta^a_\mathcal{M}(\Theta^b_\mathcal{M}(m))
$$
for all $m\in\mathcal{M}$ and $\Theta^1_\mathcal{M}=\mathrm{id}_\mathcal{M}$.
Since the product of $\mathcal{A}$ is associative, so is the product of
$\mathrm{Nat}(F,F)$.
\end{proof}
This duality of algebras and their representation theory is 
a fundamental concept and will be applied throughout the whole thesis.
We are going to examine further algebraic properties of algebras
parallel to categorical properties of their modules. Shifting between
these two pictures turns out to be an extremely useful tool.

Analogously to left modules, one defines a 
\textit{right $\mathcal{A}$-module}
to be a $\Bbbk$-module $\mathcal{M}$ together with a $\Bbbk$-linear map
$\rho\colon\mathcal{M}\otimes\mathcal{A}\longrightarrow\mathcal{M}$,
making
\begin{equation*}
\begin{tikzcd}
\mathcal{M}\otimes\mathcal{A}\otimes\mathcal{A}
\arrow{r}{\rho\otimes\mathrm{id}_\mathcal{A}}
\arrow{d}[swap]{\mathrm{id}_\mathcal{M}\otimes\mu}
& \mathcal{M}\otimes\mathcal{A} \arrow{d}{\rho} \\
\mathcal{M}\otimes\mathcal{A} \arrow{r}{\rho}
& \mathcal{M}
\end{tikzcd}
\text{ and }
\begin{tikzcd}
\mathcal{M}\otimes\Bbbk \arrow{r}{\mathrm{id}_\mathcal{M}\otimes\eta}
\arrow{rd}[swap]{\cong}
& \mathcal{M}\otimes\mathcal{A} \arrow{d}{\rho} \\
& \mathcal{M}
\end{tikzcd}
\end{equation*}
commute. A \textit{right $\mathcal{A}$-module homomorphism} is a 
$\Bbbk$-linear map $\phi\colon\mathcal{M}\rightarrow\mathcal{M}'$,
where $(\mathcal{M},\rho_\mathcal{M})$ and $(\mathcal{M}',
\rho_{\mathcal{M}'})$ are right $\mathcal{A}$-modules, such that
\begin{equation*}
\begin{tikzcd}
\mathcal{M}\otimes\mathcal{A} \arrow{r}{\rho_\mathcal{M}}
\arrow{d}[swap]{\phi\otimes\mathrm{id}_\mathcal{A}}
& \mathcal{M} \arrow{d}{\phi} \\
\mathcal{M}'\otimes\mathcal{A} \arrow{r}{\rho_{\mathcal{M}'}}
& \mathcal{M}'
\end{tikzcd}
\end{equation*}
commutes.
The category of right $\mathcal{A}$-modules is denoted by 
$\mathcal{M}_\mathcal{A}$. If an object in ${}_\mathcal{A}\mathcal{M}$
is also a right $\mathcal{A}'$-module for another $\Bbbk$-algebra 
$(\mathcal{A}',\mu',\eta')$ it is natural to ask if both module actions
are compatible. This leads to the notion of bimodules.
An \textit{$\mathcal{A}$-$\mathcal{A}'$-bimodule} is an object 
$(\mathcal{M},\lambda,\rho)$ in
${}_\mathcal{A}\mathcal{M}\cap\mathcal{M}_{\mathcal{A}'}$
such that 
\begin{equation*}
\begin{tikzcd}
\mathcal{A}\otimes\mathcal{M}\otimes\mathcal{A}'
\arrow{r}{\lambda\otimes\mathrm{id}_{\mathcal{A}'}}
\arrow{d}[swap]{\mathrm{id}_\mathcal{A}\otimes\rho}
& \mathcal{M}\otimes\mathcal{A}' \arrow{d}{\rho} \\
\mathcal{A}\otimes\mathcal{M} \arrow{r}{\lambda}
& \mathcal{M}
\end{tikzcd}
\end{equation*}
commutes.
If $\mathcal{A}'=\mathcal{A}$ as algebras we are calling $\mathcal{M}$
an $\mathcal{A}$-bimodule. The category of 
$\mathcal{A}$-$\mathcal{A}'$-bimodules is denoted by 
${}_\mathcal{A}\mathcal{M}_{\mathcal{A}'}$. Its morphisms
are left $\mathcal{A}$-module homomorphisms which are also right 
$\mathcal{A}'$-module homomorphisms.
Since every right $\mathcal{A}$-module can be viewed as a left
$\mathcal{A}^{\mathrm{op}}$-module for the opposite algebra
$\mathcal{A}^{\mathrm{op}}=
(\mathcal{A},\mu\circ\tau_{\mathcal{A}\otimes\mathcal{A}},\eta)$, we
can focus on left $\mathcal{A}$-modules without loss of generality.
We come back to right and bimodules in the subsequent sections.
In the following we denote a left $\mathcal{A}$-module action by
$\cdot$ for short, if there is no danger of confusion with other operations.

The category ${}_\mathcal{A}\mathcal{M}$ of left $\mathcal{A}$-modules
is a subcategory of the category ${}_\Bbbk\mathcal{M}$ of all $\Bbbk$-modules
for any $\Bbbk$-algebra $\mathcal{A}$.
In the following lines we show how to use the additional data
of a bialgebra to structure ${}_\mathcal{A}\mathcal{M}$ even as a monoidal
subcategory of ${}_\Bbbk\mathcal{M}$. Assume that $(\mathcal{A},\mu,\eta,
\Delta,\epsilon)$ is a $\Bbbk$-bialgebra. Then, the tensor product
$\mathcal{M}\otimes\mathcal{M}'$ of two left $\mathcal{A}$-modules becomes
an object in ${}_\mathcal{A}\mathcal{M}$ via
$$
a\cdot(m\otimes m')
=(a_{(1)}\cdot m)\otimes(a_{(2)}\cdot m')
$$
for all $a\in\mathcal{A}$, $m\in\mathcal{M}$ and $m'\in\mathcal{M}'$, by
making use of the fact that $\Delta$ is an algebra homomorphism.
Furthermore, since $\epsilon$ is an algebra homomorphism, the commutative
ring $\Bbbk$ becomes a left $\mathcal{A}$-module via
$$
a\cdot\lambda=\epsilon(a)\lambda
$$
for all $a\in\mathcal{A}$ and $\lambda\in\Bbbk$.
This action respects the usual associativity constraint of
${}_\Bbbk\mathcal{M}$, since $\Delta$ is coassociative. Namely,
\begin{align*}
    a\cdot((m\otimes m')\otimes m'')
    =&((a_{(1)(1)}\cdot m)\otimes(a_{(1)(2)}\cdot m'))\otimes(a_{(2)}\cdot m'')\\
    =&(a_{(1)}\cdot m)\otimes((a_{(2)(1)}\cdot m')\otimes(a_{(2)(2)}\cdot m''))\\
    =&a\cdot(m\otimes(m'\otimes m''))
\end{align*}
for another left $\mathcal{A}$-module $\mathcal{M}''$ and $m''\in\mathcal{M}''$.
It further respects the usual unit constraints of
${}_\Bbbk\mathcal{M}$, i.e.
\begin{align*}
    a\cdot(\lambda\otimes m)
    =&(\epsilon(a_{(1)})\lambda)\otimes(a_{(2)}\cdot m)\\
    =&\lambda\otimes(a\cdot m)\\
    =&\lambda(a\cdot m)\\
    =&a\cdot(m\otimes\lambda)
\end{align*}
for all $a\in\mathcal{A}$, $m\in\mathcal{M}$ and $\lambda\in\Bbbk$, 
since $\epsilon$ satisfies the counit axiom.
Strictly speaking the usual associativity and unit constraints
$\alpha$, $\ell$ and $r$ are left $\mathcal{A}$-module homomorphisms
on objects and we should write
$$
\alpha_{\mathcal{M},\mathcal{M}',\mathcal{M}''}(a\cdot((m\otimes m')\otimes m''))
=a\cdot\alpha_{\mathcal{M},\mathcal{M}',\mathcal{M}''}((m\otimes m')\otimes m'')
$$
and
$$
\ell_\mathcal{M}(a\cdot(\lambda\otimes m))
=a\cdot\ell_\mathcal{M}(\lambda\otimes m),~
r_\mathcal{M}(a\cdot(m\otimes\lambda))
=a\cdot r_\mathcal{M}(m\otimes\lambda).
$$
Instead we treat those isomorphisms as equalities in the above computations.
In the next proposition (c.f. \cite{Ka95}~Prop.~XI.3.1 and
\cite{Ma95}~Ex.~9.1.3)
we show that coassociativity and the counit axiom are not
only sufficient but also necessary for ${}_\mathcal{A}\mathcal{M}$ to be a
monoidal subcategory of ${}_\Bbbk\mathcal{M}$ with usual associativity and
unit constraints.
\begin{proposition}\label{prop04}
Let $\mathcal{A}$ be an algebra over a commutative ring $\Bbbk$
and consider two algebra homomorphisms
\begin{equation}\label{eq58}
    \Delta\colon\mathcal{A}\rightarrow\mathcal{A}\otimes\mathcal{A}
    \text{ and }
    \epsilon\colon\mathcal{A}\rightarrow\Bbbk.
\end{equation}
Then $(\mathcal{A},\Delta,\epsilon)$ is a $\Bbbk$-bialgebra if and only
if ${}_\mathcal{A}\mathcal{M}$ is a monoidal subcategory of
${}_\Bbbk\mathcal{M}$ with respect to the usual tensor product and unit of
$\Bbbk$-modules and the usual associativity and unit constraints.
\end{proposition}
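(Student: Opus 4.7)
I would split the argument into the two implications.

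For the forward direction $(\Rightarrow)$, assume $\mathcal{A}$ is a bialgebra. Since $\Delta$ and $\epsilon$ are algebra homomorphisms by hypothesis, the formulas $a\cdot(m\otimes m')=(a_{(1)}\cdot m)\otimes(a_{(2)}\cdot m')$ and $a\cdot\lambda=\epsilon(a)\lambda$ genuinely define left $\mathcal{A}$-module structures on $\mathcal{M}\otimes\mathcal{M}'$ and on $\Bbbk$, as was noted in the discussion immediately preceding the statement. It then remains to check that the usual associativity and unit constraints $\alpha$, $\ell$, $r$ of ${}_\Bbbk\mathcal{M}$ are left $\mathcal{A}$-linear when the sources and targets are equipped with these actions; this is precisely the content of the two displayed computations just above the proposition, where coassociativity of $\Delta$ gives $\mathcal{A}$-linearity of $\alpha_{\mathcal{M},\mathcal{M}',\mathcal{M}''}$ and the counit axioms give $\mathcal{A}$-linearity of $\ell_\mathcal{M}$ and $r_\mathcal{M}$. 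Thus ${}_\mathcal{A}\mathcal{M}$ is closed under the tensor product and unit of ${}_\Bbbk\mathcal{M}$ and inherits the monoidal structure.

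For the converse $(\Leftarrow)$, suppose ${}_\mathcal{A}\mathcal{M}$ is a monoidal subcategory of ${}_\Bbbk\mathcal{M}$ with the usual tensor product, unit, and usual constraints. The very existence of this structure forces $\Bbbk$ to be a left $\mathcal{A}$-module and $\mathcal{M}\otimes\mathcal{M}'$ to be a left $\mathcal{A}$-module with the same underlying $\Bbbk$-modules as in ${}_\Bbbk\mathcal{M}$; compatibility with the algebra homomorphisms $\Delta$ and $\epsilon$ then pins these actions down to the formulas above. The key step is to extract coassociativity and counitality by testing the $\mathcal{A}$-linearity of the usual constraints on the regular left $\mathcal{A}$-module $\mathcal{A}$ and evaluating at the unit. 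Concretely, applying $\mathcal{A}$-linearity of $\alpha_{\mathcal{A},\mathcal{A},\mathcal{A}}$ to $(1\otimes 1)\otimes 1$ gives $(\Delta\otimes\mathrm{id})\Delta(a)=(\mathrm{id}\otimes\Delta)\Delta(a)$ under the canonical identification, which is coassociativity. Parallel tests with $\ell_\mathcal{A}$ on $1\otimes 1\in\Bbbk\otimes\mathcal{A}$ and $r_\mathcal{A}$ on $1\otimes 1\in\mathcal{A}\otimes\Bbbk$ yield $\epsilon(a_{(1)})a_{(2)}=a=a_{(1)}\epsilon(a_{(2)})$, i.e. both counit axioms.

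The proof is essentially formal; the one conceptual move, in the spirit of the Tannaka reconstruction of Proposition~\ref{prop08}, is that testing the categorical axioms on the regular representation with $1\in\mathcal{A}$ inserted is enough to recover the Hopf-algebraic axioms on all of $\mathcal{A}$. I do not expect a genuine obstacle. The one bookkeeping subtlety is to keep straight that with ``usual'' constraints the maps $\alpha$, $\ell$, $r$ are literally the canonical ``forget-the-bracket'' isomorphisms of underlying $\Bbbk$-modules, so that their $\mathcal{A}$-linearity translates verbatim into the desired identities in $\mathcal{A}^{\otimes 3}$ and $\mathcal{A}$, with no hidden coherence terms.
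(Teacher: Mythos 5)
Your proposal is correct and follows essentially the same route as the paper: the forward direction invokes the computations preceding the statement showing that coassociativity and counitality make $\alpha$, $\ell$, $r$ into left $\mathcal{A}$-module homomorphisms, and the converse tests $\mathcal{A}$-linearity of the usual constraints on the regular module at $(1\otimes 1)\otimes 1$ to recover the coalgebra axioms. No gaps.
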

\begin{proof}
We already proved that ${}_\mathcal{A}\mathcal{M}$ is a monoidal subcategory
if $\mathcal{A}$ is a bialgebra. So assume that
$\mathcal{A}$ is an algebra endowed with two algebra homomorphisms
(\ref{eq58}) and assume further that
${}_\mathcal{A}\mathcal{M}$ is monoidal with respect to $\alpha,\ell,r$. Since
$\alpha_{\mathcal{M},\mathcal{M}',\mathcal{M}''}$ is a left $\mathcal{A}$-module
homomorphism for all objects $\mathcal{M},\mathcal{M}',\mathcal{M}''$ in
${}_\mathcal{A}\mathcal{M}$ by definition, we obtain
\begin{align*}
    \alpha_{\mathcal{A},\mathcal{A},\mathcal{A}}
    (((\Delta\otimes\mathrm{id})\circ\Delta)(a))
    =&\alpha_{\mathcal{A},\mathcal{A},\mathcal{A}}(a\cdot((1\otimes 1)\otimes 1))\\
    =&a\cdot\alpha_{\mathcal{A},\mathcal{A},\mathcal{A}}((1\otimes 1)\otimes 1)\\
    =&((\mathrm{id}\otimes\Delta)\circ\Delta)(a)
\end{align*}
for all $a\in\mathcal{A}$,
by considering the left $\mathcal{A}$-module $\mathcal{A}$ itself with
multiplication from the left as module action. Furthermore,
\begin{align*}
    \ell_\mathcal{A}(((\epsilon\otimes\mathrm{id})\circ\Delta)(a))
    =\ell_\mathcal{A}(a\cdot(1\otimes 1))
    =a\cdot\ell_\mathcal{A}(1\otimes 1)
    =a\cdot 1
    =a
\end{align*}
and
\begin{align*}
    r_\mathcal{A}(((\mathrm{id}\otimes\epsilon)\circ\Delta)(a))
    =r_\mathcal{A}(a\cdot(1\otimes 1))
    =a\cdot e_\mathcal{A}(1\otimes 1)
    =a\cdot 1
    =a
\end{align*}
follow for all $a\in\mathcal{A}$. Since we treat $\alpha,\ell,r$ as 
identities on objects this means that 
$\Delta$ is coassociative and $\epsilon$ is a
counit. This concludes the proof.
\end{proof}
This characterizes $\Bbbk$-bialgebras completely. Focusing on representation
theory one might for this reason reformulate the definition of a bialgebra
by declaring: an algebra $\mathcal{A}$ with algebra homomorphisms
$\Delta\colon\mathcal{A}\rightarrow\mathcal{A}\otimes\mathcal{A}$ and
$\epsilon\colon\mathcal{A}\rightarrow\Bbbk$ is a bialgebra if
${}_\mathcal{A}\mathcal{M}$ is a monoidal subcategory of ${}_\Bbbk\mathcal{M}$
with usual associativity and unit constraints. 

Incorporating the antipode of a Hopf algebra in the picture we receive
a duality property on the side of monoidal categories. However, for
this we have to restrict our class of modules from arbitrary
$\Bbbk$-modules ${}_\Bbbk\mathcal{M}$ for a commutative ring $\Bbbk$
to finitely generated projective $\Bbbk$-modules. A $\Bbbk$-module
$\mathcal{M}$ is said to be \textit{finitely generated projective}
if there is a finite set $\{m_i\}_{i\in I}$ of elements in $\mathcal{M}$,
called \textit{generators}, and a set of elements $\{\alpha^i\}_{i\in I}$
called \textit{dual generators} in the dual $\Bbbk$-module
$\mathcal{M}^*=\mathrm{Hom}_\Bbbk(\mathcal{M},\Bbbk)$, subscripted by the
same finite index set $I$, such that
$$
\alpha^i(m_j)=\delta^i_j
\text{ and }
m=\sum_{i\in I}\alpha^i(m)m_i
$$
for all $m\in\mathcal{M}$. It follows that $\mathcal{M}^*$ is finitely
generated projective with generators $\{\alpha^i\}_{i\in I}$ and dual
generators $\{\hat{m}_i\}_{i\in I}$ in $(\mathcal{M}^*)^*$, defined by
$\hat{m}_i(\alpha)=\alpha(m_i)$ for all $\alpha\in\mathcal{M}^*$.
Furthermore, the map $\hat{ }\colon\mathcal{M}\rightarrow(\mathcal{M}^*)^*$
is a $\Bbbk$-module isomorphism. Note that this identification
fails for general $\Bbbk$-modules, even if $\Bbbk$ is a field. We denote
the category of finitely generated projective $\Bbbk$-modules with
$\Bbbk$-module homomorphisms as morphisms by ${}_\Bbbk\mathcal{M}_f$.
It is a monoidal subcategory of ${}_\Bbbk\mathcal{M}$.
\begin{lemma}\label{lemma05}
The monoidal category ${}_\Bbbk\mathcal{M}_f$ of finitely generated
projective modules of a commutative ring $\Bbbk$ is a rigid category.
\end{lemma}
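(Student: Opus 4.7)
The plan is to exhibit, for every finitely generated projective $\Bbbk$-module $\mathcal{M}$, a dual object together with evaluation and coevaluation morphisms satisfying the triangle identities; then rigidity in both the left and right senses will follow because the monoidal category is symmetric under the flip $\tau$.

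First, I would take the dual object to be the $\Bbbk$-linear dual $\mathcal{M}^{*}=\mathrm{Hom}_{\Bbbk}(\mathcal{M},\Bbbk)$, which the excerpt has already observed to be finitely generated projective. Using a chosen set of generators $\{m_{i}\}_{i\in I}$ and dual generators $\{\alpha^{i}\}_{i\in I}$, I would define
\begin{equation*}
\mathrm{ev}_{\mathcal{M}}\colon \mathcal{M}^{*}\otimes\mathcal{M}\to\Bbbk,\qquad \alpha\otimes m\mapsto \alpha(m),
\end{equation*}
and
\begin{equation*}
\mathrm{coev}_{\mathcal{M}}\colon \Bbbk\to\mathcal{M}\otimes\mathcal{M}^{*},\qquad 1\mapsto \sum_{i\in I}m_{i}\otimes\alpha^{i}.
\end{equation*}
Both are $\Bbbk$-linear, and a short check, using the relation $m=\sum_{i}\alpha^{i}(m)m_{i}$, shows that $\mathrm{coev}_{\mathcal{M}}$ is independent of the chosen dual basis, so the assignment $\mathcal{M}\mapsto\mathcal{M}^{*}$ is canonical.

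Next, I would verify the two triangle (zig-zag) identities. Applying $(\mathrm{id}_{\mathcal{M}}\otimes\mathrm{ev}_{\mathcal{M}})\circ(\mathrm{coev}_{\mathcal{M}}\otimes\mathrm{id}_{\mathcal{M}})$ to an element $m\in\mathcal{M}$ produces $\sum_{i}m_{i}\,\alpha^{i}(m)=m$ by the dual basis property; the analogous computation for $\alpha\in\mathcal{M}^{*}$, combined with the dual-basis property of $\mathcal{M}^{*}$ with generators $\{\alpha^{i}\}$ and dual generators $\{\hat{m}_{i}\}$, gives the second identity. This provides $\mathcal{M}^{*}$ with the structure of a left dual of $\mathcal{M}$ in ${}_{\Bbbk}\mathcal{M}_{f}$.

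Finally, to obtain a right dual I would use the flip isomorphism $\tau$, which makes ${}_{\Bbbk}\mathcal{M}$ symmetric monoidal (and restricts to ${}_{\Bbbk}\mathcal{M}_{f}$). Composing with $\tau$ turns $\mathrm{ev}_{\mathcal{M}}$ and $\mathrm{coev}_{\mathcal{M}}$ into a right evaluation $\widetilde{\mathrm{ev}}_{\mathcal{M}}\colon\mathcal{M}\otimes\mathcal{M}^{*}\to\Bbbk$ and right coevaluation $\widetilde{\mathrm{coev}}_{\mathcal{M}}\colon\Bbbk\to\mathcal{M}^{*}\otimes\mathcal{M}$, and the mirrored triangle identities follow formally from the ones above together with naturality of $\tau$. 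Hence every object in ${}_{\Bbbk}\mathcal{M}_{f}$ admits both a left and a right dual, which is precisely rigidity. The only mildly delicate point is checking well-definedness of $\mathrm{coev}_{\mathcal{M}}$ (independence of the dual basis) and working through the identities without an explicit basis of $\mathcal{M}$ in the general projective case; once the dual-basis relation is in place, however, this reduces to a one-line manipulation.
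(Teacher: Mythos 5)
Your proposal is correct and follows essentially the same route as the paper: both construct the dual object as $\mathcal{M}^*$ with evaluation $\alpha\otimes m\mapsto\alpha(m)$ and coevaluation $1\mapsto\sum_i m_i\otimes\alpha^i$ built from a finite set of (dual) generators, and verify the zig-zag identities from the relation $m=\sum_i\alpha^i(m)m_i$. The only cosmetic difference is that the paper writes the right-dual structure maps $\mathrm{ev}'_\mathcal{M},\pi'_\mathcal{M}$ explicitly, whereas you obtain them by composing with the flip $\tau$ — the resulting data coincide.
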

\begin{proof}
We follow \cite{ChPr94}~Ex.~5.1.3.
Consider a finitely generated projective $\Bbbk$-module
$\mathcal{M}$ with (dual) generators $m_i$ and $\alpha^i$ and define
\begin{align*}
    \mathrm{ev}_\mathcal{M}(\alpha\otimes m)
    =&\hat{m}_i(\alpha)\alpha^i(m),\\
    \pi_\mathcal{M}(1)
    =&m_i\otimes\alpha^i,\\
    \mathrm{ev}'_\mathcal{M}(m\otimes\alpha)
    =&\hat{m}_i(\alpha)\alpha^i(m)
    \text{ and} \\
    \pi'_\mathcal{M}(1)
    =&\alpha^i\otimes m_i
\end{align*}
where $m=\alpha^i(m)m_i\in\mathcal{M}$
and $\alpha=\hat{m}_i(\alpha)\alpha^i\in\mathcal{M}^*$ and we used Einstein
sum convention. Then
$$
((\mathrm{id}_\mathcal{M}\otimes\mathrm{ev}_\mathcal{M})
\circ(\pi_\mathcal{M}\otimes\mathrm{id}_\mathcal{M}))(m)=m
$$
and
$$
((\mathrm{ev}_{\mathcal{M}}\otimes\mathrm{id}_{\mathcal{M}^*})
\circ(\mathrm{id}_{\mathcal{M}^*}\otimes\pi_\mathcal{M}))(\alpha)=\alpha
$$
follow.
\end{proof}
Note that without a finite set of dual generators we are in general
not able to define the maps $\pi$ required for rigidity.
In particular, ${}_\Bbbk\mathcal{M}$ is \textit{not} rigid in general.
This motivates
the passage from general $\Bbbk$-modules to finitely generated projective
$\Bbbk$-modules. As a special case we recover the rigid monoidal
category of finite-dimensional $\mathbb{K}$-vector spaces 
${}_\mathbb{K}\mathrm{Vec}_f$ if $\Bbbk=\mathbb{K}$ is a field.
In Proposition~\ref{prop04} we proved that bialgebras correspond to
monoidal categories of representation. The analogue for Hopf algebras
in the setting of finitely generated projective modules is given
in the following statement (c.f. \cite{ChPr94}~Ex.~5.1.4).
\begin{proposition}\label{prop09}
Let $H$ be a $\Bbbk$-Hopf algebra and consider the monoidal category
${}_H\mathcal{M}$ of left $H$-modules, characterized by the bialgebra
structure of $H$. The monoidal subcategory ${}_H\mathcal{M}_f$ of
finitely generated projective left $H$-modules is rigid, where the
left and right dual $\mathcal{M}^*$ and ${}^*\mathcal{M}$ of an object
$\mathcal{M}$ in ${}_H\mathcal{M}_f$ are defined as the finitely generated
projective $\Bbbk$-module $\mathcal{M}^*$ with left $H$-module actions
given by
$$
\langle\xi\cdot\alpha,m\rangle
=\langle\alpha,S(\xi)\cdot m\rangle
$$
and
$$
\langle\xi\cdot\alpha,m\rangle
=\langle\alpha,S^{-1}(\xi)\cdot m\rangle
$$
for all $\xi\in H$, $m\in\mathcal{M}$ and $\alpha\in\mathcal{M}^*$,
respectively. The forgetful functor
$$
F\colon{}_H\mathcal{M}_f\rightarrow {}_\Bbbk\mathcal{M}_f
$$
is monoidal.
\end{proposition}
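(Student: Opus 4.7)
The plan is to reduce the statement to the rigidity of ${}_\Bbbk\mathcal{M}_f$ established in Lemma~\ref{lemma05}, by equipping the evaluation and coevaluation maps constructed there with $H$-equivariance so that the same triangular identities transfer to ${}_H\mathcal{M}_f$.

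First I would verify that the prescription $\langle\xi\cdot\alpha,m\rangle = \langle\alpha,S(\xi)\cdot m\rangle$ really defines a left $H$-module structure on $\mathcal{M}^*$. The only nontrivial axiom is $\chi\cdot(\xi\cdot\alpha) = (\chi\xi)\cdot\alpha$, which follows from the antipode being an anti-algebra homomorphism (Proposition~\ref{prop07}): $\langle\chi\cdot(\xi\cdot\alpha),m\rangle = \langle\alpha, S(\xi)S(\chi)\cdot m\rangle = \langle\alpha, S(\chi\xi)\cdot m\rangle$, while the unit axiom uses $S(1)=1$. The analogous check with $S^{-1}$ in place of $S$ produces the right-dual action on ${}^*\mathcal{M}$.

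Next I would check that the evaluation $\mathrm{ev}_\mathcal{M}\colon\mathcal{M}^*\otimes\mathcal{M}\to\Bbbk$ and coevaluation $\pi_\mathcal{M}\colon\Bbbk\to\mathcal{M}\otimes\mathcal{M}^*$ from Lemma~\ref{lemma05} are $H$-equivariant with respect to the diagonal $H$-action on tensor products (as provided by Proposition~\ref{prop04}) and the $\epsilon$-action on $\Bbbk$. For $\mathrm{ev}_\mathcal{M}$ this is immediate from the left antipode axiom:
\begin{equation*}
\langle\xi_{(1)}\cdot\alpha,\,\xi_{(2)}\cdot m\rangle
= \langle\alpha,\, S(\xi_{(1)})\xi_{(2)}\cdot m\rangle
= \epsilon(\xi)\langle\alpha,m\rangle.
\end{equation*}
For $\pi_\mathcal{M}$ the cleanest route is to observe that the element $\sum_i m_i\otimes\alpha^i$ is intrinsic, corresponding to $\mathrm{id}_\mathcal{M}$ under the canonical isomorphism $\mathcal{M}\otimes\mathcal{M}^*\cong\mathrm{End}_\Bbbk(\mathcal{M})$ that exists for finitely generated projective modules. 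Transporting the diagonal $H$-action along this isomorphism yields the adjoint-style action $(\xi\cdot f)(n)=\xi_{(1)}\cdot f(S(\xi_{(2)})\cdot n)$, and evaluating at $f=\mathrm{id}_\mathcal{M}$ collapses to $\epsilon(\xi)\mathrm{id}_\mathcal{M}$ by the right form of the antipode axiom. The parallel computations with $S^{-1}$ yield the rigidity data for the right dual.

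With $H$-equivariance established, the triangular identities required for rigidity follow verbatim from Lemma~\ref{lemma05} since the underlying $\Bbbk$-linear maps are unchanged. Monoidality of the forgetful functor is then essentially tautological: by Proposition~\ref{prop04} the tensor product on ${}_H\mathcal{M}_f$ has as underlying $\Bbbk$-module exactly the tensor product in ${}_\Bbbk\mathcal{M}_f$, $F(\Bbbk)=\Bbbk$, and the coherence constraints agree with the ambient ones. The main point requiring vigilance is the bookkeeping around the right dual ${}^*\mathcal{M}$: one must use that the antipode is invertible (built into our convention for Hopf algebras) and that by Proposition~\ref{prop07} the inverse $S^{-1}$ inherits the anti-algebra homomorphism property needed to make ${}^*\mathcal{M}$ into an $H$-module and to verify the second pair of triangular identities.
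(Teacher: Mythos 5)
Your proof is correct and follows essentially the same route as the paper: reduce to the rigidity of ${}_\Bbbk\mathcal{M}_f$ from Lemma~\ref{lemma05} and verify that the evaluation and coevaluation maps are $H$-equivariant via the antipode axioms. The paper carries out only the computation for $\mathrm{ev}_\mathcal{M}$ and dismisses the rest with ``similarly''; your argument for $\pi_\mathcal{M}$ via the identification $\mathcal{M}\otimes\mathcal{M}^*\cong\mathrm{End}_\Bbbk(\mathcal{M})$ is a clean way of filling in that gap.
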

\begin{proof}
This follows from Lemma~\ref{lemma05}, using the same evaluation and
projection maps. It only remains to prove that they are morphisms
in the right category, i.e. that they are left $H$-module homomorphisms.
Consider a finitely generated projective left $H$-module $\mathcal{M}$ and
let $\xi\in H$, $m\in\mathcal{M}$ and $\alpha\in\mathcal{M}^*$. Then
\begin{align*}
    \mathrm{ev}_\mathcal{M}(\xi\cdot(\alpha\otimes m))
    =&\mathrm{ev}_\mathcal{M}((\xi_{(1)}\cdot\alpha)\otimes(\xi_{(2)}\cdot m))\\
    =&(\xi_{(1)}\cdot\alpha)(\xi_{(2)}\cdot m)\\
    =&\alpha((S(\xi_{(1)})\xi_{(2)})\cdot m)\\
    =&\epsilon(\xi)\cdot\alpha(m)\\
    =&\xi\cdot\mathrm{ev}_\mathcal{M}(\alpha\otimes m)
\end{align*}
for a dual set of generators $\{\alpha^i\}_{i\in I}$ and $\{m_i\}_{i\in I}$.
Similarly one proves that $\pi_\mathcal{M}$, $\mathrm{ev}_{\mathcal{M}^*}$
and
$\pi_{\mathcal{M}^*}$ are $H$-linear.
\end{proof}
The converse also holds true, giving a Tannaka-Krein duality for Hopf algebras
similar to Proposition~\ref{prop08}.

\begin{theorem}[Tannaka-Krein Reconstruction of Hopf Algebras]
For a commutative ring $\Bbbk$ consider an essentially small, $\Bbbk$-linear,
rigid, Abelian, monoidal category $\mathcal{C}$ together with a
$\Bbbk$-linear, exact, faithful, monoidal functor 
$F\colon\mathcal{C}\rightarrow {}_\Bbbk\mathcal{M}_f$. Then there is a
$\Bbbk$-Hopf algebra $H$ and an equivalence 
$G\colon\mathcal{C}\rightarrow {}_H\mathcal{M}_f$ such that
$F=G'\circ G$, where $G'\colon{}_H\mathcal{M}_f\rightarrow 
{}_\Bbbk\mathcal{M}_f$ is the forgetful functor.
\end{theorem}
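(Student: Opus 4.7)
The plan is to generalize the algebra reconstruction of Proposition~\ref{prop08} to the Hopf-algebra setting, building successively on the bialgebra characterization of Proposition~\ref{prop04} and the rigidity characterization of Proposition~\ref{prop09}. I would set $H=\mathrm{Nat}(F,F)$, which by the pointwise argument in Proposition~\ref{prop08} is already a $\Bbbk$-algebra with product given by vertical composition of natural transformations and unit the identity transformation. The remaining task is to equip $H$ with coproduct, counit and antipode using the extra data — monoidality of $F$ and rigidity of $\mathcal{C}$ — and then to produce the equivalence $G$.

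First, I would exploit the monoidal structure of $F$. A monoidal functor comes with natural isomorphisms $F_{X,Y}\colon F(X)\otimes F(Y)\to F(X\otimes Y)$ and $F_0\colon\Bbbk\to F(\mathbf{1})$. For $\Theta\in H$, conjugating $\Theta_{X\otimes Y}$ by $F_{X,Y}$ yields a natural family of endomorphisms of $F(X)\otimes F(Y)$; using that $F$ is $\Bbbk$-linear, exact and faithful (so that the canonical map $H\otimes H\to\mathrm{Nat}(F\otimes F,\,F\otimes F)$ is rich enough to capture such transformations, morally via a coend reconstruction $H=\int^{X}F(X)^{*}\otimes F(X)$) one extracts an element $\Delta(\Theta)\in H\otimes H$. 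Analogously, $\epsilon(\Theta):=F_0^{-1}\circ\Theta_{\mathbf{1}}\circ F_0\in\mathrm{End}_\Bbbk(\Bbbk)=\Bbbk$. Coassociativity and counitality of $(H,\Delta,\epsilon)$ then reduce to the pentagon and triangle coherences of the monoidal structures on $\mathcal{C}$ and $F$, and the bialgebra compatibility between $\mu,\eta$ and $\Delta,\epsilon$ follows from the interchange of vertical and horizontal composition of natural transformations.

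Second, I would read off the antipode from the rigidity of $\mathcal{C}$. By Lemma~\ref{lemma05} and the monoidality of $F$, the image $F(X^{\vee})$ of a left dual is forced to be a dual of $F(X)$ in ${}_\Bbbk\mathcal{M}_f$, hence canonically isomorphic to $F(X)^{*}$. Defining $S(\Theta)$ on the object $X$ as the transport of $(\Theta_{X^{\vee}})^{*}$ along this isomorphism yields a natural transformation in $H$; the antipode axioms (\ref{eq02}) then correspond, under $F$, to the evaluation/coevaluation identities of the rigid structure, and invertibility of $S$ comes from the existence of both left and right duals in $\mathcal{C}$. This promotes $H$ to a $\Bbbk$-Hopf algebra.

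Third, the functor $G\colon\mathcal{C}\to{}_H\mathcal{M}_f$ is defined on objects by $G(X)=F(X)$ equipped with the $H$-action $\Theta\cdot m:=\Theta_X(m)$, and on morphisms simply by $G(\phi)=F(\phi)$; naturality of each $\Theta$ makes $F(\phi)$ automatically $H$-equivariant, and by construction $G'\circ G=F$. The main obstacle will be proving that $G$ is an equivalence: faithfulness of $G$ is inherited from faithfulness of $F$, but fullness and essential surjectivity are more delicate. Fullness amounts to showing that every $H$-equivariant map $F(X)\to F(Y)$ is the image of a morphism in $\mathcal{C}$, which uses the fact that the $H$-module structure encodes precisely the naturality constraints of $F$; essential surjectivity requires that every finitely generated projective left $H$-module is isomorphic, as an $H$-module, to some $F(X)$, and here one exploits exactness and $\Bbbk$-linearity of $F$, together with essential smallness of $\mathcal{C}$ to guarantee that $H$ is a genuine set. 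The rigid/monoidal compatibility ensures that these equivalences respect the monoidal structures on both sides.
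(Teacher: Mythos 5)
The paper itself does not prove this theorem --- it defers entirely to \cite{ChPr94}~Thm.~5.1.11 --- so there is no in-paper argument to compare yours against; your outline follows the standard Tannakian reconstruction strategy, but as written it contains two genuine gaps. The first and most serious is the coproduct. Conjugating $\Theta_{X\otimes Y}$ by the monoidal structure maps of $F$ produces a natural endomorphism of the two-variable functor $(X,Y)\mapsto F(X)\otimes F(Y)$, i.e.\ an element of $\mathrm{Nat}(F\otimes F,F\otimes F)$, but the theorem needs an element of $H\otimes H$, and the canonical map $\mathrm{Nat}(F,F)\otimes\mathrm{Nat}(F,F)\to\mathrm{Nat}(F\otimes F,F\otimes F)$ is in general neither injective nor surjective. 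Your parenthetical ``morally via a coend reconstruction $H=\int^{X}F(X)^{*}\otimes F(X)$'' conflates two different objects: $\mathrm{Nat}(F,F)$ is the \emph{end} $\int_{X}\mathrm{Hom}_\Bbbk(F(X),F(X))$, which is naturally an algebra, whereas the coend is naturally a \emph{coalgebra}, and it is on the coend that the comultiplication exists for free. Passing between the two --- or showing that the end carries an honest (rather than completed/topological) Hopf algebra structure --- is exactly where essential smallness, exactness, and the restriction to finitely generated projective values must enter, and this is the technical heart of the proof in \cite{ChPr94}. Asserting that $\Delta(\Theta)$ ``is extracted'' does not close this gap.

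The second gap is the equivalence itself. Faithfulness of $G$ is indeed immediate from faithfulness of $F$, but fullness and essential surjectivity are the substance of the theorem and your sketch only names them. Essential surjectivity --- that every finitely generated projective left $H$-module is isomorphic to some $F(X)$ with its tautological action --- is not a formal consequence of exactness and $\Bbbk$-linearity; it requires exhibiting arbitrary objects of ${}_H\mathcal{M}_f$ as retracts of modules built from the $F(X)$, typically by realizing $H$ itself (or suitable quotients) inside the image of $G$, which again runs through the coend description. Similarly, fullness needs an actual argument that an $H$-equivariant map $F(X)\to F(Y)$ lifts to $\mathcal{C}$, not just the remark that the action ``encodes the naturality constraints.'' To turn this into a proof you would need to reproduce the argument of \cite{ChPr94}~Thm.~5.1.11 in detail; as it stands the proposal is a correct road map with the two hardest steps left open.
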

For a proof we refer to \cite{ChPr94}~Thm.~5.1.11. This reveals the
true analogue of the Hopf algebra structure on the monoidal category of
its representations and gives us a deeper understanding of the notion
of Hopf algebra.

\section{Quasi-Triangular Structures}\label{Sec2.3}

Following the spirit of the last section, i.e.
characterizing algebraic structures
by properties of the corresponding representation theory, we introduce
quasi-triangular bialgebras as those bialgebras whose
corresponding monoidal category is braided. We prove that this leads to
universal $\mathcal{R}$-matrices, which satisfy the hexagon relations
and control the noncocommutativity of the coproduct. The corresponding
references are \cite{ChPr94}~Sec.~5.2, \cite{Ka95}~Sec.~XIII~1.3
and \cite{Ma95}~Sec.~9.2. 

\begin{definition}[Quasi-Triangular Bialgebra]\label{def02}
A bialgebra $\mathcal{A}$ is said to be \textit{quasi-triangular} if
its monoidal category ${}_\mathcal{A}\mathcal{M}$ of left $\mathcal{A}$-modules is
braided. We call $\mathcal{A}$ \textit{triangular} if
${}_\mathcal{A}\mathcal{M}$ is braided symmetric.
\end{definition}
We expect to find additional algebraic structure
underlying a quasi-triangular bialgebra. In fact, the following
proposition, taken from \cite{Ka95}~Prop.~XIII.1.4,
recovers the original Definition from \cite{Dr86}.
\begin{proposition}\label{prop06}
A bialgebra $\mathcal{A}$ is quasi-triangular if and only if 
there is an invertible element
$\mathcal{R}\in\mathcal{A}\otimes\mathcal{A}$ such that
\begin{equation}\label{eq08}
    \Delta_{21}(a)=\mathcal{R}\Delta(a)\mathcal{R}^{-1}
\end{equation}
holds for all $a\in\mathcal{A}$, and such that the equations
\begin{align}\label{eq09}
    (\Delta\otimes\mathrm{id})(\mathcal{R})
    =\mathcal{R}_{13}\mathcal{R}_{23}
    \text{ and }
    (\mathrm{id}\otimes\Delta)(\mathcal{R})
    =\mathcal{R}_{13}\mathcal{R}_{12}
\end{align}
are satisfied. The bialgebra $\mathcal{A}$ is triangular
if and only if the element $\mathcal{R}$ satisfies
\begin{equation}\label{eq24}
    \mathcal{R}^{-1}=\mathcal{R}_{21}
\end{equation}
in addition.
\end{proposition}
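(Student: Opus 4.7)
The plan is to lift the Tannaka-type correspondence of Proposition \ref{prop04} and Proposition \ref{prop08} to the braided setting. On the algebraic side the datum is the universal $\mathcal{R}$-matrix; on the categorical side it is the natural family of braiding isomorphisms $c_{\mathcal{M},\mathcal{N}}\colon\mathcal{M}\otimes\mathcal{N}\to\mathcal{N}\otimes\mathcal{M}$. The key link is obtained by viewing $\mathcal{A}$ as a left module over itself under multiplication and setting $\mathcal{R}:=\tau_{\mathcal{A},\mathcal{A}}\bigl(c_{\mathcal{A},\mathcal{A}}(1\otimes 1)\bigr)\in\mathcal{A}\otimes\mathcal{A}$.

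For the direction assuming $\mathcal{R}$ exists, I would define
\begin{equation*}
c_{\mathcal{M},\mathcal{N}}(m\otimes n):=(\mathcal{R}_2\cdot n)\otimes(\mathcal{R}_1\cdot m),
\end{equation*}
with the formula possibly adjusted by an inversion or flip of $\mathcal{R}$ to match the sign convention of (\ref{eq08}), and verify four things in succession: left $\mathcal{A}$-linearity of $c_{\mathcal{M},\mathcal{N}}$, which on $\mathcal{A}\otimes\mathcal{A}$ reduces exactly to the commutation relation (\ref{eq08}); naturality in both arguments, which is immediate from the explicit formula; invertibility, which follows from invertibility of $\mathcal{R}$ via the analogous formula involving $\mathcal{R}^{-1}$; and the two hexagon axioms, which after unfolding the tensor-product module actions via $\Delta$ become precisely the two identities in (\ref{eq09}) involving $\mathcal{R}_{13}\mathcal{R}_{23}$ and $\mathcal{R}_{13}\mathcal{R}_{12}$.

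For the converse direction I would imitate the proof of Proposition \ref{prop08}. Given a braiding $c$ on ${}_\mathcal{A}\mathcal{M}$, define $\mathcal{R}$ as above. For arbitrary modules $\mathcal{M},\mathcal{N}$ and elements $m\in\mathcal{M}$, $n\in\mathcal{N}$, the maps $\phi_m\colon\mathcal{A}\to\mathcal{M}$, $a\mapsto a\cdot m$, and $\phi_n\colon\mathcal{A}\to\mathcal{N}$, $a\mapsto a\cdot n$, are morphisms in ${}_\mathcal{A}\mathcal{M}$, so naturality of $c$ applied to $\phi_m\otimes\phi_n$ recovers the formula $c_{\mathcal{M},\mathcal{N}}(m\otimes n)=(\mathcal{R}_2\cdot n)\otimes(\mathcal{R}_1\cdot m)$ from the single element $c_{\mathcal{A},\mathcal{A}}(1\otimes 1)$. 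The three required properties of $\mathcal{R}$ then read off: left $\mathcal{A}$-linearity of $c_{\mathcal{A},\mathcal{A}}$ yields (\ref{eq08}); invertibility of $c_{\mathcal{A},\mathcal{A}}$ yields invertibility of $\mathcal{R}$; and each hexagon axiom, evaluated on $1\otimes 1\otimes 1\in\mathcal{A}^{\otimes 3}$, produces one of the identities in (\ref{eq09}). For the triangular case, the braided symmetry condition $c_{\mathcal{N},\mathcal{M}}\circ c_{\mathcal{M},\mathcal{N}}=\mathrm{id}_{\mathcal{M}\otimes\mathcal{N}}$ translates, again via evaluation on $\mathcal{A}\otimes\mathcal{A}$, into $\mathcal{R}_{21}\mathcal{R}=1\otimes 1$, i.e. (\ref{eq24}).

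The main obstacle is the bookkeeping in the hexagon step: one has to expand $c_{\mathcal{M}\otimes\mathcal{N},\mathcal{P}}$ and $c_{\mathcal{M},\mathcal{N}\otimes\mathcal{P}}$ using the fact that the module action on a tensor product is defined through $\Delta$, and then recognize the resulting elements of $\mathcal{A}^{\otimes 3}$ as $(\Delta\otimes\mathrm{id})(\mathcal{R})$ and $(\mathrm{id}\otimes\Delta)(\mathcal{R})$, equal respectively to the leg-labeled products $\mathcal{R}_{13}\mathcal{R}_{23}$ and $\mathcal{R}_{13}\mathcal{R}_{12}$. A secondary subtlety is pinning down a convention for the braiding formula compatible with the sign convention of (\ref{eq08}); once chosen consistently, both directions of the correspondence run on the same dictionary.
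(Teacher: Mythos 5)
Your proposal is correct and follows essentially the same route as the paper's proof: define $\mathcal{R}=\tau_{\mathcal{A},\mathcal{A}}(\beta_{\mathcal{A},\mathcal{A}}(1\otimes 1))$, use naturality against the morphisms $\phi_m\colon a\mapsto a\cdot m$ to show the braiding is everywhere given by $(m\otimes m')\mapsto(\mathcal{R}_2\cdot m')\otimes(\mathcal{R}_1\cdot m)$, read off (\ref{eq08}) from $\mathcal{A}$-linearity on the regular module, and obtain (\ref{eq09}) by evaluating the hexagons on $1\otimes 1\otimes 1$ (up to the tensor-leg relabelling the paper also performs). The convention you flag as needing to be pinned down is exactly the one the paper uses, so no adjustment is required.
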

If the conditions (\ref{eq08}) and (\ref{eq09}) are satisfied, the
element $\mathcal{R}$ is said to be a \textit{universal
$\mathcal{R}$-matrix} or \textit{quasi-triangular structure}
for $\mathcal{A}$. If (\ref{eq24}) holds in addition, $\mathcal{R}$
is called \textit{triangular structure}. $\mathcal{A}$ is said
to be \textit{quasi-cocommutative} with respect to $\mathcal{R}$ if
(\ref{eq08}) holds. The equations (\ref{eq09}) are the so-called
\textit{hexagon relations}.
\begin{proof}
Let $\beta$ be a braiding on the monoidal category
${}_\mathcal{A}\mathcal{M}$ and define 
$$
\mathcal{R}
=\tau_{\mathcal{A},\mathcal{A}}(
\beta_{\mathcal{A},\mathcal{A}}(1\otimes 1)).
$$
Clearly $\mathcal{R}\in\mathcal{A}\otimes\mathcal{A}$ is invertible.
Before showing that it satisfies equations (\ref{eq08}) and (\ref{eq09})
we prove that
$
\beta_{\mathcal{M},\mathcal{M}'}(m\otimes m')
=\tau_{\mathcal{M},\mathcal{M}'}(\mathcal{R}\cdot(m\otimes m'))
$
for all $m\in\mathcal{M}$, $m'\in\mathcal{M}$ and left $\mathcal{A}$-modules
$\mathcal{M}$ and $\mathcal{M}'$. Since $\beta$ is a natural
isomorphism of the functors $\otimes$ and $\otimes\circ\tau$ we obtain
\begin{align*}
    \beta_{\mathcal{M},\mathcal{M}'}((\phi_m\otimes\phi_{m'})(a\otimes b))
    =(\phi_{m'}\otimes\phi_m)
    (\beta_{\mathcal{A}\otimes\mathcal{A}}(a\otimes b)),
\end{align*}
where $\phi_m(a)=a\cdot m$ for all $a\in\mathcal{A}$ and $m\in\mathcal{M}$
is a left $\mathcal{A}$-module homomorphism. Then
\begin{align*}
    \beta_{\mathcal{M},\mathcal{M}'}(m\otimes m')
    =&\beta_{\mathcal{M},\mathcal{M}'}((\phi_m\otimes\phi_{m'})(1\otimes 1))\\
    =&(\phi_{m'}\otimes\phi_m)
    (\beta_{\mathcal{A}\otimes\mathcal{A}}(1\otimes 1))\\
    =&(\phi_{m'}\otimes\phi_m)(\tau_{\mathcal{A},\mathcal{A}}(
    \mathcal{R}))\\
    =&\tau_{\mathcal{M},\mathcal{M}'}(\mathcal{R}\cdot(m\otimes m'))
\end{align*}
follows, since $\tau$ is a natural transformation. Using this we prove
\begin{align*}
    \Delta(a)\tau_{\mathcal{A},\mathcal{A}}(\mathcal{R})
    =&a\cdot(\beta_{\mathcal{A},\mathcal{A}}(1\otimes 1))\\
    =&\beta_{\mathcal{A},\mathcal{A}}(a\cdot(1\otimes 1))\\
    =&\tau_{\mathcal{A},\mathcal{A}}(\mathcal{R}\cdot\Delta(a)),
\end{align*}
since $\beta$ is a left $\mathcal{A}$-module homomorphism on objects.
Applying the flip isomorphism on both sides of the above equation leads to
equation (\ref{eq08}). By making use of the hexagon relations of
$\beta$ we obtain
\begin{align*}
    \mathcal{R}_{3,12}
    =&\alpha_{\mathcal{A},\mathcal{A},\mathcal{A}}(
    \beta_{\mathcal{A},\mathcal{A}\otimes\mathcal{A}}(
    \alpha_{\mathcal{A},\mathcal{A},\mathcal{A}}((1\otimes 1)\otimes 1)))\\
    =&(\mathrm{id}_\mathcal{A}\otimes\beta_{\mathcal{A},\mathcal{A}})(
    \alpha_{\mathcal{A},\mathcal{A},\mathcal{A}}(
    (\beta_{\mathcal{A},\mathcal{A}}\otimes\mathrm{id}_\mathcal{A})
    ((1\otimes 1)\otimes 1)))\\
    =&\mathcal{R}_{32}\mathcal{R}_{31}
\end{align*}
and
\begin{align*}
    \mathcal{R}_{23,1}
    =&\alpha^{-1}_{\mathcal{A},\mathcal{A},\mathcal{A}}(
    \beta_{\mathcal{A}\otimes\mathcal{A},\mathcal{A}}(
    \alpha^{-1}_{\mathcal{A},\mathcal{A},\mathcal{A}}(
    1\otimes(1\otimes 1))))\\
    =&(\beta_{\mathcal{A},\mathcal{A}}\otimes\mathrm{id}_\mathcal{A})(
    \alpha^{-1}_{\mathcal{A},\mathcal{A},\mathcal{A}}(
    (\mathrm{id}_\mathcal{A}\otimes\beta_{\mathcal{A},\mathcal{A}})(
    1\otimes(1\otimes 1))))\\
    =&\mathcal{R}_{21}\mathcal{R}_{31},
\end{align*}
concluding that $\mathcal{R}$ satisfies the equations (\ref{eq09}):
for the first we perform a tensor shift $(123)\rightarrow(231)$ to obtain
$\mathcal{R}_{1,23}=\mathcal{R}_{13}\mathcal{R}_{12}$ and a shift
$(123)\rightarrow(312)$ for the second equation 
$\mathcal{R}_{12,3}=\mathcal{R}_{13}\mathcal{R}_{23}$.
Let on the other hand $\mathcal{A}$ be a bialgebra and assume
the existence of an invertible element
$\mathcal{R}\in\mathcal{A}\otimes\mathcal{A}$ satisfying equations
(\ref{eq08}) and (\ref{eq09}). For two arbitrary left $\mathcal{A}$-modules
$\mathcal{M}$ and $\mathcal{M}'$ we define an isomorphism
$\beta_{\mathcal{M},\mathcal{M}'}\colon\mathcal{M}\otimes\mathcal{M}'
\rightarrow\mathcal{M}'\otimes\mathcal{M}$
of left $\mathcal{A}$-modules by
$$
\beta_{\mathcal{M},\mathcal{M}'}(m\otimes m')
=\tau_{\mathcal{M},\mathcal{M}'}(\mathcal{R}\cdot(m\otimes m'))
$$
for all $m\in\mathcal{M}$ and $m'\in\mathcal{M}'$. In fact, for all
$a\in\mathcal{A}$ one obtains
\begin{align*}
    \beta_{\mathcal{M},\mathcal{M}'}(a\cdot(m\otimes m'))
    =&\tau_{\mathcal{M},\mathcal{M}'}
    ((\mathcal{R}\Delta(a))\cdot(m\otimes m'))\\
    =&\tau_{\mathcal{M},\mathcal{M}'}
    ((\Delta_{21}(a)\mathcal{R})\cdot(m\otimes m'))\\
    =&\Delta(a)\cdot\tau_{\mathcal{M},\mathcal{M}'}(
    \mathcal{R}\cdot(m\otimes m'))\\
    =&a\cdot\beta_{\mathcal{M},\mathcal{M}'}(m\otimes m')
\end{align*}
by property (\ref{eq08}) and the inverse of 
$\beta_{\mathcal{M},\mathcal{M}'}$ is given by
$$
\mathcal{M}'\otimes\mathcal{M}\ni(m\otimes m')\mapsto
\mathcal{R}^{-1}\cdot(\tau_{\mathcal{M}',\mathcal{M}}(m\otimes m'))
\in\mathcal{M}\otimes\mathcal{M}'.
$$
This implies that $\beta\colon\otimes\rightarrow\otimes\circ\tau$
is a natural transformation. It remains to prove that hexagon
relations for $\beta$ and it is not surprising that they are
following by the hexagon relations of $\mathcal{R}$. In detail we obtain
\begin{align*}
    \alpha_{\mathcal{M}',\mathcal{M}'',\mathcal{M}}&(
    \beta_{\mathcal{M},\mathcal{M}'\otimes\mathcal{M}''}(
    \alpha_{\mathcal{M},\mathcal{M}',\mathcal{M}''}(
    (m\otimes m')\otimes m'')))\\
    =&\alpha_{\mathcal{M}',\mathcal{M}'',\mathcal{M}}(
    \tau_{\mathcal{M},\mathcal{M}'\otimes\mathcal{M}''}(
    (\mathcal{R}_1\cdot m)\otimes((\mathcal{R}_{2(1)}\cdot m')
    \otimes(\mathcal{R}_{2(2)}\cdot m''))))\\
    =&(\mathcal{R}_{2(1)}\cdot m')\otimes(
    (\mathcal{R}_{2(2)}\cdot m'')\otimes(\mathcal{R}_1\cdot m))\\
    =&(\mathcal{R}_2\cdot m')
    \otimes((\mathcal{R}'_2\cdot m'')
    \otimes((\mathcal{R}'_1\mathcal{R}_1)\cdot m))\\
    =&(\mathrm{id}_{\mathcal{M}'}\otimes\beta_{\mathcal{M},\mathcal{M}''})
    ((\mathcal{R}_2\cdot m')\otimes((\mathcal{R}_1\cdot m)\otimes m''))\\
    =&(\mathrm{id}_{\mathcal{M}'}\otimes\beta_{\mathcal{M},\mathcal{M}''})(
    \alpha_{\mathcal{M}',\mathcal{M},\mathcal{M}''}(
    (\beta_{\mathcal{M},\mathcal{M}'}\otimes\mathrm{id}_{\mathcal{M}''})
    ((m\otimes m')\otimes m'')))
\end{align*}
and
\begin{align*}
    \alpha^{-1}_{\mathcal{M}'',\mathcal{M},\mathcal{M}'}&(
    \beta_{\mathcal{M}\otimes\mathcal{M}',\mathcal{M}''}(
    \alpha^{-1}_{\mathcal{M},\mathcal{M}',\mathcal{M}''}(
    m\otimes(m'\otimes m''))))\\
    =&\alpha^{-1}_{\mathcal{M}'',\mathcal{M},\mathcal{M}'}(
    (\mathcal{R}_2\cdot m'')\otimes((\mathcal{R}_{1(1)}\cdot m)
    \otimes(\mathcal{R}_{1(2)}\cdot m')))\\
    =&((\mathcal{R}_2\cdot m'')\otimes(\mathcal{R}_{1(1)}\cdot m))
    \otimes(\mathcal{R}_{1(2)}\cdot m')\\
    =&(((\mathcal{R}'_2\mathcal{R}_2)\cdot m'')
    \otimes(\mathcal{R}'_1\cdot m))
    \otimes(\mathcal{R}_1\cdot m')\\
    =&(\beta_{\mathcal{M},\mathcal{M}''}\otimes\mathrm{id}_{\mathcal{M}'})
    ((m\otimes(\mathcal{R}_2\cdot m''))\otimes(\mathcal{R}_1\cdot m'))\\
    =&(\beta_{\mathcal{M},\mathcal{M}''}\otimes\mathrm{id}_{\mathcal{M}'})
    \alpha^{-1}_{\mathcal{M},\mathcal{M}'',\mathcal{M}'}(
    \mathrm{id}_\mathcal{M}\otimes\beta_{\mathcal{M}',\mathcal{M}''})(
    m\otimes(m'\otimes m''))
\end{align*}
for another left $\mathcal{A}$-module $\mathcal{M}''$, where we used
equations (\ref{eq09}) in leg notation, i.e.
$$
\mathcal{R}_1\otimes\mathcal{R}_{2(1)}\otimes\mathcal{R}_{2(2)}
=(\mathcal{R}'_1\mathcal{R}_1)\otimes\mathcal{R}_2\otimes\mathcal{R}'_2
$$
and
$$
\mathcal{R}_{1(1)}\otimes\mathcal{R}_{1(2)}\otimes\mathcal{R}_2
=\mathcal{R}'_1\otimes\mathcal{R}_1\otimes(\mathcal{R}'_2\mathcal{R}_2).
$$
This concludes the characterization of quasi-triangular bialgebras.
\end{proof}
Every universal $\mathcal{R}$-matrix satisfies an additional
equation, connecting it to the theory of integrable systems
(see \cite{Sem1995} for more information).
\begin{corollary}[QYBE]\label{cor01}
A universal $\mathcal{R}$-matrix $\mathcal{R}$
on a quasi-triangular bialgebra satisfies the
\textit{quantum Yang-Baxter equation}
$$
\mathcal{R}_{12}\mathcal{R}_{13}\mathcal{R}_{23}
=\mathcal{R}_{23}\mathcal{R}_{13}\mathcal{R}_{12}.
$$
\end{corollary}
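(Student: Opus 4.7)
The plan is to derive the quantum Yang-Baxter equation by combining the quasi-cocommutativity condition (\ref{eq08}) with the first hexagon relation in (\ref{eq09}). The key observation is that (\ref{eq08}) is an identity of elements of $\mathcal{A}\otimes\mathcal{A}$ parametrized by $a\in\mathcal{A}$, and it can be promoted to an identity in $\mathcal{A}^{\otimes 3}$ by feeding it the legs of a second copy of $\mathcal{R}$.

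Concretely, I would first pick a second copy $\mathcal{R}=\mathcal{R}_1\otimes\mathcal{R}_2$ in leg notation and apply the quasi-cocommutativity equation to each summand $a=\mathcal{R}_1$, appending $\otimes\mathcal{R}_2$ in the third tensor slot. Since $\mathcal{R}_{12}=\mathcal{R}\otimes 1$ and $(\Delta\otimes\mathrm{id})(\mathcal{R})=\Delta(\mathcal{R}_1)\otimes\mathcal{R}_2$, this gives the intertwining relation
\begin{equation*}
    \mathcal{R}_{12}\cdot(\Delta\otimes\mathrm{id})(\mathcal{R})
    =(\Delta_{21}\otimes\mathrm{id})(\mathcal{R})\cdot\mathcal{R}_{12}
\end{equation*}
in $\mathcal{A}^{\otimes 3}$, which is just (\ref{eq08}) with the parameter $a$ replaced by $\mathcal{R}_1$ in the first two legs and a spectator tensor factor $\mathcal{R}_2$ sitting in the third leg.

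Next, I would rewrite both sides using the first hexagon relation of (\ref{eq09}). Directly, $(\Delta\otimes\mathrm{id})(\mathcal{R})=\mathcal{R}_{13}\mathcal{R}_{23}$; after applying $\tau\otimes\mathrm{id}$ to this identity, the left-hand side becomes $(\Delta_{21}\otimes\mathrm{id})(\mathcal{R})$ and the right-hand side becomes $\mathcal{R}_{23}\mathcal{R}_{13}$, where the order is swapped because $\tau_{\mathcal{A},\mathcal{A}}$ is an anti-algebra isomorphism. Substituting both expressions into the intertwining relation yields exactly
\begin{equation*}
    \mathcal{R}_{12}\mathcal{R}_{13}\mathcal{R}_{23}
    =\mathcal{R}_{23}\mathcal{R}_{13}\mathcal{R}_{12},
\end{equation*}
which is the desired quantum Yang-Baxter equation.

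The only non-routine point is making sure that the second hexagon relation and the triangular condition (\ref{eq24}) play no role, so that the statement really is valid for all quasi-triangular bialgebras; a quick sanity check using the second hexagon instead produces the same QYBE by the symmetric argument, confirming the consistency. Nothing in the computation is a genuine obstacle — the proof is essentially a one-line manipulation once one recognises that (\ref{eq08}) can be applied leg-wise to another copy of $\mathcal{R}$; the only care needed is bookkeeping of the leg indices under the flip.
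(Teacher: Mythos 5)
Your proof is correct and takes essentially the same route as the paper: rewrite $\mathcal{R}_{13}\mathcal{R}_{23}$ as $(\Delta\otimes\mathrm{id})(\mathcal{R})$ via the first hexagon relation, commute it past $\mathcal{R}_{12}$ by applying quasi-cocommutativity leg-wise to $\mathcal{R}$, and identify $(\Delta_{21}\otimes\mathrm{id})(\mathcal{R})$ with $\mathcal{R}_{23}\mathcal{R}_{13}$. One small correction to your justification: the identity $(\tau\otimes\mathrm{id})(\mathcal{R}_{13}\mathcal{R}_{23})=\mathcal{R}_{23}\mathcal{R}_{13}$ holds because $\tau\otimes\mathrm{id}$ is an \emph{algebra} isomorphism of $\mathcal{A}^{\otimes 3}$ that relabels the first two legs (sending $\mathcal{R}_{13}\mapsto\mathcal{R}_{23}$ and $\mathcal{R}_{23}\mapsto\mathcal{R}_{13}$ while preserving the product order), not because $\tau$ reverses multiplication — your formula is right, but the stated reason is not.
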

\begin{proof}
Using (\ref{eq08}) and (\ref{eq09}) we obtain
\begin{align*}
    \mathcal{R}_{12}\mathcal{R}_{13}\mathcal{R}_{23}
    =\mathcal{R}_{12}\mathcal{R}_{12,3}
    =\mathcal{R}_{21,3}\mathcal{R}_{12}
    =\mathcal{R}_{23}\mathcal{R}_{13}\mathcal{R}_{12}.
\end{align*}
\end{proof}
\begin{definition}[Quasi-Triangular Hopf Algebra]
A Hopf algebra is said to be (quasi-)triangular if its underlying bialgebra is.
\end{definition}
We conclude this section with a specification of
Proposition~\ref{prop09} in the setting of quasi-triangular Hopf algebras
(compare also to \cite{ChPr94}~Ex.~5.1.4).
\begin{corollary}
Let $H$ be a quasi-triangular Hopf algebra. Then
${}_H\mathcal{M}_f$ is a rigid braided monoidal category.
\end{corollary}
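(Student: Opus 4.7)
The plan is to assemble this corollary from two results already at hand: Proposition~\ref{prop09}, which gives the rigidity of ${}_H\mathcal{M}_f$ for any Hopf algebra, and Proposition~\ref{prop06}, which furnishes ${}_H\mathcal{M}$ with a braiding whenever $H$ carries a universal $\mathcal{R}$-matrix. Since by definition a quasi-triangular Hopf algebra is one whose underlying bialgebra is quasi-triangular, both results apply to $H$.

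First I would invoke Proposition~\ref{prop09} to conclude that ${}_H\mathcal{M}_f$ is a rigid monoidal subcategory of ${}_H\mathcal{M}$, with left and right duals of an object $\mathcal{M}$ given by $\mathcal{M}^*$ equipped with the actions $\langle \xi\cdot\alpha,m\rangle=\langle\alpha,S(\xi)\cdot m\rangle$ and $\langle \xi\cdot\alpha,m\rangle=\langle\alpha,S^{-1}(\xi)\cdot m\rangle$, and with the evaluation and coevaluation maps from Lemma~\ref{lemma05}, which were verified to be $H$-equivariant in the proof of Proposition~\ref{prop09}. Next, by Proposition~\ref{prop06}, the universal $\mathcal{R}$-matrix $\mathcal{R}\in H\otimes H$ defines a braiding on the whole category ${}_H\mathcal{M}$ via
\begin{equation*}
    \beta_{\mathcal{M},\mathcal{M}'}(m\otimes m')
    =\tau_{\mathcal{M},\mathcal{M}'}(\mathcal{R}\cdot(m\otimes m'))
\end{equation*}
for any left $H$-modules $\mathcal{M}$ and $\mathcal{M}'$.

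It remains to check that this braiding restricts to the subcategory ${}_H\mathcal{M}_f$, i.e.\ that the braiding morphisms are morphisms in ${}_H\mathcal{M}_f$ whenever their source and target are objects of ${}_H\mathcal{M}_f$. This is immediate: the tensor product $\mathcal{M}\otimes\mathcal{M}'$ of two finitely generated projective $\Bbbk$-modules is again finitely generated projective (one takes products of the respective finite sets of (dual) generators), so both $\mathcal{M}\otimes\mathcal{M}'$ and $\mathcal{M}'\otimes\mathcal{M}$ lie in ${}_H\mathcal{M}_f$, and $\beta_{\mathcal{M},\mathcal{M}'}$ is already known to be an $H$-equivariant isomorphism between them from Proposition~\ref{prop06}. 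Hence ${}_H\mathcal{M}_f$ inherits from ${}_H\mathcal{M}$ a braiding satisfying the hexagon relations on the nose, being a full monoidal subcategory closed under the tensor product.

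Combining both ingredients, ${}_H\mathcal{M}_f$ is rigid (by Proposition~\ref{prop09}), monoidal (as a monoidal subcategory of ${}_H\mathcal{M}$, which in turn is monoidal by Proposition~\ref{prop04}), and braided (by the restriction argument just given), which is precisely the claim. There is really no hard step here; the only point worth underlining is the closure of ${}_H\mathcal{M}_f$ under $\otimes$, which justifies the passage from the braiding on ${}_H\mathcal{M}$ to one on ${}_H\mathcal{M}_f$.
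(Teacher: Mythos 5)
Your proposal is correct and follows the same route as the paper: rigidity from Proposition~\ref{prop09}, the braiding on ${}_H\mathcal{M}$ from the quasi-triangular structure, and restriction to the monoidal subcategory ${}_H\mathcal{M}_f$. Your explicit remark on closure of finitely generated projective modules under $\otimes$ is a welcome detail the paper leaves implicit, but the argument is otherwise identical.
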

\begin{proof}
From Proposition~\ref{prop09} it follows that ${}_H\mathcal{M}_f$ is a
rigid monoidal category and by definition ${}_H\mathcal{M}$ is braided.
Since ${}_H\mathcal{M}_f$ is a monoidal subcategory of ${}_H\mathcal{M}$
the latter also applies to ${}_H\mathcal{M}_f$.
\end{proof}

\section{The Drinfel'd Functor}\label{Sec2.4}

In this section we discuss gauge
transformations of quasi-triangular bialgebras. The corresponding
algebraic tool is given by a normalized $2$-cocycle, called Drinfel'd twist.
Following \cite{Ma95}~Sec.~2.3, we elaborate how to twist deform
the coproduct such that the deformed structure still corresponds
to a quasi-triangular bialgebra. Similarly, one twist deforms bialgebra
modules, which leads to the definition of the Drinfel'd functor. We prove that
this functor is braided monoidal and gives rise to a braided monoidal equivalence
of the representation theory of the deformed and undeformed bialgebra.
A twist deformation of antipodes is incorporated in the next section.
We also refer to \cite{Ka95}~Sec.~XV.3.
In the following, $\mathcal{B}$ denotes a bialgebra over a commutative ring $\Bbbk$
with coproduct $\Delta$ and counit $\epsilon$.
\begin{definition}[Drinfel'd Twist]
An invertible element $\mathcal{F}\in\mathcal{B}\otimes\mathcal{B}$ is said to
be a Drinfel'd twist, or twist for short, if it is
normalized, i.e. $(\epsilon\otimes\mathrm{id})(\mathcal{F})=1
=(\mathrm{id}\otimes\epsilon)(\mathcal{F})$
and satisfies the $2$-cocycle condition
$$
(\mathcal{F}\otimes 1)(\Delta\otimes\mathrm{id})(\mathcal{F})
=(1\otimes\mathcal{F})(\mathrm{id}\otimes\Delta)(\mathcal{F}).
$$
\end{definition}
The original definition goes back to Drinfel'd \cite{Dr83}, where
the author introduces twists as quantizations of solutions of the classical
Yang-Baxter equation. Those twists where considered as elements on 
(formal power series of) universal enveloping algebras of Lie
algebras (see also Section~\ref{Sec-r-matrix}).
The generalization to arbitrary bialgebras was undertaken by
Giaquinto and Zhang in \cite{GiZh98}. Drinfel'd twists on bialgebroids are
considered in e.g. \cite{Borowiec2017,Xu1998}.
We start by giving some examples of Drinfel'd twists 
to convince the reader of the richness of this concept.
\begin{example}\label{ex02}
\begin{enumerate}
\item[i.)] On every bialgebra $\mathcal{B}$ there is a twist given by the
unit element $1\otimes 1$. We refer to it as the \textit{trivial twist} in the
following;

\item[ii.)][c.f. \cite{GiZh98}~Thm.~2.1] Let $\Bbbk$ be a commutative ring
such that $\mathbb{Q}\subseteq\Bbbk$ and consider a commutative bialgebra
$\mathcal{B}$. Denote the primitive elements of $\mathcal{B}$ by $P$.
Then
$$
\exp(\hbar r)
=\sum_{n=0}^\infty\frac{\hbar^n}{n!}r^n
\in(\mathcal{B}\otimes\mathcal{B})[[\hbar]]
$$
is a twist on $\mathcal{B}[[\hbar]]$ for any $r\in P\otimes P$.
Here $\mathcal{B}[[\hbar]]$ is a topologically free module
and we consider the completed tensor product (c.f. \cite{ES2010}~Sec.~1.1);
\begin{proof}
Let $r=r_1\otimes r_2\in P\otimes P$. In particular 
$(\epsilon\otimes\mathrm{id})(r)=0=(\mathrm{id}\otimes\epsilon)(r)$
and since $\epsilon$ is an algebra homomorphism this implies that
$\exp(\hbar r)$ is normalized. Then 
$$
(\Delta\otimes\mathrm{id})\exp(\hbar r)
=\sum_{n=0}^\infty\frac{\hbar^n}{n!}\Delta(r_1^n)\otimes r_2^n
=\sum_{n=0}^\infty\frac{\hbar^n}{n!}\Delta(r_1)^n\otimes r_2^n
=\exp((\Delta\otimes\mathrm{id})(\hbar r))
$$
since $\Delta$ is an algebra homomorphism and similarly
$(\mathrm{id}\otimes\Delta)\exp(\hbar r)
=\exp((\mathrm{id}\otimes\Delta)(\hbar r))$ follows. Since
$\mathcal{B}$ is commutative, the Baker-Campbell-Hausdorff series of
$\mathbb{B}[[\hbar]]$ is trivial and
\begin{align*}
    (\Delta\otimes\mathrm{id})(\exp(\hbar r))
    (\exp(\hbar r)\otimes 1)
    =&\exp((\Delta\otimes\mathrm{id})(\hbar r))
    \exp(\hbar r\otimes 1)\\
    =&\exp((\Delta\otimes\mathrm{id})(\hbar r)+\hbar r\otimes 1)
\end{align*}
as well as $(\mathrm{id}\otimes\Delta)(\exp(\hbar r))
(1\otimes\exp(\hbar r))
=\exp((\mathrm{id}\otimes\Delta)(\hbar r)+1\otimes\hbar r)$ follow.
Now $r_1,r_2\in P$, which implies that
$(\Delta\otimes\mathrm{id})(\hbar r)+\hbar r\otimes 1
=(\mathrm{id}\otimes\Delta)(\hbar r)+1\otimes\hbar r$. This proves that
$\exp(\hbar r)$ is a twist on $\mathcal{B}[[\hbar]]$.
\end{proof}

\item[iii.)] A variation of ii.) is described in \cite{Reshetikhin1990}:
let $\Bbbk$ be a commutative ring such that $\mathbb{Q}\subseteq\Bbbk$
and consider a Lie algebra $\mathfrak{g}$ over $\Bbbk$ together with a set
of elements
$x_1,\ldots,x_n,y_1,\ldots,y_n\in\mathfrak{g}$, where $n\in\mathbb{N}$, such
that $[x_i,x_j]=[x_i,y_j]=[y_i,y_j]=0$ for all $1\leq i\leq n$. Then
$\exp(\hbar r)\in(\mathscr{U}\mathfrak{g}\otimes\mathscr{U}\mathfrak{g})[[\hbar]]$ 
is a twist on $\mathscr{U}\mathfrak{g}[[\hbar]]$, where $r
=\sum_{i=1}^nx_i\otimes y_i$. In this case we refer to $\exp(\hbar r)$
as an \textit{Abelian twist};

\item[iv.)][c.f. \cite{GiZh98}~Thm.~2.10]
Let $\Bbbk$ be a commutative ring such that $\mathbb{Q}\subseteq\Bbbk$
and consider the Lie algebra $\mathfrak{g}$ over $\Bbbk$ which is generated by
two elements $H,E\in\mathfrak{g}$ such that $[H,E]=2E$. Then
$$
\sum_{n=0}^\infty\sum_{m=0}^n\frac{\hbar^n}{n!}
(-1)^m\binom{n}{m}E^{n-m}H^{\langle m\rangle}
\otimes E^mH^{\langle n-m\rangle}
\in(\mathscr{U}\mathfrak{g}\otimes\mathscr{U}\mathfrak{g})[[\hbar]]
$$
is a twist on $\mathscr{U}\mathfrak{g}[[\hbar]]$, where
$H^{\langle m\rangle}$ is defined inductively by $H^{\langle 0\rangle}=1$
and $H^{\langle m+1\rangle}=H(H+1)\cdots(H+m)$;

\item[v.)][c.f. \cite{Ewen1992,Ohn1992}]
In the same setting as in iv.) there is a twist on
$\mathscr{U}\mathfrak{g}[[\hbar]]$ given by
$$
\exp\bigg(\frac{1}{2}H\otimes\mathrm{log}(1+\hbar E)\bigg)
\in(\mathscr{U}\mathfrak{g}\otimes\mathscr{U}\mathfrak{g})[[\hbar]],
$$
which is known as \textit{Jordanian twist}. We further refer to 
\cite{Aschieri2006}~Sec.~2.3.3 and references therein for a generalization 
of Jordanian twist and to \cite{Borowiec2019,Pachol2017} for more recent
discussions on Jordanian twists;
\end{enumerate}
\end{example}
Besides the trivial twist, all Drinfel'd twists we discussed above are
formal power series with entries in a bialgebra or Hopf algebra.
Further remark that all of them are the identity in zero order of the
formal parameter. In fact, many examples of Drinfel'd twists occur in
the context of deformation quantization, which naturally utilizes
formal power series. Furthermore
in the primordial article \cite{Dr83} Drinfel'd introduced twists
$\mathcal{F}
=\sum_{n=0}^\infty\hbar^nF_n\in\mathscr{U}\mathfrak{g}^{\otimes 2}[[\hbar]]$
on formal power series of universal enveloping algebras of a real or complex
Lie algebra $\mathfrak{g}$, such that $F_0=1\otimes 1$. The first order term
$F_1$ satisfies the so-called \textit{classical Yang-Baxter equation}
and Drinfel'd proved in the mentioned article that conversely any
solution of the classical Yang-Baxter equation on $\mathfrak{g}$ can be
realized as the first order term of a twist on
$\mathscr{U}\mathfrak{g}[[\hbar]]$ starting with the identity.
We revive this thought in Section~\ref{Sec-r-matrix}, discussing
the correspondence of (formal) Drinfel'd twists and their first order in
detail. However for the rest of this section we
come back to discuss general properties of Drinfel'd twists and how
they deform the underlying quasi-triangular bialgebra structure.
\begin{lemma}[Inverse Twist]
Let $\mathcal{F}$ be a twist on $\mathcal{B}$. Its inverse $\mathcal{F}^{-1}$
is normalized, i.e. $(\epsilon\otimes\mathrm{id})(\mathcal{F}^{-1})=1
=(\mathrm{id}\otimes\epsilon)(\mathcal{F}^{-1})$ and satisfies the
so called inverse $2$-cocycle condition
\begin{equation}
    (\Delta\otimes\mathrm{id})(\mathcal{F}^{-1})(\mathcal{F}^{-1}\otimes 1)
    =(\mathrm{id}\otimes\Delta)(\mathcal{F}^{-1})(1\otimes\mathcal{F}^{-1}).
\end{equation}
\end{lemma}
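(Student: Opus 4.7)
The plan is to derive both statements as algebraic consequences of the fact that $\epsilon$, $\Delta\otimes\mathrm{id}$, and $\mathrm{id}\otimes\Delta$ are all algebra homomorphisms, combined with the uniqueness of inverses in an associative algebra.

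For normalization, I would first observe that $\epsilon\otimes\mathrm{id}\colon\mathcal{B}\otimes\mathcal{B}\to\Bbbk\otimes\mathcal{B}\cong\mathcal{B}$ is an algebra homomorphism (since both $\epsilon$ and $\mathrm{id}$ are, and the product on a tensor product of algebras is componentwise as recalled in equation (\ref{AlgTimesAlg})). Applying this homomorphism to $\mathcal{F}\cdot\mathcal{F}^{-1}=1\otimes 1$ and $\mathcal{F}^{-1}\cdot\mathcal{F}=1\otimes 1$ yields
\[
1\cdot(\epsilon\otimes\mathrm{id})(\mathcal{F}^{-1})=1=(\epsilon\otimes\mathrm{id})(\mathcal{F}^{-1})\cdot 1,
\]
using the hypothesis $(\epsilon\otimes\mathrm{id})(\mathcal{F})=1$. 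The same argument with $\mathrm{id}\otimes\epsilon$ gives the second normalization identity.

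For the inverse $2$-cocycle condition, the strategy is simply to invert both sides of the $2$-cocycle condition of $\mathcal{F}$. Since $\Delta$ is an algebra homomorphism, so are $\Delta\otimes\mathrm{id}$ and $\mathrm{id}\otimes\Delta$; hence each maps inverses to inverses, so $[(\Delta\otimes\mathrm{id})(\mathcal{F})]^{-1}=(\Delta\otimes\mathrm{id})(\mathcal{F}^{-1})$ and analogously for $\mathrm{id}\otimes\Delta$. Moreover $(\mathcal{F}\otimes 1)^{-1}=\mathcal{F}^{-1}\otimes 1$ and $(1\otimes\mathcal{F})^{-1}=1\otimes\mathcal{F}^{-1}$. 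Inverting
\[
(\mathcal{F}\otimes 1)(\Delta\otimes\mathrm{id})(\mathcal{F})=(1\otimes\mathcal{F})(\mathrm{id}\otimes\Delta)(\mathcal{F}),
\]
and remembering that $(xy)^{-1}=y^{-1}x^{-1}$, then gives exactly
\[
(\Delta\otimes\mathrm{id})(\mathcal{F}^{-1})(\mathcal{F}^{-1}\otimes 1)=(\mathrm{id}\otimes\Delta)(\mathcal{F}^{-1})(1\otimes\mathcal{F}^{-1}).
\]

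There is no real obstacle here: the proof is essentially one line once one notes that $\Delta$, $\epsilon$, and $\mathrm{id}$ are algebra maps, so the two required properties of $\mathcal{F}^{-1}$ are obtained by pushing the defining identities of $\mathcal{F}$ through the operation of inversion in the algebras $\mathcal{B}$, $\mathcal{B}^{\otimes 2}$, and $\mathcal{B}^{\otimes 3}$. The only minor care needed is to keep track of the order-reversal when inverting a product, which is what changes the original $2$-cocycle condition into the stated inverse form.
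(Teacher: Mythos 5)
Your proof is correct and is precisely the standard argument this lemma rests on; the paper in fact states the lemma without proof, and your two observations (algebra homomorphisms preserve inverses, and inverting a product reverses the order of the factors) are exactly what is needed. Both the normalization and the inverse $2$-cocycle condition follow as you describe, with no gaps.
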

As already indicated, twist
are deformation tools, leading to compatibility on a categorical level.
We start by explaining how the twist deforms the underlying bialgebra
$\mathcal{B}$ before
passing to module algebras and more general to equivariant bialgebra
module algebra modules in the next section. We define the
\textit{twisted coproduct}
\begin{equation}
    \Delta_\mathcal{F}(\xi)=\mathcal{F}\Delta(\xi)\mathcal{F}^{-1}
\end{equation}
for all $\xi\in\mathcal{B}$.
\begin{proposition}\label{prop01}
Let $\mathcal{F}$ be a twist on $\mathcal{B}$. Then
$\mathcal{B}_\mathcal{F}
=(\mathcal{B},\mu,\eta,\Delta_\mathcal{F},\epsilon)$ is a bialgebra.
If $\mathcal{B}$ is quasi-triangular with universal $\mathcal{R}$-matrix
$\mathcal{R}$, so is $\mathcal{B}_\mathcal{F}$ with quasi-triangular structure
\begin{equation}
    \mathcal{R}_\mathcal{F}=\mathcal{F}_{21}\mathcal{R}\mathcal{F}^{-1}.
\end{equation}
If $\mathcal{R}$ is triangular, so is $\mathcal{R}_\mathcal{F}$.
\end{proposition}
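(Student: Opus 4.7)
The plan is to verify the axioms of a (quasi-triangular) bialgebra for $\mathcal{B}_\mathcal{F}$ one by one, using the normalization and $2$-cocycle properties of $\mathcal{F}$ (and the hexagon relations of $\mathcal{R}$ in the second part). The multiplication and unit are unchanged, so only the compatibility of $\Delta_\mathcal{F}$ with $\mu$, $\eta$ and $\epsilon$, together with coassociativity, need inspection. First I would observe that $\Delta_\mathcal{F}(\xi\chi) = \mathcal{F}\Delta(\xi)\Delta(\chi)\mathcal{F}^{-1} = \Delta_\mathcal{F}(\xi)\Delta_\mathcal{F}(\chi)$ and $\Delta_\mathcal{F}(1) = \mathcal{F}(1\otimes 1)\mathcal{F}^{-1} = 1\otimes 1$, so $\Delta_\mathcal{F}$ is still an algebra homomorphism; that $\epsilon$ remains an algebra homomorphism is trivial. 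The counit axiom follows from normalization: applying $\epsilon\otimes\mathrm{id}$ to $\Delta_\mathcal{F}(\xi) = \mathcal{F}\Delta(\xi)\mathcal{F}^{-1}$ and using $(\epsilon\otimes\mathrm{id})(\mathcal{F}) = 1 = (\epsilon\otimes\mathrm{id})(\mathcal{F}^{-1})$ (the second equality because $\epsilon$ is multiplicative) recovers $\xi$, and similarly for $\mathrm{id}\otimes\epsilon$.

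Next I would tackle coassociativity, which is the technical heart of the first statement. The idea is to compute both sides directly: expanding
\[
(\Delta_\mathcal{F}\otimes\mathrm{id})\Delta_\mathcal{F}(\xi)
= \mathcal{F}_{12}(\Delta\otimes\mathrm{id})(\mathcal{F})\,
(\Delta\otimes\mathrm{id})\Delta(\xi)\,
(\Delta\otimes\mathrm{id})(\mathcal{F}^{-1})\mathcal{F}_{12}^{-1},
\]
and similarly for $(\mathrm{id}\otimes\Delta_\mathcal{F})\Delta_\mathcal{F}(\xi)$ with $\mathcal{F}_{23}$ and $(\mathrm{id}\otimes\Delta)$. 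Using the coassociativity of $\Delta$ in the middle, the $2$-cocycle identity $\mathcal{F}_{12}(\Delta\otimes\mathrm{id})(\mathcal{F}) = \mathcal{F}_{23}(\mathrm{id}\otimes\Delta)(\mathcal{F})$ on the left and its inverse version on the right, the two expressions collapse to the same element. This is the main bookkeeping step.

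For the quasi-triangular part, I would verify the three defining identities of Proposition~\ref{prop06} for $\mathcal{R}_\mathcal{F} = \mathcal{F}_{21}\mathcal{R}\mathcal{F}^{-1}$. The quasi-cocommutativity $\Delta_{\mathcal{F},21}(\xi) = \mathcal{R}_\mathcal{F}\Delta_\mathcal{F}(\xi)\mathcal{R}_\mathcal{F}^{-1}$ falls out by direct substitution and the relation $\Delta_{21}(\xi) = \mathcal{R}\Delta(\xi)\mathcal{R}^{-1}$: the factors $\mathcal{F}^{-1}$ and $\mathcal{F}_{21}^{-1}$ conjugate in the right way so that $\mathcal{R}$ moves $\Delta(\xi)$ across into $\Delta_{21}(\xi)$. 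The hexagons are the real work: I would compute, for example,
\[
(\Delta_\mathcal{F}\otimes\mathrm{id})(\mathcal{R}_\mathcal{F})
= \mathcal{F}_{12}(\Delta\otimes\mathrm{id})(\mathcal{F}_{21}\mathcal{R}\mathcal{F}^{-1})\mathcal{F}_{12}^{-1},
\]
then rewrite $(\Delta\otimes\mathrm{id})(\mathcal{R}) = \mathcal{R}_{13}\mathcal{R}_{23}$ and insert the $2$-cocycle identities for $\mathcal{F}$ and $\mathcal{F}_{21}$ to convert everything into $(\mathcal{R}_\mathcal{F})_{13}(\mathcal{R}_\mathcal{F})_{23}$. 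The analogous manipulation handles $(\mathrm{id}\otimes\Delta_\mathcal{F})(\mathcal{R}_\mathcal{F}) = (\mathcal{R}_\mathcal{F})_{13}(\mathcal{R}_\mathcal{F})_{12}$. I expect the hexagons to be the trickiest step: keeping track of which $\mathcal{F}$ lives on which legs, and applying the $2$-cocycle condition in the flipped form $(\mathrm{id}\otimes\Delta)(\mathcal{F}_{21})\mathcal{F}_{23} = (\Delta\otimes\mathrm{id})(\mathcal{F}_{21})\mathcal{F}_{12}$ at the right moment, is where mistakes would occur.

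Finally, the triangular case is immediate: if $\mathcal{R}^{-1} = \mathcal{R}_{21}$ then
\[
(\mathcal{R}_\mathcal{F})_{21}
= \mathcal{F}\mathcal{R}_{21}\mathcal{F}_{21}^{-1}
= \mathcal{F}\mathcal{R}^{-1}\mathcal{F}_{21}^{-1}
= (\mathcal{F}_{21}\mathcal{R}\mathcal{F}^{-1})^{-1}
= \mathcal{R}_\mathcal{F}^{-1},
\]
so no further work is needed.
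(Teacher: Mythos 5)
Your proposal is correct and follows essentially the same route as the paper's proof: verify the counit axiom via normalization, the algebra-homomorphism property of $\Delta_\mathcal{F}$ by conjugation, quasi-cocommutativity by direct substitution, the hexagons by inserting the $2$-cocycle identity on the appropriate legs, and triangularity by the one-line flip computation. The only difference is that you spell out coassociativity of $\Delta_\mathcal{F}$ explicitly (via $\mathcal{F}_{12}(\Delta\otimes\mathrm{id})(\mathcal{F})=\mathcal{F}_{23}(\mathrm{id}\otimes\Delta)(\mathcal{F})$), a step the paper's written proof leaves implicit; your argument for it is the standard and correct one.
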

\begin{proof}
We split the proof into four parts.
\begin{enumerate}
\item[i.)] \textbf{$(\Delta_\mathcal{F},\epsilon)$ is a coalgebra structure on
$\mathcal{B}$: }let $\xi\in\mathcal{B}$. Then
\begin{align*}
    (\epsilon\otimes\mathrm{id})\Delta_\mathcal{F}(\xi)
    =&\epsilon(\mathcal{F}_1)\epsilon(\xi_{(1)})\epsilon(\mathcal{F}_1^{'-1})
    \mathcal{F}_2\xi_{(2)}\mathcal{F}_2^{'-1}\\
    =&\epsilon(\mathcal{F}_1)\mathcal{F}_2
    \epsilon(\xi_{(1)})\xi_{(2)}
    \epsilon(\mathcal{F}_1^{'-1})\mathcal{F}_2^{'-1}\\
    =&\xi
\end{align*}
and similarly one proves
$(\mathrm{id}\otimes\epsilon)\Delta_\mathcal{F}(\xi)=\xi$.
\item[ii.)] \textbf{$\Delta_\mathcal{F}$ is an algebra homomorphism: }let
$\xi,\chi\in\mathcal{B}$. Then
\begin{align*}
    \Delta_\mathcal{F}(\xi\chi)
    =\mathcal{F}\Delta(\xi\chi)\mathcal{F}^{-1}
    =\mathcal{F}\Delta(\xi)\mathcal{F}^{-1}\mathcal{F}\Delta(\chi)\mathcal{F}^{-1}
    =\Delta_\mathcal{F}(\xi)\Delta_\mathcal{F}(\chi)
\end{align*}
and $\Delta_\mathcal{F}(1)=\mathcal{F}(1\otimes 1)\mathcal{F}^{-1}=1\otimes 1$,
i.e. $\Delta_\mathcal{F}$ respects the algebra structure of $\mathcal{B}$ and
$\mathcal{B}\otimes\mathcal{B}$.
\item[iii.)] \textbf{$\mathcal{R}_\mathcal{F}$ is a quasi-triangular structure: }
let $\mathcal{R}$ be a universal $\mathcal{R}$-matrix on $\mathcal{B}$ and
$\xi\in\mathcal{B}$. Then
\begin{align*}
    \Delta_\mathcal{F}(\xi)_{21}
    =\mathcal{F}_{21}\Delta_{21}(\xi)\mathcal{F}_{21}^{-1}
    =\mathcal{F}_{21}\mathcal{R}\Delta(\xi)\mathcal{R}^{-1}
    \mathcal{F}_{21}^{-1}
    =\mathcal{R}_\mathcal{F}\Delta_\mathcal{F}(\xi)\mathcal{R}_\mathcal{F}^{-1}
\end{align*}
proves that $\Delta_\mathcal{F}$ is quasi-cocommutative with respect to
$\mathcal{R}_\mathcal{F}$. Moreover
\begin{align*}
    (\Delta_\mathcal{F}\otimes\mathrm{id})(\mathcal{R}_\mathcal{F})
    =&\mathcal{F}_{12}(\Delta\otimes\mathrm{id})(\mathcal{F}_{21})
    (\Delta\otimes\mathrm{id})(\mathcal{R})
    (\Delta\otimes\mathrm{id})(\mathcal{F}^{-1})
    \mathcal{F}_{12}^{-1}\\
    =&\mathcal{F}_{12}\mathcal{F}_{3,12}
    \mathcal{R}_{13}\mathcal{R}_{23}
    \mathcal{F}_{12,3}^{-1}\mathcal{F}_{12}^{-1}\\
    =&\mathcal{F}_{31}\mathcal{F}_{31,2}
    \mathcal{R}_{13}\mathcal{R}_{23}
    \mathcal{F}_{1,23}^{-1}\mathcal{F}_{23}^{-1}\\
    =&\mathcal{F}_{31}
    \mathcal{R}_{13}\mathcal{F}_{13,2}\mathcal{F}_{1,32}^{-1}\mathcal{R}_{23}
    \mathcal{F}_{23}^{-1}\\
    =&\mathcal{R}_{\mathcal{F}13}\mathcal{F}_{13}
    \mathcal{F}_{13,2}\mathcal{F}_{1,32}^{-1}
    \mathcal{F}_{32}^{-1}\mathcal{R}_{\mathcal{F}23}\\
    =&\mathcal{R}_{\mathcal{F}13}\mathcal{R}_{\mathcal{F}23}
\end{align*}
and
\begin{align*}
    (\mathrm{id}\otimes\Delta_\mathcal{F})(\mathcal{R}_\mathcal{F})
    =&\mathcal{F}_{23}(\mathrm{id}\otimes\Delta)(\mathcal{F}_{21})
    (\mathrm{id}\otimes\Delta)(\mathcal{R})
    (\mathrm{id}\otimes\Delta)(\mathcal{F}^{-1})
    \mathcal{F}_{23}^{-1}\\
    =&\mathcal{F}_{23}\mathcal{F}_{23,1}
    \mathcal{R}_{13}\mathcal{R}_{12}
    \mathcal{F}_{1,23}^{-1}
    \mathcal{F}_{23}^{-1}\\
    =&\mathcal{R}_{\mathcal{F}13}\mathcal{F}_{13}\mathcal{F}_{2,13}
    \mathcal{F}_{21,3}^{-1}\mathcal{F}_{21}^{-1}\mathcal{R}_{\mathcal{F}12}\\
    =&\mathcal{R}_{\mathcal{F}13}\mathcal{R}_{\mathcal{F}12}
\end{align*}
are the hexagon relations of $\mathcal{R}_\mathcal{F}$ with respect to
$\mathcal{B}_\mathcal{F}$.

\item[iv.)] \textbf{If $\mathcal{R}$ is triangular so is 
$\mathcal{R}_\mathcal{F}$: }assume that $\mathcal{R}$ is triangular. Then
\begin{align*}
    \mathcal{R}_{\mathcal{F}21}
    =\mathcal{F}\mathcal{R}_{21}\mathcal{F}_{21}^{-1}
    =\mathcal{F}\mathcal{R}^{-1}\mathcal{F}_{21}^{-1}
    =\mathcal{R}_{\mathcal{F}}^{-1},
\end{align*}
i.e. $\mathcal{R}_\mathcal{F}$ is triangular, too.
\end{enumerate}
This is all we need to conclude the proposition.
\end{proof}
The bialgebra constructed in Proposition~\ref{prop01} is called
\textit{twisted bialgebra} and we denote it by $\mathcal{B}_\mathcal{F}$. In the
following we use Sweedler's notation $\xi_{\widehat{(1)}}\otimes\xi_{\widehat{(2)}}
=\Delta_\mathcal{F}(\xi)$ to denote the twisted coproduct of $\xi\in\mathcal{B}$.
One might ask if the process of twisting is reversible
and what happens if one deforms repetitively. Both issues are
discussed in the following lemma.
\begin{lemma}\label{lemma11}
If $\mathcal{F}$ is a Drinfel'd twist on $\mathcal{B}$ and $\mathcal{F}'$ a
Drinfel'd twist on $\mathcal{B}_\mathcal{F}$
then $\mathcal{F}'\mathcal{F}$ is a Drinfel'd twist on $\mathcal{B}$
such that 
$$
\mathcal{B}_{\mathcal{F}'\mathcal{F}}
=(\mathcal{B}_{\mathcal{F}})_{\mathcal{F}'}.
$$
Furthermore,
$\mathcal{F}^{-1}$ is a Drinfel'd twist on $\mathcal{B}_\mathcal{F}$ and
$$
(\mathcal{B}_\mathcal{F})_{\mathcal{F}^{-1}}
=\mathcal{B}
$$
is an equality of bialgebras.
\end{lemma}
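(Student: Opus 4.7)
The plan is to verify the defining properties of a Drinfel'd twist (normalization and the $2$-cocycle condition) for $\mathcal{F}'\mathcal{F}$ with respect to $\Delta$, then identify twisted coproducts directly, and finally deduce the statements about $\mathcal{F}^{-1}$ either by direct check or by taking $\mathcal{F}'=\mathcal{F}^{-1}$ in the first part.

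First I would dispose of normalization. Since the counit $\epsilon$ is unchanged by twisting and $\epsilon$ is an algebra homomorphism, applying $\epsilon\otimes\mathrm{id}$ to a product in $\mathcal{B}\otimes\mathcal{B}$ is multiplicative, so
\begin{equation*}
(\epsilon\otimes\mathrm{id})(\mathcal{F}'\mathcal{F})
=(\epsilon\otimes\mathrm{id})(\mathcal{F}')\cdot(\epsilon\otimes\mathrm{id})(\mathcal{F})
=1\cdot 1=1,
\end{equation*}
and analogously for $\mathrm{id}\otimes\epsilon$.

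Next, for the cocycle condition I would unpack $(\Delta_\mathcal{F}\otimes\mathrm{id})(\mathcal{F}')=(\mathcal{F}\otimes 1)(\Delta\otimes\mathrm{id})(\mathcal{F}')(\mathcal{F}^{-1}\otimes 1)$ (and its mirror), so that the hypothesis that $\mathcal{F}'$ is a $2$-cocycle on $\mathcal{B}_\mathcal{F}$ becomes
\begin{equation*}
(\mathcal{F}'\otimes 1)(\mathcal{F}\otimes 1)(\Delta\otimes\mathrm{id})(\mathcal{F}')(\mathcal{F}^{-1}\otimes 1)
=(1\otimes\mathcal{F}')(1\otimes\mathcal{F})(\mathrm{id}\otimes\Delta)(\mathcal{F}')(1\otimes\mathcal{F}^{-1}).
\end{equation*}
Then I would compute $(\mathcal{F}'\mathcal{F}\otimes 1)(\Delta\otimes\mathrm{id})(\mathcal{F}'\mathcal{F})$ by splitting $(\Delta\otimes\mathrm{id})(\mathcal{F}'\mathcal{F})=(\Delta\otimes\mathrm{id})(\mathcal{F}')(\Delta\otimes\mathrm{id})(\mathcal{F})$, substituting the displayed identity, and using the $2$-cocycle property of $\mathcal{F}$ on $\mathcal{B}$ to convert the remaining $(\mathcal{F}\otimes 1)(\Delta\otimes\mathrm{id})(\mathcal{F})$ into $(1\otimes\mathcal{F})(\mathrm{id}\otimes\Delta)(\mathcal{F})$. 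A cancellation of $(1\otimes\mathcal{F}^{-1})(1\otimes\mathcal{F})$ then lands the expression exactly on $(1\otimes\mathcal{F}'\mathcal{F})(\mathrm{id}\otimes\Delta)(\mathcal{F}'\mathcal{F})$. The equality $\mathcal{B}_{\mathcal{F}'\mathcal{F}}=(\mathcal{B}_\mathcal{F})_{\mathcal{F}'}$ is then immediate, since both have the same underlying algebra and counit, while
\begin{equation*}
(\Delta_\mathcal{F})_{\mathcal{F}'}(\xi)
=\mathcal{F}'\mathcal{F}\,\Delta(\xi)\,\mathcal{F}^{-1}\mathcal{F}'^{-1}
=(\mathcal{F}'\mathcal{F})\,\Delta(\xi)\,(\mathcal{F}'\mathcal{F})^{-1}
=\Delta_{\mathcal{F}'\mathcal{F}}(\xi).
\end{equation*}

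For the second assertion I would note that $1\otimes 1$ is trivially a Drinfel'd twist on any bialgebra producing the same bialgebra back. Hence it suffices to verify that $\mathcal{F}^{-1}$ is a twist on $\mathcal{B}_\mathcal{F}$: normalization follows by applying $\epsilon\otimes\mathrm{id}$ to $\mathcal{F}\mathcal{F}^{-1}=1\otimes 1$, and the cocycle condition with respect to $\Delta_\mathcal{F}$ reduces, after the same substitution as above, precisely to the inverse $2$-cocycle condition for $\mathcal{F}^{-1}$ with respect to $\Delta$, which was stated just before the lemma. Applying the first part with $\mathcal{F}'=\mathcal{F}^{-1}$ then yields $(\mathcal{B}_\mathcal{F})_{\mathcal{F}^{-1}}=\mathcal{B}_{\mathcal{F}^{-1}\mathcal{F}}=\mathcal{B}_{1\otimes 1}=\mathcal{B}$. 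The only mildly subtle step is the bookkeeping in the long cocycle computation; with the substitutions above it is purely mechanical, so no real obstacle is expected.
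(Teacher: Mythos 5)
Your proposal is correct and follows essentially the same route as the paper's proof: normalization via multiplicativity of $\epsilon$, the $2$-cocycle condition for $\mathcal{F}'\mathcal{F}$ by inserting $(\mathcal{F}^{-1}\otimes 1)(\mathcal{F}\otimes 1)$ and rewriting the cocycle identity for $\mathcal{F}'$ on $\mathcal{B}_\mathcal{F}$ in terms of $\Delta$, and the statement about $\mathcal{F}^{-1}$ via the inverse $2$-cocycle condition together with $\mathcal{B}_{1\otimes 1}=\mathcal{B}$. Your explicit verification that $(\Delta_\mathcal{F})_{\mathcal{F}'}=\Delta_{\mathcal{F}'\mathcal{F}}$ is a small addition the paper leaves implicit, but otherwise the two arguments coincide.
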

\begin{proof}
We prove that $\mathcal{F}'\mathcal{F}$ is a Drinfel'd twist on
$\mathcal{B}$. Since $\epsilon$ is an algebra homomorphism we have
$(\epsilon\otimes\mathrm{id})(\mathcal{F}'\mathcal{F})=1
=(\mathrm{id}\otimes\epsilon)(\mathcal{F}'\mathcal{F})$, which means that
$\mathcal{F}'\mathcal{F}$ is normalized. Furthermore,
\begin{align*}
    (\mathcal{F}'\mathcal{F}\otimes 1)
    (\Delta\otimes\mathrm{id})(\mathcal{F}'\mathcal{F})
    =&(\mathcal{F}'\mathcal{F}\otimes 1)
    (\Delta\otimes\mathrm{id})(\mathcal{F}')
    (\mathcal{F}^{-1}\otimes 1)(\mathcal{F}\otimes 1)
    (\Delta\otimes\mathrm{id})(\mathcal{F})\\
    =&(1\otimes\mathcal{F}'\mathcal{F})
    (\mathrm{id}\otimes\Delta)(\mathcal{F}')
    (1\otimes\mathcal{F}^{-1})(\mathcal{F}\otimes 1)
    (\Delta\otimes\mathrm{id})(\mathcal{F})\\
    =&(1\otimes\mathcal{F}'\mathcal{F})
    (\mathrm{id}\otimes\Delta)(\mathcal{F}')
    (\mathrm{id}\otimes\Delta)(\mathcal{F})\\
    =&(1\otimes\mathcal{F}'\mathcal{F})
    (\mathrm{id}\otimes\Delta)(\mathcal{F}'\mathcal{F})
\end{align*}
proves that $\mathcal{F}'\mathcal{F}$ satisfies the $2$-cocycle condition
with respect to $\mathcal{B}$. Moreover, $\mathcal{F}^{-1}$ is
a Drinfel'd twist on $\mathcal{B}_\mathcal{F}$, since $\mathcal{F}^{-1}$
is normalized and
\begin{align*}
    (\mathcal{F}^{-1}\otimes 1)(\Delta_\mathcal{F}\otimes\mathrm{id})
    (\mathcal{F}^{-1})
    =&(\Delta\otimes\mathrm{id})(\mathcal{F}^{-1})
    (\mathcal{F}^{-1}\otimes 1)\\
    =&(\mathrm{id}\otimes\Delta)(\mathcal{F}^{-1})
    (1\otimes\mathcal{F}^{-1})\\
    =&(1\otimes\mathcal{F}^{-1})(\mathrm{id}\otimes\Delta_\mathcal{F})
    (\mathcal{F}^{-1})
\end{align*}
by the inverse $2$-cocycle property. Since $\mathcal{B}_{1\otimes 1}
=\mathcal{B}$ we conclude the proof of the lemma.
\end{proof}
Fix a Drinfel'd twist $\mathcal{F}$ on $\mathcal{B}$. In
Proposition~\ref{prop01} we proved that $\mathcal{B}_\mathcal{F}$
is a bialgebra, so according to Proposition~\ref{prop04}
${}_\mathcal{B}\mathcal{M}$ and ${}_{\mathcal{B}_\mathcal{F}}\mathcal{M}$
are both monoidal subcategories of ${}_\Bbbk\mathcal{M}$. Ignoring
the monoidal structures for a while we can define an assignment
\begin{equation}
    \mathrm{Drin}_\mathcal{F}
    \colon{}_\mathcal{B}\mathcal{M}
    \rightarrow{}_{\mathcal{B}_\mathcal{F}}\mathcal{M}
\end{equation}
to be the identity on objects and morphisms. It assigns to any object
$\mathcal{M}$ in ${}_\mathcal{B}\mathcal{M}$ itself, however seen as
a left $\mathcal{B}_\mathcal{F}$-module and to any left $\mathcal{B}$-module
homomorphism $\phi\colon\mathcal{M}\rightarrow\mathcal{M}'$ itself,
however viewed as a left $\mathcal{B}_\mathcal{F}$-module
homomorphism. This assignment is well-defined since
$\mathcal{B}$ and $\mathcal{B}_\mathcal{F}$ coincide as algebras.
To distinguish those two pictures we write 
$$
\mathrm{Drin}_\mathcal{F}(\mathcal{M})=\mathcal{M}_\mathcal{F}
\text{ and }
\mathrm{Drin}_\mathcal{F}(\phi)=\phi^\mathcal{F}.
$$
Since
$\mathrm{Drin}_\mathcal{F}$ is the identity on objects and morphisms
it follows that $\mathrm{Drin}_\mathcal{F}$ is a functor. We prove that
it is even a (braided) monoidal functor, leading to a (braided) monoidal
equivalence. Recall that for two left $\mathcal{B}$-modules
$\mathcal{M}$ and $\mathcal{M}'$, the tensor product
$\mathcal{M}\otimes\mathcal{M}'$ is again a left $\mathcal{B}$-module, or
equivalently a left $\mathcal{B}_\mathcal{F}$-module.
As a $\Bbbk$-module it coincides with the left
$\mathcal{B}_\mathcal{F}$-module
$\mathcal{M}_\mathcal{F}\otimes_\mathcal{F}\mathcal{M}'_\mathcal{F}$,
where we denote the monoidal structure on
${}_{\mathcal{B}_\mathcal{F}}\mathcal{M}$ by $\otimes_\mathcal{F}$.
However the left 
$\mathcal{B}_\mathcal{F}$-actions differ, since for all
$\xi\in\mathcal{B}$, $m\in\mathcal{M}$ and $m'\in\mathcal{M}'$ one obtains
$$
\xi\cdot(m\otimes m')
=(\xi_{(1)}\cdot m)\otimes(\xi_{(2)}\cdot m')
$$
while
$$
\xi\cdot(m\otimes_\mathcal{F}m')
=(\xi_{\widehat{(1)}}\cdot m)\otimes_\mathcal{F}(\xi_{\widehat{(2)}}\cdot m').
$$
To compare these two pictures we define a left
$\mathcal{B}_\mathcal{F}$-module isomorphism
$$
\varphi_{\mathcal{M},\mathcal{M}'}
\colon\mathcal{M}_\mathcal{F}\otimes_\mathcal{F}\mathcal{M}'_\mathcal{F}
\ni(m\otimes_\mathcal{F}m')\mapsto
(\mathcal{F}_1^{-1}\cdot m)\otimes(\mathcal{F}_2^{-1}\cdot m')
\in(\mathcal{M}\otimes\mathcal{M}')_\mathcal{F}.
$$
It functions as the natural transformation.
\begin{theorem}[c.f. \cite{Ka95}~Lem.~XV.3.7]\label{thm01}
Let $\mathcal{F}$ be a Drinfel'd twist on $\mathcal{B}$.
The functor
$$
\mathrm{Drin}_\mathcal{F}\colon
({}_\mathcal{B}\mathcal{M},\otimes)
\rightarrow
({}_{\mathcal{B}_\mathcal{F}}\mathcal{M},\otimes_\mathcal{F})
$$
is monoidal and the monoidal categories
$({}_{\mathcal{B}}\mathcal{M},\otimes)$
and $({}_{\mathcal{B}_\mathcal{F}}\mathcal{M},\otimes_\mathcal{F})$
are monoidally equivalent. If $\mathcal{B}$ is quasi-triangular,
$\mathrm{Drin}_\mathcal{F}$ is a braided monoidal functor and the two
categories are braided monoidally equivalent.
\end{theorem}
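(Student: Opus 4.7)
The plan is to verify that $\mathrm{Drin}_\mathcal{F}$, together with the natural isomorphisms $\varphi_{\mathcal{M},\mathcal{M}'}$ and the identity morphism on the unit object $\Bbbk$, satisfies the axioms of a (braided) monoidal functor, and then to produce a quasi-inverse via $\mathrm{Drin}_{\mathcal{F}^{-1}}$.

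First, I would verify that each $\varphi_{\mathcal{M},\mathcal{M}'}$ is a left $\mathcal{B}_\mathcal{F}$-module isomorphism. The obvious candidate for an inverse is the analogous map built from $\mathcal{F}$ instead of $\mathcal{F}^{-1}$, and the defining identity $\Delta_\mathcal{F}(\xi)=\mathcal{F}\Delta(\xi)\mathcal{F}^{-1}$ rearranges to $\mathcal{F}^{-1}\Delta_\mathcal{F}(\xi)=\Delta(\xi)\mathcal{F}^{-1}$, which is precisely the required linearity. Naturality of $\varphi$ in both slots is automatic since the underlying $\Bbbk$-linear maps $\phi^\mathcal{F},\phi'^{\mathcal{F}}$ are identical to $\phi,\phi'$ and commute with the pre-multiplication by $\mathcal{F}^{-1}$.

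Next, I would establish the coherence hexagon. Applied to an arbitrary element $m\otimes_\mathcal{F}(m'\otimes_\mathcal{F}m'')$, the path $(\mathrm{id}\otimes_\mathcal{F}\varphi_{\mathcal{M}',\mathcal{M}''})$ followed by $\varphi_{\mathcal{M},\mathcal{M}'\otimes\mathcal{M}''}$ produces the action of $(\mathrm{id}\otimes\Delta)(\mathcal{F}^{-1})(1\otimes\mathcal{F}^{-1})$ on $m\otimes m'\otimes m''$, while the other path yields $(\Delta\otimes\mathrm{id})(\mathcal{F}^{-1})(\mathcal{F}^{-1}\otimes 1)$; the two agree by the inverse $2$-cocycle identity for $\mathcal{F}^{-1}$. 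The two unit coherence axioms reduce identically to $(\epsilon\otimes\mathrm{id})(\mathcal{F}^{-1})=1=(\mathrm{id}\otimes\epsilon)(\mathcal{F}^{-1})$, i.e.\ the normalization of $\mathcal{F}$. Together this shows that $\mathrm{Drin}_\mathcal{F}$ is a monoidal functor.

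For the equivalence I would invoke Lemma~\ref{lemma11}: since $\mathcal{F}^{-1}$ is a Drinfel'd twist on $\mathcal{B}_\mathcal{F}$ with $(\mathcal{B}_\mathcal{F})_{\mathcal{F}^{-1}}=\mathcal{B}$, the monoidal functor $\mathrm{Drin}_{\mathcal{F}^{-1}}\colon{}_{\mathcal{B}_\mathcal{F}}\mathcal{M}\to{}_\mathcal{B}\mathcal{M}$ is well-defined and each of the compositions $\mathrm{Drin}_{\mathcal{F}^{-1}}\circ\mathrm{Drin}_\mathcal{F}$ and $\mathrm{Drin}_\mathcal{F}\circ\mathrm{Drin}_{\mathcal{F}^{-1}}$ is the identity on objects and morphisms, hence isomorphic to the identity functor via the obvious coherence data. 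Finally, for the braided statement, compatibility of $\varphi$ with the two braidings boils down to the identity
\[
\mathcal{F}^{-1}\mathcal{R}_{\mathcal{F}21}=(\mathcal{R}\mathcal{F}^{-1})_{21}=\mathcal{R}_{21}\mathcal{F}_{21}^{-1}
\]
of elements of $\mathcal{B}\otimes\mathcal{B}$, which follows immediately from $\mathcal{R}_\mathcal{F}=\mathcal{F}_{21}\mathcal{R}\mathcal{F}^{-1}$ and the multiplicativity of the tensor flip, $(AB)_{21}=A_{21}B_{21}$. The only genuine point of care throughout is the bookkeeping of Sweedler/leg indices when passing through $\Delta_\mathcal{F}$ and $\mathcal{R}_\mathcal{F}$; once these are unwound, each axiom is a one-line consequence of the $2$-cocycle, normalization, or quasi-triangularity data, so no substantial obstacle remains.
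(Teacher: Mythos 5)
Your proposal is correct and follows essentially the same route as the paper: the associativity coherence reduces to the inverse $2$-cocycle identity for $\mathcal{F}^{-1}$, the unit coherence to its normalization, the quasi-inverse is $\mathrm{Drin}_{\mathcal{F}^{-1}}$ via Lemma~\ref{lemma11}, and the braided compatibility is the one-line computation with $\mathcal{R}_\mathcal{F}=\mathcal{F}_{21}\mathcal{R}\mathcal{F}^{-1}$. You are in fact slightly more explicit than the paper in verifying that $\varphi_{\mathcal{M},\mathcal{M}'}$ is a $\mathcal{B}_\mathcal{F}$-module isomorphism and in writing out the braiding identity, which the paper dismisses as holding "by definition".
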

\begin{proof}
Let $\mathcal{M},\mathcal{M}'$ and $\mathcal{M}''$ be objects in
${}_\mathcal{B}\mathcal{M}$.
By the inverse $2$-cocycle property of $\mathcal{F}^{-1}$ we deduce that
the diagram
\begin{equation*}
\begin{tikzcd}
\mathcal{M}_\mathcal{F}\otimes_\mathcal{F}
\mathcal{M}'_\mathcal{F}\otimes_\mathcal{F}
\mathcal{M}''_\mathcal{F}
\arrow{rr}{\mathrm{id}\otimes_\mathcal{F}\varphi_{\mathcal{M}',\mathcal{M}''}} \arrow{d}[swap]{\varphi_{\mathcal{M},\mathcal{M}'}
\otimes_\mathcal{F}\mathrm{id}}
& & \mathcal{M}_\mathcal{F}\otimes_\mathcal{F}
(\mathcal{M}'\otimes\mathcal{M}'')_\mathcal{F}
\arrow{d}{\varphi_{\mathcal{M},\mathcal{M}'\otimes\mathcal{M}''}} \\
(\mathcal{M}\otimes\mathcal{M}')_\mathcal{F}\otimes_\mathcal{F}
\mathcal{M}''_\mathcal{F}
\arrow{rr}{\varphi_{\mathcal{M}\otimes\mathcal{M}',\mathcal{M}''}}
& & (\mathcal{M}\otimes\mathcal{M}'\otimes\mathcal{M}'')_\mathcal{F}
\end{tikzcd}
\end{equation*}
commutes, while the normalization property of $\mathcal{F}^{-1}$
is sufficient to make
\begin{equation*}
\begin{tikzcd}
\Bbbk_\mathcal{F}\otimes_\mathcal{F}
\mathcal{M}_\mathcal{F}
\arrow{r}{\varphi_{\Bbbk,\mathcal{M}}} \arrow{dr}[swap]{\cong}
& (\Bbbk\otimes\mathcal{M})_\mathcal{F}
\arrow{d}{\cong} \\
& \mathcal{M}_\mathcal{F}
\end{tikzcd},
\begin{tikzcd}
\mathcal{M}_\mathcal{F}\otimes_\mathcal{F}
\Bbbk_\mathcal{F}
\arrow{r}{\varphi_{\mathcal{M},\Bbbk}} \arrow{d}[swap]{\cong}
& (\mathcal{M}\otimes\Bbbk)_\mathcal{F}
\arrow{ld}{\cong} \\
\mathcal{M}_\mathcal{F}
&
\end{tikzcd}
\end{equation*}
commute. This proves that $\mathrm{Drin}_\mathcal{F}$ is a monoidal
functor. Assume now that $\mathcal{B}$ is quasi-triangular with
universal $\mathcal{R}$-matrix $\mathcal{R}\in\mathcal{B}\otimes\mathcal{B}$.
Equivalently, $({}_\mathcal{B}\mathcal{M},\otimes)$ is braided monoidal
with braiding 
$$
\beta_{\mathcal{M},\mathcal{M}'}
\colon\mathcal{M}\otimes\mathcal{M}'\ni(m\otimes m')
\mapsto\mathcal{R}^{-1}\cdot(m'\otimes m)\in
\mathcal{M}'\otimes\mathcal{M}.
$$
This was proven in Proposition~\ref{prop06}. Furthermore
$(\mathcal{B}_\mathcal{F},\otimes_\mathcal{F})$ is braided monoidal
with braiding
$$
\beta^\mathcal{F}_{\mathcal{M}_\mathcal{F},\mathcal{M}'_\mathcal{F}}
\colon\mathcal{M}_\mathcal{F}\otimes_\mathcal{F}
\mathcal{M}'_\mathcal{F}\ni(m\otimes_\mathcal{F}m')
\mapsto\mathcal{R}_\mathcal{F}^{-1}\cdot(m'\otimes_\mathcal{F} m)\in
\mathcal{M}'_\mathcal{F}\otimes_\mathcal{F}\mathcal{M}_\mathcal{F}
$$
according to Proposition~\ref{prop01}, where $\mathcal{R}_\mathcal{F}
=\mathcal{F}_{21}\mathcal{R}\mathcal{F}^{-1}$ is the quasi-triangular
structure on $\mathcal{B}_\mathcal{F}$. Then
\begin{equation*}
\begin{tikzcd}
\mathcal{M}_\mathcal{F}\otimes_\mathcal{F}\mathcal{M}'_\mathcal{F}
\arrow{r}{\varphi_{\mathcal{M},\mathcal{M}'}}
\arrow{d}[swap]{\beta^\mathcal{F}_{\mathcal{M}_\mathcal{F},
\mathcal{M}'_\mathcal{F}}}
& (\mathcal{M}\otimes\mathcal{M}')_\mathcal{F}
\arrow{d}{\mathrm{Drin}_\mathcal{F}(\beta_{\mathcal{M},\mathcal{M}'})} \\
\mathcal{M}'_\mathcal{F}\otimes_\mathcal{F}\mathcal{M}_\mathcal{F}
\arrow{r}{\varphi_{\mathcal{M}',\mathcal{M}}}
& (\mathcal{M}'\otimes\mathcal{M})_\mathcal{F}
\end{tikzcd}
\end{equation*}
commutes by definition, proving that $\mathrm{Drin}_\mathcal{F}$ is
a braided monoidal functor. Recalling Lemma~\ref{lemma11},
the inverse of $\mathrm{Drin}_\mathcal{F}$ is given by
$$
\mathrm{Drin}_{\mathcal{F}^{-1}}
\colon({}_{\mathcal{B}_\mathcal{F}}\mathcal{M},\otimes_\mathcal{F})
\rightarrow({}_{\mathcal{B}}\mathcal{M},\otimes).
$$
This concludes the proof of the theorem.
\end{proof}
%
%

\section{Module Algebras and their Equivariant Bimodules}\label{Sec2.5}

In this section we discuss bialgebra module algebras, i.e.
bialgebra modules with an additional algebra structure that is
respected by the bialgebra action. 
We prove that the Drinfel'd functor restricts to an isomorphism of the
equivariant algebra modules \cite{GiZh98}. Passing to equivariant
algebra bimodules this even becomes a monoidal equivalence if one
considers the tensor product over the algebra. In the end we are
interested in braided commutative algebras and  equivariant algebra bimodules which
are braided symmetric. As concrete examples one may think of twist star product
algebras together with twisted multivector fields and differential forms
(see Chapter~\ref{chap04}).
We obtain a braided monoidal equivalence via the Drinfel'd functor 
in this case (c.f. \cite{AsSh14,Schenkel2015}).
At the end of this section we also include twist deformations of antipodes
and rigid categories.
Since there are two different module
actions involved in the following, one from the bialgebra and one from them module
algebra, we denote the first by $\rhd$ and the latter by $\cdot$ for convenience.
In a first step we prove that a twist on a bialgebra $\mathcal{B}$ deforms the category of left
$\mathcal{B}$-module algebras. 
\begin{definition}
A $\Bbbk$-algebra $\mathcal{A}$ is said
to be a \textit{left $\mathcal{B}$-module algebra} of there is a $\Bbbk$-linear map
$\rhd\colon\mathcal{B}\otimes\mathcal{A}\rightarrow\mathcal{A}$, structuring
$\mathcal{A}$ as a left $\mathcal{B}$-module such that
$$
\xi\rhd(a\cdot b)
=(\xi_{(1)}\rhd a)\cdot(\xi_{(2)}\rhd b)
\text{ and }
\xi\rhd 1=\epsilon(\xi)1
$$
hold for all $\xi\in\mathcal{B}$ and $a,b\in\mathcal{A}$.
A \textit{left $\mathcal{B}$-module algebra homomorphism} is an algebra homomorphism
between left $\mathcal{B}$-module algebras which is a left $\mathcal{B}$-module 
homomorphism in addition. This constitutes the category ${}_\mathcal{B}\mathcal{A}$
of left $\mathcal{B}$-module algebras.
\end{definition}
In other words, the left $\mathcal{B}$-module action is respecting
the algebra structure of $\mathcal{A}$. Fix a Drinfel'd twist
$\mathcal{F}$ on $\mathcal{B}$ and a left $\mathcal{B}$-module algebra
$\mathcal{A}$ for now. Since $\mathcal{B}$ and $\mathcal{B}_\mathcal{F}$ are
representatives of the same gauge equivalence class it is natural to ask 
if also $\mathcal{B}_\mathcal{F}$ respects the algebra structure of
$\mathcal{A}$. However, since $\Delta$ differs from $\Delta_\mathcal{F}$
in general, this can not be expected. On the other hand there is a way
to gauge transform $\mathcal{A}$ using the twist $\mathcal{F}$, such
that $\mathcal{B}_\mathcal{F}$ respects the gauge transformed algebra.
\begin{proposition}[c.f. \cite{AsSh14}~Thm.~3.4]\label{prop02}
Let $\mathcal{F}$ be a twist on $\mathcal{B}$ and consider an object $\mathcal{A}
\in{}_\mathcal{B}\mathcal{A}$ with product $\cdot$ and unit
$1$. Then $\mathcal{A}_\mathcal{F}=(\mathcal{A},\cdot_\mathcal{F},1)$
is an (associative unital) algebra, where
\begin{equation}
    a\star_\mathcal{F}b
    =(\mathcal{F}_1^{-1}\rhd a)\cdot(\mathcal{F}_2^{-1}\rhd b)
\end{equation}
for all $a,b\in\mathcal{A}$.
Moreover, $\mathcal{A}_\mathcal{F}$ is an object in
${}_{\mathcal{B}_\mathcal{F}}\mathcal{A}$ with respect to the same Hopf algebra
action, i.e. it is a left $\mathcal{B}_\mathcal{F}$-module algebra.
\end{proposition}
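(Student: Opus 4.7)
The plan is to verify the three required properties in the order unitality, associativity, and the $\mathcal{B}_\mathcal{F}$-module algebra axiom. Each of the three will reduce to one of the three defining features of the twist — normalization, the inverse $2$-cocycle condition of $\mathcal{F}^{-1}$, and the relation $\Delta_\mathcal{F}(\xi)=\mathcal{F}\Delta(\xi)\mathcal{F}^{-1}$ — paired with the fact that $\rhd$ is a left $\mathcal{B}$-module algebra action, so that the underlying product $\cdot$ behaves well under $\rhd$ with respect to the undeformed coproduct.

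For unitality I would compute $1\cdot_\mathcal{F}a=(\mathcal{F}_1^{-1}\rhd 1)\cdot(\mathcal{F}_2^{-1}\rhd a)=\epsilon(\mathcal{F}_1^{-1})(\mathcal{F}_2^{-1}\rhd a)=((\epsilon\otimes\mathrm{id})(\mathcal{F}^{-1}))\rhd a=1\rhd a=a$, using $\xi\rhd 1=\epsilon(\xi)1$ and the normalization of $\mathcal{F}^{-1}$; the calculation $a\cdot_\mathcal{F}1=a$ is mirrored. For associativity I would expand $(a\cdot_\mathcal{F}b)\cdot_\mathcal{F}c$ twice, using that $\rhd$ distributes over $\cdot$ via $\Delta$, to obtain
\begin{equation*}
    ((\mathcal{F}^{-1}_{1(1)}\mathcal{F}^{'-1}_1)\rhd a)\cdot((\mathcal{F}^{-1}_{1(2)}\mathcal{F}^{'-1}_2)\rhd b)\cdot(\mathcal{F}^{-1}_2\rhd c),
\end{equation*}
which is the action of $(\Delta\otimes\mathrm{id})(\mathcal{F}^{-1})(\mathcal{F}^{-1}\otimes 1)$ on $a\otimes b\otimes c$ slot-by-slot. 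The symmetric expansion of $a\cdot_\mathcal{F}(b\cdot_\mathcal{F}c)$ produces the action of $(\mathrm{id}\otimes\Delta)(\mathcal{F}^{-1})(1\otimes\mathcal{F}^{-1})$, and equality follows from the inverse $2$-cocycle identity.

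For the left $\mathcal{B}_\mathcal{F}$-module algebra property, I would compute on the one hand
\begin{equation*}
    \xi\rhd(a\cdot_\mathcal{F}b)=((\xi_{(1)}\mathcal{F}^{-1}_1)\rhd a)\cdot((\xi_{(2)}\mathcal{F}^{-1}_2)\rhd b),
\end{equation*}
using that $\rhd$ is a module algebra action with respect to $\Delta$, and on the other hand
\begin{equation*}
    (\xi_{\widehat{(1)}}\rhd a)\cdot_\mathcal{F}(\xi_{\widehat{(2)}}\rhd b)=((\mathcal{F}^{-1}_1\xi_{\widehat{(1)}})\rhd a)\cdot((\mathcal{F}^{-1}_2\xi_{\widehat{(2)}})\rhd b).
\end{equation*}
These match because $\mathcal{F}^{-1}\Delta_\mathcal{F}(\xi)=\Delta(\xi)\mathcal{F}^{-1}$ is a direct rearrangement of the definition of $\Delta_\mathcal{F}$. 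The unit axiom $\xi\rhd 1=\epsilon(\xi)1$ carries over verbatim because the underlying $\rhd$ and the counit are unchanged when passing from $\mathcal{B}$ to $\mathcal{B}_\mathcal{F}$.

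There is no conceptual obstacle here; the only risk is notational, namely keeping several Sweedler indices for independent copies of $\mathcal{F}^{-1}$ consistently labeled during the associativity computation. If desired, one can package all three calculations uniformly by writing $a\cdot_\mathcal{F}b=\mu\circ(\rhd\otimes\rhd)\circ(\mathrm{id}_{\mathcal{B}\otimes\mathcal{B}}\otimes\tau_{\mathcal{B},\mathcal{A}}\otimes\mathrm{id}_\mathcal{A})(\mathcal{F}^{-1}\otimes a\otimes b)$ and reading off the required identities as equalities of morphisms in $\mathcal{B}^{\otimes k}\otimes\mathcal{A}^{\otimes k}\to\mathcal{A}$.
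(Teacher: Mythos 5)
Your proposal is correct and follows essentially the same route as the paper: unitality from the normalization of $\mathcal{F}^{-1}$ together with $\xi\rhd 1=\epsilon(\xi)1$, associativity from the inverse $2$-cocycle identity applied after distributing $\rhd$ over $\cdot$ via $\Delta$, and the $\mathcal{B}_\mathcal{F}$-module algebra axiom from the intertwining relation $\mathcal{F}^{-1}\Delta_\mathcal{F}(\xi)=\Delta(\xi)\mathcal{F}^{-1}$ (the paper achieves the same matching by inserting $\mathcal{F}''^{-1}\mathcal{F}'=1$ to exhibit $\Delta_\mathcal{F}(\xi)$ inside the expression). No gaps.
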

\begin{proof}
We split the proof into two parts. Denote $\mu_\mathcal{F}=\cdot_\mathcal{F}$
and the product and unit of $\mathcal{A}$ by $\mu_\mathcal{A}$ and
$\eta_\mathcal{A}$, respectively.
\begin{enumerate}
\item[i.)] \textbf{$\mathcal{A}_\mathcal{F}$ is an associative unital algebra: }
let $a,b,c\in\mathcal{A}$. Then
\begin{align*}
    \mu_\mathcal{F}(\mu_\mathcal{F}(a\otimes b)\otimes c)
    =&\mu_\mathcal{A}(\mu_\mathcal{A}
    (((\mathcal{F}_{1(1)}^{-1}\mathcal{F}_1^{'-1})\rhd a)
    \otimes((\mathcal{F}_{1(2)}^{-1}\mathcal{F}_2^{'-1})\rhd b))
    \otimes(\mathcal{F}_2^{-1}\rhd c))\\
    =&\mu_\mathcal{A}((\mathcal{F}_1^{-1}\rhd a)\otimes\mu_\mathcal{A}(
    ((\mathcal{F}_{2(1)}^{-1}\mathcal{F}_1^{'-1})\rhd b)
    \otimes((\mathcal{F}_{2(2)}^{-1}\mathcal{F}_2^{'-1})\rhd c)))\\
    =&\mu_\mathcal{F}(a\otimes\mu_\mathcal{F}(b\otimes c))
\end{align*}
shows that $\mu_\mathcal{F}$ is associative, where we made use of the associativity
of $\mu_\mathcal{A}$ and the $2$-cocycle property of $\mathcal{F}^{-1}$.
Furthermore, $\mu_\mathcal{F}$ is unital with respect to $\eta_\mathcal{A}$,
since for all $\lambda\in\Bbbk$
\begin{align*}
    (\mu_\mathcal{F}\circ(\mathrm{id}\otimes\eta_\mathcal{A}))(a\otimes\lambda)
    =&\lambda
    \mu_\mathcal{A}((\mathcal{F}_1^{-1}\rhd a)\otimes(\mathcal{F}_2^{-1}\rhd 1))\\
    =&\lambda
    \mu_\mathcal{A}(((\mathcal{F}_1^{-1}\epsilon(\mathcal{F}_2^{-1}))\rhd a)
    \otimes 1)\\
    =&\lambda\mu_\mathcal{A}((1\rhd a)\otimes 1)\\
    =&\lambda a
\end{align*}
and similarly 
$(\mu_\mathcal{F}\circ(\eta_\mathcal{A}\otimes\mathrm{id}))(\lambda\otimes a)
=\lambda a$ follows. Here we used that $\mathcal{F}^{-1}$ is normalized and that
$\mu_\mathcal{A}$ is unital with respect to $\eta_\mathcal{A}$.

\item[ii.)] \textbf{$\rhd\colon\mathcal{B}_\mathcal{F}\otimes\mathcal{A}_\mathcal{F}
\rightarrow\mathcal{A}_\mathcal{F}$ respects the algebra structure of
$\mathcal{A}_\mathcal{F}$: }let $\xi\in\mathcal{B}$ and $a,b\in\mathcal{A}$. Then
\begin{align*}
    \xi\rhd\mu_\mathcal{F}(a\otimes b)
    =&\mu_\mathcal{A}(((\mathcal{F}_1^{''-1}\mathcal{F}_1^{'}\xi_{(1)}
    \mathcal{F}_1^{-1})\rhd a)
    \otimes((\mathcal{F}_2^{''-1}\mathcal{F}_2^{'}
    \xi_{(2)}\mathcal{F}_2^{-1})\rhd b))\\
    =&\mu_\mathcal{F}((\xi_{\widehat{(1)}}\rhd a)\otimes
    (\xi_{\widehat{(2)}}\rhd b))
\end{align*}
and $\xi\rhd 1=\epsilon(\xi) 1$ are satisfied.
\end{enumerate}
Since $\mathcal{A}_\mathcal{F}$ equals $\mathcal{A}$ as a $\Bbbk$-module
and $\mathcal{B}_\mathcal{F}$ equals $\mathcal{B}$ as an algebra this is all
we have to prove.
\end{proof}
More generally, we are able to deform the category
of $\mathcal{B}$-equivariant left $\mathcal{A}$-modules. 
\begin{definition}
Let $\mathcal{A}$ be a left $\mathcal{B}$-module algebra and consider the
left $\mathcal{B}$-modules $\mathcal{M}$
which are left $\mathcal{A}$-modules in addition, such that
\begin{equation}
    \xi\rhd(a\cdot m)=(\xi_{(1)}\rhd a)\cdot(\xi_{(2)}\rhd m)
\end{equation}
holds for all $\xi\in\mathcal{B}$, $a\in\mathcal{A}$ and $m\in\mathcal{M}$.
They are said to be $\mathcal{B}$-equivariant left $\mathcal{A}$-modules,
forming a category ${}^\mathcal{B}_\mathcal{A}\mathcal{M}$ with
morphisms being left $\mathcal{B}$-linear and left $\mathcal{A}$-linear maps
between $\mathcal{B}$-equivariant left $\mathcal{A}$-modules.
\end{definition}
In complete analogy to Proposition~\ref{prop02} one proves the
following statement.
\begin{proposition}[c.f. \cite{AsSh14}~Thm.~3.5]\label{prop03}
Let $\mathcal{F}$ be a twist on $\mathcal{B}$ and consider a left
$\mathcal{B}$-module algebra 
$\mathcal{A}$. For every object $\mathcal{M}$ in
${}^\mathcal{B}_\mathcal{A}\mathcal{M}$ the twisted left
$\mathcal{A}_\mathcal{F}$-module action
\begin{equation}
    a\bullet_\mathcal{F}m
    =(\mathcal{F}_1^{-1}\rhd a)\cdot(\mathcal{F}_2^{-1}\rhd m),
\end{equation}
where $a\in\mathcal{A}$ and $m\in\mathcal{M}$,
structures $\mathcal{M}$ as an $\mathcal{B}_\mathcal{F}$-equivariant left 
$\mathcal{A}_\mathcal{F}$-module.
\end{proposition}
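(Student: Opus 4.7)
The plan is to mirror the proof of Proposition~\ref{prop02} essentially verbatim, since the only structural difference is that the second tensor factor now lives in $\mathcal{M}$ rather than in $\mathcal{A}$, and all of the algebraic manipulations rely only on the $\mathcal{B}$-equivariance of the $\mathcal{A}$-action on $\mathcal{M}$, not on $\mathcal{M} = \mathcal{A}$. I would split the argument into two parts: first check that $\bullet_\mathcal{F}$ is a left $\mathcal{A}_\mathcal{F}$-module action on $\mathcal{M}$, then check $\mathcal{B}_\mathcal{F}$-equivariance.

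For the left $\mathcal{A}_\mathcal{F}$-module axioms, I would compute $(a \star_\mathcal{F} b)\bullet_\mathcal{F} m$ by expanding $a\star_\mathcal{F} b = (\mathcal{F}_1'^{-1}\rhd a)\cdot(\mathcal{F}_2'^{-1}\rhd b)$ and using that $\rhd$ is a module algebra action to bring one leg of $\mathcal{F}^{-1}$ past the product. This yields an expression of the form $((\mathcal{F}^{-1}_{1(1)}\mathcal{F}^{'-1}_1)\rhd a)\cdot((\mathcal{F}^{-1}_{1(2)}\mathcal{F}^{'-1}_2)\rhd b)\cdot(\mathcal{F}^{-1}_2\rhd m)$. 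Computing $a\bullet_\mathcal{F}(b\bullet_\mathcal{F} m)$ analogously gives $(\mathcal{F}^{-1}_1\rhd a)\cdot((\mathcal{F}^{-1}_{2(1)}\mathcal{F}^{'-1}_1)\rhd b)\cdot((\mathcal{F}^{-1}_{2(2)}\mathcal{F}^{'-1}_2)\rhd m)$. The two agree by the inverse $2$-cocycle condition on $\mathcal{F}^{-1}$ combined with associativity of $\cdot$ on $\mathcal{M}$. Unitality $1\bullet_\mathcal{F} m = m$ follows from normalization $(\epsilon\otimes\mathrm{id})(\mathcal{F}^{-1})=1$ together with $\mathcal{F}_1^{-1}\rhd 1 = \epsilon(\mathcal{F}_1^{-1})1$.

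For the equivariance condition, I would compute
\[
\xi\rhd(a\bullet_\mathcal{F} m) = (\xi_{(1)}\mathcal{F}^{-1}_1\rhd a)\cdot(\xi_{(2)}\mathcal{F}^{-1}_2\rhd m),
\]
and then insert $\mathcal{F}\mathcal{F}^{-1}=1\otimes 1$ on both legs to recognize the twisted coproduct $\Delta_\mathcal{F}(\xi) = \mathcal{F}\Delta(\xi)\mathcal{F}^{-1}$, exactly as in part ii.) of the proof of Proposition~\ref{prop02}. This produces $(\xi_{\widehat{(1)}}\rhd a)\bullet_\mathcal{F}(\xi_{\widehat{(2)}}\rhd m)$ as required.

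No step here is a serious obstacle; everything reduces to the same two inputs (the inverse $2$-cocycle identity and the fact that $\mathcal{A}$ acts on $\mathcal{M}$ equivariantly) that already handled Proposition~\ref{prop02}. The one place to be mildly careful is keeping track of the Sweedler indices on $\mathcal{F}^{-1}$ when applying the cocycle identity, since one now has three distinct tensor slots $(\mathcal{A},\mathcal{A},\mathcal{M})$ rather than three slots in $\mathcal{A}$; but since the $\mathcal{B}$-equivariance of the $\mathcal{A}$-module structure on $\mathcal{M}$ has exactly the same form as the module algebra condition on $\mathcal{A}$, the same calculation goes through without modification. Note also that $\mathcal{M}$ already carries a compatible $\mathcal{B}_\mathcal{F}$-module structure (since $\mathcal{B}=\mathcal{B}_\mathcal{F}$ as an algebra), so no separate verification of that part is needed.
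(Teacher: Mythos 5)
Your proposal is correct and matches the paper's treatment: the paper gives no separate proof of this proposition, stating only that it is proven ``in complete analogy to Proposition~\ref{prop02},'' and your argument is exactly that analogy carried out, using the inverse $2$-cocycle condition for the module axiom and the insertion of $\mathcal{F}\mathcal{F}^{-1}$ to produce $\Delta_\mathcal{F}$ for equivariance.
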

The $\mathcal{B}_\mathcal{F}$-equivariant left $\mathcal{A}_\mathcal{F}$-module
$\mathcal{M}$ with module action $\bullet_\mathcal{F}$ is denoted by
$\mathcal{M}_\mathcal{F}$. If the left $\mathcal{A}$-module action is trivial
the assignment of Proposition~\ref{prop03} reduces to the
Drinfel'd functor $\mathrm{Drin}_\mathcal{F}\colon
{}_\mathcal{B}\mathcal{M}\rightarrow{}_{\mathcal{B}_\mathcal{F}}\mathcal{M}$.
On the other hand we can extend the Drinfel'd functor to
$\mathcal{B}$-equivariant left $\mathcal{A}$-modules.
\begin{proposition}[c.f. \cite{Schenkel2015}~Prop.~3.9]\label{prop11}
Let $\mathcal{A}\in{}_\mathcal{B}\mathcal{A}$ and $\mathcal{F}$ be a twist on
$\mathcal{B}$. There is a functor
\begin{equation}
    \mathrm{Drin}_\mathcal{F}\colon{}^\mathcal{B}_\mathcal{A}\mathcal{M}
    \rightarrow{}^{\mathcal{B}_\mathcal{F}}_{\mathcal{A}_\mathcal{F}}\mathcal{M}
\end{equation}
from the category of $\mathcal{B}$-equivariant left $\mathcal{A}$-modules to the
category of $\mathcal{B}_\mathcal{F}$-equivariant left
$\mathcal{A}_\mathcal{F}$-modules.
It is the identity on morphisms and assigns to any $\mathcal{B}$-equivariant left
$\mathcal{A}$-module $\mathcal{M}$ the $\mathcal{B}_\mathcal{F}$-equivariant left
$\mathcal{A}_\mathcal{F}$-module $\mathcal{M}_\mathcal{F}$ defined in
Proposition~\ref{prop03}. Furthermore, $\mathrm{Drin}_\mathcal{F}$ is an
isomorphism of categories.
\end{proposition}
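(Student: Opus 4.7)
The plan is to proceed in three steps: check well-definedness on morphisms, observe functoriality is automatic, and exhibit an explicit inverse.

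First, the assignment on objects is already given by Proposition~\ref{prop03}: any object $\mathcal{M}$ in ${}^\mathcal{B}_\mathcal{A}\mathcal{M}$ becomes the $\mathcal{B}_\mathcal{F}$-equivariant left $\mathcal{A}_\mathcal{F}$-module $\mathcal{M}_\mathcal{F}$ with action $\bullet_\mathcal{F}$. For a morphism $\phi\colon\mathcal{M}\rightarrow\mathcal{N}$ in ${}^\mathcal{B}_\mathcal{A}\mathcal{M}$, I declare $\mathrm{Drin}_\mathcal{F}(\phi)=\phi$ as a $\Bbbk$-linear map. I need to verify that this is a morphism in the target category. Since $\mathcal{B}$ and $\mathcal{B}_\mathcal{F}$ coincide as algebras and the $\mathcal{B}_\mathcal{F}$-action on $\mathcal{M}_\mathcal{F}$ and $\mathcal{N}_\mathcal{F}$ equals the original $\mathcal{B}$-action, $\mathcal{B}_\mathcal{F}$-linearity of $\phi$ is immediate. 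For $\mathcal{A}_\mathcal{F}$-linearity, I compute directly using the $\mathcal{A}$-linearity and $\mathcal{B}$-equivariance of $\phi$:
\begin{equation*}
\phi(a\bullet_\mathcal{F} m)
=\phi((\mathcal{F}_1^{-1}\rhd a)\cdot(\mathcal{F}_2^{-1}\rhd m))
=(\mathcal{F}_1^{-1}\rhd a)\cdot(\mathcal{F}_2^{-1}\rhd\phi(m))
=a\bullet_\mathcal{F}\phi(m)
\end{equation*}
for all $a\in\mathcal{A}$ and $m\in\mathcal{M}$.

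Second, functoriality is essentially free: since $\mathrm{Drin}_\mathcal{F}$ is the identity on morphisms, it preserves composition and identities automatically. Hence $\mathrm{Drin}_\mathcal{F}$ is a well-defined functor.

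Third, for the isomorphism claim, the candidate inverse is the analogous functor $\mathrm{Drin}_{\mathcal{F}^{-1}}\colon{}^{\mathcal{B}_\mathcal{F}}_{\mathcal{A}_\mathcal{F}}\mathcal{M}\rightarrow{}^\mathcal{B}_\mathcal{A}\mathcal{M}$, which makes sense because $\mathcal{F}^{-1}$ is a Drinfel'd twist on $\mathcal{B}_\mathcal{F}$ by Lemma~\ref{lemma11}, and because the same lemma already gives $(\mathcal{B}_\mathcal{F})_{\mathcal{F}^{-1}}=\mathcal{B}$ as bialgebras. I have to check the module-algebra analogue, i.e.\ that $(\mathcal{A}_\mathcal{F})_{\mathcal{F}^{-1}}=\mathcal{A}$ and $(\mathcal{M}_\mathcal{F})_{\mathcal{F}^{-1}}=\mathcal{M}$ as equivariant modules. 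Denoting the inverse twist operation in leg notation and using that the relevant inverse of $\mathcal{F}^{-1}$ is $\mathcal{F}$, I compute
\begin{equation*}
a\bullet_{\mathcal{F}^{-1}}^{(\mathcal{A}_\mathcal{F})}m
=(\mathcal{F}_1\rhd a)\bullet_\mathcal{F}(\mathcal{F}_2\rhd m)
=((\mathcal{F}_1^{'-1}\mathcal{F}_1)\rhd a)\cdot((\mathcal{F}_2^{'-1}\mathcal{F}_2)\rhd m)
=a\cdot m,
\end{equation*}
where the final step uses $\mathcal{F}^{-1}\mathcal{F}=1\otimes 1$. The same identity shows $(\mathcal{A}_\mathcal{F})_{\mathcal{F}^{-1}}=\mathcal{A}$ as module algebras. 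Together with Lemma~\ref{lemma11}, this gives $\mathrm{Drin}_{\mathcal{F}^{-1}}\circ\mathrm{Drin}_\mathcal{F}=\mathrm{id}$, and swapping the roles of $\mathcal{F}$ and $\mathcal{F}^{-1}$ yields the other composition. The only thing that could catch me out is keeping the leg-notation bookkeeping straight, but since both functors are the identity on underlying $\Bbbk$-modules and on morphisms, this is really just the inverse-twist calculation above.
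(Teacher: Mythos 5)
Your proof is correct and follows the same overall strategy as the paper: well-definedness on morphisms via the same one-line computation combining $\mathcal{A}$-linearity with $\mathcal{B}$-equivariance, functoriality for free since the assignment is the identity on morphisms, and invertibility via $\mathrm{Drin}_{\mathcal{F}^{-1}}$. The one place you diverge is in how you justify that $\mathrm{Drin}_{\mathcal{F}^{-1}}$ is a well-defined functor into ${}^\mathcal{B}_\mathcal{A}\mathcal{M}$: you delegate this to Lemma~\ref{lemma11} together with the general twisting construction of Propositions~\ref{prop02} and~\ref{prop03} applied to the twist $\mathcal{F}^{-1}$ on $\mathcal{B}_\mathcal{F}$, and then only need the short identification $(\mathcal{A}_\mathcal{F})_{\mathcal{F}^{-1}}=\mathcal{A}$ via $\mathcal{F}^{-1}\mathcal{F}=1\otimes 1$. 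The paper instead verifies by hand that $a\cdot m=(\mathcal{F}_1\rhd a)\bullet(\mathcal{F}_2\rhd m)$ is an associative, unital, $\mathcal{B}$-equivariant module action, which is a longer computation relying on the $2$-cocycle condition. Your modular route is legitimate and shorter, at the cost of implicitly using that $\mathcal{A}_\mathcal{F}$ is a left $\mathcal{B}_\mathcal{F}$-module algebra (Proposition~\ref{prop02}) so that the general construction applies; both approaches are sound.
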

\begin{proof}
By Proposition~\ref{prop03} the assignment $\mathrm{Drin}_\mathcal{F}$ is
well-defined on
objects. To prove that it is well-defined on morphisms consider a left
$\mathcal{A}$-module homomorphism $\phi\colon\mathcal{M}\rightarrow\mathcal{M}'$
between $\mathcal{B}$-equivariant left $\mathcal{A}$-module homomorphisms, which is
also a left $\mathcal{B}$-module homomorphism. Let $\xi\in\mathcal{B}$,
$a\in\mathcal{A}$ and $m\in\mathcal{M}$. Then
\begin{align*}
    \phi(a\bullet_\mathcal{F}m)
    =&\phi((\mathcal{F}_1^{-1}\rhd a)\cdot(\mathcal{F}_2^{-1}\rhd m))\\
    =&(\mathcal{F}_1^{-1}\rhd a)\cdot\phi(\mathcal{F}_2^{-1}\rhd m)\\
    =&(\mathcal{F}_1^{-1}\rhd a)\cdot(\mathcal{F}_2^{-1}\rhd\phi(m))\\
    =&a\bullet_\mathcal{F}\phi(m),
\end{align*}
while $\phi(\xi\rhd m)=\xi\rhd\phi(m)$ holds by definition. Consequently,
$\mathrm{Drin}_\mathcal{F}$ is also well-defined on morphisms. The functorial
properties are clear since the concatenation of morphisms is well-defined.
There is an inverse functor 
$$
\mathrm{Drin}_{\mathcal{F}^{-1}}
\colon{}^{\mathcal{B}_\mathcal{F}}_{\mathcal{A}_\mathcal{F}}\mathcal{M}
\rightarrow{}^\mathcal{B}_\mathcal{A}\mathcal{M},
$$
being the identity on morphisms and assigning to any
$\mathcal{B}_\mathcal{F}$-equivariant
left $\mathcal{A}_\mathcal{F}$-module $(\boldsymbol{\mathcal{M}},\bullet)$
the same left $\mathcal{B}$-module but with left $\mathcal{A}$-module structure
$$
a\cdot m=(\mathcal{F}_1\rhd a)\bullet(\mathcal{F}_2\rhd m)
$$
for all $a\in\mathcal{A}$ and $m\in\boldsymbol{\mathcal{M}}$. In fact,
this defines a left $\mathcal{A}$-module action, since for all
$a,b\in\mathcal{A}$ and $m\in\mathcal{M}$ one obtains
\begin{allowdisplaybreaks}
\begin{align*}
    a\cdot(b\cdot m)
    =&a\cdot((\mathcal{F}_1\rhd b)\bullet(\mathcal{F}_2\rhd m))\\
    =&(\mathcal{F}_1^{'}\rhd a)\bullet
    (\mathcal{F}_2^{'}\rhd((\mathcal{F}_1\rhd b)\bullet(\mathcal{F}_2\rhd m)))\\
    =&(\mathcal{F}_1^{'}\rhd a)
    \bullet(((\mathcal{F}_{2\widehat{(1)}}^{'}\mathcal{F}_1)\rhd b)
    \bullet((\mathcal{F}_{2\widehat{(2)}}^{'}\mathcal{F}_2)\rhd m))\\
    =&(\mathcal{F}_1^{'}\rhd a)
    \bullet(((\mathcal{F}_1\mathcal{F}_{2(1)}^{'})\rhd b)
    \bullet((\mathcal{F}_2\mathcal{F}_{2(2)}^{'})\rhd m))\\
    =&((\mathcal{F}_1\mathcal{F}_{1(1)}^{'})\rhd a)
    \bullet(((\mathcal{F}_2\mathcal{F}_{1(2)}^{'})\rhd b)
    \bullet(\mathcal{F}_{2}^{'}\rhd m))\\
    =&(((\mathcal{F}_1\mathcal{F}_{1(1)}^{'})\rhd a)
    \star_\mathcal{F}((\mathcal{F}_2\mathcal{F}_{1(2)}^{'})\rhd b))
    \bullet(\mathcal{F}_{2}^{'}\rhd m)\\
    =&((\mathcal{F}_{1(1)}^{'}\rhd a)
    \cdot(\mathcal{F}_{1(2)}^{'}\rhd b))
    \bullet(\mathcal{F}_{2}^{'}\rhd m)\\
    =&(\mathcal{F}_1^{'}\rhd(a\cdot b))\bullet(\mathcal{F}_2^{'}\rhd m)\\
    =&(a\cdot b)\cdot m
\end{align*}
\end{allowdisplaybreaks}
and $1\cdot m=(\mathcal{F}_1\rhd 1)\bullet(\mathcal{F}_2\rhd m)=m$.
Moreover, $\boldsymbol{\mathcal{M}}$ is $\mathcal{B}$-equivariant, since
\begin{align*}
    \xi\rhd(a\cdot m)
    =&\xi\rhd((\mathcal{F}_1\rhd a)\bullet(\mathcal{F}_2\rhd m))\\
    =&((\xi_{\widehat{(1)}}\mathcal{F}_1)\rhd a)\bullet
    ((\xi_{\widehat{(2)}}\mathcal{F}_2)\rhd m)\\
    =&((\mathcal{F}_1\xi_{(1)})\rhd a)\bullet
    ((\mathcal{F}_2\xi_{(2)})\rhd m)\\
    =&(\xi_{(1)}\rhd a)\cdot(\xi_{(2)}\rhd m)
\end{align*}
for all $\xi\in\mathcal{B}$, $a\in\mathcal{A}$ and $m\in\mathcal{M}$. We denote
$\boldsymbol{\mathcal{M}}$ with the module action $\cdot$ by
$\boldsymbol{\mathcal{M}}_{\mathcal{F}^{-1}}$.
Any morphisms $\boldsymbol{\phi}\colon\boldsymbol{\mathcal{M}}\rightarrow
\boldsymbol{\mathcal{M}'}$ in
${}^{\mathcal{B}_\mathcal{F}}_{\mathcal{A}_\mathcal{F}}\mathcal{M}$ can be viewed as
a morphisms
$\boldsymbol{\phi}\colon\boldsymbol{\mathcal{M}}_{\mathcal{F}^{-1}}\rightarrow
\boldsymbol{\mathcal{M}'}_{\mathcal{F}^{-1}}$
in ${}^\mathcal{B}_\mathcal{A}\mathcal{M}$. While the left $\mathcal{B}$-linearity 
is clear we check that
\begin{align*}
    \boldsymbol{\phi}(a\cdot m)
    =&\boldsymbol{\phi}((\mathcal{F}_1\rhd a)\bullet(\mathcal{F}_2\rhd m))\\
    =&(\mathcal{F}_1\rhd a)\bullet\boldsymbol{\phi}(\mathcal{F}_2\rhd m)\\
    =&(\mathcal{F}_1\rhd a)\bullet(\mathcal{F}_2\rhd\boldsymbol{\phi}(m))\\
    =&a\cdot(\boldsymbol{\phi}(m))
\end{align*}
holds in addition for all $a\in\mathcal{A}$ and
$m\in\boldsymbol{\mathcal{M}}_{\mathcal{F}^{-1}}$.
The functors $\mathrm{Drin}_\mathcal{F}$
and $\mathrm{Drin}_{\mathcal{F}^{-1}}$ are inverse to each other:
let $\mathcal{M}$ be on object in ${}^\mathcal{B}_\mathcal{A}\mathcal{M}$. Then
$(\mathcal{M}_\mathcal{F})_{\mathcal{F}^{-1}}=\mathcal{M}$ as objects in
${}^\mathcal{B}_\mathcal{A}\mathcal{M}$, since
\begin{align*}
    (\mathcal{F}_1\rhd a)\bullet_\mathcal{F}(\mathcal{F}_2\rhd m)
    =a\cdot m
\end{align*}
for all $a\in\mathcal{A}$ and $m\in\mathcal{M}$.
On the other hand, let $\boldsymbol{\mathcal{M}}$ be an object in
${}^{\mathcal{B}_\mathcal{F}}_{\mathcal{A}_\mathcal{F}}\mathcal{M}$. Then
$(\boldsymbol{\mathcal{M}}_{\mathcal{F}^{-1}})_\mathcal{F}
=\boldsymbol{\mathcal{M}}$ as objects in
${}^{\mathcal{B}_\mathcal{F}}_{\mathcal{A}_\mathcal{F}}\mathcal{M}$. To see this
let $a\in\mathcal{A}$ and $m\in\boldsymbol{\mathcal{M}}$ be arbitrary and
consider
\begin{align*}
    (\mathcal{F}_1^{-1}\rhd a)\cdot(\mathcal{F}_2^{-1}\rhd m)
    =a\bullet m.
\end{align*}
On the level of morphisms there is nothing to prove. This concludes
the proof of the proposition.
\end{proof}
The natural question arises if the functor from Proposition~\ref{prop11}
is still (braided) monoidal. This has to be negated, since
${}_\mathcal{A}^\mathcal{B}\mathcal{M}$ is not monoidal in general.
We need two further specifications, the first being to consider
$\mathcal{B}$-equivariant $\mathcal{A}$-bimodules. Namely, we consider
the subcategory ${}_\mathcal{A}^\mathcal{B}\mathcal{M}_\mathcal{A}$
of ${}_\mathcal{A}^\mathcal{B}\mathcal{M}$, consisting of those objects
$\mathcal{M}$ which inherit an additional right $\mathcal{A}$-module action,
which commutes with the left $\mathcal{A}$-module action such that
$$
\xi\rhd(m\cdot a)
=(\xi_{(1)}\rhd m)\cdot(\xi_{(2)}\rhd a)
$$
for all $\xi\in\mathcal{B}$, $m\in\mathcal{M}$ and $a\in\mathcal{A}$.
Morphisms in ${}_\mathcal{A}^\mathcal{B}\mathcal{M}_\mathcal{A}$ are
left $\mathcal{B}$-linear and left and right $\mathcal{A}$-linear maps.
In a second step we replace the tensor product $\otimes$ of $\Bbbk$-modules
with the tensor product $\otimes_\mathcal{A}$ over $\mathcal{A}$. Namely,
for two objects $\mathcal{M}$ and $\mathcal{M}'$ in
${}_\mathcal{A}^\mathcal{B}\mathcal{M}_\mathcal{A}$, the product
$\mathcal{M}\otimes_\mathcal{A}\mathcal{M}'$ is defined by the quotient
$$
\mathcal{M}\otimes\mathcal{M}'/\mathcal{N}_{\mathcal{M},\mathcal{M}'},
$$
where $\mathcal{N}_{\mathcal{M},\mathcal{M}'}$ is the ideal
in $\mathcal{M}\otimes\mathcal{M}'$, defined by the image of
$$
\rho_\mathcal{M}\otimes\mathrm{id}_{\mathcal{M}'}
-\mathrm{id}_\mathcal{M}\otimes\lambda_{\mathcal{M}'},
$$
where $\rho_\mathcal{M}$ and $\lambda_{\mathcal{M}'}$ denote the right
and left $\mathcal{A}$-module action on $\mathcal{M}$ and
$\mathcal{M}'$, respectively. In particular this implies
$$
(m\cdot a)\otimes_\mathcal{A}m'
=m\otimes_\mathcal{A}(a\cdot m')
$$
for all $a\in\mathcal{A}$, $m\in\mathcal{M}$ and $m'\in\mathcal{M}'$.
Furthermore, $\mathcal{M}\otimes_\mathcal{A}\mathcal{M}'$ is a
$\mathcal{B}$-equivariant $\mathcal{A}$-bimodule with
induced left $\mathcal{B}$-action and left and right $\mathcal{A}$-actions
given for all $a\in\mathcal{A}$, $m\in\mathcal{M}$ and $m'\in\mathcal{M}'$ by
$$
a\cdot(m\otimes_\mathcal{A}m')
=(a\cdot m)\otimes_\mathcal{A}m'
\text{ and }
(m\otimes_\mathcal{A}m')\cdot a
=m\otimes_\mathcal{A}(m'\cdot a),
$$
respectively. On morphisms $\phi\colon\mathcal{M}\rightarrow\mathcal{N}$
and $\psi\colon\mathcal{M}'\rightarrow\mathcal{N}'$ of
${}_\mathcal{A}^\mathcal{B}\mathcal{M}_\mathcal{A}$ one defines
$(\phi\otimes_\mathcal{A}\psi)(m\otimes_\mathcal{A}m')
=\phi(m)\otimes_\mathcal{A}\psi(m')$ for all $m\in\mathcal{M}$ and
$m'\in\mathcal{M}'$. This implies the following statement.
\begin{lemma}[c.f. \cite{Schenkel2015}~Prop.~3.11]
The tuple $({}_\mathcal{A}^\mathcal{B}\mathcal{M}_\mathcal{A},
\otimes_\mathcal{A})$ is a monoidal category and the Drinfel'd functor
\begin{equation}\label{eq29}
    \mathrm{Drin}_\mathcal{F}
    \colon({}_\mathcal{A}^\mathcal{B}\mathcal{M}_\mathcal{A},
    \otimes_\mathcal{A})
    \rightarrow
    ({}_{\mathcal{A}_\mathcal{F}}^{\mathcal{B}_\mathcal{F}}
    \mathcal{M}_{\mathcal{A}_\mathcal{F}},
    \otimes_{\mathcal{A}_\mathcal{F}})
\end{equation}
is monoidal, leading to a monoidal equivalence.
\end{lemma}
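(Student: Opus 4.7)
The plan is to proceed in three stages: first establish that $({}_\mathcal{A}^\mathcal{B}\mathcal{M}_\mathcal{A},\otimes_\mathcal{A})$ is monoidal, then extend $\mathrm{Drin}_\mathcal{F}$ from bimodules to bimodules over the twisted algebra, and finally exhibit the coherence isomorphism witnessing monoidality. For the first stage I would take $\mathcal{A}$ itself, viewed as a $\mathcal{B}$-equivariant $\mathcal{A}$-bimodule through multiplication, as the unit object, and inherit the associativity and unit constraints from the standard ones on ${}_\Bbbk\mathcal{M}$, which descend to $\otimes_\mathcal{A}$ because they preserve the defining relations of the balanced tensor product. The pentagon and triangle axioms then pass to $\otimes_\mathcal{A}$ from ${}_\Bbbk\mathcal{M}$, and it remains only to check that these constraints are $\mathcal{B}$-equivariant left and right $\mathcal{A}$-linear, which follows directly from the $\mathcal{B}$-equivariance of each module action.

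For the functor itself, I would first observe that if $(\mathcal{M},\lambda,\rho)$ is an object of ${}_\mathcal{A}^\mathcal{B}\mathcal{M}_\mathcal{A}$, then twisting both actions by $\mathcal{F}^{-1}$ in the manner of Proposition~\ref{prop03} (applied to the right action by viewing $\mathcal{M}$ as a left module over $\mathcal{A}^{\mathrm{op}}$) produces a $\mathcal{B}_\mathcal{F}$-equivariant $\mathcal{A}_\mathcal{F}$-bimodule $\mathcal{M}_\mathcal{F}$; associativity of the new actions and their mutual commutation both reduce to the $2$-cocycle condition on $\mathcal{F}^{-1}$, and $\mathcal{B}_\mathcal{F}$-equivariance was already checked. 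Functoriality and the fact that $\mathrm{Drin}_{\mathcal{F}^{-1}}$ provides an inverse are verbatim as in Proposition~\ref{prop11}. The monoidal structure is supplied by the natural transformation
\begin{equation}
\varphi_{\mathcal{M},\mathcal{M}'}\colon \mathcal{M}_\mathcal{F}\otimes_{\mathcal{A}_\mathcal{F}}\mathcal{M}'_\mathcal{F}\longrightarrow (\mathcal{M}\otimes_\mathcal{A}\mathcal{M}')_\mathcal{F},\qquad m\otimes_{\mathcal{A}_\mathcal{F}} m'\longmapsto (\mathcal{F}_1^{-1}\rhd m)\otimes_\mathcal{A}(\mathcal{F}_2^{-1}\rhd m'),
\end{equation}
together with the canonical unit constraint $\mathcal{A}\to\mathcal{A}_\mathcal{F}$ given by the identity on $\Bbbk$-modules.

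The main obstacle, and the one new computation not already visible in Theorem~\ref{thm01}, is to show that $\varphi_{\mathcal{M},\mathcal{M}'}$ is well defined on the quotient $\otimes_{\mathcal{A}_\mathcal{F}}$, i.e.\ that $\varphi_{\mathcal{M},\mathcal{M}'}((m\bullet_\mathcal{F} a)\otimes m')=\varphi_{\mathcal{M},\mathcal{M}'}(m\otimes (a\bullet_\mathcal{F} m'))$ holds in $(\mathcal{M}\otimes_\mathcal{A}\mathcal{M}')_\mathcal{F}$ for all $a\in\mathcal{A}$. Expanding both sides in terms of the undeformed actions produces an expression to which $(\Delta\otimes\mathrm{id})(\mathcal{F}^{-1})(\mathcal{F}^{-1}\otimes 1)$ is applied on one side and $(\mathrm{id}\otimes\Delta)(\mathcal{F}^{-1})(1\otimes\mathcal{F}^{-1})$ on the other; the $\mathcal{A}$-balancing of the original $\otimes_\mathcal{A}$ then turns the two expressions into the same element after invoking the inverse $2$-cocycle identity. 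Once $\varphi$ is shown to be a well-defined left $\mathcal{B}_\mathcal{F}$-linear and $\mathcal{A}_\mathcal{F}$-bilinear isomorphism, its inverse is built from $\mathcal{F}$ in the obvious way, and the hexagon-type associativity coherence as well as the unit coherences follow exactly as in the proof of Theorem~\ref{thm01}, again using the $2$-cocycle and normalization conditions respectively. Combined with the equivalence $\mathrm{Drin}_{\mathcal{F}^{-1}}$ this yields a monoidal equivalence of categories.
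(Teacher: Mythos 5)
Your proposal is correct and follows exactly the route the paper intends: it states this lemma without proof as a consequence of the preceding construction of $\otimes_\mathcal{A}$, and your argument supplies precisely the missing details --- reusing Proposition~\ref{prop03}/Proposition~\ref{prop11} for the twisted bimodule structure, identifying the well-definedness of $\varphi_{\mathcal{M},\mathcal{M}'}$ on the balanced tensor product as the one genuinely new verification, and correctly reducing it (together with the commutation of the twisted left and right actions) to the inverse $2$-cocycle identity. One parenthetical caveat: your remark that the twisted right action arises from Proposition~\ref{prop03} ``by viewing $\mathcal{M}$ as a left module over $\mathcal{A}^{\mathrm{op}}$'' does not literally apply, since $\mathcal{A}^{\mathrm{op}}$ is a module algebra over $\mathcal{B}^{\mathrm{cop}}$ rather than $\mathcal{B}$ (and no quasi-triangular structure is assumed here); this is harmless because the direct $2$-cocycle computation you also invoke for $m\bullet_\mathcal{F}a=(\mathcal{F}_1^{-1}\rhd m)\cdot(\mathcal{F}_2^{-1}\rhd a)$ is what actually carries the argument.
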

However, the monoidal functor (\ref{eq29}) still fails to be
braided monoidal in general. In fact 
$({}_\mathcal{A}^\mathcal{B}\mathcal{M}_\mathcal{A},
\otimes_\mathcal{A})$ is not even braided in general if $\mathcal{B}$ is
quasi-triangular. The $\mathcal{B}$-equivariant $\mathcal{A}$-bimodules
are still too arbitrary. We have to demand even more symmetry before.
We do so by considering a \textit{braided commutative} (also called
\textit{quasi-commutative}) left $\mathcal{B}$-module algebra
$\mathcal{A}$ for a triangular bialgebra $(\mathcal{B},\mathcal{R})$
instead of a general left $\mathcal{B}$-module algebra. This means that
$b\cdot a=(\mathcal{R}_1^{-1}\rhd a)\cdot(\mathcal{R}_2^{-1}\rhd b)$
holds for all elements $a,b$ of a left $\mathcal{B}$-module algebra
$(\mathcal{A},\cdot)$. On the level of $\mathcal{A}$-bimodules we want to
keep this symmetry.
\begin{definition}
Let $(\mathcal{B},\mathcal{R})$ be a triangular bialgebra
and $(\mathcal{A},\cdot)$ be a braided commutative left $\mathcal{B}$-module
algebra. A $\mathcal{B}$-equivariant $\mathcal{A}$-bimodule
$\mathcal{M}$ is said to be braided symmetric if
$$
a\cdot m
=(\mathcal{R}_1^{-1}\rhd m)\cdot(\mathcal{R}_2^{-1}\rhd a)
$$
for all $a\in\mathcal{A}$ and $m\in\mathcal{M}$. Their morphisms 
are left $\mathcal{B}$-linear maps which are left and right
$\mathcal{A}$-linear in addition. We denote the category
of \textit{$\mathcal{B}$-equivariant braided symmetric
$\mathcal{A}$-bimodules} by
${}_\mathcal{A}^\mathcal{B}\mathcal{M}_\mathcal{A}^\mathcal{R}$.
\end{definition}
In other words, the left and right $\mathcal{A}$-module actions are related
via the universal $\mathcal{R}$-matrix $\mathcal{R}$, mirroring
the braided commutativity of $\mathcal{A}$.
We proceed by proving the main theorem of this section
(c.f. \cite{Schenkel2015}~Thm.~3.13).
It states that the Drinfel'd functor is braided monoidal
on equivariant braided symmetric bimodules.
\begin{theorem}\label{thm02}
Let $(\mathcal{B},\mathcal{R})$ be a triangular bialgebra and $\mathcal{F}$
a Drinfel'd twist on $\mathcal{B}$. Then,
the triple $({}_\mathcal{A}^\mathcal{B}\mathcal{M}^\mathcal{R}_\mathcal{A},
\otimes_\mathcal{A},\beta^\mathcal{R})$ is a braided monoidal category and
the Drinfel'd functor
\begin{equation}\label{eq30}
    \mathrm{Drin}_\mathcal{F}
    \colon({}_\mathcal{A}^\mathcal{B}\mathcal{M}_\mathcal{A}^\mathcal{R},
    \otimes_\mathcal{A},\beta^\mathcal{R})
    \rightarrow
    ({}_{\mathcal{A}_\mathcal{F}}^{\mathcal{B}_\mathcal{F}}
    \mathcal{M}_{\mathcal{A}_\mathcal{F}}^{\mathcal{R}_\mathcal{F}},
    \otimes_{\mathcal{A}_\mathcal{F}},\beta^{\mathcal{F}})
\end{equation}
is braided monoidal, leading to a braided monoidal equivalence.
\end{theorem}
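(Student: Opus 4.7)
The plan is to upgrade the monoidal equivalence established in Proposition~\ref{prop11} (and the subsequent unnumbered lemma) to a braided monoidal equivalence by first producing the braiding on the source category and then checking that $\mathrm{Drin}_\mathcal{F}$ carries it to the braiding on the target.

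First I would define the candidate braiding by the familiar $\mathcal{R}$-matrix formula
$
\beta^\mathcal{R}_{\mathcal{M},\mathcal{N}}(m\otimes_\mathcal{A}n)
=(\mathcal{R}_2\rhd n)\otimes_\mathcal{A}(\mathcal{R}_1\rhd m),
$
lifted from the bialgebra-module case treated in Proposition~\ref{prop06}. The only new issue here is well-definedness on the quotient $\otimes_\mathcal{A}$: the identities $(m\cdot a)\otimes_\mathcal{A}n=m\otimes_\mathcal{A}(a\cdot n)$ must be respected by $\beta^\mathcal{R}$, and a direct expansion reduces this precisely to the braided symmetry condition on $\mathcal{M}$ and $\mathcal{N}$ combined with the quasi-cocommutativity relation~(\ref{eq08}) for $\mathcal{R}$. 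Left $\mathcal{B}$-linearity and left/right $\mathcal{A}$-linearity of $\beta^\mathcal{R}$ follow from the same relation and from braided commutativity of $\mathcal{A}$; the hexagon axioms translate directly into the two equations~(\ref{eq09}) satisfied by $\mathcal{R}$, exactly as in the proof of Proposition~\ref{prop06}, and invertibility with $(\beta^\mathcal{R})^{-1}=\beta^\mathcal{R}\circ\tau$ follows from triangularity $\mathcal{R}^{-1}=\mathcal{R}_{21}$. This establishes that $({}_\mathcal{A}^\mathcal{B}\mathcal{M}^\mathcal{R}_\mathcal{A},\otimes_\mathcal{A},\beta^\mathcal{R})$ is braided monoidal, and by the same argument applied on the twisted side, $({}_{\mathcal{A}_\mathcal{F}}^{\mathcal{B}_\mathcal{F}}\mathcal{M}_{\mathcal{A}_\mathcal{F}}^{\mathcal{R}_\mathcal{F}},\otimes_{\mathcal{A}_\mathcal{F}},\beta^\mathcal{F})$ is braided monoidal too, using $\mathcal{R}_\mathcal{F}=\mathcal{F}_{21}\mathcal{R}\mathcal{F}^{-1}$ from Proposition~\ref{prop01}.

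Next I would show that $\mathrm{Drin}_\mathcal{F}$ restricts to a well-defined functor between the braided symmetric subcategories. Given $\mathcal{M}\in{}_\mathcal{A}^\mathcal{B}\mathcal{M}^\mathcal{R}_\mathcal{A}$, the twisted bimodule actions are $a\bullet_\mathcal{F}m=(\mathcal{F}_1^{-1}\rhd a)\cdot(\mathcal{F}_2^{-1}\rhd m)$ and symmetrically from the right. I would compute $a\bullet_\mathcal{F}m$, insert $\mathcal{F}\mathcal{F}^{-1}$ where convenient, apply the braided symmetry of $\mathcal{M}$ (producing an $\mathcal{R}$-factor), and reorganize the resulting string using quasi-cocommutativity of $\Delta$ with respect to $\mathcal{R}$ to collect the combination $\mathcal{F}_{21}\mathcal{R}\mathcal{F}^{-1}=\mathcal{R}_\mathcal{F}$ in the correct position; this yields $m\bullet_\mathcal{F}(\mathcal{R}_{\mathcal{F}\,2}^{-1}\rhd a)\bullet_\mathcal{F}\cdots$, which is braided symmetry of $\mathcal{M}_\mathcal{F}$ with respect to $\mathcal{R}_\mathcal{F}$. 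This bookkeeping — repeatedly trading $\mathcal{R}$ for $\mathcal{R}_\mathcal{F}$ through insertions of $\mathcal{F}$ and $\mathcal{F}^{-1}$ — is the principal technical step and the one I expect to be the main obstacle; it is essentially the same kind of computation that appears in the proof of Proposition~\ref{prop01} for $(\mathcal{R}_\mathcal{F})_{21}=\mathcal{R}_\mathcal{F}^{-1}$, only carried out on module elements.

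Finally, to prove that $\mathrm{Drin}_\mathcal{F}$ is braided monoidal I would verify that the natural isomorphism $\varphi_{\mathcal{M},\mathcal{N}}$ from Theorem~\ref{thm01} intertwines $\beta^\mathcal{F}$ with $\mathrm{Drin}_\mathcal{F}(\beta^\mathcal{R})$; this amounts to the identity
$
\varphi_{\mathcal{N},\mathcal{M}}\circ\beta^\mathcal{F}_{\mathcal{M}_\mathcal{F},\mathcal{N}_\mathcal{F}}
=\beta^\mathcal{R}_{\mathcal{M},\mathcal{N}}\circ\varphi_{\mathcal{M},\mathcal{N}},
$
which unwinds to the defining relation $\mathcal{R}_\mathcal{F}=\mathcal{F}_{21}\mathcal{R}\mathcal{F}^{-1}$ in leg notation and is therefore immediate once the target braiding is in place. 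The monoidal equivalence is already known from Proposition~\ref{prop11} and the subsequent lemma, whose inverse is realized by $\mathrm{Drin}_{\mathcal{F}^{-1}}$; since both functors are now braided monoidal and $\mathrm{Drin}_{\mathcal{F}^{-1}}\circ\mathrm{Drin}_\mathcal{F}=\mathrm{id}$ as functors, the equivalence automatically upgrades to a braided monoidal equivalence, completing the proof.
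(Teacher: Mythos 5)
Your proposal follows essentially the same route as the paper: the monoidal equivalence is inherited from the preceding propositions, and the real content is (i) that the $\mathcal{R}$-matrix braiding is well defined on the braided symmetric subcategory and (ii) that $\mathrm{Drin}_\mathcal{F}$ converts braided symmetry with respect to $\mathcal{R}$ into braided symmetry with respect to $\mathcal{R}_\mathcal{F}=\mathcal{F}_{21}\mathcal{R}\mathcal{F}^{-1}$ --- your central computation $a\cdot_\mathcal{F}m=(\mathcal{R}_{\mathcal{F}1}^{-1}\rhd m)\cdot_\mathcal{F}(\mathcal{R}_{\mathcal{F}2}^{-1}\rhd a)$ is exactly the paper's key step, and you correctly identify it as the main technical point. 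The one item you pass over is that for $({}_\mathcal{A}^\mathcal{B}\mathcal{M}^\mathcal{R}_\mathcal{A},\otimes_\mathcal{A},\beta^\mathcal{R})$ to be a braided monoidal category at all, the product $\mathcal{M}\otimes_\mathcal{A}\mathcal{M}'$ of two braided symmetric bimodules must itself be braided symmetric; this closure of $\otimes_\mathcal{A}$ is the paper's first computation and is distinct from (though of the same flavor as) your check that $\beta^\mathcal{R}$ descends to the quotient. It follows from the hexagon relation for $\mathcal{R}^{-1}$ together with braided symmetry of each factor, so it is routine, but it should be stated explicitly.
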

\begin{proof}
It is sufficient to prove that $\otimes_\mathcal{A}$ and
$\mathrm{Drin}_\mathcal{F}$ are closed in the category of
equivariant braided symmetric bimodules. So let
$\mathcal{M}$ and $\mathcal{M}'$ be objects in
${}_\mathcal{A}^\mathcal{B}\mathcal{M}_\mathcal{A}$. Then
\begin{align*}
    a\cdot(m\otimes_\mathcal{A}m')
    =&(a\cdot m)\otimes_\mathcal{A}m'\\
    =&((\mathcal{R}_1^{-1}\rhd m)\cdot(\mathcal{R}_2^{-1}\rhd a))
    \otimes_\mathcal{A}m'\\
    =&(\mathcal{R}_1^{-1}\rhd m)
    \otimes_\mathcal{A}((\mathcal{R}_2^{-1}\rhd a)\cdot m')\\
    =&(\mathcal{R}_1^{-1}\rhd m)
    \otimes_\mathcal{A}((\mathcal{R}_1^{'-1}\rhd m')
    \cdot((\mathcal{R}_2^{'-1}\mathcal{R}_2^{-1})\rhd a))\\
    =&(\mathcal{R}_1^{-1}\rhd(m\otimes_\mathcal{A}m'))
    \cdot(\mathcal{R}_2^{-1}\rhd a)
\end{align*}
for all $a\in\mathcal{A}$, $m\in\mathcal{M}$ and
$m'\in\mathcal{M}'$. Furthermore $\Bbbk$ is a
$\mathcal{B}$-equivariant braided symmetric $\mathcal{A}$-bimodule,
since
$$
a\cdot\lambda
=\lambda\cdot a
=(\mathcal{R}_1^{-1}\rhd\lambda)\cdot(\mathcal{R}_2^{-1}\rhd a)
$$
for all $a\in\mathcal{A}$ and $\lambda\in\Bbbk$. We further observe that
\begin{align*}
    a\cdot_\mathcal{F}m
    =&(\mathcal{F}_1^{-1}\rhd a)\cdot(\mathcal{F}_2^{-1}\rhd m)\\
    =&((\mathcal{R}_1^{-1}\mathcal{F}_2^{-1})\rhd m)
    \cdot((\mathcal{R}_2^{-1}\mathcal{F}_1^{-1})\rhd a)\\
    =&(\mathcal{R}_{\mathcal{F}1}^{-1}\rhd m)
    \cdot_\mathcal{F}(\mathcal{R}_{\mathcal{F}2}^{-1}\rhd a)
\end{align*}
for all $a\in\mathcal{A}$ and $m\in\mathcal{M}$, proving that
$\mathcal{M}_\mathcal{F}$ is an object in
${}_{\mathcal{A}_\mathcal{F}}^{\mathcal{B}_\mathcal{F}}
\mathcal{M}_{\mathcal{A}_\mathcal{F}}^{\mathcal{R}_\mathcal{F}}$.
\end{proof}
The rest of this section is devoted to include twist deformations of
antipodes in the picture. 
If there is an antipode $S$ on $\mathcal{B}$ we define
$$
\beta=\mathcal{F}_1S(\mathcal{F}_2)\in\mathcal{B}\otimes\mathcal{B}.
$$
Since $\mathcal{B}$ is a Hopf algebra
in this case it is more convenient to write $H$ instead of $\mathcal{B}$.
\begin{lemma}\label{lemma01}
The element $\beta$ is invertible with inverse given by
$\beta^{-1}=S(\mathcal{F}_1^{-1})\mathcal{F}_2^{-1}$.
\end{lemma}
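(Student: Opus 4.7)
The strategy is a direct verification that $\beta \cdot \beta^{-1} = 1 = \beta^{-1} \cdot \beta$, where we set $\beta^{-1} := S(\mathcal{F}_1^{-1})\mathcal{F}_2^{-1}$. Using that $S$ is an anti-algebra homomorphism (Proposition~\ref{prop07}), we can combine the two antipode factors,
\[
\beta \cdot \beta^{-1}
= \mathcal{F}_1 S(\mathcal{F}_2) S(\mathcal{F}_1^{-1}) \mathcal{F}_2^{-1}
= \mathcal{F}_1 \, S(\mathcal{F}_1^{-1} \mathcal{F}_2) \, \mathcal{F}_2^{-1},
\]
so the task reduces to showing that the middle piece $\mathcal{F}_1^{-1}\mathcal{F}_2$ can be viewed as a coproduct, at which point the antipode axiom $\mu \circ (S\otimes \mathrm{id}) \circ \Delta = \eta \circ \epsilon$ would collapse the whole expression.

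The key tool to achieve this is the $2$-cocycle condition for $\mathcal{F}$ and its inverse form for $\mathcal{F}^{-1}$, combined with normalization. Concretely, I would apply $\mathrm{id} \otimes S \otimes \mathrm{id}$ to both sides of
\[
(\mathcal{F}\otimes 1)(\Delta\otimes\mathrm{id})(\mathcal{F})
=(1\otimes\mathcal{F})(\mathrm{id}\otimes\Delta)(\mathcal{F})
\]
and then multiply the three tensor legs. On the right-hand side the Sweedler pair arising from $\Delta(\mathcal{F}_2)$ contracts against the antipode to give an $\epsilon$, which is then absorbed by the normalization $(\epsilon\otimes\mathrm{id})(\mathcal{F})=1=(\mathrm{id}\otimes\epsilon)(\mathcal{F})$; the left-hand side reduces analogously to $\beta$ by the same normalization. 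The mirror manipulation starting from the inverse $2$-cocycle condition
\[
(\Delta\otimes\mathrm{id})(\mathcal{F}^{-1})(\mathcal{F}^{-1}\otimes 1)
=(\mathrm{id}\otimes\Delta)(\mathcal{F}^{-1})(1\otimes\mathcal{F}^{-1}),
\]
applying $S\otimes\mathrm{id}\otimes\mathrm{id}$ and multiplying through, produces the counterpart identity involving $\beta^{-1}$.

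Combining these two computations, inserting $\mathcal{F}\mathcal{F}^{-1}=1\otimes 1$ where needed and using $\xi_{(1)} S(\xi_{(2)}) = \epsilon(\xi)1$ for the other antipode axiom, yields $\beta\beta^{-1}=1$. The check $\beta^{-1}\beta = 1$ proceeds by the symmetric argument: start from $\beta^{-1}\beta = S(\mathcal{F}_1^{-1})\mathcal{F}_2^{-1}\mathcal{F}_1 S(\mathcal{F}_2)$ and apply the same pair of $2$-cocycle manipulations with the roles of the two antipode axioms swapped.

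The main obstacle is combinatorial bookkeeping. Each of $\mathcal{F}$, $\mathcal{F}^{-1}$ and the intermediate Sweedler coproducts contributes indices that must be aligned so that the $2$-cocycle identities apply in the correct tensor legs and so that the antipode axiom contracts exactly the adjacent Sweedler pair. Once the correct sequence of $\mathrm{id}\otimes S\otimes\mathrm{id}$ (or $S\otimes\mathrm{id}\otimes\mathrm{id}$) applications and leg contractions is pinned down, every individual step is a one-line invocation of an already established identity, and the computation collapses cleanly to $1$.
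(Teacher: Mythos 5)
Your reduction $\beta\beta^{-1}=\mathcal{F}_1S(\mathcal{F}_1^{'-1}\mathcal{F}_2)\mathcal{F}_2^{'-1}$ is correct, but it leaves the antipode sitting in the \emph{middle} of three factors drawn from two independent leg sums, whereas the antipode axiom only contracts an adjacent Sweedler pair $\xi_{(1)}S(\xi_{(2)})$ or $S(\xi_{(1)})\xi_{(2)}$ of a \emph{single} coproduct. Nothing in your sketch produces such a pattern. Moreover, the specific mechanism you rely on fails on one side: applying $\mathrm{id}\otimes S\otimes\mathrm{id}$ to the $2$-cocycle condition and multiplying the legs does collapse the left-hand side to $\beta$ (there the pair $\mathcal{F}'_{1(1)}S(\mathcal{F}'_{1(2)})=\epsilon(\mathcal{F}'_1)1$ is adjacent), but on the right-hand side one gets
\[
\mathcal{F}'_1\,S(\mathcal{F}'_{2(1)})\,S(\mathcal{F}_1)\mathcal{F}_2\,\mathcal{F}'_{2(2)},
\]
where the Sweedler pair is separated by $S(\mathcal{F}_1)\mathcal{F}_2$ and does \emph{not} contract. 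So this manipulation yields the identity $\beta=\mathcal{F}'_1S(\mathcal{F}'_{2(1)})S(\mathcal{F}_1)\mathcal{F}_2\mathcal{F}'_{2(2)}$, not $\beta\beta^{-1}=1$, and your proposal never explains how the two partial identities are to be combined.

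The missing idea — and it is an idea, not bookkeeping — is the insertion trick used in the paper: write $1=\mathcal{F}_1^{''-1}\epsilon(\mathcal{F}_2^{''-1})$ in front of $\beta\beta^{-1}$ using the normalization of $\mathcal{F}^{-1}$, then re-expand the scalar as $\epsilon(\mathcal{F}_2^{''-1})=S(\mathcal{F}_{2(1)}^{''-1})\mathcal{F}_{2(2)}^{''-1}$ and place these two factors on either side of $\mathcal{F}_2^{'-1}$. Only after merging the three antipode factors into $S(\mathcal{F}_{2(1)}^{''-1}\mathcal{F}_1^{'-1}\mathcal{F}_2)$ is the expression in the exact shape of one side of the inverse $2$-cocycle condition; applying it, then $\mathcal{F}^{-1}\mathcal{F}=1\otimes 1$, turns the whole product into the adjacent pair $\mathcal{F}_{1(1)}^{''-1}S(\mathcal{F}_{1(2)}^{''-1})\mathcal{F}_2^{''-1}=\epsilon(\mathcal{F}_1^{''-1})\mathcal{F}_2^{''-1}=1$. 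Without this insertion the cocycle condition cannot be brought to bear on $\beta\beta^{-1}$ at all, so as written the proposal has a genuine gap.
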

\begin{proof}
This is just a matter of computation using the axioms of Hopf algebra and Drinfel'd
twist. Explicitly we obtain
\begin{allowdisplaybreaks}
\begin{align*}
    \beta\beta^{-1}
    =&\mathcal{F}_1S(\mathcal{F}_2)S(\mathcal{F}_1^{'-1})\mathcal{F}_2^{'-1}\\
    =&\mathcal{F}_1^{''-1}\epsilon(\mathcal{F}_2^{''-1})\mathcal{F}_1
    S(\mathcal{F}_2)S(\mathcal{F}_1^{'-1})\mathcal{F}_2^{'-1}\\
    =&\mathcal{F}_1^{''-1}\mathcal{F}_1
    S(\mathcal{F}_2)S(\mathcal{F}_1^{'-1})
    S(\mathcal{F}_{2(1)}^{''-1})\mathcal{F}_{2(2)}^{''-1}\mathcal{F}_2^{'-1}\\
    =&\mathcal{F}_1^{''-1}\mathcal{F}_1
    S(\mathcal{F}_{2(1)}^{''-1}\mathcal{F}_1^{'-1}\mathcal{F}_2)
    \mathcal{F}_{2(2)}^{''-1}\mathcal{F}_2^{'-1}\\
    =&\mathcal{F}_{1(1)}^{''-1}\mathcal{F}_1^{'-1}\mathcal{F}_1
    S(\mathcal{F}_{1(2)}^{''-1}\mathcal{F}_2^{'-1}\mathcal{F}_2)
    \mathcal{F}_{2}^{''-1}\\
    =&\mathcal{F}_{1(1)}^{''-1}S(\mathcal{F}_{1(2)}^{''-1})
    \mathcal{F}_{2}^{''-1}\\
    =&\epsilon(\mathcal{F}_1^{''-1})\mathcal{F}_2^{''-1}\\
    =&1
\end{align*}
\end{allowdisplaybreaks}
and similarly $\beta^{-1}\beta=1$.
\end{proof}
Let us define a $\Bbbk$-linear map
$S_\mathcal{F}\colon H\rightarrow H$ by
\begin{equation}
    S_\mathcal{F}(\xi)=\beta S(\xi)\beta^{-1}
\end{equation}
for all $\xi\in H$. It is said to be the \textit{twisted antipode}.
\begin{proposition}
Let $(H,\mu,\eta,\Delta,\epsilon,S)$
be a Hopf algebra and $\mathcal{F}$ a Drinfel'd twist on $H$. Then
$H_\mathcal{F}=(H,\mu,\eta,\Delta_\mathcal{F},\epsilon,S_\mathcal{F})$
is a Hopf algebra. If $\mathcal{R}$ is a quasi-triangular structure on
$H$ then $\mathcal{R}_\mathcal{F}$ is a quasi-triangular structure on
$H_\mathcal{F}$. If $\mathcal{R}$ is triangular, so is
$\mathcal{R}_\mathcal{F}$.
\end{proposition}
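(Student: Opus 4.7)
The plan is to observe first that the bialgebra part of the statement, as well as the assertions about $\mathcal{R}_\mathcal{F}$ being a (triangular) quasi-triangular structure on the twisted bialgebra, are already contained in Proposition~\ref{prop01}. The only new content is that the twisted antipode $S_\mathcal{F}(\xi) = \beta S(\xi) \beta^{-1}$ is a bijective antipode for the bialgebra $(H,\mu,\eta,\Delta_\mathcal{F},\epsilon)$. Bijectivity of $S_\mathcal{F}$ is immediate: $\beta$ is invertible by Lemma~\ref{lemma01} and $S$ is bijective by hypothesis, so $S_\mathcal{F}$ is the composition of three bijections with explicit inverse $\beta S^{-1}(\beta^{-1}(\cdot)\beta)\beta^{-1}$.

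By Lemma~\ref{lemma03} it suffices to show that $S_\mathcal{F}$ is a two-sided convolution inverse of $\mathrm{id}_H$ in the convolution algebra built from the coalgebra $(H,\Delta_\mathcal{F},\epsilon)$ and the algebra $(H,\mu,\eta)$, i.e.
\[
\xi_{\widehat{(1)}}\,S_\mathcal{F}(\xi_{\widehat{(2)}})
= \epsilon(\xi)\,1
= S_\mathcal{F}(\xi_{\widehat{(1)}})\,\xi_{\widehat{(2)}}
\]
for every $\xi \in H$. I would expand $\Delta_\mathcal{F}(\xi) = \mathcal{F}\Delta(\xi)\mathcal{F}^{-1}$ in Sweedler notation, substitute the definition of $S_\mathcal{F}$, and push $S$ past the product factors using that $S$ is an anti-algebra homomorphism (Proposition~\ref{prop07}). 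The resulting expression is a sandwich with $\xi_{(1)}$ and $S(\xi_{(2)})$ in the middle and several copies of $\mathcal{F}$, $\mathcal{F}^{-1}$, $\beta$, $\beta^{-1}$ around them.

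The strategy is then to use the $2$-cocycle identity (or its inverse) to regroup the twist fragments next to $\beta$ so that, after a single application of the antipode identity $\xi_{(1)} S(\xi_{(2)}) = \epsilon(\xi)\,1$ (respectively $S(\xi_{(1)})\xi_{(2)} = \epsilon(\xi)\,1$), the surviving twist pieces reassemble into $\beta\beta^{-1} = 1$ (via Lemma~\ref{lemma01}) and yield $\epsilon(\xi)\,1$. The element $\beta$ has been designed precisely so that this regrouping works: it packages those twist legs which obstruct $S$ from being an antipode for $\Delta_\mathcal{F}$. The second antipode identity is verified by a mirror computation using the inverse $2$-cocycle condition, which has already been recorded earlier in the chapter.

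The main obstacle will be the combinatorial bookkeeping of the several copies of $\mathcal{F}$ and $\mathcal{F}^{-1}$ appearing after expansion: one must choose the correct sequence of $2$-cocycle substitutions to bring the expression into a form where a single antipode identity collapses everything. No conceptually new input is needed beyond the tools already assembled in Sections~\ref{Sec2.1} and~\ref{Sec2.4}, but the manipulation is notationally dense and needs to be done carefully to track which primed copy of $\mathcal{F}$ interacts with which piece of $\beta$.
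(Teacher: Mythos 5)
Your proposal is correct and follows essentially the same route as the paper: the bialgebra and (quasi-)triangularity claims are delegated to Proposition~\ref{prop01}, and the antipode axiom for $S_\mathcal{F}=\beta S(\cdot)\beta^{-1}$ is verified by expanding $\Delta_\mathcal{F}$, pushing $S$ through as an anti-homomorphism, regrouping with the $2$-cocycle identity, and collapsing via the antipode property of $S$ and $\beta\beta^{-1}=1$ from Lemma~\ref{lemma01}. The only cosmetic difference is that you phrase the target as a convolution-inverse condition via Lemma~\ref{lemma03}, whereas the paper checks the antipode identity directly; these are equivalent by that very lemma.
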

\begin{proof}
$S_\mathcal{F}$ is an antipode on $H$ with respect to
$\Delta_\mathcal{F}$ and $\epsilon$ since for all $\xi\in H$ we obtain
\begin{align*}
    (\mu\circ(S_\mathcal{F}\otimes\mathrm{id})\circ\Delta_\mathcal{F})(\xi)
    =&S_\mathcal{F}(\mathcal{F}_1\xi_{(1)}\mathcal{F}_1^{'-1})
    \mathcal{F}_2\xi_{(2)}\mathcal{F}_2^{'-1}\\
    =&\mathcal{F}_1^{''}S(\mathcal{F}_2^{''})
    S(\mathcal{F}_1\xi_{(1)}\mathcal{F}_1^{'-1})
    S(\mathcal{F}_1^{'''-1})\mathcal{F}_2^{'''-1}
    \mathcal{F}_2\xi_{(2)}\mathcal{F}_2^{'-1}\\
    =&\mathcal{F}_1^{''}
    S(\mathcal{F}_1^{'''-1}\mathcal{F}_1\xi_{(1)}
    \mathcal{F}_1^{'-1}\mathcal{F}_2^{''})
    \mathcal{F}_2^{'''-1}\mathcal{F}_2\xi_{(2)}\mathcal{F}_2^{'-1}\\
    =&\mathcal{F}_1^{''}
    S(\mathcal{F}_1^{'-1}\mathcal{F}_2^{''})S(\xi_{(1)})
    \xi_{(2)}\mathcal{F}_2^{'-1}\\
    =&\epsilon(\xi)\mathcal{F}_1^{''}S(\mathcal{F}_2^{''})
    S(\mathcal{F}_1^{'-1})\mathcal{F}_2^{'-1}\\
    =&\epsilon(\xi)\beta\beta^{-1}\\
    =&\epsilon(\xi)1,
\end{align*}
where we used Lemma~\ref{lemma01}. In complete analogy one proves 
$\mu\circ(\mathrm{id}\otimes S_\mathcal{F})\circ\Delta_\mathcal{F}
=\eta\circ\epsilon$. The other statements only involve the underlying
bialgebra structure and have been proven in Proposition~\ref{prop01}.
\end{proof}
As a consequence of Theorem~\ref{thm02} we obtain the following result.
\begin{corollary}
Let $H$ be quasi-triangular and let furthermore $\mathcal{F}$ be a
Drinfel'd twist on $H$. Then,
the triple $({}_\mathcal{A}^H\mathcal{M}^\mathcal{R}_\mathcal{A},
\otimes_\mathcal{A},\beta^\mathcal{R})$ is a braided monoidal category and
the Drinfel'd functor
\begin{equation}\label{eq46}
    \mathrm{Drin}_\mathcal{F}
    \colon({}_\mathcal{A}^H\mathcal{M}_\mathcal{A}^\mathcal{R},
    \otimes_\mathcal{A},\beta^\mathcal{R})
    \rightarrow
    ({}_{\mathcal{A}_\mathcal{F}}^{H_\mathcal{F}}
    \mathcal{M}_{\mathcal{A}_\mathcal{F}}^{\mathcal{R}_\mathcal{F}},
    \otimes_{\mathcal{A}_\mathcal{F}},\beta^{\mathcal{F}})
\end{equation}
is braided monoidal, leading to a braided monoidal equivalence.
Restricting to finitely generated projective modules, similar
results hold on the rigid subcategory.
\end{corollary}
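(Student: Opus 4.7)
The plan is to argue that this corollary essentially packages two pieces: the braided monoidal equivalence part follows directly from Theorem~\ref{thm02}, and the rigid refinement requires combining with Proposition~\ref{prop09}.

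First I would observe that all the data witnessing that $({}_\mathcal{A}^H\mathcal{M}^\mathcal{R}_\mathcal{A},\otimes_\mathcal{A},\beta^\mathcal{R})$ is braided monoidal (namely, the tensor product, associators, unit constraints, and braiding) depend only on the underlying bialgebra structure of $H$, the counit and coproduct, and the universal $\mathcal{R}$-matrix; the antipode plays no role. Likewise the natural isomorphism $\varphi_{\mathcal{M},\mathcal{M}'}$ built from $\mathcal{F}^{-1}$ and the verification of its compatibility with $\otimes_\mathcal{A}$ and $\beta^\mathcal{R}$, which establish monoidality and braided monoidality of $\mathrm{Drin}_\mathcal{F}$ in Theorem~\ref{thm02}, use only the $2$-cocycle and normalization conditions of $\mathcal{F}$ together with $\mathcal{R}_\mathcal{F} = \mathcal{F}_{21}\mathcal{R}\mathcal{F}^{-1}$. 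Consequently, the first half of the corollary is a direct restatement of Theorem~\ref{thm02} with $\mathcal{B}$ replaced by $H$. The inverse braided monoidal functor is $\mathrm{Drin}_{\mathcal{F}^{-1}}$, already identified in the proof of Theorem~\ref{thm01}.

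For the rigid refinement, I would first recall from Proposition~\ref{prop09} that for any Hopf algebra $H$ the subcategory ${}_H\mathcal{M}_f$ of finitely generated projective $H$-modules is rigid, with left dual of $\mathcal{M}$ given by $\mathcal{M}^*$ endowed with the $H$-action $\langle\xi\cdot\alpha,m\rangle=\langle\alpha,S(\xi)\cdot m\rangle$. One then extends this to objects in ${}_\mathcal{A}^H\mathcal{M}^\mathcal{R}_\mathcal{A}$: given $\mathcal{M}$ finitely generated projective as a left $\mathcal{A}$-module with dual generators $\{m_i\},\{\alpha^i\}$, define $\mathcal{M}^*$ with left $H$-action via $S$, left $\mathcal{A}$-action as the transpose of the right $\mathcal{A}$-action on $\mathcal{M}$, and right $\mathcal{A}$-action dictated by the braided symmetry constraint using $\mathcal{R}^{-1}$. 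The evaluation and coevaluation from Lemma~\ref{lemma05} then need to be checked to be morphisms in ${}_\mathcal{A}^H\mathcal{M}^\mathcal{R}_\mathcal{A}$; $H$-equivariance goes exactly as in Proposition~\ref{prop09}, while $\mathcal{A}$-bilinearity and braided symmetry of the dual follow from the braided commutativity of $\mathcal{A}$ and triangularity of $\mathcal{R}$.

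The remaining task is to verify that $\mathrm{Drin}_\mathcal{F}$ sends finitely generated projective objects to finitely generated projective objects and intertwines the duality, which amounts to producing a natural isomorphism $(\mathcal{M}_\mathcal{F})^*\cong(\mathcal{M}^*)_\mathcal{F}$ in ${}_{\mathcal{A}_\mathcal{F}}^{H_\mathcal{F}}\mathcal{M}_{\mathcal{A}_\mathcal{F}}^{\mathcal{R}_\mathcal{F}}$. The main obstacle I anticipate is the bookkeeping required at this last step: the twisted antipode is $S_\mathcal{F}(\cdot)=\beta S(\cdot)\beta^{-1}$ with $\beta=\mu((\mathrm{id}\otimes S)(\mathcal{F}))$, and one has to insert $\beta$ and $\mathcal{F}^{-1}$ at the right places to produce an equivariant isomorphism of duals. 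The calculation is parallel to the classical one relating $S_\mathcal{F}$ to $S$ and uses the $2$-cocycle condition, so it should reduce to the kind of manipulations already carried out in the proof of Proposition~\ref{prop11}, adapted to cover the antipode-dependent dual structure.
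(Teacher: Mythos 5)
Your proposal is correct and follows the paper's own route: the paper derives this corollary immediately from Theorem~\ref{thm02} (whose proof indeed never uses the antipode, only the bialgebra structure, the triangular $\mathcal{R}$ and the cocycle/normalization properties of $\mathcal{F}$), and it leaves the rigid refinement unproven, exactly as a "similar results hold" remark. Your additional sketch of the rigid part — $\mathcal{A}$-linear duals of finitely generated projective $\mathcal{A}$-modules with the $S$-twisted $H$-action and the $\beta$-mediated isomorphism $(\mathcal{M}_\mathcal{F})^*\cong(\mathcal{M}^*)_\mathcal{F}$ — goes beyond what the paper records but is consistent with Proposition~\ref{prop09} and Lemma~\ref{lemma01}.
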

During the chapter we build up a setup to understand the braided monoidal
category $({}_\mathcal{A}^H\mathcal{M}^\mathcal{R}_\mathcal{A},
\otimes_\mathcal{A},\beta^\mathcal{R})$ and that (\ref{eq46}) acts as
a gauge equivalence on braided symmetric modules. In Chapter~\ref{chap04}
we resume by building a noncommutative Cartan calculus for any braided
commutative algebra in this category in a way that gauge equivalence classes
are respected. Before, in Chapter~\ref{chap03}, we recall
some concepts of \textit{deformation quantization} and how they relate to
Drinfel'd twist deformation. In particular we point out several cases
which do not allow for a twist deformation quantization.

\section{Star-Involutions and Unitary Twists}\label{Sec2.6}

In this short section we recall the notion of Hopf ${}^*$-algebras and
their representations. In other words we discuss how to incorporate a
${}^*$-involution in the previous picture. The most remarkable innovation
is that also the ${}^*$-involution admits a Drinfel'd twist deformation if
the twist is \textit{unitary}. While we omit this additional structure in
the general framework we are utilizing it in the explicit example of
Section~\ref{Sec4.4}. As reference we name \cite{Fiore2010}~Chap.~2
and \cite{Ma95}~Sec.~1.7.

Let $\Bbbk$ be a commutative unital ring endowed with a \textit{${}^*$-involution},
i.e. there is an (anti)automorphism $\Bbbk\ni\lambda\mapsto\overline{\lambda}\in\Bbbk$,
which is an involution in addition. On elements $\lambda,\mu\in\Bbbk$ this reads
\begin{align*}
    \overline{\lambda+\mu}
    =&\overline{\lambda}+\overline{\mu},\\
    \overline{\lambda\mu}
    =&\overline{\mu}\overline{\lambda}
    =\overline{\lambda}\overline{\mu},\\
    \overline{\overline{\lambda}}
    =&\lambda,\\
    \overline{1}
    =&1,
\end{align*}
where we used the commutativity of $\Bbbk$ in the second equation. Furthermore,
a \textit{${}^*$-algebra} over $\Bbbk$ is an associative unital algebra
$(\mathcal{A},\cdot,1_\mathcal{A})$ together with a ${}^*$-involution, i.e. a map
${}^*\colon\mathcal{A}\rightarrow\mathcal{A}$ such that
\begin{align*}
    (\lambda a+\mu b)^*
    =&\overline{\lambda}a^*+\overline{\mu}b^*,\\
    (a\cdot b)^*
    =&b^*\cdot a^*,\\
    (a^*)^*
    =&a,\\
    (1_\mathcal{A})^*
    =&1_\mathcal{A}
\end{align*}
for all $\lambda,\mu\in\Bbbk$ and $a,b\in\mathcal{A}$. Note that in general
$(a\cdot b)^*\neq a^*\cdot b^*$ since $\mathcal{A}$ might be noncommutative.
Coalgebras are defined as dual objects to algebras, while bialgebras unite both
concepts in a compatible way and Hopf algebras inherit an additional antipode
which is an anti-bialgebra homomorphism. In this light it is clear how to
define \textit{Hopf ${}^*$-algebras}.
\begin{definition}
A $\Bbbk$-Hopf algebra $(H,\cdot,1_H,\Delta,\epsilon,S)$ is said to be a
Hopf ${}^*$-algebra if $(H,\cdot,1_H)$ is a ${}^*$-algebra with
${}^*$-involution ${}^*\colon H\rightarrow H$ such that
\begin{equation}\label{eq55}
    \Delta(\xi^*)
    =(\xi_{(1)})^*\otimes(\xi_{(2)})^*,~
    \epsilon(\xi^*)
    =\overline{\epsilon(\xi)},
    \text{ and }
    S((S(\xi^*))^*)
    =\xi
\end{equation}
hold for all $\xi\in H$.
\end{definition}
If $S^2=\mathrm{id}$ the last equation of (\ref{eq55}) becomes
$S(\xi^*)=S(\xi)^*$, saying that $S$ respects the ${}^*$-involution. This happens
for example if $H$ is cocommutative or commutative.
We discuss an example of Hopf ${}^*$-algebra which is commonly used in
deformation quantization.
\begin{example}
Consider a Lie ${}^*$-algebra $(\mathfrak{g},[\cdot,\cdot],{}^*)$, which
is defined as a $\Bbbk$-Lie algebra $(\mathfrak{g},[\cdot,\cdot])$ together
with a map ${}^*\colon\mathfrak{g}\rightarrow\mathfrak{g}$ satisfying
\begin{equation*}
    (\lambda x+\mu y)^*
    =\overline{\lambda}x^*+\overline{\mu}y^*,~
    [x,y]^*
    =[y^*,x^*]
    \text{ and }
    (x^*)^*
    =x
\end{equation*}
for all $\lambda,\mu\in\Bbbk$ and $x,y\in\mathfrak{g}$. The universal
enveloping algebra $\mathscr{U}\mathfrak{g}$ of $\mathfrak{g}$ is not only
a Hopf algebra but can even be structured as a Hopf ${}^*$-algebra in this case:
the extension of ${}^*\colon\mathfrak{g}\rightarrow\mathfrak{g}$ as an
algebra anti-homomorphism to ${}^*\colon\mathscr{U}\mathfrak{g}\rightarrow
\mathscr{U}\mathfrak{g}$ is well-defined since $1^*=\overline{1}=1$ and
$$
\bigg(x\otimes y-y\otimes x-[x,y]\bigg)^*
=y^*\otimes x^*-x^*\otimes y^*-[y^*,x^*]
$$
for all $x,y\in\mathfrak{g}$,
where the latter implies that ${}^*$-respects the relation which is used to construct
$\mathscr{U}\mathfrak{g}$ as a quotient form the tensor algebra. It is
sufficient to prove the compatibility of the coalgebra structure and the antipode
with the ${}^*$-involution on primitive elements. In fact
$$
\Delta(x^*)
=x^*\otimes 1+1\otimes x^*
=x^*\otimes 1^*+1^*\otimes x^*
=(x_{(1)})^*\otimes(x_{(2)})^*,
$$
$\epsilon(x^*)=0=\overline{\epsilon(x)}$ and
$S((S(x^*))^*)=S((-x^*)^*)=(x^*)^*=x$
hold for all $x\in\mathfrak{g}$ since $x^*\in\mathfrak{g}$ by definition.
Note that every $\mathbb{R}$-Lie algebra can be
structured as a Lie ${}^*$-algebra with ${}^*$-involution given by
$x^*=-x$ for all $x\in\mathfrak{g}$.
\end{example}
Fix a Hopf ${}^*$-algebra $H$. A \textit{left $H$-module ${}^*$-algebra}
is a ${}^*$-algebra $(\mathcal{A},\cdot,1_\mathcal{A},{}^*)$ which is a
left $H$-module algebra, such that
\begin{equation}
    (\xi\rhd a)^*
    =S(\xi)^*\rhd a^*
\end{equation}
holds for all $\xi\in H$ and $a\in\mathcal{A}$. Let
$\mathcal{A}$ be a left $H$-module ${}^*$-algebra.
An \textit{$H$-equivariant $\mathcal{A}$-${}^*$-bimodule} is an
$H$-equivariant $\mathcal{A}$-bimodule $\mathcal{M}$ endowed with a map
${}^*\colon\mathcal{M}\rightarrow\mathcal{M}$, such that
\begin{equation}
    (\lambda m+\mu n)^*
    =\overline{\lambda}m^*+\overline{\mu}n^*,~
    (a\cdot m\cdot b)^*
    =b^*\cdot m^*\cdot a^*
    \text{ and }
    (m^*)^*
    =m
\end{equation}
for all $\lambda,\mu\in\Bbbk$, $m,n\in\mathcal{M}$ and $a,b\in\mathcal{A}$.
Consider a Drinfel'd twist $\mathcal{F}$ on $H$. It deforms the Hopf algebra
structure of $H$, the algebra structure of $\mathcal{A}$ and the
$\mathcal{A}$-bimodule structure of $\mathcal{M}$ such that
$\mathcal{A}_\mathcal{F}$ is a left $H_\mathcal{F}$-module algebra
and $\mathcal{M}$ is an $H_\mathcal{F}$-equivariant 
$\mathcal{A}_\mathcal{F}$-bimodule. The natural question arises if there is
a way to twist deform the ${}^*$-involution ${}^*\mapsto{}^{*_\mathcal{F}}$
such that $(\mathcal{A}_\mathcal{F},{}^{*_\mathcal{F}})$
is a left $H_\mathcal{F}$-module ${}^*$-algebra and
$\mathcal{M}_\mathcal{F}$ an $H_\mathcal{F}$-equivariant
$\mathcal{A}_\mathcal{F}$-${}^*$-bimodule. It turns out that this is the case
if we impose another condition on the twist $\mathcal{F}$.
\begin{definition}[Unitary Twist]
A twist $\mathcal{F}=\mathcal{F}_1\otimes\mathcal{F}_2$
on a Hopf ${}^*$-algebra $H$ is said to be unitary if
$
    \mathcal{F}_1^*\otimes\mathcal{F}_2^*
    =\mathcal{F}^{-1}
$.
\end{definition}
We slightly modify Example~\ref{ex02}~iii.) and v.) to obtain two classes of
examples of unitary twists.
\begin{example}\label{example05}
Let $\mathfrak{g}$ be a Lie ${}^*$-algebra over $\mathbb{C}$.
\begin{enumerate}
\item[i.)] For $n\in\mathbb{N}$ consider a set
$x_1,\ldots,x_n,y_1,\ldots,y_n\in\mathfrak{g}$ of elements in $\mathfrak{g}$
which are all Hermitian, i.e. $x_i^*=x_i$, $y_i^*=y_i$, or anti-Hermitian, i.e.
$x_i^*=-x_i$, $y_i^*=-y_i$. Further assume that
$[x_i,y_j]=[x_i,x_j]=[y_i,y_j]=0$. Then
\begin{equation}
    \mathcal{F}=\exp(\mathrm{i}\hbar r)\in
    (\mathscr{U}\mathfrak{g}\otimes\mathscr{U}\mathfrak{g})[[\hbar]]
\end{equation}
is a unitary twist, where $r=\sum_{i=1}^nx_i\otimes y_i$ and $\mathrm{i}$ denotes
the imaginary unit in $\mathbb{C}$. It remains to prove that $\mathcal{F}$ is
unitary, which follows since
$$
\mathcal{F}_1^*\otimes\mathcal{F}_2^*
=\exp\bigg(\overline{\mathrm{i}\hbar}\sum_{i=1}^nx_i^*\otimes y_i^*\bigg)
=\exp(-\mathrm{i}\hbar r)
=\mathcal{F}^{-1}.
$$

\item[ii.)] Let $H,E\in\mathfrak{g}$ be anti-Hermitian elements such that
$[H,E]=2E$. Then
\begin{equation}
    \mathcal{F}=\exp\bigg(\frac{H}{2}\otimes\log(1+\mathrm{i}\hbar E)\bigg)
    \in(\mathscr{U}\mathfrak{g}\otimes\mathscr{U}\mathfrak{g})[[\hbar]]
\end{equation}
is a unitary twist. Using the formal power series expansion in the $\hbar$-adic
topology gives $(\log(1+\mathrm{i}\hbar E))^*=\log(\overline{1+\mathrm{i}\hbar E})
=\log(1+\mathrm{i}\hbar E)$. Then clearly $\mathcal{F}_1^*\otimes\mathcal{F}_2^*
=\exp\big(-\frac{H}{2}\otimes\log(1+\mathrm{i}\hbar E)\big)
=\mathcal{F}^{-1}$, which implies that $\mathcal{F}$ is unitary.
\end{enumerate}
\end{example}
After discussing two examples we return to the general theory. The claim
was that unitary twists allow for deformations of the ${}^*$-involution
such that $\mathcal{A}_\mathcal{F}$ and $\mathcal{M}_\mathcal{F}$ are
deformed as ${}^*$-algebras and ${}^*$-bimodules.
\begin{proposition}[\cite{Fiore2010}]\label{prop17}
Let $H$ be a cocommutative Hopf ${}^*$-algebra and
consider a left $H$-module ${}^*$-algebra $\mathcal{A}$. Then
$\mathcal{A}_\mathcal{F}$ is a left $H_\mathcal{F}$-module ${}^*$-algebra
with ${}^*$-involution ${}^{*_\mathcal{F}}\colon\mathcal{A}_\mathcal{F}\rightarrow
\mathcal{A}_\mathcal{F}$ defined by
\begin{equation}
    a^{*_\mathcal{F}}
    =S(\beta)\rhd a^*
\end{equation}
for all $a\in\mathcal{A}$, where $\beta=\mathcal{F}_1S(\mathcal{F}_2)$.
If furthermore, $\mathcal{M}$ is an $H$-equivariant $\mathcal{A}$-${}^*$-bimodule,
$\mathcal{M}_\mathcal{F}$ is an $H_\mathcal{F}$-equivariant 
$\mathcal{A}_\mathcal{F}$-${}^*$-bimodule with ${}^*$-involution
\begin{equation}
    m^{*_\mathcal{F}}
    =S(\beta)\rhd m^*
\end{equation}
for all $m\in\mathcal{M}$.
\end{proposition}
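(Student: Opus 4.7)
The plan is to verify the four $*$-involution axioms for ${}^{*_\mathcal{F}}$ on $\mathcal{A}_\mathcal{F}$, then the twisted module $*$-algebra compatibility with $H_\mathcal{F}$, and finally carry the same pattern over to $\mathcal{M}_\mathcal{F}$. The structural facts I will lean on are: cocommutativity of $H$ gives $S^2=\mathrm{id}$ (Proposition~\ref{prop07}) and hence $S(\xi^*)=S(\xi)^*$; the explicit formula $\beta^{-1}=S(\mathcal{F}_1^{-1})\mathcal{F}_2^{-1}$ from Lemma~\ref{lemma01}; and unitarity $\mathcal{F}_1^*\otimes\mathcal{F}_2^*=\mathcal{F}^{-1}$.

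First I would note $\Bbbk$-antilinearity of ${}^{*_\mathcal{F}}$, which is immediate from $\Bbbk$-linearity of the action $S(\beta)\rhd(\cdot)$ and antilinearity of ${}^*$ on $\mathcal{A}$. Next I would check the involution property $(a^{*_\mathcal{F}})^{*_\mathcal{F}}=a$: using the left $H$-module $*$-algebra relation $(h\rhd x)^*=S(h)^*\rhd x^*$ together with $S^2=\mathrm{id}$, one computes
\begin{equation*}
    (a^{*_\mathcal{F}})^{*_\mathcal{F}}
    =S(\beta)\rhd\bigl(S(\beta)\rhd a^*\bigr)^*
    =S(\beta)\rhd(\beta^*\rhd a)
    =(S(\beta)\beta^*)\rhd a,
\end{equation*}
reducing the claim to the algebraic identity $S(\beta)\beta^*=1$ in $H$. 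Writing $S(\beta)=\mathcal{F}_2 S(\mathcal{F}_1)$ and $\beta^*=S(\mathcal{F}_2^*)\mathcal{F}_1^*$, unitarity replaces the starred twist legs by inverse twist legs, and then the Hopf algebra axioms together with Lemma~\ref{lemma01} force the product to collapse to $1$; cocommutativity is what synchronizes the two orderings of the legs.

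The heart of the argument is antimultiplicativity $(a\cdot_\mathcal{F}b)^{*_\mathcal{F}}=b^{*_\mathcal{F}}\cdot_\mathcal{F}a^{*_\mathcal{F}}$. I would expand the left-hand side by applying $*$ on $\mathcal{A}$ (which flips the factors) and then using the module $*$-algebra relation on the twist legs, and expand the right-hand side by unfolding $\cdot_\mathcal{F}$ around the twisted involution. Both sides become an explicit product of an action of $H$ on $b^*$ times an action of $H$ on $a^*$, and the task is to match the $H$-prefactors. The main obstacle is precisely this bookkeeping: unitarity trades $\mathcal{F}_i^*$ for the corresponding leg of $\mathcal{F}^{-1}$, the $2$-cocycle identity (and its inverse form) reorganizes $(\Delta\otimes\mathrm{id})(\mathcal{F}^{\pm 1})$-type products, and cocommutativity of $\Delta$ allows the $\beta$-factors to pass through the coproduct of the twist so that the two sides coincide.

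The twisted module $*$-algebra condition $(\xi\rhd a)^{*_\mathcal{F}}=S_\mathcal{F}(\xi)^*\rhd a^{*_\mathcal{F}}$ follows by unpacking $S_\mathcal{F}(\xi)=\beta S(\xi)\beta^{-1}$ and comparing with $S(\beta)\rhd(\xi\rhd a)^*=(S(\beta)S(\xi)^*)\rhd a^*$; after using $S^2=\mathrm{id}$ and that $S$ commutes with $*$ in the cocommutative case, this reduces to the identity $S_\mathcal{F}(\xi)^*\,S(\beta)=S(\beta)\,S(\xi)^*$ in $H$, which is a direct rearrangement. Finally, for an equivariant $\mathcal{A}$-$*$-bimodule $\mathcal{M}$ the three $*$-involution axioms on $\mathcal{M}_\mathcal{F}$ are verified in complete parallel, with the bimodule identity $(a\cdot m\cdot b)^*=b^*\cdot m^*\cdot a^*$ replacing the antimultiplicativity on $\mathcal{A}$; no new ingredients are needed beyond those already marshalled for $\mathcal{A}_\mathcal{F}$.
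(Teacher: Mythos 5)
Your plan follows essentially the same route as the paper's proof: the same reduction of involutivity to the identity $S(\beta)\beta^*=1$, the same reduction of the $H_\mathcal{F}$-compatibility to the relation $S_\mathcal{F}(\xi)^*S(\beta)=S(\beta)S(\xi)^*$ (which indeed collapses once one knows $\beta^*=S(\beta)^{-1}$), and the same observation that the bimodule case is verbatim parallel. The one place where your sketch is genuinely thin is the antimultiplicativity step: saying that the $2$-cocycle identity and cocommutativity ``allow the $\beta$-factors to pass through the coproduct of the twist'' does not yet identify the fact that actually does the work. What is needed, and what the paper isolates first, is the explicit formula
\begin{equation*}
\Delta(\beta)=\mathcal{F}^{-1}\cdot(\beta\otimes\beta)\cdot\bigl((S\otimes S)(\mathcal{F}_{21}^{-1})\bigr),
\end{equation*}
derived from the $2$-cocycle condition, normalization, and the anti-(co)algebra properties of $S$ alone --- cocommutativity plays no role there; it enters the proof only through $S^2=\mathrm{id}$ and $S(\xi^*)=S(\xi)^*$. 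Without this identity the matching of the $H$-prefactors on the two sides of $(a\cdot_\mathcal{F}b)^{*_\mathcal{F}}=b^{*_\mathcal{F}}\cdot_\mathcal{F}a^{*_\mathcal{F}}$ does not go through, so you should state and prove it as a preliminary lemma before executing the rest of your plan.
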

In particular
\begin{equation}
    (a\cdot_\mathcal{F}b)^{*_\mathcal{F}}
    =b^{*_\mathcal{F}}\cdot_\mathcal{F}a^{*_\mathcal{F}}
    \text{ and }
    (a\cdot_\mathcal{F}m\cdot_\mathcal{F}b)^{*_\mathcal{F}}
    =b^{*_\mathcal{F}}\cdot m^{*_\mathcal{F}}\cdot a^{*_\mathcal{F}}
\end{equation}
hold for all $a,b\in\mathcal{A}$ and $m\in\mathcal{M}$.
Note that the ${}^*$-involution on $H_\mathcal{F}$ is undeformed.
\begin{proof}
Let us first prove that
\begin{equation}\label{eq48}
    \mathcal{F}\cdot\Delta(\beta)\cdot(S(\mathcal{F}_2)\otimes S(\mathcal{F}_1))
    =\beta\otimes\beta.
\end{equation}
In fact, the $2$-cocycle property and the normalization property of $\mathcal{F}$
and the anti-bialgebra homomorphism property of $S$ as well as the antipode
property imply
\begin{align*}
    \mathcal{F}\Delta(\beta)(S(\mathcal{F}_2)\otimes S(\mathcal{F}_1))
    =&\mathcal{F}''_1\mathcal{F}'_{1(1)}S(\mathcal{F}'_{2})_{(1)}S(\mathcal{F}_2)
    \otimes\mathcal{F}''_2\mathcal{F}'_{1(2)}
    S(\mathcal{F}'_{2})_{(2)}S(\mathcal{F}_1)\\
    =&\mathcal{F}''_1\mathcal{F}'_{1(1)}S(\mathcal{F}_2\mathcal{F}'_{2(2)})
    \otimes\mathcal{F}''_2\mathcal{F}'_{1(2)}
    S(\mathcal{F}_1\mathcal{F}'_{2(1)})\\
    =&\mathcal{F}''_1(\mathcal{F}_{1}\mathcal{F}'_{1(1)})_{(1)}S(\mathcal{F}'_{2})
    \otimes\mathcal{F}''_2(\mathcal{F}_{1}\mathcal{F}'_{1(1)})_{(2)}
    S(\mathcal{F}_2\mathcal{F}'_{1(2)})\\
    =&\mathcal{F}''_1\mathcal{F}_{1(1)}\mathcal{F}'_{1(1)}S(\mathcal{F}'_{2})
    \otimes\mathcal{F}''_2\mathcal{F}_{1(2)}\mathcal{F}'_{1(2)}
    S(\mathcal{F}'_{1(3)})S(\mathcal{F}_2)\\
    =&\mathcal{F}''_1\mathcal{F}_{1(1)}\mathcal{F}'_{1}S(\mathcal{F}'_{2})
    \otimes\mathcal{F}''_2\mathcal{F}_{1(2)}S(\mathcal{F}_2)\\
    =&\mathcal{F}_{1}\mathcal{F}'_{1}S(\mathcal{F}'_{2})
    \otimes\mathcal{F}''_1\mathcal{F}_{2(1)}S(\mathcal{F}''_2\mathcal{F}_{2(2)})\\
    =&\mathcal{F}_{1}\mathcal{F}'_{1}S(\mathcal{F}'_{2})
    \otimes\mathcal{F}''_1\mathcal{F}_{2(1)}S(\mathcal{F}_{2(2)})S(\mathcal{F}''_2)\\
    =&\mathcal{F}'_{1}S(\mathcal{F}'_{2})\otimes\mathcal{F}''_1S(\mathcal{F}''_2)\\
    =&\beta\otimes\beta.
\end{align*}
Then (\ref{eq48}) leads to
\begin{equation}\label{eq49}
    \Delta(\beta)
    =\mathcal{F}^{-1}\cdot(\beta\otimes\beta)\cdot
    ((S\otimes S)(\mathcal{F}_{21}^{-1}))
\end{equation}
and
\begin{align*}
    (a\cdot_\mathcal{F}b)^{*_\mathcal{F}}
    =&S(\beta)\rhd(a\cdot_\mathcal{F}b)^*\\
    =&S(\beta)\rhd((\mathcal{F}_1^{-1}\rhd a)\cdot(\mathcal{F}_2^{-1}\rhd b))^*\\
    =&S(\beta)\rhd((\mathcal{F}_2^{-1}\rhd b)^*\cdot(\mathcal{F}_1^{-1}\rhd a)^*)\\
    =&S(\beta)\rhd((S(\mathcal{F}_2^{-1})^*\rhd b^*)
    \cdot(S(\mathcal{F}_1^{-1})^*\rhd a^*))\\
    =&S(\beta)\rhd((S(\mathcal{F}_2)\rhd b^*)\cdot(S(\mathcal{F}_1)\rhd a^*))\\
    =&((S(\beta)_{(1)}S(\mathcal{F}_2)S(\beta^{-1}))\rhd b^{*_\mathcal{F}})
    \cdot((S(\beta)_{(2)}S(\mathcal{F}_1)S(\beta^{-1}))\rhd a^{*_\mathcal{F}})\\
    =&(S(\beta^{-1}F_2\beta_{(2)})\rhd b^{*_\mathcal{F}})
    \cdot(S(\beta^{-1}F_1\beta_{(1)})\rhd b^{*_\mathcal{F}})\\
    =&(\mathcal{F}_1^{-1}\rhd b^{*_\mathcal{F}})
    \cdot(\mathcal{F}_2^{-1}\rhd a^{*_\mathcal{F}})\\
    =&b^{*_\mathcal{F}}\cdot_\mathcal{F}a^{*_\mathcal{F}}
\end{align*}
holds for all $a,b\in\mathcal{A}$,
using that $\mathcal{F}$ is unitary, eq.(\ref{eq49}) and
$S^2=\mathrm{id}$, which holds since $H$ is cocommutative.
Furthermore, for $a\in\mathcal{A}$ we obtain
\begin{align*}
    (a^{*_\mathcal{F}})^{*_\mathcal{F}}
    =&(S(\beta)\rhd a^*)^{*_\mathcal{F}}\\
    =&S(\beta)\rhd(S(\beta)\rhd a^*)^*\\
    =&(S(\beta)S^2(\beta^*))\rhd(a^*)^*\\
    =&(S(\beta)\beta^*)\rhd a\\
    =&a
\end{align*}
where we employed that
\begin{align*}
    S(\beta)\beta^*
    =&\mathcal{F}_2S(\mathcal{F}_1)S(\mathcal{F}_2^{'*})\mathcal{F}_1^{'*}\\
    =&\mathcal{F}_2S(\mathcal{F}_1)S(\mathcal{F}_2^{'-1})\mathcal{F}_1^{'-1}\\
    =&\mathcal{F}_2^{''-1}\mathcal{F}_2S(\mathcal{F}_1)
    S(\mathcal{F}_2^{'-1})S(\mathcal{F}_{1(2)}^{''-1})
    \mathcal{F}_{1(1)}^{''-1}\mathcal{F}_1^{'-1}\\
    =&\mathcal{F}_{2(2)}^{''-1}\mathcal{F}_2^{'-1}\mathcal{F}_2S(\mathcal{F}_1)
    S(\mathcal{F}_{2(1)}^{''-1}\mathcal{F}_1^{'-1})\mathcal{F}_{1}^{''-1}\\
    =&\mathcal{F}_{2(1)}^{''-1}S(\mathcal{F}_{2(2)}^{''-1})\mathcal{F}_1^{''-1}\\
    =&1.
\end{align*}
The $\Bbbk$-linearity of ${}^{*_\mathcal{F}}$ is clear since $\rhd$ is
$\Bbbk$-linear and we also obtain
$$
1^{*_\mathcal{F}}
=S(\beta)\rhd 1^*
=\epsilon(\beta)1
=1.
$$
This proves that $(\mathcal{A}_\mathcal{F},{}^{*_\mathcal{F}})$ is a
${}^*$-algebra. The left $H_\mathcal{F}$-action is compatible with the
${}^*$-involution since
\begin{align*}
    S_\mathcal{F}(\xi)^*\rhd a^{*_\mathcal{F}}
    =&(\beta S(\xi)\beta^{-1})^*\rhd
    (S(\beta)\rhd a^*)\\
    =&((\beta^{-1})^*S(\xi^*)\beta^*S(\beta))\rhd a^*\\
    =&((\beta^{-1})^*S(\xi^*))\rhd a^*\\
    =&S(\beta)\rhd(S(\xi^*)\rhd a^*)\\
    =&S(\beta)\rhd(\xi\rhd a)^*\\
    =&(\xi\rhd a)^{*_\mathcal{F}}.
\end{align*}
This proves that $(\mathcal{A}_\mathcal{F},{}^{*_\mathcal{F}})$ is a 
left $H_\mathcal{F}$-module ${}^*$-algebra. Replacing the product of the algebra
with the left and right $\mathcal{A}$-module action on $\mathcal{M}$ the
same calculation leads to the conclusion that
$(\mathcal{M}_\mathcal{F},{}^{*_\mathcal{F}})$ is an $H_\mathcal{F}$-equivariant
$\mathcal{A}$-${}^*$-bimodule. This concludes the proof of the proposition.
\end{proof}
Note that there is another notion of twist deformed ${}^*$-involution,
given by \textit{real} Drinfel'd twists, which is discussed in \cite{Ma95}~Prop.~2.3.7.
The corresponding condition on a twist $\mathcal{F}$ is
$S(\mathcal{F}_1^*)\otimes S(\mathcal{F}_2^*)=\mathcal{F}_{21}$. Note that 
following this convention one has to deform the ${}^*$-involution on the Hopf
algebra as well.
We end this section by discussing a classical example from differential
geometry that also appears in the later sections.
\begin{example}
Let $M$ be a smooth manifold and consider the algebra $\mathscr{C}^\infty(M)$
of smooth complex-valued functions on $M$. It is commutative with respect to
the pointwise product and there is a ${}^*$-involution
${}^*\colon\mathscr{C}^\infty(M)\rightarrow\mathscr{C}^\infty(M)$ defined by
$f^*(p)=\overline{f(p)}$ for all $f\in\mathscr{C}^\infty(M)$ and $p\in M$.
This structures $\mathscr{C}^\infty(M)$ as a ${}^*$-algebra over $\mathbb{C}$.
On smooth vector fields $X\in\mathfrak{X}^1(M)$ we define
\begin{equation}
    \mathscr{L}_{X^*}f
    =-(\mathscr{L}_X(f^*))^*
\end{equation}
for all $f\in\mathscr{C}^\infty(M)$, where
$\mathscr{L}\colon\mathfrak{X}^1(M)\rightarrow\mathrm{Der}(\mathscr{C}^\infty(M))$
denotes the Lie derivative. One easily proves that this structures
$\mathfrak{X}^1(M)$ as a Lie ${}^*$-algebra. In particular
$[X,Y]^*=[Y^*,X^*]$ for all $X,Y\in\mathfrak{X}^1(M)$. Moreover, with
the usual $\mathscr{C}^\infty(M)$-bimodule actions
$$
\mathscr{L}_{f\cdot X\cdot g}h
=f(\mathscr{L}_Xh)g,
\text{ where }f,g,h\in\mathscr{C}^\infty(M)
\text{ and }X\in\mathfrak{X}^1(M),
$$
$\mathfrak{X}^1(M)$ becomes an $\mathcal{A}$-${}^*$-bimodule. This can be
extended to multivector $\mathfrak{X}^\bullet(M)$ fields by defining
$(X\wedge Y)^*=Y^*\wedge X^*$ for $X,Y\in\mathfrak{X}^\bullet(M)$.
Remark that the Gerstenhaber bracket
satisfies $\llbracket X,Y\rrbracket^*=\llbracket Y^*,X^*\rrbracket$ for all
$X,Y\in\mathfrak{X}^\bullet(M)$. Using the dual pairing
$$
\langle\cdot,\cdot\rangle\colon\Omega^1(M)\times\mathfrak{X}^1(M)
\rightarrow\mathscr{C}^\infty(M)
$$
the ${}^*$-involution on $\mathfrak{X}^1(M)$ induces a ${}^*$-involution on
$\Omega^1(M)$. Namely, for $\omega\in\Omega^1(M)$ we define
$\omega^*\in\Omega^1(M)$ by
$$
\langle\omega^*,X\rangle=\langle\omega,X^*\rangle
$$
for all $X\in\mathfrak{X}^1(M)$. Inductively this extends to $\Omega^\bullet(M)$,
structuring it as a $\mathscr{C}^\infty(M)$-${}^*$-bimodule with respect to the
usual left and right $\mathscr{C}^\infty(M)$-module actions, defined by
$(f\cdot\omega\cdot g)(X)=f\cdot\omega(X)\cdot g$ for all
$f,g\in\mathscr{C}^\infty(M)$, $\omega\in\Omega^1(M)$ and $X\in\mathfrak{X}^1(M)$.

Assume now the existence of a left $H$-module ${}^*$-algebra action
$\rhd$ on $\mathscr{C}^\infty(M)$ for a cocommutative Hopf ${}^*$-algebra $H$ and
let $\mathcal{F}$ be a unitary Drinfel'd twist on $H$.
A natural candidate is given by the universal enveloping algebra of the
Lie ${}^*$-algebra $\mathfrak{X}^1(M)$.
The adjoint action
$$
\mathscr{L}_{\xi\rhd X}f
=\xi_{(1)}\rhd(\mathscr{L}_X(S(\xi_{(2)})\rhd f))
$$
structures $\mathfrak{X}^1(M)$ as an $H$-equivariant $\mathcal{A}$-${}^*$-bimodule.
Again, the dual pairing induces the same structure on $\Omega^1(M)$ via
$$
\langle\xi\rhd\omega,X\rangle
=\xi_{(1)}\rhd\langle\omega,S(\xi_{(2)})\rhd X\rangle.
$$
Extending the $H$-action via the coproduct to higher wedge products we end up with
$H$-equivariant $\mathcal{A}$-${}^*$-bimodules $\mathfrak{X}^\bullet(M)$ and
$\Omega^\bullet(M)$. By Proposition~\ref{prop17} the unitary twist $\mathcal{F}$
deforms them into $H_\mathcal{F}$-equivariant 
$\mathcal{A}_\mathcal{F}$-${}^*$-bimodules.
\end{example}
In Chapter~\ref{chap04} we come back to this example and further examine how
the structure of the Cartan calculus on $M$ can be twist deformed.
Motivated by this, we construct a noncommutative Cartan calculus on a
huge class of noncommutative algebras.

\chapter{Obstructions of Twist Star Products}\label{chap03}
The aim of this chapter is to give a quick recap on symplectic geometry and
its quantization in the form of Drinfel'd twist deformation quantization,
before discussing several obstructions to this construction. We start
by reviewing basic definitions and properties of Poisson manifolds
in Section~\ref{SecObs1}. As examples, constant Poisson structures on
$\mathbb{R}^n$, the KKS Poisson structure on the dual of a Lie algebra
and the connected orientable symplectic Riemann surfaces
are depicted. Equivalently to the Poisson bracket we may consider a
bivector, squaring to zero under the Schouten-Nijenhuis bracket. If this
so called Poisson bivector is non-degenerate, we obtain a symplectic manifold.
Star products are introduced as formal deformations of the algebra of
smooth functions, deforming the underlying Poisson bracket in addition.
Then, examples of star products on some of the former Poisson
manifolds are given.
We end the first section with results about existence and classification of star
products in the symplectic case. In Section~\ref{Sec-r-matrix}, classical
$r$-matrices are defined as skew-symmetric solutions of the classical
Yang-Baxter equation. Equivalently, one can think of $G$-equivariant Poisson
bivectors on a Lie group $G$. Then it is not surprising that they appear as
skew-symmetrization of the first order of formal $\mathcal{R}$-matrices
and Drinfel'd twists on universal enveloping algebras. We discuss
quantization of $r$-matrices before turning to the notion of twist star 
products. The latter are star products on Poisson manifolds which are induced by
Drinfel'd twists. It follows that the underlying Poisson bracket is
induced by the corresponding $r$-matrix. In the symplectic case it is not
hard to see that this implies that the manifold is a homogeneous space
under certain conditions, leading to a first obstruction: the symplectic
Pretzel surfaces of genus $g>1$ admit formal deformations but no twist star product.
With some more effort we conclude the same statement for the symplectic
$2$-sphere. After recalling the definition and several characterizations
and instances of Morita equivalence in Section~\ref{SecObs3}, we study
formal deformations of algebra modules, relative to deformations of the
algebra, in Section~\ref{SecObs4}. In particular, deforming Morita
equivalence bimodules it follows that the deformation theory of an algebra
is a Morita invariant. In the case of twist star products one can even prove
that the action of the Picard group is trivial, i.e. any star product
which is Morita equivalent to a twist star product is in fact equivalent as
star product to the latter. This leads to another class of
star products on complex projective spaces that can not be induced by 
Drinfel'd twists based on matrix Lie algebra symmetries.

\section{Poisson Manifolds and Deformation Quantization}\label{SecObs1}

As indicated in the introduction, \textit{Poisson geometry} is a
decent mathematical framework to describe classical mechanics,
while \textit{deformation quantization} pictures the quantized system.
In this section
we briefly recall the notion of Poisson manifold. As corresponding literature
we mention \cite{B2001}~Chap.~2-3 and \cite{WaldmannBuch}~Chap.~3-4. 
Afterwards, following \cite{ChiaraBuch}~Sec.~2.3, \cite{Gutt2005}~Chap.~II~2
and \cite{WaldmannBuch}~Sec~6.1,
star products are introduced as formal deformations of the algebra of functions
on a Poisson manifold and examples are given. We discuss the notion of equivalence
of star products and their classification on symplectic manifolds. A more
detailed overview of the field of deformation quantization is given in
\cite{Waldmann2016}.

A \textit{Poisson manifold} is a smooth manifold $M$ \cite{Lee2003},
together with a \textit{Poisson bracket} on its algebra $\mathscr{C}^\infty(M)$
of smooth functions. The latter is a Lie bracket
$\{\cdot,\cdot\}\colon\mathscr{C}^\infty(M)\times\mathscr{C}^\infty(M)
\rightarrow\mathscr{C}^\infty(M)$ satisfying a \textit{Leibniz rule}
\begin{equation}
    \{f,gh\}
    =\{f,g\}h+g\{f,h\}
\end{equation}
in addition, where $f,g,h\in\mathscr{C}^\infty(M)$. The Leibniz rule implies that
$$
X_f:=-\{f,\cdot\}\colon\mathscr{C}^\infty(M)
\rightarrow\mathscr{C}^\infty(M)
$$
is a smooth vector field on $M$ for all $f\in\mathscr{C}^\infty(M)$,
called \textit{Hamiltonian vector field}. On a local chart $(U,x)$
around a point $p\in M$ the Poisson bracket reads
$$
\{f,g\}|_U
=\sum_{ij}\pi^{ij}\frac{\partial f}{\partial x^i}\frac{\partial g}{\partial x^j}
$$
for all $f,g\in\mathscr{C}^\infty(M)$, with $\pi^{ij}=\{x^i,x^j\}=-\pi^{ji}\in
\mathscr{C}^\infty(U)$. This determines a bivector
$$
\pi_U
=\frac{1}{2}\sum_{ij}\pi^{ij}
\frac{\partial}{\partial x^i}\wedge\frac{\partial}{\partial x^j}
\in\mathfrak{X}^2(U)
$$
on $U$, which corresponds to a global bivector $\pi\in\mathfrak{X}^2(M)$
such that
$$
\{f,g\}=\pi(\mathrm{d}f,\mathrm{d}g)
$$
for all $f,g\in\mathscr{C}^\infty(M)$. One can prove that the Jacobi identity
of $\{\cdot,\cdot\}$ is equivalent to the vanishing $\llbracket\pi,\pi\rrbracket=0$
of the Schouten-Nijenhuis bracket of $\pi$ with itself.
The latter is defined as the extension
\begin{align*}
    \llbracket X_1\wedge\ldots\wedge X_k,
    Y_1\wedge\ldots\wedge Y_\ell\rrbracket
    =\sum_{i=1}^k\sum_{j=1}^\ell&(-1)^{i+j}[X_i,Y_j]\wedge
    X_1\wedge\ldots\wedge\widehat{X_i}\wedge\ldots\wedge X_k\\
    &\wedge Y_1\wedge\ldots\wedge\widehat{Y_j}\wedge\ldots\wedge Y_\ell
\end{align*}
of the Lie bracket $[\cdot,\cdot]$ of vector fields,
where $X_1,\ldots,X_k,Y_1,\ldots,Y_\ell\in\mathfrak{X}^1(M)$
and $\widehat{X_i}$, $\widehat{Y_j}$ means that $X_i$ and $Y_j$ are omitted
in the above wedge product.
It is defined to vanish if one of its entries is a function. In other words,
the data $\pi\in\mathfrak{X}^2(M)$ and $\llbracket\pi,\pi\rrbracket=0$ is
equivalent to the properties of $\{\cdot,\cdot\}$. For this reason we
sometimes refer to $(M,\pi)$ as a Poisson manifold and call $\pi$ the
corresponding \textit{Poisson bivector}.
\begin{example}\label{example02}
\begin{enumerate}
\item[i.)] Consider $\mathbb{R}^n$ with global coordinates $(x^1,\ldots,x^n)$.
Any skew-symmetric matrix $(\pi^{ij})_{ij}\in M_n(\mathbb{R})$ leads to a
Poisson bivector
\begin{equation}
    \pi=\sum_{i<j}\pi^{ij}\frac{\partial}{\partial x^i}
    \wedge\frac{\partial}{\partial x^j}
\end{equation}
on $\mathbb{R}^n$, called constant Poisson bivector. In fact,
since $\llbracket\cdot,\cdot\rrbracket$ is the extension of the Lie bracket
of vector fields and coordinate vector fields commute,
$\llbracket\pi,\pi\rrbracket=0$ follows immediately.

\item[ii.)] Consider a $\mathbb{C}$-Lie algebra 
$\mathfrak{g}$ and denote its dual by $\mathfrak{g}^*$.
Remark that polynomials $\mathrm{Pol}^\bullet(\mathfrak{g}^*)$ on $\mathfrak{g}^*$
can be identified with the symmetric algebra $\mathrm{S}^\bullet\mathfrak{g}$
of $\mathfrak{g}$. For example, $x\in\mathfrak{g}$ corresponds to
$\hat{x}\in\mathrm{Pol}^1(\mathfrak{g}^*)$, which is defined by
$\hat{x}(\xi)=\xi(x)$ for all $\xi\in\mathfrak{g}^*$.
We assume that $\mathfrak{g}$ is finite-dimensional with basis
$e^1,\ldots,e^n\in\mathfrak{g}$. The corresponding dual basis on
$\mathfrak{g}^*$ is denoted by $e_1,\ldots,e_n$. We define the
Kirillov-Kostant-Souriau bracket
\begin{equation}\label{eq47}
    \{f,g\}
    =\sum_{i,j,k}c_{ij}^ke_k\frac{\partial f}{\partial e_i}
    \frac{\partial g}{\partial e_j}
\end{equation}
for all $f,g\in\mathscr{C}^\infty(\mathfrak{g}^*)$, where
$c_{ij}^k=e_k([e^i,e^j])\in\mathbb{C}$ are the structure constants of
$(\mathfrak{g},[\cdot,\cdot])$. The $\mathbb{C}$-bilinear map (\ref{eq47})
is a Lie bracket exactly because $[\cdot,\cdot]$ is and it satisfies a
Leibniz rule, since $\frac{\partial}{\partial e_i}$ is a derivation of
$(\mathscr{C}^\infty(\mathfrak{g}^*),\cdot)$. Consider
\cite{BBGW2002} and references therein for more information about this example.
\end{enumerate}
\end{example}
The Hamiltonian vector field corresponding to $f\in\mathscr{C}^\infty(M)$ reads
$X_f=\llbracket f,\pi\rrbracket$ and
$
    X_{fg}
    =fX_g+gX_f,~
    [X_f,X_g]
    =-X_{\{f,g\}}
$
hold for all $f,g\in\mathscr{C}^\infty(M)$.
We say that a vector field
$X\in\mathfrak{X}^1(M)$ is a \textit{Poisson vector field} if
$\mathscr{L}_X\pi=0$. In particular, every Hamiltonian vector field is Poisson.
Moreover, Poisson vector fields form a Lie subalgebra of $\mathfrak{X}^1(M)$,
and the identity
\begin{align*}
    [X,X_f]=X_{X(f)},
\end{align*}
which holds for all Poisson vector fields $X$ and $f\in\mathscr{C}^\infty(M)$,
proves that the Hamiltonian vector fields are a Lie ideal of the Poisson vector
fields. Furthermore, Poisson vector fields are characterized as those vector
fields, which are derivations of the Poisson bracket, i.e. $X\in\mathfrak{X}^1(M)$
such that
$$
X(\{f,g\})
=\{X(f),g\}+\{f,X(g)\}
$$
for all $f,g\in\mathscr{C}^\infty(M)$ (c.f. \cite{WaldmannBuch}~Satz~4.1.9).
A \textit{Poisson diffeomorphism} is a diffeomorphism $\phi\colon(M,\{\cdot,\cdot\})
\rightarrow(M',\{\cdot,\cdot\}')$ between Poisson manifolds, such that
$$
\phi^*\{f,g\}'=\{\phi^*(f),\phi^*(g)\}
$$
for all $f,g\in\mathscr{C}^\infty(M')$. 
The \textit{musical
homomorphism} ${}^\sharp\colon T^*M\rightarrow TM$ is defined on any $p\in M$ by 
$$
T_p^*M\ni\alpha_p\mapsto\alpha_p^\sharp
=\pi_p(\cdot,\alpha_p)\in T_pM.
$$
It extends to a $\mathscr{C}^\infty(M)$-linear map
${}^\sharp\colon\Omega^1(M)\rightarrow\mathfrak{X}^1(M)$.
A Poisson manifold $(M,\pi)$ is said to be \textit{symplectic}
if the bivector field $\pi\in\mathfrak{X}^2(M)$ is non-degenerate.
Note that for any $p\in M$ the Hamiltonian vector fields at $p$ span $T_pM$
in this case.
Then, we can define a non-degenerate $2$-form $\omega\in\Omega^2(M)$ by
\begin{equation}\label{eq43}
    \omega(X_f,X_g)
    =\pi(\mathrm{d}f,\mathrm{d}g)
\end{equation}
for all $f,g\in\mathscr{C}^\infty(M)$. It follows that $\omega$ is closed, i.e.
$\mathrm{d}\omega=0$. On the other hand, a non-degenerate $2$-form 
$\omega\in\Omega^2(M)$ determines a non-degenerate bivector field
$\pi\in\mathfrak{X}^2(M)$ by (\ref{eq43}). $\pi$ is a Poisson bivector if and
only if $\omega$ is closed. It is remarkable that the quadratic relation
$\llbracket\pi,\pi\rrbracket=0$ reduces to a linear relation $\mathrm{d}\omega=0$
in the symplectic case.
\begin{example}[c.f. \cite{Radko}]\label{example06}
The connected orientable Riemann surfaces are characterized by
their genus $g\geq 0$, which indicates the number of holes. Since
they are orientable, they inherit non-degenerate $2$-forms, which are
automatically closed, since those manifolds are $2$-dimensional. Thus, the
choice of a volume form structures any connected orientable Riemann
surfaces as a symplectic manifold.
\end{example}
A vector field $X\in\mathfrak{X}^1(M)$ is said
to be a \textit{symplectic vector field} if $\mathscr{L}_X\omega=0$. In
particular every Hamiltonian vector field is symplectic and the symplectic
vector fields form a Lie subalgebra of $\mathfrak{X}^1(M)$. A
\textit{symplectomorphism} is a diffeomorphism $\phi\colon(M,\omega)
\rightarrow(M',\omega')$ between symplectic manifolds such that
$\phi^*\omega'=\omega$. Recall that the de Rham differential
$\mathrm{d}\colon\Omega^\bullet(M)\rightarrow\Omega^{\bullet+1}(M)$
constitutes a cochain complex
$$
0\xrightarrow{\mathrm{d}}\Omega^1(M)
\xrightarrow{\mathrm{d}}\Omega^2(M)
\xrightarrow{\mathrm{d}}\Omega^3(M)
\xrightarrow{\mathrm{d}}\cdots
$$
and since it is a differential, the \textit{de Rham cohomology}
(see e.g. \cite{Lee2003}~Chap.~11)
$$
\mathrm{H}^n_{\mathrm{dR}}(M)
=\frac{\ker\mathrm{d}\colon\Omega^n(M)\rightarrow\Omega^{n+1}(M)}
{\mathrm{im}~\mathrm{d}\colon\Omega^{n-1}(M)\rightarrow\Omega^n(M)}
$$
of $M$ is well-defined for all $n\geq 0$, where we set $\Omega^{-1}(M)=0$.
In the presence of a Lie group action $\Phi\colon G\times M\rightarrow M$
the de Rham differential forms a cochain complex with respect to
$G$-invariant differential forms $\Omega^{\bullet,G}(M)$, i.e.
elements $\omega\in\Omega^\bullet(M)$ such that $\Phi^*_g\omega=\omega$
for all $g\in G$. It is well-defined since $\mathrm{d}$ commutes with
pullbacks. The corresponding \textit{$G$-invariant de Rham cohomology}
is denoted by $\mathrm{H}^{\bullet,G}_{\mathrm{dR}}(M)$. Note that there
is also a \textit{Poisson cohomology} on $(M,\pi)$ defined by the differential
$\mathrm{d}_\pi=\llbracket\pi,\cdot\rrbracket$ (c.f. \cite{Lich1977}).

In a next step we describe how to \textit{quantize} Poisson manifolds.
To be more precise, we want to perform formal deformations of the
algebra of smooth functions $\mathscr{C}^\infty(M)$. Since the definition is
given in algebraic terms we first formulate it for general associative algebras
(following \cite{GerstenhaberAlgDef}), before coming back to smooth functions.
Let $(\mathcal{A},\cdot,1)$ be an associative unital algebra. A
\textit{formal deformation} of $(\mathcal{A},\cdot,1)$ is an
associative product $\star$ on $\boldsymbol{\mathcal{A}}=\mathcal{A}[[\hbar]]$
such that $a\star 1=a=1\star a$ and
\begin{equation}
    a\star b
    =a\cdot b+\mathcal{O}(\hbar)
\end{equation}
for all $a,b\in\mathcal{A}$. The \textit{classical limit} assigns to a
formal deformation $\star$ its lowest order $\cdot$ and we sometimes write
$\lim_{\hbar\rightarrow 0}a\star b=a\cdot b$. Heuristically, this amounts to set
the value of $\hbar$ to zero.
If $\mathcal{A}$ is commutative and $\star$ a formal deformation of $\mathcal{A}$,
the first order
\begin{equation}\label{eq39}
    \{a,b\}=
    \frac{1}{\hbar}(a\star b-b\star a)+\mathcal{O}(\hbar)
\end{equation}
of the $\star$-commutator $[a,b]_\star=a\star b-b\star a$
defines a \textit{Poisson bracket} on $\mathcal{A}$,
i.e. a Lie bracket satisfying a Leibniz rule
$$
\{a,bc\}
=\{a,b\}c+b\{a,c\}
$$
for all $a,b,c\in\mathcal{A}$. Equation~(\ref{eq39}) is said to be the
\textit{correspondence principle}, which can be expressed as
\begin{equation}
    \lim_{\hbar\rightarrow 0}\frac{1}{\hbar}[a,b]_\star
    =\{a,b\},
\end{equation}
using the classical limit.
Two formal deformations $\star$ and $\star'$
are said to be \textit{equivalent} if there are $\Bbbk$-linear maps
$T_n\colon\mathcal{A}\rightarrow\mathcal{A}$ for all $n>0$ such that
$T=\mathrm{id}+\sum_{n>0}\hbar^nT_n$ satisfies
\begin{equation}
    T(a\star b)
    =T(a)\star'T(b)
\end{equation}
for all $a,b\in\mathcal{A}$. Since $T$ starts with the identity it is
invertible in the $\hbar$-adic topology and the $\hbar$-linear extension
$T\colon(\boldsymbol{\mathcal{A}},\star)
\rightarrow(\boldsymbol{\mathcal{A}},\star')$
is an algebra isomorphism. The equivalence class of formal deformations which
are equivalent to $\star$ is denoted by $[\star]$.
From any automorphism $\phi$ of $\mathcal{A}$ and any formal deformation $\star$
of $\mathcal{A}$ we construct another formal deformation $\star_\phi$ by
$$
a\star_\phi b
=\phi^{-1}(\phi(a)\star\phi(b))
$$
for all $a,b\in\mathcal{A}$.
\begin{lemma}[\cite{Bursztyn2002}~Prop.~2.14]
Two formal deformations $\star$ and $\star'$ of $\mathcal{A}$ are isomorphic
if and only if there exists an automorphism $\phi$ of $\mathcal{A}$ such that
$[\star_\phi]=[\star']$.
\end{lemma}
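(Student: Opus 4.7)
The plan is to prove the two implications separately, using only the formal algebraic structure already set up in the excerpt. For the backward direction, I would start from an automorphism $\phi$ of $\mathcal{A}$ and an equivalence $T=\mathrm{id}+\sum_{n>0}\hbar^nT_n$ between $\star_\phi$ and $\star'$. The key observation is that $\phi$ itself, extended $\Bbbk[[\hbar]]$-linearly, is already an algebra isomorphism from $(\boldsymbol{\mathcal{A}},\star_\phi)$ to $(\boldsymbol{\mathcal{A}},\star)$: this is immediate from the defining formula $a\star_\phi b=\phi^{-1}(\phi(a)\star\phi(b))$. Composing with $T$ then yields a $\Bbbk[[\hbar]]$-algebra isomorphism $T\circ\phi^{-1}\colon(\boldsymbol{\mathcal{A}},\star)\to(\boldsymbol{\mathcal{A}},\star')$, which is exactly an isomorphism of formal deformations.

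For the forward direction, let $\Phi\colon(\boldsymbol{\mathcal{A}},\star)\to(\boldsymbol{\mathcal{A}},\star')$ be an arbitrary $\Bbbk[[\hbar]]$-algebra isomorphism. Expand $\Phi=\phi_0+\sum_{n>0}\hbar^n\phi_n$. The main technical step is to identify $\phi_0$ as an automorphism of the undeformed algebra $\mathcal{A}$. Reducing the identities $\Phi(a\star b)=\Phi(a)\star'\Phi(b)$ and $\Phi(1)=1$ modulo $\hbar$ and using $a\star b=a\cdot b+\mathcal{O}(\hbar)$, $a\star' b=a\cdot b+\mathcal{O}(\hbar)$, I would obtain $\phi_0(ab)=\phi_0(a)\phi_0(b)$ and $\phi_0(1)=1$. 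Invertibility of $\phi_0$ on $\mathcal{A}$ follows from a standard $\hbar$-adic argument: since $\Phi$ has a two-sided inverse $\Psi=\psi_0+\mathcal{O}(\hbar)$, the zeroth-order reduction of $\Phi\circ\Psi=\mathrm{id}=\Psi\circ\Phi$ gives $\phi_0\circ\psi_0=\mathrm{id}_\mathcal{A}=\psi_0\circ\phi_0$.

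Setting $\phi=\phi_0^{-1}$, I would then form $T=\Phi\circ\phi_0^{-1}$, with $\phi_0^{-1}$ extended $\Bbbk[[\hbar]]$-linearly. Since $\phi_0^{-1}\colon(\boldsymbol{\mathcal{A}},\star_{\phi_0^{-1}})\to(\boldsymbol{\mathcal{A}},\star)$ is already a $\Bbbk[[\hbar]]$-algebra isomorphism by the computation above (applied to the automorphism $\phi_0^{-1}$), the composition $T$ lands as a $\Bbbk[[\hbar]]$-algebra isomorphism $(\boldsymbol{\mathcal{A}},\star_{\phi_0^{-1}})\to(\boldsymbol{\mathcal{A}},\star')$. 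Its lowest order is $\phi_0\circ\phi_0^{-1}=\mathrm{id}_\mathcal{A}$, so $T$ is of the form $\mathrm{id}+\sum_{n>0}\hbar^nT_n$, which is precisely an equivalence of formal deformations in the sense of the excerpt. Hence $[\star_{\phi_0^{-1}}]=[\star']$, proving the claim with $\phi:=\phi_0^{-1}$.

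The only delicate point is the $\hbar$-adic invertibility argument showing that $\phi_0$ is an honest automorphism of $\mathcal{A}$; everything else is straightforward bookkeeping of composing $\Bbbk[[\hbar]]$-algebra maps. The proof is essentially a reformulation of the obvious fact that an arbitrary isomorphism of formal deformations decomposes into an automorphism of the classical limit followed by an equivalence starting with the identity.
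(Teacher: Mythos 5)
The paper states this lemma without proof, citing Bursztyn; your argument is correct and is essentially the standard one: extract the classical limit $\phi_0$ of the isomorphism $\Phi$, check it is an automorphism of $(\mathcal{A},\cdot)$ via the mod-$\hbar$ reduction of $\Phi$ and its inverse, and factor $\Phi=T\circ\phi_0$ with $T=\Phi\circ\phi_0^{-1}$ an equivalence starting with the identity. Both directions are complete, including the observation that $\phi$ itself intertwines $\star_\phi$ with $\star$, so nothing is missing.
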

In the case $\mathcal{A}=\mathscr{C}^\infty(M)$ for a smooth manifold $M$,
formal deformations of the pointwise product should be considered in the
smooth category.
\begin{definition}[\cite{Bayen1978}]
A formal deformation 
$$
f\star g
=f\cdot g+\sum_{r>0}\hbar^rC_r(f,g)
$$
of $(\mathscr{C}^\infty(M),\cdot)$
is said to be a \textit{star product} on $M$ if $C_r$ are bidifferential operators.
The corresponding Poisson bracket $\{\cdot,\cdot\}$
structures $M$ as a \textit{Poisson manifold}. Accordingly, a formal deformation
of a Poisson manifold $(M,\{\cdot,\cdot\})$ is a star product $\star$, such that
\begin{equation}
    \lim_{\hbar\rightarrow 0}\frac{1}{\hbar}[f,g]_\star
    =\{f,g\}
\end{equation}
holds for all $f,g\in\mathscr{C}^\infty(M)$.
\end{definition}
Two formal deformations $\star$, $\star'$ of a Poisson manifold
$(M,\{\cdot,\cdot\})$ are equivalent as star products if there is a
formal equivalence via differential operators.
They are isomorphic if and only if there is a Poisson diffeomorphism
$\phi\colon M\rightarrow M$ such that $[\star_\phi]=[\star']$, where
$$
\phi^*(f\star_\phi g)
=\phi^*(f)\star\phi^*(g)
$$
for all $f,g\in\mathscr{C}^\infty(M)$.
The set of equivalence classes of star products on
$(M,\{\cdot,\cdot\})$ is denoted by $\mathrm{Def}(M,\{\cdot.\cdot\})$
and by $\mathrm{Def}(M,\omega)$ in the symplectic case.
We give two examples of star products to 
become familiar with the notion. Namely, we quantize the Poisson structures
discussed in Example~\ref{example02}.
\begin{example}\label{example03}
\begin{enumerate}
\item[i.)] Consider $\mathbb{R}^{n}$ with coordinates $(x^1,\ldots,x^n)$
and a constant Poisson bivector $\pi
=\sum_{i<j}\pi^{ij}\frac{\partial}{\partial x^i}\wedge\frac{\partial}{\partial x^j}$.
Its exponential
\begin{equation}
    (f\star g)(x)
    =\exp\bigg(\hbar\sum_{i<j}\pi^{ij}\frac{\partial}{\partial x^i}
    \frac{\partial}{\partial y^j}\bigg)f(x)g(y)|_{y=x},
\end{equation}
where $f,g\in\mathscr{C}^\infty(M)$, is a formal star product quantizing
$(\mathbb{R}^n,\pi)$, called Moyal-Weyl star product
(compare e.g. to \cite{ChiaraBuch}~Ex.~2.3.3). This leads to
$$
\lim_{\hbar\rightarrow 0}\frac{1}{\hbar}[f,g]_\star
=\{f,g\},
$$
where $\{f,g\}=\sum_{i<j}\pi^{ij}\frac{\partial f}{\partial x^i}
\frac{\partial g}{\partial x^j}$ is the Poisson bracket corresponding to
the constant Poisson bivector $\pi$.

\item[ii.)] Consider a finite-dimensional $\mathbb{C}$-Lie algebra
$\mathfrak{g}$ with dual frame $e_i(e^j)=\delta_i^j$ and structure
constants $c_{ij}^k$ as in Example~\ref{example02}~ii.).
Then, the
Poincaré-Birkhoff-Witt theorem gives an isomorphism
$\rho_\hbar\colon\mathrm{S}^\bullet(\mathfrak{g})[\hbar]
\rightarrow\mathscr{U}\mathfrak{g}[\hbar]$
via total symmetrization. On elements
$x_1,\ldots,x_n\in\mathfrak{g}$ the latter reads
$$
\rho_\hbar(\widehat{x_1\vee\ldots\vee x_n})
=\frac{\hbar^n}{n!}\sum_{\sigma\in S_n}x_{\sigma(1)}\bullet\ldots
\bullet x_{\sigma(n)},
$$
where $\bullet$ denotes the
multiplication in $\mathscr{U}\mathfrak{g}$. One proves that
\begin{equation}
    f\star g
    =\rho_\hbar^{-1}(\rho_\hbar(f)\bullet\rho_\hbar(g))
\end{equation}
defines an associative unital product on $\mathrm{Pol}^\bullet(\mathfrak{g}^*)$,
which extends to a formal star product on $\mathscr{C}^\infty(\mathfrak{g}^*)$.
This is said to be the Gutt star product.
The corresponding Poisson bracket is the
Kirillov-Kostant-Souriau bracket. Again, we refer to \cite{BBGW2002} for more
information.
\end{enumerate}
\end{example}
The question of existence and classification of star products on general
Poisson manifolds was completely settled in \cite{Kont2003}.
We state some of the most important results in the case of symplectic manifold.
\begin{proposition}[\cite{BCG1997,Deligne1995}]\label{prop19}
Let $(M,\omega)$ be a symplectic manifold. Then there is a bijection
\begin{equation}
    c\colon\mathrm{Def}(M,\omega)\rightarrow
    \frac{[\omega]}{\hbar}+\mathrm{H}^2_{\mathrm{dR}}(M)[[\hbar]],
\end{equation}
which assigns to any star product $\star$ on $(M,\omega)$ its 
characteristic class $c(\star)$.
\end{proposition}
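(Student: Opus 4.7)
The plan is to follow Fedosov's construction, which simultaneously produces every equivalence class and supplies a cohomological invariant distinguishing them. First I would fix a symplectic (torsion-free) connection $\nabla$ on $(M,\omega)$; such connections always exist and form an affine space over $\Omega^1(M)\otimes\mathrm{Sym}^2 T^*M$, so the construction will be independent of this auxiliary choice up to equivalence. Over each point $p\in M$ form the formal Weyl algebra $W_p$ generated by $T_pM$ with the Moyal-type product associated to $\omega_p$, and assemble these into a bundle $\mathcal{W}\to M$ of formal algebras. The symmetric algebra filtration and Koszul-type operator $\delta$ on $\Omega^\bullet(M)\otimes\mathcal{W}$ will be the core linear-algebra input.

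Next, for any closed formal $2$-form $\Omega=\omega+\hbar\Omega_1+\hbar^2\Omega_2+\cdots\in Z^2_{\mathrm{dR}}(M)[[\hbar]]$, I would construct Fedosov's flat connection $D=-\delta+\nabla+\tfrac{1}{\mathrm{i}\hbar}[r,\cdot]_\star$ on $\Omega^\bullet(M)\otimes\mathcal{W}$ by solving recursively the equation $D^2=\tfrac{1}{\mathrm{i}\hbar}\mathrm{ad}_\star(\Omega)=0$ order by order in the Weyl filtration. The standard homotopy argument using $\delta^{-1}$ produces a unique $r\in\Omega^1(M)\otimes\mathcal{W}$ with the normalization $\delta^{-1}r=0$. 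Then $\sigma\colon\ker D\to\mathscr{C}^\infty(M)[[\hbar]]$, projection to the symbol, is a bijection, and transporting the fiberwise Moyal product through $\sigma^{-1}$ yields a star product $\star_\Omega$ on $(M,\omega)$. Define the characteristic class $c(\star_\Omega):=[\Omega]/\hbar$; this is the definition for Fedosov star products, and surjectivity of $c$ is then automatic because $\Omega$ was arbitrary.

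For well-definedness and injectivity I would proceed in two stages. First, show that any two Fedosov data $(\nabla,\Omega)$ and $(\nabla',\Omega')$ with $[\Omega]=[\Omega']\in\mathrm{H}^2_{\mathrm{dR}}(M)[[\hbar]]$ yield equivalent star products, by constructing an intertwining automorphism of $\mathcal{W}$ of the form $\exp(\tfrac{1}{\mathrm{i}\hbar}\mathrm{ad}_\star(h))$ with $h\in\mathscr{C}^\infty(M,\mathcal{W})[[\hbar]]$, chosen order by order so that it sends $D$ to $D'$ and descends through $\sigma$ to a differential equivalence. Conversely, if $\star$ and $\star'$ are equivalent star products and both are realized as Fedosov star products with classes $[\Omega]$ and $[\Omega']$, I would compare their Hochschild $2$-cocycles order by order, using the Hochschild-Kostant-Rosenberg identification of local Hochschild cohomology with multivector fields together with the $\delta\nabla$-cohomology computation in Fedosov's setting to read off $[\Omega']-[\Omega]$ as an exact class. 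Finally, I would invoke the classical theorem (Gutt-Rawnsley, De~Wilde-Lecomte) that every star product on a symplectic manifold is equivalent to a Fedosov one, which reduces the general injectivity statement to the Fedosov case.

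The main obstacle is the bookkeeping in the two injectivity steps: showing that the recursive construction of the intertwiner $h$ does not obstruct at any order relies on the acyclicity of $\delta$ in positive Weyl degree combined with the closedness of $\Omega-\Omega'$, and the converse direction requires carefully identifying the characteristic class with the cohomology class controlling Hochschild-type deformations of $\star$. Everything else—existence of $r$, bijectivity of $\sigma$, associativity of $\star_\Omega$—is a routine induction in the Weyl filtration using $\delta$, $\delta^{-1}$, and $\delta\delta^{-1}+\delta^{-1}\delta=\mathrm{id}-\pi$ with $\pi$ the projection onto symbols.
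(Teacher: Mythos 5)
The paper does not actually prove this proposition: it is quoted from \cite{BCG1997,Deligne1995}, with only a remark that Fedosov's geometric construction \cite{Fedosov} leads to the result. Your sketch is precisely that standard Fedosov route (Weyl bundle, flat connection with central curvature $\Omega$, class $[\Omega]/\hbar$, plus the two reduction steps for injectivity), so it matches the intended argument; the genuinely hard points you correctly flag — that every star product is equivalent to a Fedosov one, and the identification of $[\Omega']-[\Omega]$ with the class governing equivalence — are exactly where the cited references do the work.
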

In particular, the above theorem reveals that there is a star product on
every symplectic manifold $(M,\omega)$. If
$\mathrm{H}^2_{\mathrm{dR}}(M)=\{0\}$, all star products on $(M,\omega)$
are equivalent. There is a geometric construction \cite{Fedosov} of Fedosov,
leading to the result of Proposition~\ref{prop19}. Moreover, it provides
a recursive formula to successively assemble the star product.

If there is a Lie group action $\Phi\colon G\times M\rightarrow M$
on a Poisson manifold $(M,\pi)$, a star product $\star$ on $(M,\pi)$ is
said to be \textit{$G$-invariant} if
\begin{equation}
    \Phi^*_g(f\star h)
    =(\Phi^*_g(f))\star(\Phi^*_g(h))
\end{equation}
for all $g\in G$ and $f,h\in\mathscr{C}^\infty(M)$, where
$\Phi^*_g\colon\mathscr{C}^\infty(M)\rightarrow\mathscr{C}^\infty(M)$ denotes
the pullback of the diffeomorphism $\Phi_g\colon M\rightarrow M$. Two
$G$-invariant star products $\star$ and $\star'$ are said to be $G$-invariantly 
equivalent if there exists an equivalence $T
=\mathrm{id}+\sum_{r>0}\hbar^rT_r$ of star products consisting of
$G$-invariant operators, i.e. $\Phi^*_g(T_r(f))=T_r(\Phi^*_g(f))$ for all 
$g\in G$, $f\in\mathscr{C}^\infty(M)$ and $r>0$. The corresponding set
of $G$-invariant equivalence classes of $G$-invariant star products on
$(M,\pi)$ is denoted by $\mathrm{Def}^G(M,\pi)$.
\begin{proposition}[\cite{BBG1998}]
Let $(M,\omega)$ be a symplectic manifold and consider a Lie group action
$\Phi\colon G\times M\rightarrow M$ on $M$ such that $\Phi^*_g\omega=\omega$ for
all $g\in G$. Then there is a bijection
\begin{equation}
    c^G\colon\mathrm{Def}^G(M,\omega)\rightarrow
    \frac{[\omega]^G}{\hbar}+\mathrm{H}^{2,G}_{\mathrm{dR}}(M)[[\hbar]],
\end{equation}
which assigns to any $G$-invariant star product $\star$ on $(M,\omega)$ its 
$G$-invariant characteristic class $c^G(\star)$.
\end{proposition}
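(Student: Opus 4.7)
The plan is to mimic the proof of the non-equivariant classification (Proposition~\ref{prop19}), essentially via an equivariant version of Fedosov's construction, while keeping track of the $G$-action at every step. The starting observation is that since $\Phi_g^*\omega=\omega$, one can produce a $G$-invariant torsion-free symplectic connection $\nabla$ on $(M,\omega)$: if $G$ is compact this is just averaging of an arbitrary symplectic connection against the Haar measure, and the general case can be reduced to this by noting that affine combinations of symplectic connections are again symplectic connections, so a partition-of-unity argument adapted to the $G$-action will do. Such a $\nabla$ together with any formal series $\Omega=\frac{[\omega]}{\hbar}+\sum_{r\geq 0}\hbar^r\Omega_r$ of $G$-invariant closed $2$-forms feeds into Fedosov's construction, and I would check that every step of that construction — the formal Weyl algebra bundle, the Fedosov derivation, the recursive solution for the flat section corresponding to a function — is manifestly $G$-equivariant when the ingredients are $G$-invariant. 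This gives a map
\begin{equation*}
    \frac{[\omega]^G}{\hbar}+\mathrm{H}^{2,G}_{\mathrm{dR}}(M)[[\hbar]]
    \longrightarrow\mathrm{Def}^G(M,\omega),
\end{equation*}
and one defines $c^G$ as an inverse to it.

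For the definition of $c^G$ independently of Fedosov, I would follow the Deligne-type approach: on any contractible $G$-stable open $U$ (obtained from a $G$-invariant good cover, which exists whenever $G$-invariant symplectic connections exist and $G$ is proper, or via Palais' slice theorem if $G$ is a Lie group acting properly) every two $G$-invariant star products are $G$-invariantly equivalent by a Moyal-Weyl argument, and the \v Cech-cocycle measuring the failure of a global $G$-invariant equivalence between $\star$ and a reference Fedosov star product yields the class $c^G(\star)\in\mathrm{H}^{2,G}_{\mathrm{dR}}(M)[[\hbar]]$ after passing from \v Cech to de Rham in the equivariant complex.

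Surjectivity then follows from the Fedosov construction above. For injectivity I would argue order by order in $\hbar$: given two $G$-invariant star products $\star,\star'$ with $c^G(\star)=c^G(\star')$, assume inductively that a $G$-invariant equivalence $T_{(n)}=\mathrm{id}+\sum_{r=1}^n\hbar^rT_r$ has been constructed so that $T_{(n)}(f\star g)-T_{(n)}(f)\star'T_{(n)}(g)=\mathcal{O}(\hbar^{n+1})$. The obstruction to extending to order $n+1$ is a Hochschild cocycle on $\mathscr{C}^\infty(M)$ whose skew-symmetric part is a $G$-invariant closed $2$-form, cohomologous to the difference of characteristic classes at that order by the non-equivariant theorem; since the difference vanishes, this $G$-invariant closed $2$-form is exact in ordinary de Rham cohomology, and I need it to be exact equivariantly.

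The main obstacle is exactly this last point: equivariant exactness of a $G$-invariant closed form is in general strictly stronger than ordinary exactness. The resolution I would pursue is to show that $\mathrm{H}^{\bullet,G}_{\mathrm{dR}}(M)\to\mathrm{H}^{\bullet}_{\mathrm{dR}}(M)$ is injective under our running hypotheses (true for compact $G$ by the standard averaging argument $\omega=\mathrm{d}\alpha\Rightarrow\omega=\mathrm{d}\bar\alpha$ with $\bar\alpha$ the $G$-average of $\alpha$, and extendable to proper Lie group actions via a $G$-invariant partition of unity on the orbit space). With this in hand, the $G$-invariant primitive of the obstruction $2$-form produces the next $G$-invariant Hochschild coboundary $T_{n+1}$, closing the induction. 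I expect the careful formulation of these equivariant-cohomology inputs — and their compatibility with the Hochschild-to-de Rham identifications used in Deligne's proof — to be the technically delicate part, whereas the Fedosov side is a straightforward $G$-equivariant rerun of the classical construction.
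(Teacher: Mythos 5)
The paper does not prove this proposition; it is quoted verbatim from \cite{BBG1998}, so your proposal has to be measured against the standard argument there, which is a purely Fedosov-theoretic one: every $G$-invariant star product is shown to be $G$-invariantly equivalent to an invariant Fedosov star product built from a $G$-invariant symplectic connection and an invariant series of closed $2$-forms, and the class of that series in $\mathrm{H}^{2,G}_{\mathrm{dR}}(M)[[\hbar]]$ is the invariant of the construction. Your surjectivity half (equivariant rerun of Fedosov) matches this and is fine, modulo one caveat: a $G$-invariant symplectic connection does \emph{not} exist for an arbitrary symplectic action of a non-compact group, and your partition-of-unity reduction needs properness of the action; in \cite{BBG1998} the existence of an invariant connection is an explicit hypothesis, silently absorbed into the statement as quoted here.

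The two genuine gaps are in your definition of $c^G$ and in your injectivity step. First, a ``$G$-invariant good cover'' by $G$-stable contractible opens essentially never exists: a $G$-stable open set contains entire orbits, which already for $G=S^1$ acting freely are not contractible, so the \v{C}ech--Deligne route as you describe it cannot get off the ground. The invariant characteristic class has to be defined either through the invariant Fedosov normal form or through an invariant adaptation of Deligne's intrinsic class, not through an equivariant good cover. Second, and more seriously, your injectivity induction derives only \emph{ordinary} exactness of the order-$(n{+}1)$ obstruction $2$-form and then tries to upgrade to invariant exactness via injectivity of $\mathrm{H}^{2,G}_{\mathrm{dR}}(M)\to\mathrm{H}^{2}_{\mathrm{dR}}(M)$. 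That map is injective for compact $G$ (averaging), but fails for non-compact groups: already for left translations of a non-compact semisimple group on itself there are invariant closed forms that are exact but not invariantly exact (e.g.\ the Cartan $3$-form on $SL(2,\mathbb{R})$), and nothing in the hypotheses rules out the analogous phenomenon in degree $2$. The correct argument never leaves the invariant complex: the obstruction form represents precisely the order-$n$ component of the \emph{difference of the invariant characteristic classes} in $\mathrm{H}^{2,G}_{\mathrm{dR}}(M)$, so its vanishing there hands you an invariant primitive directly, with no comparison to ordinary cohomology needed. As written, your induction proves the theorem only for compact (or otherwise cohomologically tame) $G$.
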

There is another existence and classification theorem \cite{Thorsten2016}
of symplectic manifold incorporating the notion of momentum map.

\section{Classical r-Matrices and Drinfel'd Twists}\label{Sec-r-matrix}

In this section we examine Drinfel'd twists in the setting of deformation quantization.
It turns out that invariant star products on a Lie group $G$ can be identified with
Drinfel'd twists $\mathcal{F}$ on $\mathscr{U}\mathfrak{g}[[\hbar]]$,
where $\mathfrak{g}$ denotes the Lie algebra corresponding to $G$. The 
skew-symmetrization of the first order term of a $G$-invariant star product
is a $G$-invariant Poisson bivector. On the side of the universal enveloping algebra,
this means that the skew-symmetrization of the first order term of a twist
is a \textit{classical $r$-matrix} \cite{Sem1985}. After internalizing the
relation of twists and classical $r$-matrices, we consider star products
which are induced by Drinfel'd twists and study first obstructions of
this situation. For an introduction to classical $r$-matrices we refer to
\cite{ES2010}~Chap.~3 and \cite{KosSch2004}~Sec.~2, while Drinfel'd twists
on universal enveloping algebras and twist star products are discussed in e.g.
\cite{Aschieri2008,AsSh14,Thomas2016,Bloh2003,Bloh2004,Borowiec2014,dAWe17,Fiore2010,Juric2015,Pachol2017}.

Let $\mathfrak{g}$ be a Lie algebra over a commutative unital ring 
$\Bbbk$ such that $\mathbb{Q}\subseteq\Bbbk$
and denote the corresponding exterior algebra 
by $\Lambda^\bullet\mathfrak{g}$. By extending the Lie bracket
$[\cdot,\cdot]$ of $\mathfrak{g}$, using the following expression
\begin{align*}
    \llbracket x_1\wedge\ldots\wedge x_k,
    y_1\wedge\ldots\wedge y_\ell\rrbracket
    =\sum_{i=1}^k\sum_{j=1}^\ell&(-1)^{i+j}[x_i,y_j]\wedge
    x_1\wedge\ldots\wedge\widehat{x_i}\wedge\ldots\wedge x_k\\
    &\wedge y_1\wedge\ldots\wedge\widehat{y_j}\wedge\ldots\wedge y_\ell
\end{align*}
for all $x_1,\ldots,x_k,y_1,\ldots,y_\ell\in\mathfrak{g}$,
as well as
$$
\llbracket x,\lambda\rrbracket
=\llbracket\lambda,x\rrbracket
=\llbracket\lambda,\mu\rrbracket
=0
$$
for all $x\in\mathfrak{g}$ and $\lambda,\nu\in\Bbbk$,
we obtain a Gerstenhaber bracket $\llbracket\cdot,\cdot\rrbracket$ 
on $\Lambda^\bullet\mathfrak{g}$.
An element $r\in\mathfrak{g}\wedge\mathfrak{g}$
is said to be a \textit{classical $r$-matrix} if
\begin{equation}\label{eq44}
    \llbracket r,r\rrbracket=0.
\end{equation}
Clearly every scalar multiple of a classical $r$-matrix is a classical
$r$-matrix as well. In particular, $0$ is a trivial solution.
The equation (\ref{eq44}) is called \textit{classical Yang-Baxter equation}
on $\Lambda^3\mathfrak{g}$.
It is the skew-symmetrization of the 
\textit{classical Yang-Baxter equation} $\mathrm{CYB}(r)=0$ on
$\mathfrak{g}^{\otimes 3}$, where
\begin{align*}
    \mathrm{CYB}\colon\mathfrak{g}^{\otimes 2}\ni r&\mapsto
    [r_{12},r_{13}]+[r_{12},r_{23}]+[r_{13},r_{23}]\\
    =&[r_1,r'_1]\otimes r_2\otimes r'_2
    +r_1\otimes[r_2,r'_1]\otimes r'_2
    +r_1\otimes r'_1\otimes[r_2,r'_2]
    \in\mathfrak{g}^{\otimes 3}
\end{align*}
is said to be the \textit{classical Yang-Baxter map}.
\begin{example}
Consider the $\Bbbk$-Lie algebra $\mathfrak{g}$ generated by two elements
$H,E\in\mathfrak{g}$, such that $[H,E]=2E$. Then
$r=H\wedge E$ is a classical $r$-matrix on $\mathfrak{g}$. This is obvious since
$\Lambda^3\mathfrak{g}=\{0\}$.
\end{example}
In the next
theorem we prove that classical $r$-matrices naturally appear as the
(skew-symmetrization of the) classical limit of universal
$\mathcal{R}$-matrices on $\mathscr{U}\mathfrak{g}[[\hbar]]$
(c.f. \cite{ES2010}~Prop.~9.5).
\begin{theorem}\label{thm06}
Let $\mathcal{R}=1\otimes 1+\hbar\tilde{r}+\mathcal{O}(\hbar^2)$
be a universal $\mathcal{R}$-matrix on $\mathscr{U}\mathfrak{g}[[\hbar]]$.
Then
\begin{equation}
    r=\frac{\tilde{r}-\tilde{r}_{21}}{2}\in\Lambda^2\mathfrak{g}
\end{equation}
is a classical $r$-matrix on $\mathfrak{g}$.
\end{theorem}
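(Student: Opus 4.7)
The plan is to extract consequences of the universal $\mathcal{R}$-matrix axioms at successive orders in $\hbar$: first to prove that $r\in\Lambda^2\mathfrak{g}$ using the hexagon relations at order $\hbar$, then to derive $\llbracket r,r\rrbracket=0$ from the quantum Yang-Baxter equation at order $\hbar^2$.

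For the first part, expanding the hexagon relations~(\ref{eq09}) of Proposition~\ref{prop06} at first order in $\hbar$ gives
\[
    (\Delta\otimes\mathrm{id})(\tilde r)
    =\tilde r_{13}+\tilde r_{23},
    \qquad
    (\mathrm{id}\otimes\Delta)(\tilde r)
    =\tilde r_{13}+\tilde r_{12}.
\]
Writing $\tilde r=\sum_\alpha e_\alpha\otimes u_\alpha$ with $\{u_\alpha\}$ linearly independent in $\mathscr{U}\mathfrak{g}$, the first identity forces $\Delta(e_\alpha)=e_\alpha\otimes 1+1\otimes e_\alpha$ for every $\alpha$. Since the primitive elements of $\mathscr{U}\mathfrak{g}$ are exactly $\mathfrak{g}$, one obtains $\tilde r\in\mathfrak{g}\otimes\mathscr{U}\mathfrak{g}$. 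The second identity analogously yields $\tilde r\in\mathscr{U}\mathfrak{g}\otimes\mathfrak{g}$, and the elementary observation $(A\otimes W)\cap(V\otimes B)=A\otimes B$ (for subspaces $A\subseteq V$, $B\subseteq W$, proven by choosing adapted bases) then forces $\tilde r\in\mathfrak{g}\otimes\mathfrak{g}$. Consequently $r=\tfrac{1}{2}(\tilde r-\tilde r_{21})\in\Lambda^2\mathfrak{g}$.

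For the Yang-Baxter equation on $r$, I would expand $\mathcal{R}_{12}\mathcal{R}_{13}\mathcal{R}_{23}=\mathcal{R}_{23}\mathcal{R}_{13}\mathcal{R}_{12}$ from Corollary~\ref{cor01} to order $\hbar^2$. The $\hbar^2$-coefficients of the single tensor factors appear symmetrically on both sides and cancel, leaving
\[
    [\tilde r_{12},\tilde r_{13}]
    +[\tilde r_{12},\tilde r_{23}]
    +[\tilde r_{13},\tilde r_{23}]
    =0
    \quad\text{in }\mathfrak{g}^{\otimes 3}.
\]
To pass from this to $\llbracket r,r\rrbracket=0$ in $\Lambda^3\mathfrak{g}$, I would apply the total antisymmetrization $\mathrm{Alt}_3\colon\mathfrak{g}^{\otimes 3}\to\Lambda^3\mathfrak{g}$. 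Using the defining formula of the Schouten-Nijenhuis bracket as the extension of $[\cdot,\cdot]$, a slot-by-slot expansion identifies $\mathrm{Alt}_3(\mathrm{CYB}(\tilde r))$ with $\llbracket r,r\rrbracket$ up to a universal nonzero scalar.

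The principal obstacle is this last identification: one must decompose $\tilde r=r+s$ into antisymmetric and symmetric parts and verify that the contributions of $s$ and of the mixed $r$--$s$ terms are killed under $\mathrm{Alt}_3$, which relies on $s\in\mathrm{S}^2\mathfrak{g}$ being symmetric in its two legs while each triple commutator in $\mathrm{CYB}$ has one leg trivially placed in a distinct slot. A clean bookkeeping of the signed permutation sum completes the argument.
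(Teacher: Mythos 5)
Your first two steps reproduce the paper's own argument: the hexagon relations at order $\hbar$ force $\tilde r\in\mathfrak{g}\otimes\mathfrak{g}$, and the quantum Yang--Baxter equation at order $\hbar^2$ yields $\mathrm{CYB}(\tilde r)=0$ in $\mathfrak{g}^{\otimes 3}$. You are in fact more careful than the paper here: the linear-independence argument forcing primitivity of the first legs, and the intersection $(\mathfrak{g}\otimes\mathscr{U}\mathfrak{g})\cap(\mathscr{U}\mathfrak{g}\otimes\mathfrak{g})=\mathfrak{g}\otimes\mathfrak{g}$, are left implicit in the paper's proof.

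The gap is exactly where you locate it, but the repair you sketch does not work. Write $\tilde r=r+s$ with $s=\tfrac12(\tilde r+\tilde r_{21})$ the symmetric part. Quasi-cocommutativity at order $\hbar$ makes $s$ ad-invariant, and for an ad-invariant symmetric $s$ the contribution $[s_{13},s_{23}]=\sum s^{ab}s^{cd}\,e_a\otimes e_c\otimes[e_b,e_d]$ appearing in $\mathrm{CYB}(s)$ is \emph{already totally antisymmetric} --- it is the Cartan-type $3$-tensor, nonzero whenever $s$ is a nondegenerate invariant pairing on a nonabelian Lie algebra (for half the Casimir of $\mathfrak{sl}_2$ it is a nonzero multiple of $h\wedge e\wedge f$). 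Hence $\mathrm{Alt}_3$ does not kill the $s$--$s$ term, and what $\mathrm{CYB}(\tilde r)=0$ actually gives after antisymmetrization is the \emph{modified} classical Yang--Baxter equation $\llbracket r,r\rrbracket=\mathrm{const}\cdot[s_{12},s_{23}]$, not $\llbracket r,r\rrbracket=0$. The identification $\mathrm{Alt}_3(\mathrm{CYB}(\tilde r))=c\,\llbracket r,r\rrbracket$ you need is valid precisely when $\tilde r$ is already skew ($s=0$), which is the situation of the triangular case and of the Drinfel'd-twist corollary, where $\mathcal{R}=\mathcal{F}_{21}\mathcal{F}^{-1}$ has skew first order. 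To be fair, the paper's proof makes the identical leap --- it derives $\mathrm{CYB}(\tilde r)=0$ and then simply asserts that the skew-symmetrization satisfies $\llbracket r,r\rrbracket=0$ --- so you have not missed an ingredient the paper supplies; but the mechanism you propose for closing the step is the one that concretely fails, and a complete argument needs either the hypothesis $s=0$ or a separate use of quasi-cocommutativity against the undeformed cocommutative coproduct of $\mathscr{U}\mathfrak{g}[[\hbar]]$.
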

\begin{proof}
We first prove $\tilde{r}
=r_1\otimes r_2\in\mathfrak{g}\otimes\mathfrak{g}$ by showing that
$r_i$ are primitive elements of $\mathscr{U}\mathfrak{g}$. The hexagon
relations $(\Delta\otimes\mathrm{id})(\mathcal{R})
=\mathcal{R}_{13}\mathcal{R}_{23}$ of $\mathcal{R}$
and
$(\mathrm{id}\otimes\Delta)(\mathcal{R})
=\mathcal{R}_{13}\mathcal{R}_{12}$
read
\begin{align*}
    \sum_{n=0}^\infty\hbar^n\Delta(R_1^n)\otimes R_2^n
    =\sum_{n=0}^\infty\hbar^n\sum_{m=0}^n
    (R_1^m\otimes 1\otimes R_2^m)
    (1\otimes R_1^{'m-n}\otimes R_2^{'m-n})
\end{align*}
and
\begin{align*}
    \sum_{n=0}^\infty\hbar^nR_1^n\otimes\Delta(R_2^n)
    =\sum_{n=0}^\infty\hbar^n\sum_{m=0}^n
    (R_1^m\otimes 1\otimes R_2^m)
    (R_1^{'m-n}\otimes R_2^{'m-n}\otimes 1),
\end{align*}
where we denoted $\mathcal{R}=\sum_{n=0}^\infty\hbar^nR_1^n\otimes R_2^n$.
In order one of $\hbar$ this gives
\begin{align*}
    r_{1(1)}\otimes r_{1(2)}\otimes r_2
    =(r_1\otimes 1+1\otimes r_1)\otimes r_2
\end{align*}
and
\begin{align*}
    r_1\otimes r_{2(1)}\otimes r_{2(2)}
    =r_1\otimes(r_2\otimes 1+1\otimes r_2),
\end{align*}
since $R_1^1\otimes R_2^1=1\otimes 1$ and
$R_1^1\otimes R_2^1=r_1\otimes r_2$. We conclude that
$\tilde{r}=r_1\otimes r_2\in\mathfrak{g}\otimes\mathfrak{g}$.
In particular, its skew-symmetrization $r$ is an element of $\mathfrak{g}
\wedge\mathfrak{g}$. Recall that $\mathcal{R}$ satisfies the
quantum Yang-Baxter equation
$\mathcal{R}_{12}\mathcal{R}_{13}\mathcal{R}_{23}
=\mathcal{R}_{23}\mathcal{R}_{13}\mathcal{R}_{12}$. In order two
of $\hbar$ this equation reads
\begin{align*}
    (v_1\otimes v_2\otimes 1)
    &+(v_1\otimes 1\otimes v_2)
    +(1\otimes v_1\otimes v_2)\\
    &+(r_1\otimes r_2\otimes 1)(r'_1\otimes 1\otimes r'_2
    +1\otimes r'_1\otimes r'_2)
    +(r_1\otimes 1\otimes r_2)(1\otimes r'_1\otimes r'_2)\\
    =&(v_1\otimes v_2\otimes 1)
    +(v_1\otimes 1\otimes v_2)
    +(1\otimes v_1\otimes v_2)\\
    &+(1\otimes r_1\otimes r_2)(r'_1\otimes 1\otimes r'_2
    +r'_1\otimes r'_2\otimes 1)
    +(r_1\otimes 1\otimes r_2)(r'_1\otimes r'_2\otimes 1),
\end{align*}
where we denoted $\mathcal{R}=1\otimes 1+\hbar r_1\otimes r_2
+\hbar^2v_1\otimes v_2+\mathcal{O}(\hbar^3)$. The first three terms on
both sides of the equation cancel each other and the remaining terms
give $\mathrm{CYB}(r_1\otimes r_2)=0$ after applying the relation
$xy-yx=[x,y]$ for $x,y\in\mathfrak{g}$, which holds in
$\mathscr{U}\mathfrak{g}$. In particular, the skew-symmetrization
$r$ of $\tilde{r}=r_1\otimes r_2$ satisfies $\llbracket r,r\rrbracket=0$,
i.e. $r$ is a classical $r$-matrix.
\end{proof}
Note that we even proved a stronger statement: the first order of any
universal $\mathcal{R}$-matrix is a solution of the classical
Yang-Baxter equation on $\mathfrak{g}^{\otimes 3}$.
The correspondence of the classical Yang-Baxter equation and quantum
Yang-Baxter equation is further discussed in \cite{Dr85}.
Since any Drinfel'd twist $\mathcal{F}$ on a cocommutative Hopf algebra
leads to a universal $\mathcal{R}$-matrix $\mathcal{F}_{21}\mathcal{F}^{-1}$
on the twisted Hopf algebra, a result similar to Theorem~\ref{thm06} holds for
the first order of a Drinfel'd twist on $\mathscr{U}\mathfrak{g}[[\hbar]]$.
\begin{corollary}[\cite{Dr83}~Thm.~5(a)]
Let $\mathcal{F}=1\otimes 1+\hbar\tilde{r}+\mathcal{O}(\hbar^2)$
be a Drinfel'd twist on $\mathscr{U}\mathfrak{g}[[\hbar]]$. Then
\begin{equation}\label{eq45}
    r=\tilde{r}_{21}-\tilde{r}\in\Lambda^2\mathfrak{g}
\end{equation}
is a classical $r$-matrix on $\mathfrak{g}$.
\end{corollary}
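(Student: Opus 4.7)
The plan is to derive the corollary from Theorem~\ref{thm06} by exhibiting $\mathcal{F}_{21}\mathcal{F}^{-1}$ as the universal $\mathcal{R}$-matrix of a twist deformation and reading off its first-order term.

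First I would observe that $\mathscr{U}\mathfrak{g}[[\hbar]]$ is cocommutative and therefore triangular with trivial universal $\mathcal{R}$-matrix $1\otimes 1$. By Proposition~\ref{prop01}, the twist $\mathcal{F}$ turns this into the triangular Hopf algebra $(\mathscr{U}\mathfrak{g})_\mathcal{F}[[\hbar]]$ with universal $\mathcal{R}$-matrix $\mathcal{R}_\mathcal{F} = \mathcal{F}_{21}\cdot(1\otimes 1)\cdot\mathcal{F}^{-1} = \mathcal{F}_{21}\mathcal{F}^{-1}$. Using $\mathcal{F}^{-1} = 1\otimes 1 - \hbar\tilde{r} + \mathcal{O}(\hbar^2)$ I would compute
\begin{equation*}
\mathcal{R}_\mathcal{F} = 1\otimes 1 + \hbar(\tilde{r}_{21}-\tilde{r}) + \mathcal{O}(\hbar^2) = 1\otimes 1 + \hbar r + \mathcal{O}(\hbar^2),
\end{equation*}
and note that $r_{21} = -r$ by construction, so $r$ already equals its own skew-symmetrization in the sense of Theorem~\ref{thm06}.

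Next I would rerun the two main steps in the proof of Theorem~\ref{thm06} with $\mathcal{R}_\mathcal{F}$ in place of $\mathcal{R}$. Since $\Delta_\mathcal{F} = \Delta + \mathcal{O}(\hbar)$, the hexagon relation $(\Delta_\mathcal{F}\otimes\mathrm{id})(\mathcal{R}_\mathcal{F}) = (\mathcal{R}_\mathcal{F})_{13}(\mathcal{R}_\mathcal{F})_{23}$ read at order $\hbar$ reduces to $(\Delta\otimes\mathrm{id})(r) = r_{13} + r_{23}$, and the companion identity gives $(\mathrm{id}\otimes\Delta)(r) = r_{13} + r_{12}$. Decomposing $r = \sum_i a_i\otimes b_i$ with linearly independent $b_i$, the first identity forces each $a_i$ to be primitive and hence to lie in $\mathfrak{g}$; symmetrically each $b_i\in\mathfrak{g}$. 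Combined with the skew-symmetry this yields $r\in\Lambda^2\mathfrak{g}$.

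Finally, $\mathcal{R}_\mathcal{F}$ satisfies the quantum Yang-Baxter equation of Corollary~\ref{cor01}, a purely algebraic identity in $\mathscr{U}\mathfrak{g}^{\otimes 3}[[\hbar]]$ that does not involve any coproduct. Expanding it to order $\hbar^2$ and using $xy-yx = [x,y]$ for $x,y\in\mathfrak{g}$ gives $\mathrm{CYB}(r) = [r_{12},r_{13}]+[r_{12},r_{23}]+[r_{13},r_{23}] = 0$ in $\mathfrak{g}^{\otimes 3}$; skew-symmetrizing yields $\llbracket r,r\rrbracket = 0$ in $\Lambda^3\mathfrak{g}$, so $r$ is a classical $r$-matrix. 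The only mild subtlety is that Theorem~\ref{thm06} is formulated for $\mathcal{R}$-matrices on the untwisted cocommutative $\mathscr{U}\mathfrak{g}[[\hbar]]$, whereas $\mathcal{R}_\mathcal{F}$ lives on the twisted Hopf algebra; this causes no difficulty because the QYBE step is coproduct-independent and the hexagon step only uses $\Delta_\mathcal{F}$ at zeroth order in $\hbar$, where it coincides with $\Delta$, so the argument transfers verbatim.
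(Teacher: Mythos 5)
Your proof is correct and follows essentially the same route as the paper: identify $\mathcal{F}_{21}\mathcal{F}^{-1}$ as a universal $\mathcal{R}$-matrix, compute its first-order term $\tilde{r}_{21}-\tilde{r}$ (already skew-symmetric), and invoke the argument of Theorem~\ref{thm06}. You are in fact slightly more careful than the paper, which applies Theorem~\ref{thm06} directly even though $\mathcal{R}_\mathcal{F}$ satisfies the hexagon relations for the twisted coproduct $\Delta_\mathcal{F}$ rather than $\Delta$; your observation that the hexagon step only sees $\Delta_\mathcal{F}$ at zeroth order and the QYBE step is coproduct-independent is exactly the justification the paper leaves implicit.
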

\begin{proof}
Since $\mathcal{R}=\mathcal{F}_{21}\mathcal{F}^{-1}$ is a universal
$\mathcal{R}$-matrix, Theorem~\ref{thm06} implies that the 
skew-symmetrization of its first order is a classical $r$-matrix.
Let $\mathcal{F}=\sum_{n=0}^\infty\hbar^nF_1^n\otimes F_2^n$
and $\mathcal{F}^{-1}=\sum_{n=0}^\infty\hbar^n\overline{F}_1^n
\otimes\overline{F}_2^n$.
Then, the first order in $\hbar$ of 
$$
\mathcal{R}
=\sum_{n=0}^\infty\hbar^n\sum_{m=0}^n
(F_2^m\otimes F_1^m)(\overline{F}_1^{n-m}\otimes\overline{F}_1^{n-m})
$$
is $F_2^1\otimes F_1^1+\overline{F}_1^1\otimes\overline{F}_2^1
=\tilde{r}_{21}-\tilde{r}
\in\mathfrak{g}\otimes\mathfrak{g}$, where we used that the first order
of the inverse of $\mathcal{F}$ is given by $-\tilde{r}$.
\end{proof}
In the next remark we resume the thought of the introduction of this section
and identify twists with invariant star products.
Furthermore, we discuss several important constructions and
classifications of Drinfel'd twist on universal enveloping algebras.
\begin{remark}\label{remark01}
\begin{enumerate}
\item[i.)] Consider a $G$-invariant star product $\star$ on a Lie group
$G$, where the Lie group action is given by left multiplication
$\ell_g\colon G\ni h\mapsto gh\in G$ on $G$. It
corresponds to a Drinfel'd twist $\mathcal{F}=1\otimes 1+\mathcal{O}(\hbar)
\in(\mathscr{U}\mathfrak{g}\otimes\mathscr{U}\mathfrak{g})[[\hbar]]$, where
$\mathfrak{g}$ is the Lie algebra corresponding to $G$ \cite{BBG1998,BCG1997}.
To see this, recall that for every $k>0$ there is an isomorphism
$$
\Gamma^\infty(TG^{\otimes k})^G\ni X\mapsto X(e)
\in T_eG^{\otimes k}
$$
between $G$-invariant sections $\Gamma^\infty(TG^{\otimes k})^G$
and the tangent space at the unit element $e\in G$. The inverse is given by
$T_eG^{\otimes k}\ni\xi\mapsto X_\xi\in\Gamma^\infty(TG^{\otimes k})^G$,
where
$
X_\xi(g)=(T_e\ell_g)^{\otimes k}\xi
$
for all $g\in G$ and $T_e\ell_g\colon T_eG\rightarrow T_gG$ denotes the
tangent map of $\ell_g$ at $e$. Since this isomorphism is in fact a Lie algebra
isomorphism, i.e. $[X_\xi,X_\eta]=X_{[\xi,\eta]}$ for all 
$\xi,\eta\in\mathfrak{g}$, it extends to an isomorphism of
$G$-invariant bidifferential operators and elements in 
$\mathscr{U}\mathfrak{g}^{\otimes 2}$, leading to the stated correspondence
$\star\mapsto\mathcal{F}$. Since $f\star g=f\cdot g
+\mathcal{O}(\hbar)$ it follows that $\mathcal{F}=1\otimes 1+\mathcal{O}(\hbar)$,
while $1\star f=f=f\star 1$ leads to $(\epsilon\otimes\mathrm{id})(\mathcal{F})
=1=(\mathrm{id}\otimes\epsilon)(\mathcal{F})$ and the associativity of
$\star$ corresponds to the $2$-cocycle condition of $\mathcal{F}$. It
is clear that the data of a $G$-invariant star product on $G$ and
a Drinfel'd twist $\mathcal{F}=1\otimes 1+\mathcal{O}(\hbar)$ on
$\mathscr{U}\mathfrak{g}$ are equivalent. In the same spirit, classical
$r$-matrices on $\mathfrak{g}$ correspond to $G$-invariant Poisson bivectors
on $G$.
Using the right multiplication one proves that right invariant Poisson
bivectors on $G$ also correspond to classical $r$-matrices and that the
difference of an induced right invariant and induced left invariant Poisson bivector
structures $G$ as a Poisson-Lie group (see e.g. \cite{ES2010}~Prop.~3.1);

\item[ii.)] In \cite{Dr83}~Thm.~6 it is proven that for any 
(non-degenerate) classical $r$-matrix $r\in\mathfrak{g}\wedge\mathfrak{g}$
there exists a
twist $\mathcal{F}=1\otimes 1+\frac{\hbar}{2}r+\mathcal{O}(\hbar^2)$
on $\mathscr{U}\mathfrak{g}[[\hbar]]$. This is done by extending the
Lie algebra $\mathfrak{g}$ via the inverse of $r$, constructing the
Gutt star product on the dual Lie algebra of the extension and restricting
this product to an affine subspace, which is locally diffeomorphic to $G$.
In the end, the identification $i.)$ is employed;

\item[iii.)] In \cite{Jonas2017} there is an alternative proof of
\cite{Dr83}~Thm.~6, using a Fedosov-like construction. Furthermore,
in \cite{Jonas2017}~Thm.~5.6 the authors give a classification of twists
on $\mathscr{U}\mathfrak{g}[[\hbar]]$ which quantize a fixed non-degenerate
classical $r$-matrix $r\in\mathfrak{g}\wedge\mathfrak{g}$. Namely,
the twists on $\mathscr{U}\mathfrak{g}[[\hbar]]$ quantizing $r$ are in
bijection with the second Chevalley-Eilenberg cohomology
$\mathrm{H}^2_{\mathrm{CE}}(\mathfrak{g})[[\hbar]]$. Note that this
classification is undertaken up to equivalence, where two
twists $\mathcal{F}=1\otimes 1+\mathcal{O}(\hbar)$ and
$\mathcal{F}'=1\otimes 1+\mathcal{O}(\hbar)$ are said to be equivalent
if there exists an element $S=1+\mathcal{O}(\hbar)
\in\mathscr{U}\mathfrak{g}[[\hbar]]$, such that $\epsilon(S)=1$ and
$
\Delta(S)\mathcal{F}'
=\mathcal{F}(S\otimes S).
$
This implies that the twist of Example~\ref{ex02}~iv.)
and the skew-symmetrization of the Jordanian twist of
Example~\ref{ex02}~v.) are equivalent, since the second
Chevalley-Eilenberg cohomology of the underlying Lie algebra is trivial
(c.f. \cite{Jonas2017}~Ex.~5.7);

\item[iv.)] The quantization of classical $r$-matrices is closely related to
the quantization of Lie bialgebras. In \cite{EK1996}~Thm~6.1,
the authors give a functor construction to quantize any quasi-triangular
Lie bialgebra. In particular, this construction provides a universal $R$-matrix
for every classical $r$-matrix;

\item[v.)] There is a more general notion of classical dynamical $r$-matrices,
which admit a quantization for reductive Lie algebras. This was proven in
\cite{Cal2006} by making use of formality theory;
\end{enumerate}
\end{remark}
Inspired by Remark~\ref{remark01}~i.) and Proposition~\ref{prop02}
we state the following definitions. The idea is to induce a star product
on a manifold $M$ from a Drinfel'd twist on $\mathscr{U}\mathfrak{g}$
via a Lie algebra action $\mathfrak{g}\rightarrow\Gamma^\infty(TM)$.
\begin{definition}[\cite{Thomas2016} Twist Star Product]
Let $(M,\pi)$ be a Poisson manifold.
\begin{enumerate}
\item[i.)] A (formal) Drinfel'd twist on $\mathscr{U}\mathfrak{g}$ is a
twist $\mathcal{F}=1\otimes 1+\mathcal{O}(\hbar)\in
(\mathscr{U}\mathfrak{g}\otimes\mathscr{U}\mathfrak{g})[[\hbar]]$
on $\mathscr{U}\mathfrak{g}[[\hbar]]$.

\item[ii.)] A symmetry $\mathfrak{g}$ of $M$ is a Lie algebra, acting on
$M$ by derivations, i.e. a Lie algebra anti-homomorphism 
$\phi\colon\mathfrak{g}\rightarrow\Gamma^\infty(TM)$.

\item[iii.)] A twist star product on $(M,\pi)$ is a star product $\star$
on $(M,\pi)$ together with a symmetry $\mathfrak{g}$ of $M$ and a formal
Drinfel'd twist $\mathcal{F}$ on $\mathscr{U}\mathfrak{g}$ such that
\begin{equation}
    f\star g
    =(\mathcal{F}_1^{-1}\rhd f)\cdot(\mathcal{F}_2^{-1}\rhd g)
\end{equation}
for all $f,g\in\mathscr{C}^\infty(M)$, where $\rhd$ denotes the
Hopf algebra
action induced by the symmetry.
\end{enumerate}
\end{definition}
By Proposition~\ref{prop02}, for any symmetry $\mathfrak{g}$ on a
manifold $M$ and any (formal) Drinfel'd twist $\mathcal{F}$ on
$\mathscr{U}\mathfrak{g}$ there is a twist star product
$f\star_\mathcal{F}g=(\mathcal{F}_1^{-1}\rhd f)\cdot(\mathcal{F}_2^{-1}\rhd g)$
on $M$ with Poisson bracket given by the skew-symmetrization of the
first order of $\mathcal{F}^{-1}$. It is the strategy of the rest of the
section to investigate such Poisson brackets and find obstructions for their
existence, leading to obstructions for twist star products. Before continuing,
we first give an example of a twist star product.
\begin{example}\label{example04}
Consider $\mathbb{R}^2$ with coordinates $(x,y)$. The $2$-torus $\mathbb{T}^2$
is the quotient of $\mathbb{R}^2$ under the identification
$(x,y)\sim(x+1,y)\sim(x,y+1)$. It is a symplectic manifold with respect
to the Poisson bracket
$$
\{f,g\}
=\frac{\partial f}{\partial x}\frac{\partial g}{\partial y}
-\frac{\partial f}{\partial y}\frac{\partial g}{\partial x}
$$
for all $f,g\in\mathscr{C}^\infty(\mathbb{T}^2)$.
The Lie algebra $\mathbb{R}^2$ endowed with
the zero Lie bracket is a symmetry of $\mathbb{T}^2$ with respect to the
usual action of coordinate vector fields $\frac{\partial}{\partial x}$
and $\frac{\partial}{\partial y}$. Consequently, $\mathscr{U}\mathfrak{g}$
can be identified with the symmetric algebra $\mathrm{S}^\bullet\mathfrak{g}$
of $\mathfrak{g}$. Then, the Moyal-Weyl product
\begin{equation}
    (f\star g)(x,y)
    =\exp\bigg(\hbar\frac{\partial}{\partial x}\frac{\partial}{\partial y}\bigg)
    f(x,v)g(u,y)|_{(u,v)=(x,y)}
\end{equation}
on $\mathbb{T}^2$, where $f,g\in\mathscr{C}^\infty(\mathbb{T}^2)$, is
a twist star product corresponding to the (formal) Drinfel'd twist
\begin{equation}\label{TwistTorus}
    \mathcal{F}=\exp\bigg(-\hbar\frac{\partial}{\partial x}
    \otimes\frac{\partial}{\partial y}\bigg)
    \in(\mathscr{U}\mathfrak{g}\otimes\mathscr{U}\mathfrak{g})[[\hbar]]
\end{equation}
on $\mathscr{U}\mathfrak{g}$.
Similarly, the Moyal-Weyl product on $\mathbb{R}^n$
given in Example~\ref{example03}~i.) is a twist star product with 
respect to the Drinfel'd twist (\ref{TwistTorus}).
\end{example}
A twist star product puts quite strict requirements on the underlying manifold.
As a first instance of this, we prove that the Poisson bracket
corresponding to a twist star product is induced by a classical $r$-matrix
(c.f. \cite{Thomas2016}~Lem.~2.7).
\begin{lemma}\label{lemma18}
Let $\star_\mathcal{F}$ be a twist star product on a Poisson manifold
$(M,\{\cdot,\cdot\})$. Then
\begin{equation}
    \{f,g\}
    =(r_1\rhd f)\cdot(r_2\rhd g)
\end{equation}
for all $f,g\in\mathscr{C}^\infty(M)$, where $r=r_1\wedge r_2$ denotes the
$r$-matrix corresponding to $\mathcal{F}$, $\cdot$ the pointwise product of
functions and $\rhd$ the induced Hopf algebra action.
\end{lemma}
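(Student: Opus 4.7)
The plan is to expand both sides of the defining equation $f\star_\mathcal{F}g=\mu_0(\mathcal{F}^{-1}\rhd(f\otimes g))$ to first order in $\hbar$ and apply the correspondence principle, which states that the Poisson bracket is recovered as $\{f,g\}=\lim_{\hbar\to 0}\hbar^{-1}[f,g]_{\star_\mathcal{F}}$. Since by definition $\mathcal{F}=1\otimes 1+\hbar\tilde r+\mathcal{O}(\hbar^2)$ for some $\tilde r\in\mathscr{U}\mathfrak{g}\otimes\mathscr{U}\mathfrak{g}$, a direct geometric series gives $\mathcal{F}^{-1}=1\otimes 1-\hbar\tilde r+\mathcal{O}(\hbar^2)$, and recall from the corollary to Theorem~\ref{thm06} that the first order term necessarily lies in $\mathfrak{g}\otimes\mathfrak{g}$, with $r=\tilde r_{21}-\tilde r\in\Lambda^2\mathfrak{g}$ being the classical $r$-matrix associated to $\mathcal{F}$.

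First I would substitute this expansion to compute
\begin{equation*}
f\star_\mathcal{F}g=f\cdot g-\hbar(\tilde r_1\rhd f)\cdot(\tilde r_2\rhd g)+\mathcal{O}(\hbar^2),
\end{equation*}
and similarly for $g\star_\mathcal{F}f$. Subtracting and dividing by $\hbar$ yields, at zeroth order in $\hbar$,
\begin{equation*}
\frac{1}{\hbar}[f,g]_{\star_\mathcal{F}}=-(\tilde r_1\rhd f)\cdot(\tilde r_2\rhd g)+(\tilde r_1\rhd g)\cdot(\tilde r_2\rhd f)+\mathcal{O}(\hbar).
\end{equation*}
The key maneuver is then to use the commutativity of the pointwise product $\cdot$ on $\mathscr{C}^\infty(M)$ to rewrite $(\tilde r_1\rhd g)\cdot(\tilde r_2\rhd f)=(\tilde r_2\rhd f)\cdot(\tilde r_1\rhd g)$, so that the right-hand side assembles into $((\tilde r_{21}-\tilde r)_1\rhd f)\cdot((\tilde r_{21}-\tilde r)_2\rhd g)$.

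Finally I would identify $\tilde r_{21}-\tilde r$ with the classical $r$-matrix $r=r_1\wedge r_2$ corresponding to $\mathcal{F}$, exactly as in equation (\ref{eq45}), and invoke the correspondence principle to conclude $\{f,g\}=(r_1\rhd f)\cdot(r_2\rhd g)$. There is no genuine obstacle here: the only subtlety worth double-checking is the sign convention arising from inverting $\mathcal{F}$ and the passage from a not-necessarily-skew $\tilde r$ to its skew-symmetrization $r$, both of which are handled by the commutativity of $\cdot$. The argument would extend verbatim to the statement that higher orders of $[\,\cdot\,,\cdot]_{\star_\mathcal{F}}$ vanish modulo $\hbar$ after skew-symmetrization, but only the first order is needed for the lemma as stated.
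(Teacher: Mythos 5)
Your proof is correct and follows essentially the same route as the paper: expand $\mathcal{F}^{-1}$ to first order, read off $C_1(f,g)$, antisymmetrize, and use commutativity of the pointwise product to identify the result with $\mu_0(r\rhd(f\otimes g))$ for $r=\tilde r_{21}-\tilde r$. The only (harmless) slip is attributing to the corollary of Theorem~\ref{thm06} the claim that $\tilde r$ itself lies in $\mathfrak{g}\otimes\mathfrak{g}$ — the corollary only asserts this for the skew-symmetrization $r$, which is all your argument actually uses.
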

\begin{proof}
Since $\star_\mathcal{F}$ is a twist star product
$$
\sum_{n=0}^\infty\hbar^n(\overline{F}_1^n\rhd f)
\cdot(\overline{F}_2^n\rhd g)
=(\mathcal{F}_1^{-1}\rhd f)\cdot(\mathcal{F}_2^{-1}\rhd g)
=f\star_\mathcal{F} g
=\sum_{n=0}^\infty\hbar^nC_n(f,g)
$$
holds for all $f,g\in\mathscr{C}^\infty(M)$, where $C_n$ are the
bidifferential operators corresponding to $\star_\mathcal{F}$ and
$\mathcal{F}^{-1}=\sum_{n=0}^\infty\hbar^n
\overline{F}_1^n\otimes\overline{F}_2^n$. In particular,
$C_1(f,g)=(\overline{F}_1^1\rhd f)
\cdot(\overline{F}_2^1\rhd g)$ and the Poisson bracket is
\begin{align*}
    \{f,g\}
    =&C_1(f,g)-C_1(g,f)\\
    =&(\overline{F}_1^1\rhd f)
    \cdot(\overline{F}_2^1\rhd g)
    -(\overline{F}_1^1\rhd g)
    \cdot(\overline{F}_2^1\rhd f)\\
    =&\mu_0(
    (\overline{F}_1^1\otimes\overline{F}_2^1
    -\overline{F}_2^1\otimes\overline{F}_1^1)
    \rhd(f\otimes g))\\
    =&\mu_0(r\rhd(f\otimes g))\\
    =&(r_1\rhd f)\cdot(r_2\rhd g),
\end{align*}
where we used that the pointwise product $\mu_0$ is commutative.
\end{proof}
More generally, the same statement holds true if a formal deformation is
braided commutative (see \cite{Thomas2016}~Prop.~2.10).
\begin{proposition}
Let $\star$ be a star product on a Poisson manifold $(M,\{\cdot,\cdot\})$
such that $(\mathscr{C}^\infty(M)[[\hbar]],\star)$ is braided
commutative with respect to a universal $\mathcal{R}$-matrix
$\mathcal{R}=1\otimes 1+\hbar r+\mathcal{O}(\hbar^2)$. Then
$r=r_1\wedge r_2$ is a classical $r$-matrix on $\mathfrak{g}$ and
\begin{equation}
    \{f,g\}
    =(r_1\rhd f)\cdot(r_2\rhd g)
\end{equation}
for all $f,g\in\mathscr{C}^\infty(M)$.
\end{proposition}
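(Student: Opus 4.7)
The plan is to extract all three claims---that $r$ lies in $\mathfrak{g}\otimes\mathfrak{g}$, that it is skew-symmetric, and that it satisfies the classical Yang--Baxter equation---by Taylor-expanding the braided commutativity condition and the quasi-triangular axioms of $\mathcal{R}$ in powers of $\hbar$. The strategy is the same as the one employed in Theorem~\ref{thm06} and Lemma~\ref{lemma18}, but adapted to the braided commutative (rather than commutative or twist-induced) setting.

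First I would derive the formula for the Poisson bracket. Writing $\star=\cdot+\hbar C_1+\mathcal{O}(\hbar^2)$ and $\mathcal{R}^{-1}=1\otimes 1-\hbar r+\mathcal{O}(\hbar^2)$, the braided commutativity condition reads $g\star f=(\mathcal{R}^{-1}_1\rhd f)\star(\mathcal{R}^{-1}_2\rhd g)$. Its zeroth order is just the (tautological) commutativity of the pointwise product, while its first order in $\hbar$ reduces to $C_1(g,f)-C_1(f,g)=-(r_1\rhd f)\cdot(r_2\rhd g)$. Since the Poisson bracket of the star product is $\{f,g\}=C_1(f,g)-C_1(g,f)$, this immediately yields the claimed identity $\{f,g\}=(r_1\rhd f)\cdot(r_2\rhd g)$.

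Next I would verify that $r$ is a classical $r$-matrix. That $r_1,r_2$ may be taken in $\mathfrak{g}$ follows by expanding the hexagon relations $(\Delta\otimes\mathrm{id})(\mathcal{R})=\mathcal{R}_{13}\mathcal{R}_{23}$ and $(\mathrm{id}\otimes\Delta)(\mathcal{R})=\mathcal{R}_{13}\mathcal{R}_{12}$ to order $\hbar$: these force each leg of $r$ to be primitive, in exact parallel with the argument used for Theorem~\ref{thm06}. The classical Yang--Baxter equation $\llbracket r,r\rrbracket=0$ is then obtained from the order-$\hbar^2$ part of the quantum Yang--Baxter equation $\mathcal{R}_{12}\mathcal{R}_{13}\mathcal{R}_{23}=\mathcal{R}_{23}\mathcal{R}_{13}\mathcal{R}_{12}$ supplied by Corollary~\ref{cor01}, again as in the proof of Theorem~\ref{thm06}.

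The main obstacle will be establishing the skew-symmetry $r\in\Lambda^2\mathfrak{g}$ that is implicit in the wedge notation of the statement. I see two possible routes. The cleanest uses that the braided commutative framework of the thesis is set up with a triangular $\mathcal{R}$; in that case $\mathcal{R}^{-1}=\mathcal{R}_{21}$ forces $-r=r_{21}$ at first order directly. If one prefers to avoid invoking triangularity, the formula for $\{f,g\}$ derived above, together with the skew-symmetry of $\{\cdot,\cdot\}$ and the commutativity of the pointwise product, shows that the symmetric part of $r$ acts trivially on any pair of functions; replacing $r$ by its skew part therefore changes neither the formula for $\{f,g\}$ nor the classical Yang--Baxter relation, and the skew part lies in $\Lambda^2\mathfrak{g}$ by the primitivity argument above.
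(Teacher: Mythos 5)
Your proposal is correct and follows essentially the same route as the paper: the identity $\{f,g\}=(r_1\rhd f)\cdot(r_2\rhd g)$ is extracted from the first order of $f\star g-g\star f=f\star g-(\mathcal{R}_1^{-1}\rhd f)\star(\mathcal{R}_2^{-1}\rhd g)$ after the $C_1$ terms cancel, and skew-symmetry of $r$ is read off from the skew-symmetry of the Poisson bracket. The primitivity of the legs of $r$ and the classical Yang--Baxter equation, which you spell out via the hexagon relations and the order-$\hbar^2$ part of the QYBE, are exactly the content the paper delegates to Theorem~\ref{thm06}, so your extra detail only makes explicit what the paper leaves implicit.
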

\begin{proof}
Let $f,g\in\mathscr{C}^\infty(M)$.
For $f\star g=\sum_{n=0}^\infty\hbar^nC_n(f,g)$ we obtain
\begin{align*}
    \hbar\{f,g\}+\mathcal{O}(\hbar^2)
    =&f\star g-g\star f\\
    =&f\star g-(\mathcal{R}_1^{-1}\rhd f)\star(\mathcal{R}_2^{-1}\rhd g)\\
    =&\hbar(C_1(f,g)-C_1(f,g)+(r_1\rhd f)\cdot(r_2\rhd g))
    +\mathcal{O}(\hbar^2)\\
    =&\hbar(r_1\rhd f)\cdot(r_2\rhd g)+\mathcal{O}(\hbar^2),
\end{align*}
which implies the statement. Note that we used that the first order in
$\hbar$ of $\mathcal{R}^{-1}$ is $-r$. In particular $r$ is 
skew-symmetric since the Poisson bracket is.
\end{proof}
If $\mathfrak{g}$ is a finite-dimensional Lie algebra over $\mathbb{C}$
with basis $e_1,\ldots,e_n\in\mathfrak{g}$, any $r$-matrix $r$ on $\mathfrak{g}$
reads
$$
r=\sum_{i<j}r^{ij}e_i\wedge e_j
$$
with coefficients $r^{ij}=-r^{ji}\in\mathbb{C}$. The $r$-matrix $r$ is said to be
\textit{non-degenerate} if the matrix $(r^{ij})_{ij}\in M_n(\mathbb{C})$
is of full rank. There is always a finite-dimensional Lie subalgebra of $\mathfrak{g}$
on which $r$ is non-degenerate.
\begin{lemma}[\cite{Thomas2016}~Prop.~2.5]\label{lemma22}
Let $r\in\mathfrak{g}$ be an $r$-matrix on $\mathfrak{g}$. Then
\begin{equation}
    \mathfrak{g}_r
    =\{(\alpha\otimes\mathrm{id})(r)\in\mathfrak{g}
    ~|~\alpha\in\mathfrak{g}^*\}
    \subseteq\mathfrak{g}
\end{equation}
is a finite-dimensional Lie subalgebra. Furthermore, $r$ is a non-degenerate
$r$-matrix on $\mathfrak{g}_r$. In particular, $r\in\mathfrak{g}_r$.
\end{lemma}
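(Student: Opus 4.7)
The plan is to establish each assertion of the lemma in turn. First, I would write $r$ as a finite sum $r=\sum_{i=1}^N a_i\otimes b_i$, whence every element of $\mathfrak{g}_r$ has the form $(\alpha\otimes\mathrm{id})(r)=\sum_i\alpha(a_i)b_i\in\mathrm{span}(b_1,\dots,b_N)$. This immediately shows that $\mathfrak{g}_r$ is a finite-dimensional linear subspace of $\mathfrak{g}$. Next I would show $r\in\mathfrak{g}_r\otimes\mathfrak{g}_r$: choose a basis $e_1,\dots,e_k$ of $\mathfrak{g}_r$, extend it to a basis $e_1,\dots,e_n$ of $\mathfrak{g}$, and expand $r=\sum_{i,j}r^{ij}e_i\otimes e_j$. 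For each $i$, the vector $(e^{*i}\otimes\mathrm{id})(r)=\sum_jr^{ij}e_j$ lies in $\mathfrak{g}_r$ by construction, which forces $r^{ij}=0$ whenever $j>k$; so $r\in\mathfrak{g}\otimes\mathfrak{g}_r$, and by skew-symmetry $r=-\tau(r)\in\mathfrak{g}_r\otimes\mathfrak{g}$, hence $r\in\mathfrak{g}_r\otimes\mathfrak{g}_r$. Together with skew-symmetry this is exactly the final claim $r\in\mathfrak{g}_r\wedge\mathfrak{g}_r$.

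Non-degeneracy is then almost formal. The linear map $r^\flat\colon\mathfrak{g}^*\to\mathfrak{g}$, $\alpha\mapsto(\alpha\otimes\mathrm{id})(r)$, has image $\mathfrak{g}_r$ by construction. Since $r$ in fact lies in $\mathfrak{g}_r\otimes\mathfrak{g}_r$, the value of $r^\flat(\alpha)$ depends only on the restriction $\alpha|_{\mathfrak{g}_r}$, so $r^\flat$ factors through a surjection $\mathfrak{g}_r^*\twoheadrightarrow\mathfrak{g}_r$; as $\dim\mathfrak{g}_r^*=\dim\mathfrak{g}_r<\infty$, this surjection is bijective, i.e.\ $r$ is non-degenerate on $\mathfrak{g}_r$.

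The main obstacle is to see that $\mathfrak{g}_r$ is closed under the Lie bracket, and this is where the classical Yang--Baxter equation enters. For $x=(\alpha\otimes\mathrm{id})(r)$ and $y=(\beta\otimes\mathrm{id})(r)$, a direct tensor computation in $\mathscr{U}\mathfrak{g}^{\otimes 3}$ gives $[x,y]=(\alpha\otimes\beta\otimes\mathrm{id})\bigl([r_{13},r_{23}]\bigr)$. Since $r$ is skew-symmetric, the assumption $\llbracket r,r\rrbracket=0$ in $\Lambda^3\mathfrak{g}$ is equivalent to the tensor CYBE $[r_{12},r_{13}]+[r_{12},r_{23}]+[r_{13},r_{23}]=0$ in $\mathfrak{g}^{\otimes 3}$ (the left-hand side is already totally skew-symmetric in the three legs, cf.\ the discussion preceding $\mathrm{CYB}$). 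Applying $\alpha\otimes\beta\otimes\mathrm{id}$ and using CYBE therefore yields
$$[x,y]=-(\alpha\otimes\beta\otimes\mathrm{id})\bigl([r_{12},r_{13}]\bigr)-(\alpha\otimes\beta\otimes\mathrm{id})\bigl([r_{12},r_{23}]\bigr).$$
A short index calculation rewrites each summand as $r^\flat(\gamma)$ for a suitable $\gamma\in\mathfrak{g}^*$: writing $r=\sum_i a_i\otimes b_i$, the first summand equals $r^\flat(\gamma_1)$ with $\gamma_1(c)=\sum_i\alpha([a_i,c])\beta(b_i)$, and analogously the second equals $r^\flat(\gamma_2)$ with $\gamma_2(c)=\sum_i\alpha(a_i)\beta([b_i,c])$. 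Both summands lie in $\mathfrak{g}_r$, hence $[x,y]\in\mathfrak{g}_r$. I expect the bookkeeping of tensor legs in this last step to be the only point requiring care; everything else is formal linear algebra.
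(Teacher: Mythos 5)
Your proposal is correct and follows essentially the same route as the paper: the closure of $\mathfrak{g}_r$ under the bracket is obtained by writing $[x,y]=(\alpha\otimes\beta\otimes\mathrm{id})([r_{13},r_{23}])$ and using the tensor-form CYBE to re-express it as $(\gamma\otimes\mathrm{id})(r)$ for suitable functionals, exactly as in the paper's computation. The remaining claims ($r\in\mathfrak{g}_r\wedge\mathfrak{g}_r$ and non-degeneracy), which the paper dispatches with a one-line "it follows", are filled in correctly by your basis-extension argument and the factorization of $r^\flat$ through $\mathfrak{g}_r^*$.
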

\begin{proof}
We first prove that $\mathfrak{g}_r$ is a Lie subalgebra. Let 
$\alpha,\beta\in\mathfrak{g}^*$. Then
\begin{align*}
    [(\alpha\otimes\mathrm{id})(r),(\beta\otimes\mathrm{id})(r)]
    =&\alpha(r_1)\beta(r'_1)[r_2,r'_2]\\
    =&(\alpha\otimes\beta\otimes\mathrm{id})
    (r_1\otimes r'_1\otimes[r_2,r'_2])\\
    =&(\alpha\otimes\beta\otimes\mathrm{id})
    (-[r_1,r'_1]\otimes r_2\otimes r'_2
    -r_1\otimes[r_2,r'_1]\otimes r'_2)\\
    =&((-\beta(r_2)\alpha\circ\mathrm{ad}_{r_1}
    -\alpha(r_1)\beta\circ\mathrm{ad}_{r_2})
    \otimes\mathrm{id})(r)\in\mathfrak{g}_r,
\end{align*}
since $r$ is a solution of the classical Yang-Baxter equation.
Furthermore, $r$ spans the Lie subalgebra $\mathfrak{g}_r$ and it follows that
 $r\in\mathfrak{g}_r$ and $r$ is non-degenerate in $\mathfrak{g}_r$.
\end{proof}
The Lie algebra $\mathfrak{g}_r$ is said to be the \textit{symplectic leaf}
corresponding to $r$. It is also employed in \cite{ES2010}~Sec.~3.5.
We prove another condition which twist star products
enforce on their underlying manifold in the symplectic case.
\begin{proposition}[\cite{Thomas2016}~Thm.~3.3]\label{prop15}
Let $(M,\{\cdot,\cdot\})$ be a connected symplectic manifold allowing
for a twist star product $\star_\mathcal{F}$, where $\mathcal{F}\in
(\mathscr{U}\mathfrak{g}\otimes\mathscr{U}\mathfrak{g})[[\hbar]]$.
If the corresponding Lie algebra action $\phi\colon\mathfrak{g}\rightarrow
\Gamma^\infty(TM)$ integrates to a Lie group action 
$G\times M\rightarrow M$, $M$ is a homogeneous $G$-space.
\end{proposition}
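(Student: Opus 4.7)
The plan is to derive transitivity of the $G$-action from infinitesimal transitivity, which in turn will follow from the formula for the Poisson bracket provided by Lemma~\ref{lemma18} together with non-degeneracy of the symplectic structure. First I would invoke Lemma~\ref{lemma18} to write
\begin{equation*}
\{f,g\} = (r_1\rhd f)\cdot(r_2\rhd g)
\end{equation*}
where $r = r_1\wedge r_2 \in \mathfrak{g}\wedge\mathfrak{g}$ (as a finite sum in leg notation) is the classical $r$-matrix extracted from $\mathcal{F}$. Because $r$ lives in $\mathfrak{g}\wedge\mathfrak{g}$ rather than in the full universal enveloping algebra, the action $\rhd$ by $r_1$ and $r_2$ is implemented by genuine fundamental vector fields $\phi(r_1),\phi(r_2)\in\Gamma^\infty(TM)$ of the $G$-action.

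Next I would rewrite the Hamiltonian vector field $X_f = -\{f,\cdot\}$ in the form
\begin{equation*}
X_f = -(r_2\rhd f)\,\phi(r_1) \;+\; (r_1\rhd f)\,\phi(r_2)\quad\text{(finite sum)},
\end{equation*}
so that $X_f$ is an $\mathscr{C}^\infty(M)$-linear combination of the finitely many fundamental vector fields $\phi(x)$ with $x$ ranging over the symplectic leaf Lie subalgebra $\mathfrak{g}_r$ from Lemma~\ref{lemma22}. Evaluating at any point $p\in M$, the tangent vector $X_f(p)$ therefore lies in the $\mathbb{R}$-linear span of the vectors $\{\phi(x)(p)\mid x\in\mathfrak{g}_r\}\subseteq T_pM$. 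Since $(M,\{\cdot,\cdot\})$ is symplectic, the Hamiltonian vector fields span $T_pM$ as $f$ varies over $\mathscr{C}^\infty(M)$, so the fundamental vector fields of the $G$-action already span $T_pM$ at every $p$; equivalently, the orbit map $G\ni g\mapsto g\cdot p\in M$ is a submersion at the identity for every $p$.

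From this infinitesimal transitivity I would conclude global transitivity in the standard way: each $G$-orbit is an immersed submanifold whose tangent space at every point equals the ambient tangent space, hence each orbit is open in $M$. Passing if necessary to the identity component of $G$ (which is what $\phi$ integrates to), the orbits partition $M$ into open sets; since $M$ is connected, there is exactly one orbit, so $M$ is a homogeneous $G$-space.

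The main obstacle I anticipate is the passage from the formal-power-series twist $\mathcal{F}$ to a statement about honest vector fields on $M$: one must observe that although $\mathcal{F}$ involves all orders in $\hbar$ and arbitrary elements of $\mathscr{U}\mathfrak{g}$, only the antisymmetric part of its first order enters the Poisson bracket and this first order lies in $\mathfrak{g}\wedge\mathfrak{g}$, so differential operators of order higher than one never appear in the expression for $X_f$. Once this is made explicit, the rest is an application of the orbit theorem, and connectedness of $M$ does the final work.
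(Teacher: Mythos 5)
Your proposal is correct and follows essentially the same route as the paper: both use Lemma~\ref{lemma18} to express the Poisson structure through the fundamental vector fields of the induced $r$-matrix, deduce from non-degeneracy of the symplectic form that $\phi|_p\colon\mathfrak{g}\rightarrow T_pM$ is surjective at every point (the paper phrases this via surjectivity of the musical homomorphism $\pi^\sharp$ on fibers, which is equivalent to your statement that Hamiltonian vector fields lie in, and hence exhaust, the span of the fundamental vector fields), and then pass from local transitivity to transitivity using connectedness of $M$ and the open-orbit argument. Your closing remark about only the antisymmetrized first order of $\mathcal{F}$ entering the Poisson bracket is exactly the content already packaged into Lemma~\ref{lemma18} and the corollary on classical $r$-matrices, so no additional work is needed there.
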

\begin{proof}
In Lemma~\ref{lemma18} it was proven that 
$\{\cdot,\cdot\}=(r_1\rhd\cdot)(r_2\rhd\cdot)$ in this situation.
Equivalently, the corresponding Poisson bivector reads
$\pi=\sum_{i<j}r^{ij}\phi(e_i)\wedge\phi(e_j)$. Since $\pi$ is symplectic,
the musical homomorphism $\pi^\sharp\colon T^*M\rightarrow TM$ is surjective
on fibers. Namely, for any $p\in M$ and $v_p\in T_pM$ there is an
$\alpha_p\in T_p^*M$ such that
\begin{align*}
    v_p
    =&-\pi^\sharp|_p(\alpha_p)
    =-\pi_p(\cdot,\alpha_p)
    =\pi_p(\alpha_p,\cdot)
    =(\alpha_p\otimes\mathrm{id})\pi_p\\
    =&\sum_{i<j}r^{ij}\alpha_p(\phi(e_i)|_p)\phi(e_j)|_p\\
    =&\phi\bigg(\sum_{i<j}r^{ij}\alpha_p(\phi(e_i)|_p)e_j\bigg)\bigg|_p.
\end{align*}
This proves that $\phi\colon\mathfrak{g}\rightarrow\Gamma^\infty(TM)$ is
locally transitive, i.e. that $\phi|_p\colon\mathfrak{g}\rightarrow
T_pM$ is transitive for all $p\in M$. By assumption $\phi$ integrates to a
Lie group action $\Phi\colon G\times M\rightarrow M$. This means that
$G$ is the connected and simply connected Lie group corresponding to
$\mathfrak{g}$ and
$$
\frac{\mathrm{d}}{\mathrm{d}t}\bigg|_{t=0}
\Phi(\exp(t\xi),p)
=\phi|_p(\xi)\in T_pM
$$
for all $p\in M$ and $\xi\in\mathfrak{g}$. Since $\phi$ is locally transitive
and $M$ connected, it follows that $\Phi\colon G\times M\rightarrow M$
is a transitive group action (c.f. \cite{Thomas2016}~Lem.~3.2).
In other words, $M$ is a homogeneous
$G$-space. This concludes the proof of the proposition.
\end{proof}
If $M$ is a compact manifold, the additional integration assumption is
redundant, leading us to the first examples of star products which
can not be induced by Drinfel'd twists.
\begin{corollary}\label{cor02}
Let $(M,\{\cdot,\cdot\})$ be a connected compact symplectic manifold allowing
for a twist star product $\star_\mathcal{F}$, where $\mathcal{F}\in
(\mathscr{U}\mathfrak{g}\otimes\mathscr{U}\mathfrak{g})[[\hbar]]$.
Then $M$ is a homogeneous $G$-space.
\end{corollary}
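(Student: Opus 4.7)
The plan is to reduce Corollary~\ref{cor02} to Proposition~\ref{prop15} by showing that, under compactness, the infinitesimal action $\phi\colon\mathfrak{g}\to\Gamma^\infty(TM)$ always integrates to a genuine Lie group action on $M$. The catch is that $\mathfrak{g}$ need not be finite-dimensional, so I first have to cut it down to something manageable.

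First, I would reduce to a finite-dimensional symmetry. Let $r\in\Lambda^2\mathfrak{g}$ be the classical $r$-matrix arising from $\mathcal{F}$ and let $\mathfrak{g}_r\subseteq\mathfrak{g}$ be the symplectic leaf from Lemma~\ref{lemma22}; this is a finite-dimensional Lie subalgebra containing $r$. By Lemma~\ref{lemma18} the Poisson bracket satisfies $\{f,g\}=(r_1\rhd f)\cdot(r_2\rhd g)$, and since $r\in\mathfrak{g}_r\otimes\mathfrak{g}_r$ the corresponding Poisson bivector equals
\begin{equation*}
    \pi \;=\; \sum_{i<j} r^{ij}\,\phi(e_i)\wedge\phi(e_j),
\end{equation*}
for any basis $e_1,\dots,e_n$ of $\mathfrak{g}_r$ with coefficients $r^{ij}$ of $r$ in that basis. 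Hence only the restriction $\phi|_{\mathfrak{g}_r}\colon\mathfrak{g}_r\to\Gamma^\infty(TM)$ enters the argument, and exactly as in the proof of Proposition~\ref{prop15} the non-degeneracy of $\pi$ together with the non-degeneracy of $r$ on $\mathfrak{g}_r$ implies that $\phi|_{\mathfrak{g}_r}$ is pointwise surjective, hence locally transitive.

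Second, I would invoke compactness to integrate $\phi|_{\mathfrak{g}_r}$. On a compact manifold every smooth vector field is complete, so each $\phi(\xi)$ with $\xi\in\mathfrak{g}_r$ has a globally defined flow. A finite-dimensional Lie algebra acting on a manifold by complete vector fields integrates to a smooth action of the connected, simply connected Lie group $G$ with Lie algebra $\mathfrak{g}_r$ — this is the standard integrability theorem of Palais. In particular the hypothesis of Proposition~\ref{prop15} is satisfied: we obtain a smooth action $\Phi\colon G\times M\to M$ with $\frac{\mathrm d}{\mathrm dt}\big|_{t=0}\Phi(\exp(t\xi),p)=\phi|_p(\xi)$.

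Third, I would apply Proposition~\ref{prop15} directly to this integrated action. Local transitivity of $\phi|_{\mathfrak{g}_r}$ plus connectedness of $M$ force $\Phi$ to be transitive, so $M$ is a homogeneous $G$-space, concluding the proof. The only genuinely nontrivial step is the integrability input, but since the finite-dimensional reduction via $\mathfrak{g}_r$ makes the vector fields complete by compactness, Palais's theorem covers it with no further effort; everything else is a bookkeeping translation from Proposition~\ref{prop15}.
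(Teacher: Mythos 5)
Your proof is correct and follows essentially the same route as the paper: completeness of the fundamental vector fields on the compact $M$, Palais' integrability theorem, and then Proposition~\ref{prop15}. The only difference is that you first restrict to the finite-dimensional symplectic leaf $\mathfrak{g}_r$ of Lemma~\ref{lemma22} before invoking Palais — a reduction the paper performs only later, in Corollary~\ref{cor03} — which is a harmless (indeed slightly more careful) refinement, since Palais' theorem requires a finite-dimensional Lie algebra and the definition of symmetry does not impose this on $\mathfrak{g}$.
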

\begin{proof}
By Palais' theorem \cite{Palais}, a Lie algebra action integrates to a Lie group 
action if all
corresponding fundamental vector fields have complete flow. This
is the case, since $M$ is compact (c.f. \cite{Lee2003}~Thm.~12.12).
In particular, we can apply Proposition~\ref{prop15} to obtain the result.
\end{proof}
\begin{example}[\cite{Thomas2016}~Ex.~3.9]
The connected orientable Riemann surfaces $T(g)$ of genus $g>1$ are not homogeneous.
This follows from theorems \cite{MostowI,MostowII}
of Mostow, which say that connected compact homogeneous spaces have
non-negative Euler characteristic. Of course the Euler characteristic of
$T(g)$ is $\chi(T(g))=2(1-g)$. On the other hand, the canonical symplectic
structure on $T(g)$, which we discussed in Example~\ref{example06}, admits 
a star product quantization according to Proposition~\ref{prop19}. By
Corollary~\ref{cor02} we conclude that those star products can not be
induced by a Drinfel'd twist on a universal enveloping algebra.
\end{example}
In Example~\ref{example04} we recognized that the Moyal-Weyl product on the
symplectic $2$-torus $T(1)=\mathbb{T}^2$
is a twist star product. The question remains if there are twist star products
on the symplectic $2$-sphere $T(0)=\mathbb{S}^2$. We address this question
in the rest of this section.
\begin{lemma}[\cite{Thomas2016}~Prop.~3.6]\label{lemma19}
Let $\Phi\colon G\times M\rightarrow M$ be a Lie group action with
corresponding Lie algebra action
$\phi\colon\mathfrak{g}\rightarrow\Gamma^\infty(TM)$. Then the following
statements hold.
\begin{enumerate}
\item[i.)] If $\Phi$ is transitive, the induced Lie group action
$$
\Psi\colon G/\ker\Phi\otimes M\ni([g],p)\mapsto\Phi(g,p)\in M
$$
is transitive and effective.

\item[ii.)] If $r\in\Lambda^2\mathfrak{g}$ is a classical $r$-matrix, so is
$[r]\in\Lambda^2\mathfrak{g}/\ker\phi$.

\item[iii.)] If $\phi(r):=\sum_{i<j}r^{ij}\phi(e_1)\wedge\phi(e_j)$ is a
Poisson bivector on $M$, so is $\phi([r])$. Moreover $\phi(r)=\phi([r])$.
\end{enumerate}
\end{lemma}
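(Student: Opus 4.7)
The plan is to dispatch the three claims in order, each relying on a standard quotient construction in the appropriate category.

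For part i.), I would first note that $\ker\Phi = \{g \in G \mid \Phi_g = \mathrm{id}_M\}$ is automatically a closed normal subgroup of $G$: closedness follows since $\ker\Phi = \bigcap_{p \in M} \{g \in G \mid \Phi(g,p) = p\}$ is an intersection of preimages of points under continuous maps, and normality follows from $\Phi(hgh^{-1}, p) = \Phi(h, \Phi(g, \Phi(h^{-1},p))) = p$ whenever $g \in \ker\Phi$. Hence $G/\ker\Phi$ inherits a canonical Lie group structure and the map $\Psi([g], p) := \Phi(g, p)$ is well-defined (if $g' = gk$ with $k \in \ker\Phi$ then $\Phi(gk,p) = \Phi(g, \Phi(k,p)) = \Phi(g,p)$) and is smooth since $\Phi$ is. Transitivity is inherited directly from $\Phi$, while effectiveness holds by construction: if $\Psi([g], \cdot) = \mathrm{id}_M$ then $g \in \ker\Phi$, so $[g]$ is the identity coset.

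For part ii.), the key observation is that $\ker\phi \subseteq \mathfrak{g}$ is a Lie ideal because $\phi$ is a Lie algebra (anti-)homomorphism: for $x \in \ker\phi$ and $y \in \mathfrak{g}$, $\phi([y,x]) = \pm[\phi(y), \phi(x)] = 0$. Consequently $\bar{\mathfrak{g}} := \mathfrak{g}/\ker\phi$ is a Lie algebra and the projection $\pi \colon \mathfrak{g} \to \bar{\mathfrak{g}}$ is a Lie algebra homomorphism. Extending $\pi$ to the exterior algebras in the obvious way, $\pi(x_1 \wedge \cdots \wedge x_k) = \pi(x_1) \wedge \cdots \wedge \pi(x_k)$, produces a morphism of Gerstenhaber algebras, because the Schouten-type bracket is characterized by its behavior on generators and by the graded Leibniz rule, both of which are preserved by $\pi$. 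Hence $\llbracket [r], [r]\rrbracket_{\bar{\mathfrak{g}}} = \pi^{\wedge 3}(\llbracket r,r \rrbracket) = 0$, so $[r]$ is a classical $r$-matrix on $\bar{\mathfrak{g}}$.

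For part iii.), the universal property of the quotient yields a unique injective Lie algebra (anti-)homomorphism $\bar\phi \colon \bar{\mathfrak{g}} \to \Gamma^\infty(TM)$ with $\bar\phi \circ \pi = \phi$. Passing once more to exterior powers and writing $r = \sum_{i<j} r^{ij}\, e_i \wedge e_j$, we compute
\begin{equation*}
\phi([r]) = \sum_{i<j} r^{ij}\, \bar\phi([e_i]) \wedge \bar\phi([e_j]) = \sum_{i<j} r^{ij}\, \phi(e_i) \wedge \phi(e_j) = \phi(r),
\end{equation*}
which gives the asserted equality. Since $\phi(r)$ is assumed to be a Poisson bivector on $M$, so is $\phi([r])$. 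The only genuine care needed along the way is ensuring that the exterior extensions of $\pi$ and $\bar\phi$ really do intertwine the Gerstenhaber/Schouten-Nijenhuis brackets; this is the routine but slightly delicate point, and I would expect the sign bookkeeping for the anti-homomorphism case to be the main place to be attentive, though it causes no structural difficulty.
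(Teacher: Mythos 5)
Your proposal is correct and follows essentially the same route as the paper: normality of $\ker\Phi$ giving the quotient Lie group for i.), the fact that the induced Lie algebra homomorphism $\mathfrak{g}\rightarrow\mathfrak{g}/\ker\phi$ intertwines the Schouten--Nijenhuis brackets for ii.), and the factorization $\bar\phi\circ\pi=\phi$ for iii.). You merely fill in details (closedness, well-definedness, effectiveness, the universal property) that the paper dismisses as "easily verified."
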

\begin{proof}
For the first statement recall that $\ker\Phi=\{g\in G~|~\Phi_g=\mathrm{id}_M\}$
is a normal Lie subgroup of $G$, i.e. $\ker\Phi$ is a Lie group and
$ghg^{-1}\in\ker\Phi$ if $h\in\ker\Phi$ and $g\in G$. The latter is the case
since $\Phi_{ghg^{-1}}=\Phi_g\circ\Phi_h\circ\Phi_{g^{-1}}
=\Phi_g\circ\Phi_{g^{-1}}
=\mathrm{id}_M$. It follows that $G/\ker\Phi$ is a Lie group and there is
a homomorphism $G\rightarrow G/\ker\Phi$ of Lie groups with Lie algebra homomorphism
$\psi\colon\mathfrak{g}\rightarrow\mathfrak{g}/\ker\phi$. Transitivity and
effectiveness of $\Psi$ are easily verified. If $r\in\Lambda^2\mathfrak{g}$
is a classical $r$-matrix, i.e. $\llbracket r,r\rrbracket=0$, so is
$[r]=\psi(r)\in\Lambda^2\mathfrak{g}/\ker\phi$, since Lie algebra homomorphisms
respect the Lie bracket and consequently the Schouten-Nijenhuis bracket, i.e.
$$
\llbracket[r],[r]\rrbracket
=\llbracket\psi(r),\psi(r)\rrbracket
=\psi(\llbracket r,r\rrbracket)
=0.
$$
Then, also the last statement follows.
\end{proof}
Collecting the former results, we obtain more and more information on
the Lie group action corresponding to a twist star product.
\begin{corollary}\label{cor03}
If there is a twist star product on a connected compact symplectic
manifold $M$, there exists a non-degenerate classical $r$-matrix on a Lie algebra
$\mathfrak{g}$ such that the corresponding connected and simply connected Lie
group $G$ acts transitively and effectively on $M$.
\end{corollary}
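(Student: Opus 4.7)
The plan is to chain together the three preceding preparatory results of the section. Let $\mathcal{F}\in(\mathscr{U}\mathfrak{g}\otimes\mathscr{U}\mathfrak{g})[[\hbar]]$ be the given Drinfel'd twist and let $r\in\Lambda^2\mathfrak{g}$ be the classical $r$-matrix obtained as the skew-symmetrization of the first order of $\mathcal{F}^{-1}$; by Lemma~\ref{lemma18} the symplectic Poisson bracket on $M$ equals $\phi(r)$. The goal is to produce a single Lie algebra $\bar{\mathfrak{g}}$ for which $r$ descends to a non-degenerate classical $r$-matrix and whose simply connected Lie group acts transitively and effectively on $M$.

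First I would pass to the symplectic leaf: by Lemma~\ref{lemma22} the $r$-matrix in fact lies in the finite-dimensional Lie subalgebra $\mathfrak{g}_r\subseteq\mathfrak{g}$ and is non-degenerate there. Since $\phi(r)$ only involves the vector fields in $\phi(\mathfrak{g}_r)$, I can replace the symmetry $\phi$ by its restriction $\phi|_{\mathfrak{g}_r}$ without changing the Poisson structure on $M$, and the argument in the proof of Proposition~\ref{prop15} -- using the surjectivity of the musical homomorphism $\pi^\sharp$ -- shows verbatim that the restricted action $\phi|_{\mathfrak{g}_r}\colon\mathfrak{g}_r\to\Gamma^\infty(TM)$ is locally transitive. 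Because $M$ is compact, every fundamental vector field is complete, so Palais' theorem integrates $\phi|_{\mathfrak{g}_r}$ to a smooth action $\Phi\colon G_r\times M\to M$ of the connected simply connected Lie group $G_r$ attached to $\mathfrak{g}_r$; local transitivity together with the connectedness of $M$ upgrades $\Phi$ to a transitive action, exactly as in the proof of Corollary~\ref{cor02}.

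To obtain effectiveness I would then apply Lemma~\ref{lemma19}(i), replacing $G_r$ by the quotient $G_r/\ker\Phi$, whose Lie algebra is $\bar{\mathfrak{g}}=\mathfrak{g}_r/\ker\phi|_{\mathfrak{g}_r}$; parts (ii) and (iii) of the same lemma ensure that the image $[r]$ is still a classical $r$-matrix on $\bar{\mathfrak{g}}$ and still induces the original symplectic Poisson bivector on $M$. The final Lie algebra $\mathfrak{g}$ of the statement is then $\bar{\mathfrak{g}}$, and the desired $G$ is its connected simply connected cover.

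The main obstacle I anticipate is verifying that $[r]$ remains non-degenerate after this quotient, that is, that $\mathfrak{g}_r\cap\ker\phi|_{\mathfrak{g}_r}=0$. My strategy is to use the non-degeneracy of $r$ on $\mathfrak{g}_r$, which provides a bijection $\mathfrak{g}_r^*\to\mathfrak{g}_r$ via $\alpha\mapsto(\alpha\otimes\mathrm{id})(r)$: if some nonzero $\xi=(\alpha\otimes\mathrm{id})(r)$ were killed by $\phi$, then applying $\phi$ would give $(\alpha\otimes\mathrm{id})\phi(r)|_p=0$ for every $p\in M$, so the covector along $\phi(\mathfrak{g}_r)|_p$ represented by $\alpha$ would lie in the radical of the symplectic form at $p$; the local transitivity of $\phi|_{\mathfrak{g}_r}$ guarantees that this radical is detectable pointwise, and non-degeneracy of $\omega$ then forces $\alpha=0$ and hence $\xi=0$, closing the argument.
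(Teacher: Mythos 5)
Your proposal assembles exactly the same ingredients as the paper -- the transitivity argument of Proposition~\ref{prop15}/Corollary~\ref{cor02}, the quotient construction of Lemma~\ref{lemma19}, and the symplectic leaf of Lemma~\ref{lemma22} -- but in the reverse order, and this reversal creates a genuine gap. The paper first quotients by $\ker\phi$ (resp.\ $\ker\Phi$), so that the resulting Lie algebra $\mathfrak{g}'=\mathfrak{g}/\ker\phi$ injects into $\Gamma^\infty(TM)$, and only \emph{then} passes to the symplectic leaf $\mathfrak{g}'_{[r]}\subseteq\mathfrak{g}'$. In that order Lemma~\ref{lemma22} delivers non-degeneracy on the leaf for free, and effectiveness is inherited because the leaf sits inside an algebra that already acts faithfully. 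You instead pass to the leaf $\mathfrak{g}_r$ first and quotient afterwards, so you must prove that the quotient does not destroy non-degeneracy, i.e.\ that $\mathfrak{g}_r\cap\ker\phi=\{0\}$.

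Your argument for this last point does not go through. Writing a hypothetical $0\neq\xi=(\alpha\otimes\mathrm{id})(r)\in\mathfrak{g}_r\cap\ker\phi$ with $\alpha\in\mathfrak{g}_r^*$, the non-degeneracy of $\omega_p$ only lets you conclude $\alpha=0$ when $\alpha$ factors through the evaluation map $\phi|_p\colon\mathfrak{g}_r\rightarrow T_pM$, i.e.\ when $\alpha=\beta_p\circ\phi|_p$ for some $\beta_p\in T_p^*M$; in that case $\phi(\xi)|_p=-\pi_p^\sharp(\beta_p)$ and non-degeneracy of $\pi_p$ applies. But the functionals of this form span only a subspace of $\mathfrak{g}_r^*$ of dimension $\dim M$ (the annihilator of $\ker\phi|_p$), and nothing forces $\dim\mathfrak{g}_r\leq\dim M$. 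For a general $\alpha$ the vanishing of $\phi(\xi)$ expresses no condition on the radical of $\omega_p$ at all, so ``the radical is detectable pointwise'' is not justified. The clean repair is simply to adopt the paper's ordering: quotient by the kernel first (Lemma~\ref{lemma19} guarantees $[r]$ is still a classical $r$-matrix inducing the same Poisson bivector), then take the symplectic leaf of $[r]$ inside the effectively acting quotient algebra, and rerun the Proposition~\ref{prop15} transitivity argument there.
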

\begin{proof}
By Corollary~\ref{cor02} there is a classical $r$-matrix 
$\tilde{r}\in\Lambda^2\tilde{\mathfrak{g}}$ on a Lie algebra $\tilde{\mathfrak{g}}$
such that the corresponding connected and simply connected Lie group $\tilde{G}$
acts transitively on $M$. Denote this group action by $\tilde{\Phi}$ and the
corresponding Lie algebra action by $\tilde{\phi}$.
Performing the quotient $G'=\tilde{G}/\ker\tilde{\Phi}$
we obtain a Lie group action $\Phi'\colon G'\times M\rightarrow M$,
which is well-defined, transitive and effective according to
Lemma~\ref{lemma19}, such that the induced $r$-matrix $r\in\Lambda^2\mathfrak{g}'$
induces the Poisson structure on $M$. This $r$-matrix is non-degenerate
in the Lie subalgebra $\mathfrak{g}_r\subseteq\mathfrak{g}'$ according to 
Lemma~\ref{lemma22}. The Lie group action of the corresponding Lie group
$G_r$ is still transitive and effective, which can be checked by repeating
the proof of Proposition~\ref{prop15} for $\mathfrak{g}_r$.
This concludes the proof of the corollary.
\end{proof}
Finally we are able to conclude the main obstruction of this section.
\begin{theorem}[\cite{Thomas2016}~Cor.~3.12]
There is no twist star product deforming the symplectic $2$-sphere.
\end{theorem}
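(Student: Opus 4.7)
The plan is to argue by contradiction, assuming that $\mathbb{S}^2$ admits a twist star product $\star_\mathcal{F}$ and deducing an impossible structure for the underlying symmetry. First, I would apply Corollary~\ref{cor03} to extract the data of a non-degenerate classical $r$-matrix $r\in\Lambda^2\mathfrak{g}$ on some finite-dimensional Lie algebra $\mathfrak{g}$, whose associated connected and simply connected Lie group $G$ acts transitively and effectively on $\mathbb{S}^2$. Non-degeneracy of a skew-symmetric $r$ gives a $\mathfrak{g}$-linear isomorphism $\mathfrak{g}^*\cong\mathfrak{g}$, which forces $\dim\mathfrak{g}$ to be even. This already excludes the most natural candidate $\mathfrak{g}\cong\mathfrak{su}(2)\cong\mathfrak{so}(3)$, for which $\dim\mathfrak{g}=3$.

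Next, I would exploit simple connectedness of $G$ together with effectiveness. Writing $\mathbb{S}^2=G/H$ for a closed subgroup $H\subseteq G$, the long exact homotopy sequence of the fibration $H\to G\to\mathbb{S}^2$ combined with $\pi_2(G)=0$ and $\pi_1(G)=0$ yields $\pi_1(H)\cong\pi_2(\mathbb{S}^2)\cong\mathbb{Z}$. In particular $H$ is connected, of codimension $2$ in $G$, and contains a distinguished circle subgroup. This gives rigid topological constraints on the possible $(G,H)$ realizing $\mathbb{S}^2$ as a homogeneous space.

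The decisive step, and also the main obstacle, is to show that no even-dimensional $\mathfrak{g}$ simultaneously admits a non-degenerate $r$-matrix and an effective transitive action of its simply connected integration on $\mathbb{S}^2$. The natural way to proceed is by classification: since the action $\mathfrak{g}\hookrightarrow\mathfrak{X}^1(\mathbb{S}^2)$ is injective (by effectiveness) and transitive, and since $\mathbb{S}^2$ is compact, the possible images can be enumerated. Up to conjugacy, the finite-dimensional Lie subalgebras of $\mathfrak{X}^1(\mathbb{S}^2)$ acting transitively are essentially $\mathfrak{so}(3)$ and its conformal extension $\mathfrak{sl}(2,\mathbb{C})_\mathbb{R}$. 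The first has odd dimension and is ruled out immediately. For the second, $\dim_\mathbb{R}\mathfrak{sl}(2,\mathbb{C})=6$ is even, but its simply connected group $SL(2,\mathbb{C})$ acts on $\mathbb{CP}^1\cong\mathbb{S}^2$ with kernel $\{\pm\mathbf{1}\}$, so this action is not effective. Any enlargement of the semisimple part by a nontrivial radical is excluded by compactness of $\mathbb{S}^2$ combined with $\chi(\mathbb{S}^2)=2\neq 0$: a connected solvable group acting on $\mathbb{S}^2$ must fix a point, and once added to a semisimple summand this collapses transitivity or effectiveness.

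Putting these ingredients together, there is no even-dimensional $\mathfrak{g}$ whose simply connected Lie group acts transitively and effectively on $\mathbb{S}^2$. This contradicts the existence statement supplied by Corollary~\ref{cor03}, and hence $\mathbb{S}^2$ admits no twist star product. The hardest part of executing this plan is not the contradiction itself but the careful classification of transitive effective Lie algebra actions on the sphere, in particular the elimination of solvable extensions, where one must combine the Poincaré--Hopf theorem with the algebraic structure of $r$-matrices to control all remaining candidates.
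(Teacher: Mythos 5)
Your opening move is the same as the paper's: invoke Corollary~\ref{cor03} to obtain a non-degenerate classical $r$-matrix on a Lie algebra $\mathfrak{g}$ whose associated group acts transitively and effectively on $\mathbb{S}^2$, and your observation that non-degeneracy of $r\in\Lambda^2\mathfrak{g}$ forces $\dim\mathfrak{g}$ to be even is correct and immediately kills $\mathfrak{so}(3)$. But from there the paper finishes in two lines: by Onishchik's theorems, any Lie algebra acting transitively and effectively on $\mathbb{S}^2$ is semisimple, and semisimple Lie algebras carry no non-degenerate classical $r$-matrices (\cite{ES2010}~Prop.~5.2). That second fact is the missing ingredient in your argument, and its absence creates a genuine gap rather than just a longer route.

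The gap is concentrated in your treatment of $\mathfrak{sl}(2,\mathbb{C})_\mathbb{R}$. Its real dimension is $6$, so your parity argument does not touch it, and you instead rule it out by saying that the simply connected group $SL(2,\mathbb{C})$ acts on $\mathbb{CP}^1$ with kernel $\{\pm\mathbf{1}\}$. This leans entirely on the group-level effectiveness of the \emph{simply connected} integration asserted in the statement of Corollary~\ref{cor03}; but the proof of that corollary actually produces effectiveness only after quotienting by the kernel of the action (Lemma~\ref{lemma19}), i.e.\ the robust content is injectivity of the Lie algebra action $\mathfrak{g}\hookrightarrow\Gamma^\infty(T\mathbb{S}^2)$. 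At that infinitesimal level $\mathfrak{sl}(2,\mathbb{C})$ \emph{does} act faithfully and transitively on $\mathbb{S}^2\cong\mathbb{CP}^1$ (it is the Lie algebra of $PSL(2,\mathbb{C})$, which acts effectively), so your elimination of this case evaporates. The only robust way to kill it is the one the paper uses: $\mathfrak{sl}(2,\mathbb{C})$ is semisimple, hence admits no non-degenerate skew solution of the classical Yang--Baxter equation. Two further points are asserted rather than proved: the claim that $\mathfrak{so}(3)$ and $\mathfrak{sl}(2,\mathbb{C})_\mathbb{R}$ exhaust the transitive finite-dimensional Lie subalgebras of $\mathfrak{X}^1(\mathbb{S}^2)$ is exactly the Montgomery--Onishchik classification you would need to cite or prove, and the elimination of a nontrivial solvable radical via a common fixed point is not supplied by Lefschetz/Poincar\'e--Hopf alone (those give a zero for each individual vector field, not a common zero for the whole solvable subalgebra). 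As it stands the proposal is a sketch whose decisive case fails; supplying the fact that semisimple Lie algebras have no non-degenerate $r$-matrices would both repair the $\mathfrak{sl}(2,\mathbb{C})$ case and make the entire classification unnecessary.
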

\begin{proof}
Assume there is a twist star product on symplectic $\mathbb{S}^2$. Then
there is a non-degenerate classical $r$-matrix $r$ on a Lie algebra $\mathfrak{g}$
such that $G$ acts transitively and effectively on $\mathbb{S}^2$, according to
Corollary~\ref{cor03}. In particular, $\mathfrak{g}$ is semisimple according to
Onishchik \cite{OnishchikI,OnishchikII,OnishchikIII}. This gives a contradiction 
since there are no non-degenerate $r$-matrices on semisimple Lie algebras
(c.f. \cite{ES2010}~Prop.~5.2).
\end{proof}

\section{Morita Equivalence of Algebras}\label{SecObs3}

A weaker notion of equivalence of algebras than isomorphism is given by
Morita equivalence. It identifies two algebras if their categories of
representations are equivalent. There are several properties which are
preserved under Morita equivalence, like e.g. algebraic $K$-Theory
(c.f. \cite{Quillen}). Those invariants are said to be
\textit{Morita invariants}. We will see that commutativity is not one of them.
Following \cite{Bursztyn2004}~Sec.~3 and
\cite{Bursztyn2002}~Sec.~2. we recall some basic concepts of Morita theory,
which we apply to star product algebras and in particular to twist
star product algebras in Section~\ref{SecObs4}. For a general introduction
to Morita equivalence of rings we refer to \cite{Lam1999}.

Fix an associative unital algebra $\mathcal{A}$ over a commutative unital
ring $\Bbbk$ in the following. As usual, its category of representations
if denoted by ${}_\mathcal{A}\mathcal{M}$. If
$\mathcal{B}$ is another associative unital algebra isomorphic to $\mathcal{A}$,
it follows that ${}_\mathcal{A}\mathcal{M}$ and ${}_\mathcal{B}\mathcal{M}$
are equivalent categories. This means that there are two
functors $F\colon{}_\mathcal{A}\mathcal{M}\rightarrow{}_\mathcal{B}\mathcal{M}$
and $G\colon{}_\mathcal{B}\mathcal{M}\rightarrow{}_\mathcal{A}\mathcal{M}$,
as well as two natural isomorphisms 
$\mathrm{id}_{{}_\mathcal{A}\mathcal{M}}\rightarrow G\circ F$ and
$F\circ G\rightarrow\mathrm{id}_{{}_\mathcal{B}\mathcal{M}}$.
The following example shows that being isomorphic is not a
necessary condition for algebras to have equivalent categories of modules.
\begin{example}\label{example01}
Fix a natural number $n>0$. The set $M_n(\mathcal{A})$ of $n\times n$-matrices
with entries in $\mathcal{A}$ is itself an associative unital algebra
with product given by
matrix multiplication and unit being the $n\times n$-matrix $1_n$
with units on the diagonal and zeros in every other entry. An object
in ${}_{M_n(\mathcal{A})}\mathcal{M}$ can be identified with $n$ left
$\mathcal{A}$-modules $\mathcal{M}_i$, such that $\sum_{i=1}^na_{ji}\cdot m_i
\in\mathcal{M}_j$ for all $a_{ji}\in\mathcal{A}$ and $m_i\in\mathcal{M}_i$,
where $1\leq j\leq n$. It follows that $M_i=M_1$ for all $1\leq i\leq n$.
This means that any left $M_n(\mathcal{A})$-module is of the form $\mathcal{M}^n$
for a left $\mathcal{A}$-module $\mathcal{M}$.
Similarly one proves that any left $M_n(\mathcal{A})$-homomorphism
$\Phi\colon\mathcal{M}^n\rightarrow\mathcal{N}^n$ is of the form
$\Phi=\phi^n$ for a left $\mathcal{A}$-module homomorphism
$\phi\colon\mathcal{M}\rightarrow\mathcal{N}$. It is easy to prove that
the assignments $\mathcal{M}\mapsto\mathcal{M}^n$ and
$\phi\mapsto\phi^n$ define an invertible functor with inverse given by
the projection to the first component. This proves that
${}_\mathcal{A}\mathcal{M}$ and ${}_{M_n(\mathcal{A})}\mathcal{M}$
are equivalent categories. However, $\mathcal{A}$ and 
$M_n(\mathcal{A})$ are not isomorphic as algebras in general, since
$M_n(\mathcal{A})$ might be noncommutative even if $\mathcal{A}$ is
commutative.
\end{example}
Motivated from this example we state the following definition.
\begin{definition}
Two associative unital algebras $\mathcal{A}$ and $\mathcal{B}$ are said to be
Morita equivalent if ${}_\mathcal{A}\mathcal{M}$ and
${}_\mathcal{B}\mathcal{M}$ are equivalent categories.
\end{definition}
It follows from the previous discussion that two isomorphic algebras
are Morita equivalent and $\mathcal{A}$ is Morita equivalent to
$M_n(\mathcal{A})$ for every $n>0$.
There are several characterizations of Morita equivalence.
For instance, we note that the functor from
Example~\ref{example01}, which assigns to any left $\mathcal{A}$-module
$\mathcal{M}$ the $M_n(\mathcal{A})$-module $\mathcal{M}^n$ and to any
left $\mathcal{A}$-module homomorphism $\phi\colon\mathcal{M}\rightarrow
\mathcal{N}$ the left $M_n(\mathcal{A})$-module homomorphism
$\phi^n\colon\mathcal{M}^n\rightarrow\mathcal{N}^n$, can be represented
as the tensor product with the $M_n(\mathcal{A})$-$\mathcal{A}$-bimodule
${}_{M_n(\mathcal{A})}\mathcal{A}^n_\mathcal{A}$. The latter is defined
as $\mathcal{A}^n$, which is a left $M_n(\mathcal{A})$-module via
matrix-vector multiplication and a right $\mathcal{A}$-module by
component-wise multiplication from the right. Clearly the actions commute.
Furthermore, for any left $\mathcal{A}$-module $\mathcal{M}$ the
tensor product ${}_{M_n(\mathcal{A})}\mathcal{A}^n_\mathcal{A}
\otimes_\mathcal{A}\mathcal{M}$ over $\mathcal{A}$ is isomorphic to the left
$M_n(\mathcal{A})$-module $\mathcal{M}^n$. The left 
$M_n(\mathcal{A})$-module isomorphism is given by
$$
\mathcal{M}^n\ni
\begin{pmatrix}
m_1 \\
\vdots \\
m_n
\end{pmatrix}
\mapsto
\begin{pmatrix}
1 \\
0 \\
\vdots \\
0
\end{pmatrix}
\otimes_\mathcal{A}m_1
+\ldots+
\begin{pmatrix}
0 \\
\vdots \\
0 \\
1
\end{pmatrix}
\otimes_\mathcal{A}m_n
\in{}_{M_n(\mathcal{A})}\mathcal{A}^n_\mathcal{A}
\otimes_\mathcal{A}\mathcal{M},
$$
with inverse
$$
{}_{M_n(\mathcal{A})}\mathcal{A}^n_\mathcal{A}
\otimes_\mathcal{A}\mathcal{M}\ni
\begin{pmatrix}
a_1 \\
\vdots \\
a_n
\end{pmatrix}
\otimes_\mathcal{A}m\mapsto
\begin{pmatrix}
a_1\cdot m \\
\vdots \\
a_n\cdot m
\end{pmatrix}
\in\mathcal{M}^n.
$$
One easily proves that these assignments respect the left
$M_n(\mathcal{A})$-module actions. It turns out that this is not
a specific ramification of the Morita equivalence of $\mathcal{A}$
and $M_n(\mathcal{A})$ but rather a general construction underlying
every Morita equivalence.
\begin{proposition}[\cite{Bursztyn2002}~Cor.~2.4]
Two associative unital algebras $\mathcal{A}$ and $\mathcal{B}$ are Morita equivalent
if and only if there is a $\mathcal{B}$-$\mathcal{A}$-bimodule
${}_\mathcal{B}\mathcal{E}_\mathcal{A}$ and an
$\mathcal{A}$-$\mathcal{B}$-bimodule
${}_\mathcal{A}\mathcal{E}_\mathcal{B}$ such that
\begin{equation}\label{eq38}
    {}_\mathcal{A}\mathcal{E}_\mathcal{B}\otimes_\mathcal{B}
    {}_\mathcal{B}\mathcal{E}_\mathcal{A}\cong
    {}_\mathcal{A}\mathcal{A}_\mathcal{A}
    \text{ and }
    {}_\mathcal{B}\mathcal{E}_\mathcal{A}\otimes_\mathcal{A}
    {}_\mathcal{A}\mathcal{E}_\mathcal{B}
    \cong{}_\mathcal{B}\mathcal{B}_\mathcal{B}
\end{equation}
as bimodules.
\end{proposition}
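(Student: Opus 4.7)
The plan is to prove both directions separately, treating the "if" direction as a direct construction and the "only if" direction as an application of the Eilenberg--Watts reconstruction of additive functors as tensor products with bimodules.

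For the "if" direction, I would assume the existence of the two bimodules with the isomorphisms (\ref{eq38}) and define two functors
\[
F={}_\mathcal{B}\mathcal{E}_\mathcal{A}\otimes_\mathcal{A}-\colon {}_\mathcal{A}\mathcal{M}\rightarrow{}_\mathcal{B}\mathcal{M},\qquad
G={}_\mathcal{A}\mathcal{E}_\mathcal{B}\otimes_\mathcal{B}-\colon{}_\mathcal{B}\mathcal{M}\rightarrow{}_\mathcal{A}\mathcal{M}.
\]
These are well-defined because the bimodule structures on the tensor factors supply the outer left-module actions, and they are functorial in the obvious way. Using associativity of the tensor product over an algebra together with the bimodule isomorphisms (\ref{eq38}) one obtains natural isomorphisms
\[
G\circ F\cong ({}_\mathcal{A}\mathcal{E}_\mathcal{B}\otimes_\mathcal{B}{}_\mathcal{B}\mathcal{E}_\mathcal{A})\otimes_\mathcal{A}-\cong \mathcal{A}\otimes_\mathcal{A}-\cong\mathrm{id}_{{}_\mathcal{A}\mathcal{M}},
\]
and similarly $F\circ G\cong\mathrm{id}_{{}_\mathcal{B}\mathcal{M}}$, using the canonical unit isomorphism $\mathcal{A}\otimes_\mathcal{A}\mathcal{M}\cong\mathcal{M}$ for every left $\mathcal{A}$-module $\mathcal{M}$. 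This yields the categorical equivalence.

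For the "only if" direction, suppose an equivalence $F\colon{}_\mathcal{A}\mathcal{M}\to{}_\mathcal{B}\mathcal{M}$ with quasi-inverse $G$ is given. I would set ${}_\mathcal{B}\mathcal{E}_\mathcal{A}=F({}_\mathcal{A}\mathcal{A})$ and ${}_\mathcal{A}\mathcal{E}_\mathcal{B}=G({}_\mathcal{B}\mathcal{B})$. The left $\mathcal{B}$-module structure on $F(\mathcal{A})$ is built in, and the right $\mathcal{A}$-action is constructed by Yoneda-style transport: each $a\in\mathcal{A}$ induces a left $\mathcal{A}$-linear endomorphism $\rho_a\colon\mathcal{A}\to\mathcal{A}$, $b\mapsto ba$, and I set the right multiplication on $F(\mathcal{A})$ by $a$ to be $F(\rho_a)$, which is left $\mathcal{B}$-linear. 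Because $\rho_{ab}=\rho_b\circ\rho_a$ and $\rho_1=\mathrm{id}$, one checks that this defines a genuine right $\mathcal{A}$-action commuting with the left $\mathcal{B}$-action, so ${}_\mathcal{B}\mathcal{E}_\mathcal{A}$ becomes a bimodule, and similarly for ${}_\mathcal{A}\mathcal{E}_\mathcal{B}$.

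The heart of the proof, and the step I expect to be the main obstacle, is to establish the natural isomorphism
\[
F\cong {}_\mathcal{B}\mathcal{E}_\mathcal{A}\otimes_\mathcal{A}-
\]
on the whole category ${}_\mathcal{A}\mathcal{M}$, not just on $\mathcal{A}$ itself. The strategy is: first observe that both sides, evaluated on ${}_\mathcal{A}\mathcal{A}$, give ${}_\mathcal{B}\mathcal{E}_\mathcal{A}$ canonically; then extend this to free modules $\mathcal{A}^{(I)}$ by additivity (an equivalence of module categories preserves direct sums); then use that every left $\mathcal{A}$-module $\mathcal{M}$ admits a presentation $\mathcal{A}^{(J)}\to\mathcal{A}^{(I)}\to\mathcal{M}\to 0$ and that both functors are right exact (the tensor product is right exact, and $F$ is exact because it is an equivalence). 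Comparing the resulting cokernel diagrams via the five-lemma produces the desired natural isomorphism. Performing the analogous argument for $G$, and then combining $GF\cong\mathrm{id}$ and $FG\cong\mathrm{id}$ evaluated on ${}_\mathcal{A}\mathcal{A}$ and ${}_\mathcal{B}\mathcal{B}$, yields precisely the bimodule isomorphisms (\ref{eq38}). The delicate bookkeeping lies in verifying that all these identifications are simultaneously left $\mathcal{B}$-linear and right $\mathcal{A}$-linear (respectively, left $\mathcal{A}$-linear and right $\mathcal{B}$-linear), which requires that the Yoneda construction of the outer action is compatible with the natural isomorphisms produced at each step.
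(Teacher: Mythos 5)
Your proposal is correct: it is the standard Eilenberg--Watts/Morita argument, with the tensor functors giving the ``if'' direction and the reconstruction $F\cong F(\mathcal{A})\otimes_\mathcal{A}-$ giving the ``only if'' direction. The paper itself states this proposition without proof, deferring entirely to the cited reference (\cite{Bursztyn2002}, Cor.~2.4), so there is no in-text argument to compare against; your route is the one that reference (and every standard treatment) follows. The only point worth tightening is that before invoking the five lemma you should write down the comparison map globally and naturally --- namely $\Theta_\mathcal{M}\colon F(\mathcal{A})\otimes_\mathcal{A}\mathcal{M}\rightarrow F(\mathcal{M})$, $e\otimes m\mapsto F(\phi_m)(e)$ with $\phi_m\colon a\mapsto a\cdot m$ --- rather than choosing isomorphisms on free modules ad hoc, since otherwise the ladder over a presentation need not commute; with that map in hand, naturality with respect to the right multiplications $\rho_a$ is also exactly what makes the final identifications (\ref{eq38}) bimodule isomorphisms, as you anticipate.
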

The bimodules ${}_\mathcal{B}\mathcal{E}_\mathcal{A}$ and
${}_\mathcal{A}\mathcal{E}_\mathcal{B}$ are said to be \textit{Morita equivalence
bimodules}.
We want to point out that there is an interpretation of Morita equivalence
bimodules as invertible morphisms in the following category: objects are defined
as associative unital $\Bbbk$-algebras and morphisms are isomorphism classes of
bimodules. Namely, let $\mathcal{A}$, $\mathcal{B}$ and $\mathcal{C}$ be algebras
and fix an $\mathcal{A}$-$\mathcal{B}$-bimodule ${}_\mathcal{A}\mathcal{E}_\mathcal{B}$
and a $\mathcal{B}$-$\mathcal{C}$-bimodule ${}_\mathcal{B}\mathcal{E}_\mathcal{C}$.
Then $\otimes_\mathcal{B}$ is associative up to isomorphism, with
${}_\mathcal{A}\mathcal{A}_\mathcal{A}$ and ${}_\mathcal{B}\mathcal{B}_\mathcal{B}$
functioning as unit morphisms. A morphism ${}_\mathcal{A}\mathcal{E}_\mathcal{B}$
is invertible if and only if there is a morphism ${}_\mathcal{B}
\mathcal{E}_\mathcal{A}$ such that (\ref{eq38}) holds, i.e. if and only if
it is a Morita equivalence bimodule. This constitutes a (large)
groupoid $\mathrm{Pic}$ with objects being associative unital algebras and
(invertible) morphisms being Morita equivalence bimodules, called \textit{Picard
groupoid}. The set of $\mathcal{A}$-$\mathcal{B}$ Morita equivalence bimodules
is denoted by $\mathrm{Pic}(\mathcal{A},\mathcal{B})$, while the set of self-Morita
equivalence classes of $\mathcal{A}$, the \textit{Picard group}, is denoted by
$\mathrm{Pic}(\mathcal{A})$. The Morita equivalence class of an associative unital
algebra $\mathcal{A}$ is defined to be the orbit of $\mathcal{A}$ in $\mathrm{Pic}$
and $\mathrm{Pic}(\mathcal{A})$ measures in how many ways $\mathcal{A}$ is
Morita equivalent to another algebra $\mathcal{B}$ in its orbit. We refer to
\cite{Benabou} for more information on this interpretation of Morita
equivalence.

There is another characterization of Morita equivalence, affirming the
similarity to Example~\ref{example01}.
Recall that an element $P\in M_n(\mathcal{A})$ is said to be an \textit{idempotent}
if $P^2=P$ and it is said to be \textit{full} if the span of elements of the form
$MPN\in M_n(\mathcal{A})$ for $M,N\in M_n(\mathcal{A})$ equals $M_n(\mathcal{A})$.
\begin{theorem}\label{thm05}
Two associative unital $\Bbbk$-algebras $\mathcal{A}$ and $\mathcal{B}$
are Morita equivalent if and only if one of the following statements holds.
\begin{enumerate}
\item[i.)] there is an equivalence ${}_\mathcal{B}\mathcal{M}
\rightarrow{}_\mathcal{A}\mathcal{M}$ of categories;

\item[ii.)] there is an $\mathcal{A}$-$\mathcal{B}$-bimodule
${}_\mathcal{A}\mathcal{E}_\mathcal{B}$ such that 
$$
\mathcal{F}_{{}_\mathcal{A}\mathcal{E}_\mathcal{B}}
\colon{}_\mathcal{B}\mathcal{M}\ni{}_\mathcal{B}\mathcal{E}\mapsto
{}_\mathcal{A}\mathcal{E}_\mathcal{B}\otimes_\mathcal{B}{}_\mathcal{B}\mathcal{E}
\in{}_\mathcal{A}\mathcal{M}
$$
is an equivalence of categories;

\item[iii.)] there is an $\mathcal{A}$-$\mathcal{B}$-bimodule
${}_\mathcal{A}\mathcal{E}_\mathcal{B}$, which is finitely generated projective
as left $\mathcal{A}$-module and right $\mathcal{B}$-module such that
$$
\mathcal{A}\cong\mathrm{End}_\mathcal{B}({}_\mathcal{A}\mathcal{E}_\mathcal{B})
\text{ and }
\mathcal{B}\cong\mathrm{End}_\mathcal{A}({}_\mathcal{A}\mathcal{E}_\mathcal{B})
$$
are isomorphisms of algebras, where the first endomorphisms are right 
$\mathcal{B}$-linear and the latter left $\mathcal{A}$-linear;

\item[iv.)] there is an $n>0$ and a full idempotent $P\in M_n(\mathcal{A})$
such that 
$$
\mathcal{B}\cong
\mathrm{End}_\mathcal{A}(P\mathcal{A}^n)
=PM_n(\mathcal{A})P
$$
is an isomorphism of algebras;
\end{enumerate}
\end{theorem}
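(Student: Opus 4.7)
The plan is to establish the chain of implications Morita equivalence $\Leftrightarrow$ (i) $\Leftarrow$ (ii) $\Rightarrow$ (iii) $\Leftrightarrow$ (iv), together with (i) $\Rightarrow$ (ii), which connects every item back to Morita equivalence. The first equivalence is the definition, and (ii) $\Rightarrow$ (i) is trivial because the tensor functor $\mathcal{F}_{{}_\mathcal{A}\mathcal{E}_\mathcal{B}}$ is itself an equivalence ${}_\mathcal{B}\mathcal{M}\to{}_\mathcal{A}\mathcal{M}$. The substantive content lies in the three implications (i) $\Rightarrow$ (ii), (ii) $\Rightarrow$ (iii), and (iii) $\Leftrightarrow$ (iv).

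For (i) $\Rightarrow$ (ii) I would invoke the Eilenberg--Watts theorem: any $\Bbbk$-linear functor ${}_\mathcal{B}\mathcal{M}\to{}_\mathcal{A}\mathcal{M}$ that preserves arbitrary colimits (which every equivalence does, being both a left and right adjoint up to isomorphism) is naturally isomorphic to $(-)\otimes_\mathcal{B}{}_\mathcal{A}\mathcal{E}_\mathcal{B}$, where ${}_\mathcal{A}\mathcal{E}_\mathcal{B}:=F({}_\mathcal{B}\mathcal{B}_\mathcal{B})$. The left $\mathcal{A}$-action on this module comes from the action of $F$ on ${}_\mathcal{B}\mathcal{M}$, while the right $\mathcal{B}$-action is the image under $F$ of the right-multiplication endomorphisms $r_b\colon\mathcal{B}\to\mathcal{B}$. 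This is the main obstacle of the proof, since the theorem itself requires verifying that any cocontinuous linear functor on a module category is representable as tensor product, which uses the key fact that $\mathcal{B}$ is a compact projective generator of ${}_\mathcal{B}\mathcal{M}$.

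For (ii) $\Rightarrow$ (iii), I would argue as follows. Since $\mathcal{F}_{{}_\mathcal{A}\mathcal{E}_\mathcal{B}}$ is an equivalence, ${}_\mathcal{A}\mathcal{E}_\mathcal{B}=\mathcal{F}_{{}_\mathcal{A}\mathcal{E}_\mathcal{B}}({}_\mathcal{B}\mathcal{B}_\mathcal{B})$ is the image of the compact projective generator $\mathcal{B}$ and is thus itself a compact projective generator of ${}_\mathcal{A}\mathcal{M}$; in particular, it is finitely generated projective as a left $\mathcal{A}$-module. The standard naturality isomorphism
\begin{equation*}
    \mathcal{B}=\mathrm{End}_\mathcal{B}({}_\mathcal{B}\mathcal{B}_\mathcal{B})^{\mathrm{op}}
    \xrightarrow{\;\mathcal{F}_{{}_\mathcal{A}\mathcal{E}_\mathcal{B}}\;}
    \mathrm{End}_\mathcal{A}({}_\mathcal{A}\mathcal{E}_\mathcal{B})^{\mathrm{op}}
\end{equation*}
gives the second isomorphism of (iii); the first comes from applying the quasi-inverse functor $G$ to ${}_\mathcal{A}\mathcal{A}_\mathcal{A}$, obtaining a bimodule ${}_\mathcal{B}\mathcal{E}'_\mathcal{A}$ with the analogous properties, and then identifying $\mathcal{A}\cong\mathrm{End}_\mathcal{A}({}_\mathcal{A}\mathcal{A}_\mathcal{A})^{\mathrm{op}}$ with $\mathrm{End}_\mathcal{B}({}_\mathcal{A}\mathcal{E}_\mathcal{B})$ through the right $\mathcal{B}$-linear action of $\mathcal{A}$ on ${}_\mathcal{A}\mathcal{E}_\mathcal{B}$. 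Along the way one gets the Morita context isomorphisms ${}_\mathcal{A}\mathcal{E}_\mathcal{B}\otimes_\mathcal{B}{}_\mathcal{B}\mathcal{E}'_\mathcal{A}\cong\mathcal{A}$ and ${}_\mathcal{B}\mathcal{E}'_\mathcal{A}\otimes_\mathcal{A}{}_\mathcal{A}\mathcal{E}_\mathcal{B}\cong\mathcal{B}$, which also furnish (ii) $\Rightarrow$ Morita equivalence directly.

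Finally, (iii) $\Leftrightarrow$ (iv) is the familiar dictionary between finitely generated projective modules and idempotents. A finitely generated projective left $\mathcal{A}$-module ${}_\mathcal{A}\mathcal{E}$ is a direct summand of some $\mathcal{A}^n$, hence of the form $P\mathcal{A}^n$ for an idempotent $P\in M_n(\mathcal{A})\cong\mathrm{End}_\mathcal{A}(\mathcal{A}^n)$; then $\mathrm{End}_\mathcal{A}(P\mathcal{A}^n)\cong PM_n(\mathcal{A})P$ via restriction of endomorphisms, which yields the algebra isomorphism of (iv). Conversely, given a full idempotent $P$ one sets ${}_\mathcal{A}\mathcal{E}_\mathcal{B}:=P\mathcal{A}^n$ and ${}_\mathcal{B}\mathcal{E}_\mathcal{A}:=\mathcal{A}^nP$ and checks the Morita context isomorphisms, obtaining (ii); here fullness of $P$, i.e. $M_n(\mathcal{A})\cdot P\cdot M_n(\mathcal{A})=M_n(\mathcal{A})$, is precisely what guarantees that ${}_\mathcal{B}\mathcal{E}_\mathcal{A}\otimes_\mathcal{A}{}_\mathcal{A}\mathcal{E}_\mathcal{B}\cong\mathcal{A}^nPM_n(\mathcal{A})P\mathcal{A}^n$ reconstitutes all of $\mathcal{B}\cong PM_n(\mathcal{A})P$ and, symmetrically, that ${}_\mathcal{A}\mathcal{E}_\mathcal{B}$ generates ${}_\mathcal{A}\mathcal{M}$. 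This closes the cycle.
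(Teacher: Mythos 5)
Your proposal is correct, and it supplies an actual argument where the paper gives none: the text disposes of this theorem with a bare citation to Bursztyn's notes (Thm.~2.6 and Thm.~2.8 of \cite{Bursztyn2002}), so there is no in-paper proof to compare against. What you write is the standard route -- Eilenberg--Watts to represent an equivalence as tensoring with a bimodule, the observation that an equivalence carries the compact projective generator ${}_\mathcal{B}\mathcal{B}$ to a compact projective generator of ${}_\mathcal{A}\mathcal{M}$, and the dictionary between finitely generated projective modules and full idempotents -- and all the load-bearing ideas are in place. Two small points to tighten. First, for \emph{left} $\mathcal{B}$-modules the Eilenberg--Watts functor should be written ${}_\mathcal{A}\mathcal{E}_\mathcal{B}\otimes_\mathcal{B}(-)$, as in the statement of (ii); your notation $(-)\otimes_\mathcal{B}{}_\mathcal{A}\mathcal{E}_\mathcal{B}$ is the right-module version and does not typecheck here, though your actual construction of $\mathcal{E}=F(\mathcal{B})$ with its two actions is the correct one. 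Second, item (iii) asks for finite generation and projectivity of ${}_\mathcal{A}\mathcal{E}_\mathcal{B}$ on \emph{both} sides; you verify the left $\mathcal{A}$-side and leave the right $\mathcal{B}$-side implicit. It follows from the Morita context isomorphisms ${}_\mathcal{A}\mathcal{E}_\mathcal{B}\otimes_\mathcal{B}{}_\mathcal{B}\mathcal{E}'_\mathcal{A}\cong\mathcal{A}$ and ${}_\mathcal{B}\mathcal{E}'_\mathcal{A}\otimes_\mathcal{A}{}_\mathcal{A}\mathcal{E}_\mathcal{B}\cong\mathcal{B}$ you already establish (surjectivity of the evaluation map onto $\mathcal{B}$ produces a finite dual basis for $\mathcal{E}$ as a right $\mathcal{B}$-module), but it deserves a sentence. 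Neither point is a gap in the mathematics, only in the exposition.
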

This follows from \cite{Bursztyn2002}~Thm.~2.6 and Thm.~2.8.
In a next step we examine that the center of an algebra is a Morita
invariant, where we proceed as in \cite{Bursztyn2004}~Sec.~3.1.
\begin{corollary}
If $\mathcal{A}$ and $\mathcal{B}$ are Morita equivalent, there is an isomorphism
$\mathcal{Z}(\mathcal{A})\cong\mathcal{Z}(\mathcal{B})$ of algebras.
In particular, two commutative algebras are Morita equivalent if and only if they are
isomorphic.
\end{corollary}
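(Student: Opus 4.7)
The plan is to mimic the Tannaka reconstruction argument of Proposition~\ref{prop08}, but applied to the identity endofunctor rather than to the forgetful functor. Concretely, I would first establish the algebra isomorphism
\[
\mathcal{Z}(\mathcal{A})\cong
\mathrm{Nat}(\mathrm{id}_{{}_\mathcal{A}\mathcal{M}},
\mathrm{id}_{{}_\mathcal{A}\mathcal{M}}),
\]
and then observe that the right-hand side is manifestly invariant under equivalences of categories. Since Morita equivalence by definition yields an equivalence ${}_\mathcal{A}\mathcal{M}\simeq {}_\mathcal{B}\mathcal{M}$ (Theorem~\ref{thm05}), the statement $\mathcal{Z}(\mathcal{A})\cong\mathcal{Z}(\mathcal{B})$ is then formal.

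To set up the isomorphism, I would associate to every $z\in\mathcal{Z}(\mathcal{A})$ the natural transformation $\Theta^z$ with components $\Theta^z_\mathcal{M}(m)=z\cdot m$, and conversely to every natural transformation $\Theta$ of the identity functor the element $\Theta_\mathcal{A}(1)\in\mathcal{A}$, where $\mathcal{A}$ is viewed as a left module over itself. The argument that these are mutually inverse is exactly as in the proof of Proposition~\ref{prop08}, using that for each $m\in\mathcal{M}$ the map $\phi_m\colon\mathcal{A}\ni b\mapsto b\cdot m\in\mathcal{M}$ is left $\mathcal{A}$-linear, so naturality of $\Theta$ with respect to $\phi_m$ forces $\Theta_\mathcal{M}(m)=\Theta_\mathcal{A}(1)\cdot m$. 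The algebra structure on the natural transformations is vertical composition, and under the bijection this corresponds to the product of $\mathcal{A}$ restricted to $\mathcal{Z}(\mathcal{A})$.

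The main subtle point, and the step I expect to require the most care, is that each component $\Theta^z_\mathcal{M}$ must itself be a morphism in ${}_\mathcal{A}\mathcal{M}$, i.e.\ left $\mathcal{A}$-linear, not merely $\Bbbk$-linear; this is what forces the element $z=\Theta_\mathcal{A}(1)$ to lie in the center rather than in all of $\mathcal{A}$. Explicitly, $\mathcal{A}$-linearity of $\Theta^z_\mathcal{A}$ on the regular module requires $a\cdot(z\cdot 1)=z\cdot(a\cdot 1)$ for all $a\in\mathcal{A}$, i.e.\ $az=za$. Conversely, centrality of $z$ is exactly what is needed to check $\mathcal{A}$-linearity of $\Theta^z_\mathcal{M}$ on an arbitrary module. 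This characterization of the center as the endomorphism algebra of $\mathrm{id}_{{}_\mathcal{A}\mathcal{M}}$ is the crucial Morita-invariant formulation.

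Finally, given any equivalence $G\colon{}_\mathcal{B}\mathcal{M}\rightarrow{}_\mathcal{A}\mathcal{M}$, the assignment $\Theta\mapsto G^{-1}\Theta\,G$ (suitably interpreted using the natural isomorphisms $G\circ G^{-1}\cong\mathrm{id}$ and $G^{-1}\circ G\cong\mathrm{id}$) provides an algebra isomorphism between the endomorphism algebras of the two identity functors, hence between $\mathcal{Z}(\mathcal{A})$ and $\mathcal{Z}(\mathcal{B})$. For the last claim, if $\mathcal{A}$ and $\mathcal{B}$ are commutative then $\mathcal{Z}(\mathcal{A})=\mathcal{A}$ and $\mathcal{Z}(\mathcal{B})=\mathcal{B}$, so Morita equivalence yields an algebra isomorphism $\mathcal{A}\cong\mathcal{B}$; the converse implication is trivial since isomorphic algebras have (tautologically) equivalent categories of modules.
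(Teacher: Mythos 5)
Your argument is correct. Note, though, that the paper itself gives no proof of this corollary: it simply defers to the cited reference (Bursztyn--Weinstein, Sec.~3.1), whose route is the bimodule one implicit in Theorem~\ref{thm05}~iii.): for a Morita equivalence bimodule ${}_\mathcal{A}\mathcal{E}_\mathcal{B}$ one identifies $\mathcal{Z}(\mathcal{A})$ with the algebra of $\mathcal{A}$-$\mathcal{B}$-bimodule endomorphisms of $\mathcal{E}$ (a central element acts by left multiplication, and every bimodule endomorphism arises this way), and this description is manifestly symmetric in $\mathcal{A}$ and $\mathcal{B}$. Your route instead identifies $\mathcal{Z}(\mathcal{A})$ with $\mathrm{Nat}(\mathrm{id}_{{}_\mathcal{A}\mathcal{M}},\mathrm{id}_{{}_\mathcal{A}\mathcal{M}})$ and invokes invariance of this ``center of the category'' under equivalence; this dovetails nicely with the Tannaka reconstruction of Proposition~\ref{prop08} already in the text, and has the advantage of working directly from the definition of Morita equivalence without choosing an equivalence bimodule. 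The details you single out are exactly the right ones: $\mathcal{A}$-linearity of the component on the regular module forces $\Theta_\mathcal{A}(1)$ to be central, and naturality against $\phi_m\colon b\mapsto b\cdot m$ recovers $\Theta_\mathcal{M}$ from that element. The only step you leave compressed is the transport $\Theta\mapsto G^{-1}\Theta G$ along an equivalence $G$; to make it airtight one should define the transported transformation on an object $N$ of ${}_\mathcal{B}\mathcal{M}$ as the unique endomorphism of $N$ mapping to $\Theta_{G(N)}$ under the bijection $\mathrm{Hom}(N,N)\to\mathrm{Hom}(G(N),G(N))$ given by full faithfulness of $G$; multiplicativity and independence of the choice of quasi-inverse then follow from functoriality and naturality. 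This is standard and does not affect the validity of the argument.
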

In other words, for every element of the Picard group $\mathrm{Pic}(\mathcal{A})$
there is an automorphism of the center of $\mathcal{A}$. This determines
a map $h\colon\mathrm{Pic}(\mathcal{A})
\rightarrow\mathrm{Aut}(\mathcal{Z}(\mathcal{A}))$. On the other hand,
every automorphism of $\mathcal{A}$ can be interpreted as a self-Morita
equivalence bimodule, defining a map
$j\colon\mathrm{Aut}(\mathcal{A})\rightarrow\mathrm{Pic}(\mathcal{A})$.
\begin{proposition}
There are two group homomorphisms
$$
j\colon\mathrm{Aut}(\mathcal{A})\rightarrow\mathrm{Pic}(\mathcal{A})
\text{ and }
h\colon\mathrm{Pic}(\mathcal{A})\rightarrow\mathrm{Aut}(\mathcal{Z}(\mathcal{A})).
$$
If $\mathcal{A}$ is commutative $h\circ j=\mathrm{id}_{\mathrm{Pic}(\mathcal{A})}$
and $\mathrm{Pic}(\mathcal{A})
=\mathrm{Aut}(\mathcal{A})\ltimes\ker h$.
\end{proposition}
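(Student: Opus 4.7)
The plan is to build $j$ and $h$ explicitly in terms of bimodules and to exploit the fact that, for commutative $\mathcal{A}$, $j$ provides an obvious right inverse of $h$. For the construction of $j$, given $\phi\in\mathrm{Aut}(\mathcal{A})$ I would define $j(\phi)$ as the $\mathcal{A}$-bimodule ${}_\phi\mathcal{A}_\mathcal{A}$, which equals $\mathcal{A}$ as a right $\mathcal{A}$-module with standard multiplication and whose left action is twisted by $\phi$, namely $a\cdot m:=\phi(a)m$. This is a Morita equivalence bimodule: its inverse is ${}_{\phi^{-1}}\mathcal{A}_\mathcal{A}$, as a direct check on the bimodule maps $m\otimes_\mathcal{A} n\mapsto\phi^{-1}(m)n$ and $m\otimes_\mathcal{A} n\mapsto\phi(m)n$ shows using the defining isomorphisms (\ref{eq38}). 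Multiplicativity $j(\phi\psi)\cong j(\phi)\otimes_\mathcal{A}j(\psi)$ follows from the same type of computation, so $j$ is a group homomorphism.

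For $h$, I would use the intrinsic identification $\mathcal{Z}(\mathcal{A})\cong\mathrm{End}({}_\mathcal{A}\mathrm{id}_\mathcal{A})$ of the center with $\mathcal{A}$-bimodule endomorphisms of the identity functor (equivalently, of the regular bimodule ${}_\mathcal{A}\mathcal{A}_\mathcal{A}$), which was recalled just before the proposition. Given a self-Morita equivalence bimodule ${}_\mathcal{A}\mathcal{E}_\mathcal{A}$, left multiplication by $z\in\mathcal{Z}(\mathcal{A})$ defines a bimodule endomorphism of $\mathcal{E}$; by Theorem~\ref{thm05}~iii.) (applied with $\mathcal{B}=\mathcal{A}$) this endomorphism is given by right multiplication by a unique element $h(\mathcal{E})(z)\in\mathcal{A}$, which necessarily lies in $\mathcal{Z}(\mathcal{A})$. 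Hence $h(\mathcal{E})$ is characterised by
\begin{equation*}
    z\cdot m=m\cdot h(\mathcal{E})(z),\qquad m\in\mathcal{E},~z\in\mathcal{Z}(\mathcal{A}).
\end{equation*}
Algebra-automorphism property and compatibility $h(\mathcal{E}\otimes_\mathcal{A}\mathcal{F})=h(\mathcal{F})\circ h(\mathcal{E})$ (up to convention for the composition order, which determines whether $h$ is a homomorphism or anti-homomorphism) are then a direct consequence of the defining equation.

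For the second claim, assume $\mathcal{A}$ is commutative, so $\mathcal{Z}(\mathcal{A})=\mathcal{A}$ and the statement $h\circ j=\mathrm{id}$ is to be read on $\mathrm{Aut}(\mathcal{A})$. For $\phi\in\mathrm{Aut}(\mathcal{A})$ and $z\in\mathcal{A}$, applying the characterising equation of $h$ to $m=1\in{}_\phi\mathcal{A}_\mathcal{A}$ yields $\phi(z)=z\cdot 1=1\cdot h(j(\phi))(z)=h(j(\phi))(z)$, so $h\circ j=\mathrm{id}_{\mathrm{Aut}(\mathcal{A})}$. In particular $j$ is injective, $h$ is surjective, and $j$ splits the short exact sequence
\begin{equation*}
    1\longrightarrow\ker h\longrightarrow\mathrm{Pic}(\mathcal{A})
    \overset{h}{\longrightarrow}\mathrm{Aut}(\mathcal{A})\longrightarrow 1,
\end{equation*}
so that $\mathrm{Pic}(\mathcal{A})\cong\mathrm{Aut}(\mathcal{A})\ltimes\ker h$, where $\mathrm{Aut}(\mathcal{A})$ acts on $\ker h$ via conjugation in $\mathrm{Pic}(\mathcal{A})$.

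The main obstacle is book-keeping: one must fix once and for all the convention (which side is twisted, and which composition order is used for tensor products of bimodules) in order to verify cleanly that $j$ and $h$ are homomorphisms rather than anti-homomorphisms, and to confirm that $h(j(\phi))$ comes out equal to $\phi$ rather than $\phi^{-1}$. The remaining verifications — that ${}_\phi\mathcal{A}_\mathcal{A}$ is indeed Morita, that $h(\mathcal{E})(z)$ actually lies in $\mathcal{Z}(\mathcal{A})$, and well-definedness on isomorphism classes — are straightforward from Theorem~\ref{thm05} and the characterising equations.
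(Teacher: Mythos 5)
Your proof is correct. The paper itself gives no proof of this proposition (it is quoted from the lecture notes of Bursztyn--Weinstein, \cite{Bursztyn2004}), but your construction is exactly the standard one the paper implicitly relies on: $j(\phi)={}_\phi\mathcal{A}_\mathcal{A}$, and $h(\mathcal{E})$ defined by transporting left multiplication by central elements to right multiplication via the identification $\mathcal{A}\cong\mathrm{End}_\mathcal{A}({}_\mathcal{A}\mathcal{E}_\mathcal{A})$ of Theorem~\ref{thm05}~iii.); your splitting argument then matches the paper's subsequent description of the conjugation action $\mathcal{E}\mapsto\mathcal{E}^\Phi$ on $\ker h$. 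Two remarks: you correctly read $h\circ j=\mathrm{id}_{\mathrm{Pic}(\mathcal{A})}$ in the statement as $\mathrm{id}_{\mathrm{Aut}(\mathcal{A})}$ (the subscript in the paper is a typo, since $h\circ j$ is an endomorphism of $\mathrm{Aut}(\mathcal{A})=\mathrm{Aut}(\mathcal{Z}(\mathcal{A}))$); and the anti-homomorphism issue you flag is real with the naive conventions (one finds ${}_\phi\mathcal{A}\otimes_\mathcal{A}{}_\psi\mathcal{A}\cong{}_{\psi\phi}\mathcal{A}$ and $h(\mathcal{E}\otimes_\mathcal{A}\mathcal{F})=h(\mathcal{F})\circ h(\mathcal{E})$), but the two reversals cancel in $h\circ j$ and are fixed globally by choosing the opposite composition in $\mathrm{Pic}(\mathcal{A})$ or replacing $\phi$ by $\phi^{-1}$ in $j$, so this is indeed only bookkeeping and not a gap.
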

In the above proposition, the action of an automorphism
$\Phi\in\mathrm{Aut}(\mathcal{A})$ on a self-Morita equivalence bimodule
$\mathcal{E}\in\ker(h)$ in the semidirect product
$\mathrm{Aut}(\mathcal{A})\ltimes\ker h$, is defined as the self-Morita
equivalence bimodule $\mathcal{E}^\Phi$, with left and right
$\mathcal{A}$-actions given by $a\cdot e\cdot b
=\Phi(a)\cdot e\cdot\Phi(b)$
for all $a,b\in\mathcal{A}$ and $e\in\mathcal{E}$.

Since we are mainly interested in the algebra of smooth functions on a manifold,
we are going to discuss its Morita equivalence classes and Picard group
in detail in the next example (c.f. \cite{Bursztyn2004}~Ex.~3.5).
\begin{example}\label{ex03}
Consider the associative unital algebra $\mathcal{A}=\mathscr{C}^\infty(M)$ of smooth
complex-valued functions on a smooth manifold $M$. If $E\rightarrow M$ is a smooth
complex vector bundle, its space $\Gamma^\infty(E)$ of smooth sections is a
Morita equivalence bimodule between $\Gamma^\infty(\mathrm{End}(E))$ and
$\mathcal{A}$. Moreover, every finitely generated projective $\mathcal{A}$-module is of
that form according to the Serre-Swan theorem (c.f. \cite{Jet2003}~Thm.~11.32).
In particular, we recover the Morita equivalence
of $\mathcal{A}$ and $M_n(\mathcal{A})$ for any $n>0$, by employing the trivial bundle
$E=\mathbb{C}^n\times M\rightarrow M$. The kernel of $h$ equals the group
$\mathrm{Pic}(M)$ of isomorphism classes of smooth complex line bundles on $M$.
Further remark that there is an isomorphism $c_1\colon\mathrm{Pic}(M)\rightarrow
H^2(M,\mathbb{Z})$, given by the Chern class map (see \cite{Hirze}~Sec.~3.8)
and
$\mathrm{Aut}(\mathcal{A})=\mathrm{Diff}(M)$. Summing up we obtain
$$
\mathrm{Pic}(\mathscr{C}^\infty(M))
=\mathrm{Diff}(M)\ltimes H^2(M,\mathbb{Z}),
$$
where $\mathrm{Diff}(M)$ acts on $H^2(M,\mathbb{Z})$ via pull-back.
\end{example}
In the next section we observe that the deformation theory is another
Morita invariant. This will have particular consequences for twist star
products.

\section{Twist Star Products and Morita Equivalence}\label{SecObs4}

We are mainly interested in Morita equivalence of star product algebras
(see \cite{Waldmann2011} for a review on this topic). An obvious question is,
if Morita equivalence bimodules can be deformed relative to a deformation
of the corresponding algebras. Even more, if $\mathcal{A}$ and $\mathcal{B}$
are Morita equivalent algebras and $\boldsymbol{\mathcal{A}}$ a formal deformation
of $\mathcal{A}$, is there a formal deformation $\boldsymbol{\mathcal{B}}$
of $\mathcal{B}$, such that $\boldsymbol{\mathcal{A}}$ and
$\boldsymbol{\mathcal{B}}$ are Morita equivalent? Following
\cite{Bursztyn2002}~Sec.~2.3 and \cite{BursztynWaldmann2000}, we give a
positive answer to this question. We further refer to
\cite{BursztynWaldmann2001,BursztynWaldmann2002}.
Afterwards we focus on formal deformations of
Morita equivalence  bimodules of twist star product algebras,
reviewing the results of \cite{dAWe17}.

Fix an associative unital algebra $\mathcal{A}$ and a left $\mathcal{A}$-module
$\mathcal{M}$ for the moment. Assume that there is a formal deformation $\star$ of
$\mathcal{A}$. The natural question arises, if there exists a left
$\boldsymbol{\mathcal{A}}=(\mathcal{A}[[\hbar]],\star)$-module structure $\bullet$
on the $\mathbb{K}[[\hbar]]$-module $\mathcal{M}[[\hbar]]$, such that
$a\bullet m=a\cdot m+\mathcal{O}(\hbar)$ for all $a\in\mathcal{A}$ and
$m\in\mathcal{M}$, where $\cdot$ denotes the left $\mathcal{A}$-module action
on $\mathcal{M}$. If there are $\mathbb{K}$-bilinear maps
$\lambda_r\colon\mathcal{A}\times\mathcal{M}\rightarrow\mathcal{M}$ such that
$$
a\bullet m
=a\cdot m+\sum_{r>0}\hbar^r\lambda_r(a,m),
$$
we call $\boldsymbol{\mathcal{M}}=(\mathcal{M}[[\hbar]],\bullet)$ a 
\textit{formal deformation}
of $(\mathcal{M},\cdot)$ with respect to $\boldsymbol{\mathcal{A}}$.
Two formal deformations $\boldsymbol{\mathcal{M}}=(\mathcal{M}[[\hbar]],\bullet)$ and
$\boldsymbol{\mathcal{M}}'=(\mathcal{M}[[\hbar]],\bullet')$ of $(\mathcal{M},\cdot)$
with respect to $\boldsymbol{\mathcal{A}}$
are said to be \textit{equivalent} if there are $\mathbb{K}$-linear maps 
$T_r\colon\mathcal{M}\rightarrow\mathcal{M}$ such that
$\boldsymbol{T}=\mathrm{id}_\mathcal{M}
+\sum_{r>0}\hbar^rT_r$ extends to an $\boldsymbol{\mathcal{A}}$-module
isomorphism $\boldsymbol{T}\colon\boldsymbol{\mathcal{M}}
\rightarrow\boldsymbol{\mathcal{M}}'$.
\begin{lemma}[c.f. \cite{FedosovIndex}]
Let $\boldsymbol{\mathcal{A}}=(\mathcal{A}[[\hbar]],\star)$ be a formal deformation
of $\mathcal{A}$. Then $M_n(\boldsymbol{\mathcal{A}})\cong M_n(\mathcal{A})[[\hbar]]$
as $\mathbb{K}[[\hbar]]$-modules and $M_n(\boldsymbol{\mathcal{A}})$ is a formal
deformation of $M_n(\mathcal{A})$. Furthermore, if $P\in M_n(\mathcal{A})$ is an
idempotent,
\begin{equation}
    \boldsymbol{P}
    =\frac{1}{2}+\bigg(P-\frac{1}{2}\bigg)
    \star\frac{1}{\sqrt[\star]{1+4(P\star P-P)}}
\end{equation}
defines an idempotent on $M_n(\boldsymbol{\mathcal{A}})$ such that
$\boldsymbol{P}=P+\mathcal{O}(\hbar)$ and $P$ is full if and only if
$\boldsymbol{P}$ is full.
\end{lemma}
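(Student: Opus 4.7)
The plan is to prove the three statements in order, treating the matrix algebra part as essentially formal and reserving most of the work for the idempotent construction.

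First, the module identification $M_n(\boldsymbol{\mathcal{A}}) \cong M_n(\mathcal{A})[[\hbar]]$ is simply the componentwise tautology: an $n\times n$ matrix with entries in $\mathcal{A}[[\hbar]]$ is the same datum as a formal power series in $\hbar$ whose coefficients are $n\times n$ matrices in $M_n(\mathcal{A})$. Under this identification, matrix multiplication in $M_n(\boldsymbol{\mathcal{A}})$ (using $\star$ entrywise) has the form $A\cdot_\star B = AB + \sum_{r>0}\hbar^r D_r(A,B)$, where $AB$ denotes the undeformed matrix product and each $D_r$ is $\Bbbk$-bilinear in the matrix entries; associativity is inherited from $\star$ and the identity matrix $1_n$ serves as a strict unit. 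This shows $M_n(\boldsymbol{\mathcal{A}})$ is a formal deformation of $M_n(\mathcal{A})$.

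Next I would verify that the formula defines an idempotent. The main point is that $Q := P\star P - P$ is of order $\mathcal{O}(\hbar)$, since in zeroth order $P^2 = P$. Hence $1 + 4Q$ starts with $1$, is invertible in $M_n(\boldsymbol{\mathcal{A}})$ via the $\star$-Neumann series, and admits a formal $\star$-square root $S := (1+4Q)^{-1/2}_\star$ via the binomial series. I would then check that $P$ commutes with $Q$ under $\star$ (both $P\star Q$ and $Q\star P$ equal $P\star P\star P - P\star P$), whence $P-\tfrac12$ commutes with $S$. The computation reduces to
\[
\boldsymbol{P}\star\boldsymbol{P} = \tfrac14 + (P-\tfrac12)\star S + (P-\tfrac12)\star(P-\tfrac12)\star S\star S,
\]
and since $(P-\tfrac12)\star(P-\tfrac12) = Q + \tfrac14 = \tfrac14(1+4Q)$, the last term collapses to $\tfrac14$, giving $\boldsymbol{P}\star\boldsymbol{P}=\boldsymbol{P}$. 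Setting $\hbar=0$ gives $Q=0$, $S=1$, and therefore $\boldsymbol{P} = P + \mathcal{O}(\hbar)$.

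For the equivalence of fullness, one direction uses the classical limit: if $\boldsymbol{P}$ is full, write $\sum_i \boldsymbol{M}_i \star \boldsymbol{P}\star \boldsymbol{N}_i = 1_n$ and reduce modulo $\hbar$ to obtain $\sum_i M_i^{(0)} P N_i^{(0)} = 1_n$ in $M_n(\mathcal{A})$. Conversely, suppose $P$ is full, so there exist $M_i, N_i\in M_n(\mathcal{A})$ with $\sum_i M_i P N_i = 1_n$. Then $U := \sum_i M_i \star \boldsymbol{P}\star N_i = 1_n + \mathcal{O}(\hbar)$ is invertible in $M_n(\boldsymbol{\mathcal{A}})$ by the geometric series, and multiplying by $U^{-1}_\star$ on one side expresses $1_n$ as a finite $\star$-combination of elements of the form $\widetilde{M}_i\star\boldsymbol{P}\star \widetilde{N}_i$, showing $\boldsymbol{P}$ is full. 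The main obstacle here is only bookkeeping: one must consistently use the deformed product and invoke invertibility of elements of the form $1+\mathcal{O}(\hbar)$, which is the technical workhorse throughout; the hardest piece is really the algebraic identity in the idempotency check, but the commutation of $P$ with $Q$ under $\star$ reduces it to a short formal manipulation.
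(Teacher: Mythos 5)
The paper states this lemma without proof, only citing Fedosov, so there is nothing internal to compare against; your argument is the standard one from that source and it is correct. The key points all hold up: $Q=P\star P-P=\mathcal{O}(\hbar)$ so the $\star$-inverse square root exists as a formal series, $P\star Q=Q\star P=P\star P\star P-P\star P$ gives the needed commutation, $(P-\tfrac12)\star(P-\tfrac12)=\tfrac14(1+4Q)$ collapses the quadratic term, and the fullness equivalence via the classical limit in one direction and invertibility of $1_n+\mathcal{O}(\hbar)$ in the other is exactly right.
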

In particular, for any finitely generated projective left $\mathcal{A}$-module
${}_\mathcal{A}\mathcal{E}$
there is a finitely generated projective left $\boldsymbol{\mathcal{A}}$-module
${}_{\boldsymbol{\mathcal{A}}}\boldsymbol{\mathcal{E}}$
deforming ${}_\mathcal{A}\mathcal{E}$, which is unique up to equivalence.
This answers the question we stated at the introduction of this section,
leading to the following theorem (c.f. \cite{Bursztyn2002}~Prop.~2.21).
\begin{theorem}\label{thm04}
Let $\mathcal{A}$ and $\mathcal{B}$ be two Morita equivalent algebras and
$\boldsymbol{\mathcal{A}}=(\mathcal{A}[[\hbar]],\star)$ a formal deformation
of $\mathcal{A}$. Then, there is a formal deformation $\boldsymbol{\mathcal{B}}
=(\mathcal{B}[[\hbar]],\star')$ of $\mathcal{B}$ such that 
$\boldsymbol{\mathcal{A}}$ and $\boldsymbol{\mathcal{B}}$ are Morita equivalent.
Furthermore, there is a bijection
$\mathrm{Def}(\mathcal{A})\cong\mathrm{Der}(\mathcal{B})$ given by the
Morita equivalence bimodules which deform ${}_\mathcal{A}\mathcal{E}_\mathcal{B}$.
\end{theorem}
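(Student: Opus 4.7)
The plan is to use the idempotent picture of Morita equivalence (Theorem~\ref{thm05}~iv.)) as the bridge between the two deformation theories. First I would invoke that characterization to fix a natural number $n>0$ and a full idempotent $P\in M_n(\mathcal{A})$ together with an algebra isomorphism $\mathcal{B}\cong PM_n(\mathcal{A})P$. Given the formal deformation $\boldsymbol{\mathcal{A}}=(\mathcal{A}[[\hbar]],\star)$, the preceding lemma provides the deformed matrix algebra $M_n(\boldsymbol{\mathcal{A}})\cong M_n(\mathcal{A})[[\hbar]]$ together with the explicit lift $\boldsymbol{P}=P+\mathcal{O}(\hbar)\in M_n(\boldsymbol{\mathcal{A}})$, which is again a full idempotent. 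I would then define
\begin{equation*}
\boldsymbol{\mathcal{B}}:=\boldsymbol{P}\star M_n(\boldsymbol{\mathcal{A}})\star\boldsymbol{P},
\end{equation*}
with the induced product from $M_n(\boldsymbol{\mathcal{A}})$, and verify that $\boldsymbol{\mathcal{B}}$ is a formal deformation of $\mathcal{B}$: at zeroth order in $\hbar$ it restricts to $PM_n(\mathcal{A})P\cong\mathcal{B}$, and the deformation datum is inherited from $\star$.

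Next I would apply Theorem~\ref{thm05}~iv.) in the opposite direction, this time over $\mathbb{K}[[\hbar]]$: since $\boldsymbol{P}$ is a full idempotent in $M_n(\boldsymbol{\mathcal{A}})$ realising $\boldsymbol{\mathcal{B}}$ as a corner, the algebras $\boldsymbol{\mathcal{A}}$ and $\boldsymbol{\mathcal{B}}$ are Morita equivalent, with an explicit equivalence bimodule given by the finitely generated projective left $\boldsymbol{\mathcal{A}}$-module $\boldsymbol{\mathcal{E}}:=\boldsymbol{P}\star\boldsymbol{\mathcal{A}}^n$. By construction $\boldsymbol{\mathcal{E}}$ starts at $P\mathcal{A}^n\cong{}_\mathcal{A}\mathcal{E}_\mathcal{B}$, so $\boldsymbol{\mathcal{E}}$ is precisely a deformation of the original Morita equivalence bimodule, which is the object appearing in the statement of the bijection.

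For the bijection $\mathrm{Def}(\mathcal{A})\cong\mathrm{Def}(\mathcal{B})$ I would show that the assignment $[\star]\mapsto[\star']$ is both well-defined on equivalence classes and invertible. Well-definedness amounts to checking that an equivalence $T=\mathrm{id}+\mathcal{O}(\hbar)$ between two deformations of $\mathcal{A}$ induces, after extension to $M_n$ and restriction to the corner via conjugation by the lifted idempotents, an equivalence between the induced deformations on $\mathcal{B}$; the lifting formula for $\boldsymbol{P}$ is functorial enough in $\star$ to make this routine. For injectivity/surjectivity I would run the same construction starting from $\mathcal{B}$: the Morita equivalence also gives a full idempotent $Q\in M_m(\mathcal{B})$ with $\mathcal{A}\cong QM_m(\mathcal{B})Q$, so the opposite assignment $\mathrm{Def}(\mathcal{B})\to\mathrm{Def}(\mathcal{A})$ is defined by the symmetric procedure.

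The main obstacle I expect is verifying that the two assignments are actually inverse to each other up to equivalence of star products, rather than merely well-defined. The natural way to handle this is to work at the level of the deformed Morita context: the bimodule $\boldsymbol{\mathcal{E}}$ together with its dual satisfies the tensor-product isomorphisms \eqref{eq38} in the $\hbar$-adic topology, and by Theorem~\ref{thm05}~iii.) both $\boldsymbol{\mathcal{A}}$ and $\boldsymbol{\mathcal{B}}$ are recovered as endomorphism algebras of $\boldsymbol{\mathcal{E}}$. Iterating the construction therefore returns an algebra isomorphic to $\boldsymbol{\mathcal{A}}$, and unpacking the lift $\boldsymbol{P}\mapsto\boldsymbol{Q}\mapsto\boldsymbol{P}$ (using that the explicit idempotent formula commutes with taking corners) shows that the round trip is the identity on $\mathrm{Def}(\mathcal{A})$; the reverse round trip is identical by symmetry. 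This identifies the bijection with the one induced by deforming the chosen equivalence bimodule ${}_\mathcal{A}\mathcal{E}_\mathcal{B}$, as claimed.
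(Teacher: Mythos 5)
Your proposal is correct and follows essentially the route the paper intends: the paper gives no proof of Theorem~\ref{thm04}, deferring to \cite{Bursztyn2002}~Prop.~2.21, but the idempotent-lifting lemma stated immediately before it is exactly the tool you deploy, and passing between the corner picture (Theorem~\ref{thm05}~iv.)) and the endomorphism/bimodule picture (Theorem~\ref{thm05}~iii.)) is the standard argument. The only point worth making explicit is that $\boldsymbol{P}\star M_n(\boldsymbol{\mathcal{A}})\star\boldsymbol{P}$ is a corner of $M_n(\mathcal{A})[[\hbar]]$ rather than literally the $\mathbb{K}[[\hbar]]$-module $\mathcal{B}[[\hbar]]$, so to obtain a formal deformation of $\mathcal{B}$ in the paper's sense you must transport the product along a $\mathbb{K}[[\hbar]]$-module isomorphism starting with the identity (e.g. $b\mapsto\boldsymbol{P}\star b\star\boldsymbol{P}$), and likewise the well-definedness on equivalence classes uses that deformations of the idempotent, hence of the projective module, are unique only up to equivalence --- both routine, and consistent with the uniqueness statement the paper records just before the theorem.
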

Applying this to Example~\ref{ex03} we obtain a result on the action of
the Picard group of a symplectic manifold on equivalence classes of 
symplectic star products.
\begin{corollary}[\cite{BursztynWaldmann2002}~Thm.~3.1]\label{cor04}
Let $(M,\omega)$ be a symplectic manifold. The action of the Picard group
$\mathrm{Pic}(M)$ on star products on $(M,\omega)$ is given by
\begin{equation}
    [\star]\mapsto[\star]+2\pi ic_1(L),
\end{equation}
where $c_1(L)$ is the first Chern class of the corresponding line bundle
$L\rightarrow M$. In particular, the obtained star product $\star'$ is equivalent
to $\star$ if and only if the first Chern class of the line bundle $L$ is trivial.
\end{corollary}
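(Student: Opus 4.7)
The plan is to combine Theorem~\ref{thm04} with the classification by characteristic class (Proposition~\ref{prop19}) and then pin down the shift in the class using the geometry of the line bundle.

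First I would identify the relevant part of the Picard group. By Example~\ref{ex03}, $\mathrm{Pic}(\mathscr{C}^\infty(M)) = \mathrm{Diff}(M) \ltimes H^2(M,\mathbb{Z})$, and the summand acting non-trivially on equivalence classes of star products (not merely pulling them back by diffeomorphisms) is $\ker h \cong \mathrm{Pic}(M) \cong H^2(M,\mathbb{Z})$, realised by line bundles $L \to M$ via $L \mapsto \Gamma^\infty(L)$. So I need to show that, for any such self-Morita equivalence bimodule ${}_\mathcal{A}\Gamma^\infty(L)_\mathcal{A}$, the deformed star product $\star'$ of Theorem~\ref{thm04} satisfies $c(\star') = c(\star) + 2\pi i\, c_1(L)$.

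Next I would construct $\star'$ explicitly using the idempotent picture from Theorem~\ref{thm05}~iv.). Writing $\Gamma^\infty(L) \cong P \cdot \mathscr{C}^\infty(M)^n$ for a full idempotent $P \in M_n(\mathcal{A})$ by Serre--Swan, the Fedosov--type idempotent lemma (preceding Theorem~\ref{thm04}) gives a canonical full deformed idempotent $\boldsymbol{P} \in M_n(\boldsymbol{\mathcal{A}})$. The deformed bimodule is then $\boldsymbol{P}\cdot\boldsymbol{\mathcal{A}}^n$, its left endomorphism algebra realises $\star'$, and the Morita equivalence is automatic. The whole construction is intrinsic, depending only on the isomorphism class of $L$, so the map $\mathrm{Pic}(M) \to \mathrm{Def}(M,\omega)$, $[L] \mapsto [\star']$, is well-defined and a group homomorphism from the composition law of the Picard groupoid.

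The main obstacle, which is the heart of the proof, is computing $c(\star') - c(\star)$ and identifying it with $2\pi i\, c_1(L)$. The natural route is via Fedosov's construction (Proposition~\ref{prop19}): choose a symplectic connection on $M$ and a Hermitian connection $\nabla^L$ on $L$ with curvature $F^{\nabla^L}$. The Fedosov machinery applied to $\Gamma^\infty(L)$ produces a deformed module whose endomorphism star product is Fedosov's $\star'$ with formal Weyl curvature shifted by the curvature $F^{\nabla^L}$ of $L$. Since $c(\star)$ is represented at zeroth order by $[\omega]/\hbar$ plus the higher-order Fedosov curvature terms, and since $c_1(L) = \tfrac{1}{2\pi i}[F^{\nabla^L}]$ in de Rham cohomology (Chern--Weil), one obtains the desired shift $c(\star') - c(\star) = 2\pi i\,c_1(L)$. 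The essential input is that changes in the Fedosov curvature by closed $2$-forms translate to shifts of the characteristic class by the same classes.

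Finally, the "in particular" part is immediate: by the bijection in Proposition~\ref{prop19}, $[\star'] = [\star]$ iff $c(\star') = c(\star)$ iff $c_1(L) = 0$ in $H^2(M,\mathbb{Z})$, i.e.\ iff $L$ is trivial.
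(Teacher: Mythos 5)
The paper does not actually prove this corollary: it is imported verbatim from Bursztyn--Waldmann (the cited Thm.~3.1) and presented as a consequence of Theorem~\ref{thm04} applied to Example~\ref{ex03}. Your proposal is therefore being measured against the cited proof rather than one in the text, and it reconstructs that proof's strategy faithfully: restrict to $\ker h\cong\mathrm{Pic}(M)$, deform $\Gamma^\infty(L)$ via the full idempotent $\boldsymbol{P}$, and compute the shift of the characteristic class through Fedosov's construction twisted by a connection $\nabla^L$. Two remarks. First, the step you label as ``the heart of the proof'' is indeed where all the content sits, and your sketch asserts rather than establishes it: one must show that the module obtained from the idempotent lemma is equivalent (as a deformation of $\Gamma^\infty(L)$, using the uniqueness up to equivalence of such deformations) to the Fedosov module built from $\nabla^L$, and then verify by the recursive Fedosov computation that the induced star product on the endomorphisms has Weyl curvature shifted by exactly $F^{\nabla^L}$, whose class is $2\pi i\,c_1(L)$. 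As an outline this is acceptable, but be aware that this identification of the two deformations is not automatic and is the technical core of the cited paper.

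Second, your final sentence overstates the ``in particular'' part: the shift $2\pi i\,c_1(L)$ lives in $\mathrm{H}^2_{\mathrm{dR}}(M)[[\hbar]]$, so by the bijection of Proposition~\ref{prop19} one gets $[\star']=[\star]$ if and only if the image of $c_1(L)$ in de Rham cohomology vanishes, i.e.\ if and only if $c_1(L)$ is torsion in $H^2(M,\mathbb{Z})$ --- not if and only if $L$ is trivial. This imprecision is inherited from the statement of the corollary itself, and it is harmless for the application in Theorem~\ref{thm08} (where $c_1$ of the tautological bundle on $\mathbb{CP}^n$ is a non-torsion generator), but your chain of equivalences ``$c(\star')=c(\star)$ iff $c_1(L)=0$ in $H^2(M,\mathbb{Z})$'' is not literally correct.
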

A generalization of this corollary to Poisson manifold is proven in
\cite{BuDoWa2012}~Thm.~3.11. The authors have to employ
Kontsevich's quantization map \cite{Kont2003} and the curvature
$2$-form of the line bundle in addition.

In the following lines we include a Lie group symmetry into the picture.
The strategy is to use a
Drinfel'd twist on the universal enveloping algebra of the corresponding
Lie algebra to deform the Morita equivalence bimodule
corresponding to an equivariant line bundle in the end.
Consider a \textit{$G$-equivariant line bundle} $\mathrm{pr}\colon L\rightarrow M$
for a Lie group $G$. This means that there are $G$-actions on $L$ and $M$,
respectively, such that
$$
g\rhd\colon L_p\rightarrow L_{g\rhd p}
$$
for all $g\in G$ and $p\in M$, where $L_p=\mathrm{pr}^{-1}(\{p\})\subseteq L$ is the
fiber of $p$. In other words, we require the Lie group action to be linear on fibers.
In particular, this implies
$$
\mathrm{pr}(g\rhd q)
=g\rhd\mathrm{pr}(q)
$$
for all $g\in G$ and $q\in L$.
\begin{example}\label{ex04}
Let $n>0$ and consider the complex projective space $\mathbb{CP}^n$, which is
defined as the set of all orbits of the Lie group action
$$
\Phi\colon\mathbb{C}_\times\times\mathbb{C}^{n+1}_\times\ni(\lambda,z)
\mapsto\lambda\cdot z\in\mathbb{C}^{n+1}_\times,
$$
where $\mathbb{C}_\times=\mathbb{C}\setminus\{0\}$. Since $\Phi$ is free and proper,
$\mathbb{CP}^n$ can be structured as a smooth manifold. On $\mathbb{CP}^n$ there
is the tautological line bundle
\begin{equation}
    L=\{(\ell,z)\in\mathbb{CP}^n\times\mathbb{C}^{n+1}~|~z\in\ell\}
\end{equation}
with projection $\pi\colon L\ni(\ell,z)\mapsto\ell\in\mathbb{CP}^n$,
where $z\in\ell$ means that $z$ is an element of the orbit $\ell$.
It is equivariant with respect to the $\mathrm{GL}_{n+1}(\mathbb{C})$-action
given by matrix-vector multiplication. In fact, 
$\mathrm{GL}_{n+1}(\mathbb{C})\times\mathbb{C}^{n+1}\rightarrow\mathbb{C}^{n+1}$
descends to a Lie group action on $\mathbb{CP}^n$, since it commutes with $\Phi$.
Then, the projection $\pi$ intertwines the diagonal action on $L$ and $\mathbb{CP}^n$,
i.e. 
$$
\pi(A\rhd(\ell,z))
=\pi((A\rhd\ell),(A\rhd z))
=A\rhd\ell
=A\rhd(\pi(\ell,z))
$$
for all $A\in\mathrm{GL}_{n+1}(\mathbb{C})$ and $(\ell,z)\in L$. This proves that
$\pi\colon L\rightarrow\mathbb{CP}^n$ is indeed 
$\mathrm{GL}_{n+1}(\mathbb{C})$-equivariant.
\end{example}
An equivariant line bundle naturally induces Lie group actions on the smooth
functions on the manifold and the smooth sections of the line bundle.
\begin{lemma}\label{lemma16}
Let $\mathrm{pr}\colon L\rightarrow M$ be a $G$-equivariant line bundle. Then
\begin{equation}\label{eq60}
\begin{split}
    (g\rhd f)(p)
    =&f(g^{-1}\rhd p),\\
    (g\rhd s)(p)
    =&g\rhd(s(g^{-1}\rhd p)),
\end{split}
\end{equation}
where $g\in G$, $f\in\mathscr{C}^\infty(M)$, $p\in M$ and $s\in\Gamma^\infty(L)$,
define $G$-actions on $\mathscr{C}^\infty(M)$ and $\Gamma^\infty(L)$.
Moreover the $G$-actions respect the $\mathscr{C}^\infty(M)$-bimodule structure
of $\Gamma^\infty(L)$, i.e.
\begin{equation}\label{eq42}
    g\rhd(f\cdot s)
    =(g\rhd f)\cdot(g\rhd s)
\end{equation}
for all $g\in G$, $f\in\mathscr{C}^\infty(M)$ and $s\in\Gamma^\infty(L)$.
In other words, $G$ acts by group-like elements.
\end{lemma}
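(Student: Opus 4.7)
The plan is to verify in turn that (\ref{eq60}) gives well-defined $G$-actions (smooth maps of the right type, respecting composition and the identity) and then to derive the module compatibility (\ref{eq42}) directly from the fiberwise linearity that is built into the notion of $G$-equivariant line bundle. All three claims reduce to unwinding the definitions; the only nontrivial ingredient is recognizing that linearity of $g\rhd\colon L_p\to L_{g\rhd p}$ must be used in the last step.

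For the action on $\mathscr{C}^\infty(M)$, I would first observe that for each $g\in G$ the map $M\ni p\mapsto g^{-1}\rhd p\in M$ is a diffeomorphism (the smooth $G$-action on $M$ gives smooth inverses), so $g\rhd f=f\circ\Phi_{g^{-1}}$ is smooth whenever $f$ is. The axioms $e\rhd f=f$ and $(gh)\rhd f=g\rhd(h\rhd f)$ then follow from $\Phi_e=\mathrm{id}_M$ and $\Phi_{(gh)^{-1}}=\Phi_{h^{-1}}\circ\Phi_{g^{-1}}$, respectively, evaluated pointwise.

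For the action on $\Gamma^\infty(L)$, the first step is to check that $g\rhd s$ is indeed a section: using equivariance of $\mathrm{pr}$,
\begin{equation*}
\mathrm{pr}((g\rhd s)(p))
=\mathrm{pr}(g\rhd s(g^{-1}\rhd p))
=g\rhd\mathrm{pr}(s(g^{-1}\rhd p))
=g\rhd(g^{-1}\rhd p)=p.
\end{equation*}
Smoothness follows since $g\rhd s$ is the composition of the smooth maps $\Phi_{g^{-1}}\colon M\to M$, $s\colon M\to L$ and $\Phi_g\colon L\to L$. The action axioms are then verified by an entirely parallel computation to the one for functions, using that the $G$-actions on $L$ and $M$ are themselves actions.

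For (\ref{eq42}), the key point—and the main obstacle—is to invoke the hypothesis that $g\rhd\colon L_p\to L_{g\rhd p}$ is linear on fibers (this is the substance of being a $G$-equivariant \emph{line bundle} as opposed to a mere $G$-space over $M$). Given this, I would compute
\begin{equation*}
(g\rhd(f\cdot s))(p)
=g\rhd\big(f(g^{-1}\rhd p)\cdot s(g^{-1}\rhd p)\big)
=f(g^{-1}\rhd p)\cdot\big(g\rhd s(g^{-1}\rhd p)\big)
=(g\rhd f)(p)\cdot(g\rhd s)(p),
\end{equation*}
where the middle equality is exactly fiberwise $\mathbb{C}$-linearity of $g\rhd$ applied to the scalar $f(g^{-1}\rhd p)\in\mathbb{C}$ and the vector $s(g^{-1}\rhd p)\in L_{g^{-1}\rhd p}$. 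The rephrasing at the end of the statement, that $G$ acts ``by group-like elements,'' is then just the observation that (\ref{eq42}) says $\Delta(g)=g\otimes g$ once $G$ is viewed inside the group-like part of $\mathbb{C}[G]$ or of a suitable Hopf algebra completion.
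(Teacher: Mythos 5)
Your proposal is correct and follows essentially the same route as the paper: check that $g\rhd s$ is a section via equivariance of $\mathrm{pr}$, verify the action axioms by direct pointwise computation, and derive the bimodule compatibility from fiberwise $\mathbb{C}$-linearity of $g\rhd\colon L_p\to L_{g\rhd p}$. Your explicit attention to smoothness of $g\rhd f$ and $g\rhd s$ is a small addition the paper leaves implicit, but the substance of the argument is identical.
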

\begin{proof}
Consider a section $s\in\Gamma^\infty(L)$ and an element $g\in G$.
Then $g\rhd s\in\Gamma^\infty(L)$ since
$$
\mathrm{pr}((g\rhd s)(p))
=\mathrm{pr}(g\rhd s(g^{-1}\rhd p))
=g\rhd\mathrm{pr}(s(g^{-1}\rhd p))
=g\rhd(g^{-1}\rhd p)
=p
$$
for all $p\in M$. It is an easy exercise to verify that (\ref{eq60}) are 
$G$-actions. 
In fact $(e\rhd f)(p)=f(p)$, $(e\rhd s)(p)=e\rhd s(e^{-1}\rhd p)=s(p)$,
\begin{align*}
    (g\rhd(h\rhd f))(p)
    =f((h^{-1}g^{-1})\rhd p)
    =f((gh)^{-1}\rhd p)
    =((gh)\rhd f)(p)
\end{align*}
and $(g\rhd(h\rhd s))(p)
=(gh)\rhd s((h^{-1}g^{-1})\rhd p)
=((gh)\rhd s)(p)$
for all $g,h\in G$, $p\in M$, $f\in\mathscr{C}^\infty(M)$ and 
$s\in\Gamma^\infty(L)$.
Furthermore, eq.(\ref{eq42}) follows since
\begin{align*}
    (g\rhd(f\cdot s))(p)
    =&g\rhd((f\cdot s)(g^{-1}\rhd p))\\
    =&g\rhd(f(g^{-1}\rhd p)s(g^{-1}\rhd p))\\
    =&f(g^{-1}\rhd p)g\rhd(s(g^{-1}\rhd p))\\
    =&((g\rhd f)\cdot(g\rhd s))(p)
\end{align*}
for all $g\in G$, $p\in M$ $f\in\mathscr{C}^\infty(M)$ and $s\in\Gamma^\infty(L)$,
where we used that the $G$-actions are $\mathbb{C}$-linear.
\end{proof}
The Lie group actions on functions and sections of the
line bundle induce Lie algebra actions of the corresponding Lie algebra. They can 
be extended to Hopf algebra actions of the universal enveloping algebra.
We prove that those actions respect the bimodule structure of sections.
\begin{corollary}\label{cor05}
If $\mathrm{pr}\colon L\rightarrow M$ is a $G$-equivariant line bundle, then
$\Gamma^\infty(L)$ is a $\mathscr{U}\mathfrak{g}$-equivariant symmetric
$\mathscr{C}^\infty(M)$-bimodule, where $\mathfrak{g}$ is the Lie algebra 
corresponding to $G$.
\end{corollary}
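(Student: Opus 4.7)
The plan is to obtain the $\mathscr{U}\mathfrak{g}$-action by differentiating the given $G$-actions and then invoking the universal property of the universal enveloping algebra; the compatibility with the bimodule structure will follow from Lemma~\ref{lemma16} by Leibniz-type differentiation, and braided symmetry will be automatic because $\mathscr{C}^\infty(M)$ is commutative and $\mathscr{U}\mathfrak{g}$ is cocommutative (i.e.\ triangular with $\mathcal{R}=1\otimes 1$).

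First I would define, for $x\in\mathfrak{g}$, $f\in\mathscr{C}^\infty(M)$ and $s\in\Gamma^\infty(L)$,
\begin{equation*}
    x\rhd f
    =\frac{\mathrm{d}}{\mathrm{d}t}\bigg|_{t=0}\exp(tx)\rhd f,
    \qquad
    x\rhd s
    =\frac{\mathrm{d}}{\mathrm{d}t}\bigg|_{t=0}\exp(tx)\rhd s,
\end{equation*}
where the derivatives exist in the smooth (pointwise) sense because the $G$-actions from Lemma~\ref{lemma16} are smooth. Standard arguments show that $x\mapsto(f\mapsto x\rhd f)$ and $x\mapsto(s\mapsto x\rhd s)$ are Lie algebra anti-homomorphisms into the endomorphisms of $\mathscr{C}^\infty(M)$ and $\Gamma^\infty(L)$, respectively. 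By the universal property of $\mathscr{U}\mathfrak{g}$ (recalled in Section~\ref{Sec2.1}) these extend uniquely to algebra anti-homomorphisms on $\mathscr{U}\mathfrak{g}$, and hence to left $\mathscr{U}\mathfrak{g}$-module structures on both spaces (by composing with the antipode, or equivalently by viewing $\mathfrak{g}$ itself as acting on the opposite algebras, which for our present purpose is only a cosmetic sign convention).

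Second, I would verify that $\mathscr{C}^\infty(M)$ becomes a $\mathscr{U}\mathfrak{g}$-module algebra and that $\Gamma^\infty(L)$ becomes a $\mathscr{U}\mathfrak{g}$-equivariant left $\mathscr{C}^\infty(M)$-module. For the first claim it suffices to check on primitive elements $x\in\mathfrak{g}$, where the Leibniz rule
\begin{equation*}
    x\rhd(fg)=(x\rhd f)g+f(x\rhd g)
\end{equation*}
is obtained by differentiating the group-like identity $\exp(tx)\rhd(fg)=(\exp(tx)\rhd f)(\exp(tx)\rhd g)$ from Lemma~\ref{lemma16} at $t=0$; since $\Delta(x)=x\otimes 1+1\otimes x$ this is exactly the module-algebra compatibility. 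The same differentiation applied to (\ref{eq42}) yields
\begin{equation*}
    x\rhd(f\cdot s)=(x\rhd f)\cdot s+f\cdot(x\rhd s),
\end{equation*}
which is the $\mathscr{U}\mathfrak{g}$-equivariance of the left $\mathscr{C}^\infty(M)$-module $\Gamma^\infty(L)$ on primitive elements, and extends to all of $\mathscr{U}\mathfrak{g}$ as an algebra identity because both sides are algebra homomorphism extensions of the same Lie algebra map.

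Finally, I would define the right $\mathscr{C}^\infty(M)$-action on $\Gamma^\infty(L)$ by $s\cdot f=f\cdot s$, which is well-defined and turns $\Gamma^\infty(L)$ into a $\mathscr{C}^\infty(M)$-bimodule precisely because $\mathscr{C}^\infty(M)$ is commutative; the right-equivariance follows from the left one by the same commutativity. Since $\mathscr{U}\mathfrak{g}$ is cocommutative it is triangular with $\mathcal{R}=1\otimes 1$, so that braided symmetry of a bimodule $\mathcal{M}$ reduces to $f\cdot m=m\cdot f$, which holds by construction. The only point that requires a moment of care is the extension of the action from $\mathfrak{g}$ to $\mathscr{U}\mathfrak{g}$: one must ensure that the equivariance identity, which a priori only holds on primitive elements, propagates to arbitrary products; this is the mildly technical step, but it is standard and follows by induction on word length using $\Delta$ being an algebra homomorphism together with the Leibniz rule already established on generators.
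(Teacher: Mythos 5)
Your proposal is correct and follows essentially the same route as the paper: differentiate the $G$-actions from Lemma~\ref{lemma16} at the identity, extend to $\mathscr{U}\mathfrak{g}$ by the universal property, and verify module-algebra compatibility and equivariance on primitive elements via the Leibniz rule, the extension to products and the braided symmetry (with $\mathcal{R}=1\otimes 1$) being routine. The only cosmetic divergence is your detour through anti-homomorphisms and the antipode, which, as you note, is a sign convention; the paper simply records that the differentiated left action extends directly to a left Hopf algebra action.
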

\begin{proof}
By Lemma~\ref{lemma16} there is a Lie group action $G\times\Gamma^\infty(L)
\rightarrow\Gamma^\infty(L)$. Recall that the corresponding Lie algebra
action $\mathfrak{g}\rightarrow\mathrm{End}_\mathbb{C}(\Gamma^\infty(L))$
is defined by
\begin{equation}\label{eq41}
    \xi\rhd s
    =\frac{\mathrm{d}}{\mathrm{d}t}\bigg|_{t=0}\exp(t\cdot\xi)\rhd s
\end{equation}
for all $\xi\in\mathfrak{g}$ and $s\in\Gamma^\infty(L)$. Note that the triangle on
the right hand side of (\ref{eq41}) denotes the $G$-action. This extends uniquely
to a left Hopf algebra action $\mathscr{U}\mathfrak{g}\otimes\Gamma^\infty(L)
\rightarrow\Gamma^\infty(L)$.
It remains to prove
that the $\mathscr{U}\mathfrak{g}$-action respects the $\mathscr{C}^\infty(M)$-module
action. This is the case, since
\begin{align*}
    \xi\rhd(f\cdot s)
    =&\frac{\mathrm{d}}{\mathrm{d}t}\bigg|_{t=0}\exp(t\cdot\xi)\rhd(f\cdot s)\\
    =&\frac{\mathrm{d}}{\mathrm{d}t}\bigg|_{t=0}\bigg((\exp(t\cdot\xi)\rhd f)\cdot
    (\exp(t\cdot\xi)\rhd s)\bigg)\\
    =&\bigg(\frac{\mathrm{d}}{\mathrm{d}t}(\exp(t\cdot\xi)\rhd f)\bigg)
    \cdot(\exp(t\cdot\xi)\rhd s)\bigg|_{t=0}\\
    &+(\exp(t\cdot\xi)\rhd f)\cdot
    \bigg(\frac{\mathrm{d}}{\mathrm{d}t}(\exp(t\cdot\xi)\rhd s)\bigg)\bigg|_{t=0}\\
    =&(\xi\rhd f)\cdot s
    +f\cdot(\xi\rhd s)\\
    =&(\xi_{(1)}\rhd f)\cdot(\xi_{(2)}\rhd s)
\end{align*}
for all $\xi\in\mathfrak{g}$, $f\in\mathscr{C}^\infty(M)$ and 
$s\in\Gamma^\infty(L)$, where we used (\ref{eq42}). It is sufficient to
prove equivariance on primitive elements.
\end{proof}
After those general considerations we return to obstructions for twist star products.
Before proving the main theorem of this section we argue that any equivariant line
bundle on a manifold can be twist deformed into a self Morita equivalence
bimodule of the twisted functions if there is a Drinfel'd twist on the 
corresponding universal enveloping algebra.
\begin{lemma}\label{lemma17}
Let $L\rightarrow M$ be a $G$-equivariant line bundle and $\star_\mathcal{F}$ be a
twist star product on $M$ with Drinfel'd twist based on $\mathscr{U}\mathfrak{g}$.
Then there is an algebra isomorphism
\begin{equation}
    \psi\colon(\mathscr{C}^\infty(M)[[\hbar]],\star_\mathcal{F})
    \rightarrow\mathrm{End}_{(\mathscr{C}^\infty(M)[[\hbar]],\star_\mathcal{F})}
    (\Gamma^\infty(L)[[\hbar]],\cdot_\mathcal{F})
\end{equation}
given by the twisted left $\mathscr{C}^\infty(M)$-module action on
$\Gamma^\infty(L)$.
\end{lemma}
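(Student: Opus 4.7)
The plan is to exhibit $\psi$ as the twist deformation of the classical Morita self-equivalence $\mathscr{C}^\infty(M)\cong\mathrm{End}_{\mathscr{C}^\infty(M)}(\Gamma^\infty(L))$ provided by any complex line bundle (see Example~\ref{ex03}), and to verify that the algebra homomorphism property follows directly from the associativity of the deformed bimodule structure. The main ingredients are already assembled in the preceding sections: by Corollary~\ref{cor05} the sections $\Gamma^\infty(L)$ form a $\mathscr{U}\mathfrak{g}$-equivariant $\mathscr{C}^\infty(M)$-bimodule, and since $\mathscr{U}\mathfrak{g}$ is cocommutative, hence triangular with $\mathcal{R}=1\otimes 1$, the bimodule is automatically braided symmetric. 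Proposition~\ref{prop03} (applied in its bimodule version leading to the functor~(\ref{eq29})) then yields a twisted $\mathscr{C}^\infty(M)_\mathcal{F}$-bimodule $\Gamma^\infty(L)_\mathcal{F}$ with deformed actions
\begin{equation*}
f\cdot_\mathcal{F} s=(\mathcal{F}^{-1}_1\rhd f)\cdot(\mathcal{F}^{-1}_2\rhd s),\qquad
s\cdot_\mathcal{F} f=(\mathcal{F}^{-1}_1\rhd s)\cdot(\mathcal{F}^{-1}_2\rhd f).
\end{equation*}

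Next I would define $\psi(f)(s)=f\cdot_\mathcal{F} s$ for $f\in\mathscr{C}^\infty(M)[[\hbar]]$ and $s\in\Gamma^\infty(L)[[\hbar]]$, extended $\hbar$-linearly. Three things need checking. First, $\psi(f)$ must be right $(\mathscr{C}^\infty(M)[[\hbar]],\star_\mathcal{F})$-linear: since $\Gamma^\infty(L)$ is a \emph{symmetric} $\mathscr{C}^\infty(M)$-bimodule (locally $s\cdot f=f\cdot s$) and $\mathcal{R}=1\otimes 1$, one verifies $(f\cdot_\mathcal{F} s)\cdot_\mathcal{F} g=f\cdot_\mathcal{F}(s\cdot_\mathcal{F} g)$ by a direct computation using the $2$-cocycle identity of $\mathcal{F}^{-1}$, exactly as in the proof of associativity of $\star_\mathcal{F}$ in Proposition~\ref{prop02}. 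Second, $\psi$ is an algebra homomorphism: the identity $\psi(f\star_\mathcal{F} g)=\psi(f)\circ\psi(g)$ is the statement $(f\star_\mathcal{F} g)\cdot_\mathcal{F} s=f\cdot_\mathcal{F}(g\cdot_\mathcal{F} s)$, which is the left module associativity of the deformed bimodule $\Gamma^\infty(L)_\mathcal{F}$ granted by Proposition~\ref{prop03}. Unitality $\psi(1)=\mathrm{id}$ is immediate from normalization of $\mathcal{F}^{-1}$.

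Finally, I must establish that $\psi$ is bijective. This is where the deformation-theoretic reasoning replaces a direct construction of the inverse. The zeroth order $\psi_0\colon\mathscr{C}^\infty(M)\to\mathrm{End}_{\mathscr{C}^\infty(M)}(\Gamma^\infty(L))$ is the classical multiplication map, which is an isomorphism precisely because the line bundle $L$ is of rank one and so $\Gamma^\infty(L)$ is a Morita self-equivalence bimodule (Example~\ref{ex03}). Writing $\psi=\psi_0+\sum_{r>0}\hbar^r\psi_r$, invertibility in the $\hbar$-adic topology follows from the standard geometric series argument: $\psi_0^{-1}\psi=\mathrm{id}+\hbar(\cdots)$ is invertible in $\mathrm{End}_{\mathbb{C}[[\hbar]]}$ of the corresponding formal power series module, so $\psi$ itself admits a $\mathbb{C}[[\hbar]]$-linear inverse. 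One then observes that this inverse automatically respects the multiplicative structures, since being an algebra homomorphism is preserved under passage to the inverse of a bijective algebra homomorphism. The only subtlety to watch — and arguably the mildest obstacle in the whole argument — is checking that the image of $\psi$ really lands inside the right $\star_\mathcal{F}$-linear endomorphisms and not merely inside $\mathbb{C}[[\hbar]]$-linear ones; this reduces, as sketched above, to the inverse $2$-cocycle property together with the symmetry of the undeformed bimodule.
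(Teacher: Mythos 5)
Your proposal is correct and follows essentially the same route as the paper's proof: the homomorphism property is the $2$-cocycle/associativity identity $(f\star_\mathcal{F}g)\cdot_\mathcal{F}s=f\cdot_\mathcal{F}(g\cdot_\mathcal{F}s)$, and bijectivity comes from the classical Morita self-equivalence $\mathscr{C}^\infty(M)\cong\mathrm{End}_{\mathscr{C}^\infty(M)}(\Gamma^\infty(L))$ at order zero together with invertibility in the $\hbar$-adic topology. The extra verifications you include (right $\star_\mathcal{F}$-linearity of $\psi(f)$ and the braided-symmetric bimodule setup) are left implicit in the paper but are consistent with its argument.
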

\begin{proof}
$\psi$ is an algebra homomorphism since $f\cdot_\mathcal{F}(g\cdot_\mathcal{F}s)
=(f\star_\mathcal{F}g)\cdot_\mathcal{F}s$ for all $f,g\in\mathscr{C}^\infty(M)$
and $s\in\Gamma^\infty(L)$ by the $2$-cocycle property. It is an isomorphism
since $\Gamma^\infty(L)$ is a self Morita equivalence bimodule for the 
undeformed algebra. Namely
$\Gamma^\infty(\mathrm{End}(L))\cong\mathscr{C}^\infty(M)$ since
$\mathrm{End}(L)\cong M\times\mathbb{C}$ is the trivial line bundle, which implies
that
$$
(\mathscr{C}^\infty(M),\cdot)
\rightarrow\mathrm{End}_{(\mathscr{C}^\infty(M),\cdot)}(\Gamma^\infty(L),\cdot)
$$
is an algebra isomorphism. It follows that $\psi$ is invertible in the $\hbar$-adic
topology since $\psi(f)(s)=f\cdot s+\mathcal{O}(\hbar)$ 
for all $f\in\mathscr{C}^\infty(M)$ and $s\in\Gamma^\infty(L)$. 
\end{proof}
Now it is clear how to obtain an obstruction in the symplectic setting:
if the line bundle has non-trivial
Chern class it follows that its twist deformation is a Morita equivalence bimodule
between the twisted functions and another star product which is not isomorphic
to the twist star product. This is a contradiction to Lemma~\ref{lemma17},
which proves that the induced star product has to be the twisted product itself.
\begin{theorem}[\cite{dAWe17}~Thm.~11]\label{thm08}
Let $(M,\omega)$ be a symplectic manifold such that $M$ is a $G$-space in addition.
The following two properties are mutually exclusive.
\begin{enumerate}
\item[i.)] There is a $G$-equivariant smooth complex line bundle on $M$
with non-trivial Chern class;

\item[ii.)] There is a twist star product on $(M,\omega)$ for a Drinfel'd twist
based on $\mathscr{U}\mathfrak{g}[[\hbar]]$, where $\mathfrak{g}$ is the Lie
algebra corresponding to $G$;
\end{enumerate}
\end{theorem}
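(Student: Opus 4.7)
The plan is to derive a contradiction by assuming both items hold simultaneously. So suppose $L\to M$ is a $G$-equivariant smooth complex line bundle with $c_1(L)\neq 0$, and suppose $\star_\mathcal{F}$ is a twist star product on $(M,\omega)$ with twist $\mathcal{F}\in(\mathscr{U}\mathfrak{g}\otimes\mathscr{U}\mathfrak{g})[[\hbar]]$, based on the Lie algebra $\mathfrak{g}$ of $G$.

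First, I would upgrade the geometry of $L$ to the algebraic side via Corollary~\ref{cor05}: the sections $\Gamma^\infty(L)$ form an $H$-equivariant (braided) symmetric $\mathscr{C}^\infty(M)$-bimodule for $H=\mathscr{U}\mathfrak{g}$. Applying the Drinfel'd functor in the bimodule setting (Proposition~\ref{prop03} and its bimodule version from Section~\ref{Sec2.5}), we obtain an $H_\mathcal{F}$-equivariant bimodule $\Gamma^\infty(L)_\mathcal{F}=(\Gamma^\infty(L)[[\hbar]],\cdot_\mathcal{F})$ over $\mathcal{A}_\mathcal{F}=(\mathscr{C}^\infty(M)[[\hbar]],\star_\mathcal{F})$. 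Since $L$ is a line bundle, the undeformed $\Gamma^\infty(L)$ is a self Morita equivalence bimodule of $\mathscr{C}^\infty(M)$ (cf.\ Example~\ref{ex03}), and its twist deformation is visibly a deformation of this bimodule in the sense of Section~\ref{SecObs4}.

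Next, I would invoke Theorem~\ref{thm04} to conclude that there exists a star product $\star'$ on $(M,\omega)$ such that $\Gamma^\infty(L)_\mathcal{F}$ realizes a Morita equivalence between $\mathcal{A}_\mathcal{F}$ and $(\mathscr{C}^\infty(M)[[\hbar]],\star')$, where $\star'$ is defined via the right action on $\Gamma^\infty(L)_\mathcal{F}$ through the isomorphism $\mathscr{C}^\infty(M)[[\hbar]]\cong\mathrm{End}_{\mathcal{A}_\mathcal{F}}(\Gamma^\infty(L)_\mathcal{F})$ (and classifies the resulting deformation up to equivalence). On the other hand, Lemma~\ref{lemma17} already identifies $\mathcal{A}_\mathcal{F}$ itself with this endomorphism algebra via the twisted left multiplication. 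Matching these two identifications forces $[\star']=[\star_\mathcal{F}]$ in $\mathrm{Def}(M,\omega)$.

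Finally, this contradicts Corollary~\ref{cor04}: the action of the class of $L$ in $\mathrm{Pic}(M)$ on $[\star_\mathcal{F}]$ produces $[\star']=[\star_\mathcal{F}]+2\pi\mathrm{i}c_1(L)$, and non-triviality of $c_1(L)$ therefore forces $\star'\not\sim\star_\mathcal{F}$, a contradiction. The main subtlety, which I expect to be the central bookkeeping obstacle, is verifying that the bimodule $\Gamma^\infty(L)_\mathcal{F}$ produced by the Drinfel'd functor coincides, up to equivalence of deformations, with the canonical deformation of the Morita context of $\Gamma^\infty(L)$ used in Theorem~\ref{thm04} and Corollary~\ref{cor04}; in particular one must be careful that the "right" star product extracted from the endomorphism algebra is the same one, not merely an isomorphic one differing by a Poisson diffeomorphism, so that the characteristic-class computation of Corollary~\ref{cor04} can be applied cleanly.
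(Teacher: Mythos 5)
Your proposal is correct and follows essentially the same route as the paper: both arguments combine Lemma~\ref{lemma17} (identifying $(\mathscr{C}^\infty(M)[[\hbar]],\star_\mathcal{F})$ with the endomorphism algebra of the twisted sections), Theorem~\ref{thm04} together with Theorem~\ref{thm05} to produce the Morita-partner star product $\star'$, and Corollary~\ref{cor04} to read off $c_1(L)=0$ from the equivalence $[\star']=[\star_\mathcal{F}]$. The subtlety you flag at the end -- that the Drinfel'd-functor deformation of $\Gamma^\infty(L)$ must agree, up to equivalence, with the canonical deformation of the Morita context entering Corollary~\ref{cor04} -- is real but is also passed over silently in the paper's own proof, so you have not missed anything the paper supplies.
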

\begin{proof}
Assume that $i.)$ and $ii.)$ both hold on the same symplectic manifold
and $G$-space $(M,\omega)$. In particular, there are two algebra isomorphisms
$$
(\mathscr{C}^\infty(M)[[\hbar]],\star_\mathcal{F})
\xrightarrow{\psi}
\mathrm{End}_{(\mathscr{C}^\infty(M)[[\hbar]],\star_\mathcal{F})}
(\Gamma^\infty(L)[[\hbar]],\cdot_\mathcal{F})
\rightarrow
(\mathscr{C}^\infty(M)[[\hbar]],\star'),
$$
the first according to Lemma~\ref{lemma17} and the second according to
Theorem~\ref{thm04} and Theorem~\ref{thm05}~$iii.)$. More precisely,
$\star_\mathcal{F}$ denotes the twist star product on $(M,\omega)$ that exists
by assumption $ii.)$, while $\star'$ denotes the star product which exists
corresponding to the deformation $\star_\mathcal{F}$ of $\mathscr{C}^\infty(M)$
and $\cdot_\mathcal{F}$ of $\Gamma^\infty(L)$. According to 
Corollary~\ref{cor04} this implies
$c_1(L)=0$ in contradiction to assumption $i.)$ and we conclude the proof
of the theorem.
\end{proof}
Note that Theorem~\ref{thm08} only gives obstructions for Drinfel'd twists
on $\mathscr{U}\mathfrak{g}$ if $\mathfrak{g}$ is the Lie algebra corresponding
to the symmetry $G$ of the line bundle. Nevertheless this might rule out a huge
class of candidates, like it is depicted in the following example.
\begin{example}
Consider the tautological line bundle $L$ on $\mathbb{CP}^n$ discussed in
Example~\ref{ex04}. It is $G=\mathrm{GL}_{n+1}(\mathbb{C})$-equivariant
and according to Corollary~\ref{cor05} this implies that the sections
$\Gamma^\infty(L)$ of $L$ are a 
$\mathfrak{g}=\mathfrak{gl}_{n+1}(\mathbb{C})$-equivariant
$\mathscr{C}^\infty(\mathbb{CP}^n)$-bimodule. It is well-known that
$c_1(L)\neq 0$ and that the Fubini-Study $2$-form
$\omega_{\mathrm{FS}}$ is a symplectic
structure on $\mathbb{CP}^n$. According to Theorem~\ref{thm08} there is no
twist star product on $(\mathbb{CP}^n,\omega_{\mathrm{FS}})$ with a twist
based on $\mathscr{U}\mathfrak{g}$. On the other hand, according to
Proposition~\ref{prop19}, there are star product quantizing
$(\mathbb{CP}^n,\omega_{\mathrm{FS}})$. A particular class of them,
given by \textit{Berezin-Toeplitz} quantization, is described in 
\cite{Schlichenmaier}.
\end{example}

\chapter{Braided Cartan Calculi}\label{chap04}
After recalling the well-known theory of quasi-triangular Hopf algebras
and their representations we proceed by elaborating some original
results of the author. The main statement, i.e.
the construction of the braided Cartan calculus
on an arbitrary braided commutative algebra, can be
found in \cite{Weber2019}~Section~3, while we add more details, proofs
and lemmas in the following sections. To give some insight on
how several formulas were deduced as natural generalizations, we start by
recalling the construction of the Cartan calculus on a commutative algebra
over a commutative unital ring in Section~\ref{Sec3.1}
(c.f. \cite{Ko60,Ri63}). Note that Henri
Cartan laid down the foundation of the Cartan calculus on a manifold in
\cite{Ca50} and with it all of its fundamental ingredients. The study
of multivector fields on a manifold and the generalization of the
Lie bracket of vector fields to a Gerstenhaber bracket (c.f. \cite{Ge63})
goes back to early works \cite{Ni55,Sc40,Sc54} of Schouten and Nijenhuis.
In the context of noncommutative geometry \cite{Connes94}
the Schouten-Nijenhuis bracket
is discussed in \cite{D-VM94}. The notion of Cartan calculi on
noncommutative algebras goes back to Woronowicz.
In \cite{Woronowicz1989} he generalized the de Rham differential
of a smooth manifold to a noncommutative setting, focusing on its
algebraic properties. Namely, a first order differential calculus over
a Hopf algebra $\mathcal{A}$ is given as a bimodule $\Gamma$ over the
algebra, together with a $\Bbbk$-linear map
$\mathrm{d}\colon\mathcal{A}\rightarrow\Gamma$, which satisfies
a Leibniz rule $\mathrm{d}(a\cdot b)
=\mathrm{d}(a)\cdot b+a\cdot\mathrm{d}(b)$ for all $a,b\in\mathcal{A}$.
In this context, $\Gamma$ represents the bimodule of differential forms
on $\mathcal{A}$.
Furthermore, he assumes that $\Gamma=\mathcal{A}\cdot\mathrm{d}\mathcal{A}$
is generated by $\mathcal{A}$ and $\mathrm{d}$. If the first order
calculus is \textit{bicovariant}, i.e. if
$\sum_ka_k\mathrm{d}b_k=0$ implies 
$\sum_k\Delta(a_k)(\mathrm{id}\otimes\mathrm{d})\Delta(b_k)=0$
and $\sum_k\Delta(a_k)(\mathrm{d}\otimes\mathrm{id})\Delta(b_k)=0$,
it admits an extension to the exterior algebra of $\Gamma$, which
we identify with higher order differential forms. Noncommutative
differential calculi based on derivations rather than generalizations
of differential forms are discussed in
\cite{D-VM96,D-VM94,Schupp1994,Schupp1993}. It is a necessity
to employ modules over the center of a noncommutative algebra in this
case, since derivations are only a central bimodule.
Following the approach of \cite{Schenkel2015,Schenkel2016}, we
construct a braided derivation based calculus via bimodules over the whole
algebra for the huge class of braided commutative algebras, in
accordance with \cite{Woronowicz1989}.
In the Sections~\ref{Sec3.2}-\ref{Sec3.4}
we repeat the construction of Section~\ref{Sec3.1} in the
category of equivariant braided symmetric bimodules of a braided commutative
algebra. Namely, we shape the braided Gerstenhaber algebra of
braided multivector fields with the braided Schouten-Nijenhuis 
bracket in Section~\ref{Sec3.2}, while we construct the braided
Graßmann algebra of braided differential forms in Section~\ref{Sec3.3}.
In particular we define a braided analogue of the de Rham differential
in the latter section via a generalization of the Chevalley-Eilenberg
formula. Then, in Section~\ref{Sec3.4}, our arrangements culminate in
the definition of the braided Cartan calculus. The corresponding
braided Lie derivative, insertion and de Rham differential are related
by graded braided commutators and resemble the formulas known from differential
geometry. It would be interesting to compare the braided Cartan calculus to the
noncommutative calculus \cite{Tsygan2000,Tsygan2012}
and generalize our construction to Lie-Rinehart pairs (see \cite{Hue1998}).
As an application we prove in Section~\ref{Sec3.5} that the
notion of covariant derivative naturally generalizes to the braided
Cartan calculus. In particular we compute the curvature and torsion of
an equivariant covariant derivative, we prove that a given equivariant covariant
derivative on the algebra extends to braided differential forms and
multivector fields and we give an existence and uniqueness theorem for
an equivariant Levi-Civita covariant derivative of a given non-degenerate 
equivariant metric.
Covariant derivatives on noncommutative algebras are also discussed in e.g.
\cite{Arnlind2019II,Arnlind2019,Aschieri2010,AsSh14,Schenkel2016,Jyotishman2016,Jyotishman2019,D-VM96,Fiore2000,LaMa2012,Peterka2017}.
In general one has to distinguish between left and
right covariant derivatives. However, we prove that these notions coincide in
the equivariant case.
Finally, in Section~\ref{Sec3.6} we clarify that Drinfel'd twist
gauge equivalence is compatible with the construction of the braided
Cartan calculus and the notion of equivariant covariant derivative.
For a given twist we describe a deformation of the data of a braided
Cartan calculus and prove that it is isomorphic to the braided Cartan
calculus with respect to the twisted algebra and twisted triangular structure.
This recovers the well-known twisted Cartan calculus and integrates it in the
setting of braided Cartan calculi. Furthermore,
the twist deformation of an equivariant covariant derivative can be viewed
as an equivariant covariant derivative with respect to the twisted
universal $\mathcal{R}$-matrix on the twisted algebra via the same 
isomorphism.

\section{The Cartan Calculus on a Commutative Algebra}\label{Sec3.1}

Recalling differential geometry, the Cartan calculus on a smooth manifold $M$
is based on the commutative algebra $\mathscr{C}^\infty(M)$ of smooth functions.
The Gerstenhaber algebra 
$(\mathfrak{X}^\bullet(M),\wedge,\llbracket\cdot,\cdot\rrbracket)$ of
multivector fields 
and the Graßmann algebra $(\Omega^\bullet(M),\wedge)$
of differential forms are graded symmetric $\mathscr{C}^\infty(M)$-bimodules.
Using the de Rham differential $\mathrm{d}$ and the insertion $\mathrm{i}_X$
of vector fields $X\in\mathfrak{X}^1(M)$ into the first slot of a
differential form, one defines the Lie derivative
$\mathscr{L}_X=[\mathrm{i}_X,\mathrm{d}]$, the bracket denoting the
commutator of endomorphisms. On factorizing multivector fields
$X=X_1\wedge\cdots\wedge X_k\in\mathfrak{X}^k(M)$ one defines
$\mathrm{i}_X=\mathrm{i}_{X_1}\cdots\mathrm{i}_{X_k}$ and the Lie derivative by 
$$
\mathscr{L}_X
=\mathrm{i}_X\mathrm{d}
-(-1)^{k-1}\mathrm{d}\mathrm{i}_X.
$$
Linear extension leads to homogeneous maps
$\mathrm{d}\colon\Omega^\bullet(M)\rightarrow\Omega^{\bullet+1}(M)$,
$\mathrm{i}_X\colon\Omega^\bullet(M)\rightarrow\Omega^{\bullet-k}(M)$
and $\mathscr{L}_X\colon\Omega^\bullet(M)\rightarrow\Omega^{\bullet-(k-1)}(M)$,
where $X\in\mathfrak{X}^k(M)$. In particular we can define the Lie derivative
$\mathscr{L}_X=[\mathrm{i}_X,\mathrm{d}]$
of any multivector field $X\in\mathfrak{X}^\bullet(M)$ using the
graded commutator $[\cdot,\cdot]$. One proves that 
$\mathrm{i}\colon\mathfrak{X}^\bullet(M)\times\Omega^\bullet(M)
\rightarrow\Omega^\bullet(M)$ is $\mathscr{C}^\infty(M)$-bilinear. If
$X\in\mathfrak{X}^1(M)$ we obtain derivations $\mathrm{i}_X$ and
$\mathscr{L}_X$ of $(\Omega^\bullet(M),\wedge)$. It is the aim of this subsection
to reduce the Cartan calculus to its algebraic properties and
reconstruct them for any commutative algebra.
In the next subsections this will be generalized to braided commutative
algebras.

Fix a commutative algebra $\mathcal{A}$ in the following. An
endomorphism $\Phi\colon\mathcal{A}\rightarrow\mathcal{A}$ is said to be a
\textit{derivation} of $\mathcal{A}$ if $\Phi(ab)=\Phi(a)b+a\Phi(b)$
for all $a,b\in\mathcal{A}$. We denote the $\Bbbk$-module of all derivations of
$\mathcal{A}$ by $\mathrm{Der}(\mathcal{A})$.
\begin{lemma}
The derivations $\mathrm{Der}(\mathcal{A})$ of $\mathcal{A}$ form a Lie
algebra with Lie bracket given by the commutator $[\cdot,\cdot]$ of
endomorphisms. Furthermore, $\mathrm{Der}(\mathcal{A})$ is a symmetric
$\mathcal{A}$-bimodule with left and right $\mathcal{A}$-module actions
defined on $X\in\mathrm{Der}(\mathcal{A})$ by
$$
(a\cdot X)(b)=aX(b)=(X\cdot a)(b)
$$
for all $a,b\in\mathcal{A}$.
\end{lemma}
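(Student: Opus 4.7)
The plan is to verify the two assertions directly from the definitions, with the only delicate point being that the commutativity of $\mathcal{A}$ enters precisely when one checks that $a\cdot X$ is still a derivation and that the left and right actions coincide.

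First I would verify that $\mathrm{Der}(\mathcal{A})$ is closed under the commutator. Let $X,Y\in\mathrm{Der}(\mathcal{A})$ and $a,b\in\mathcal{A}$. Expanding $[X,Y](ab)=X(Y(ab))-Y(X(ab))$ using the Leibniz rule for $X$ and $Y$, the four mixed terms $X(a)Y(b)$, $Y(a)X(b)$, $X(a)Y(b)$, $Y(a)X(b)$ cancel pairwise, leaving $[X,Y](a)b+a[X,Y](b)$. Hence $[X,Y]\in\mathrm{Der}(\mathcal{A})$. Antisymmetry and the Jacobi identity for $[\cdot,\cdot]$ are formal consequences of associativity of $\circ$ in $\mathrm{End}_\Bbbk(\mathcal{A})$, which holds on any $\Bbbk$-module; so $(\mathrm{Der}(\mathcal{A}),[\cdot,\cdot])$ is a Lie algebra.

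Next I would verify the bimodule structure. For $a\in\mathcal{A}$ and $X\in\mathrm{Der}(\mathcal{A})$ define $(a\cdot X)(b)=aX(b)$. Then
\begin{equation*}
(a\cdot X)(bc)=aX(bc)=aX(b)c+abX(c)=(a\cdot X)(b)\,c+b\,(a\cdot X)(c),
\end{equation*}
where in the last step I used $abX(c)=b\,aX(c)$, which is exactly where the commutativity of $\mathcal{A}$ is used. Hence $a\cdot X\in\mathrm{Der}(\mathcal{A})$. The axioms $(ab)\cdot X=a\cdot(b\cdot X)$ and $1\cdot X=X$ are immediate from the associativity and unitality of the product in $\mathcal{A}$, so $\mathrm{Der}(\mathcal{A})$ is a left $\mathcal{A}$-module. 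The analogous statements hold for the right action $(X\cdot a)(b)=X(b)a$, and the identity $aX(b)=X(b)a$ (again by commutativity) gives $a\cdot X=X\cdot a$, which shows both that the two actions coincide and that the bimodule compatibility $a\cdot(X\cdot b)=(a\cdot X)\cdot b$ is automatic. This establishes that $\mathrm{Der}(\mathcal{A})$ is a symmetric $\mathcal{A}$-bimodule.

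The main (and really only) obstacle is bookkeeping: the Leibniz expansions produce several terms and one must be careful to track where commutativity is invoked, since this is precisely the feature that will have to be \emph{braided} in the subsequent sections when we replace $\mathcal{A}$ by a braided commutative $H$-module algebra and $\mathrm{Der}(\mathcal{A})$ by its braided analogue $\mathfrak{X}^1_\mathcal{R}(\mathcal{A})$. Flagging the use of commutativity explicitly here makes the generalization in Section~\ref{Sec3.2} transparent.
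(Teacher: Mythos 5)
Your proposal is correct and follows the same route as the paper: closure of $\mathrm{Der}(\mathcal{A})$ under the commutator via the Leibniz expansion (with the mixed terms cancelling), the Lie algebra axioms inherited from $\mathrm{End}_\Bbbk(\mathcal{A})$, and the observation that commutativity of $\mathcal{A}$ is exactly what makes $a\cdot X$ a derivation and the two module actions coincide. Your explicit flagging of where commutativity enters is a nice touch that the paper leaves implicit, but the mathematical content is identical.
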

\begin{proof}
For any $a\in\mathcal{A}$ and $X\in\mathrm{Der}(\mathcal{A})$,
$a\cdot X$ is a derivation of $\mathcal{A}$, since
$$
(a\cdot X)(bc)
=aX(bc)
=a(X(b)c+bX(c))
=((a\cdot X)(b))c+b(a\cdot X)(c)
$$
holds for all $b,c\in\mathcal{A}$ by the commutativity of $\mathcal{A}$.
Therefore we obtain well-defined $\mathcal{A}$-module actions by the
associativity of $\mathcal{A}$. They are symmetric by definition.
The endomorphisms of $\mathcal{A}$ are a Lie algebra with respect to the
commutator. It remains to prove that $[\cdot,\cdot]$ closes in
$\mathrm{Der}(\mathcal{A})$. Let $X,Y\in\mathrm{Der}(\mathcal{A})$.
Then
\begin{align*}
    [X,Y](ab)
    =&X(Y(ab))-Y(X(ab))\\
    =&X(Y(a)b+aY(b))-Y(X(a)b+aX(b))\\
    =&X(Y(a))b+Y(a)X(b)+X(a)Y(b)+aX(Y(b))\\
    &-Y(X(a))b-X(a)Y(b)-Y(a)X(b)-aY(X(b))\\
    =&(X(Y(a))-Y(X(a)))b+a(X(Y(b))-Y(X(b)))\\
    =&([X,Y](a))b+a([X,Y](b))
\end{align*}
for all $a,b\in\mathcal{A}$, which means that $[X,Y]\in\mathrm{Der}(\mathcal{A})$.
\end{proof}
Next we consider the \textit{tensor algebra} 
$$
\mathrm{T}^\bullet\mathrm{Der}(\mathcal{A})
=\mathcal{A}\oplus\mathrm{Der}(\mathcal{A})
\oplus(\mathrm{Der}(\mathcal{A})\otimes_\mathcal{A}\mathrm{Der}(\mathcal{A}))
\oplus\cdots
$$
of the symmetric $\mathcal{A}$-bimodule $\mathrm{Der}(\mathcal{A})$. It is a
graded noncommutative algebra with multiplication given by the tensor product
$\otimes_\mathcal{A}$ over $\mathcal{A}$. The left and right
$\mathcal{A}$-actions, defined on factorizing elements
$X=X_1\otimes_\mathcal{A}\cdots\otimes_\mathcal{A}X_k\in
\mathrm{T}^k\mathrm{Der}(\mathcal{A})$ and $a\in\mathcal{A}$ by
$$
a\cdot X
=(a\cdot X_1)\otimes_\mathcal{A}\cdots\otimes_\mathcal{A}X_k
=X\cdot a,
$$
structure $\mathrm{T}^\bullet\mathrm{Der}(\mathcal{A})$ as a
symmetric $\mathcal{A}$-bimodule.
The quotient of $\mathrm{T}^\bullet\mathrm{Der}(\mathcal{A})$ with the ideal
generated by expressions $X\otimes_\mathcal{A}Y-(-1)^{k\ell}Y\otimes_\mathcal{A}X$,
where $X\in\mathrm{T}^k\mathrm{Der}(\mathcal{A})$
and $Y\in\mathrm{T}^\ell\mathrm{Der}(\mathcal{A})$, is the \textit{Graßmann algebra}
or \textit{exterior algebra} $\Lambda^\bullet\mathrm{Der}(\mathcal{A})$
of $\mathrm{Der}(\mathcal{A})$. The induced
product, the \textit{wedge product}, is denoted by $\wedge$. Since
the ideal respects the structure of the tensor algebra it follows that
$\Lambda^\bullet\mathrm{Der}(\mathcal{A})$ is a graded algebra and a
symmetric $\mathcal{A}$-bimodule. In particular, there are symmetric
left and right $\mathcal{A}$-module actions such that on
factorizing elements $X=X_1\wedge\cdots\wedge X_k\in
\Lambda^k\mathrm{Der}(\mathcal{A})$
$$
a\cdot X
=a\wedge X=(a\cdot X_1)\wedge\cdots\wedge X_k
=X\wedge a=X\cdot a
$$
holds for all $a\in\mathcal{A}$. Note that
$\Lambda^\bullet\mathrm{Der}(\mathcal{A})$ is \textit{graded commutative}
in addition, i.e.
$$
X\wedge Y=(-1)^{k\ell}Y\wedge X
$$
for all $X\in\Lambda^k\mathrm{Der}(\mathcal{A})$
and $Y\in\Lambda^\ell\mathrm{Der}(\mathcal{A})$. In the following we write
$\mathfrak{X}^\bullet(\mathcal{A})$ instead of
$\Lambda^\bullet\mathrm{Der}(\mathcal{A})$ and call it the
\textit{multivector fields} of $\mathcal{A}$. We define the
\textit{Schouten-Nijenhuis bracket} $\llbracket\cdot,\cdot\rrbracket
\colon\mathfrak{X}^\bullet(\mathcal{A})\times\mathfrak{X}^\bullet(\mathcal{A})
\rightarrow\mathfrak{X}^\bullet(\mathcal{A})$ on factorizing elements
$X=X_1\wedge\cdots\wedge X_k\in\mathfrak{X}^k(\mathcal{A})$ and
$Y=Y_1\wedge\cdots\wedge X_\ell\in\mathfrak{X}^\ell(\mathcal{A})$ by
\begin{equation}\label{eq25}
    \llbracket X,Y\rrbracket
    =\sum_{i=1}^k\sum_{j=1}^\ell(-1)^{i+j}[X_i,X_j]\wedge X_1\wedge\cdots
    \wedge\widehat{X_i}\wedge\cdots X_k\wedge Y_1\wedge\cdots\wedge\widehat{Y_j}\wedge
    \cdots Y_\ell
\end{equation}
if $k,\ell>0$, where $\widehat{X_i}$ and $\widehat{Y_j}$ are omitted in the above wedge
product. We further set $\llbracket a,b\rrbracket=0$ for $a,b\in\mathcal{A}$
and $\llbracket X,a\rrbracket=X(a)=-\llbracket a,X\rrbracket$ for all 
$X\in\mathfrak{X}^1(\mathcal{A})$ and $a\in\mathcal{A}$ and extend 
$\llbracket\cdot,\cdot\rrbracket$ $\Bbbk$-bilinearly.
\begin{proposition}
The Schouten-Nijenhuis bracket structures $\mathfrak{X}^\bullet(\mathcal{A})$
as a Gerstenhaber algebra. Namely,
$
\llbracket\cdot,\cdot\rrbracket\colon
\mathfrak{X}^k(\mathcal{A})\times\mathfrak{X}^\ell(\mathcal{A})
\rightarrow\mathfrak{X}^{k+\ell-1}(\mathcal{A})
$
is a graded (with respect to the degree shifted by $1$) Lie bracket, i.e.
it is graded skew-symmetric
$$
\llbracket Y,X\rrbracket
=-(-1)^{(k-1)(\ell-1)}\llbracket X,Y\rrbracket
$$
and satisfies the graded Jacobi identity
$$
\llbracket X,\llbracket Y,Z\rrbracket\rrbracket
=\llbracket\llbracket X,Y\rrbracket,Z\rrbracket
+(-1)^{(k-1)(\ell-1)}\llbracket Y,\llbracket X,Z\rrbracket\rrbracket,
$$
such that the graded Leibniz rule
$$
\llbracket X,Y\wedge Z\rrbracket
=\llbracket X,Y\rrbracket\wedge Z
+(-1)^{(k-1)\ell}Y\wedge\llbracket X,Z\rrbracket
$$
holds in addition,
where $X\in\mathfrak{X}^k(\mathcal{A})$, $Y\in\mathfrak{X}^\ell(\mathcal{A})$
and $Z\in\mathfrak{X}^\bullet(\mathcal{A})$.
It is the unique Gerstenhaber bracket on
$\mathfrak{X}^\bullet(\mathcal{A})$ such that
$$
\llbracket X,a\rrbracket=X(a)
\text{ and }
\llbracket X,Y\rrbracket=[X,Y]
$$
for all $a\in\mathcal{A}$ and $X,Y\in\mathfrak{X}^1(\mathcal{A})$.
\end{proposition}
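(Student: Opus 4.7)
The plan is to establish, in order: well-definedness of~(\ref{eq25}) on the exterior algebra $\mathfrak{X}^\bullet(\mathcal{A})$, graded skew-symmetry, the graded Leibniz rule, the graded Jacobi identity, and finally uniqueness. For well-definedness one must check that the right-hand side of~(\ref{eq25}) is alternating in the entries $X_i$ separately from the $Y_j$, so that it descends from $\mathrm{T}^\bullet\mathrm{Der}(\mathcal{A})\otimes_\mathcal{A}\mathrm{T}^\bullet\mathrm{Der}(\mathcal{A})$ to the exterior algebra. This is a direct sign count: transposing two adjacent entries $X_i, X_{i+1}$ flips the sign of the residual wedge product $X_1\wedge\cdots\wedge\widehat{X_i}\wedge\cdots\wedge X_k$ exactly when the omitted index is not one of $i, i+1$, which together with the shift in the factor $(-1)^{i+j}$ produces the required overall sign change.

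Graded skew-symmetry follows by substituting $\llbracket Y, X\rrbracket$ into~(\ref{eq25}), exchanging the order of the double sum, using $[Y_j, X_i]=-[X_i, Y_j]$, and reshuffling the wedge factors. Moving the $k-1$ remaining $X$-factors past the $\ell-1$ remaining $Y$-factors yields the sign $(-1)^{(k-1)(\ell-1)}$. The mixed cases involving arguments in $\mathcal{A}$ are consistent with the conventions $\llbracket a,b\rrbracket=0$ and $\llbracket X,a\rrbracket=X(a)=-\llbracket a,X\rrbracket$ for $X\in\mathfrak{X}^1(\mathcal{A})$.

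I would then prove the graded Leibniz rule by induction on $k=|X|$. The cases $k=0$ and $k=1$ are immediate from the definition, using that $X\in\mathfrak{X}^1(\mathcal{A})$ is a derivation of $\mathcal{A}$ and that $[X,Y\wedge Z]$ has a direct expansion coming from the entry-wise definition on wedges. For $k>1$ I would write $X=X_1\wedge X'$ with $X'\in\mathfrak{X}^{k-1}(\mathcal{A})$ and combine the inductive hypothesis with graded skew-symmetry to treat $\llbracket X_1\wedge X',Y\wedge Z\rrbracket$ by reducing it to brackets of lower total degree.

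The main obstacle will be the graded Jacobi identity, whose direct combinatorial verification is prohibitive. My plan is the standard bootstrap: first check it for degree-one arguments, where it reduces to the Jacobi identity of the Lie algebra $\mathrm{Der}(\mathcal{A})$ together with the compatibility $X(Y(a))-Y(X(a))=[X,Y](a)$. Then, for fixed $X\in\mathfrak{X}^k(\mathcal{A})$ and $Y\in\mathfrak{X}^\ell(\mathcal{A})$, both operators
\[
\llbracket X,\llbracket Y,\cdot\rrbracket\rrbracket
\quad\text{and}\quad
\llbracket\llbracket X,Y\rrbracket,\cdot\rrbracket+(-1)^{(k-1)(\ell-1)}\llbracket Y,\llbracket X,\cdot\rrbracket\rrbracket
\]
are graded derivations of $(\mathfrak{X}^\bullet(\mathcal{A}),\wedge)$ of the same degree, by the already established graded Leibniz rule. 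It therefore suffices to verify agreement on the algebra generators, i.e.\ on $\mathcal{A}\oplus\mathfrak{X}^1(\mathcal{A})$, and an analogous reduction in the $Y$-slot and $X$-slot reduces the whole identity to the degree-one case. Uniqueness is then automatic: any Gerstenhaber bracket agreeing with ours on $\mathcal{A}\oplus\mathfrak{X}^1(\mathcal{A})$ is determined on factorizing wedges by the graded Leibniz rule, and hence on all of $\mathfrak{X}^\bullet(\mathcal{A})$ by $\Bbbk$-linearity.
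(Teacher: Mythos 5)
Your proposal is correct and follows essentially the same strategy as the paper's proof: verify the identities in degrees $0$ and $1$, then propagate them to all degrees via the graded Leibniz rule (both sides of the Jacobi identity being graded derivations), with uniqueness following because the bracket is determined on $\mathcal{A}\oplus\mathfrak{X}^1(\mathcal{A})$ together with the Leibniz rule. The paper leaves most of these verifications as an exercise, so your write-up simply fills in more of the details along the same route.
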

\begin{proof}
By counting degrees we see that $\llbracket\cdot,\cdot\rrbracket\colon
\mathfrak{X}^k(\mathcal{A})\times\mathfrak{X}^\ell(\mathcal{A})
\rightarrow\mathfrak{X}^{k+\ell-1}(\mathcal{A})$. This means that
$\llbracket\cdot,\cdot\rrbracket$ is homogeneous with respect to the degree
shifted by $1$. Then, using the defining formula (\ref{eq25}) it is an
exercise to verify that $\llbracket\cdot,\cdot\rrbracket$ is a Gerstenhaber
bracket. In fact it is sufficient to prove this in degree $0$ and $1$ since
afterwards an inductive argument, using the graded Leibniz rule, implies that
the identities are valid in any degree. Remark that this is how (\ref{eq25})
was engineered: one defines $\llbracket\cdot,\cdot\rrbracket$ in degree
$0$ and $1$, extends $\llbracket X,\cdot\rrbracket\colon
\mathfrak{X}^\bullet(\mathcal{A})\rightarrow\mathfrak{X}^\bullet(\mathcal{A})$ 
as a derivation of the wedge product for all $X\in\mathfrak{X}^1(\mathcal{A})$
and imposes the graded Leibniz rule
and graded skew-symmetry afterwards. Similarly one extends
$\llbracket a,\cdot\rrbracket\colon
\mathfrak{X}^\bullet(\mathcal{A})\rightarrow\mathfrak{X}^{\bullet-1}(\mathcal{A})$
as a graded derivation for all $a\in\mathcal{A}$.
In particularly this clarifies the uniqueness by
controlling the values of the Gerstenhaber bracket in degree $0$ and $1$.
\end{proof}
A preliminary stage to differential forms is given by the dual space
$\underline{\Omega}^1(\mathcal{A})
=\mathrm{Hom}_\mathcal{A}(\mathrm{Der}(\mathcal{A}),\mathcal{A})$
corresponding to $\mathrm{Der}(\mathcal{A})$, i.e. by the $\mathcal{A}$-linear
maps $\mathrm{Der}(\mathcal{A})\rightarrow\mathcal{A}$. They form a
symmetric $\mathcal{A}$-bimodule with left and right $\mathcal{A}$-action
given for $a\in\mathcal{A}$ and $\omega\in\underline{\Omega}^1(\mathcal{A})$ by
$$
(a\cdot\omega)(X)=a\cdot\omega(X)=(\omega\cdot a)(X)
$$
for all $X\in\mathrm{Der}(\mathcal{A})$. The corresponding Graßmann algebra
is denoted by $(\underline{\Omega}^\bullet(\mathcal{A}),\wedge)$. There is a
non-degenerate
$\mathcal{A}$-bilinear dual pairing $\langle\cdot,\cdot\rangle\colon
\underline{\Omega}^1(\mathcal{A})\otimes\mathrm{Der}(\mathcal{A})
\rightarrow\mathcal{A}$ defined by $\langle\omega,X\rangle=\omega(X)$
for all $\omega\in\underline{\Omega}^1(\mathcal{A})$ and
$X\in\mathrm{Der}(\mathcal{A})$. We can also view this as inserting
a derivation $X$ into the functional $\omega\in\underline{\Omega}^1(\mathcal{A})$.
In this picture it is customized to write
$\mathrm{i}_X\omega=\langle\omega,X\rangle$. Extending $\mathrm{i}_X\colon
\underline{\Omega}^\bullet(\mathcal{A})
\rightarrow\underline{\Omega}^{\bullet-1}(\mathcal{A})$ as a graded derivation
of degree $-1$ of the wedge product, we obtain a homogeneous map of degree $-1$.
On factorizing multivector fields
$X=X_1\wedge\cdots\wedge X_k\in\mathfrak{X}^k(\mathcal{A})$ we set
$$
\mathrm{i}_X=\mathrm{i}_{X_1}\cdots\mathrm{i}_{X_k}.
$$
This determines the $\mathcal{A}$-bilinear \textit{insertion of multivector fields}
$\mathrm{i}\colon\mathfrak{X}^\bullet(\mathcal{A})
\times\underline{\Omega}^\bullet(\mathcal{A})
\rightarrow\underline{\Omega}^\bullet(\mathcal{A})$, such that
$\mathrm{i}_X\colon\underline{\Omega}^\bullet(\mathcal{A})
\rightarrow\underline{\Omega}^{\bullet-k}(\mathcal{A})$
is homogeneous of degree $-k$ for any $X\in\mathfrak{X}^k(\mathcal{A})$.
We further define a $\Bbbk$-linear map
$\mathrm{d}\colon\underline{\Omega}^\bullet(\mathcal{A})
\rightarrow\underline{\Omega}^{\bullet+1}(\mathcal{A})$ on
$\underline{\Omega}^\bullet(\mathcal{A})$ by setting
$\mathrm{i}_X(\mathrm{d}a)=X(a)$,
$$
(\mathrm{d}\alpha)(X,Y)
=X(\mathrm{i}_Y\alpha)-Y(\mathrm{i}_X\alpha)-\mathrm{i}_{[X,Y]}\alpha
$$
on $a\in\mathcal{A}$ and $\alpha\in\underline{\Omega}^1(\mathcal{A})$,
for all $X,Y\in\mathfrak{X}^1(\mathcal{A})$ and extending $\mathrm{d}$
to higher factorizing elements as a graded derivation of degree $1$, i.e.
by imposing
$$
\mathrm{d}(\omega\wedge\eta)
=\mathrm{d}\omega\wedge\eta
+(-1)^k\omega\wedge\mathrm{d}\eta
$$
for all $\omega\in\underline{\Omega}^k(\mathcal{A})$ and
$\eta\in\underline{\Omega}^\bullet(\mathcal{A})$. Alternatively one
can directly axiomatise the \textit{Chevalley-Eilenberg formula}
\begin{align*}
    (\mathrm{d}\omega)(X_0,\ldots,X_k)
    =&\sum_{i=0}^k(-1)^iX_i(\omega(X_0,\cdots,
    \widehat{X_i},\cdots,X_k))\\
    &+\sum_{i<j}(-1)^{i+j}\omega([X_i,X_j],X_{0},\cdots
    \widehat{X_i},\cdots,\widehat{X_j},\cdots,X_k)
\end{align*}
for the differential of a homogeneous element $\omega\in\underline{\Omega}^k
(\mathcal{A})$ and all $X_0,\ldots,X_k\in\mathfrak{X}^1(\mathcal{A})$.
\begin{lemma}\label{lemma06}
The derivation $\mathrm{d}$ of degree $1$ is a differential on
$(\underline{\Omega}^\bullet(\mathcal{A}),\wedge)$.
\end{lemma}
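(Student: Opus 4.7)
The plan is to check $\mathrm{d}^2=0$ and it splits naturally into two parts: first reduce the claim to low degrees via a derivation argument, then verify the base cases by direct computation.

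First I would observe that because $\mathrm{d}$ is a graded derivation of degree $+1$, its square is an (ungraded) derivation of degree $+2$: for $\omega\in\underline{\Omega}^k(\mathcal{A})$ and $\eta\in\underline{\Omega}^\bullet(\mathcal{A})$,
\begin{align*}
\mathrm{d}^2(\omega\wedge\eta)
&=\mathrm{d}\bigl(\mathrm{d}\omega\wedge\eta+(-1)^k\omega\wedge\mathrm{d}\eta\bigr)\\
&=\mathrm{d}^2\omega\wedge\eta+\bigl((-1)^{k+1}+(-1)^k\bigr)\mathrm{d}\omega\wedge\mathrm{d}\eta+\omega\wedge\mathrm{d}^2\eta,
\end{align*}
and the cross terms cancel. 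Since $\underline{\Omega}^\bullet(\mathcal{A})=\Lambda^\bullet\underline{\Omega}^1(\mathcal{A})$ is generated as a wedge algebra by $\mathcal{A}$ and $\underline{\Omega}^1(\mathcal{A})$, it will suffice to show $\mathrm{d}^2=0$ in degrees $0$ and $1$.

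The degree-$0$ check is immediate from the Chevalley-Eilenberg definition: for $a\in\mathcal{A}$ and $X,Y\in\mathrm{Der}(\mathcal{A})$,
\begin{equation*}
(\mathrm{d}^2a)(X,Y)=X(\mathrm{i}_Y\mathrm{d}a)-Y(\mathrm{i}_X\mathrm{d}a)-\mathrm{i}_{[X,Y]}\mathrm{d}a=X(Y(a))-Y(X(a))-[X,Y](a)=0
\end{equation*}
by the very definition of the commutator of derivations.

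The degree-$1$ check is the main work. For $\alpha\in\underline{\Omega}^1(\mathcal{A})$ and $X,Y,Z\in\mathrm{Der}(\mathcal{A})$, applying Chevalley-Eilenberg twice writes $(\mathrm{d}^2\alpha)(X,Y,Z)$ as an alternating sum of six terms of the form $W((\mathrm{d}\alpha)(\cdot,\cdot))$ and three terms of the form $(\mathrm{d}\alpha)([\cdot,\cdot],\cdot)$. Substituting the formula for $\mathrm{d}\alpha$ into each piece, the "double derivation" contributions $X(Y(\alpha(Z)))$ pair up against terms of the form $\alpha([X,Y])$-pieces (after being differentiated) and cancel by the definition $[X,Y]=X\circ Y-Y\circ X$, while the remaining terms collect into
\begin{equation*}
-\alpha\bigl([[X,Y],Z]+[[Y,Z],X]+[[Z,X],Y]\bigr)=0
\end{equation*}
by the Jacobi identity for $(\mathrm{Der}(\mathcal{A}),[\cdot,\cdot])$. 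The main obstacle is purely bookkeeping: eighteen-odd terms must be aligned and their signs tracked carefully, but no idea beyond the Jacobi identity is needed. Once this cancellation is in place, the reduction of the first paragraph propagates $\mathrm{d}^2=0$ to all of $\underline{\Omega}^\bullet(\mathcal{A})$.
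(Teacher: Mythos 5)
Your proposal is correct and follows essentially the same route as the paper: reduce to degrees $0$ and $1$ using that $\mathrm{d}^2$ is a (graded) derivation of even degree, check degree $0$ from the definition of the commutator of derivations, and check degree $1$ by expanding the Chevalley--Eilenberg formula twice and cancelling via the definition of $[\cdot,\cdot]$ together with the Jacobi identity. The only difference is presentational — you state the reduction up front and sketch the degree-$1$ bookkeeping, while the paper carries out the full expansion and invokes the derivation property at the end — but the mathematical content is identical.
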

\begin{proof}
The homogeneity is clear. Let $X,Y,Z\in\mathfrak{X}^1(\mathcal{A})$.
Then
\begin{align*}
    (\mathrm{d}^2a)(X,Y)
    =&X(\mathrm{i}_Y(\mathrm{d}a))-Y(\mathrm{i}_X(\mathrm{d}a))
    -\mathrm{i}_{[X,Y]}(\mathrm{d}a)\\
    =&X(Y(a))-Y(X(a))-[X,Y](a)\\
    =&0
\end{align*}
for all $a\in\mathcal{A}$ by the definition of the Lie bracket of vector fields.
Furthermore
\begin{allowdisplaybreaks}
\begin{align*}
    (\mathrm{d}^2\omega)(X,Y,Z)
    =&X((\mathrm{d}\omega)(Y,Z))
    -Y((\mathrm{d}\omega)(X,Z))
    +Z((\mathrm{d}\omega)(X,Y))\\
    &-(\mathrm{d}\omega)([X,Y],Z)
    +(\mathrm{d}\omega)([X,Z],Y)
    -(\mathrm{d}\omega)([Y,Z],X)\\
    =&X\bigg(
    Y(\mathrm{i}_Z\omega)-Z(\mathrm{i}_Y\omega)
    -\mathrm{i}_{[Y,Z]}\omega
    \bigg)\\
    &-Y\bigg(
    X(\mathrm{i}_Z\omega)-Z(\mathrm{i}_X\omega)
    -\mathrm{i}_{[X,Z]}\omega
    \bigg)\\
    &+Z\bigg(
    X(\mathrm{i}_Y\omega)-Y(\mathrm{i}_X\omega)
    -\mathrm{i}_{[X,Y]}\omega
    \bigg)\\
    &-\bigg(
    [X,Y](\mathrm{i}_Z\omega)-Z(\mathrm{i}_{[X,Y]}\omega)
    -\mathrm{i}_{[[X,Y],Z]}\omega
    \bigg)\\
    &+[X,Z](\mathrm{i}_Y\omega)-Y(\mathrm{i}_{[X,Z]}\omega)
    -\mathrm{i}_{[[X,Z],Y]}\omega\\
    &-\bigg(
    [Y,Z](\mathrm{i}_X\omega)-X(\mathrm{i}_{[Y,Z]}\omega)
    -\mathrm{i}_{[[Y,Z],X]}\omega
    \bigg)\\
    =&0
\end{align*}
\end{allowdisplaybreaks}
for all $\omega\in\underline{\Omega}^1(\mathcal{A})$, where we used the
Jacobi identity of the Lie bracket of vector fields. Since $\mathrm{d}^2$
is a graded derivation it is sufficient to verify $\mathrm{d}^2\omega=0$
for $\omega\in\underline{\Omega}^k(\mathcal{A})$ with $k<2$ in order to
conclude $\mathrm{d}^2=0$.
\end{proof}
Differential forms on $\mathcal{A}$ are generated by $\mathcal{A}$ and the
image of the differential $\mathrm{d}$ via the wedge product.
\begin{definition}
The smallest differential graded subalgebra $\Omega^\bullet(\mathcal{A})
\subseteq\underline{\Omega}^\bullet(\mathcal{A})$ such that
$\mathcal{A}\subseteq\Omega^\bullet(\mathcal{A})$ is said to be the
Graßmann algebra of differential forms on $\mathcal{A}$.
\end{definition}
Since the former operations on $\underline{\Omega}^\bullet(\mathcal{A})$
respect the wedge product $\wedge$, they also close in 
$\Omega^\bullet(\mathcal{A})$.
\begin{lemma}
The differential forms $(\Omega^\bullet(\mathcal{A}),\wedge)$ are
a graded commutative algebra and a symmetric $\mathcal{A}$-bimodule.
Every element $\omega\in\Omega^k(\mathcal{A})$ is a finite sum of expressions
of the form $a_0\mathrm{d}a_1\wedge\cdots\wedge\mathrm{d}a_k$, where
$a_0,\ldots,a_k\in\mathcal{A}$. The restrictions of the insertion of
multivector fields and differential to $\Omega^\bullet(\mathcal{A})$ are
well-defined.
\end{lemma}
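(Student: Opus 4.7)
The plan is to give an explicit description of $\Omega^\bullet(\mathcal{A})$ inside $\underline{\Omega}^\bullet(\mathcal{A})$ and then read off the three assertions from it. I would let $\Omega'^\bullet \subseteq \underline{\Omega}^\bullet(\mathcal{A})$ be the $\Bbbk$-linear span of the elements $a_0\,\mathrm{d}a_1\wedge\cdots\wedge\mathrm{d}a_k$ (with $\Omega'^0=\mathcal{A}$) and show that $\Omega'^\bullet=\Omega^\bullet(\mathcal{A})$. The minimality in the definition of $\Omega^\bullet(\mathcal{A})$ gives $\Omega^\bullet(\mathcal{A})\subseteq\Omega'^\bullet$ as soon as $\Omega'^\bullet$ is verified to be a differential graded subalgebra of $\underline{\Omega}^\bullet(\mathcal{A})$ containing $\mathcal{A}$. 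Closure under $\wedge$ amounts to sign bookkeeping: the product of two generators, after sliding the scalar $b_0$ past the $\mathrm{d}a_i$ using that degree-zero elements commute with everything under $\wedge$, becomes $(a_0b_0)\,\mathrm{d}a_1\wedge\cdots\wedge\mathrm{d}a_k\wedge\mathrm{d}b_1\wedge\cdots\wedge\mathrm{d}b_\ell$, again a generator. Closure under $\mathrm{d}$ uses the graded Leibniz rule together with $\mathrm{d}^2=0$ from Lemma~\ref{lemma06}: applying $\mathrm{d}$ to $a_0\,\mathrm{d}a_1\wedge\cdots\wedge\mathrm{d}a_k$ leaves only the single term $\mathrm{d}a_0\wedge\mathrm{d}a_1\wedge\cdots\wedge\mathrm{d}a_k$. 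The reverse inclusion $\Omega'^\bullet\subseteq\Omega^\bullet(\mathcal{A})$ is immediate, since any differential graded subalgebra containing $\mathcal{A}$ must contain every $\mathrm{d}a_i$ and hence every generator by taking wedge products and scalar multiplication.

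With this normal form in hand the structural claims are free. Graded commutativity and the symmetric $\mathcal{A}$-bimodule structure are inherited from the ambient $\underline{\Omega}^\bullet(\mathcal{A})$: any graded-commutative subalgebra is graded commutative, and the identification of left and right $\mathcal{A}$-actions as $a\wedge\omega=\omega\wedge a$ persists under restriction. Well-definedness of $\mathrm{d}$ on $\Omega^\bullet(\mathcal{A})$ is tautological, since $\Omega^\bullet(\mathcal{A})$ was defined as a \emph{differential} graded subalgebra. For the insertion of multivector fields, by the multiplicativity $\mathrm{i}_{X_1\wedge\cdots\wedge X_k}=\mathrm{i}_{X_1}\cdots\mathrm{i}_{X_k}$ it suffices to treat $X\in\mathfrak{X}^1(\mathcal{A})$. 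There $\mathrm{i}_X$ is a graded derivation of degree $-1$ with $\mathrm{i}_X(a)=0$ and $\mathrm{i}_X(\mathrm{d}a)=X(a)\in\mathcal{A}$, so expanding $\mathrm{i}_X$ on a generator $a_0\,\mathrm{d}a_1\wedge\cdots\wedge\mathrm{d}a_k$ via the graded Leibniz rule yields a sum of $k$ generators of degree $k-1$, each lying in $\Omega^{k-1}(\mathcal{A})$.

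I do not anticipate any serious obstacle; the only delicate step is the first one, where the exhibited generating set must genuinely be closed under $\wedge$ and $\mathrm{d}$, and this reduces to the Leibniz rule plus $\mathrm{d}^2=0$. Everything else is a direct transfer from $\underline{\Omega}^\bullet(\mathcal{A})$ or a one-line derivation computation on generators.
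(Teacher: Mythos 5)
Your proof is correct and fills in exactly the argument the paper leaves implicit: the paper states this lemma without proof, remarking only that the operations on $\underline{\Omega}^\bullet(\mathcal{A})$ respect the wedge product and therefore close in $\Omega^\bullet(\mathcal{A})$. Your identification of $\Omega^\bullet(\mathcal{A})$ with the span of the generators $a_0\,\mathrm{d}a_1\wedge\cdots\wedge\mathrm{d}a_k$, verified to be a differential graded subalgebra via the Leibniz rule and $\mathrm{d}^2=0$, is the intended (and standard) route, and the remaining claims follow by restriction exactly as you describe.
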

The last operation missing to describe the Cartan calculus is the
\textit{Lie derivative}. It is defined on any
$X\in\mathfrak{X}^\bullet(\mathcal{A})$ by
$$
\mathscr{L}_X=[\mathrm{i}_X,\mathrm{d}],
$$
where the bracket denotes the graded commutator. If
$X\in\mathfrak{X}^k(\mathcal{A})$, we obtain a homogeneous map
$\mathscr{L}_X\colon\Omega^\bullet(\mathcal{A})
\rightarrow\Omega^{\bullet-(k-1)}(\mathcal{A})$ of degree $-(k-1)$. For $k=1$,
$\mathscr{L}_X$ is a derivation with respect to the wedge product.
The first statement holds because $\mathrm{i}_X$ is homogeneous of
degree $-k$ and $\mathrm{d}$ is homogeneous of degree $1$. For the second
statement let $X\in\mathfrak{X}^1(\mathcal{A})$. Then
\begin{align*}
    \mathscr{L}_X(\omega\wedge\eta)
    =&\mathrm{i}_X\mathrm{d}(\omega\wedge\eta)
    -(-1)^{(-1)\cdot 1}\mathrm{d}\mathrm{i}_X(\omega\wedge\eta)\\
    =&\mathrm{i}_X(\mathrm{d}\omega\wedge\eta
    +(-1)^k\omega\wedge\mathrm{d}\eta)
    +\mathrm{d}(\mathrm{i}_X\omega\wedge\eta
    +(-1)^k\omega\wedge\mathrm{i}_X\eta)\\
    =&\mathscr{L}_X\omega\wedge\eta
    +(-1)^{k+1}\mathrm{d}\omega\wedge\mathrm{i}_X\eta
    +(-1)^k\mathrm{i}_X\omega\wedge\mathrm{d}\eta
    +(-1)^{2k}\omega\wedge\mathrm{i}_X\mathrm{d}\eta\\
    &+(-1)^{k-1}\mathrm{i}_X\omega\wedge\mathrm{d}\eta
    +(-1)^k\mathrm{d}\omega\wedge\mathrm{i}_X\eta
    +(-1)^{2k}\omega\wedge\mathrm{d}\mathrm{i}_X\eta\\
    =&\mathscr{L}_X\omega\wedge\eta
    +\omega\wedge\mathscr{L}_X\eta
\end{align*}
for all $\omega\in\Omega^k(\mathcal{A})$ and $\eta\in\Omega^\bullet(\mathcal{A})$, 
using that $\mathrm{i}_X$ and $\mathrm{d}$ are graded derivations of the wedge
product of degree $-1$ and $1$, respectively. Two helpful relations are
proven in the following lemma.
\begin{lemma}\label{lemma07}
One has
$$
\mathscr{L}_a\omega
=-(\mathrm{d}a)\wedge\omega
\text{ and }
\mathscr{L}_{X\wedge Y}
=\mathrm{i}_X\mathscr{L}_Y+(-1)^{\ell}\mathscr{L}_X\mathrm{i}_Y
$$
for all $a\in\mathcal{A}$, $X\in\mathfrak{X}^\bullet(\mathcal{A})$,
$Y\in\mathfrak{X}^\ell(\mathcal{A})$ and $\omega\in\Omega^\bullet(\mathcal{A})$. If
$X,Y\in\mathfrak{X}^1(\mathcal{A})$
$$
[\mathscr{L}_X,\mathrm{i}_Y]=\mathrm{i}_{[X,Y]}
$$
holds.
\end{lemma}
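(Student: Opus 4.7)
The plan is to establish the three identities in turn by unfolding the Cartan magic formula $\mathscr{L}_Z=[\mathrm{i}_Z,\mathrm{d}]$ and exploiting the graded Leibniz rules of $\mathrm{d}$ and $\mathrm{i}_Z$ together with the multiplicative definition $\mathrm{i}_{X\wedge Y}=\mathrm{i}_X\mathrm{i}_Y$ on factorizing elements.

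For the first identity, I would note that $a\in\mathcal{A}=\mathfrak{X}^0(\mathcal{A})$ has degree zero, so $\mathrm{i}_a$ acts as left multiplication by $a$. The graded commutator then yields $\mathscr{L}_a\omega=a\,\mathrm{d}\omega-\mathrm{d}(a\wedge\omega)$, and the Leibniz rule $\mathrm{d}(a\wedge\omega)=\mathrm{d}a\wedge\omega+a\,\mathrm{d}\omega$ collapses this to $-\mathrm{d}a\wedge\omega$. This is the shortest of the three arguments and fixes the sign convention used in the rest of the proof.

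For the second identity I would write $X\in\mathfrak{X}^k$ and expand both sides using the Cartan formula. The right-hand side $\mathrm{i}_X\mathscr{L}_Y+(-1)^\ell\mathscr{L}_X\mathrm{i}_Y$ produces four terms, namely $\mathrm{i}_X\mathrm{i}_Y\mathrm{d}$, $(-1)^{\ell-1}\mathrm{i}_X\mathrm{d}\mathrm{i}_Y$, $(-1)^{\ell}\mathrm{i}_X\mathrm{d}\mathrm{i}_Y$ and $(-1)^{\ell+k-1}\mathrm{d}\mathrm{i}_X\mathrm{i}_Y$. The two middle terms cancel because $(-1)^{\ell-1}+(-1)^\ell=0$, and the remaining two reassemble, via $\mathrm{i}_{X\wedge Y}=\mathrm{i}_X\mathrm{i}_Y$ and $X\wedge Y\in\mathfrak{X}^{k+\ell}$, to $\mathrm{i}_{X\wedge Y}\mathrm{d}+(-1)^{(k+\ell)-1}\mathrm{d}\mathrm{i}_{X\wedge Y}=\mathscr{L}_{X\wedge Y}$. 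The argument requires no new structural input beyond sign bookkeeping.

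For the third identity my strategy is to use generators. For $X,Y\in\mathfrak{X}^1(\mathcal{A})$, both $\mathscr{L}_X$ and $\mathrm{i}_Y$ are graded derivations of $(\Omega^\bullet(\mathcal{A}),\wedge)$ of degrees $0$ and $-1$ respectively, so $[\mathscr{L}_X,\mathrm{i}_Y]$ is a graded derivation of degree $-1$; the same is true of $\mathrm{i}_{[X,Y]}$. Since $\Omega^\bullet(\mathcal{A})$ is generated as a $\Bbbk$-algebra by $\mathcal{A}$ and $\mathrm{d}\mathcal{A}$, it suffices to test the claim on these generators. On $a\in\mathcal{A}$ both sides vanish trivially (as $\mathrm{i}_Y$ annihilates degree zero). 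On $\mathrm{d}a$ I would first record the auxiliary identity $[\mathscr{L}_X,\mathrm{d}]=0$, which follows at once from Cartan's formula together with $\mathrm{d}^2=0$; then $\mathscr{L}_X\mathrm{d}a=\mathrm{d}(X(a))$ and $\mathrm{i}_Y\mathrm{d}a=Y(a)$ give $[\mathscr{L}_X,\mathrm{i}_Y]\mathrm{d}a=X(Y(a))-Y(X(a))=[X,Y](a)=\mathrm{i}_{[X,Y]}\mathrm{d}a$.

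The only real obstacle here is bookkeeping: keeping the graded-commutator signs consistent (particularly in the $k=0$ degenerate case of the first identity) and being careful that $[\mathscr{L}_X,\mathrm{d}]=0$ is in place before invoking it in the last step. No genuinely new ingredient beyond the definitions of $\mathrm{i}$, $\mathrm{d}$ and $\mathscr{L}$ and the derivation properties established earlier in the section is required.
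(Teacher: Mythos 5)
Your proof is correct and follows essentially the same route as the paper: the first identity is the identical two-line computation, and your term-by-term expansion of $\mathrm{i}_X\mathscr{L}_Y+(-1)^\ell\mathscr{L}_X\mathrm{i}_Y$ is just the unpacked form of the paper's one-line appeal to the graded Leibniz rule of the graded commutator applied to $[\mathrm{i}_X\mathrm{i}_Y,\mathrm{d}]$. The only genuine (and harmless) divergence is in the third identity: the paper tests the degree $-1$ derivation $[\mathscr{L}_X,\mathrm{i}_Y]$ on an arbitrary $\omega\in\Omega^1(\mathcal{A})$ and invokes the defining formula $(\mathrm{d}\omega)(X,Y)=X(\mathrm{i}_Y\omega)-Y(\mathrm{i}_X\omega)-\mathrm{i}_{[X,Y]}\omega$ to produce $\mathrm{i}_{[X,Y]}$, whereas you test only on the exact generators $\mathrm{d}a$ and obtain $[X,Y](a)$ from $[\mathscr{L}_X,\mathrm{d}]=0$ and the commutator of derivations — both reductions are legitimate since $\Omega^\bullet(\mathcal{A})$ is generated by $\mathcal{A}$ and $\mathrm{d}\mathcal{A}$, and your derivation of $[\mathscr{L}_X,\mathrm{d}]=0$ from $\mathrm{d}^2=0$ introduces no circularity because it does not rely on the present lemma.
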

\begin{proof}
By the definition of the Lie derivative and since $\mathrm{d}$ is a graded
derivation of degree $1$
\begin{allowdisplaybreaks}
\begin{align*}
    \mathscr{L}_a\omega
    =&[\mathrm{i}_a,\mathrm{d}]\omega\\
    =&\mathrm{i}_a\mathrm{d}\omega-(-1)^{0\cdot(-1)}\mathrm{d}\mathrm{i}_a\omega\\
    =&a\wedge\mathrm{d}\omega-\mathrm{d}(a\wedge\omega)\\
    =&a\wedge\mathrm{d}\omega
    -(\mathrm{d}a\wedge\omega+(-1)^0a\wedge\mathrm{d}\omega)\\
    =&-\mathrm{d}a\wedge\omega
\end{align*}
\end{allowdisplaybreaks}
follows. The relation
\begin{align*}
    \mathscr{L}_{X\wedge Y}
    =&[\mathrm{i}_{X\wedge Y},\mathrm{d}]
    =[\mathrm{i}_X\mathrm{i}_Y,\mathrm{d}]
    =\mathrm{i}_X[\mathrm{i}_Y,\mathrm{d}]
    +(-1)^{(-\ell)\cdot 1}[\mathrm{i}_X,\mathrm{d}]\mathrm{i}_Y\\
    =&\mathrm{i}_X\mathscr{L}_Y+(-1)^{\ell}\mathscr{L}_X\mathrm{i}_Y
\end{align*}
is obtained from the graded Leibniz rule of the graded commutator. The
missing formula trivially holds on differential forms of degree $0$,
while for $\omega\in\Omega^1(\mathcal{A})$ one obtains
\begin{align*}
    [\mathscr{L}_X,\mathrm{i}_Y]\omega
    =&\mathscr{L}_X\mathrm{i}_Y\omega
    -(-1)^{0\cdot 1}\mathrm{i}_Y\mathscr{L}_X\omega\\
    =&(\mathrm{i}_X\mathrm{d}+\mathrm{d}\mathrm{i}_X)\mathrm{i}_Y\omega
    -\mathrm{i}_Y(\mathrm{i}_X\mathrm{d}+\mathrm{d}\mathrm{i}_X)\omega\\
    =&X(\mathrm{i}_Y\omega)+0-(\mathrm{d}\omega)(X,Y)-Y(\mathrm{i}_X\omega)\\
    =&\mathrm{i}_{[X,Y]}\omega
\end{align*}
for all $X,Y\in\mathfrak{X}^1(\mathcal{A})$. Since
$[\mathscr{L}_X,\mathrm{i}_Y]$ is a graded derivation
this is all we have to prove.
\end{proof}
It follows the main theorem
of this subsection, where we relate the operations given by the Lie derivative,
the insertion and the de Rham differential via graded commutators. It
describes the \textit{Cartan calculus} on $\mathcal{A}$.
\begin{theorem}[Cartan Calculus]
Let $\mathcal{A}$ be a commutative algebra. Then
\begin{align*}
\begin{split}
    [\mathscr{L}_X,\mathscr{L}_Y]
    =&\mathscr{L}_{\llbracket X,Y\rrbracket},\\
    [\mathscr{L}_X,\mathrm{i}_Y]
    =&\mathrm{i}_{\llbracket X,Y\rrbracket},\\
    [\mathscr{L}_X,\mathrm{d}]
    =&0,
\end{split}
\begin{split}
    [\mathrm{i}_X,\mathrm{i}_Y]
    =&0,\\
    [\mathrm{i}_X,\mathrm{d}]
    =&\mathscr{L}_X,\\
    [\mathrm{d},\mathrm{d}]
    =&0
\end{split}
\end{align*}
hold for all $X,Y\in\mathfrak{X}^\bullet(\mathcal{A})$.
\end{theorem}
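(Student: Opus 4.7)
The plan is to prove the six identities in an order that lets each one feed into the next, with the general strategy being: verify the identities on generators (elements of $\mathcal{A}$ and derivations of degree $1$), where they reduce to Lemma~\ref{lemma06}, Lemma~\ref{lemma07}, and the basic properties of $\llbracket\cdot,\cdot\rrbracket$, and then bootstrap to all of $\mathfrak{X}^\bullet(\mathcal{A})$ by using that the graded commutator satisfies a graded Jacobi identity and a graded Leibniz rule with respect to composition, while the Schouten--Nijenhuis bracket is the unique Gerstenhaber extension of $[\cdot,\cdot]$ on $\mathfrak{X}^1(\mathcal{A})$. I would first dispatch the three ``easy'' identities. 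The relation $[\mathrm{d},\mathrm{d}]=2\mathrm{d}^2=0$ is just Lemma~\ref{lemma06}. The relation $[\mathrm{i}_X,\mathrm{d}]=\mathscr{L}_X$ is the very definition of the Lie derivative. For $[\mathrm{i}_X,\mathrm{i}_Y]=0$ I would start from $X,Y\in\mathfrak{X}^1(\mathcal{A})$, where it follows from $\mathrm{i}_X\mathrm{i}_Y+\mathrm{i}_Y\mathrm{i}_X=0$ on $\Omega^1(\mathcal{A})$ (a consequence of graded anticommutativity of $\wedge$ and the defining pairing) and from the fact that $\mathrm{i}_X$ and $\mathrm{i}_Y$ are graded derivations, and then extend to factorising $X,Y$ using $\mathrm{i}_{X_1\wedge\cdots\wedge X_k}=\mathrm{i}_{X_1}\cdots\mathrm{i}_{X_k}$ together with the graded Leibniz rule for $[\cdot,\cdot]$ in its first entry.

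Next I would prove $[\mathscr{L}_X,\mathrm{d}]=0$. Since $\mathscr{L}_X=[\mathrm{i}_X,\mathrm{d}]$, the graded Jacobi identity for the graded commutator yields
\begin{equation*}
[\mathscr{L}_X,\mathrm{d}]=[[\mathrm{i}_X,\mathrm{d}],\mathrm{d}]
= [\mathrm{i}_X,[\mathrm{d},\mathrm{d}]] \pm [\mathrm{d},[\mathrm{i}_X,\mathrm{d}]],
\end{equation*}
and $[\mathrm{d},\mathrm{d}]=0$ forces both sides to agree up to a sign with $[\mathrm{d},\mathscr{L}_X]$, hence $[\mathscr{L}_X,\mathrm{d}]=0$.

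The two remaining identities $[\mathscr{L}_X,\mathrm{i}_Y]=\mathrm{i}_{\llbracket X,Y\rrbracket}$ and $[\mathscr{L}_X,\mathscr{L}_Y]=\mathscr{L}_{\llbracket X,Y\rrbracket}$ are the core. For the first I would anchor the induction at $X,Y\in\mathfrak{X}^1(\mathcal{A})$, where Lemma~\ref{lemma07} gives exactly $[\mathscr{L}_X,\mathrm{i}_Y]=\mathrm{i}_{[X,Y]}=\mathrm{i}_{\llbracket X,Y\rrbracket}$. To pass to higher degree I would fix $X\in\mathfrak{X}^1(\mathcal{A})$ and induct on the degree of $Y$ using $\mathrm{i}_{Y\wedge Z}=\mathrm{i}_Y\mathrm{i}_Z$, the graded Leibniz rule for $[\mathscr{L}_X,\cdot]$ with respect to composition, and the Leibniz rule $\llbracket X,Y\wedge Z\rrbracket=\llbracket X,Y\rrbracket\wedge Z+(-1)^{(|X|-1)|Y|}Y\wedge\llbracket X,Z\rrbracket$ for the Schouten--Nijenhuis bracket. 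Having handled the case $X\in\mathfrak{X}^1(\mathcal{A})$ I would then induct on the degree of $X$, using the formula $\mathscr{L}_{X\wedge Y}=\mathrm{i}_X\mathscr{L}_Y+(-1)^\ell\mathscr{L}_X\mathrm{i}_Y$ from Lemma~\ref{lemma07} to reduce $\mathscr{L}_{X\wedge Y}$ to already-known pieces, combined again with the graded Jacobi identity and the Leibniz rule of $\llbracket\cdot,\cdot\rrbracket$. Finally, with $[\mathscr{L}_X,\mathrm{i}_Y]=\mathrm{i}_{\llbracket X,Y\rrbracket}$ in hand, the Lie--Lie identity follows almost formally:
\begin{equation*}
[\mathscr{L}_X,\mathscr{L}_Y]=[\mathscr{L}_X,[\mathrm{i}_Y,\mathrm{d}]]
=[[\mathscr{L}_X,\mathrm{i}_Y],\mathrm{d}]\pm[\mathrm{i}_Y,[\mathscr{L}_X,\mathrm{d}]]
=[\mathrm{i}_{\llbracket X,Y\rrbracket},\mathrm{d}]
=\mathscr{L}_{\llbracket X,Y\rrbracket},
\end{equation*}
using $[\mathscr{L}_X,\mathrm{d}]=0$ in the penultimate step.

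The main obstacle I anticipate is the sign bookkeeping: the two gradings at play (the degree of a multivector field and the shifted degree governing the super--Lie structure on operators) must be tracked simultaneously, and the induction step in $[\mathscr{L}_{X\wedge Y},\mathrm{i}_Z]=\mathrm{i}_{\llbracket X\wedge Y,Z\rrbracket}$ requires that the sign coming from the Leibniz rule for composition, the sign in Lemma~\ref{lemma07}, and the sign in the Leibniz rule for $\llbracket\cdot,\cdot\rrbracket$ all conspire to match. Apart from this, the proof is a routine but careful double induction on the degrees of the two entries.
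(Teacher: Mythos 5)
Your proposal is correct and follows essentially the same route as the paper's proof: the same reversed ordering of the six identities, the same reliance on Lemma~\ref{lemma06} and Lemma~\ref{lemma07}, the same formal graded-Jacobi derivations of $[\mathscr{L}_X,\mathrm{d}]=0$ and $[\mathscr{L}_X,\mathscr{L}_Y]=\mathscr{L}_{\llbracket X,Y\rrbracket}$, and the same double induction for $[\mathscr{L}_X,\mathrm{i}_Y]=\mathrm{i}_{\llbracket X,Y\rrbracket}$ via $\mathscr{L}_{X\wedge Y}=\mathrm{i}_X\mathscr{L}_Y+(-1)^\ell\mathscr{L}_X\mathrm{i}_Y$. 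The only point to flag is that the base case the paper spends the most effort on, namely $[\mathscr{L}_a,\mathrm{i}_Y]=\mathrm{i}_{\llbracket a,Y\rrbracket}$ for $a\in\mathcal{A}$ (needed because $\mathscr{L}_{a\cdot Y}\neq a\,\mathscr{L}_Y$), is only implicit in your opening remark about verifying on elements of $\mathcal{A}$; carrying it out requires the explicit computation with $\mathscr{L}_a\omega=-\mathrm{d}a\wedge\omega$.
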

\begin{proof}
We are going to prove all formulas in reversed order. First,
$[\mathrm{d},\mathrm{d}]=2\mathrm{d}^2=0$ follows since $\mathrm{d}$ is a
differential, which we proved in Lemma~\ref{lemma06}. The next formula,
$[\mathrm{i}_X,\mathrm{d}]=\mathscr{L}_X$, holds by definition of the Lie
derivative for all $X\in\mathfrak{X}^\bullet(\mathcal{A})$. Let
$X\in\mathfrak{X}^k(\mathcal{A})$ and $Y\in\mathfrak{X}^\ell(\mathcal{A})$.
Then
$$
[\mathrm{i}_X,\mathrm{i}_Y]
=\mathrm{i}_X\mathrm{i}_Y-(-1)^{k\cdot\ell}\mathrm{i}_Y\mathrm{i}_X
=\mathrm{i}_{X\wedge Y}-(-1)^{k\cdot\ell}\mathrm{i}_{Y\wedge X}
=0
$$
by the graded commutativity of the wedge product. For the next equation
we utilize the graded Jacobi identity of the graded commutator to
conclude
\begin{align*}
    [\mathscr{L}_X,\mathrm{d}]
    =[[\mathrm{i}_X,\mathrm{d}],\mathrm{d}]
    =[\mathrm{i}_X,[\mathrm{d},\mathrm{d}]]
    +(-1)^{1\cdot 1}[[\mathrm{i}_X,\mathrm{d}],\mathrm{d}]
    =0-[\mathscr{L}_X,\mathrm{d}]
\end{align*}
for all $X\in\mathfrak{X}^\bullet(\mathcal{A})$, which implies
$[\mathscr{L}_X,\mathrm{d}]=0$. Recall that for any
$a\in\mathcal{A}$, $X\in\mathfrak{X}^1(\mathcal{A})$ and any
homogeneous element $Y=Y_1\wedge\cdots\wedge Y_\ell
\in\mathfrak{X}^\ell(\mathcal{A})$
$$
\llbracket a,Y\rrbracket
=\sum_{j=1}^\ell(-1)^j
Y_j(a)Y_1\wedge\cdots\wedge\widehat{Y_j}\wedge\cdots\wedge Y_\ell
$$
and
$$
\llbracket X,Y\rrbracket
=\sum_{j=1}Y_1\wedge\cdots\wedge Y_{j-1}
\wedge[X,Y_j]\wedge Y_{j+1}\wedge\cdots\wedge Y_\ell
$$
hold. Together with Lemma~\ref{lemma07} this implies for all
$\omega\in\Omega^\bullet(\mathcal{A})$
\begin{allowdisplaybreaks}
\begin{align*}
    \mathrm{i}_{\llbracket a,Y\rrbracket}\omega
    =&\sum_{j=1}^\ell(-1)^j
    \mathrm{i}_{Y_j(a)Y_1\wedge\cdots\wedge\widehat{Y_j}\wedge\cdots Y_\ell}
    \omega\\
    =&-\mathrm{d}a\wedge\mathrm{i}_Y\omega
    +(-1)^\ell\bigg(
    \sum_{j=1}^\ell(-1)^{\ell-j}
    \mathrm{i}_{Y_j}(\mathrm{d}a)\mathrm{i}_{Y_1}
    \cdots\widehat{\mathrm{i}_{Y_j}}\cdots\mathrm{i}_{Y_\ell}\omega
    +(-1)^\ell\mathrm{d}a\wedge\mathrm{i}_Y\omega
    \bigg)\\
    =&-\mathrm{d}a\wedge\mathrm{i}_Y\omega
    +(-1)^\ell\bigg(
    \sum_{j=2}^{\ell}(-1)^{\ell-j}
    \mathrm{i}_{Y_j}(\mathrm{d}a)\mathrm{i}_{Y_1}
    \cdots\widehat{\mathrm{i}_{Y_j}}\cdots\mathrm{i}_{Y_\ell}\omega\\
    &+(-1)^{\ell-1}\mathrm{i}_{Y_1}(\mathrm{d}a
    \wedge\mathrm{i}_{Y_2}\cdots\mathrm{i}_{Y_{\ell}}\omega)
    \bigg)\\
    =&\cdots\\
    =&-\mathrm{d}a\wedge\mathrm{i}_Y\omega
    +(-1)^\ell\bigg(
    \mathrm{i}_{Y_\ell}(\mathrm{d}a)\mathrm{i}_{Y_1}
    \cdots\mathrm{i}_{Y_{\ell-1}}\omega
    -\mathrm{i}_{Y_1}\cdots\mathrm{i}_{Y_{\ell-1}}(\mathrm{d}a
    \wedge\mathrm{i}_{Y_{\ell}}\omega)
    \bigg)\\
    =&-\mathrm{d}a\wedge\mathrm{i}_Y\omega
    +(-1)^\ell\mathrm{i}_{Y_1}\cdots\mathrm{i}_{Y_{\ell-1}}\bigg(
    \mathrm{i}_{Y_\ell}(\mathrm{d}a)\wedge\omega
    -\mathrm{d}a\wedge\mathrm{i}_{Y_\ell}\omega
    \bigg)\\
    =&-\mathrm{d}a\wedge\mathrm{i}_Y\omega
    +(-1)^\ell\mathrm{i}_Y(\mathrm{d}a\wedge\omega)\\
    =&\mathscr{L}_a\mathrm{i}_Y\omega
    -(-1)^\ell\mathrm{i}_Y\mathscr{L}_a\omega\\
    =&[\mathscr{L}_a,\mathrm{i}_Y]\omega,
\end{align*}
\end{allowdisplaybreaks}
and
\begin{align*}
    [\mathscr{L}_X,\mathrm{i}_Y]
    =&[\mathscr{L}_X,\mathrm{i}_{Y_1}]\mathrm{i}_{Y_2\wedge\cdots\wedge Y_\ell}
    +(-1)^{0\cdot 1}\mathrm{i}_{Y_1}
    [\mathscr{L}_X,\mathrm{i}_{Y_2\wedge\cdots\wedge Y_\ell}]\\
    =&\mathrm{i}_{[X,Y_1]}\mathrm{i}_{Y_2\wedge\cdots\wedge Y_\ell}
    +\mathrm{i}_{Y_1}([\mathscr{L}_X,\mathrm{i}_{Y_2}]
    \mathrm{i}_{Y_3\wedge\cdots\wedge Y_\ell}
    +\mathrm{i}_{Y_2}[\mathscr{L}_X,\mathrm{i}_{Y_3\wedge\cdots\wedge Y_\ell}])\\
    =&\cdots\\
    =&\sum_{j=1}^\ell\mathrm{i}_{Y_1\wedge\cdots\wedge Y_{j-1}\wedge[X,Y_j]
    \wedge Y_{j+1}\wedge\cdots\wedge Y_\ell}\\
    =&\mathrm{i}_{\llbracket X,Y\rrbracket},
\end{align*}
where we used the graded Leibniz rule of the graded commutator and
Lemma~\ref{lemma07}. This proves $[\mathscr{L}_X,\mathrm{i}_Y]
=\mathrm{i}_{\llbracket X,Y\rrbracket}$ for all $X\in\mathfrak{X}^k(\mathcal{A})$ 
with $k<2$ and all $Y\in\mathfrak{X}^\bullet(\mathcal{A})$. We assume now
inductively that this formula holds for a fixed $k>0$. Let $X\in\mathfrak{X}^k
(\mathcal{A})$, $Y\in\mathfrak{X}^1(\mathcal{A})$ and
$Z\in\mathfrak{X}^\ell(\mathcal{A})$ for an arbitrary $\ell\in\mathbb{N}$.
Then
\begin{allowdisplaybreaks}
\begin{align*}
    [\mathscr{L}_{X\wedge Y},\mathrm{i}_Z]
    =&[\mathrm{i}_X\mathscr{L}_Y-\mathscr{L}_X\mathrm{i}_Y,\mathrm{i}_Z]\\
    =&\mathrm{i}_X[\mathscr{L}_Y,\mathrm{i}_Z]
    +(-1)^{0\cdot\ell}[\mathrm{i}_X,\mathrm{i}_Z]\mathscr{L}_Y
    -\mathscr{L}_X[\mathrm{i}_Y,\mathrm{i}_Z]
    -(-1)^{1\cdot\ell}[\mathscr{L}_X,\mathrm{i}_Z]\mathrm{i}_Y\\
    =&\mathrm{i}_X\mathrm{i}_{\llbracket Y,Z\rrbracket}+0-0
    +(-1)^{\ell-1}\mathrm{i}_{\llbracket X,Z\rrbracket}\mathrm{i}_Y\\
    =&\mathrm{i}_{\llbracket X\wedge Y,Z\rrbracket},
\end{align*}
\end{allowdisplaybreaks}
where we utilized the graded Leibniz rules of $\llbracket\cdot,\cdot\rrbracket$
and $[\cdot,\cdot]$ as well as Lemma~\ref{lemma07}. We conclude that
$[\mathscr{L}_X,\mathrm{i}_Y]=\mathrm{i}_{\llbracket X,Y\rrbracket}$
holds for all $X,Y\in\mathfrak{X}^\bullet(\mathcal{A})$. 
For the remaining formula we note that
\begin{align*}
    [\mathscr{L}_X,\mathscr{L}_Y]
    =&[\mathscr{L}_X,[\mathrm{i}_Y,\mathrm{d}]]\\
    =&[[\mathscr{L}_X,\mathrm{i}_Y],\mathrm{d}]
    +(-1)^{(k-1)\cdot\ell}[\mathrm{i}_Y,[\mathscr{L}_X,\mathrm{d}]]\\
    =&[\mathrm{i}_{\llbracket X,Y\rrbracket},\mathrm{d}]+0\\
    =&\mathscr{L}_{\llbracket X,Y\rrbracket}
\end{align*}
holds for all $X\in\mathfrak{X}^k(\mathcal{A})$ and
$Y\in\mathfrak{X}^\ell(\mathcal{A})$. This concludes the proof of the theorem.
\end{proof}
The motivating example is of course the Cartan calculus on
$\mathcal{A}=\mathscr{C}^\infty(M)$ for a smooth manifold $M$.
In the next sections we are going to repeat the construction of the
Cartan calculus, however, in a more general setting. The commutative
algebra is replaced by a braided commutative one. Consistently,
the braided symmetry has to be transferred to every involved object
and morphism. In the words of Section~\ref{Sec2.5} we have to work in the
symmetric braided monoidal category of equivariant braided symmetric bimodules.

\section{Braided Multivector Fields}\label{Sec3.2}

In this section we intend to motivate why vector fields on braided
commutative algebras should be represented by braided
derivations rather than by usual derivations. The braided 
derivations are an equivariant braided symmetric bimodule
with respect to the adjoint Hopf algebra action and a braided Lie algebra
with braided Lie bracket given by the braided commutator. In this sense
the braided derivations are the "correct" generalization of vector
fields in the category of equivariant braided symmetric bimodules.
Keeping track of the braided symmetry we construct the braided
Graßmann algebra of braided derivations and furthermore extend
the braided commutator via a graded braided Leibniz rule to
higher order multivector fields. The latter turns out to be a braided
Gerstenhaber bracket, structuring the braided multivector fields as
a braided Gerstenhaber algebra.

Fix a triangular Hopf algebra $(H,\mathcal{R})$ and a braided commutative
algebra $\mathcal{A}$. Recall that this means that $\mathcal{A}$ is an
associative unital left $H$-module algebra such that
$b\cdot a=(\mathcal{R}_1^{-1}\rhd a)\cdot(\mathcal{R}_2^{-1}\rhd b)$
for all $a,b\in\mathcal{A}$. It is well known that the \textit{endomorphisms}
$\mathrm{End}_\Bbbk(\mathcal{A})$ of $\mathcal{A}$ can be structured
as a left $H$-module algebra via the \textit{adjoint action}
$$
(\xi\rhd\Phi)(a)
=\xi_{(1)}\rhd(\Phi(S(\xi_{(2)})\rhd a))
$$
for all $a\in\mathcal{A}$, where $\Phi\in\mathrm{End}_\Bbbk(\mathcal{A})$.
The action on $\mathrm{End}_\Bbbk(\mathcal{A})$ is well-defined since
the $H$-action on $\mathcal{A}$ is $\Bbbk$-linear, it respects the
product, which is the concatenation, because
\begin{align*}
    ((\xi_{(1)}\rhd\Phi)(\xi_{(2)}\rhd\Psi))(a)
    =&\xi_{(1)}\rhd(\Phi((S(\xi_{(2)})\xi_{(3)})\rhd(
    \Psi(S(\xi_{(4)})\rhd a))))\\
    =&\xi_{(1)}\rhd(\Phi(\Psi(S(\xi_{(2)})\rhd a)))\\
    =&(\xi\rhd(\Phi\circ\Psi))(a)
\end{align*}
for all $\Phi,\Psi\in\mathrm{End}_\Bbbk(\mathcal{A})$ and $a\in\mathcal{A}$
and it respects the unit endomorphism $\mathrm{id}_\mathcal{A}$, since
$$
(\xi\rhd\mathrm{id}_\mathcal{A})(a)
=\xi_{(1)}\rhd(\mathrm{id}_\mathcal{A}(S(\xi_{(2)})\rhd a))
=(\xi_{(1)}S(\xi_{(2)}))\rhd a
=\epsilon(\xi)\mathrm{id}_\mathcal{A}(a)
$$
for all $a\in\mathcal{A}$ and $\xi\in H$. It strikes that we did not
utilize the triangular structure of $H$ or the braided commutativity of
$\mathcal{A}$ at all. In fact, it follows by the same computation that 
$\mathrm{End}_\Bbbk(\mathcal{A})$ is a module algebra for any Hopf algebra
and any module algebra $\mathcal{A}$. However, the triangular structure
matters if one wants to restrict the $H$-module action of
endomorphisms to derivations. This is not possible in general
unless $H$ is cocommutative.
Moreover, the Lie bracket of derivations is not $H$-equivariant in general
unless $H$ is cocommutative. Also, for all
$a,b,c\in\mathcal{A}$ and $X\in\mathrm{Der}(\mathcal{A})$
$$
(a\cdot X)(b\cdot c)
=(a\cdot X)(b)\cdot c
+(\mathcal{R}_1^{-1}\rhd b)\cdot((\mathcal{R}_2^{-1}\rhd a)\cdot X)(c)
$$
further suggests that $\mathrm{Der}(\mathcal{A})$ is not an interesting
object in this setting since it is not a left $\mathcal{A}$-module
for the canonical module action in general unless $\mathcal{A}$ is
commutative and $H$ is cocommutative, i.e. unless $\mathcal{R}=1\otimes 1$.
To fix all of these problems we define \textit{braided derivations} as the 
$\Bbbk$-linear endomorphisms $X\colon\mathcal{A}\rightarrow\mathcal{A}$ of
$\mathcal{A}$ which satisfy
\begin{equation}
    X(a\cdot b)
    =X(a)\cdot b
    +(\mathcal{R}_1^{-1}\rhd a)\cdot(\mathcal{R}_2^{-1}\rhd X)(b)
\end{equation}
for all $a,b\in\mathcal{A}$. The $\Bbbk$-module of braided derivations on
$\mathcal{A}$ is denoted by $\mathrm{Der}_\mathcal{R}(\mathcal{A})$.
\begin{lemma}\label{lemma08}
The braided derivations on $\mathcal{A}$ form a braided Lie algebra with
respect to the braided commutator of endomorphisms
$$
[X,Y]_\mathcal{R}
=XY-(\mathcal{R}_1^{-1}\rhd Y)(\mathcal{R}_2^{-1}\rhd X),
$$
where $X,Y\in\mathrm{Der}_\mathcal{R}(\mathcal{A})$. Furthermore,
$\mathrm{Der}_\mathcal{R}(\mathcal{A})$ is
an $H$-equivariant braided symmetric $\mathcal{A}$-bimodule with respect to 
the adjoint Hopf algebra action and the left and right $\mathcal{A}$-module
actions defined by
\begin{equation}\label{eq26}
    (a\cdot X)(b)=a\cdot X(b)
    \text{ and }
    (X\cdot a)(b)
    =X(\mathcal{R}_1^{-1}\rhd b)\cdot(\mathcal{R}_2^{-1}\rhd a)
\end{equation}
for all $a,b\in\mathcal{A}$ and $X\in\mathrm{Der}_\mathcal{R}(\mathcal{A})$.
\end{lemma}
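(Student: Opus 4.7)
The plan is to verify the claims in the natural order: first that the structures on $\mathrm{Der}_\mathcal{R}(\mathcal{A})$ are well-defined, then the compatibility axioms, and finally the braided Lie bracket properties. Throughout, the key leverage is that $\mathcal{R}$ is triangular (so $\mathcal{R}^{-1} = \mathcal{R}_{21}$, and the hexagon relations give $(\Delta \otimes \mathrm{id})(\mathcal{R}) = \mathcal{R}_{13}\mathcal{R}_{23}$ etc.) and that $\mathcal{A}$ is braided commutative, which controls every reordering of an element of $\mathcal{A}$ past another element or a braided derivation.

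First I would check that $\mathrm{Der}_\mathcal{R}(\mathcal{A})$ is stable under the adjoint action. For $\xi \in H$ and $X \in \mathrm{Der}_\mathcal{R}(\mathcal{A})$ one expands $(\xi \rhd X)(a \cdot b) = \xi_{(1)} \rhd X(S(\xi_{(2)}) \rhd (a \cdot b))$, distributes $S(\xi_{(2)})$ over the product (using that $\mathcal{A}$ is a module algebra and $S$ is an anti-coalgebra homomorphism), applies the braided derivation rule for $X$, and then uses the hexagon relation together with cocommutativity up to $\mathcal{R}$ to pull $\mathcal{R}^{-1}$ to the outside; the antipode identities then collapse the $\xi$-legs to reassemble the braided Leibniz rule with $a$ and $b$ replaced by the original inputs. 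Next I would verify that the formulas in (\ref{eq26}) define braided derivations. For $a \cdot X$ this is a short computation using the braided commutativity of $\mathcal{A}$ to commute $a$ past the first factor. For $X \cdot a$ the computation is more involved: after applying the definition one uses triangularity to shuffle the $\mathcal{R}^{-1}$-legs and braided commutativity again; here the identity $\mathcal{R}_{21} = \mathcal{R}^{-1}$ is essential to cancel a pair of $\mathcal{R}$'s produced by moving $a$ through the braided derivation.

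Next I would establish the bimodule axioms. Associativity of the left action $(ab)\cdot X = a \cdot (b \cdot X)$ follows directly from associativity of $\mathcal{A}$; associativity on the right requires one hexagon relation to combine two $\mathcal{R}^{-1}$'s into $(\mathrm{id} \otimes \Delta)(\mathcal{R}^{-1})$. That the left and right actions commute, and that they satisfy braided symmetry $a \cdot X = (\mathcal{R}_1^{-1} \rhd X)\cdot(\mathcal{R}_2^{-1}\rhd a)$, are both consequences of braided commutativity of $\mathcal{A}$ applied to the output value $X(b)$, together with the identity $\mathcal{R}_{21}\mathcal{R} = 1 \otimes 1$ used to reassemble the input. $H$-equivariance of the bimodule actions reduces, via the same template, to the fact that the adjoint action is a Hopf action and that $\rhd$ commutes with $\mathcal{R}^{\pm 1}$ in the sense dictated by the hexagon relations.

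Finally I would tackle the braided commutator. The first point is that $[X,Y]_\mathcal{R}$ is again a braided derivation: expand $[X,Y]_\mathcal{R}(a \cdot b)$ using the braided Leibniz rule for $X$ and for $\mathcal{R}_2^{-1} \rhd X$, expand $(\mathcal{R}_1^{-1} \rhd Y)(\mathcal{R}_2^{-1}\rhd X)(a\cdot b)$ analogously, and observe that the "mixed" cross-terms cancel exactly by the Yang--Baxter / triangularity identity $\mathcal{R}_{12}\mathcal{R}_{13}\mathcal{R}_{23} = \mathcal{R}_{23}\mathcal{R}_{13}\mathcal{R}_{12}$ combined with $\mathcal{R}^{-1} = \mathcal{R}_{21}$. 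This cancellation of cross-terms is the main obstacle: it is the only place where a nontrivial three-legged identity on $H$ is required, and bookkeeping of the Sweedler and $\mathcal{R}$-indices must be done with care. Braided skew-symmetry $[Y,X]_\mathcal{R} = -(\mathcal{R}_1^{-1}\rhd X)\cdot[\ldots]$ follows immediately from $\mathcal{R}^{-1} = \mathcal{R}_{21}$, and the braided Jacobi identity
\[
[X,[Y,Z]_\mathcal{R}]_\mathcal{R} = [[X,Y]_\mathcal{R},Z]_\mathcal{R} + [(\mathcal{R}_1^{-1}\rhd Y),(\mathcal{R}_2^{-1}\rhd X,Z)_\mathcal{R}]_\mathcal{R}
\]
is verified by expanding both sides into products of four endomorphisms and matching terms using, once again, the quantum Yang--Baxter equation together with triangularity. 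Together these establish the braided Lie algebra structure and the $H$-equivariant braided symmetric $\mathcal{A}$-bimodule structure claimed in the lemma.
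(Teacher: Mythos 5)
Your proposal follows the same route as the paper: stability of $\mathrm{Der}_\mathcal{R}(\mathcal{A})$ under the adjoint action, well-definedness of the two module actions as braided derivations, the bimodule axioms with braided symmetry and $H$-equivariance, and finally closure under the braided commutator. One point needs correcting, though. You attribute the cancellation of the cross-terms in $[X,Y]_\mathcal{R}(a\cdot b)$ to the quantum Yang--Baxter equation $\mathcal{R}_{12}\mathcal{R}_{13}\mathcal{R}_{23}=\mathcal{R}_{23}\mathcal{R}_{13}\mathcal{R}_{12}$ together with $\mathcal{R}^{-1}=\mathcal{R}_{21}$. That is not the identity doing the work. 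The offending terms are of the form $(\mathcal{R}_1^{-1}\rhd(Y(a)))\cdot(\mathcal{R}_2^{-1}\rhd X)(b)$, and to match them against $(\mathcal{R}_1^{-1}\rhd Y)(\mathcal{R}_1^{'-1}\rhd a)\cdot((\mathcal{R}_2^{'-1}\mathcal{R}_2^{-1})\rhd X)(b)$ you must split the first leg of $\mathcal{R}^{-1}$ through the evaluation, i.e.\ use $\xi\rhd(Y(a))=(\xi_{(1)}\rhd Y)(\xi_{(2)}\rhd a)$ followed by the hexagon relation $(\Delta\otimes\mathrm{id})(\mathcal{R})=\mathcal{R}_{13}\mathcal{R}_{23}$ (and its partner). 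The QYBE is a statement in $H^{\otimes 3}$ with no coproducts and cannot produce this splitting; triangularity is likewise not needed for closure of the bracket, only for the braided symmetry of the bimodule and the skew-symmetry of the bracket. With that substitution your computation goes through exactly as in the paper. A minor further difference: the paper treats the braided skew-symmetry and braided Jacobi identity of the braided commutator of endomorphisms as standard and only verifies closure, whereas you propose to check them explicitly; that is harmless extra work, and your sketch of how they follow (skew-symmetry from $\mathcal{R}^{-1}=\mathcal{R}_{21}$, Jacobi by expansion) is sound.
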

\begin{proof}
Let $\xi\in H$, $a,b,c\in\mathcal{A}$ and
$X,Y,Z\in\mathrm{Der}_\mathcal{R}(\mathcal{A})$. We split the proof
of the lemma in two parts.
\begin{enumerate}
\item[i.)] \textbf{$(\mathrm{Der}_\mathcal{R}(\mathcal{A}),
[\cdot,\cdot]_\mathcal{R})$ is a braided Lie algebra: }
first of all, the adjoint $H$-action is well-defined on 
$\mathrm{Der}_\mathcal{R}(\mathcal{A})$, since
\begin{allowdisplaybreaks}
\begin{align*}
    (\xi\rhd X)(a\cdot b)
    =&\xi_{(1)}\rhd\bigg(X((S(\xi_{(2)})_{(1)}\rhd a)
    \cdot(S(\xi_{(2)})_{(1)}\rhd a))\bigg)\\
    =&\xi_{(1)}\rhd\bigg(
    X(S(\xi_{(3)})\rhd a)\cdot(S(\xi_{(2)})\rhd b)\\
    &+((\mathcal{R}_1^{-1}S(\xi_{(3)}))\rhd a)
    (\mathcal{R}_2^{-1}\rhd X)(S(\xi_{(2)})\rhd b)\bigg)\\
    =&(\xi_{(1)}\rhd X)((\xi_{(2)}S(\xi_{(5)}))\rhd a)
    \cdot((\xi_{(3)}S(\xi_{(4)}))\rhd b)\\
    &+((\xi_{(1)}\mathcal{R}_1^{-1}S(\xi_{(5)}))\rhd a)
    ((\xi_{(2)}\mathcal{R}_2^{-1})\rhd X)((\xi_{(3)}S(\xi_{(4)}))\rhd b)\\
    =&(\xi_{(1)}\rhd X)((\xi_{(2)}S(\xi_{(3)}))\rhd a)
    \cdot b\\
    &+((\mathcal{R}_1^{-1}\xi_{(2)}S(\xi_{(3)}))\rhd a)
    ((\mathcal{R}_2^{-1}\xi_{(1)})\rhd X)(b)\\
    =&(\xi\rhd X)(a)\cdot b
    +(\mathcal{R}_1^{-1}\rhd a)
    (\mathcal{R}_2^{-1}\rhd(\xi\rhd X))(b)
\end{align*}
\end{allowdisplaybreaks}
by the quasi-cocommutativity of $\Delta$ and the anti-coalgebra property
of $S$. It remains to prove that $[X,Y]_\mathcal{R}
\in\mathrm{Der}_\mathcal{R}(\mathcal{A})$, since the braided commutator
of endomorphisms is clearly a braided Lie bracket. Using the hexagon
relations we see that the difference of
\begin{align*}
    X(Y(a\cdot b))
    =&X(Y(a)\cdot b
    +(\mathcal{R}_1^{-1}\rhd a)(\mathcal{R}_2^{-1}\rhd Y)(b))\\
    =&X(Y(a))\cdot b
    +(\mathcal{R}_1^{-1}\rhd(Y(a)))\cdot(\mathcal{R}_2^{-1}\rhd X)(b)\\
    &+X(\mathcal{R}_1^{-1}\rhd a)\cdot(\mathcal{R}_2^{-1}\rhd Y)(b))\\
    &+((\mathcal{R}_1^{'-1}\mathcal{R}_1^{-1})\rhd a)
    \cdot(\mathcal{R}_2^{'-1}\rhd X)((\mathcal{R}_2^{-1}\rhd Y)(b))
\end{align*}
and
\begin{align*}
    (\mathcal{R}_1^{-1}\rhd Y)((\mathcal{R}_2^{-1}\rhd X)(a\cdot b))
    =&(\mathcal{R}_1^{-1}\rhd Y)((\mathcal{R}_2^{-1}\rhd X)(a)\cdot b\\
    &+(\mathcal{R}_1^{'-1}\rhd a)
    \cdot((\mathcal{R}_2^{'-1}\mathcal{R}_2^{-1})\rhd X)(b))\\
    =&(\mathcal{R}_1^{-1}\rhd Y)((\mathcal{R}_2^{-1}\rhd X)(a))\cdot b\\
    &+(\mathcal{R}_1^{'-1}\rhd(\mathcal{R}_2^{-1}\rhd X)(a))
    \cdot((\mathcal{R}_2^{'-1}\mathcal{R}_1^{-1})\rhd Y)(b)\\
    &+(\mathcal{R}_1^{-1}\rhd Y)(\mathcal{R}_1^{'-1}\rhd a)
    \cdot((\mathcal{R}_2^{'-1}\mathcal{R}_2^{-1})\rhd X)(b)\\
    &+((\mathcal{R}_1^{''-1}\mathcal{R}_1^{'-1})\rhd a)\\
    &\cdot((\mathcal{R}_2^{''-1}\mathcal{R}_1^{-1})\rhd Y)(
    ((\mathcal{R}_2^{'-1}\mathcal{R}_2^{-1})\rhd X)(b))
\end{align*}
equals
\begin{align*}
    [X,Y]_\mathcal{R}(a)\cdot b
    +(\mathcal{R}_1\rhd a)\cdot(\mathcal{R}_2^{-1}\rhd[X,Y]_\mathcal{R})(b),
\end{align*}
where the second and third terms cancel.

\item[ii.)] \textbf{$\mathrm{Der}_\mathcal{R}(\mathcal{A})$ is an
$H$-equivariant braided symmetric $\mathcal{A}$-bimodule: }
the $\Bbbk$-linear maps in (\ref{eq26}) are $H$-equivariant, because
\begin{align*}
    (\xi\rhd(a\cdot X))(b)
    =&\xi_{(1)}\rhd((a\cdot X)(S(\xi_{(2)})\rhd b))\\
    =&\xi_{(1)}\rhd(a\cdot X(S(\xi_{(2)})\rhd b))\\
    =&(\xi_{(1)}\rhd a)\cdot(\xi_{(2)}\rhd X)
    ((\xi_{(3)}S(\xi_{(4)}))\rhd b)\\
    =&((\xi_{(1)}\rhd a)\cdot(\xi_{(2)}\rhd X))(b)
\end{align*}
and
\begin{align*}
    (\xi\rhd(X\cdot a))(b)
    =&\xi_{(1)}\rhd((X\cdot a)(S(\xi_{(2)})\rhd b))\\
    =&\xi_{(1)}\rhd(X((\mathcal{R}_1^{-1}S(\xi_{(2)}))\rhd b)
    \cdot(\mathcal{R}_2^{-1}\rhd a))\\
    =&(\xi_{(1)}\rhd X)((\xi_{(2)}\mathcal{R}_1^{-1}S(\xi_{(4)}))\rhd b)
    \cdot((\xi_{(3)}\mathcal{R}_2^{-1})\rhd a)\\
    =&(\xi_{(1)}\rhd X)((\mathcal{R}_1^{-1}\xi_{(3)}S(\xi_{(4)}))\rhd b)
    \cdot((\mathcal{R}_2^{-1}\xi_{(2)})\rhd a)\\
    =&((\xi_{(1)}\rhd X)\cdot(\xi_{(2)}\rhd a))(b),
\end{align*}
using the adjoint $H$-module action on $\Bbbk$-linear endomorphisms.
Next, we note that $a\cdot X$ and $X\cdot a$ are in fact braided derivations,
since
\begin{align*}
    (a\cdot X)(b\cdot c)
    =&a\cdot X(b\cdot c)\\
    =&a\cdot(X(b)\cdot c
    +(\mathcal{R}_1^{-1}\rhd b)\cdot(\mathcal{R}_2^{-1}\rhd X)(c))\\
    =&(a\cdot X)(b)\cdot c
    +((\mathcal{R}_1^{'-1}\mathcal{R}_1^{-1})\rhd b)
    \cdot(\mathcal{R}_2^{'-1}\rhd a)
    \cdot(\mathcal{R}_2^{-1}\rhd X)(c)\\
    =&(a\cdot X)(b)\cdot c
    +(\mathcal{R}_1^{-1}\rhd b)\cdot(\mathcal{R}_2^{-1}\rhd(a\cdot X))(c)
\end{align*}
and
\begin{align*}
    (X\cdot a)(b\cdot c)
    =&X((\mathcal{R}_{1(1)}^{-1}\rhd b)\cdot(\mathcal{R}_{1(2)}^{-1}\rhd c))
    \cdot(\mathcal{R}_2^{-1}\rhd a)\\
    =&\bigg(X(\mathcal{R}_{1(1)}^{-1}\rhd b)
    \cdot(\mathcal{R}_{1(2)}^{-1}\rhd c)\\
    &+((\mathcal{R}_1^{'-1}\mathcal{R}_{1(1)}^{-1})\rhd b)
    \cdot(\mathcal{R}_2^{'-1}\rhd X)(\mathcal{R}_{1(2)}^{-1}\rhd c)\bigg)
    \cdot(\mathcal{R}_2^{-1}\rhd a)\\
    =&X(\mathcal{R}_{1(1)}^{-1}\rhd b)
    \cdot((\mathcal{R}_1^{'-1}\mathcal{R}_2^{-1})\rhd a)
    \cdot((\mathcal{R}_2^{'-1}\mathcal{R}_{1(2)}^{-1})\rhd c)\\
    &+((\mathcal{R}_1^{'-1}\mathcal{R}_{1}^{-1})\rhd b)
    \cdot(\mathcal{R}_2^{'-1}\rhd X)(\mathcal{R}_{1}^{''-1}\rhd c)
    \cdot((\mathcal{R}_2^{''-1}\mathcal{R}_2^{-1})\rhd a)\\
    =&X(\mathcal{R}_{1}^{-1}\rhd b)
    \cdot((\mathcal{R}_1^{'-1}\mathcal{R}_2^{''-1}\mathcal{R}_2^{-1})\rhd a)
    \cdot((\mathcal{R}_2^{'-1}\mathcal{R}_{1}^{''-1})\rhd c)\\
    &+((\mathcal{R}_1^{'-1}\mathcal{R}_{1}^{-1})\rhd b)
    \cdot((\mathcal{R}_2^{'-1}\rhd X)\cdot(\mathcal{R}_2^{-1}\rhd a))(c)\\
    =&(X\cdot a)(b)\cdot c
    +(\mathcal{R}_1^{-1}\rhd b)\cdot
    (\mathcal{R}_2^{-1}\rhd(X\cdot a))(c).
\end{align*}
One has $(a\cdot(b\cdot X))(c)=a\cdot b\cdot X(c)=((a\cdot b)\cdot X)(c)$
and
\begin{align*}
    ((X\cdot a)\cdot b)(c)
    =&(X\cdot a)(\mathcal{R}_1^{-1}\rhd c)\cdot(\mathcal{R}_2^{-1}\rhd b)\\
    =&X((\mathcal{R}_1^{'-1}\mathcal{R}_1^{-1})\rhd c)
    \cdot(\mathcal{R}_2^{'-1}\rhd a)
    \cdot(\mathcal{R}_2^{-1}\rhd b)\\
    =&X(\mathcal{R}_1^{-1}\rhd c)
    \cdot(\mathcal{R}_{2(1)}^{-1}\rhd a)
    \cdot(\mathcal{R}_{2(2)}^{-1}\rhd b)\\
    =&X(\mathcal{R}_1^{-1}\rhd c)
    \cdot(\mathcal{R}_2^{-1}\rhd(a\cdot b))\\
    =&(X\cdot(a\cdot b))(c)
\end{align*}
showing that (\ref{eq26}) define $\mathcal{A}$-module actions.
They commute because
\begin{align*}
    ((a\cdot X)\cdot b)(c)
    =&(a\cdot X)(\mathcal{R}_1^{-1}\rhd c)\cdot(\mathcal{R}_2^{-1}\rhd b)\\
    =&a\cdot X(\mathcal{R}_1^{-1}\rhd c)\cdot(\mathcal{R}_2^{-1}\rhd b)\\
    =&a\cdot(X\cdot b)(c)\\
    =&(a\cdot(X\cdot b))(c)
\end{align*}
and they are braided symmetric, since
\begin{align*}
    (X\cdot a)(b)
    =&X(\mathcal{R}_1^{-1}\rhd b)\cdot(\mathcal{R}_2^{-1}\rhd a)\\
    =&((\mathcal{R}_1^{'-1}\mathcal{R}_2^{-1})\rhd a)
    \cdot(\mathcal{R}_{2(1)}^{'-1}\rhd X)
    ((\mathcal{R}_{2(2)}^{'-1}\mathcal{R}_1^{-1})\rhd b)\\
    =&((\mathcal{R}_1^{''-1}\mathcal{R}_1^{'-1}\mathcal{R}_2^{-1})\rhd a)
    \cdot(\mathcal{R}_{2}^{''-1}\rhd X)
    ((\mathcal{R}_{2}^{'-1}\mathcal{R}_1^{-1})\rhd b)\\
    =&((\mathcal{R}_1^{-1}\rhd a)\cdot(\mathcal{R}_2^{-1}\rhd X))(b).
\end{align*}
\end{enumerate}
This concludes the proof of the lemma.
\end{proof}
Since $\mathrm{Der}_\mathcal{R}(\mathcal{A})$ is an
$\mathcal{A}$-bimodule we can build the tensor algebra
$$
\mathrm{T}^\bullet\mathrm{Der}_\mathcal{R}(\mathcal{A})
=\mathcal{A}\oplus\mathrm{Der}_\mathcal{R}(\mathcal{A})
\oplus(\mathrm{Der}_\mathcal{R}(\mathcal{A})\otimes_\mathcal{A}
\mathrm{Der}_\mathcal{R}(\mathcal{A}))\oplus\cdots
$$
of $\mathrm{Der}_\mathcal{R}(\mathcal{A})$ with respect to the
tensor product $\otimes_\mathcal{A}$ over $\mathcal{A}$. It is a
braided symmetric $H$-equivariant $\mathcal{A}$-bimodule with
module actions defined on factorizing elements
$X_1\otimes_\mathcal{A}\cdots\otimes_\mathcal{A}X_k\in
\mathrm{T}^k\mathrm{Der}_\mathcal{R}(\mathcal{A})$ by
\begin{align}
    \xi\rhd(X_1\otimes_\mathcal{A}\cdots\otimes_\mathcal{A}X_k)
    =&(\xi_{(1)}\rhd X_1)\otimes_\mathcal{A}\cdots\otimes_\mathcal{A}
    (\xi_{(k)}\rhd X_k),\label{eq33}\\
    a\cdot(X_1\otimes_\mathcal{A}\cdots\otimes_\mathcal{A}X_k)
    =&(a\cdot X_1)\otimes_\mathcal{A}\cdots\otimes_\mathcal{A}X_k
    ,\label{eq34}\\
    (X_1\otimes_\mathcal{A}\cdots\otimes_\mathcal{A}X_k)\cdot a
    =&X_1\otimes_\mathcal{A}\cdots\otimes_\mathcal{A}(X_k\cdot a)\label{eq35}
\end{align}
for all $\xi\in H$ and $a\in\mathcal{A}$. Note however that
$(\mathrm{T}^\bullet\mathrm{Der}_\mathcal{R}(\mathcal{A}),
\otimes_\mathcal{A})$ is \textit{not} (graded) braided commutative in
general. There is an ideal $I$ in
$(\mathrm{T}^\bullet\mathrm{Der}_\mathcal{R}(\mathcal{A}),
\otimes_\mathcal{A})$ generated by elements
$
X_1\otimes_\mathcal{A}\cdots\otimes_\mathcal{A}X_k
\in\mathrm{T}^k\mathrm{Der}_\mathcal{R}(\mathcal{A})
$
which equal
\begin{align*}
    X_1\otimes_\mathcal{A}&\cdots\otimes_\mathcal{A}X_{i-1}
    \otimes_\mathcal{A}\bigg(\mathcal{R}_1^{'-1}\rhd\bigg(
    (\mathcal{R}_1^{-1}\rhd X_j)\otimes_\mathcal{A}
    (\mathcal{R}_2^{-1}\rhd(X_{i+1}\otimes_\mathcal{A}
    \cdots\otimes_\mathcal{A}X_{j-1}))\bigg)\bigg)\\
    &\otimes_\mathcal{A}(\mathcal{R}_2^{'-1}\rhd X_i)
    \otimes_\mathcal{A}X_{j+1}\otimes_\mathcal{A}\cdots
    \otimes_\mathcal{A}X_k
\end{align*}
for a pair $(i,j)$ such that $1\leq i<j\leq k$. We illustrate this
for small tensor powers: if $k=2$ the elements of $I$ are those
$X\otimes_\mathcal{A}Y\in\mathrm{T}^2\mathrm{Der}_\mathcal{R}(\mathcal{A})$
satisfying $X\otimes_\mathcal{A}Y=(\mathcal{R}_1^{-1}\rhd Y)
\otimes_\mathcal{A}(\mathcal{R}_2^{-1}\rhd X)$, if $k=3$ elements
$X\otimes_\mathcal{A}Y\otimes_\mathcal{A}Z\in
\mathrm{T}^3\mathrm{Der}_\mathcal{R}(\mathcal{A})$ have to equal
$(\mathcal{R}_1^{-1}\rhd Y)\otimes_\mathcal{A}(\mathcal{R}_2^{-1}\rhd X)
\otimes_\mathcal{A}Z$, $X\otimes_\mathcal{A}(\mathcal{R}_1^{-1}\rhd Z)
\otimes_\mathcal{A}(\mathcal{R}_2^{-1}\rhd Y)$ or
$$
((\mathcal{R}_{1(1)}^{'-1}\mathcal{R}_1^{-1})\rhd Z)\otimes_\mathcal{A}
((\mathcal{R}_{1(2)}^{'-1}\mathcal{R}_2^{-1})\rhd Y)
\otimes_\mathcal{A}(\mathcal{R}_2^{'-1}\rhd X)
$$
in order to belong to $I$. According to Lemma~\ref{lemma14} from the
appendix, the $H$-module action (\ref{eq33}) and the $\mathcal{A}$-bimodule
actions (\ref{eq34}) and (\ref{eq35}) respect the ideal $I$.
We denote the quotient
$\mathrm{T}^\bullet\mathrm{Der}_\mathcal{R}(\mathcal{A})/I$ by
$\mathfrak{X}^\bullet_\mathcal{R}(\mathcal{A})$ and call them
\textit{braided multivector fields} on $\mathcal{A}$. The induced
product $\wedge_\mathcal{R}$ is said to be the
\textit{braided wedge product}. By Proposition~\ref{prop14} we conclude
the following statement.
\begin{proposition}
The braided multivector fields
$(\mathfrak{X}^\bullet_\mathcal{R}(\mathcal{A}),\wedge_\mathcal{R})$
are a braided Graßmann algebra. Namely,
$\mathfrak{X}^\bullet_\mathcal{R}(\mathcal{A})$ is a braided symmetric
$H$-equivariant $\mathcal{A}$-bimodule such that
$H\rhd\mathfrak{X}^k_\mathcal{R}(\mathcal{A})
\subseteq\mathfrak{X}^k_\mathcal{R}(\mathcal{A})$ and
$$
\wedge_\mathcal{R}\colon\mathfrak{X}^k_\mathcal{R}(\mathcal{A})
\times\mathfrak{X}^\ell_\mathcal{R}(\mathcal{A})
\rightarrow\mathfrak{X}^{k+\ell}_\mathcal{R}(\mathcal{A})
$$
is associative, $H$-equivariant and graded braided commutative, i.e.
$$
Y\wedge_\mathcal{R}X
=(-1)^{k\cdot\ell}(\mathcal{R}_1^{-1}\rhd X)\wedge_\mathcal{R}
(\mathcal{R}_2^{-1}\rhd Y)
$$
for all $X\in\mathfrak{X}^k_\mathcal{R}(\mathcal{A})$ and
$Y\in\mathfrak{X}^\ell_\mathcal{R}(\mathcal{A})$.
\end{proposition}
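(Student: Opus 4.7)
The strategy is to transport structure through the quotient $\mathfrak{X}^\bullet_\mathcal{R}(\mathcal{A}) = \mathrm{T}^\bullet\mathrm{Der}_\mathcal{R}(\mathcal{A})/I$. The bulk of the work has already been done: we know from Lemma~\ref{lemma08} and the paragraph preceding the proposition that $\mathrm{T}^\bullet\mathrm{Der}_\mathcal{R}(\mathcal{A})$ with $\otimes_\mathcal{A}$, the actions (\ref{eq33})-(\ref{eq35}), and the obvious grading is an $H$-equivariant $\mathcal{A}$-bimodule and an $H$-equivariant graded algebra; and Lemma~\ref{lemma14} from the appendix (invoked just above the statement) guarantees that $I$ is stable under the $H$-action and the left and right $\mathcal{A}$-actions. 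So the plan is to verify that $I$ is moreover a \emph{two-sided, homogeneous} ideal for $\otimes_\mathcal{A}$ and then to argue that the quotient inherits each listed property.

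First I would confirm that $I$ is two-sided and homogeneous: it is generated by homogeneous elements (each generator lies in some $\mathrm{T}^k\mathrm{Der}_\mathcal{R}(\mathcal{A})$ with a fixed $k$), and by construction it is a two-sided $\otimes_\mathcal{A}$-ideal. Consequently $\wedge_\mathcal{R}$ is well defined and associative, the grading $\mathfrak{X}^k_\mathcal{R}(\mathcal{A}) = \mathrm{T}^k\mathrm{Der}_\mathcal{R}(\mathcal{A})/(I \cap \mathrm{T}^k\mathrm{Der}_\mathcal{R}(\mathcal{A}))$ is preserved by $\wedge_\mathcal{R}$, and the braided symmetric $H$-equivariant $\mathcal{A}$-bimodule structure descends to the quotient with the same formulas as (\ref{eq33})-(\ref{eq35}), satisfying $H \rhd \mathfrak{X}^k_\mathcal{R}(\mathcal{A}) \subseteq \mathfrak{X}^k_\mathcal{R}(\mathcal{A})$. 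Braided symmetry of the bimodule actions, and $H$-equivariance of $\wedge_\mathcal{R}$, both follow from the corresponding identities in $\mathrm{T}^\bullet\mathrm{Der}_\mathcal{R}(\mathcal{A})$ by passing to representatives.

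The substantive point is the graded braided commutativity $Y\wedge_\mathcal{R} X = (-1)^{k\ell}(\mathcal{R}_1^{-1}\rhd X)\wedge_\mathcal{R}(\mathcal{R}_2^{-1}\rhd Y)$ for $X\in\mathfrak{X}^k_\mathcal{R}(\mathcal{A})$, $Y\in\mathfrak{X}^\ell_\mathcal{R}(\mathcal{A})$. By bilinearity it suffices to check it on factorizing representatives. For $k=\ell=1$ it is precisely the defining relation of $I$, up to the sign convention fixed in Appendix~\ref{App02}. I would then proceed by double induction on $k$ and $\ell$: to move a single factor across a length-$k$ block one applies the $k=1$ relation $k$ times while bookkeeping the $\mathcal{R}$-matrix via the hexagon relations (\ref{eq09}) and the triangularity $\mathcal{R}^{-1}=\mathcal{R}_{21}$, producing the leg $(\Delta \otimes \mathrm{id})(\mathcal{R}^{-1})$ acting on the block; a second induction then moves a length-$\ell$ block across. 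Associativity of $\wedge_\mathcal{R}$ guarantees this iterated swapping is unambiguous, and the total power $(-1)^{k\ell}$ comes out of the $k\ell$ elementary swaps.

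The main obstacle will be this combinatorial bookkeeping in the inductive step: one must keep the legs of the universal $\mathcal{R}$-matrix in the correct positions while using the hexagons, and show that the ideal generators in degree $k+\ell$ described in the paragraph before the proposition are exactly what is produced by iterating the degree-$2$ relation. This is precisely what Proposition~\ref{prop14} of Appendix~\ref{App02} (the braided Graßmann construction) accomplishes in full generality, so the cleanest presentation is to verify the hypotheses of that proposition for the bimodule $\mathrm{Der}_\mathcal{R}(\mathcal{A})$ and invoke it, rather than redoing the braid-combinatorial argument ex nihilo.
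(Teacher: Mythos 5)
Your proposal is correct and follows essentially the same route as the paper: the paper gives no in-text argument at all, simply invoking Lemma~\ref{lemma14} for stability of $I$ under the actions and then Proposition~\ref{prop14} (with Lemma~\ref{lemma09} supplying graded braided commutativity) from Appendix~\ref{App02} applied to $\mathcal{M}=\mathrm{Der}_\mathcal{R}(\mathcal{A})$. Your inductive sketch for the commutativity is exactly the argument carried out in the appendix, so deferring to that general construction, as you conclude, is the intended proof.
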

We are defining a $\Bbbk$-bilinear operation
$\llbracket\cdot,\cdot\rrbracket_\mathcal{R}\colon
\mathfrak{X}^k_\mathcal{R}(\mathcal{A})\times
\mathfrak{X}^\ell_\mathcal{R}(\mathcal{A})
\rightarrow\mathfrak{X}^{k+\ell-1}_\mathcal{R}(\mathcal{A})$
in the following. If $a,b\in\mathcal{A}$ we set 
$\llbracket a,b\rrbracket_\mathcal{R}=0$. For $a\in\mathcal{A}$ and a
factorizing element
$X=X_1\wedge_\mathcal{R}\cdots\wedge_\mathcal{R}X_k
\in\mathfrak{X}^k_\mathcal{R}(\mathcal{A})$ where $k>0$, we define
\begin{align*}
    \llbracket X,a\rrbracket_\mathcal{R}
    =&\sum_{i=1}^k(-1)^{k-i}
    X_1\wedge_\mathcal{R}\cdots\wedge_\mathcal{R}X_{i-1}
    \wedge_\mathcal{R}(X_i(\mathcal{R}_1^{-1}\rhd a))\\
    &\wedge_\mathcal{R}\bigg(\mathcal{R}_2^{-1}\rhd\bigg(
    X_{i+1}\wedge_\mathcal{R}\cdots\wedge_\mathcal{R}X_k
    \bigg)\bigg)
\end{align*}
and
\begin{align*}
    \llbracket a,X\rrbracket_\mathcal{R}
    =&\sum_{i=1}^k(-1)^i\bigg(\mathcal{R}_{1(1)}^{-1}\rhd\bigg(
    X_1\wedge_\mathcal{R}\cdots\wedge_\mathcal{R}X_{i-1}\bigg)\bigg)
    \wedge_\mathcal{R}((\mathcal{R}_{1(2)}^{-1}\rhd X_i)
    (\mathcal{R}_2^{-1}\rhd a))\\
    &\wedge_\mathcal{R}
    X_{i+1}\wedge_\mathcal{R}\cdots\wedge_\mathcal{R}X_k.
\end{align*}
Furthermore, on factorizing elements
$X=X_1\wedge_\mathcal{R}\cdots\wedge_\mathcal{R}X_k
\in\mathfrak{X}^k_\mathcal{R}(\mathcal{A})$ and
$Y=Y_1\wedge_\mathcal{R}\cdots\wedge_\mathcal{R}Y_\ell
\in\mathfrak{X}^\ell_\mathcal{R}(\mathcal{A})$, where
$k,\ell>0$, we define
\begin{align*}
    \llbracket X&,Y\rrbracket_\mathcal{R}
    =\sum_{i=1}^k\sum_{j=1}^\ell(-1)^{i+j}
    [\mathcal{R}_1^{-1}\rhd X_i,\mathcal{R}_1^{'-1}\rhd Y_j]_\mathcal{R}\\
    &\wedge_\mathcal{R}
    \bigg(\mathcal{R}_2^{'-1}\rhd\bigg(
    \bigg(\mathcal{R}_2^{-1}\rhd(X_1\wedge_\mathcal{R}\cdots
    \wedge_\mathcal{R}X_{i-1})\bigg)
    \wedge_\mathcal{R}\widehat{X_i}\wedge_\mathcal{R}X_{i+1}
    \wedge_\mathcal{R}\cdots\wedge_\mathcal{R}X_k\\
    &Y_1\wedge_\mathcal{R}\cdots\wedge_\mathcal{R}Y_{j-1}\bigg)\bigg)
    \wedge_\mathcal{R}\widehat{Y_j}\wedge_\mathcal{R}Y_{j+1}
    \wedge_\mathcal{R}\cdots\wedge_\mathcal{R}Y_\ell,
\end{align*}
where $[\cdot,\cdot]_\mathcal{R}$ denotes the braided commutator.
The operation $\llbracket\cdot,\cdot\rrbracket_\mathcal{R}$
is said to be the \textit{braided Schouten-Nijenhuis bracket}.
In practice the bracket is
determined on $\mathfrak{X}^k_\mathcal{R}(\mathcal{A})$
for $k<2$ if we impose the \textit{graded braided Leibniz rules}
\begin{equation*}
    \llbracket X,Y\wedge_\mathcal{R}Z\rrbracket_\mathcal{R}
    =\llbracket X,Y\rrbracket_\mathcal{R}\wedge_\mathcal{R}Z
    +(-1)^{(k-1)\cdot\ell}(\mathcal{R}_1^{-1}\rhd Y)\wedge_\mathcal{R}
    \llbracket\mathcal{R}_2^{-1}\rhd X,Z\rrbracket_\mathcal{R}
\end{equation*}
and
\begin{equation*}
    \llbracket X\wedge_\mathcal{R}Y,Z\rrbracket_\mathcal{R}
    =X\wedge_\mathcal{R}\llbracket Y,Z\rrbracket_\mathcal{R}
    +(-1)^{\ell\cdot(m-1)}
    \llbracket X,\mathcal{R}_1^{-1}\rhd Z\rrbracket_\mathcal{R}
    \wedge_\mathcal{R}(\mathcal{R}_2^{-1}\rhd Y)
\end{equation*}
for all $X\in\mathfrak{X}^k_\mathcal{R}(\mathcal{A})$,
$Y\in\mathfrak{X}^\ell_\mathcal{R}(\mathcal{A})$ and
$Z\in\mathfrak{X}^m_\mathcal{R}(\mathcal{A})$. In those low orders
one obtains
\begin{align*}
    \llbracket a,b\rrbracket_\mathcal{R}=&0,\\
    \llbracket a,X\rrbracket_\mathcal{R}=&
    -(\mathcal{R}_1^{-1}\rhd X)(\mathcal{R}_2^{-1}\rhd a),\\
    \llbracket X,a\rrbracket_\mathcal{R}=&X(a),\\
    \llbracket X,Y\rrbracket_\mathcal{R}=&[X,Y]_\mathcal{R}
\end{align*}
for all $a,b\in\mathcal{A}$ and $X,Y\in\mathfrak{X}^1_\mathcal{R}(\mathcal{A})$.
As a consequence, we conclude the following result.
\begin{proposition}
The braided Schouten-Nijenhuis bracket
$\llbracket\cdot,\cdot\rrbracket_\mathcal{R}$ structures
the braided Graßmann algebra
$(\mathfrak{X}^\bullet_\mathcal{R}(\mathcal{A}),\wedge_\mathcal{R})$
of braided multivector fields
as a braided Gerstenhaber algebra. Namely,
$\llbracket\cdot,\cdot\rrbracket_\mathcal{R}\colon
\mathfrak{X}^k_\mathcal{R}(\mathcal{A})\times
\mathfrak{X}^\ell_\mathcal{R}(\mathcal{A})
\rightarrow\mathfrak{X}^{k+\ell-1}_\mathcal{R}(\mathcal{A})$ is
graded with respect to the degree shifted by $1$,
$H$-equivariant, graded braided skewsymmetric, i.e.
$$
\llbracket Y,X\rrbracket_\mathcal{R}
=-(-1)^{(k-1)\cdot(\ell-1)}\llbracket\mathcal{R}_1^{-1}\rhd X,
\mathcal{R}_2^{-1}\rhd Y\rrbracket_\mathcal{R},
$$
satisfies the graded braided Jacobi identity
$$
\llbracket X,\llbracket Y,Z\rrbracket_\mathcal{R}\rrbracket_\mathcal{R}
=\llbracket\llbracket X,Y\rrbracket_\mathcal{R},Z\rrbracket_\mathcal{R}
+(-1)^{(k-1)\cdot(\ell-1)}\llbracket\mathcal{R}_1^{-1}\rhd Y,
\llbracket\mathcal{R}_2^{-1}\rhd X,Z\rrbracket_\mathcal{R}
\rrbracket_\mathcal{R}
$$
and the graded braided Leibniz rule
$$
\llbracket X,Y\wedge_\mathcal{R}Z\rrbracket_\mathcal{R}
=\llbracket X,Y\rrbracket_\mathcal{R}\wedge_\mathcal{R}Z
+(-1)^{(k-1)\cdot\ell}(\mathcal{R}_1^{-1}\rhd Y)\wedge_\mathcal{R}
\llbracket\mathcal{R}_2^{-1}\rhd X,Z\rrbracket_\mathcal{R},
$$
where $X\in\mathfrak{X}^k_\mathcal{R}(\mathcal{A})$,
$Y\in\mathfrak{X}^\ell_\mathcal{R}(\mathcal{A})$
and
$Z\in\mathfrak{X}^\bullet_\mathcal{R}(\mathcal{A})$.
Furthermore, $\llbracket\cdot,\cdot\rrbracket_\mathcal{R}$ is the unique
braided Gerstenhaber bracket on $(\mathfrak{X}^\bullet_\mathcal{R}(\mathcal{A}),
\wedge_\mathcal{R})$ such that 
$$
\llbracket X,a\rrbracket_\mathcal{R}=X(a)
\text{ and }
\llbracket X,Y\rrbracket_\mathcal{R}=[X,Y]_\mathcal{R}
$$
hold for all $a\in\mathcal{A}$ and $X,Y\in\mathfrak{X}^1_\mathcal{R}(\mathcal{A})$.
\end{proposition}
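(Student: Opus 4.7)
The plan is to follow the classical strategy from Section~\ref{Sec3.1}, upgrading every step to the braided monoidal category ${}_{\mathcal{A}}^{H}\mathcal{M}_{\mathcal{A}}^{\mathcal{R}}$. First I would check everything in total degree $\leq 2$ directly from the formulas
$\llbracket a,b\rrbracket_\mathcal{R}=0$,
$\llbracket X,a\rrbracket_\mathcal{R}=X(a)$,
$\llbracket a,X\rrbracket_\mathcal{R}=-(\mathcal{R}_1^{-1}\rhd X)(\mathcal{R}_2^{-1}\rhd a)$,
$\llbracket X,Y\rrbracket_\mathcal{R}=[X,Y]_\mathcal{R}$. Graded braided skewsymmetry in low degrees follows because $X$ is a braided derivation and because $[\cdot,\cdot]_\mathcal{R}$ is braided skewsymmetric by Lemma~\ref{lemma08}; $H$-equivariance in low degrees follows from the fact that the adjoint action on $\mathrm{Der}_\mathcal{R}(\mathcal{A})$ commutes with evaluation on $\mathcal{A}$ (again Lemma~\ref{lemma08}) and from triangularity $\mathcal{R}^{-1}=\mathcal{R}_{21}$ which turns the left entry of $\mathcal{R}^{-1}$ acting on $X$ into the correct adjoint form. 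Graded braided Jacobi in total degree $\leq 3$ then reduces to braided Jacobi for the braided commutator of endomorphisms, which is automatic since $\mathrm{End}_\Bbbk(\mathcal{A})$ with the adjoint $H$-action is a braided Lie algebra in the triangular category.

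Next, I would \emph{define} $\llbracket\cdot,\cdot\rrbracket_\mathcal{R}$ on higher degrees by the two graded braided Leibniz rules, viewing them as a recursion in the degrees of the two arguments. The defining wedge-formula given in the excerpt is precisely what the recursion produces, so one must first show the recursion is consistent, i.e.\ that the two Leibniz rules are compatible with the ideal $I$ used to form $\mathfrak{X}^\bullet_\mathcal{R}(\mathcal{A})$ and with each other. Here I would invoke Lemma~\ref{lemma14}/Proposition~\ref{prop14} from the appendix, together with the hexagon relations, exactly as in the proof that $\wedge_\mathcal{R}$ descends to the quotient. Uniqueness of a braided Gerstenhaber bracket with the prescribed values in degrees $0,1$ is then immediate, because the Leibniz rules force its values on all factorizing elements.

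For the remaining axioms in arbitrary degree I would run two induction arguments in parallel on $k+\ell$ (and $k+\ell+m$ for Jacobi). The homogeneity statement $\llbracket\cdot,\cdot\rrbracket_\mathcal{R}\colon\mathfrak{X}^k_\mathcal{R}\times\mathfrak{X}^\ell_\mathcal{R}\to\mathfrak{X}^{k+\ell-1}_\mathcal{R}$ is obvious by inspection of the defining formula. $H$-equivariance propagates through the Leibniz rule because the $H$-action is an algebra map on $(\mathfrak{X}^\bullet_\mathcal{R},\wedge_\mathcal{R})$, the universal $\mathcal{R}$-matrix is $H$-invariant, and $\llbracket\cdot,\cdot\rrbracket_\mathcal{R}$ is $H$-equivariant in the base case. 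Graded braided skewsymmetry propagates by applying the right Leibniz rule to $\llbracket X,Y\wedge_\mathcal{R}Z\rrbracket_\mathcal{R}$, rewriting the left-hand side via the induction hypothesis, and comparing with the expansion of $-(-1)^{(k-1)(\ell+m-1)}\llbracket\mathcal{R}_1^{-1}\rhd(Y\wedge_\mathcal{R}Z),\mathcal{R}_2^{-1}\rhd X\rrbracket_\mathcal{R}$; the two sides match after one application of the hexagon relation and of triangularity. Finally, graded braided Jacobi is treated by the same device: fix the innermost argument to be a product $Z=Z'\wedge_\mathcal{R}Z''$, expand both sides of
\[
\llbracket X,\llbracket Y,Z\rrbracket_\mathcal{R}\rrbracket_\mathcal{R}
=\llbracket\llbracket X,Y\rrbracket_\mathcal{R},Z\rrbracket_\mathcal{R}
+(-1)^{(k-1)(\ell-1)}\llbracket\mathcal{R}_1^{-1}\rhd Y,\llbracket\mathcal{R}_2^{-1}\rhd X,Z\rrbracket_\mathcal{R}\rrbracket_\mathcal{R}
\]
using the Leibniz rule, and reduce to Jacobi with $Z'$ and with $Z''$ via the induction hypothesis; analogously for a factorization of $X$ or $Y$. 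The base case, where $X,Y,Z\in\mathfrak{X}^1_\mathcal{R}(\mathcal{A})\cup\mathcal{A}$, is the braided Jacobi identity for $[\cdot,\cdot]_\mathcal{R}$ on $\mathrm{Der}_\mathcal{R}(\mathcal{A})$, already established.

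The main obstacle will be the bookkeeping of $\mathcal{R}$-matrix legs in the inductive steps for skewsymmetry and Jacobi: each application of the Leibniz rule introduces a fresh copy of $\mathcal{R}^{-1}$ which must be fused with the pre-existing ones by the hexagon relations, and terms that cancel only do so after a triangularity swap $\mathcal{R}^{-1}=\mathcal{R}_{21}$. I expect the cleanest way to organize this is to phrase the whole proof inside the symmetric braided monoidal category $({}_\mathcal{A}^H\mathcal{M}_\mathcal{A}^{\mathcal{R}},\otimes_\mathcal{A},\beta^\mathcal{R})$, so that all braidings are \emph{a priori} natural and coherent; then each identity reduces to its classical analogue with the tensor flip $\tau$ replaced by $\beta^\mathcal{R}$, and the verification becomes essentially the classical calculation transported along this naturality.
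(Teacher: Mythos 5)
Your proposal is correct and follows essentially the route the paper intends: the paper states this proposition without a separate proof, presenting it as a consequence of the explicit low-degree formulas and the graded braided Leibniz rules, in direct analogy with the classical argument spelled out in Section~\ref{Sec3.1} (verify the axioms in degrees $0$ and $1$, extend by the Leibniz rules, and induct, with uniqueness forced by the values on generators). Your additional attention to the compatibility of the recursion with the ideal $I$ and to the bookkeeping of $\mathcal{R}$-legs via the hexagon relations and triangularity fills in precisely the details the paper leaves implicit.
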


\section{Braided Differential Forms}\label{Sec3.3}

Considering $\mathfrak{X}^1_\mathcal{R}(\mathcal{A})$ as a module
over $\mathcal{A}$, its dual space consists of $\Bbbk$-linear maps
$\mathfrak{X}^1_\mathcal{R}(\mathcal{A})$ which are right $\mathcal{A}$-linear
in addition. It can be structured as an equivariant braided symmetric
$\mathcal{A}$-bimodule and consequently its braided exterior algebra is
a well-defined braided Graßmann algebra. The duality with braided
multivector fields becomes explicit if one considers the braided insertion
of braided multivector fields. We prove that the insertion is an
equivariant map, which is left $\mathcal{A}$-linear in the first and
right $\mathcal{A}$-linear in the second argument. It is
a morphism in the category of equivariant braided symmetric bimodules.
In analogy to the de Rham differential we define a differential in low
degrees and extend it as a braided derivation of the braided wedge product to
higher orders. Since this differential turns out to be equivariant
it is actually a derivation and a braided derivation at the
same time. Afterwards we define the braided differential forms to be the
smallest differential graded subalgebra with respect to the previous
differential, which shelters $\mathcal{A}$ (c.f. \cite{D-VM96}). In other words,
the braided differential forms are generated by $\mathcal{A}$ and the
differential.

Consider the $\Bbbk$-module $\underline{\Omega}^1_\mathcal{R}(\mathcal{A})$
of $\Bbbk$-linear maps
$\omega\colon\mathrm{Der}_\mathcal{R}(\mathcal{A})\rightarrow\mathcal{A}$
such that $\omega(X\cdot a)=\omega(X)\cdot a$ for all $a\in\mathcal{A}$ and
$X\in\mathrm{Der}_\mathcal{R}(\mathcal{A})$.
\begin{lemma}
$\underline{\Omega}^1_\mathcal{R}(\mathcal{A})$ is an $H$-equivariant
braided symmetric $\mathcal{A}$-bimodule with respect to the $H$-adjoint action and
left and right $\mathcal{A}$-module actions given by
$$
(a\cdot\omega)(X)=a\cdot\omega(X)
\text{ and }
(\omega\cdot a)(X)=\omega(\mathcal{R}_1^{-1}\rhd X)\cdot(\mathcal{R}_2^{-1}\rhd a)
$$
for all $a\in\mathcal{A}$ and $X\in\mathrm{Der}_\mathcal{R}(\mathcal{A})$.
\end{lemma}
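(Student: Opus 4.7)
The proof mirrors the structure of the preceding lemma on $\mathrm{Der}_\mathcal{R}(\mathcal{A})$, so the plan is to verify four items in sequence: (i) well-definedness of the three operations (i.e.\ they preserve right $\mathcal{A}$-linearity of $\omega$), (ii) the bimodule axioms, (iii) $H$-equivariance of both $\mathcal{A}$-actions, and (iv) braided symmetry.

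For (i), I first show that if $\omega$ is right $\mathcal{A}$-linear, so is $\xi\rhd\omega$, where the adjoint action reads $(\xi\rhd\omega)(X)=\xi_{(1)}\rhd\omega(S(\xi_{(2)})\rhd X)$. Expanding $(\xi\rhd\omega)(X\cdot a)$, using that $H$ acts on $\mathrm{Der}_\mathcal{R}(\mathcal{A})$ respecting the right $\mathcal{A}$-module structure (established in the preceding lemma), the anti-coalgebra homomorphism property of $S$, and the antipode axiom, the expression reduces to $(\xi\rhd\omega)(X)\cdot a$ after the standard cancellation $\xi_{(2)}S(\xi_{(3)})=\epsilon(\xi_{(2)})1$. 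Right $\mathcal{A}$-linearity of $a\cdot\omega$ is immediate from associativity in $\mathcal{A}$. For $\omega\cdot a$, I insert $X\cdot b$, expand $\mathcal{R}_1^{-1}\rhd(X\cdot b)$ via the module algebra property, use right linearity of $\omega$, and then reassemble by means of the hexagon relations together with braided commutativity of $\mathcal{A}$; this is the slightly delicate step and it fixes the computational pattern reused below.

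For (ii), the left module axioms are trivial since the left action leaves the argument $X$ untouched. The two non-trivial checks are $(\omega\cdot a)\cdot b=\omega\cdot(ab)$ and $(a\cdot\omega)\cdot b=a\cdot(\omega\cdot b)$. The first is obtained by applying the hexagon identity $(\mathrm{id}\otimes\Delta)(\mathcal{R}^{-1})=\mathcal{R}_{12}^{-1}\mathcal{R}_{13}^{-1}$ to the nested evaluation, exactly as in the proof for $\mathrm{Der}_\mathcal{R}(\mathcal{A})$; the second follows immediately because the left action only modifies the value of $\omega$. For (iii), I verify $\xi\rhd(a\cdot\omega)=(\xi_{(1)}\rhd a)\cdot(\xi_{(2)}\rhd\omega)$ by unfolding both sides with the adjoint action and using the coalgebra and antipode axioms, mirroring the argument for $a\cdot X$ in the preceding lemma. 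The check for $\xi\rhd(\omega\cdot a)$ additionally invokes quasi-cocommutativity $\Delta_{21}(\cdot)=\mathcal{R}\Delta(\cdot)\mathcal{R}^{-1}$ to commute the $\xi_{(i)}$'s past the $\mathcal{R}^{-1}$ appearing in the definition of the right action.

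For (iv), I verify braided symmetry pointwise on $X\in\mathrm{Der}_\mathcal{R}(\mathcal{A})$ by expanding
\begin{equation*}
    ((\mathcal{R}_1^{-1}\rhd\omega)\cdot(\mathcal{R}_2^{-1}\rhd a))(X)
    =(\mathcal{R}_1^{-1}\rhd\omega)(\mathcal{R}_1'^{-1}\rhd X)\cdot((\mathcal{R}_2'^{-1}\mathcal{R}_2^{-1})\rhd a),
\end{equation*}
unfolding the adjoint action of $\mathcal{R}_1^{-1}$ on $\omega$, and rearranging the three resulting legs of $\mathcal{R}^{-1}$ via the quantum Yang--Baxter equation (Corollary~\ref{cor01}) together with triangularity $\mathcal{R}^{-1}=\mathcal{R}_{21}$; the right-hand side collapses to $\omega(\mathcal{R}_1^{-1}\rhd X)\cdot(\mathcal{R}_2^{-1}\rhd a)=(\omega\cdot a)(X)$. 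The main obstacle throughout is purely notational: up to three independent copies of $\mathcal{R}^{-1}$ must be tracked together with Sweedler indices from $\Delta$ and $S$, but all required identifications follow from the hexagon relations, QYBE, and triangularity, with no new structural input beyond what was already used for $\mathrm{Der}_\mathcal{R}(\mathcal{A})$.
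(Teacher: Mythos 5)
Your plan for (i)--(iii) matches the paper's proof essentially step for step: well-definedness of the adjoint action and of both $\mathcal{A}$-actions on right-$\mathcal{A}$-linear maps via the anti-coalgebra property of $S$, the antipode cancellation and the hexagon relations; the module axioms and the commutation $((a\cdot\omega)\cdot b)=(a\cdot(\omega\cdot b))$; and $H$-equivariance via quasi-cocommutativity. Those parts are sound and take the same route as the paper.

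Step (iv), however, ends at the wrong identity. Braided symmetry for $\underline{\Omega}^1_\mathcal{R}(\mathcal{A})$ is the statement $a\cdot\omega=(\mathcal{R}_1^{-1}\rhd\omega)\cdot(\mathcal{R}_2^{-1}\rhd a)$, so the expression $((\mathcal{R}_1^{-1}\rhd\omega)\cdot(\mathcal{R}_2^{-1}\rhd a))(X)$ must collapse to $(a\cdot\omega)(X)=a\cdot\omega(X)$, \emph{not} to $(\omega\cdot a)(X)=\omega(\mathcal{R}_1^{-1}\rhd X)\cdot(\mathcal{R}_2^{-1}\rhd a)$ as you assert. If your collapse were correct it would read $(\mathcal{R}_1^{-1}\rhd\omega)\cdot(\mathcal{R}_2^{-1}\rhd a)=\omega\cdot a$, which together with braided symmetry forces $a\cdot\omega=\omega\cdot a$; this fails for any nontrivial $\mathcal{R}$ (e.g.\ on a twist star product algebra), so the leg bookkeeping cannot close up as described. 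The fix is small: either redo the computation aiming at $a\cdot\omega(X)$ (the fresh copies of $\mathcal{R}^{-1}$ introduced by the right action and by the adjoint action cancel via $\mathcal{R}^{-1}_{21}=\mathcal{R}$ and the hexagon relation, leaving $a\cdot\omega(X)$), or, as the paper does, verify the equivalent mirror identity $\omega\cdot a=(\mathcal{R}_1^{-1}\rhd a)\cdot(\mathcal{R}_2^{-1}\rhd\omega)$: starting from $(\omega\cdot a)(X)=\omega(\mathcal{R}_1^{-1}\rhd X)\cdot(\mathcal{R}_2^{-1}\rhd a)$ one swaps the two factors using braided commutativity of $\mathcal{A}$, rewrites $\mathcal{R}_2^{'-1}\rhd(\omega(\cdot))$ through the adjoint action, and cancels with the hexagon relation and triangularity. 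That route never unfolds the adjoint action of $\mathcal{R}_1^{-1}$ on $\omega$ and needs no appeal to the quantum Yang--Baxter equation.
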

\begin{proof}
Let $a,b,c\in\mathcal{A}$, $\omega\in\underline{\Omega}^1_\mathcal{R}(\mathcal{A})$
and $X\in\mathrm{Der}_\mathcal{R}(\mathcal{A})$.
First of all, the $H$-module action and the $\mathcal{A}$-module actions are
well-defined, since
\begin{align*}
    (\xi\rhd\omega)(X\cdot a)
    =&\xi_{(1)}\rhd(\omega((S(\xi_{(2)})_{(1)}\rhd X)
    \cdot(S(\xi_{(2)})_{(2)}\rhd a)))\\
    =&\xi_{(1)}\rhd(\omega(S(\xi_{(3)})\rhd X)
    \cdot(S(\xi_{(2)})\rhd a))\\
    =&(\xi_{(1)}\rhd\omega(S(\xi_{(4)})\rhd X))
    \cdot((\xi_{(2)}S(\xi_{(3)}))\rhd a)\\
    =&((\xi_{(1)}\rhd\omega)((\xi_{(2)}S(\xi_{(3)}))\rhd X))\cdot a\\
    =&((\xi\rhd\omega)(X))\cdot a,
\end{align*}
$(a\cdot\omega)(X\cdot b)=a\cdot\omega(X\cdot b)=a\cdot\omega(X)\cdot b
=(a\cdot\omega)(X)\cdot b$ and
\begin{align*}
    (\omega\cdot a)(X\cdot b)
    =&\omega((\mathcal{R}_{1(1)}^{-1}\rhd X)\cdot(\mathcal{R}_{1(2)}^{-1}\rhd b))
    \cdot(\mathcal{R}_2^{-1}\rhd a)\\
    =&\omega(\mathcal{R}_{1(1)}^{-1}\rhd X)
    \cdot((\mathcal{R}_1^{'-1}\mathcal{R}_2^{-1})\rhd a)
    \cdot((\mathcal{R}_2^{'-1}\mathcal{R}_{1(2)}^{-1})\rhd b)\\
    =&\omega(\mathcal{R}_{1}^{-1}\rhd X)
    \cdot((\mathcal{R}_1^{'-1}\mathcal{R}_2^{''-1}\mathcal{R}_2^{-1})\rhd a)
    \cdot((\mathcal{R}_2^{'-1}\mathcal{R}_{1}^{''-1})\rhd b)\\
    =&\omega(\mathcal{R}_{1}^{-1}\rhd X)
    \cdot(\mathcal{R}_2^{-1}\rhd a)\cdot b\\
    =&(\omega\cdot a)(X)\cdot b
\end{align*}
hold by the hexagon relations and the bialgebra anti-homomorphism properties of
$S$.
By the associativity of the product on $\mathcal{A}$ and the hexagon relations
those assignments are left and right $\mathcal{A}$-module actions, respectively.
They commute since $((a\cdot\omega)\cdot b)(c)
=(a\cdot\omega)(\mathcal{R}_1^{-1}\rhd c)\cdot(\mathcal{R}_2^{-1}\rhd b)
=(a\cdot(\omega\cdot b))(c)$. Furthermore
\begin{allowdisplaybreaks}
\begin{align*}
    (\xi\rhd(a\cdot\omega\cdot b))(c)
    =&\xi_{(1)}\rhd((a\cdot\omega\cdot b)(S(\xi_{(2)})\rhd c))\\
    =&(\xi_{(1)}\rhd a)\cdot(\xi_{(2)}\rhd\omega)
    ((\xi_{(3)}\mathcal{R}_1^{-1}S(\xi_{(5)}))\rhd c)
    \cdot((\xi_{(4)}\mathcal{R}_2^{-1})\rhd b)\\
    =&(\xi_{(1)}\rhd a)\cdot(\xi_{(2)}\rhd\omega)
    ((\mathcal{R}_1^{-1}\xi_{(4)}S(\xi_{(5)}))\rhd c)
    \cdot((\mathcal{R}_2^{-1}\xi_{(3)})\rhd b)\\
    =&((\xi_{(1)}\rhd a)\cdot(\xi_{(2)}\rhd\omega)\cdot(\xi_{(3)}\rhd b))(c)
\end{align*}
\end{allowdisplaybreaks}
proves that $\underline{\Omega}^1_\mathcal{R}(\mathcal{A})$ is an
$H$-equivariant $\mathcal{A}$-bimodule. It is braided symmetric because
\begin{align*}
    (\omega\cdot a)(b)
    =&\omega(\mathcal{R}_1^{-1}\rhd b)\cdot(\mathcal{R}_2^{-1}\rhd a)\\
    =&((\mathcal{R}_1^{'-1}\mathcal{R}_2^{-1})\rhd a)
    \cdot(\mathcal{R}_{2(1)}^{'-1}\rhd\omega)(
    (\mathcal{R}_{2(2)}^{'-1}\mathcal{R}_1^{-1})\rhd b)\\
    =&(\mathcal{R}_1^{-1}\rhd a)\cdot(\mathcal{R}_2^{-1}\rhd\omega)(b)\\
    =&((\mathcal{R}_1^{-1}\rhd a)\cdot(\mathcal{R}_2^{-1}\rhd\omega))(b).
\end{align*}
This concludes the proof.
\end{proof}
It follows from Proposition~\ref{prop14} and Lemma~\ref{lemma09} that
the braided exterior algebra $
\underline{\Omega}^\bullet_\mathcal{R}(\mathcal{A})$ 
of $\underline{\Omega}^1_\mathcal{R}(\mathcal{A})$ is an $H$-equivariant
braided symmetric $\mathcal{A}$-bimodule and a graded braided
commutative algebra. In the following lines we show that it is also
compatible with the braided evaluation.
For two elements $\omega,\eta\in\underline{\Omega}^1_\mathcal{R}(\mathcal{A})$
we define a $\Bbbk$-bilinear map
$\omega\wedge_\mathcal{R}\eta\colon
\mathrm{Der}_\mathcal{R}(\mathcal{A})\times\mathrm{Der}_\mathcal{R}(\mathcal{A})
\rightarrow\mathcal{A}$ by
$$
(\omega\wedge_\mathcal{R}\eta)(X,Y)
=\omega(\mathcal{R}_1^{-1}\rhd X)\cdot(\mathcal{R}_2^{-1}\rhd\eta)(Y)
-\omega(\mathcal{R}_1^{-1}\rhd Y)\cdot(\mathcal{R}_2^{-1}\rhd(\eta(X)))
$$
for all $X,Y\in\mathrm{Der}_\mathcal{R}(\mathcal{A})$.
One proves that
$$
-(\omega\wedge_\mathcal{R}\eta)
(\mathcal{R}_1^{-1}\rhd Y,\mathcal{R}_1^{-1}\rhd X)
=(\omega\wedge_\mathcal{R}\eta)(X,Y)
=-((\mathcal{R}_1^{-1}\rhd\eta)\wedge_\mathcal{R}
(\mathcal{R}_2^{-1}\rhd\omega))(X,Y)
$$
and that
\begin{align*}
    (\omega\wedge_\mathcal{R}\eta)(X,Y\cdot a)
    =&((\omega\wedge_\mathcal{R}\eta)(X,Y))\cdot a,\\
    (\omega\wedge_\mathcal{R}\eta)(a\cdot X,Y)
    =&(\mathcal{R}_1^{-1}\rhd a)\cdot
    ((\mathcal{R}_2^{-1}\rhd(\omega\wedge_\mathcal{R}\eta))(X,Y)),\\
    \xi\rhd((\omega\wedge_\mathcal{R}\eta)(X,Y))
    =&((\xi_{(1)}\rhd\omega)\wedge_\mathcal{R}(\xi_{(2)}\rhd\eta))
    (\xi_{(3)}\rhd X,\xi_{(4)}\rhd Y)
\end{align*}
hold for all $\xi\in H$, $\omega,\eta\in\underline{\Omega}^1_\mathcal{R}
(\mathcal{A})$, $a\in\mathcal{A}$ and
$X,Y\in\mathrm{Der}_\mathcal{R}(\mathcal{A})$.
The evaluations of the $H$-action and $\mathcal{A}$-module actions read
\begin{align*}
    (\xi\rhd(\omega\wedge_\mathcal{R}\eta))(X,Y)
    =&\xi_{(1)}\rhd((\omega\wedge_\mathcal{R}\eta)
    (S(\xi_{(3)})\rhd X,S(\xi_{(2)})\rhd Y)),\\
    (a\cdot(\omega\wedge_\mathcal{R}\eta))(X,Y)
    =&a\cdot((\omega\wedge_\mathcal{R}\eta)(X,Y)),\\
    ((\omega\wedge_\mathcal{R}\eta)\cdot a)(X,Y)
    =&((\omega\wedge_\mathcal{R}\eta)
    (\mathcal{R}_{1(1)}^{-1}\rhd X,\mathcal{R}_{1(2)}^{-1}\rhd Y))
    \cdot(\mathcal{R}_2^{-1}\rhd a).
\end{align*}
Inductively one defines the evaluation of higher wedge products.
Explicitly, the evaluated module actions on factorizing elements
$\omega_1\wedge_\mathcal{R}\ldots\wedge_\mathcal{R}\omega_k
\in\underline{\Omega}^k_\mathcal{R}(\mathcal{A})$ read
\begin{align*}
    (\xi\rhd&(\omega_1\wedge_\mathcal{R}\ldots\wedge_\mathcal{R}\omega_k))
    (X_1,\ldots,X_k)\\
    =&\xi_{(1)}\rhd((\omega_1\wedge_\mathcal{R}\ldots\wedge_\mathcal{R}\omega_k)
    (S(\xi_{(k+1)})\rhd X_1,\ldots,S(\xi_{(2)})\rhd X_k)),
\end{align*}
$$
(a\cdot(\omega_1\wedge_\mathcal{R}\ldots\wedge_\mathcal{R}\omega_k))
    (X_1,\ldots,X_k)
    =a\cdot((\omega_1\wedge_\mathcal{R}\ldots\wedge_\mathcal{R}\omega_k)
    (X_1,\ldots,X_k))
$$
and
\begin{align*}
    ((\omega_1&\wedge_\mathcal{R}\ldots\wedge_\mathcal{R}\omega_k)\cdot a)
    (X_1,\ldots,X_k)\\
    =&((\omega_1\wedge_\mathcal{R}\ldots\wedge_\mathcal{R}\omega_k)
    (\mathcal{R}_{1(1)}^{-1}\rhd X_1,\ldots,\mathcal{R}_{1(k)}^{-1}\rhd X_k))
    \cdot(\mathcal{R}_2^{-1}\rhd a).
\end{align*}
for all $X_1,\ldots,X_1\in\mathrm{Der}_\mathcal{R}(\mathcal{A})$,
$a\in\mathcal{A}$ and $\xi\in H$.
It is useful to further define the insertion $\mathrm{i}^\mathcal{R}_X\colon
\underline{\Omega}^\bullet_\mathcal{R}(\mathcal{A})
\rightarrow\underline{\Omega}^{\bullet-1}_\mathcal{R}(\mathcal{A})$ of an element
$X\in\mathrm{Der}_\mathcal{R}(\mathcal{A})$ into the \textit{last} slot of
an element $\omega\in\underline{\Omega}^k_\mathcal{R}(\mathcal{A})$ by
$$
\mathrm{i}^\mathcal{R}_X\omega
=(-1)^{k-1}(\mathcal{R}_1^{-1}\rhd\omega)(\cdot,\ldots,\cdot,
\mathcal{R}_2^{-1}\rhd X).
$$
More general, we inductively define 
$$
\mathrm{i}^\mathcal{R}_{X\wedge_\mathcal{R}Y}
=\mathrm{i}^\mathcal{R}_{X}
\mathrm{i}^\mathcal{R}_{Y}
$$
for all $X,Y\in\mathfrak{X}^\bullet_\mathcal{R}(\mathcal{A})$.
\begin{lemma}
$(\underline{\Omega}^\bullet_\mathcal{R}(\mathcal{A}),\wedge_\mathcal{R})$
is a graded braided commutative associative unital algebra 
and an $H$-equivariant braided symmetric $\mathcal{A}$-bimodule. The insertion 
$$
\mathrm{i}^\mathcal{R}\colon\mathfrak{X}^k_\mathcal{R}(\mathcal{A})\otimes
\underline{\Omega}^\bullet_\mathcal{R}(\mathcal{A})
\rightarrow\underline{\Omega}^{\bullet-k}_\mathcal{R}(\mathcal{A})
$$
of braided multivector field
is $H$-equivariant such that $\mathrm{i}^\mathcal{R}_X$ is a
right $\mathcal{A}$-linear and braided left
$\mathcal{A}$-linear homogeneous map of degree $-k$
for all $X\in\mathfrak{X}^k_\mathcal{R}(\mathcal{A})$. Furthermore,
$\mathrm{i}^\mathcal{R}_X$ is
left $\mathcal{A}$-linear and braided right $\mathcal{A}$-linear in $X$.
For $k=1$ we obtain a
graded braided derivation $\mathrm{i}^\mathcal{R}_X$ of degree $-1$.
\end{lemma}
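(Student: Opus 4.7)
The plan is to follow exactly the pattern already used for $\mathfrak{X}^\bullet_\mathcal{R}(\mathcal{A})$, but now on the braided symmetric bimodule $\underline{\Omega}^1_\mathcal{R}(\mathcal{A})$. First I would appeal to Proposition~\ref{prop14} and Lemma~\ref{lemma09}, together with the preceding verification that $\underline{\Omega}^1_\mathcal{R}(\mathcal{A})$ is an $H$-equivariant braided symmetric $\mathcal{A}$-bimodule, to deduce at once that $(\underline{\Omega}^\bullet_\mathcal{R}(\mathcal{A}),\wedge_\mathcal{R})$ is an associative unital graded braided commutative algebra that is an $H$-equivariant braided symmetric $\mathcal{A}$-bimodule. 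The explicit formulas given for the induced $H$- and $\mathcal{A}$-actions on factorizing elements match the general quotient construction, so nothing else need be checked.

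Next I would analyse the insertion in the base case $k=1$. Here $\mathrm{i}^\mathcal{R}_X\omega$ is defined by evaluating $\omega$ on $X$ in the last slot together with a sign and a braiding. The identities collected just above the lemma (equivariance under the adjoint action, right $\mathcal{A}$-linearity and the braiding rule for the left action) translate one-to-one into the claims for $\mathrm{i}^\mathcal{R}_X$: $H$-equivariance comes from the displayed formula $(\xi\rhd(\omega_1\wedge_\mathcal{R}\dots\wedge_\mathcal{R}\omega_k))(X_1,\ldots,X_k)=\xi_{(1)}\rhd(\cdots)$ combined with the anti-coalgebra property of $S$; right $\mathcal{A}$-linearity $\mathrm{i}^\mathcal{R}_X(\omega\cdot a)=(\mathrm{i}^\mathcal{R}_X\omega)\cdot a$ drops out of the explicit module action; and braided left $\mathcal{A}$-linearity is obtained by pushing the $a$ through the last slot using the braided symmetry of the $\mathcal{A}$-bimodule structure of $\underline{\Omega}^\bullet_\mathcal{R}(\mathcal{A})$ together with the hexagon relations. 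Linearity in $X$ itself (left $\mathcal{A}$-linearity and braided right $\mathcal{A}$-linearity) is proved along the same lines, moving the coefficient $a$ between $X$ and the form via the bimodule action on $\underline{\Omega}^1_\mathcal{R}(\mathcal{A})$ and using that the $\mathcal{A}$-bimodule structure on $\mathrm{Der}_\mathcal{R}(\mathcal{A})$ was chosen precisely to make evaluation $\mathcal{A}$-balanced.

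The extension to $k>1$ is by induction using $\mathrm{i}^\mathcal{R}_{X\wedge_\mathcal{R}Y}=\mathrm{i}^\mathcal{R}_X\mathrm{i}^\mathcal{R}_Y$. The only subtle point is well-definedness: one must check that $\mathrm{i}^\mathcal{R}$ descends from the tensor algebra to the quotient defining $\mathfrak{X}^\bullet_\mathcal{R}(\mathcal{A})$, i.e.\ that on elements of the ideal $I$ the concatenation of insertions vanishes. This reduces, by the standard inductive trick on two adjacent slots, to the $k=2$ identity
\begin{equation*}
\mathrm{i}^\mathcal{R}_X\mathrm{i}^\mathcal{R}_Y
=\mathrm{i}^\mathcal{R}_{\mathcal{R}_1^{-1}\rhd Y}\mathrm{i}^\mathcal{R}_{\mathcal{R}_2^{-1}\rhd X},
\end{equation*}
which one verifies by expanding both sides with the explicit evaluation formula and applying the quasi-cocommutativity of $\Delta$ and the hexagon relations. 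Once $\mathrm{i}^\mathcal{R}$ is well defined, $H$-equivariance, left/braided-right linearity in $X$ and right/braided-left linearity in $\omega$ propagate to all $k$ by induction.

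The main obstacle, and the last part to be handled, is the graded braided Leibniz rule for $k=1$, i.e.\ that $\mathrm{i}^\mathcal{R}_X$ is a graded braided derivation of degree $-1$ of $\wedge_\mathcal{R}$ for $X\in\mathfrak{X}^1_\mathcal{R}(\mathcal{A})$. The strategy is to prove it on factorizing elements $\omega_1\wedge_\mathcal{R}\cdots\wedge_\mathcal{R}\omega_k$, inducting on $k$. The case $k=1$ is immediate, and the inductive step consists in computing $\mathrm{i}^\mathcal{R}_X((\omega_1\wedge_\mathcal{R}\cdots\wedge_\mathcal{R}\omega_{k-1})\wedge_\mathcal{R}\omega_k)$ via the evaluation formula on the last slot, splitting the sign according as $X$ lands in $\omega_k$ or in one of the $\omega_i$ with $i<k$, and using graded braided commutativity plus the hexagon relations to rearrange the resulting $\mathcal{R}$'s into the expected braided Leibniz form. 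The bookkeeping with the triangular structure here is the only technical difficulty; all other statements in the lemma are essentially structural.
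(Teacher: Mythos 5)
Your proposal follows essentially the same route as the paper: the algebra and bimodule structure of $\underline{\Omega}^\bullet_\mathcal{R}(\mathcal{A})$ is delegated to the general quotient construction (Proposition~\ref{prop14} and Lemma~\ref{lemma09}) plus the explicit verification for $\underline{\Omega}^1_\mathcal{R}(\mathcal{A})$, the graded braided Leibniz rule is proved for two $1$-forms and extended by induction, and equivariance and the four linearity properties are read off from the evaluation formulas; the paper even derives $H$-equivariance as a byproduct of the Leibniz computation rather than separately, but that is only a reordering. Your extra well-definedness check for $\mathrm{i}^\mathcal{R}$ on the quotient $\mathfrak{X}^\bullet_\mathcal{R}(\mathcal{A})=\mathrm{T}^\bullet\mathrm{Der}_\mathcal{R}(\mathcal{A})/I$ is a point the paper glosses over, but the identity you display has the wrong sign: since the generators of $I$ are the braided-\emph{symmetric} tensors and the quotient imposes $X\wedge_\mathcal{R}Y=-(\mathcal{R}_1^{-1}\rhd Y)\wedge_\mathcal{R}(\mathcal{R}_2^{-1}\rhd X)$ in degree one, what you must verify is the graded braided \emph{anti}commutation $\mathrm{i}^\mathcal{R}_X\mathrm{i}^\mathcal{R}_Y=-\,\mathrm{i}^\mathcal{R}_{\mathcal{R}_1^{-1}\rhd Y}\mathrm{i}^\mathcal{R}_{\mathcal{R}_2^{-1}\rhd X}$, so that the concatenation map sends a braided-symmetric generator $v=\tau(v)$ to $\Phi(v)=-\Phi(v)=0$. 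As written, your identity together with the correct one would force $\mathrm{i}^\mathcal{R}_X\mathrm{i}^\mathcal{R}_Y=0$ identically; with the sign corrected the argument goes through and the rest of your plan is sound.
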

\begin{proof}
We already proved that 
$\underline{\Omega}^\bullet_\mathcal{R}(\mathcal{A})$ is an
$H$-equivariant braided symmetric $\mathcal{A}$-bimodule
and that it is compatible with braided evaluation.
We prove that $\mathrm{i}^\mathcal{R}_X$
is a braided graded derivation of the wedge product for $X\in\mathrm{Der}_\mathcal{R}
(\mathcal{A})$. Let $\omega,\eta\in\underline{\Omega}^1_\mathcal{R}(\mathcal{A})$.
Then
\begin{align*}
    \mathrm{i}^\mathcal{R}_X(\omega\wedge_\mathcal{R}\eta)
    =&(-1)^{2-1}((\mathcal{R}_{1(1)}^{-1}\rhd\omega)
    \wedge_\mathcal{R}(\mathcal{R}_{1(2)}^{-1}\rhd\eta))
    (\cdot,\mathcal{R}_2^{-1}\rhd X)\\
    =&-(\mathcal{R}_{1(1)}^{-1}\rhd\omega)
    (\mathcal{R}_{1(2)}^{-1}\rhd\eta)(\mathcal{R}_2^{-1}\rhd X)\\
    &+(\mathcal{R}_{1(1)}^{-1}\rhd\omega)
    ((\mathcal{R}_1^{'-1}\mathcal{R}_2^{-1})\rhd X)
    ((\mathcal{R}_2^{'-1}\mathcal{R}_{1(2)}^{-1})\rhd\eta)\\
    =&\mathrm{i}^\mathcal{R}_X(\omega)\wedge_\mathcal{R}\eta
    +(-1)^{1\cdot 1}(\mathcal{R}_1^{-1}\rhd\omega)\wedge_\mathcal{R}
    \mathrm{i}^\mathcal{R}_{\mathcal{R}_2^{-1}\rhd X}\eta.
\end{align*}
In particular this implies $\xi\rhd(\mathrm{i}^\mathcal{R}_X
(\omega\wedge_\mathcal{R}\eta))
=\mathrm{i}^\mathcal{R}_{\xi_{(1)}\rhd
X}((\xi_{(2)}\rhd\omega)\wedge_\mathcal{R}(\xi_{(3)}\rhd\eta))$
for all $\xi\in H$.
Inductively, one shows
\begin{align*}
    \mathrm{i}^\mathcal{R}_X(\omega\wedge_\mathcal{R}\eta)
    =(\mathrm{i}^\mathcal{R}_X\omega)\wedge_\mathcal{R}\eta
    +(-1)^k(\mathcal{R}_1^{-1}\rhd\omega)\wedge_\mathcal{R}
    \mathrm{i}^\mathcal{R}_{\mathcal{R}_2^{-1}\rhd X}\eta
\end{align*}
and $\xi\rhd(\mathrm{i}^\mathcal{R}_X\eta)
=\mathrm{i}^\mathcal{R}_{\xi_{(1)}\rhd X}(\xi_{(2)}\rhd\eta)$
for all $\xi\in H$, $X\in\mathrm{Der}_\mathcal{R}(\mathcal{A})$, $\omega\in
\underline{\Omega}^k_\mathcal{R}(\mathcal{A})$ and $\eta\in
\underline{\Omega}^\bullet_\mathcal{R}(\mathcal{A})$.
For factorizing elements $X_1\wedge_\mathcal{R}
X_2\in\mathfrak{X}^2_\mathcal{R}(\mathcal{A})$ this implies
\begin{align*}
    \xi\rhd\mathrm{i}^\mathcal{R}_{X_1\wedge_\mathcal{R}X_2}\omega
    =&\xi\rhd(\mathrm{i}^\mathcal{R}_{X_1}
    \mathrm{i}^\mathcal{R}_{X_2}\omega)
    =\mathrm{i}^\mathcal{R}_{\xi_{(1)}\rhd X_1}
    \mathrm{i}^\mathcal{R}_{\xi_{(2)}\rhd X_2}(\xi_{(3)}\rhd\omega)
    =\mathrm{i}^\mathcal{R}_{\xi_{(1)}\rhd(X_1\wedge_\mathcal{R}X_2)}
    (\xi_{(2)}\rhd\omega)
\end{align*}
for all $\xi\in H$ and $\omega\in\underline{\Omega}^\bullet_\mathcal{R}(\mathcal{A})$
and inductively one obtains $\xi\rhd(\mathrm{i}^\mathcal{R}_X\omega)
=\mathrm{i}^\mathcal{R}_{\xi_{(1)}\rhd X}(\xi_{(2)}\rhd\omega)$ for any
$X\in\underline{\Omega}^\bullet_\mathcal{R}(\mathcal{A})$. It is easy to verify
that $\mathrm{i}^\mathcal{R}_X$ inherits the linearity properties
\begin{align*}
    \mathrm{i}^\mathcal{R}_{a\cdot X}\omega
    =&a\cdot(\mathrm{i}^\mathcal{R}_X\omega),\\
    \mathrm{i}^\mathcal{R}_{X\cdot a}\omega
    =&(\mathrm{i}^\mathcal{R}_X(\mathcal{R}_1^{-1}\rhd\omega))
    \cdot(\mathcal{R}_2^{-1}\rhd a),\\
    \mathrm{i}^\mathcal{R}_X(\omega\cdot a)
    =&(\mathrm{i}^\mathcal{R}_X\omega)\cdot a,\\
    \mathrm{i}^\mathcal{R}_X(a\cdot\omega)
    =&(\mathcal{R}_1^{-1}\rhd a)\cdot
    (\mathrm{i}^\mathcal{R}_{\mathcal{R}_2^{-1}\rhd X}\omega)
\end{align*}
for all $X\in\mathfrak{X}^\bullet_\mathcal{R}(\mathcal{A})$,
$a\in\mathcal{A}$ and $\omega\in\underline{\Omega}^\bullet_\mathcal{R}
(\mathcal{A})$.
\end{proof}
One defines a $\Bbbk$-linear map
$\mathrm{d}\colon\underline{\Omega}_\mathcal{R}^\bullet(\mathcal{A})
\rightarrow\underline{\Omega}_\mathcal{R}^{\bullet+1}(\mathcal{A})$
on $a\in\mathcal{A}$ by $\mathrm{i}^\mathcal{R}_X(\mathrm{d}a)=X(a)$ for all
$X\in\mathrm{Der}_\mathcal{R}(\mathcal{A})$, on 
$\omega\in\underline{\Omega}^1_\mathcal{R}(\mathcal{A})$ by
$$
(\mathrm{d}\omega)(X,Y)
=(\mathcal{R}_1^{-1}\rhd X)((\mathcal{R}_2^{-1}\rhd\omega)(Y))
-(\mathcal{R}_1^{-1}\rhd Y)(\mathcal{R}_2^{-1}\rhd(\omega(X)))
-\omega([X,Y]_\mathcal{R})
$$
for all $X,Y\in\mathrm{Der}_\mathcal{R}(\mathcal{A})$
and extends $\mathrm{d}$
to higher wedge powers by demanding it to be a
graded derivation with respect to $\wedge_\mathcal{R}$, i.e.
$$
\mathrm{d}(\omega_1\wedge_\mathcal{R}\omega_2)
=(\mathrm{d}\omega_1)\wedge_\mathcal{R}\omega_2
+(-1)^k\omega_1\wedge_\mathcal{R}(\mathrm{d}\omega_2)
$$
for $\omega_1\in\underline{\Omega}_\mathcal{R}^k(\mathcal{A})$
and $\omega_2\in\underline{\Omega}_\mathcal{R}^\bullet(\mathcal{A})$.
One can also directly define
$\mathrm{d}\omega\in\underline{\Omega}^{k+1}_\mathcal{R}(\mathcal{A})$ for any
$\omega\in\underline{\Omega}^{k}_\mathcal{R}(\mathcal{A})$ by
\begin{allowdisplaybreaks}
\begin{align*}
    (\mathrm{d}\omega)(X_0,\ldots,X_k)
    =&\sum_{i=0}^k(-1)^i
    (\mathcal{R}_1^{-1}\rhd X_i)
    \bigg((\mathcal{R}_{2(1)}^{-1}\rhd\omega)\bigg(\\
    &\mathcal{R}_{2(2)}^{-1}\rhd X_0,\ldots,
    \mathcal{R}_{2(i+1)}^{-1}\rhd X_{i-1},
    \widehat{X_i},X_{i+1},\ldots,X_k\bigg)\bigg)\\
    &+\sum_{i<j}(-1)^{i+j}
    \omega\bigg([\mathcal{R}_1^{-1}\rhd X_i,
    \mathcal{R}_1^{'-1}\rhd X_j]_\mathcal{R},\\
    &(\mathcal{R}_{2(1)}^{'-1}\mathcal{R}_{2(1)}^{-1})\rhd X_{0},
    \ldots,(\mathcal{R}_{2(i)}^{'-1}\mathcal{R}_{2(i)}^{-1})\rhd X_{i-1},
    \widehat{X_i},\\
    &\mathcal{R}_{2(i+1)}^{'-1}\rhd X_{i+1},\ldots,
    \mathcal{R}_{2(j-1)}^{'-1}\rhd X_{j-1},\widehat{X_j},
    X_{j+1},\ldots,X_k\bigg)
\end{align*}
\end{allowdisplaybreaks}
for all $X_0,\ldots,X_k\in\mathrm{Der}_\mathcal{R}(\mathcal{A})$.
Using the above formula and the fact that the braided
commutator is $H$-equivariant, it immediately follows that $\mathrm{d}$
commutes with the left $H$-module action. In other words, $\mathrm{d}$
is an \textit{integral} for the adjoint action, i.e.
$$
(\xi\rhd\mathrm{d})\omega
=\xi_{(1)}\rhd(\mathrm{d}(S(\xi_{(2)})\rhd\omega))
=(\xi_{(1)}S(\xi_{(2)}))\rhd(\mathrm{d}\omega)
=\epsilon(\xi)\mathrm{d}\omega
$$
for all $\xi\in H$ and $\omega\in\Omega^\bullet_\mathcal{R}(\mathcal{A})$.
\begin{lemma}
The map
$\mathrm{d}\colon\underline{\Omega}_\mathcal{R}^\bullet(\mathcal{A})
\rightarrow\underline{\Omega}_\mathcal{R}^{\bullet+1}(\mathcal{A})$
is a differential, i.e. $\mathrm{d}^2=0$.
\end{lemma}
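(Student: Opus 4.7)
The plan is to prove $\mathrm{d}^2=0$ by the standard two-step argument: first show that $\mathrm{d}^2$ is a graded derivation of the braided Graßmann algebra $(\underline{\Omega}_\mathcal{R}^\bullet(\mathcal{A}),\wedge_\mathcal{R})$ of degree $2$, and then verify $\mathrm{d}^2=0$ on a generating set. Since $\underline{\Omega}_\mathcal{R}^\bullet(\mathcal{A})$ is the braided exterior algebra of $\underline{\Omega}_\mathcal{R}^1(\mathcal{A})$, it is generated as an algebra by its components in degrees $0$ and $1$, so it suffices to check $\mathrm{d}^2 a=0$ for $a\in\mathcal{A}$ and $\mathrm{d}^2\omega=0$ for $\omega\in\underline{\Omega}_\mathcal{R}^1(\mathcal{A})$.

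For the first step, I would observe that the composition $\mathrm{d}\circ\mathrm{d}$ of a graded derivation of odd degree with itself is, by the usual sign arithmetic applied to the graded Leibniz rule, a graded derivation of even degree. Crucially, the graded Leibniz rule of $\mathrm{d}$ is stated without an $\mathcal{R}$-braiding (there is no swap of $\omega$ past $\mathrm{d}$), so this argument transfers verbatim from the classical case; no new braided identities intervene here.

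For the second step, the vanishing $\mathrm{d}^2 a=0$ is obtained by pairing with $X,Y\in\mathrm{Der}_\mathcal{R}(\mathcal{A})$ via the braided Chevalley-Eilenberg formula, which yields
\begin{equation*}
(\mathrm{d}^2 a)(X,Y)
=X(Y(a))-(\mathcal{R}_1^{-1}\rhd Y)((\mathcal{R}_2^{-1}\rhd X)(a))-[X,Y]_\mathcal{R}(a),
\end{equation*}
and this vanishes by the very definition of the braided commutator in Lemma~\ref{lemma08}. The main work, and the hard part, is the verification on $\omega\in\underline{\Omega}^1_\mathcal{R}(\mathcal{A})$: evaluating $(\mathrm{d}^2\omega)(X,Y,Z)$ via the Chevalley-Eilenberg formula produces six terms of ``first-derivative-of-a-derivative'' type and six terms involving braided commutators. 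Matching them requires two inputs: the braided Jacobi identity
\begin{equation*}
[X,[Y,Z]_\mathcal{R}]_\mathcal{R}
=[[X,Y]_\mathcal{R},Z]_\mathcal{R}
+[\mathcal{R}_1^{-1}\rhd Y,[\mathcal{R}_2^{-1}\rhd X,Z]_\mathcal{R}]_\mathcal{R}
\end{equation*}
for the braided Lie bracket on $\mathrm{Der}_\mathcal{R}(\mathcal{A})$ (Lemma~\ref{lemma08}), and the hexagon relations together with the quasi-cocommutativity of $\Delta$ in order to route the $\mathcal{R}$-factors across the three arguments consistently.

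The main obstacle will be bookkeeping: every time a derivation is pushed past a differential form or another derivation, the corresponding braiding $\mathcal{R}^{\pm 1}$ has to be tracked through coproducts. I would write the computation by first reducing all terms to a common ``normal form'' in which each argument $X$, $Y$, $Z$ is acted on by a single $\mathcal{R}$-string of predictable type, then apply the hexagon relations to shuffle these strings, and finally invoke the braided Jacobi identity for the commutator terms, and $H$-equivariance of $\mathrm{Der}_\mathcal{R}(\mathcal{A})\to\mathcal{A}$ (i.e. $(\xi\rhd X)(a)=\xi_{(1)}\rhd X(S(\xi_{(2)})\rhd a)$) for the others. With this accounting, the six plus six terms pair up and cancel, just as in the classical calculation of Lemma~\ref{lemma06}, concluding $\mathrm{d}^2\omega=0$ and hence $\mathrm{d}^2=0$.
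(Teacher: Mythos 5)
Your proposal is correct and follows essentially the same route as the paper: reduce to degrees $0$ and $1$ via the graded derivation property of $\mathrm{d}^2$ (which, as you note, carries no braiding since $\mathrm{d}$ is $H$-equivariant), kill the degree-$0$ case by the definition of the braided commutator, and settle the degree-$1$ case by an explicit expansion organised around the braided Jacobi identity and the hexagon relations. The only minor discrepancy is bookkeeping: the full expansion of $(\mathrm{d}^2\omega)(X,Y,Z)$ produces eighteen terms rather than twelve (each of the six Chevalley--Eilenberg terms expands into three), but this does not affect the validity of the argument.
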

\begin{proof}
It is sufficient to prove $\mathrm{d}^2=0$ on $\underline{\Omega}^k_\mathcal{R}
(\mathcal{A})$ for $k<2$, since $\mathrm{d}^2$ is a graded braided derivation.
Let $X,Y,Z\in\mathrm{Der}_\mathcal{R}(\mathcal{A})$. For $a\in\mathcal{A}$
we obtain
\begin{allowdisplaybreaks}
\begin{align*}
    (\mathrm{d}^2a)(X,Y)
    =&(\mathcal{R}_1^{-1}\rhd X)((\mathcal{R}_2^{-1}\rhd(\mathrm{d}a))(Y))
    -(\mathcal{R}_1^{-1}\rhd Y)(\mathcal{R}_2^{-1}\rhd((\mathrm{d}a)(X)))\\
    &-(\mathrm{d}a)([X,Y]_\mathcal{R})\\
    =&(\mathcal{R}_1^{-1}\rhd X)
    ((\mathcal{R}_1^{'-1}\rhd Y)((\mathcal{R}_2^{'-1}\mathcal{R}_2^{-1})\rhd a))\\
    &-(\mathcal{R}_1^{-1}\rhd Y)
    (\mathcal{R}_2^{-1}\rhd((\mathcal{R}_1^{'-1}\rhd X)
    (\mathcal{R}_2^{'-1}\rhd a)))\\
    &-(\mathcal{R}_1^{-1}\rhd[X,Y]_\mathcal{R})(\mathcal{R}_2^{-1}\rhd a)\\
    =&(\mathcal{R}_{1(1)}^{-1}\rhd X)
    ((\mathcal{R}_{1(2)}^{-1}\rhd Y)(\mathcal{R}_2^{-1}\rhd a))\\
    &-((\mathcal{R}_1^{''-1}\mathcal{R}_1^{-1})\rhd Y)
    (((\mathcal{R}_2^{''-1}\mathcal{R}_1^{'-1})\rhd X)
    ((\mathcal{R}_2^{-1}\mathcal{R}_2^{'-1})\rhd a))\\
    &-[\mathcal{R}_{1(1)}^{-1}\rhd X,\mathcal{R}_{1(2)}^{-1}\rhd Y]_\mathcal{R}
    (\mathcal{R}_2^{-1}\rhd a)\\
    =&(\mathcal{R}_{1(1)}^{-1}\rhd X)
    ((\mathcal{R}_{1(2)}^{-1}\rhd Y)(\mathcal{R}_2^{-1}\rhd a))\\
    &-((\mathcal{R}_1^{''-1}\mathcal{R}_{1(2)}^{-1})\rhd Y)
    (((\mathcal{R}_2^{''-1}\mathcal{R}_{1(1)}^{-1})\rhd X)
    (\mathcal{R}_2^{-1}\rhd a))\\
    &-[\mathcal{R}_{1(1)}^{-1}\rhd X,\mathcal{R}_{1(2)}^{-1}\rhd Y]_\mathcal{R}
    (\mathcal{R}_2^{-1}\rhd a)\\
    =&0
\end{align*}
\end{allowdisplaybreaks}
by the definition of the braided commutator. For $\omega\in
\underline{\Omega}^1_\mathcal{R}(\mathcal{A})$
\begin{allowdisplaybreaks}
\begin{align*}
    (\mathrm{d}^2\omega)(X,Y,Z)
    =&(\mathcal{R}_1^{-1}\rhd X)
    ((\mathrm{d}(\mathcal{R}_2^{-1}\rhd\omega))(Y,Z))\\
    &-(\mathcal{R}_1^{-1}\rhd Y)
    ((\mathrm{d}(\mathcal{R}_{2(1)}^{-1}\rhd\omega))
    (\mathcal{R}_{2(2)}^{-1}\rhd X,Z))\\
    &+(\mathcal{R}_1^{-1}\rhd Z)
    ((\mathrm{d}(\mathcal{R}_{2(1)}^{-1}\rhd\omega))
    (\mathcal{R}_{2(2)}^{-1}\rhd X,\mathcal{R}_{2(3)}^{-1}\rhd Y))\\
    &-\mathrm{d}\omega([X,Y]_\mathcal{R},Z)
    +\mathrm{d}\omega
    ([X,\mathcal{R}_1^{-1}\rhd Z]_\mathcal{R},\mathcal{R}_2^{-1}\rhd Y)\\
    &-\mathrm{d}\omega
    ([\mathcal{R}_{1(1)}^{-1}\rhd Y,\mathcal{R}_{1(2)}^{-1}\rhd Z]_\mathcal{R},
    \mathcal{R}_2^{-1}\rhd X)\\
    =&(\mathcal{R}_1^{-1}\rhd X)\bigg(
    (\mathcal{R}_1^{'-1}\rhd Y)
    ((\mathcal{R}_2^{'-1}\mathcal{R}_2^{-1})\rhd\omega)(Z)\\
    &-(\mathcal{R}_1^{'-1}\rhd Z)
    ((\mathcal{R}_{2(1)}^{'-1}\mathcal{R}_2^{-1})\rhd\omega)
    (\mathcal{R}_{2(2)}^{'-1}\rhd Y)\\
    &-(\mathcal{R}_2^{-1}\rhd\omega)([Y,Z]_\mathcal{R})
    \bigg)\\
    &-(\mathcal{R}_1^{-1}\rhd Y)\bigg(
    ((\mathcal{R}_1^{'-1}\mathcal{R}_{2(2)}^{-1})\rhd X)
    ((\mathcal{R}_2^{'-1}\mathcal{R}_{2(1)}^{-1})\rhd\omega)(Z)\\
    &-(\mathcal{R}_1^{'-1}\rhd Z)
    ((\mathcal{R}_{2(1)}^{'-1}\mathcal{R}_{2(1)}^{-1})\rhd\omega)
    ((\mathcal{R}_{2(2)}^{'-1}\mathcal{R}_{2(2)}^{-1})\rhd X)\\
    &-(\mathcal{R}_{2(1)}^{-1}\rhd\omega)
    ([\mathcal{R}_{2(2)}^{-1}\rhd X,Z]_\mathcal{R})
    \bigg)\\
    &+(\mathcal{R}_1^{-1}\rhd Z)\bigg(
    ((\mathcal{R}_1^{'-1}\mathcal{R}_{2(2)}^{-1})\rhd X)
    ((\mathcal{R}_2^{'-1}\mathcal{R}_{2(1)}^{-1})\rhd\omega)
    (\mathcal{R}_{2(3)}^{-1}\rhd Y)\\
    &-((\mathcal{R}_1^{'-1}\mathcal{R}_{2(3)}^{-1})\rhd Y)
    ((\mathcal{R}_{2(1)}^{'-1}\mathcal{R}_{2(1)}^{-1})\rhd\omega)
    ((\mathcal{R}_{2(2)}^{'-1}\mathcal{R}_{2(2)}^{-1})\rhd X)\\
    &-(\mathcal{R}_{2(1)}^{-1}\rhd\omega)
    (\mathcal{R}_{2(2)}^{-1}\rhd[X,Y]_\mathcal{R})
    \bigg)\\
    &-(\mathcal{R}_1^{-1}\rhd[X,Y]_\mathcal{R})
    (\mathcal{R}_2^{-1}\rhd\omega)(Z)\\
    &+(\mathcal{R}_1^{-1}\rhd Z)(\mathcal{R}_{2(1)}^{-1}\rhd\omega)
    (\mathcal{R}_{2(2)}^{-1}\rhd[X,Y]_\mathcal{R})\\
    &+\omega([[X,Y]_\mathcal{R},Z]_\mathcal{R})\\
    &+(\mathcal{R}_1^{'-1}\rhd[X,\mathcal{R}_1^{-1}\rhd Z]_\mathcal{R})
    (\mathcal{R}_2^{'-1}\rhd\omega)(\mathcal{R}_2^{-1}\rhd Y)\\
    &-((\mathcal{R}_1^{'-1}\mathcal{R}_2^{-1})\rhd Y)
    (\mathcal{R}_{2(1)}^{'-1}\rhd\omega)
    (\mathcal{R}_{2(2)}^{'-1}\rhd[X,\mathcal{R}_1^{-1}\rhd Z]_\mathcal{R})\\
    &-\omega([[X,\mathcal{R}_1^{-1}\rhd Z]_\mathcal{R},
    \mathcal{R}_2^{-1}\rhd Y]_\mathcal{R})\\
    &-((\mathcal{R}_1^{'-1}\mathcal{R}_1^{-1})\rhd[Y,Z]_\mathcal{R})
    (\mathcal{R}_2^{'-1}\rhd\omega)(\mathcal{R}_2^{-1}\rhd X)\\
    &+((\mathcal{R}_1^{'-1}\mathcal{R}_2^{-1})\rhd X)
    (\mathcal{R}_{2(1)}^{'-1}\rhd\omega)
    ((\mathcal{R}_{2(2)}^{'-1}\mathcal{R}_1^{-1})\rhd[Y,Z]_\mathcal{R})\\
    &+\omega([[\mathcal{R}_{1(1)}^{-1}\rhd Y,
    \mathcal{R}_{1(2)}^{-1}\rhd Z]_\mathcal{R},
    \mathcal{R}_2^{-1}\rhd X]_\mathcal{R})\\
    =&0,
\end{align*}
\end{allowdisplaybreaks}
where those 18 terms cancel in the following way: 12,15,18 because of the
braided Jacobi identity, 1,4,10 and 2,7,13 and 5,8,16 by the definition of
the braided commutator, while 3,17 and 6,14 and 9,11 simply cancel each other.
\end{proof}
We define the \textit{braided differential forms}
$\Omega^\bullet_\mathcal{R}(\mathcal{A})$ on $\mathcal{A}$ to be the smallest
differential graded subalgebra of
$\underline{\Omega}^\bullet_\mathcal{R}(\mathcal{A})$
such that $\mathcal{A}\subseteq\Omega^\bullet_\mathcal{R}(\mathcal{A})$.
Every 
element in $\Omega^k_\mathcal{R}(\mathcal{A})$
can be written as a finite sum of elements of the form 
$a_0\mathrm{d}a_1\wedge_\mathcal{R}\ldots\wedge_\mathcal{R}\mathrm{d}a_k$, for
$a_i\in\mathcal{A}$.

\section{The Braided Cartan Calculus}\label{Sec3.4}

We enter the main section of this chapter. In the following lines we
construct a noncommutative Cartan calculus for any braided commutative
algebra $\mathcal{A}$. Building on Section~\ref{Sec3.2} and
Section~\ref{Sec3.3}, we define the braided Lie derivative as the
graded braided commutator of the braided insertion and the braided
de Rham differential. It is a well-defined morphism in the category of
equivariant braided symmetric bimodules and it completes the data of the
braided Cartan calculus. Using graded braided commutators and the
braided Schouten-Nijenhuis bracket we relate
the braided Lie derivative, the braided insertion and the braided
de Rham differential to each other. The result is a generalization
of the usual relations of the Cartan calculus in the category
of equivariant braided symmetric bimodules. Since there is no choice
involved in our construction, as in differential geometry, it seems
justified to call the resulting data \textit{the} braided Cartan calculus
of $\mathcal{A}$. It is a noncommutative Cartan calculus with the difference
that we are not restricted to incorporate modules of the center of
$\mathcal{A}$ but are free to work with modules over the whole algebra
instead.

The \textit{graded braided commutator}
of two homogeneous maps
$\Phi,\Psi\colon\mathfrak{G}^\bullet\rightarrow\mathfrak{G}^\bullet$
of degree $k$ and $\ell$ between braided Graßmann algebras is defined by
$$
[\Phi,\Psi]_\mathcal{R}
=\Phi\circ\Psi-(-1)^{k\ell}(\mathcal{R}_1^{-1}\rhd\Psi)
\circ(\mathcal{R}_2^{-1}\rhd\Phi).
$$
If $\Phi$ or $\Psi$ is equivariant, the graded braided
commutator coincides with the graded commutator.
If $\Phi,\Psi\colon\mathfrak{X}^\bullet_\mathcal{R}(\mathcal{A})
\otimes\mathfrak{G}^\bullet\rightarrow\mathfrak{G}^\bullet$
are two $H$-equivariant maps such that
$\Phi_X,\Psi_Y\colon\mathfrak{G}^\bullet\rightarrow\mathfrak{G}^\bullet$
are homogeneous of degree $k$ and $\ell$
for any $X\in\mathfrak{X}^k_\mathcal{R}(\mathcal{A})$ and
$Y\in\mathfrak{X}^\ell_\mathcal{R}(\mathcal{A})$, respectively, 
the graded braided commutator of $\Phi_X$ and $\Psi_Y$ reads
$$
[\Phi_X,\Psi_Y]_\mathcal{R}
=\Phi_X\Psi_Y
-(-1)^{k\ell}\Psi_{\mathcal{R}_1^{-1}\rhd Y}\Phi_{\mathcal{R}_2^{-1}\rhd X}.
$$
For any $X\in\mathfrak{X}^k_\mathcal{R}(\mathcal{A})$ we define the
\textit{braided Lie derivative} $\mathscr{L}^\mathcal{R}_X\colon
\Omega^\bullet_\mathcal{R}(\mathcal{A})
\rightarrow\Omega^{\bullet-(k-1)}_\mathcal{R}(\mathcal{A})$ by
$\mathscr{L}^\mathcal{R}_X=[\mathrm{i}^\mathcal{R}_X,\mathrm{d}]_\mathcal{R}$.
It is a homogeneous map of degree $-(k-1)$ and a braided derivation of 
$\Omega^\bullet_\mathcal{R}(\mathcal{A})$ for $k=1$. Moreover, it is easy to check
that
$$
\mathscr{L}^\mathcal{R}\colon\mathfrak{X}^\bullet_\mathcal{R}(\mathcal{A})
\otimes\Omega^\bullet_\mathcal{R}(\mathcal{A})
\rightarrow\Omega^\bullet_\mathcal{R}(\mathcal{A})
$$
is $H$-equivariant. We prove an auxiliary lemma in analogy to
Lemma~\ref{lemma07}.
\begin{lemma}\label{lemma10}
One has
$$
\mathscr{L}^\mathcal{R}_a\omega=-(\mathrm{d}a)\wedge_\mathcal{R}\omega
\text{ and }
\mathscr{L}^\mathcal{R}_{X\wedge_\mathcal{R}Y}
=\mathrm{i}^\mathcal{R}_X\mathscr{L}^\mathcal{R}_Y
+(-1)^\ell\mathscr{L}^\mathcal{R}_X\mathrm{i}^\mathcal{R}_Y
$$
for all $a\in\mathcal{A}$, 
$\omega\in\Omega^\bullet_\mathcal{R}(\mathcal{A})$,
$X\in\mathfrak{X}^\bullet_\mathcal{R}(\mathcal{A})$ and
$Y\in\mathfrak{X}^\ell_\mathcal{R}(\mathcal{A})$. If
$X,Y\in\mathfrak{X}^1_\mathcal{R}(\mathcal{A})$
$$
[\mathscr{L}^\mathcal{R}_X,\mathrm{i}^\mathcal{R}_Y]_\mathcal{R}
=\mathrm{i}^\mathcal{R}_{[X,Y]_\mathcal{R}}
$$
holds.
\end{lemma}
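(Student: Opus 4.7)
The plan is to handle the three identities separately, exploiting the crucial fact that the differential $\mathrm{d}$ is $H$-equivariant (in fact an integral for the adjoint action), so that whenever $\mathrm{d}$ appears as one slot of the graded braided commutator, it collapses to the ordinary graded commutator.

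For the first identity, I would simply unfold the definition. Since $a\in\mathfrak{X}^0_\mathcal{R}(\mathcal{A})=\mathcal{A}$ and $\mathrm{i}^\mathcal{R}_a=\mathrm{i}^\mathcal{R}_{a\wedge_\mathcal{R}X}\circ(\mathrm{i}^\mathcal{R}_X)^{-1}$ identifies $\mathrm{i}^\mathcal{R}_a$ with left multiplication by $a$ (via $\mathrm{i}^\mathcal{R}_{a\cdot X}\omega=a\cdot\mathrm{i}^\mathcal{R}_X\omega$), and since $\mathrm{d}$ is equivariant,
\begin{equation*}
\mathscr{L}^\mathcal{R}_a\omega=[\mathrm{i}^\mathcal{R}_a,\mathrm{d}]_\mathcal{R}\omega=a\cdot\mathrm{d}\omega-\mathrm{d}(a\cdot\omega)=-\mathrm{d}a\wedge_\mathcal{R}\omega,
\end{equation*}
where the last step uses that $\mathrm{d}$ is a graded derivation of $\wedge_\mathcal{R}$ (as established in the previous section).

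For the second identity, the key observation is again that $\mathrm{d}$ being equivariant reduces the graded braided commutator $[\mathrm{i}^\mathcal{R}_{X\wedge_\mathcal{R}Y},\mathrm{d}]_\mathcal{R}$ to the ordinary graded commutator $[\mathrm{i}^\mathcal{R}_X\mathrm{i}^\mathcal{R}_Y,\mathrm{d}]$, to which the usual graded Leibniz rule applies. Since $\mathrm{i}^\mathcal{R}_X$ has degree $-k$ and $\mathrm{i}^\mathcal{R}_Y$ has degree $-\ell$, this yields $\mathrm{i}^\mathcal{R}_X[\mathrm{i}^\mathcal{R}_Y,\mathrm{d}]+(-1)^{(-\ell)(1)}[\mathrm{i}^\mathcal{R}_X,\mathrm{d}]\mathrm{i}^\mathcal{R}_Y=\mathrm{i}^\mathcal{R}_X\mathscr{L}^\mathcal{R}_Y+(-1)^{\ell}\mathscr{L}^\mathcal{R}_X\mathrm{i}^\mathcal{R}_Y$ as claimed.

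The third identity requires more care and is the main obstacle. I would argue that both sides of $[\mathscr{L}^\mathcal{R}_X,\mathrm{i}^\mathcal{R}_Y]_\mathcal{R}=\mathrm{i}^\mathcal{R}_{[X,Y]_\mathcal{R}}$ are graded braided derivations of $(\Omega^\bullet_\mathcal{R}(\mathcal{A}),\wedge_\mathcal{R})$ of degree $-1$: the right-hand side because $[X,Y]_\mathcal{R}\in\mathfrak{X}^1_\mathcal{R}(\mathcal{A})$, and the left-hand side because it is the graded braided commutator of the degree-$0$ braided derivation $\mathscr{L}^\mathcal{R}_X$ with the degree-$(-1)$ braided derivation $\mathrm{i}^\mathcal{R}_Y$ (the graded braided Jacobi identity for graded braided commutators guarantees that this closure is again a graded braided derivation, using equivariance of $X$ and $Y$ as degree-$1$ elements). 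Since $\Omega^\bullet_\mathcal{R}(\mathcal{A})$ is generated by $\mathcal{A}$ and $\mathrm{d}\mathcal{A}$, it suffices to verify agreement on $a$ and $\mathrm{d}a$. On $a\in\mathcal{A}$ both sides vanish trivially ($\mathrm{i}^\mathcal{R}_Y a=0$ and $\mathscr{L}^\mathcal{R}_X a=X(a)\in\mathcal{A}$, so the insertion kills it). On $\mathrm{d}a$, a short computation using $\mathrm{i}^\mathcal{R}_Y(\mathrm{d}a)=Y(a)$, the identification $\mathcal{R}_2^{-1}\rhd\mathscr{L}^\mathcal{R}_X=\mathscr{L}^\mathcal{R}_{\mathcal{R}_2^{-1}\rhd X}$ (which follows from $H$-equivariance of $\mathscr{L}^\mathcal{R}$ and the antipode axiom), and $\mathscr{L}^\mathcal{R}_X(\mathrm{d}a)=\mathrm{d}(X(a))$ (from $[\mathrm{i}^\mathcal{R}_X,\mathrm{d}]_\mathcal{R}$ reducing to the ordinary graded commutator and $\mathrm{d}^2=0$) yields
\begin{equation*}
[\mathscr{L}^\mathcal{R}_X,\mathrm{i}^\mathcal{R}_Y]_\mathcal{R}(\mathrm{d}a)=X(Y(a))-(\mathcal{R}_1^{-1}\rhd Y)((\mathcal{R}_2^{-1}\rhd X)(a))=[X,Y]_\mathcal{R}(a)=\mathrm{i}^\mathcal{R}_{[X,Y]_\mathcal{R}}(\mathrm{d}a),
\end{equation*}
which finishes the proof. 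The subtle point to get right is the precise form of the graded braided derivation property of the left-hand side, since this is exactly where the braiding in the definition of $[\cdot,\cdot]_\mathcal{R}$ becomes unavoidable.
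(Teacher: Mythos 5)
Your proof is correct and follows essentially the same route as the paper: the first two identities by unfolding $\mathscr{L}^\mathcal{R}=[\mathrm{i}^\mathcal{R},\mathrm{d}]_\mathcal{R}$ using the equivariance of $\mathrm{d}$ and the graded Leibniz rule of the commutator, and the third by checking low degrees and extending via the graded braided derivation property of $[\mathscr{L}^\mathcal{R}_X,\mathrm{i}^\mathcal{R}_Y]_\mathcal{R}$. The only (harmless) variation is that you verify the third identity on exact $1$-forms $\mathrm{d}a$ and appeal to generation of $\Omega^\bullet_\mathcal{R}(\mathcal{A})$ by $\mathcal{A}$ and $\mathrm{d}\mathcal{A}$, where the paper computes on a general $\omega\in\Omega^1_\mathcal{R}(\mathcal{A})$; also, your heuristic $\mathrm{i}^\mathcal{R}_a=\mathrm{i}^\mathcal{R}_{a\wedge_\mathcal{R}X}\circ(\mathrm{i}^\mathcal{R}_X)^{-1}$ should just be replaced by the fact that $\mathrm{i}^\mathcal{R}_a\omega=a\wedge_\mathcal{R}\omega$, which is what you actually use.
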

\begin{proof}
By the very definition of the braided Lie derivative
\begin{align*}
    \mathscr{L}^\mathcal{R}_a\omega
    =&\mathrm{i}^\mathcal{R}_a\mathrm{d}\omega
    -(-1)^{0\cdot 1}\mathrm{d}(\mathrm{i}^\mathcal{R}_a\omega)
    =a\wedge_\mathcal{R}\mathrm{d}\omega
    -((\mathrm{d}a)\wedge_\mathcal{R}\omega
    +(-1)^{0}a\wedge_\mathcal{R}\mathrm{d}\omega)\\
    =&-(\mathrm{d}a)\wedge_\mathcal{R}\omega
\end{align*}
follows. From the graded braided Leibniz rule of the graded braided commutator
we obtain
\begin{align*}
    \mathscr{L}^\mathcal{R}_{X\wedge_\mathcal{R}Y}
    =&[\mathrm{i}^\mathcal{R}_{X\wedge_\mathcal{R}Y},\mathrm{d}]_\mathcal{R}
    =[\mathrm{i}^\mathcal{R}_{X}\mathrm{i}^\mathcal{R}_{Y},
    \mathrm{d}]_\mathcal{R}
    =\mathrm{i}^\mathcal{R}_{X}[\mathrm{i}^\mathcal{R}_{Y},
    \mathrm{d}]_\mathcal{R}
    +(-1)^{-1\cdot\ell}[\mathrm{i}^\mathcal{R}_{X},
    \mathrm{d}]_\mathcal{R}\mathrm{i}^\mathcal{R}_{Y}\\
    =&\mathrm{i}^\mathcal{R}_X\mathscr{L}^\mathcal{R}_Y
    +(-1)^\ell\mathscr{L}^\mathcal{R}_X\mathrm{i}^\mathcal{R}_Y.
\end{align*}
The missing formula trivially holds on braided differential forms of degree $0$,
while for $\omega\in\Omega^1_\mathcal{R}(\mathcal{A})$
\begin{align*}
    [\mathscr{L}^\mathcal{R}_X,\mathrm{i}^\mathcal{R}_Y]_\mathcal{R}\omega
    =&\mathscr{L}^\mathcal{R}_X\mathrm{i}^\mathcal{R}_Y\omega
    -(-1)^{0\cdot 1}\mathrm{i}^\mathcal{R}_{\mathcal{R}_1^{-1}\rhd Y}
    \mathscr{L}^\mathcal{R}_{\mathcal{R}_2^{-1}\rhd X}\omega\\
    =&(\mathrm{i}^\mathcal{R}_X\mathrm{d}+\mathrm{d}\mathrm{i}^\mathcal{R}_X)
    \mathrm{i}^\mathcal{R}_Y\omega
    -\mathrm{i}^\mathcal{R}_{\mathcal{R}_1^{-1}\rhd Y}
    (\mathrm{i}^\mathcal{R}_{\mathcal{R}_2^{-1}\rhd X}\mathrm{d}
    +\mathrm{d}\mathrm{i}^\mathcal{R}_{\mathcal{R}_2^{-1}\rhd X})\omega\\
    =&X(\mathrm{i}^\mathcal{R}_Y\omega)+0
    +(\mathrm{d}((\mathcal{R}_1^{''-1}\mathcal{R}_1^{'-1})\rhd\omega))
    ((\mathcal{R}_2^{''-1}\mathcal{R}_1^{-1})\rhd Y,
    (\mathcal{R}_2^{'-1}\mathcal{R}_2^{-1})\rhd X)\\
    &-(\mathcal{R}_1^{-1}\rhd Y)
    (\mathrm{i}^\mathcal{R}_{\mathcal{R}_2^{-1}\rhd X}\omega)\\
    =&\mathrm{i}^\mathcal{R}_{[X,Y]}\omega
\end{align*}
for all $X,Y\in\mathfrak{X}^1_\mathcal{R}(\mathcal{A})$. Since
$[\mathscr{L}^\mathcal{R}_X,\mathrm{i}^\mathcal{R}_Y]_\mathcal{R}$
is a graded braided derivation this is sufficient to conclude the proof
of the lemma.
\end{proof}
Now we are prepared to prove the main theorem of this section.
It assigns to any braided commutative left $H$-module algebra
$\mathcal{A}$ a noncommutative Cartan calculus, which we call
\textit{the braided Cartan calculus} of $\mathcal{A}$ in the
following.
\begin{theorem}[Braided Cartan calculus]\label{ThmBraidedCC}
Let $\mathcal{A}$ be a braided commutative left $H$-module algebra
and consider the braided differential forms 
$(\Omega^\bullet_\mathcal{R}(\mathcal{A}),\wedge_\mathcal{R},\mathrm{d})$ 
and braided multivector fields
$(\mathfrak{X}^\bullet_\mathcal{R}(\mathcal{A}),\wedge_\mathcal{R},
\llbracket\cdot,\cdot\rrbracket_\mathcal{R})$ on $\mathcal{A}$.
The homogeneous maps
$$
\mathscr{L}^\mathcal{R}_X\colon\Omega^\bullet_\mathcal{R}(\mathcal{A})
\rightarrow\Omega^{\bullet-(k-1)}_\mathcal{R}(\mathcal{A})
\text{ and }
\mathrm{i}^\mathcal{R}_X\colon\Omega^\bullet_\mathcal{R}(\mathcal{A})
\rightarrow\Omega^{\bullet-k}_\mathcal{R}(\mathcal{A}),
$$
where $X\in\mathfrak{X}^k_\mathcal{R}(\mathcal{A})$,
and $\mathrm{d}\colon
\Omega^\bullet_\mathcal{R}(\mathcal{A})
\rightarrow\Omega^{\bullet+1}_\mathcal{R}(\mathcal{A})$ satisfy
\begin{equation}\label{BraidedCC}
\begin{split}
        [\mathscr{L}^\mathcal{R}_X,\mathscr{L}^\mathcal{R}_Y]_\mathcal{R}
        =&\mathscr{L}^\mathcal{R}_{\llbracket X,Y\rrbracket_\mathcal{R}},\\
        [\mathscr{L}^\mathcal{R}_X,\mathrm{i}^\mathcal{R}_Y]_\mathcal{R}
        =&\mathrm{i}^\mathcal{R}_{\llbracket X,Y\rrbracket_\mathcal{R}},\\
        [\mathscr{L}^\mathcal{R}_X,\mathrm{d}]_\mathcal{R}=&0,
\end{split}
\hspace{1.5cm}
\begin{split}
        [\mathrm{i}^\mathcal{R}_X,\mathrm{i}^\mathcal{R}_Y]_\mathcal{R}=&0,\\
        [\mathrm{i}^\mathcal{R}_X,\mathrm{d}]_\mathcal{R}
        =&\mathscr{L}^\mathcal{R}_X,\\
        [\mathrm{d},\mathrm{d}]_\mathcal{R}=&0,
\end{split}
\end{equation}
for all $X,Y\in\mathfrak{X}^\bullet_\mathcal{R}(\mathcal{A})$.
\end{theorem}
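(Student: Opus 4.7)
The plan is to mimic the six-step derivation of the classical Cartan calculus from Section~\ref{Sec3.1}, handling the $\mathcal{R}$-matrix carefully and proving the identities in reverse order. I would first observe a recurring simplification: the map $\mathrm{d}$ is $H$-equivariant (being an integral for the adjoint action), so whenever $\mathrm{d}$ appears in a graded braided commutator, that bracket collapses to the ordinary graded commutator. Similarly, all of the operators involved intertwine with the $H$-action when one treats $X$ as a variable, so standard graded-derivation arguments remain available.

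First I would dispose of the easy identities. The relation $[\mathrm{d},\mathrm{d}]_\mathcal{R}=0$ is just $2\mathrm{d}^2=0$, already established. The identity $[\mathrm{i}^\mathcal{R}_X,\mathrm{d}]_\mathcal{R}=\mathscr{L}^\mathcal{R}_X$ is the definition of the braided Lie derivative. For $[\mathrm{i}^\mathcal{R}_X,\mathrm{i}^\mathcal{R}_Y]_\mathcal{R}=0$, unfolding the graded braided commutator and using $\mathrm{i}^\mathcal{R}_{X\wedge_\mathcal{R}Y}=\mathrm{i}^\mathcal{R}_X\mathrm{i}^\mathcal{R}_Y$, this reduces to the graded braided commutativity of the wedge on $\mathfrak{X}^\bullet_\mathcal{R}(\mathcal{A})$. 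Next, $[\mathscr{L}^\mathcal{R}_X,\mathrm{d}]_\mathcal{R}=0$ follows from the graded Jacobi identity applied to $[[\mathrm{i}^\mathcal{R}_X,\mathrm{d}],\mathrm{d}]$, combined with $[\mathrm{d},\mathrm{d}]=0$ and the equivariance of $\mathrm{d}$.

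The substantive work lies in the two remaining identities. For $[\mathscr{L}^\mathcal{R}_X,\mathrm{i}^\mathcal{R}_Y]_\mathcal{R}=\mathrm{i}^\mathcal{R}_{\llbracket X,Y\rrbracket_\mathcal{R}}$, I would proceed by double induction on the degrees of $X$ and $Y$. The base case $X,Y\in\mathfrak{X}^1_\mathcal{R}(\mathcal{A})$ is Lemma~\ref{lemma10}. The case $X\in\mathcal{A}$ requires checking the braided analogue of the classical identity $\mathrm{i}_{\llbracket a,Y\rrbracket}=[\mathscr{L}_a,\mathrm{i}_Y]$, which I would prove by induction on the length of a factorizing $Y=Y_1\wedge_\mathcal{R}\cdots\wedge_\mathcal{R}Y_\ell$, using $\mathscr{L}^\mathcal{R}_a\omega=-(\mathrm{d}a)\wedge_\mathcal{R}\omega$ from Lemma~\ref{lemma10}, the graded braided Leibniz rule of $\llbracket\cdot,\cdot\rrbracket_\mathcal{R}$, and the fact that $\mathrm{i}^\mathcal{R}_{Y_i}$ is a graded braided derivation. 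The inductive step from $X\in\mathfrak{X}^k$ to $X\wedge_\mathcal{R}Y'\in\mathfrak{X}^{k+1}$ then follows by expanding $\mathscr{L}^\mathcal{R}_{X\wedge_\mathcal{R}Y'}=\mathrm{i}^\mathcal{R}_X\mathscr{L}^\mathcal{R}_{Y'}+(-1)^{|Y'|}\mathscr{L}^\mathcal{R}_X\mathrm{i}^\mathcal{R}_{Y'}$ from Lemma~\ref{lemma10}, applying the graded braided Leibniz rule of $[\cdot,\cdot]_\mathcal{R}$, and using $[\mathrm{i}^\mathcal{R}_X,\mathrm{i}^\mathcal{R}_Y]_\mathcal{R}=0$.

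Finally, $[\mathscr{L}^\mathcal{R}_X,\mathscr{L}^\mathcal{R}_Y]_\mathcal{R}=\mathscr{L}^\mathcal{R}_{\llbracket X,Y\rrbracket_\mathcal{R}}$ is obtained by writing $\mathscr{L}^\mathcal{R}_Y=[\mathrm{i}^\mathcal{R}_Y,\mathrm{d}]_\mathcal{R}$ and applying the graded Jacobi identity of the graded braided commutator, which yields two terms: one is $[[\mathscr{L}^\mathcal{R}_X,\mathrm{i}^\mathcal{R}_Y]_\mathcal{R},\mathrm{d}]_\mathcal{R}=[\mathrm{i}^\mathcal{R}_{\llbracket X,Y\rrbracket_\mathcal{R}},\mathrm{d}]_\mathcal{R}=\mathscr{L}^\mathcal{R}_{\llbracket X,Y\rrbracket_\mathcal{R}}$ and the other vanishes by the already-established $[\mathscr{L}^\mathcal{R}_X,\mathrm{d}]_\mathcal{R}=0$. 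The main obstacle throughout will be bookkeeping: verifying that each manipulation involving the braided commutator commutes correctly with the universal $\mathcal{R}$-matrix insertions. The equivariance of $\mathrm{d}$ (and hence of $\mathscr{L}^\mathcal{R}_X$ in $X$) plus the fact that $\mathrm{i}^\mathcal{R}$ and $\llbracket\cdot,\cdot\rrbracket_\mathcal{R}$ are $H$-equivariant will repeatedly allow me to reduce graded braided commutators to ordinary graded commutators, which is exactly what makes the classical calculation transfer almost verbatim to this setting.
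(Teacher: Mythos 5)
Your proposal follows the paper's proof essentially verbatim: the same reverse-order treatment of the six identities, the same reduction of the easy cases to the definition of $\mathscr{L}^\mathcal{R}$, the multiplicativity of $\mathrm{i}^\mathcal{R}$ and the graded braided Jacobi identity, and the same double induction via Lemma~\ref{lemma10} for $[\mathscr{L}^\mathcal{R}_X,\mathrm{i}^\mathcal{R}_Y]_\mathcal{R}=\mathrm{i}^\mathcal{R}_{\llbracket X,Y\rrbracket_\mathcal{R}}$, with the final identity obtained from the Jacobi identity and $[\mathscr{L}^\mathcal{R}_X,\mathrm{d}]_\mathcal{R}=0$. Your observation that the equivariance of $\mathrm{d}$ collapses the relevant braided commutators to ordinary graded commutators is exactly the simplification the paper invokes at the corresponding steps.
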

\begin{proof}
We are going to prove the above formulas in reversed order. Since
$\mathrm{d}$ is a differential it follows that
$[\mathrm{d},\mathrm{d}]_\mathcal{R}=2\mathrm{d}^2=0$. Recall that
there is no braiding appearing here since $\mathrm{d}$ is equivariant.
By the definition of the braided Lie derivative 
$[\mathrm{i}^\mathcal{R}_X,\mathrm{d}]_\mathcal{R}
=\mathscr{L}^\mathcal{R}_X$ holds for all $X\in\mathfrak{X}^\bullet_\mathcal{R}
(\mathcal{A})$. Let $X\in\mathfrak{X}^k_\mathcal{R}(\mathcal{A})$
and $Y\in\mathfrak{X}^\ell_\mathcal{R}(\mathcal{A})$. Then
$$
[\mathrm{i}^\mathcal{R}_X,\mathrm{i}^\mathcal{R}_Y]_\mathcal{R}
=\mathrm{i}^\mathcal{R}_X\mathrm{i}^\mathcal{R}_Y
-(-1)^{k\ell}\mathrm{i}^\mathcal{R}_{\mathcal{R}_1^{-1}\rhd Y}
\mathrm{i}^\mathcal{R}_{\mathcal{R}_2^{-1}\rhd X}
=\mathrm{i}^\mathcal{R}_{X\wedge_\mathcal{R}Y
-(-1)^{k\ell}(\mathcal{R}_1^{-1}\rhd Y)
\wedge_\mathcal{R}(\mathcal{R}_2^{-1}\rhd X)}
=0
$$
follows by the very definition of
$\mathrm{i}^\mathcal{R}_{X\wedge_\mathcal{R}Y}
=\mathrm{i}^\mathcal{R}_X\mathrm{i}^\mathcal{R}_Y$.
Using the graded braided Jacobi identity of the graded braided commutator we obtain
\begin{align*}
    [[\mathrm{i}^\mathcal{R}_X,\mathrm{d}]_\mathcal{R},\mathrm{d}]_\mathcal{R}
    =[\mathrm{i}^\mathcal{R}_X,[\mathrm{d},\mathrm{d}]_\mathcal{R}]_\mathcal{R}
    +(-1)^{1\cdot 1}
    [[\mathrm{i}^\mathcal{R}_X,\mathrm{d}]_\mathcal{R},\mathrm{d}]_\mathcal{R}
    =-[[\mathrm{i}^\mathcal{R}_X,
    \mathrm{d}]_\mathcal{R},\mathrm{d}]_\mathcal{R}
\end{align*}
for all $X\in\mathfrak{X}^\bullet_\mathcal{R}(\mathcal{A})$, which implies
$[\mathscr{L}^\mathcal{R}_X,\mathrm{d}]_\mathcal{R}=0$.
Again, there is no braiding appearing since $\mathrm{d}$ is equivariant.
Recall that the braided Schouten-Nijenhuis bracket of a homogeneous element
$Y=Y_1\wedge_\mathcal{R}\cdots\wedge_\mathcal{R}Y_\ell\in
\mathfrak{X}^\ell_\mathcal{R}(\mathcal{A})$ with $a\in\mathcal{A}$ and
$X\in\mathfrak{X}^1_\mathcal{R}(\mathcal{A})$ read
\begin{align*}
    \llbracket a,Y\rrbracket_\mathcal{R}
    =&\sum_{j=1}^\ell(-1)^{j+1}(\mathcal{R}_{1(1)}^{-1}\rhd Y_1)\wedge_\mathcal{R}
    \cdots\wedge_\mathcal{R}(\mathcal{R}_{1(j-1)}^{-1}\rhd Y_{j-1})\\
    &\wedge_\mathcal{R}\llbracket\mathcal{R}_2^{-1}\rhd a,Y_j\rrbracket_\mathcal{R}
    \wedge_\mathcal{R}Y_{j+1}\wedge_\mathcal{R}\cdots\wedge_\mathcal{R}Y_\ell
\end{align*}
and
\begin{align*}
    \llbracket X,Y\rrbracket_\mathcal{R}
    =&\sum_{j=1}^\ell(\mathcal{R}_{1(1)}^{-1}\rhd Y_1)\wedge_\mathcal{R}
    \cdots\wedge_\mathcal{R}(\mathcal{R}_{1(j-1)}^{-1}\rhd Y_{j-1})\\
    &\wedge_\mathcal{R}[\mathcal{R}_2^{-1}\rhd X,Y_j]_\mathcal{R}
    \wedge_\mathcal{R}Y_{j+1}\wedge_\mathcal{R}\cdots\wedge_\mathcal{R}Y_\ell,
\end{align*}
respectively. If $\ell=1$ we obtain
\begin{align*}
    [\mathscr{L}^\mathcal{R}_a,\mathrm{i}^\mathcal{R}_Y]_\mathcal{R}\omega
    =&(\mathscr{L}^\mathcal{R}_a\mathrm{i}^\mathcal{R}_Y
    -(-1)^{(-1)\cdot 1}\mathrm{i}^\mathcal{R}_{\mathcal{R}_1^{-1}\rhd Y}
    \mathscr{L}^\mathcal{R}_{\mathcal{R}_2^{-1}\rhd a})\omega\\
    =&-\mathrm{d}a\wedge_\mathcal{R}\mathrm{i}^\mathcal{R}_Y\omega
    -\mathrm{i}^\mathcal{R}_{\mathcal{R}_1^{-1}\rhd Y}(
    \mathrm{d}(\mathcal{R}_2^{-1}\rhd a)\wedge_\mathcal{R}\omega)\\
    =&-\mathrm{d}a\wedge_\mathcal{R}\mathrm{i}^\mathcal{R}_Y\omega
    -(\mathcal{R}_1^{-1}\rhd Y)(\mathcal{R}_2^{-1}\rhd a)\wedge_\mathcal{R}\omega\\
    &+\mathrm{d}((\mathcal{R}_1^{'-1}\mathcal{R}_2^{-1})\rhd a)\wedge_\mathcal{R}
    \mathrm{i}^\mathcal{R}_{(\mathcal{R}_2^{'-1}\mathcal{R}_1^{-1})\rhd Y}\omega\\
    =&\mathrm{i}^\mathcal{R}_{\llbracket a,Y\rrbracket_\mathcal{R}}\omega
\end{align*}
for all $\omega\in\Omega^\bullet_\mathcal{R}(\mathcal{A})$
by Lemma~\ref{lemma10}. Using the graded braided Leibniz rule
this extends to any $\ell>1$, namely
\begin{align*}
    [\mathscr{L}^\mathcal{R}_a,
    \mathrm{i}^\mathcal{R}_{Y_1\wedge_\mathcal{R}\cdots
    \wedge_\mathcal{R}Y_\ell}]_\mathcal{R}
    =&[\mathscr{L}^\mathcal{R}_a,\mathrm{i}^\mathcal{R}_{Y_1}]_\mathcal{R}
    \mathrm{i}^\mathcal{R}_{Y_2\wedge_\mathcal{R}\cdots\wedge_\mathcal{R}Y_\ell}
    +(-1)^{(-1)\cdot 1}\mathrm{i}^\mathcal{R}_{\mathcal{R}_1^{-1}\rhd Y_1}
    [\mathscr{L}^\mathcal{R}_{\mathcal{R}_2^{-1}\rhd a},
    \mathrm{i}^\mathcal{R}_{Y_2\wedge_\mathcal{R}\cdots\wedge_\mathcal{R}Y_\ell}]\\
    =&\mathrm{i}^\mathcal{R}_{\llbracket a,Y_1\rrbracket_\mathcal{R}
    \wedge_\mathcal{R}Y_2\wedge_\mathcal{R}\cdots\wedge_\mathcal{R}Y_\ell}
    -\mathrm{i}^\mathcal{R}_{\mathcal{R}_1^{-1}\rhd Y_1}
    [\mathscr{L}^\mathcal{R}_{\mathcal{R}_2^{-1}\rhd a},
    \mathrm{i}^\mathcal{R}_{Y_2\wedge_\mathcal{R}\cdots\wedge_\mathcal{R}Y_\ell}]\\
    =&\cdots
    =\mathrm{i}^\mathcal{R}_{\llbracket a,Y\rrbracket_\mathcal{R}}.
\end{align*}
Again by Lemma~\ref{lemma10} we know that $[\mathscr{L}^\mathcal{R}_X,
\mathrm{i}^\mathcal{R}_Y]_\mathcal{R}=\mathrm{i}^\mathcal{R}_{[X,Y]_\mathcal{R}}$
holds for $\ell=1$ and $X\in\mathfrak{X}^1_\mathcal{R}(\mathcal{A})$.
Using the graded braided Leibniz rule this extends to all 
$Y\in\mathfrak{X}^\bullet_\mathcal{R}(\mathcal{A})$.
Assume now that $[\mathscr{L}^\mathcal{R}_X,\mathrm{i}^\mathcal{R}_Z]_\mathcal{R}
=\mathrm{i}^\mathcal{R}_{\llbracket X,Z\rrbracket_\mathcal{R}}$
holds for all $X\in\mathfrak{X}^k_\mathcal{R}(\mathcal{A})$ and
$Z\in\mathfrak{X}^\bullet_\mathcal{R}(\mathcal{A})$ for a fixed
$k>0$. Then, for all $X\in\mathfrak{X}^k_\mathcal{R}(\mathcal{A})$,
$Y\in\mathfrak{X}^1_\mathcal{R}(\mathcal{A})$ and $Z\in
\mathfrak{X}^m_\mathcal{R}(\mathcal{A})$ it follows that
\begin{align*}
    [\mathscr{L}^\mathcal{R}_{X\wedge_\mathcal{R}Y},\mathrm{i}^\mathcal{R}_Z
    ]_\mathcal{R}
    =&[\mathrm{i}^\mathcal{R}_X\mathscr{L}^\mathcal{R}_Y
    -\mathscr{L}^\mathcal{R}_X\mathrm{i}^\mathcal{R}_Y,
    \mathrm{i}^\mathcal{R}_Z]_\mathcal{R}\\
    =&\mathrm{i}^\mathcal{R}_X[\mathscr{L}^\mathcal{R}_Y,
    \mathrm{i}^\mathcal{R}_Z]_\mathcal{R}
    +[\mathrm{i}^\mathcal{R}_X,
    \mathrm{i}^\mathcal{R}_{\mathcal{R}_1^{-1}\rhd Z}]_\mathcal{R}
    \mathscr{L}^\mathcal{R}_{\mathcal{R}_2^{-1}\rhd Y}\\
    &-\mathscr{L}^\mathcal{R}_X[\mathrm{i}^\mathcal{R}_Y,
    \mathrm{i}^\mathcal{R}_Z]_\mathcal{R}
    -(-1)^m[\mathscr{L}^\mathcal{R}_X,
    \mathrm{i}^\mathcal{R}_{\mathcal{R}_1^{-1}\rhd Z}]_\mathcal{R}
    \mathrm{i}^\mathcal{R}_{\mathcal{R}_2^{-1}\rhd Y}\\
    =&\mathrm{i}^\mathcal{R}_X[\mathscr{L}^\mathcal{R}_Y,
    \mathrm{i}^\mathcal{R}_Z]_\mathcal{R}
    -(-1)^m[\mathscr{L}^\mathcal{R}_X,
    \mathrm{i}^\mathcal{R}_{\mathcal{R}_1^{-1}\rhd Z}]_\mathcal{R}
    \mathrm{i}^\mathcal{R}_{\mathcal{R}_2^{-1}\rhd Y}\\
    =&\mathrm{i}^\mathcal{R}_{X}\mathrm{i}^\mathcal{R}_{
    \llbracket Y,Z\rrbracket_\mathcal{R}}
    -(-1)^{m}\mathrm{i}^\mathcal{R}_{\llbracket X,
    \mathcal{R}_1^{-1}\rhd Z\rrbracket_\mathcal{R}}
    \mathrm{i}^\mathcal{R}_{(\mathcal{R}_2^{-1}\rhd Y)}\\
    =&\mathrm{i}^\mathcal{R}_{X\wedge_\mathcal{R}
    \llbracket Y,Z\rrbracket_\mathcal{R}}
    +(-1)^{m-1}\mathrm{i}^\mathcal{R}_{\llbracket X,
    \mathcal{R}_1^{-1}\rhd Z\rrbracket_\mathcal{R}\wedge_\mathcal{R}
    (\mathcal{R}_2^{-1}\rhd Y)}\\
    =&\mathrm{i}^\mathcal{R}_{\llbracket X\wedge_\mathcal{R}Y,
    Z\rrbracket_\mathcal{R}}
\end{align*}
for all $X\in\mathfrak{X}^k_\mathcal{R}(\mathcal{A})$,
$Y\in\mathfrak{X}^1_\mathcal{R}(\mathcal{A})$ and
$Z\in\mathfrak{X}^m_\mathcal{R}(\mathcal{A})$
using Lemma~\ref{lemma10}. By induction
$[\mathscr{L}^\mathcal{R}_X,\mathrm{i}^\mathcal{R}_Y]_\mathcal{R}
=\mathrm{i}^\mathcal{R}_{\llbracket X,Y\rrbracket_\mathcal{R}}$ holds
for all $X,Y\in\mathfrak{X}^\bullet_\mathcal{R}(\mathcal{A})$.
The remaining formula is verified via
\begin{align*}
    [\mathscr{L}^\mathcal{R}_X,\mathscr{L}^\mathcal{R}_Y]_\mathcal{R}
    =&[\mathscr{L}^\mathcal{R}_X,[\mathrm{i}^\mathcal{R}_Y,
    \mathrm{d}]_\mathcal{R}]_\mathcal{R}\\
    =&[[\mathscr{L}^\mathcal{R}_X,\mathrm{i}^\mathcal{R}_Y]_\mathcal{R},
    \mathrm{d}]_\mathcal{R}
    +(-1)^{(k-1)\ell}[\mathrm{i}^\mathcal{R}_{\mathcal{R}_1^{-1}\rhd Y},
    [\mathscr{L}^\mathcal{R}_{\mathcal{R}_2^{-1}\rhd X},
    \mathrm{d}]_\mathcal{R}]_\mathcal{R}\\
    =&[\mathrm{i}^\mathcal{R}_{\llbracket X,Y\rrbracket_\mathcal{R}},
    \mathrm{d}]_\mathcal{R}+0\\
    =&\mathscr{L}^\mathcal{R}_{\llbracket X,Y\rrbracket_\mathcal{R}}
\end{align*}
for all $X\in\mathfrak{X}^k_\mathcal{R}(\mathcal{A})$ and
$Y\in\mathfrak{X}^\ell_\mathcal{R}(\mathcal{A})$.
This concludes the proof of the theorem.
\end{proof}
In particular, the Cartan calculus on a smooth manifold $M$ is a braided
Cartan calculus with respect to the trivial action of any cocommutative Hopf
algebra and $\mathcal{R}=1\otimes 1$. The equations (\ref{BraidedCC}) reduce
to the usual formulas of the classical Cartan calculus in this case.
In Section~\ref{Sec3.6} we study another class of examples of braided Cartan
calculi, namely twisted Cartan calculi.

\section{Equivariant Covariant Derivatives}\label{Sec3.5}

Having the braided Cartan calculus for any braided commutative algebra
$\mathcal{A}$ at hand it is nearby to ask if other fundamental
constructions of differential geometry can be generalized to this
setting. Focusing on the algebraic properties of covariant derivatives,
namely their linearity in the first argument and that they satisfy
a Leibniz rule with respect to the second argument, we define
equivariant covariant derivatives on equivariant braided symmetric bimodules.
Note that there are
several notions of covariant derivatives in the context of noncommutative
algebras in the literature, as already mentioned in the introduction 
to this chapter. While in the general noncommutative
case one has to distinguish between left and right covariant derivatives
these two notions coincide for braided commutative algebras if one also
requires equivariance of the covariant derivative. We
further introduce curvature and torsion of an equivariant covariant derivative
and prove that they have the expected symmetry properties. Given an equivariant
covariant derivative on $\mathcal{A}$ we construct an equivariant covariant
derivative on braided differential $1$-forms by employing the
dual pairing and in a next step we extend both to braided multivector
fields and differential forms. We introduce equivariant metrics
on $\mathcal{A}$ and prove the existence and uniqueness of an equivariant
Levi-Civita covariant derivative with respect to a fixed non-degenerate
equivariant metric.
Let $(H,\mathcal{R})$ be a triangular Hopf algebra and $\mathcal{A}$ a
braided commutative left $H$-module algebra in the following.
\begin{definition}[Equivariant covariant derivative]
Consider an $H$-equivariant braided symmetric
$\mathcal{A}$-bimodule $\mathcal{M}$. An $H$-equivariant map
$
\nabla^\mathcal{R}\colon\mathfrak{X}^1_\mathcal{R}(\mathcal{A})
\otimes\mathcal{M}\rightarrow\mathcal{M}
$
is said to be an equivariant covariant derivative on $\mathcal{M}$
(with respect to $\mathcal{R}$), if for all
$a\in\mathcal{A}$, $X\in\mathfrak{X}^1_\mathcal{R}(\mathcal{A})$ and
$s\in\mathcal{M}$ one has $\nabla^\mathcal{R}_{a\cdot X}s
=a\cdot(\nabla^\mathcal{R}_Xs)$
and
\begin{equation}
    \nabla^\mathcal{R}_X(a\cdot s)
    =(\mathscr{L}^\mathcal{R}_Xa)\cdot s
    +(\mathcal{R}_1^{-1}\rhd a)\cdot
    (\nabla^\mathcal{R}_{\mathcal{R}_2^{-1}\rhd X}s).
\end{equation}
The \textit{curvature} of an equivariant covariant derivative $\nabla^\mathcal{R}$
on $\mathcal{M}$ is defined by
\begin{equation}
    R^{\nabla^\mathcal{R}}(X,Y)
    =\nabla^\mathcal{R}_X\nabla^\mathcal{R}_Y
    -\nabla^\mathcal{R}_{\mathcal{R}_1^{-1}\rhd Y}
    \nabla^\mathcal{R}_{\mathcal{R}_2^{-1}\rhd X}
    -\nabla^\mathcal{R}_{[X,Y]_\mathcal{R}}
\end{equation}
for $X,Y\in\mathfrak{X}^1_\mathcal{R}(\mathcal{A})$.
If $\mathcal{M}=\mathfrak{X}^1_\mathcal{R}(\mathcal{A})$
we can further define the \textit{torsion} of $\nabla^\mathcal{R}$ by
\begin{equation}
    \mathrm{Tor}^{\nabla^\mathcal{R}}(X,Y)
    =\nabla^\mathcal{R}_XY
    -\nabla^\mathcal{R}_{\mathcal{R}_1^{-1}\rhd Y}(\mathcal{R}_2^{-1}\rhd X)
    -[X,Y]_\mathcal{R},
\end{equation}
for all $X,Y\in\mathfrak{X}^1_\mathcal{R}(\mathcal{A})$. An equivariant
covariant derivative $\nabla^\mathcal{R}$ is \textit{flat} if
$R^{\nabla^\mathcal{R}}=0$ and \textit{torsion-free} if
$\mathrm{Tor}^{\nabla^\mathcal{R}}=0$.
\end{definition}
Recall that the $H$-equivariance of an equivariant covariant derivative 
$\nabla^\mathcal{R}$ on $\mathcal{M}$ reads
$\xi\rhd(\nabla^\mathcal{R}_Xs)
=\nabla^\mathcal{R}_{\xi_{(1)}\rhd X}(\xi_{(2)}\rhd s)$
on elements $\xi\in H$, $X\in\mathfrak{X}^1_\mathcal{R}(\mathcal{A})$
and $s\in\mathcal{M}$.
In the next lemma we are going to investigate the linearity properties and
symmetries of the curvature and torsion. In short, we prove that
$R^{\nabla^\mathcal{R}}$ descends to a map
$\mathfrak{X}^2_\mathcal{R}(\mathcal{R})\otimes\mathcal{M}\rightarrow
\mathcal{M}$ and, in the case it is defined,
$\mathrm{Tor}^{\nabla^\mathcal{R}}$ to a map
$\mathfrak{X}^2_\mathcal{R}(\mathcal{A})
\rightarrow\mathfrak{X}^1_\mathcal{R}(\mathcal{A})$.
\begin{lemma}
Let $\nabla^\mathcal{R}$ be an equivariant covariant derivative on an
$H$-equivariant braided symmetric $\mathcal{A}$-bimodule $\mathcal{M}$.
Then
\begin{align*}
    \nabla^\mathcal{R}_{X\cdot a}s
    =&\nabla^\mathcal{R}_X(\mathcal{R}_1^{-1}\rhd s)
    \cdot(\mathcal{R}_2^{-1}\rhd a),\\
    \nabla^\mathcal{R}_X(s\cdot a)
    =&(\nabla^\mathcal{R}_Xs)\cdot a
    +(\mathcal{R}_1^{-1}\rhd s)
    \cdot(\mathscr{L}^\mathcal{R}_{\mathcal{R}_2^{-1}\rhd X}a)
\end{align*}
hold for all $a\in\mathcal{A}$, $X\in\mathfrak{X}^1_\mathcal{R}(\mathcal{A})$
and $s\in\mathcal{M}$. Furthermore,
\begin{align*}
    R^{\nabla^\mathcal{R}}(Y,X)
    =&-R^{\nabla^\mathcal{R}}(\mathcal{R}_1^{-1}\rhd X,
    \mathcal{R}_1^{-1}\rhd Y),\\
    R^{\nabla^\mathcal{R}}(a\cdot X,Y\cdot b)s
    =&a\cdot R^{\nabla^\mathcal{R}}(X,Y)(\mathcal{R}_1^{-1}\rhd s)
    \cdot(\mathcal{R}_2^{-1}\rhd b),\\
    R^{\nabla^\mathcal{R}}(X\cdot a,Y)s
    =&R^{\nabla^\mathcal{R}}(X,\mathcal{R}_{1(1)}^{-1}\rhd Y)
    (\mathcal{R}_{1(2)}^{-1}\rhd s)\cdot(\mathcal{R}_2^{-1}\rhd a)\\
    =&R^{\nabla^\mathcal{R}}(X,a\cdot Y)s
\end{align*}
for all $a,b\in\mathcal{A}$, $X,Y\in\mathfrak{X}^1_\mathcal{R}(\mathcal{A})$
and $s\in\mathcal{M}$.
In the case $\mathcal{M}=\mathfrak{X}^1_\mathcal{R}(\mathcal{A})$ we obtain
\begin{align*}
    \mathrm{Tor}^{\nabla^\mathcal{R}}(Y,X)
    =&-\mathrm{Tor}^{\nabla^\mathcal{R}}(\mathcal{R}_1^{-1}\rhd X,
    \mathcal{R}_1^{-1}\rhd Y),\\
    \mathrm{Tor}^{\nabla^\mathcal{R}}(a\cdot X,Y\cdot b)
    =&a\cdot\mathrm{Tor}^{\nabla^\mathcal{R}}(X,Y)\cdot b,\\
    \mathrm{Tor}^{\nabla^\mathcal{R}}(X\cdot a,Y)
    =&\mathrm{Tor}^{\nabla^\mathcal{R}}(X,\mathcal{R}_1^{-1}\rhd Y)
    \cdot(\mathcal{R}_2^{-1}\rhd a)\\
    =&\mathrm{Tor}^{\nabla^\mathcal{R}}(X,a\cdot Y)
\end{align*}
for all $a,b\in\mathcal{A}$ and $X,Y\in\mathfrak{X}^1_\mathcal{R}(\mathcal{A})$.
\end{lemma}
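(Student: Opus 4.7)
The plan is to establish the right $\mathcal{A}$-linearity and Leibniz formulas first, then use them together with the braided skew-symmetry of the commutator to deduce the symmetry properties of $R^{\nabla^\mathcal{R}}$ and $\mathrm{Tor}^{\nabla^\mathcal{R}}$, and finally to obtain the $\mathcal{A}$-bilinearity statements from a direct expansion using the Leibniz rules.

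First I would use the fact that $\mathfrak{X}^1_\mathcal{R}(\mathcal{A})$ and $\mathcal{M}$ are braided symmetric $\mathcal{A}$-bimodules, so that $X\cdot a=(\mathcal{R}_1^{-1}\rhd a)\cdot(\mathcal{R}_2^{-1}\rhd X)$ and $s\cdot a=(\mathcal{R}_1^{-1}\rhd a)\cdot(\mathcal{R}_2^{-1}\rhd s)$. Applying the left $\mathcal{A}$-linearity of $\nabla^\mathcal{R}$ in the first slot (and $H$-equivariance) to the first identity, and the Leibniz rule plus $H$-equivariance to the second, yields the two right-module formulas after an elementary use of the quasi-triangular hexagon identities to rebracket the $\mathcal{R}$-legs. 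This step is purely formal once one keeps careful track of the coproduct of $\mathcal{R}_1^{-1}$ and $\mathcal{R}_2^{-1}$.

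Next, to handle the braided skew-symmetry of the curvature, I would rewrite the two leading terms of $R^{\nabla^\mathcal{R}}(Y,X)$ using triangularity $\mathcal{R}^{-1}=\mathcal{R}_{21}$ so that they become exactly the first two terms of $-R^{\nabla^\mathcal{R}}(\mathcal{R}_1^{-1}\rhd X,\mathcal{R}_2^{-1}\rhd Y)$; the bracket term is handled by the braided skew-symmetry $[Y,X]_\mathcal{R}=-[\mathcal{R}_1^{-1}\rhd X,\mathcal{R}_2^{-1}\rhd Y]_\mathcal{R}$ from Lemma~\ref{lemma08}. The analogous argument with $\nabla^\mathcal{R}_XY-\nabla^\mathcal{R}_{\mathcal{R}_1^{-1}\rhd Y}(\mathcal{R}_2^{-1}\rhd X)$ gives the skew-symmetry of the torsion.

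For the bilinearity formulas I would expand $R^{\nabla^\mathcal{R}}(a\cdot X,Y\cdot b)s$ term by term. Each of $\nabla^\mathcal{R}_{a\cdot X}\nabla^\mathcal{R}_{Y\cdot b}$, $\nabla^\mathcal{R}_{\mathcal{R}_1^{-1}\rhd(Y\cdot b)}\nabla^\mathcal{R}_{\mathcal{R}_2^{-1}\rhd(a\cdot X)}$ and $\nabla^\mathcal{R}_{[a\cdot X,Y\cdot b]_\mathcal{R}}$ produces, via the already proven right Leibniz rule and the braided Leibniz rule of $[\cdot,\cdot]_\mathcal{R}$, a principal term of the form $a\cdot(\cdots)\cdot b$ plus Lie-derivative correction terms involving $\mathscr{L}^\mathcal{R}_Xb$ and $\mathscr{L}^\mathcal{R}_Ya$. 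The key step is to verify that those correction terms cancel exactly, which follows by tidying $\mathcal{R}$-legs with the hexagon relations and using $[\mathscr{L}^\mathcal{R}_X,a]_\mathcal{R}=\mathscr{L}^\mathcal{R}_{X}a$ (and the corresponding Cartan identity $[\mathscr{L}^\mathcal{R}_X,\mathrm{i}^\mathcal{R}_Y]_\mathcal{R}=\mathrm{i}^\mathcal{R}_{[X,Y]_\mathcal{R}}$ implicit in the definition of the braided commutator on $\mathcal{A}$). The middle identity $R^{\nabla^\mathcal{R}}(X\cdot a,Y)s=R^{\nabla^\mathcal{R}}(X,a\cdot Y)s$ then follows because $X\cdot a=(\mathcal{R}_1^{-1}\rhd a)\cdot(\mathcal{R}_2^{-1}\rhd X)$ together with the equivariance of $R^{\nabla^\mathcal{R}}$.

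The torsion identities are obtained by the same template, but they are genuinely simpler since no $s$ appears: the $\mathscr{L}^\mathcal{R}$-correction terms that must cancel come only from $[a\cdot X,Y\cdot b]_\mathcal{R}$ and from the two right-Leibniz expansions of $\nabla^\mathcal{R}_{a\cdot X}(Y\cdot b)$ and $\nabla^\mathcal{R}_{\mathcal{R}_1^{-1}\rhd(Y\cdot b)}(\mathcal{R}_2^{-1}\rhd(a\cdot X))$. The hard part throughout will be the bookkeeping: one must be disciplined about which $\mathcal{R}$-leg acts on what, repeatedly invoking the hexagon relations $(\Delta\otimes\mathrm{id})(\mathcal{R})=\mathcal{R}_{13}\mathcal{R}_{23}$ and its partner to merge or split the $\mathcal{R}$'s so that, in the end, every correction cancels against its braided mirror.
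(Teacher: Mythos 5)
Your proposal is correct and follows essentially the same route as the paper's proof: rewrite $X\cdot a$ and $s\cdot a$ via braided symmetry to reduce the right-module formulas to left $\mathcal{A}$-linearity, the braided Leibniz rule, equivariance and the hexagon relations; obtain the skew-symmetries from triangularity and the braided skew-symmetry of $[\cdot,\cdot]_\mathcal{R}$; and verify the bilinearity statements by term-by-term expansion in which the $\mathscr{L}^\mathcal{R}$-correction terms cancel after reshuffling $\mathcal{R}$-legs. The only cosmetic difference is your invocation of the Cartan identity $[\mathscr{L}^\mathcal{R}_X,\mathrm{i}^\mathcal{R}_Y]_\mathcal{R}=\mathrm{i}^\mathcal{R}_{[X,Y]_\mathcal{R}}$, which is not needed — the cancellations follow directly from the Leibniz expansions — but this does not affect correctness.
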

\begin{proof}
Fix elements $a\in\mathcal{A}$, $X,Y\in\mathfrak{X}^1_\mathcal{R}(\mathcal{A})$
and $s\in\mathcal{M}$. Then
\begin{allowdisplaybreaks}
\begin{align*}
    \nabla^\mathcal{R}_{X\cdot a}s
    =&\nabla^\mathcal{R}_{(\mathcal{R}_1^{-1}\rhd a)
    \cdot(\mathcal{R}_2^{-1}\rhd X)}s\\
    =&(\mathcal{R}_1^{-1}\rhd a)
    \cdot(\nabla^\mathcal{R}_{\mathcal{R}_2^{-1}\rhd X}s)\\
    =&(\mathcal{R}_1^{'-1}\rhd(\nabla^\mathcal{R}_{\mathcal{R}_2^{-1}\rhd X}s))
    \cdot((\mathcal{R}_2^{'-1}\mathcal{R}_1^{-1})\rhd a)\\
    =&\nabla^\mathcal{R}_{(\mathcal{R}_{1(1)}^{'-1}\mathcal{R}_2^{-1})
    \rhd X}(\mathcal{R}_{1(2)}^{'-1}\rhd s)
    \cdot((\mathcal{R}_2^{'-1}\mathcal{R}_1^{-1})\rhd a)\\
    =&\nabla^\mathcal{R}_{(\mathcal{R}_{1}^{'-1}\mathcal{R}_2^{-1})
    \rhd X}(\mathcal{R}_{1}^{''-1}\rhd s)
    \cdot((\mathcal{R}_2^{''-1}\mathcal{R}_2^{'-1}\mathcal{R}_1^{-1})\rhd a)\\
    =&\nabla^\mathcal{R}_{X}(\mathcal{R}_{1}^{-1}\rhd s)
    \cdot(\mathcal{R}_2^{-1}\rhd a)
\end{align*}
\end{allowdisplaybreaks}
and
\begin{allowdisplaybreaks}
\begin{align*}
    \nabla^\mathcal{R}_X(s\cdot a)
    =&\nabla^\mathcal{R}_X((\mathcal{R}_1^{-1}\rhd a)
    \cdot(\mathcal{R}_2^{-1}\rhd s))\\
    =&\mathscr{L}^\mathcal{R}_X(\mathcal{R}_1^{-1}\rhd a)
    \cdot(\mathcal{R}_2^{-1}\rhd s)
    +((\mathcal{R}_1^{'-1}\mathcal{R}_1^{-1})\rhd a)
    \cdot\nabla^\mathcal{R}_{\mathcal{R}_2^{'-1}\rhd X}
    (\mathcal{R}_2^{-1}\rhd s)\\
    =&((\mathcal{R}_1^{'-1}\mathcal{R}_2^{-1})\rhd s)\cdot
    \mathscr{L}^\mathcal{R}_{\mathcal{R}_{2(1)}^{'-1}\rhd X}
    ((\mathcal{R}_{2(2)}^{'-1}\mathcal{R}_1^{-1})\rhd a)\\
    &+\nabla^\mathcal{R}_{(\mathcal{R}_{1(1)}^{''-1}\mathcal{R}_2^{'-1})
    \rhd X}
    ((\mathcal{R}_{1(1)}^{''-1}\mathcal{R}_2^{-1})\rhd s)
    \cdot((\mathcal{R}_2^{''-1}\mathcal{R}_1^{'-1}\mathcal{R}_1^{-1})\rhd a)\\
    =&((\mathcal{R}_1^{''-1}\mathcal{R}_1^{'-1}\mathcal{R}_2^{-1})\rhd s)\cdot
    \mathscr{L}^\mathcal{R}_{\mathcal{R}_{2}^{''-1}\rhd X}
    ((\mathcal{R}_{2}^{'-1}\mathcal{R}_1^{-1})\rhd a)\\
    &+\nabla^\mathcal{R}_{(\mathcal{R}_{1}^{''-1}\mathcal{R}_2^{'-1})
    \rhd X}
    ((\mathcal{R}_{1}^{'''-1}\mathcal{R}_2^{-1})\rhd s)
    \cdot((\mathcal{R}_2^{'''-1}\mathcal{R}_2^{''-1}
    \mathcal{R}_1^{'-1}\mathcal{R}_1^{-1})\rhd a)\\
    =&\nabla^\mathcal{R}_Xs\cdot a
    +(\mathcal{R}_1^{-1}\rhd s)
    \cdot\mathscr{L}^\mathcal{R}_{\mathcal{R}_2^{-1}\rhd X}a,
\end{align*}
\end{allowdisplaybreaks}
proving the first two equations. By the braided skew-symmetry of
$[\cdot,\cdot]_\mathcal{R}$ we observe that
\begin{align*}
    R^{\nabla^\mathcal{R}}(\mathcal{R}_1^{-1}\rhd X,\mathcal{R}_2^{-1}\rhd Y)
    =&\nabla^\mathcal{R}_{\mathcal{R}_1^{-1}\rhd X}
    \nabla^\mathcal{R}_{\mathcal{R}_2^{-1}\rhd Y}
    -\nabla^\mathcal{R}_{(\mathcal{R}_1^{'-1}\mathcal{R}_2^{-1})\rhd Y}
    \nabla^\mathcal{R}_{(\mathcal{R}_2^{'-1}\mathcal{R}_1^{-1})\rhd X}\\
    &-\nabla^\mathcal{R}_{[\mathcal{R}_1^{-1}\rhd X,
    \mathcal{R}_2^{-1}\rhd Y]_\mathcal{R}}\\
    =&-(\nabla^\mathcal{R}_{Y}\nabla^\mathcal{R}_{X}
    -\nabla^\mathcal{R}_{\mathcal{R}_1^{-1}\rhd X}
    \nabla^\mathcal{R}_{\mathcal{R}_2^{-1}\rhd Y}
    -\nabla^\mathcal{R}_{[Y,X]_\mathcal{R}})\\
    =&-R^{\nabla^\mathcal{R}}(Y,X)
\end{align*}
and using the equivariancy of $\nabla^\mathcal{R}$ it follows that
\begin{allowdisplaybreaks}
\begin{align*}
    R^{\nabla^\mathcal{R}}(a\cdot X,Y)
    =&\nabla^\mathcal{R}_{a\cdot X}\nabla^\mathcal{R}_Y
    -\nabla^\mathcal{R}_{\mathcal{R}_1^{-1}\rhd Y}
    \nabla^\mathcal{R}_{\mathcal{R}_2^{-1}\rhd(a\cdot X)}
    -\nabla^\mathcal{R}_{[a\cdot X,Y]_\mathcal{R}}\\
    =&a\cdot\nabla^\mathcal{R}_{X}\nabla^\mathcal{R}_Y
    -\nabla^\mathcal{R}_{\mathcal{R}_1^{-1}\rhd Y}(
    (\mathcal{R}_{2(1)}^{-1}\rhd a)\cdot
    \nabla^\mathcal{R}_{\mathcal{R}_{2(2)}^{-1}\rhd X})\\
    &-\nabla^\mathcal{R}_{
    a\cdot[X,Y]_\mathcal{R}+(((\mathcal{R}_1^{'-1}\mathcal{R}_1^{-1})
    \rhd Y)(\mathcal{R}_2^{'-1}\rhd a))\cdot(\mathcal{R}_2^{-1} X)}\\
    =&a\cdot\nabla^\mathcal{R}_{X}\nabla^\mathcal{R}_Y
    -\mathscr{L}^\mathcal{R}_{\mathcal{R}_1^{-1}\rhd Y}
    (\mathcal{R}_{2(1)}^{-1}\rhd a)\cdot
    \nabla^\mathcal{R}_{\mathcal{R}_{2(2)}^{-1}\rhd X}\\
    &-((\mathcal{R}_1^{'-1}\mathcal{R}_{2(1)}^{-1})\rhd a)\cdot
    \nabla^\mathcal{R}_{(\mathcal{R}_2^{'-1}\mathcal{R}_1^{-1})\rhd Y}
    \nabla^\mathcal{R}_{\mathcal{R}_{2(2)}^{-1}\rhd X}\\
    &-\nabla^\mathcal{R}_{
    a\cdot[X,Y]_\mathcal{R}}
    +(((\mathcal{R}_1^{'-1}\mathcal{R}_1^{-1})
    \rhd Y)(\mathcal{R}_2^{'-1}\rhd a))
    \cdot\nabla^\mathcal{R}_{\mathcal{R}_2^{-1} X}\\
    =&a\cdot\nabla^\mathcal{R}_{X}\nabla^\mathcal{R}_Y
    -((\mathcal{R}_1^{'-1}\mathcal{R}_{2}^{''-1})\rhd a)\cdot
    \nabla^\mathcal{R}_{(\mathcal{R}_2^{'-1}\mathcal{R}_1^{''-1}
    \mathcal{R}_1^{-1})\rhd Y}
    \nabla^\mathcal{R}_{\mathcal{R}_{2}^{-1}\rhd X}\\
    &-a\cdot\nabla^\mathcal{R}_{[X,Y]_\mathcal{R}}\\
    =&a\cdot R^{\nabla^\mathcal{R}}(X,Y)
\end{align*}
\end{allowdisplaybreaks}
and
\begin{allowdisplaybreaks}
\begin{align*}
    R^{\nabla^\mathcal{R}}(X,Y\cdot a)s
    =&\nabla^\mathcal{R}_{X}\nabla^\mathcal{R}_{Y\cdot a}s
    -\nabla^\mathcal{R}_{\mathcal{R}_1^{-1}\rhd(Y\cdot a)}
    \nabla^\mathcal{R}_{\mathcal{R}_2^{-1}\rhd X}s
    -\nabla^\mathcal{R}_{[X,Y\cdot a]_\mathcal{R}}s\\
    =&\nabla^\mathcal{R}_{X}(\nabla^\mathcal{R}_{Y}(\mathcal{R}_1^{-1}\rhd s)
    \cdot(\mathcal{R}_2^{-1}\rhd a))\\
    &-\nabla^\mathcal{R}_{\mathcal{R}_{1(1)}^{-1}\rhd Y}
    \nabla^\mathcal{R}_{(\mathcal{R}_{1(1)}^{'-1}\mathcal{R}_2^{-1})\rhd X}
    (\mathcal{R}_{1(2)}^{'-1}\rhd s)
    \cdot((\mathcal{R}_2^{'-1}\mathcal{R}_{1(2)}^{-1})\rhd a)\\
    &-\nabla^\mathcal{R}_{[X,Y]_\mathcal{R}\cdot a
    +(\mathcal{R}_1^{-1}\rhd Y)\cdot((\mathcal{R}_2^{-1}\rhd X)(a))}s\\
    =&\nabla^\mathcal{R}_{X}\nabla^\mathcal{R}_{Y}(\mathcal{R}_1^{-1}\rhd s)
    \cdot(\mathcal{R}_2^{-1}\rhd a)\\
    &+\nabla^\mathcal{R}_{\mathcal{R}_{1(1)}^{'-1}\rhd Y}
    ((\mathcal{R}_{1(2)}^{'-1}\mathcal{R}_1^{-1})\rhd s)
    \cdot\mathscr{L}^\mathcal{R}_{\mathcal{R}_2^{'-1}\rhd X}
    (\mathcal{R}_2^{-1}\rhd a)\\
    &-\nabla^\mathcal{R}_{\mathcal{R}_{1}^{-1}\rhd Y}
    \nabla^\mathcal{R}_{\mathcal{R}_2^{-1}\rhd X}
    (\mathcal{R}_{1}^{'-1}\rhd s)
    \cdot(\mathcal{R}_2^{'-1}\rhd a)\\
    &-\nabla^\mathcal{R}_{[X,Y]_\mathcal{R}\cdot a}s
    -\nabla^\mathcal{R}_{(\mathcal{R}_1^{-1}\rhd Y)
    \cdot((\mathcal{R}_2^{-1}\rhd X)(a))}s\\
    =&\nabla^\mathcal{R}_{X}\nabla^\mathcal{R}_{Y}(\mathcal{R}_1^{-1}\rhd s)
    \cdot(\mathcal{R}_2^{-1}\rhd a)\\
    &+\nabla^\mathcal{R}_{\mathcal{R}_{1}^{'-1}\rhd Y}
    ((\mathcal{R}_{1}^{''-1}\mathcal{R}_1^{-1})\rhd s)
    \cdot\mathscr{L}^\mathcal{R}_{(\mathcal{R}_2^{''-1}\mathcal{R}_2^{'-1})
    \rhd X}(\mathcal{R}_2^{-1}\rhd a)\\
    &-\nabla^\mathcal{R}_{\mathcal{R}_{1}^{-1}\rhd Y}
    \nabla^\mathcal{R}_{\mathcal{R}_2^{-1}\rhd X}
    (\mathcal{R}_{1}^{'-1}\rhd s)
    \cdot(\mathcal{R}_2^{'-1}\rhd a)\\
    &-\nabla^\mathcal{R}_{[X,Y]_\mathcal{R}}(\mathcal{R}_1^{-1}\rhd s)
    \cdot(\mathcal{R}_2^{-1}\rhd a)\\
    &-\nabla^\mathcal{R}_{\mathcal{R}_1^{-1}\rhd Y}(\mathcal{R}_1^{'-1}\rhd s)
    \cdot(((\mathcal{R}_{2(1)}^{'-1}\mathcal{R}_2^{-1})\rhd X)
    (\mathcal{R}_{2(2)}^{'-1}\rhd a))\\
    =&R^{\nabla^\mathcal{R}}(X,Y)(\mathcal{R}_1^{-1}\rhd s)
    \cdot(\mathcal{R}_2^{-1}\rhd a)
\end{align*}
\end{allowdisplaybreaks}
hold. Furthermore
\begin{allowdisplaybreaks}
\begin{align*}
    R^{\nabla^\mathcal{R}}(X\cdot a,Y)s
    =&\nabla^\mathcal{R}_{X\cdot a}\nabla^\mathcal{R}_Ys
    -\nabla^\mathcal{R}_{\mathcal{R}_1^{-1}\rhd Y}
    \nabla^\mathcal{R}_{\mathcal{R}_2^{-1}\rhd(X\cdot a)}s
    -\nabla^\mathcal{R}_{[X\cdot a,Y]_\mathcal{R}}s\\
    =&\nabla^\mathcal{R}_{X}\nabla^\mathcal{R}_{\mathcal{R}_{1(1)}^{-1}\rhd Y}
    (\mathcal{R}_{1(2)}^{-1}\rhd s)\cdot(\mathcal{R}_2^{-1}\rhd a)\\
    &-\nabla^\mathcal{R}_{\mathcal{R}_1^{-1}\rhd Y}(
    \nabla^\mathcal{R}_{\mathcal{R}_{2(1)}^{-1}\rhd X}
    (\mathcal{R}_1^{'-1}\rhd s)
    \cdot((\mathcal{R}_2^{'-1}\mathcal{R}_{2(2)}^{-1})\rhd a))\\
    &-\nabla^\mathcal{R}_{[X,\mathcal{R}_1^{-1}\rhd Y]_\mathcal{R}
    \cdot(\mathcal{R}_2^{-1}\rhd a)
    -X\cdot((\mathcal{R}_1^{-1}\rhd Y)(\mathcal{R}_2^{-1}\rhd a))}s\\
    =&\nabla^\mathcal{R}_{X}\nabla^\mathcal{R}_{\mathcal{R}_{1(1)}^{-1}\rhd Y}
    (\mathcal{R}_{1(2)}^{-1}\rhd s)\cdot(\mathcal{R}_2^{-1}\rhd a)\\
    &-\nabla^\mathcal{R}_{\mathcal{R}_1^{-1}\rhd Y}
    \nabla^\mathcal{R}_{\mathcal{R}_{2(1)}^{-1}\rhd X}
    (\mathcal{R}_1^{'-1}\rhd s)
    \cdot((\mathcal{R}_2^{'-1}\mathcal{R}_{2(2)}^{-1})\rhd a)\\
    &-\nabla^\mathcal{R}_{(\mathcal{R}_{1(1)}^{''-1}\mathcal{R}_{2(1)}^{-1})
    \rhd X}
    ((\mathcal{R}_{1(2)}^{''-1}\mathcal{R}_1^{'-1})\rhd s)\\
    &\cdot\mathscr{L}^\mathcal{R}_{(\mathcal{R}_2^{''-1}\mathcal{R}_1^{-1})
    \rhd Y}((\mathcal{R}_2^{'-1}\mathcal{R}_{2(2)}^{-1})\rhd a)\\
    &-\nabla^\mathcal{R}_{[X,\mathcal{R}_1^{-1}\rhd Y]_\mathcal{R}
    \cdot(\mathcal{R}_2^{-1}\rhd a)}s\\
    &+\nabla^\mathcal{R}_{X}(\mathcal{R}_1^{'-1}\rhd s)
    \cdot(((\mathcal{R}_{2(1)}^{'-1}\mathcal{R}_1^{-1})\rhd Y)
    ((\mathcal{R}_{2(2)}^{'-1}\mathcal{R}_2^{-1})\rhd a))\\
    =&\nabla^\mathcal{R}_{X}\nabla^\mathcal{R}_{\mathcal{R}_{1}^{-1}\rhd Y}
    (\mathcal{R}_{1}^{'-1}\rhd s)
    \cdot((\mathcal{R}_2^{'-1}\mathcal{R}_2^{-1})\rhd a)\\
    &-\nabla^\mathcal{R}_{(\mathcal{R}_1^{''-1}\mathcal{R}_1^{-1})\rhd Y}
    \nabla^\mathcal{R}_{\mathcal{R}_{2}^{''-1}\rhd X}
    (\mathcal{R}_1^{'-1}\rhd s)
    \cdot((\mathcal{R}_2^{'-1}\mathcal{R}_{2}^{-1})\rhd a)\\
    &-\nabla^\mathcal{R}_{[X,\mathcal{R}_1^{-1}\rhd Y]_\mathcal{R}}
    (\mathcal{R}_1^{'-1}\rhd s)
    \cdot((\mathcal{R}_2^{'-1}\mathcal{R}_2^{-1})\rhd a)\\
    =&R^{\nabla^\mathcal{R}}(X,\mathcal{R}_{1(1)}^{-1}\rhd Y)
    (\mathcal{R}_{1(2)}^{-1}\rhd s)\cdot(\mathcal{R}_2^{-1}\rhd a)\\
    =&R^{\nabla^\mathcal{R}}(X,(\mathcal{R}_{1(1)}^{-1}\rhd Y)
    \cdot((\mathcal{R}_1^{'-1}\mathcal{R}_2^{-1})\rhd a))
    ((\mathcal{R}_2^{'-1}\mathcal{R}_{1(2)}^{-1})\rhd s)\\
    =&R^{\nabla^\mathcal{R}}(X,a\cdot Y)s.
\end{align*}
\end{allowdisplaybreaks}
It remains to discuss the properties of torsion. We obtain
\begin{align*}
    \mathrm{Tor}^{\nabla^\mathcal{R}}(\mathcal{R}_1^{-1}\rhd X,
    \mathcal{R}_2^{-1}\rhd Y)
    =&\nabla^\mathcal{R}_{\mathcal{R}_1^{-1}\rhd X}(\mathcal{R}_2^{-1}\rhd Y)
    -\nabla^\mathcal{R}_{(\mathcal{R}_1^{'-1}\mathcal{R}_2^{-1})\rhd Y}
    ((\mathcal{R}_2^{'-1}\mathcal{R}_1^{-1})\rhd X)\\
    &-[\mathcal{R}_1^{-1}\rhd X,\mathcal{R}_2^{-1}\rhd Y]_\mathcal{R}\\
    =&-\nabla^\mathcal{R}_YX+\nabla^\mathcal{R}_{\mathcal{R}_1^{-1}\rhd X}(
    \mathcal{R}_2^{-1}\rhd Y)
    +[Y,X]_\mathcal{R}\\
    =&-\mathrm{Tor}^{\nabla^\mathcal{R}}(Y,X),
\end{align*}
as well as
\begin{allowdisplaybreaks}
\begin{align*}
    \mathrm{Tor}^{\nabla^\mathcal{R}}(a\cdot X,Y\cdot b)
    =&\nabla^\mathcal{R}_{a\cdot X}(Y\cdot b)
    -\nabla^\mathcal{R}_{\mathcal{R}_1^{-1}\rhd(Y\cdot b)}
    (\mathcal{R}_2^{-1}\rhd(a\cdot X))
    -[a\cdot X,Y\cdot b]_\mathcal{R}\\
    =&a\cdot\nabla^\mathcal{R}_XY\cdot b
    +a\cdot(\mathcal{R}_1^{-1}\rhd Y)\cdot(\mathcal{R}_2^{-1}\rhd X)(b)\\
    &-\nabla^\mathcal{R}_{\mathcal{R}_{1(1)}^{-1} Y}
    ((\mathcal{R}_1^{'-1}\mathcal{R}_2^{-1})\rhd(a\cdot X))
    \cdot((\mathcal{R}_2^{'-1}\mathcal{R}_{1(2)}^{-1})\rhd b)\\
    &-a\cdot[X,Y\cdot b]_\mathcal{R}
    +((\mathcal{R}_1^{'-1}\mathcal{R}_1^{-1})\rhd(Y\cdot b))
    (\mathcal{R}_2^{'-1}\rhd a)
    \cdot(\mathcal{R}_2^{-1}\rhd X)\\
    =&a\cdot\nabla^\mathcal{R}_XY\cdot b
    +a\cdot(\mathcal{R}_1^{-1}\rhd Y)\cdot(\mathcal{R}_2^{-1}\rhd X)(b)\\
    &-(\mathcal{R}_{1(1)}^{-1}\rhd Y)
    ((\mathcal{R}_{1(1)}^{'-1}\mathcal{R}_{2(1)}^{-1})\rhd a)\cdot
    ((\mathcal{R}_{1(2)}^{'-1}\mathcal{R}_{2(2)}^{-1})\rhd X)\\
    &\cdot((\mathcal{R}_2^{'-1}\mathcal{R}_{1(2)}^{-1})\rhd b)\\
    &-((\mathcal{R}_1^{''-1}\mathcal{R}_{1(1)}^{'-1}\mathcal{R}_{2(1)}^{-1})
    \rhd a)\cdot\nabla^\mathcal{R}_{
    (\mathcal{R}_2^{''-1}\mathcal{R}_{1(1)}^{-1})\rhd Y}
    ((\mathcal{R}_{1(2)}^{'-1}\mathcal{R}_{2(2)}^{-1})\rhd X)\\
    &\cdot((\mathcal{R}_2^{'-1}\mathcal{R}_{1(2)}^{-1})\rhd b)\\
    &-a\cdot[X,Y]_\mathcal{R}\cdot b
    -a\cdot(\mathcal{R}_1^{-1}\rhd Y)\cdot(\mathcal{R}_2^{-1}\rhd X)(a)\\
    &+((\mathcal{R}_1^{'-1}\mathcal{R}_1^{-1})\rhd(Y\cdot b))
    (\mathcal{R}_2^{'-1}\rhd a)
    \cdot(\mathcal{R}_2^{-1}\rhd X)\\
    =&a\cdot\nabla^\mathcal{R}_XY\cdot b-a\cdot[X,Y]_\mathcal{R}\cdot b\\
    &-((\mathcal{R}_1^{''-1}\mathcal{R}_{1}^{'-1}\mathcal{R}_{2(1)}^{-1})
    \rhd a)\cdot\nabla^\mathcal{R}_{
    (\mathcal{R}_2^{''-1}\mathcal{R}_{1(1)}^{-1})\rhd Y}
    ((\mathcal{R}_{1}^{'''-1}\mathcal{R}_{2(2)}^{-1})\rhd X)\\
    &\cdot((\mathcal{R}_2^{'''-1}\mathcal{R}_2^{'-1}\mathcal{R}_{1(2)}^{-1})\rhd b)\\
    &-(\mathcal{R}_{1(1)}^{-1}\rhd Y)
    ((\mathcal{R}_{1(1)}^{'-1}\mathcal{R}_{2(1)}^{-1})\rhd a)\cdot
    ((\mathcal{R}_{1(2)}^{'-1}\mathcal{R}_{2(2)}^{-1})\rhd X)\\
    &\cdot((\mathcal{R}_2^{'-1}\mathcal{R}_{1(2)}^{-1})\rhd b)\\
    &+((\mathcal{R}_{1(1)}^{'-1}\mathcal{R}_{1(1)}^{-1})\rhd Y)
    ((\mathcal{R}_1^{''-1}\mathcal{R}_2^{'-1})\rhd a)\\
    &\cdot((\mathcal{R}_2^{''-1}\mathcal{R}_{1(2)}^{'-1}
    \mathcal{R}_{1(2)}^{-1})\rhd b)
    \cdot(\mathcal{R}_2^{-1}\rhd X)\\
    =&a\cdot\nabla^\mathcal{R}_XY\cdot b-a\cdot[X,Y]_\mathcal{R}\cdot b\\
    &-((\mathcal{R}_1^{''-1}\mathcal{R}_{2(1)}^{-1})
    \rhd a)\cdot\nabla^\mathcal{R}_{
    (\mathcal{R}_2^{''-1}\mathcal{R}_{1}^{-1})\rhd Y}
    (\mathcal{R}_{2(2)}^{-1}\rhd X)\cdot b\\
    &-(\mathcal{R}_{1(1)}^{-1}\rhd Y)
    ((\mathcal{R}_{1(1)}^{'-1}\mathcal{R}_{2(1)}^{-1})\rhd a)\cdot
    ((\mathcal{R}_{1(2)}^{'-1}\mathcal{R}_{2(2)}^{-1})\rhd X)\\
    &\cdot((\mathcal{R}_2^{'-1}\mathcal{R}_{1(2)}^{-1})\rhd b)\\
    &+((\mathcal{R}_{1(1)}^{'-1}\mathcal{R}_{1(1)}^{-1})\rhd Y)
    ((\mathcal{R}_1^{''-1}\mathcal{R}_2^{'-1})\rhd a)
    \cdot((\mathcal{R}_1^{'''-1}\mathcal{R}_2^{-1})\rhd X)\\
    &\cdot((\mathcal{R}_2^{'''-1}\mathcal{R}_2^{''-1}\mathcal{R}_{1(2)}^{'-1}
    \mathcal{R}_{1(2)}^{-1})\rhd b)\\
    =&a\cdot\mathrm{Tor}^{\nabla^\mathcal{R}}(X,Y)\cdot b\\
    &-(\mathcal{R}_{1(1)}^{-1}\rhd Y)
    ((\mathcal{R}_{1}^{'-1}\mathcal{R}_{2(1)}^{-1})\rhd a)\cdot
    ((\mathcal{R}_{1}^{''-1}\mathcal{R}_{2(2)}^{-1})\rhd X)\\
    &\cdot((\mathcal{R}_2^{''-1}\mathcal{R}_2^{'-1}
    \mathcal{R}_{1(2)}^{-1})\rhd b)\\
    &+((\mathcal{R}_{1}^{'-1}\mathcal{R}_{1}^{-1})\rhd Y)
    (\mathcal{R}_2^{'-1}\rhd a)
    \cdot(\mathcal{R}_2^{-1}\rhd X)\cdot b\\
    =&a\cdot\mathrm{Tor}^{\nabla^\mathcal{R}}(X,Y)\cdot b
\end{align*}
\end{allowdisplaybreaks}
and
\begin{allowdisplaybreaks}
\begin{align*}
    \mathrm{Tor}^{\nabla^\mathcal{R}}(X\cdot a,Y)
    =&\nabla^\mathcal{R}_{X\cdot a}Y
    -\nabla^\mathcal{R}_{\mathcal{R}_1^{-1}\rhd Y}
    ((\mathcal{R}_{2(1)}^{-1}\rhd X)\cdot(\mathcal{R}_{2(2)}^{-1}\rhd a))
    -[X\cdot a,Y]_\mathcal{R}\\
    =&\nabla^\mathcal{R}_X(\mathcal{R}_1^{-1}\rhd Y)\cdot(\mathcal{R}_2^{-1}\rhd a)
    -\nabla^\mathcal{R}_{\mathcal{R}_1^{-1}\rhd Y}
    (\mathcal{R}_{2(1)}^{-1}\rhd X)\cdot(\mathcal{R}_{2(2)}^{-1}\rhd a)\\
    &-((\mathcal{R}_1^{'-1}\mathcal{R}_{2(1)}^{-1})\rhd X)
    \cdot((\mathcal{R}_2^{'-1}\mathcal{R}_1^{-1})\rhd Y)
    (\mathcal{R}_{2(2)}^{-1}\rhd a)\\
    &-[X,\mathcal{R}_1^{-1}\rhd Y]\cdot(\mathcal{R}_2^{-1}\rhd a)
    +X\cdot(\mathcal{R}_1^{-1}\rhd Y)(\mathcal{R}_2^{-1}\rhd a)\\
    =&\mathrm{Tor}^{\nabla^\mathcal{R}}(X,\mathcal{R}_1^{-1}\rhd Y)
    \cdot(\mathcal{R}_2^{-1}\rhd a)\\
    &-((\mathcal{R}_1^{'-1}\mathcal{R}_{2}^{''-1})\rhd X)
    \cdot((\mathcal{R}_2^{'-1}\mathcal{R}_1^{''-1}\mathcal{R}_1^{-1})\rhd Y)
    (\mathcal{R}_{2}^{-1}\rhd a)\\
    &+X\cdot(\mathcal{R}_1^{-1}\rhd Y)(\mathcal{R}_2^{-1}\rhd a)\\
    =&\mathrm{Tor}^{\nabla^\mathcal{R}}(X,\mathcal{R}_1^{-1}\rhd Y)
    \cdot(\mathcal{R}_2^{-1}\rhd a)\\
    =&\mathrm{Tor}^{\nabla^\mathcal{R}}(X,(\mathcal{R}_1^{-1}\rhd Y)
    \cdot(\mathcal{R}_2^{-1}\rhd a))\\
    =&\mathrm{Tor}^{\nabla^\mathcal{R}}(X,a\cdot Y).
\end{align*}
\end{allowdisplaybreaks}
This concludes the proof of the lemma.
\end{proof}
There are natural extensions of an equivariant covariant
derivative $\nabla^\mathcal{R}$ on $\mathcal{A}$ to
braided multivector fields and differential forms in analogy
to differential geometry. We define the \textit{braided dual
pairing} $\langle\cdot,\cdot\rangle_\mathcal{R}\colon
\Omega^1_\mathcal{R}(\mathcal{A})\otimes
\mathfrak{X}^1_\mathcal{R}(\mathcal{R})\rightarrow\mathcal{A}$ by
$\langle\omega,X\rangle_\mathcal{R}=\omega(X)$
for all $\omega\in\Omega^1_\mathcal{R}
(\mathcal{A})$ and $X\in\mathfrak{X}^1_\mathcal{R}(\mathcal{A})$. It is
$H$-equivariant,
left $\mathcal{A}$-linear in the first and right $\mathcal{A}$-linear in the
second argument.
\begin{proposition}
An equivariant covariant derivative $\nabla^\mathcal{R}$ on
$\mathcal{A}$ induces an equivariant covariant derivative
$\tilde{\nabla}^\mathcal{R}$ on $\Omega^1_\mathcal{R}(\mathcal{A})$ via
\begin{equation}
    \langle\tilde{\nabla}^\mathcal{R}_X\omega,Y\rangle_\mathcal{R}
    =\mathscr{L}^\mathcal{R}_X\langle\omega,Y\rangle_\mathcal{R}
    -\langle\mathcal{R}_1^{-1}\rhd\omega,
    \nabla^\mathcal{R}_{\mathcal{R}_2^{-1}\rhd X}Y\rangle_\mathcal{R}
\end{equation}
for all $X,Y\in\mathfrak{X}^1_\mathcal{R}(\mathcal{A})$ and
$\omega\in\Omega^1_\mathcal{R}(\mathcal{A})$. Moreover, $\nabla^\mathcal{R}$
and $\tilde{\nabla}^\mathcal{R}$ extend
as braided derivations to equivariant covariant
derivatives
$$
\nabla^\mathcal{R}\colon\mathfrak{X}^1_\mathcal{R}(\mathcal{A})
\otimes\mathfrak{X}^\bullet_\mathcal{R}(\mathcal{A})
\rightarrow\mathfrak{X}^\bullet_\mathcal{R}(\mathcal{A})
\text{ and }
\tilde{\nabla}^\mathcal{R}\colon\mathfrak{X}^1_\mathcal{R}(\mathcal{A})
\otimes\Omega^\bullet_\mathcal{R}(\mathcal{A})
\rightarrow\Omega^\bullet_\mathcal{R}(\mathcal{A}),
$$
respectively. Namely, we inductively set
$\nabla^\mathcal{R}_Xa=X(a)=\tilde{\nabla}^\mathcal{R}_Xa$,
$$
\nabla^\mathcal{R}_X(Y\wedge_\mathcal{R}Z)
=\nabla^\mathcal{R}_XY\wedge_\mathcal{R}Z
+(\mathcal{R}_1^{-1}\rhd Y)\wedge_\mathcal{R}
\nabla^\mathcal{R}_{\mathcal{R}_2^{-1}\rhd X}Z
$$
and
$$
\tilde{\nabla}^\mathcal{R}_X(\omega\wedge_\mathcal{R}\eta)
=\tilde{\nabla}^\mathcal{R}_X\omega\wedge_\mathcal{R}\eta
+(\mathcal{R}_1^{-1}\rhd\omega)\wedge_\mathcal{R}
\tilde{\nabla}^\mathcal{R}_{\mathcal{R}_2^{-1}\rhd X}\eta
$$
for all $a\in\mathcal{A}$, $X\in\mathfrak{X}^1_\mathcal{R}(\mathcal{A})$,
$Y,Z\in\mathfrak{X}^\bullet_\mathcal{R}(\mathcal{A})$
and $\omega,\eta\in\Omega^\bullet_\mathcal{R}(\mathcal{A})$.
\end{proposition}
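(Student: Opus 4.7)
\medskip

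\noindent\textbf{Proof proposal.} The plan is to verify the three claims in order: well-definedness of $\tilde{\nabla}^\mathcal{R}$ on $\Omega^1_\mathcal{R}(\mathcal{A})$, the two covariant-derivative axioms plus $H$-equivariance for $\tilde{\nabla}^\mathcal{R}$, and finally that the inductive Leibniz extensions to $\mathfrak{X}^\bullet_\mathcal{R}(\mathcal{A})$ and $\Omega^\bullet_\mathcal{R}(\mathcal{A})$ descend to the quotient by the braided antisymmetry ideal.

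\emph{Step 1: $\tilde{\nabla}^\mathcal{R}_X\omega\in\Omega^1_\mathcal{R}(\mathcal{A})$.} I would first verify that the assignment $Y\mapsto\langle\tilde{\nabla}^\mathcal{R}_X\omega,Y\rangle_\mathcal{R}$ is right $\mathcal{A}$-linear, so that $\tilde{\nabla}^\mathcal{R}_X\omega\in\underline{\Omega}^1_\mathcal{R}(\mathcal{A})$. Replacing $Y$ by $Y\cdot a$, one expands $\nabla^\mathcal{R}_{\mathcal{R}_2^{-1}\rhd X}(Y\cdot a)$ using the right $\mathcal{A}$-Leibniz identity established in the preceding lemma, and balances it against $\mathscr{L}^\mathcal{R}_X(\langle\omega,Y\rangle_\mathcal{R}\cdot a)$ with the braided derivation property of $\mathscr{L}^\mathcal{R}_X$ from Theorem~\ref{ThmBraidedCC}; the hexagon relations and triangularity $\mathcal{R}^{-1}=\mathcal{R}_{21}$ make the unwanted terms telescope. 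To check that $\tilde{\nabla}^\mathcal{R}_X\omega$ lies in the differential subalgebra $\Omega^1_\mathcal{R}(\mathcal{A})$ rather than only in $\underline{\Omega}^1_\mathcal{R}(\mathcal{A})$, I would evaluate the formula on the generators $\omega=a\,\mathrm{d}b$: the defining identity $\mathrm{i}^\mathcal{R}_Y\mathrm{d}b=Y(b)$ and Lemma~\ref{lemma10} force $\tilde{\nabla}^\mathcal{R}_X(a\,\mathrm{d}b)$ to be a finite $\mathcal{A}$-combination of terms of the form $c\,\mathrm{d}e$.

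\emph{Step 2: covariant-derivative axioms and $H$-equivariance.} For left $\mathcal{A}$-linearity in $X$, replace $X$ by $a\cdot X$ in the defining formula; both summands on the right become $a$-multiples after using left $\mathcal{A}$-linearity of $\nabla^\mathcal{R}$ and of $\mathscr{L}^\mathcal{R}$ in the vector-field slot. The braided Leibniz rule in $\omega$ reduces, after pairing with $Y$, to an identity between $\mathscr{L}^\mathcal{R}_X(a\cdot\omega(Y))$ and the corresponding expansion of $a\cdot\mathscr{L}^\mathcal{R}_X(\omega(Y))$ plus the braided-twisted contribution; this is again the braided Leibniz rule for $\mathscr{L}^\mathcal{R}_X$ combined with braided commutativity of $\mathcal{A}$, and all $\mathcal{R}$-factors track through consistently. $H$-equivariance follows by acting with $\xi\in H$ on the defining equation, using equivariance of $\langle\cdot,\cdot\rangle_\mathcal{R}$, $\mathscr{L}^\mathcal{R}$ and $\nabla^\mathcal{R}$, and the quasi-cocommutativity of $\Delta$ to reshuffle the coproduct legs.

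\emph{Step 3: inductive extensions to $\mathfrak{X}^\bullet_\mathcal{R}(\mathcal{A})$ and $\Omega^\bullet_\mathcal{R}(\mathcal{A})$.} I would define $\nabla^\mathcal{R}_X$ and $\tilde{\nabla}^\mathcal{R}_X$ first on the tensor algebras $\mathrm{T}^\bullet\mathrm{Der}_\mathcal{R}(\mathcal{A})$ and $\mathrm{T}^\bullet\underline{\Omega}^1_\mathcal{R}(\mathcal{A})$ by the stated braided Leibniz rules, then argue that they descend to the respective braided exterior algebras. The main obstacle, and the technical heart of the proof, is the descent: I must show that the braided Leibniz rule preserves the ideal $I$ generated by $Y\otimes_\mathcal{A}Z-(-1)^{|Y||Z|}(\mathcal{R}_1^{-1}\rhd Z)\otimes_\mathcal{A}(\mathcal{R}_2^{-1}\rhd Y)$. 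Applying $\nabla^\mathcal{R}_X$ to such a generator and expanding both sides, one must match
\[
\nabla^\mathcal{R}_XY\wedge_\mathcal{R}Z+(\mathcal{R}_1^{-1}\rhd Y)\wedge_\mathcal{R}\nabla^\mathcal{R}_{\mathcal{R}_2^{-1}\rhd X}Z
\]
with the corresponding braided-transposed expression, using the graded braided commutativity of $\wedge_\mathcal{R}$ on the already-defined lower-order pieces together with the hexagon relations and the $H$-equivariance of $\nabla^\mathcal{R}$. Triangularity $\mathcal{R}_{21}=\mathcal{R}^{-1}$ is essential so that the two flips cancel.

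\emph{Step 4: the extended maps are equivariant covariant derivatives.} Once well-definedness is in hand, $H$-equivariance of the extensions follows by induction on degree from equivariance of $\nabla^\mathcal{R}$, $\mathscr{L}^\mathcal{R}$ and $\wedge_\mathcal{R}$. Left $\mathcal{A}$-linearity $\nabla^\mathcal{R}_{a\cdot X}(Y\wedge_\mathcal{R}Z)=a\cdot\nabla^\mathcal{R}_X(Y\wedge_\mathcal{R}Z)$ reduces via the inductive Leibniz rule to the degree-one case already established, and the braided Leibniz rule in the second argument follows analogously. The computation for $\tilde{\nabla}^\mathcal{R}$ on $\Omega^\bullet_\mathcal{R}(\mathcal{A})$ is formally identical, with $\mathfrak{X}$ replaced by $\Omega$ and the same $\mathcal{R}$-bookkeeping. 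The hard part throughout is keeping the coproduct and $\mathcal{R}$-matrix legs straight when several braided actions are combined; this is mechanical but long, and the triangularity assumption is what makes the ideal compatibility close.
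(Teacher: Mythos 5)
Your plan follows the paper's own proof essentially step for step: right $\mathcal{A}$-linearity of $Y\mapsto\langle\tilde{\nabla}^\mathcal{R}_X\omega,Y\rangle_\mathcal{R}$, the covariant-derivative axioms and $H$-equivariance checked through the non-degenerate pairing, compatibility of the Leibniz extension with the graded braided commutativity relation (your tensor-algebra descent is the same check in different packaging), and then induction on degree for the axioms of the extended maps. The only addition is your explicit verification that $\tilde{\nabla}^\mathcal{R}_X\omega$ lands in $\Omega^1_\mathcal{R}(\mathcal{A})$ rather than merely in $\underline{\Omega}^1_\mathcal{R}(\mathcal{A})$ by evaluating on generators $a\,\mathrm{d}b$ --- a point the paper leaves implicit --- which is a worthwhile extra sentence.
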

\begin{proof}
Let $\xi\in H$, $X\in\mathfrak{X}^1_\mathcal{R}(\mathcal{A})$,
$Y,Z\in\mathfrak{X}^\bullet_\mathcal{R}(\mathcal{A})$,
$\omega\in\Omega^1_\mathcal{R}(\mathcal{A})$ and
$\eta,\chi\in\Omega^\bullet_\mathcal{R}(\mathcal{A})$ be arbitrary. Then
$\tilde{\nabla}^\mathcal{R}$ is well-defined on $\Omega^1_\mathcal{R}(\mathcal{A})$,
since
\begin{align*}
    \mathscr{L}^\mathcal{R}_X\langle\omega,Y\cdot a\rangle_\mathcal{R}
    &-\langle\mathcal{R}_1^{-1}\rhd\omega,
    \nabla^\mathcal{R}_{\mathcal{R}_2^{-1}\rhd X}(Y\cdot a)\rangle_\mathcal{R}\\
    =&\mathscr{L}^\mathcal{R}_X\langle\omega,Y\rangle_\mathcal{R}\cdot a
    +\langle\mathcal{R}_{1(1)}^{-1}\rhd\omega,
    \mathcal{R}_{1(2)}^{-1}\rhd Y\rangle_\mathcal{R}
    \cdot(\mathcal{R}_2^{-1}\rhd X)(a)\\
    &-\langle\mathcal{R}_1^{-1}\rhd\omega,
    \nabla^\mathcal{R}_{\mathcal{R}_2^{-1}\rhd X}Y\rangle_\mathcal{R}\cdot a\\
    &-\langle\mathcal{R}_1^{-1}\rhd\omega,
    (\mathcal{R}_1^{'-1}\rhd Y)
    \cdot((\mathcal{R}_2^{'-1}\mathcal{R}_2^{-1})\rhd X)(a)\rangle_\mathcal{R}\\
    =&(\mathscr{L}^\mathcal{R}_X\langle\omega,Y\rangle_\mathcal{R}
    -\langle\mathcal{R}_1^{-1}\rhd\omega,
    \nabla^\mathcal{R}_{\mathcal{R}_2^{-1}\rhd X}Y\rangle_\mathcal{R})\cdot a.
\end{align*}
It is an equivariant covariant derivative because
\begin{align*}
    \langle\tilde{\nabla}^\mathcal{R}_{\xi_{(1)}\rhd X}(\xi_{(2)}\rhd\omega),
    Y\rangle_\mathcal{R}
    =&\mathscr{L}^\mathcal{R}_{\xi_{(1)}\rhd X}\langle\xi_{(2)}\rhd\omega,
    Y\rangle_\mathcal{R}
    -\langle(\mathcal{R}_1^{-1}\xi_{(2)})\rhd\omega,
    \nabla^\mathcal{R}_{(\mathcal{R}_2^{-1}\xi_{(1)})\rhd X}Y\rangle_\mathcal{R}\\
    =&\xi_{(1)}\rhd(\mathscr{L}^\mathcal{R}_{X}
    \langle\omega,S(\xi_{(2)})\rhd Y\rangle_\mathcal{R})\\
    &-\xi_{(1)}\rhd\langle\mathcal{R}_1^{-1}\rhd\omega,
    \nabla^\mathcal{R}_{\mathcal{R}_2^{-1}\rhd X}(S(\xi_{(2)})\rhd Y)
    \rangle_\mathcal{R}\\
    =&\xi_{(1)}\rhd\langle\tilde{\nabla}^\mathcal{R}_X\omega,
    S(\xi_{(2)})\rhd Y\rangle_\mathcal{R}\\
    =&\langle\xi\rhd(\tilde{\nabla}^\mathcal{R}_X\omega),Y\rangle_\mathcal{R}
\end{align*}
shows that it is $H$-equivariant, while
\begin{align*}
    \langle\tilde{\nabla}^\mathcal{R}_{a\cdot X}\omega,Y\rangle_\mathcal{R}
    =&\mathscr{L}^\mathcal{R}_{a\cdot X}\langle\omega,Y\rangle_\mathcal{R}
    -\langle\mathcal{R}_1^{-1}\rhd\omega,
    \nabla^\mathcal{R}_{(\mathcal{R}_{2(1)}^{-1}\rhd a)
    \cdot(\mathcal{R}_{2(2)}^{-1}\rhd X)}Y\rangle_\mathcal{R}\\
    =&a\cdot\mathscr{L}^\mathcal{R}_X\langle\omega,Y\rangle_\mathcal{R}
    -((\mathcal{R}_1^{'-1}\mathcal{R}_{2(1)}^{-1})\rhd a)
    \cdot\langle(\mathcal{R}_2^{'-1}\mathcal{R}_1^{-1})\rhd\omega,
    \nabla^\mathcal{R}_{\mathcal{R}_{2(2)}^{-1}\rhd X}Y\rangle_\mathcal{R}\\
    =&a\cdot\mathscr{L}^\mathcal{R}_X\langle\omega,Y\rangle_\mathcal{R}
    -a\cdot\langle\mathcal{R}_1^{-1}\rhd\omega,
    \nabla^\mathcal{R}_{\mathcal{R}_{2}^{-1}\rhd X}Y\rangle_\mathcal{R}\\
    =&a\cdot\langle\tilde{\nabla}^\mathcal{R}_X\omega,Y\rangle_\mathcal{R}\\
    =&\langle a\cdot\tilde{\nabla}^\mathcal{R}_X\omega,Y\rangle_\mathcal{R}
\end{align*}
and
\begin{align*}
    \langle\tilde{\nabla}^\mathcal{R}_X(a\cdot\omega),Y\rangle_\mathcal{R}
    =&\mathscr{L}^\mathcal{R}_X\langle a\cdot\omega,Y\rangle_\mathcal{R}
    -\langle(\mathcal{R}_{1(1)}^{-1}\rhd a)\cdot(\mathcal{R}_{1(2)}^{-1}\rhd\omega),
    \nabla^\mathcal{R}_{\mathcal{R}_2^{-1}\rhd X}Y\rangle_\mathcal{R}\\
    =&X(a)\cdot\langle\omega,Y\rangle_\mathcal{R}
    +(\mathcal{R}_1^{-1}\rhd a)
    \cdot\mathscr{L}^\mathcal{R}_{\mathcal{R}_2^{-1}\rhd X}
    \langle\omega,Y\rangle_\mathcal{R}\\
    &-(\mathcal{R}_1^{-1}\rhd a)
    \cdot\langle\mathcal{R}_1^{'-1}\rhd\omega,
    \nabla^\mathcal{R}_{(\mathcal{R}_2^{'-1}\mathcal{R}_2^{-1})\rhd X}
    Y\rangle_\mathcal{R}\\
    =&\langle\mathscr{L}^\mathcal{R}_X(a)\cdot\omega
    +(\mathcal{R}_1^{-1}\rhd a)
    \cdot(\tilde{\nabla}^\mathcal{R}_{\mathcal{R}_2^{-1}\rhd X}\omega),
    Y\rangle_\mathcal{R}
\end{align*}
prove that $\tilde{\nabla}^\mathcal{R}$ provides the correct linearity
properties via the non-degeneracy of the braided dual pairing.
To prove that the extension of $\nabla^\mathcal{R}$ to
$\mathfrak{X}^\bullet_\mathcal{R}(\mathcal{A})$ is well-defined it is
sufficient to show that
\begin{align*}
    \nabla^\mathcal{R}_XY\wedge_\mathcal{R}Z
    &+(\mathcal{R}_1^{-1}\rhd Y)\wedge_\mathcal{R}
    \nabla^\mathcal{R}_{\mathcal{R}_2^{-1}\rhd X}Z\\
    &-(-1)^{k\cdot\ell}\bigg(
    \nabla^\mathcal{R}_X(\mathcal{R}_1^{-1}\rhd Z)\wedge_\mathcal{R}
    (\mathcal{R}_2^{-1}\rhd Y)\\
    &+((\mathcal{R}_1^{-1}\mathcal{R}_1^{'-1})\rhd Z)\wedge_\mathcal{R}
    \nabla^\mathcal{R}_{\mathcal{R}_2^{-1}\rhd X}(\mathcal{R}_2^{'-1}\rhd Y)\bigg)\\
    =&0
\end{align*}
where $k$ and $\ell$ are the degrees of $Y$ and $Z$, respectively.
Starting from $k=\ell=0$ this follows inductively by the braided
commutativity of $\wedge_\mathcal{R}$ and the equivariance of $\nabla^\mathcal{R}$.
Assume that $\nabla^\mathcal{R}_{a\cdot X}Y=a\cdot\nabla^\mathcal{R}_XY$ and
$\nabla^\mathcal{R}_X(a\cdot Y)=\mathscr{L}^\mathcal{R}_Xa\cdot Y
+(\mathcal{R}_1^{-1}\rhd a)\cdot\nabla^\mathcal{R}_{\mathcal{R}_2^{-1}\rhd X}Y$
for a fixed degree $k>0$ of $Y$. Let the degree of $Z$ be $1$. Then
\begin{align*}
    \nabla^\mathcal{R}_{a\cdot X}(Y\wedge_\mathcal{R}Z)
    =&\nabla^\mathcal{R}_{a\cdot X}Y\wedge_\mathcal{R}Z
    +(\mathcal{R}_1^{-1}\rhd Y)\wedge_\mathcal{R}
    \nabla^\mathcal{R}_{(\mathcal{R}_{2(1)}^{-1}\rhd a)
    \cdot(\mathcal{R}_{2(2)}^{-1}\rhd X)}Z\\
    =&a\cdot\nabla^\mathcal{R}_XY\wedge_\mathcal{R}Z
    +((\mathcal{R}_1^{'-1}\mathcal{R}_{2(1)}^{-1})\rhd a)
    \cdot((\mathcal{R}_2^{'-1}\mathcal{R}_1^{-1})\rhd Y)\wedge_\mathcal{R}
    \nabla^\mathcal{R}_{\mathcal{R}_{2(2)}^{-1}\rhd X}Z\\
    =&a\cdot\nabla^\mathcal{R}_XY\wedge_\mathcal{R}Z
    +a\cdot(\mathcal{R}_1^{-1}\rhd Y)
    \wedge_\mathcal{R}\nabla^\mathcal{R}_{\mathcal{R}_{2}^{-1}\rhd X}Z\\
    =&a\cdot(\nabla^\mathcal{R}_X(Y\wedge_\mathcal{R}Z))
\end{align*}
and
\begin{align*}
    \nabla^\mathcal{R}_X(a\cdot Y\wedge_\mathcal{R}Z)
    =&\nabla^\mathcal{R}_X(a\cdot Y)\wedge_\mathcal{R}Z
    +(\mathcal{R}_{1(1)}^{-1}\rhd a)\cdot(\mathcal{R}_{1(2)}^{-1}\rhd Y)
    \wedge_\mathcal{R}\nabla^\mathcal{R}_{\mathcal{R}_2^{-1}\rhd X}Z\\
    =&\mathscr{L}^\mathcal{R}_X(a)\cdot Y\wedge_\mathcal{R}Z
    +(\mathcal{R}_1^{-1}\rhd a)\cdot(\nabla^\mathcal{R}_{\mathcal{R}_2^{-1}\rhd X}Y)
    \wedge_\mathcal{R}Z\\
    &+(\mathcal{R}_{1}^{-1}\rhd a)\cdot(\mathcal{R}_{1}^{'-1}\rhd Y)
    \wedge_\mathcal{R}\nabla^\mathcal{R}_{(\mathcal{R}_2^{'-1}\mathcal{R}_2^{-1})
    \rhd X}Z\\
    =&\mathscr{L}^\mathcal{R}_X(a)\cdot Y\wedge_\mathcal{R}Z
    +(\mathcal{R}_1^{-1}\rhd a)\cdot
    \nabla^\mathcal{R}_{\mathcal{R}_2^{-1}\rhd X}
    (Y\wedge_\mathcal{R}Z)
\end{align*}
show that $\nabla^\mathcal{R}$ has the correct linearity properties on
elements of degree $k+1$. Inductively this shows $\nabla^\mathcal{R}$ is an
equivariant covariant derivative on $\mathfrak{X}^\bullet_\mathcal{R}(\mathcal{A})$.
Similarly one proves that $\tilde{\nabla}^\mathcal{R}$ is an equivariant
covariant derivative on $\Omega^\bullet_\mathcal{R}(\mathcal{A})$.
\end{proof}
In Riemannian geometry, covariant derivatives are always consider together
with a Riemannian metric. We want to generalize them to the braided symmetric
setting. 
\begin{definition}[Equivariant Metric]
For a triangular Hopf algebra $(H,\mathcal{R})$ and a braided
commutative left $H$-module algebra $\mathcal{A}$ we define a $\Bbbk$-linear map
$$
{\bf g}\colon\mathfrak{X}^1_\mathcal{R}(\mathcal{A})
\otimes_\mathcal{A}\mathfrak{X}^1_\mathcal{R}(\mathcal{A})
\rightarrow\mathcal{A}
$$
to be an \textit{equivariant metric} if it is
left $\mathcal{A}$-linear in the first argument, $H$-equivariant
and \textit{braided symmetric},
i.e. if ${\bf g}(Y,X)
={\bf g}(\mathcal{R}_1^{-1}\rhd X,\mathcal{R}_2^{-1}\rhd Y)$ for all
$X,Y\in\mathfrak{X}^1_\mathcal{R}(\mathcal{A})$. An equivariant metric 
${\bf g}$ is said to be
\begin{enumerate}
\item[i.)] \textit{non-degenerate} if
${\bf g}(X,Y)=0$ for all $Y\in\mathfrak{X}^1_\mathcal{R}(\mathcal{A})$ implies
$X=0$;

\item[ii.)] \textit{strongly non-degenerate} if
${\bf g}(X,X)\neq 0$ for $X\neq 0$;

\item[iii.)] \textit{Riemannian} if ${\bf g}$ is strongly non-degenerate such that
${\bf g}(X,X)\geq 0$ for all $X\in\mathfrak{X}^1_\mathcal{R}(\mathcal{A})$ and
a partial order $\geq$ on $\mathcal{A}$;
\end{enumerate}
An equivariant covariant derivative $\nabla^\mathcal{R}
\colon
\mathfrak{X}^1_\mathcal{R}(\mathcal{A})\otimes\mathfrak{X}^1_\mathcal{R}
(\mathcal{A})\rightarrow\mathfrak{X}^1_\mathcal{R}(\mathcal{A})
$ 
on $\mathcal{A}$ is said to be a
\textit{metric equivariant covariant derivative} with respect to an equivariant
metric ${\bf g}$ if
\begin{equation}
    \mathscr{L}^\mathcal{R}_X({\bf g}(Y,Z))
    ={\bf g}(\nabla^\mathcal{R}_XY,Z)
    +{\bf g}(\mathcal{R}_1^{-1}\rhd Y,\nabla^\mathcal{R}_{\mathcal{R}_2^{-1}\rhd X}Z)
\end{equation}
holds for all $X,Y,Z\in\mathfrak{X}^1_\mathcal{R}(\mathcal{A})$.
\end{definition}
%
Note that equivariance of a metric is a quite strong requirement. Similar approaches
which omit this condition are e.g. \cite{Aschieri2010,Aschieri2006,GaetanoThomas19}.
\begin{lemma}
Any equivariant metric is braided right $\mathcal{A}$-linear in the first argument
as well as right $\mathcal{A}$-linear and braided left $\mathcal{A}$-linear in
the second argument.
\end{lemma}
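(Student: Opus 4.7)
The plan is to derive each of the three linearity properties by combining the three given hypotheses---left $\mathcal{A}$-linearity in the first argument, $H$-equivariance, and braided symmetry of ${\bf g}$---together with the braided commutativity of the bimodule $\mathfrak{X}^1_\mathcal{R}(\mathcal{A})$ (established in Lemma~\ref{lemma08}) and the triangularity $\mathcal{R}^{-1}=\mathcal{R}_{21}$.

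First I would establish braided right $\mathcal{A}$-linearity in the first slot, that is,
$$
{\bf g}(X\cdot a, Y) = (\mathcal{R}_1^{-1}\rhd a)\cdot{\bf g}(\mathcal{R}_2^{-1}\rhd X, Y).
$$
This is a one-line consequence of the braided commutativity $X\cdot a=(\mathcal{R}_1^{-1}\rhd a)\cdot(\mathcal{R}_2^{-1}\rhd X)$ in $\mathfrak{X}^1_\mathcal{R}(\mathcal{A})$ followed by the assumed left $\mathcal{A}$-linearity in the first argument. The analogous braided left $\mathcal{A}$-linearity in the second slot,
$$
{\bf g}(X, a\cdot Y) = (\mathcal{R}_1^{-1}\rhd a)\cdot{\bf g}(\mathcal{R}_2^{-1}\rhd X, Y),
$$
is then immediate from the fact that ${\bf g}$ factors through $\otimes_\mathcal{A}$, so that ${\bf g}(X, a\cdot Y)={\bf g}(X\cdot a, Y)$, and the previous identity applies.

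The main challenge is the remaining property, the ordinary (non-braided) right $\mathcal{A}$-linearity in the second slot, ${\bf g}(X, Y\cdot a)={\bf g}(X,Y)\cdot a$. Here I would begin with the braided symmetry ${\bf g}(X, Y\cdot a)={\bf g}(\mathcal{R}_1^{-1}\rhd(Y\cdot a), \mathcal{R}_2^{-1}\rhd X)$, expand the $H$-action on the product via $\xi\rhd(Y\cdot a)=(\xi_{(1)}\rhd Y)\cdot(\xi_{(2)}\rhd a)$, and invoke the hexagon identity $(\Delta\otimes\mathrm{id})(\mathcal{R}^{-1})=\mathcal{R}_{23}^{-1}\mathcal{R}_{13}^{-1}$ to split the resulting coproduct leg into two independent copies of $\mathcal{R}^{-1}$. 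Braided commutativity of the bimodule converts the scalar factor on the right of $Y$ back into a left action, after which left $\mathcal{A}$-linearity in the first slot of ${\bf g}$ extracts it outside, and $H$-equivariance of ${\bf g}$ absorbs the remaining braiding legs into an action on ${\bf g}(Y,X)$. A final application of braided symmetry of ${\bf g}$ together with triangularity $\mathcal{R}_2^{-1}\otimes\mathcal{R}_1^{-1}=\mathcal{R}_1\otimes\mathcal{R}_2$ pairs the accumulated $\mathcal{R}$-factors with their inverses and collapses them to the identity, yielding the desired ${\bf g}(X,Y)\cdot a$.

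The hard part will be the combinatorial bookkeeping of the three or four surviving copies of $\mathcal{R}$ in the last calculation: one must apply the hexagon relation at just the right moment so that the braidings produced by expanding $\Delta(\mathcal{R}_1^{-1})$ align with those introduced by the braided commutativity swap, thereby becoming eligible for cancellation with their inverses via triangularity. The analogous manipulations already carried out earlier in this section in the proofs of the symmetry properties of $R^{\nabla^\mathcal{R}}$ and $\mathrm{Tor}^{\nabla^\mathcal{R}}$ supply the template, and no genuinely new input beyond the hexagon relations, triangularity, and the three given properties of ${\bf g}$ is needed.
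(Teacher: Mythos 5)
The paper states this lemma without proof, so there is no argument of the author's to compare against; judged on its own terms, your proposal is essentially correct. The first two claims are handled exactly as they should be: $X\cdot a=(\mathcal{R}_1^{-1}\rhd a)\cdot(\mathcal{R}_2^{-1}\rhd X)$ by the braided symmetry of $\mathfrak{X}^1_\mathcal{R}(\mathcal{A})$ from Lemma~\ref{lemma08}, then left $\mathcal{A}$-linearity of ${\bf g}$ in the first slot, and the braided left linearity in the second slot is the same identity read through ${\bf g}(X\cdot a,Y)={\bf g}(X,a\cdot Y)$, which holds because ${\bf g}$ is defined on $\otimes_\mathcal{A}$. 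For the third claim your route does close, but with one caveat: if you track the legs, the left-hand side accumulates the operator $\mathcal{R}_{12}^{-1}\mathcal{R}_{13}^{-1}\mathcal{R}_{23}^{-1}$ on $a\otimes Y\otimes X$ while the target ${\bf g}(X,Y)\cdot a$, rewritten the same way, carries $\mathcal{R}_{23}^{-1}\mathcal{R}_{13}^{-1}\mathcal{R}_{12}^{-1}$; these agree by the quantum Yang--Baxter equation (Corollary~\ref{cor01}), not by a pairwise cancellation of $\mathcal{R}$-factors against their inverses via triangularity as you describe. Since QYBE is available in the paper this is a fixable imprecision rather than a gap, but the mechanism you name is not the one doing the work. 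You could also shorten this part considerably and avoid both the braided symmetry of ${\bf g}$ and QYBE: since $X\otimes_\mathcal{A}(Y\cdot a)=(X\otimes_\mathcal{A}Y)\cdot a$ and the tensor-product bimodule is itself braided symmetric (see the proof of Theorem~\ref{thm02}), one gets $(X\otimes_\mathcal{A}Y)\cdot a=(\mathcal{R}_1^{-1}\rhd a)\cdot\big((\mathcal{R}_{2(1)}^{-1}\rhd X)\otimes_\mathcal{A}(\mathcal{R}_{2(2)}^{-1}\rhd Y)\big)$, whence left linearity and $H$-equivariance of ${\bf g}$ yield $(\mathcal{R}_1^{-1}\rhd a)\cdot(\mathcal{R}_2^{-1}\rhd{\bf g}(X,Y))={\bf g}(X,Y)\cdot a$ by the braided commutativity of $\mathcal{A}$.
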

In the subsequent proposition we prove the existence and uniqueness of
a Levi-Civita covariant derivative in braided geometry: for
every non-degenerate equivariant metric there exists a unique metric torsion-free
equivariant covariant derivative.
We are even going to prove this for an arbitrary value
of the torsion. Note that the non-degeneracy is
crucial in the proof since the equivariant covariant derivative
is defined implicitly in terms of the equivariant metric.
\begin{proposition}[Levi-Civita]
Let $\bf{g}$ be a non-degenerate equivariant metric on $\mathcal{A}$.
Then there is a unique metric equivariant covariant derivative on $\mathcal{A}$
with fixed torsion
$T\colon\mathfrak{X}^2_\mathcal{R}(\mathcal{A})
\rightarrow\mathfrak{X}^1_\mathcal{R}(\mathcal{A})$.
\end{proposition}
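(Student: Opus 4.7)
The plan is to mimic the classical Koszul formula argument, tracking braidings carefully. Since $\bf{g}$ is non-degenerate, $\nabla^{\mathcal R}_X Y$ will be determined implicitly by prescribing $\mathbf{g}(\nabla^{\mathcal R}_X Y, Z)$ for all $Z$. The target identity will be a braided Koszul-type formula obtained by taking three cyclic permutations of the metric compatibility equation, adding two and subtracting the third, and then using the torsion constraint to eliminate the two unwanted $\nabla$-terms. Concretely, I would combine
\begin{equation*}
\mathscr{L}^{\mathcal R}_X\mathbf{g}(Y,Z),\qquad \mathscr{L}^{\mathcal R}_Y\mathbf{g}(X,Z),\qquad \mathscr{L}^{\mathcal R}_Z\mathbf{g}(X,Y)
\end{equation*}
with appropriate braiding insertions in the second and third terms, and use the torsion identity $\nabla^{\mathcal R}_X Y - \nabla^{\mathcal R}_{\mathcal{R}_1^{-1}\rhd Y}(\mathcal{R}_2^{-1}\rhd X) = [X,Y]_{\mathcal R}+T(X,Y)$ and the braided symmetry of $\mathbf{g}$ to reduce the right-hand side to $2\mathbf{g}(\nabla^{\mathcal R}_X Y, Z)$ plus terms involving only $\mathbf{g}$, $\mathscr{L}^{\mathcal R}$, $[\cdot,\cdot]_{\mathcal R}$ and $T$. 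Non-degeneracy of $\mathbf{g}$ then defines $\nabla^{\mathcal R}_X Y$ uniquely from this formula, which simultaneously proves uniqueness: any two solutions $\nabla, \nabla'$ satisfy $\mathbf{g}(\nabla_X Y - \nabla'_X Y, Z)=0$ for all $Z$.

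Next I would verify that the $\nabla^{\mathcal R}$ so defined is actually an equivariant covariant derivative. $H$-equivariance follows because $\mathbf{g}$, $\mathscr{L}^{\mathcal R}$, $[\cdot,\cdot]_{\mathcal R}$ and (by hypothesis) $T$ are all $H$-equivariant, so both sides of the defining Koszul formula transform the same way under the adjoint $H$-action; non-degeneracy again concludes. The left $\mathcal{A}$-linearity in the first argument and the braided Leibniz rule in the second argument must be checked by substituting $a\cdot X$, respectively $a\cdot Y$, into the Koszul formula and expanding. This is where the combinatorics of $\mathcal{R}$ is most delicate: one has to use the braided Leibniz rule for $\mathscr{L}^{\mathcal R}$ proved in Theorem~\ref{ThmBraidedCC}, the braided left/right $\mathcal{A}$-linearity of $\mathbf{g}$, and the identity $\llbracket a\cdot X,Y\rrbracket_{\mathcal R}$ in terms of $[\cdot,\cdot]_{\mathcal R}$ and $\mathscr{L}^{\mathcal R}$ to see that the three extra $\mathscr{L}^{\mathcal R}_{\cdot}(a)$-terms coming from the three cyclic permutations collapse correctly. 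Then the assumption that $T$ is $\mathcal{A}$-bilinear in the appropriate braided sense (as established in the lemma preceding the proposition) ensures the torsion contribution is compatible.

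Finally, by construction the Koszul formula is exactly the sum of the metric-compatibility identity plus its cyclic rotations, so reading it backwards gives metric compatibility; and reading the antisymmetric (in $X,Y$, with braiding) part of the formula recovers the prescribed torsion. Both checks amount to linear combinations of the very identities used to set up the formula, so they are essentially formal once the formula is in hand.

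The main obstacle will be the braided bookkeeping in two places: (i) writing the correct braided Koszul formula, i.e.\ deciding where to insert $\mathcal{R}_1^{-1}\otimes \mathcal{R}_2^{-1}$ factors in the cyclic permutations so that the metric-compatibility identities actually combine into $2\mathbf{g}(\nabla^{\mathcal R}_X Y, Z)$ (using the hexagon relations and $\mathcal{R}^{-1}=\mathcal{R}_{21}$ repeatedly); and (ii) verifying the braided Leibniz rule in the second slot, where several braiding factors must be shuffled through $\mathbf{g}$ using its braided left/right $\mathcal{A}$-linearity and through $\mathscr{L}^{\mathcal R}$ using the braided Cartan calculus. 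Once these braiding identities are set up cleanly, existence, uniqueness and the prescribed torsion all follow by the same non-degeneracy argument.
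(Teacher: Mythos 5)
Your plan is exactly the paper's argument: the paper derives a braided Koszul-type formula for $2{\bf g}(\nabla^\mathcal{R}_XY,Z)$ by combining the metric-compatibility identity with its two braided cyclic rotations (adding two and subtracting the third), uses the torsion identity to flip the unwanted $\nabla$-terms, concludes uniqueness from non-degeneracy, and then establishes existence by substituting $a\cdot X$ into the formula and checking that the non-linear contributions of the second and third Lie-derivative terms cancel against those of the bracket terms. Your sketch identifies all the same steps and the same delicate points (the placement of the $\mathcal{R}$-factors and the linearity checks), so it is essentially the paper's proof.
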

\begin{proof}
Fix an equivariant metric ${\bf g}$ on $\mathcal{A}$.
Any equivariant covariant
derivative $\nabla^\mathcal{R}$ on $\mathcal{A}$ which is metric with
respect to ${\bf g}$ satisfies
\begin{equation}\label{eq57}
\begin{split}
    2{\bf g}(\nabla^\mathcal{R}_XY,Z)
    =&X({\bf g}(Y,Z))
    +(\mathcal{R}_{1(1)}^{-1}\rhd Y)({\bf g}(\mathcal{R}_{1(2)}^{-1}\rhd Z,
    \mathcal{R}_2^{-1}\rhd X))\\
    &-(\mathcal{R}_{1}^{-1}\rhd Z)({\bf g}(\mathcal{R}_{2(1)}^{-1}\rhd X,
    \mathcal{R}_{2(2)}^{-1}\rhd Y))\\
    &-{\bf g}(X,[Y,Z]_\mathcal{R})
    +{\bf g}(\mathcal{R}_{1(1)}^{-1}\rhd Y,
    [\mathcal{R}_{1(2)}^{-1}\rhd Z,\mathcal{R}_2^{-1}\rhd X]_\mathcal{R})\\
    &+{\bf g}(\mathcal{R}_{1}^{-1}\rhd Z,
    [\mathcal{R}_{2(1)}^{-1}\rhd X,\mathcal{R}_{2(2)}^{-1}\rhd Y]_\mathcal{R})\\
    &-{\bf g}(X,\mathrm{Tor}^{\nabla^\mathcal{R}}(Y,Z))
    -{\bf g}(\mathcal{R}_1^{-1}\rhd Y,
    \mathrm{Tor}^{\nabla^\mathcal{R}}(\mathcal{R}_2^{-1}\rhd X,Z))\\
    &+{\bf g}(\mathcal{R}_1^{-1}\rhd Z,\mathrm{Tor}^{\nabla^\mathcal{R}}
    (\mathcal{R}_{2(1)}^{-1}\rhd X,\mathcal{R}_{2(2)}^{-1}\rhd Y))
\end{split}
\end{equation}
for all $X,Y,Z\in\mathfrak{X}^1_\mathcal{R}(\mathcal{A})$. To see this,
we first note that
\begin{align*}
    X({\bf g}(Y,Z))
    =&{\bf g}(\nabla^\mathcal{R}_XY,Z)
    +{\bf g}(\mathcal{R}_1^{-1}\rhd Y,\nabla^\mathcal{R}_{\mathcal{R}_2^{-1}\rhd X}Z)\\
    =&{\bf g}(\nabla^\mathcal{R}_XY,Z)
    +{\bf g}(\mathcal{R}_1^{-1}\rhd Y,
    \nabla^\mathcal{R}_{\mathcal{R}_1^{'-1}\rhd Z}
    ((\mathcal{R}_2^{'-1}\mathcal{R}_2^{-1})\rhd X))\\
    &+{\bf g}(\mathcal{R}_1^{-1}\rhd Y,[\mathcal{R}_2^{-1}\rhd X,Z]_\mathcal{R})
    +{\bf g}(\mathcal{R}_1^{-1}\rhd Y,\mathrm{Tor}^{\nabla^\mathcal{R}}(
    \mathcal{R}_2^{-1}\rhd X,Z)).
\end{align*}
Then
\begin{allowdisplaybreaks}
\begin{align*}
    X({\bf g}(Y,Z))
    &+(\mathcal{R}_{1(1)}^{-1}\rhd Y)({\bf g}(\mathcal{R}_{1(2)}^{-1}\rhd Z,
    \mathcal{R}_2^{-1}\rhd X))\\
    &-(\mathcal{R}_1^{-1}\rhd Z)({\bf g}(\mathcal{R}_{2(1)}^{-1}\rhd X,
    \mathcal{R}_{2(2)}^{-1}\rhd Y))\\
    =&{\bf g}(\nabla^\mathcal{R}_XY,Z)
    +{\bf g}(\mathcal{R}_1^{-1}\rhd Y,
    \nabla^\mathcal{R}_{\mathcal{R}_1^{'-1}\rhd Z}
    ((\mathcal{R}_2^{'-1}\mathcal{R}_2^{-1})\rhd X))\\
    &+{\bf g}(\mathcal{R}_1^{-1}\rhd Y,[\mathcal{R}_2^{-1}\rhd X,Z]_\mathcal{R})
    +{\bf g}(\mathcal{R}_1^{-1}\rhd Y,\mathrm{Tor}^{\nabla^\mathcal{R}}(
    \mathcal{R}_2^{-1}\rhd X,Z))\\
    &+{\bf g}(\nabla^\mathcal{R}_{\mathcal{R}_{1(1)}^{'-1}\rhd Y}
    (\mathcal{R}_{1(2)}^{'-1}\rhd Z),\mathcal{R}_{2}^{'-1}\rhd X)\\
    &+{\bf g}((\mathcal{R}_1^{-1}\mathcal{R}_{1(2)}^{'-1})\rhd Z,
    \nabla^\mathcal{R}_{(\mathcal{R}_1^{''-1}\mathcal{R}_{2}^{'-1})\rhd X}
    ((\mathcal{R}_2^{''-1}\mathcal{R}_2^{-1}\mathcal{R}_{1(1)}^{'-1})\rhd Y))\\
    &+{\bf g}((\mathcal{R}_1^{-1}\mathcal{R}_{1(2)}^{'-1})\rhd Z,
    [(\mathcal{R}_2^{-1}\mathcal{R}_{1(1)}^{'-1})\rhd Y,
    \mathcal{R}_{2}^{'-1}\rhd X]_\mathcal{R})\\
    &+{\bf g}((\mathcal{R}_1^{-1}\mathcal{R}_{1(2)}^{'-1}\rhd Z,
    \mathrm{Tor}^{\nabla^\mathcal{R}}(
    (\mathcal{R}_2^{-1}\mathcal{R}_{1(1)}^{'-1})\rhd Y,
    \mathcal{R}_{2}^{'-1}\rhd X))\\
    &-{\bf g}(\nabla^\mathcal{R}_{\mathcal{R}_1^{'-1}\rhd Z}
    (\mathcal{R}_{2(1)}^{'-1}\rhd X),\mathcal{R}_{2(2)}^{'-1}\rhd Y)\\
    &-{\bf g}((\mathcal{R}_1^{-1}\mathcal{R}_{2(1)}^{''-1})\rhd X,
    \nabla^\mathcal{R}_{(\mathcal{R}_1^{'-1}\mathcal{R}_{2(2)}^{''-1})\rhd Y}
    ((\mathcal{R}_2^{'-1}\mathcal{R}_2^{-1}\mathcal{R}_{1}^{''-1})\rhd Z))\\
    &-{\bf g}((\mathcal{R}_1^{-1}\mathcal{R}_{2(1)}^{'-1})\rhd X,
    [(\mathcal{R}_2^{-1}\mathcal{R}_{1}^{'-1})\rhd Z,
    \mathcal{R}_{2(2)}^{'-1}\rhd Y]_\mathcal{R})\\
    &-{\bf g}((\mathcal{R}_1^{-1}\mathcal{R}_{2(1)}^{'-1})\rhd X,
    \mathrm{Tor}^{\nabla^\mathcal{R}}(
    (\mathcal{R}_2^{-1}\mathcal{R}_{1}^{'-1})\rhd Z,
    \mathcal{R}_{2(2)}^{'-1}\rhd Y))\\
    =&2{\bf g}(\nabla^\mathcal{R}_XY,Z)\\
    &+{\bf g}(\mathcal{R}_1^{-1}\rhd Y,[\mathcal{R}_2^{-1}\rhd X,Z]_\mathcal{R})
    +{\bf g}(\mathcal{R}_1^{-1}\rhd Y,\mathrm{Tor}^{\nabla^\mathcal{R}}(
    \mathcal{R}_2^{-1}\rhd X,Z))\\
    &+{\bf g}((\mathcal{R}_1^{-1}\mathcal{R}_{1(2)}^{'-1})\rhd Z,
    [(\mathcal{R}_2^{-1}\mathcal{R}_{1(1)}^{'-1})\rhd Y,
    \mathcal{R}_{2}^{'-1}\rhd X]_\mathcal{R})\\
    &+{\bf g}((\mathcal{R}_1^{-1}\mathcal{R}_{1(2)}^{'-1}\rhd Z,
    \mathrm{Tor}^{\nabla^\mathcal{R}}(
    (\mathcal{R}_2^{-1}\mathcal{R}_{1(1)}^{'-1})\rhd Y,
    \mathcal{R}_{2}^{'-1}\rhd X))\\
    &-{\bf g}((\mathcal{R}_1^{-1}\mathcal{R}_{2(1)}^{'-1})\rhd X,
    [(\mathcal{R}_2^{-1}\mathcal{R}_{1}^{'-1})\rhd Z,
    \mathcal{R}_{2(2)}^{'-1}\rhd Y]_\mathcal{R})\\
    &-{\bf g}((\mathcal{R}_1^{-1}\mathcal{R}_{2(1)}^{'-1})\rhd X,
    \mathrm{Tor}^{\nabla^\mathcal{R}}(
    (\mathcal{R}_2^{-1}\mathcal{R}_{1}^{'-1})\rhd Z,
    \mathcal{R}_{2(2)}^{'-1}\rhd Y))
\end{align*}
\end{allowdisplaybreaks}
gives the proposed formula. Since the equivariant metric is
non-degenerate this proves that a metric equivariant covariant derivative
with a fixed torsion is unique. To also provide existence we show that
the above formula actually defines a metric equivariant covariant derivative
with torsion $T$. Replacing $X$ in (\ref{eq57}) by $a\cdot X$ for an arbitrary
$a\in\mathcal{A}$ we observe that the terms including torsion are linear with
respect to $a$, as well as the first and the fourth term on the right-hand-side.
The additional non-$\mathcal{A}$-linear terms of the second and third summand cancel with
the corresponding additional terms of the sixth and fifth summand, respectively.
By the non-degeneracy of ${\bf g}$ this proves that $\nabla^\mathcal{R}$ is
left $\mathcal{A}$-linear in the first argument. Similarly one demonstrates the
braided Leibniz rule in the second argument.
\end{proof}
The unique torsion-free metric equivariant covariant derivative on
$(\mathcal{A},{\bf g})$ is said to be the 
\textit{equivariant Levi-Civita covariant derivative}.

\section{Gauge Equivalent Braided Cartan Calculi and Covariant 
Derivatives}\label{Sec3.6}

In this last section we prove that the gauge equivalence induced by
a Drinfel'd twist is compatible with the construction of the braided
Cartan calculus and the notion of equivariant covariant derivative.
Namely, we describe how the Drinfel'd functor together with the
corresponding natural transformation discussed in Section~\ref{Sec2.4}
deforms the involved structure. For a given braided
commutative algebra $\mathcal{A}$ we define the twisted multivector
fields and twisted differential forms, as well as the twisted
Gerstenhaber bracket, Lie derivative, insertion and de Rham
differential. If the universal $\mathcal{R}$-matrix is trivial this
recovers the well-known twisted Cartan calculus (see e.g. \cite{Aschieri2006}).
Even in the general case, we prove that the twisted Cartan calculus
is isomorphic to the braided Cartan calculus with respect to the twisted
algebra and twisted triangular structure. In particular, this
isomorphism (taken from \cite{Aschieri2006,Fiore1997,GurMaj1994})
intertwines the braided Lie derivative, insertion and
de Rham differential. In a similar fashion we prove that a
twisted covariant derivative can be interpreted as an equivariant
covariant derivative on the twisted algebra with respect to the twisted
triangular structure, by applying the same isomorphism.
The concepts of equivariant metric and equivariant Levi-Civita covariant
derivative are respected as well.
We refer to \cite{Schenkel2016}~Chap.~3-4 for a similar discussion.

Recall that for any Drinfel'd twist $\mathcal{F}$ on a triangular Hopf algebra $(H,\mathcal{R})$
and any braided commutative left $H$-module algebra $\mathcal{A}$ the
twisted product
$$
a\cdot_\mathcal{F}b
=(\mathcal{F}_1^{-1}\rhd a)\cdot(\mathcal{F}_2^{-1}\rhd b),
$$
where $a,b\in\mathcal{A}$, structures $\mathcal{A}$ as a 
left $H_\mathcal{F}$-module algebra which is braided commutative
with respect to $\mathcal{R}_\mathcal{F}=\mathcal{F}_{21}\mathcal{R}
\mathcal{F}^{-1}$. As usual we write $\mathcal{A}_\mathcal{F}=
(\mathcal{A},\cdot_\mathcal{F})$. More general, the Drinfel'd functor
$$
\mathrm{Drin}_\mathcal{F}\colon
{}_\mathcal{A}^H\mathcal{M}\rightarrow
{}_{\mathcal{A}_\mathcal{F}}^{H_\mathcal{F}}\mathcal{M},
$$
which is the identity on morphisms and assigns to any $H$-equivariant
left $\mathcal{A}$-module $\mathcal{M}$ the same left $H$-module, however
with left $\mathcal{A}_\mathcal{F}$-module action
$$
a\cdot_\mathcal{F}m
=(\mathcal{F}_1^{-1}\rhd a)\cdot(\mathcal{F}_2^{-1}\rhd m),
$$
where $a\in\mathcal{A}$ and $m\in\mathcal{M}$ is an isomorphism of categories.
As usual we denote $\mathcal{M}_\mathcal{F}=(\mathcal{M},\cdot_\mathcal{F})$.
On $H$-equivariant braided symmetric $\mathcal{A}$-bimodules the Drinfel'd
functor
$$
\mathrm{Drin}_\mathcal{F}\colon
({}_\mathcal{A}^H\mathcal{M}_\mathcal{A}^\mathcal{R},\otimes_\mathcal{A})\rightarrow
({}_{\mathcal{A}_\mathcal{F}}^{H_\mathcal{F}}\mathcal{M}_{
\mathcal{A}_\mathcal{F}}^{\mathcal{R}_\mathcal{F}},\otimes_{\mathcal{A}_\mathcal{F}})
$$
is a braided monoidal equivalence of braided monoidal categories
with braided monoidal natural transformation given on objects
$\mathcal{M}$ and $\mathcal{M}'$ of
${}_\mathcal{A}^H\mathcal{M}^\mathcal{R}_\mathcal{A}$ by
$$
\varphi_{\mathcal{M},\mathcal{M}'}\colon
\mathcal{M}_\mathcal{F}\otimes_{\mathcal{A}_\mathcal{F}}\mathcal{M}'_\mathcal{F}
\ni(m\otimes_{\mathcal{A}_\mathcal{F}}m')
\mapsto(\mathcal{F}_1^{-1}\rhd m)\otimes_\mathcal{A}(\mathcal{F}_2^{-1}\rhd m')
\in(\mathcal{M}\otimes_\mathcal{A}\mathcal{M}')_\mathcal{F}.
$$
Fix a Drinfel'd twist $\mathcal{F}$ on $H$ in the following.
\begin{lemma}\label{lemma23}
The assignment
$$
(\mathfrak{X}^1_\mathcal{R}(\mathcal{A}))_\mathcal{F}
\ni X\mapsto X^\mathcal{F}
\in\mathfrak{X}^1_{\mathcal{R}_\mathcal{F}}(\mathcal{A}_\mathcal{F}),
$$
where $X^\mathcal{F}(a)=(\mathcal{F}_1^{-1}\rhd X)(\mathcal{F}_2^{-1}\rhd a)$
for all $a\in\mathcal{A}_\mathcal{F}$,
is an isomorphism of $H_\mathcal{F}$-equivariant braided symmetric
$\mathcal{A}_\mathcal{F}$-bimodules. Namely,
\begin{align*}
    \xi\rhd X^\mathcal{F}=&(\xi\rhd X)^\mathcal{F},\\
    a\cdot_{\mathcal{R}_\mathcal{F}}X^\mathcal{F}
    =&(a\cdot_\mathcal{F}X)^\mathcal{F},\\
    X^\mathcal{F}\cdot_{\mathcal{R}_\mathcal{F}}a
    =&(X\cdot_\mathcal{F}a)^\mathcal{F}
\end{align*}
for all $\xi\in H$, $a\in\mathcal{A}$ and
$X\in\mathfrak{X}^1_\mathcal{R}(\mathcal{A})$, where we denoted the
$\mathcal{A}_\mathcal{F}$-module actions on 
$\mathfrak{X}^1_{\mathcal{R}_\mathcal{F}}(\mathcal{A}_\mathcal{F})$ by
$\cdot_{\mathcal{R}_\mathcal{F}}$.
\end{lemma}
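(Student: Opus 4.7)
The plan is to verify four things in sequence: that the formula $X^{\mathcal{F}}(a)=(\mathcal{F}_1^{-1}\rhd X)(\mathcal{F}_2^{-1}\rhd a)$ actually defines a braided derivation of $(\mathcal{A}_{\mathcal{F}},\cdot_{\mathcal{F}})$ with respect to $\mathcal{R}_{\mathcal{F}}$; that the assignment $X\mapsto X^{\mathcal{F}}$ is $H_{\mathcal{F}}$-equivariant and intertwines the bimodule structures; that it is bijective; and that the formulas displayed in the statement hold. The underlying Hopf algebra $H$ and $H_{\mathcal{F}}$ coincide as algebras, so $H$-equivariance is essentially automatic, and the key analytic content will sit in the braided-derivation check. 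In spirit, the proof is the vector field analogue of Proposition~\ref{prop02}: one transports a derivation of the old product along the Drinfel'd functor and verifies that the braiding gets correspondingly twisted.

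First I would prove that $X^{\mathcal{F}}\in\mathfrak{X}^1_{\mathcal{R}_{\mathcal{F}}}(\mathcal{A}_{\mathcal{F}})$. Using the definition of $\cdot_{\mathcal{F}}$ and of $X^{\mathcal{F}}$, one computes
\begin{align*}
X^{\mathcal{F}}(a\cdot_{\mathcal{F}}b)
&=(\mathcal{F}_1^{-1}\rhd X)\bigl((\mathcal{F}_2^{-1}\mathcal{F}_1^{'-1})\rhd a)\cdot(\mathcal{F}_2^{-1}\mathcal{F}_2^{'-1})\rhd b)\bigr),
\end{align*}
where I would apply the 2-cocycle condition $(\Delta\otimes\mathrm{id})(\mathcal{F}^{-1})(\mathcal{F}^{-1}\otimes 1)=(\mathrm{id}\otimes\Delta)(\mathcal{F}^{-1})(1\otimes\mathcal{F}^{-1})$ to rewrite the legs so that the coproduct falls on $\mathcal{F}_1^{-1}\rhd X$. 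This lets me invoke the genuine braided Leibniz rule of $X$ with respect to $\mathcal{R}$ inside $\mathcal{A}$, producing two terms. In one of them the braiding $\mathcal{R}^{-1}$ is flanked by precisely the pieces of $\mathcal{F}$ and $\mathcal{F}^{-1}$ needed to assemble $\mathcal{R}_{\mathcal{F}}^{-1}=\mathcal{F}\mathcal{R}^{-1}\mathcal{F}_{21}^{-1}$, and re-folding the $\mathcal{F}^{-1}$'s back into the twisted product yields exactly
$X^{\mathcal{F}}(a)\cdot_{\mathcal{F}}b+((\mathcal{R}_{\mathcal{F}})_1^{-1}\rhd a)\cdot_{\mathcal{F}}(((\mathcal{R}_{\mathcal{F}})_2^{-1}\rhd X^{\mathcal{F}})(b))$. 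The arithmetic of relocating $\mathcal{F}^{\pm 1}$'s via the cocycle identity is where I expect the main obstacle; the computation is of the same flavour as the proof of Proposition~\ref{prop02} but has to carry the universal $\mathcal{R}$-matrix through, so one must be disciplined with leg notation and use the primed copies $\mathcal{F}',\mathcal{F}''$ systematically.

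Second I would verify the three displayed compatibilities. Equivariance $\xi\rhd X^{\mathcal{F}}=(\xi\rhd X)^{\mathcal{F}}$ follows from evaluating both sides on $a$ and using that the $H$-action is by algebra maps on $\mathrm{End}_{\Bbbk}(\mathcal{A})$ together with the identity of $H$ and $H_{\mathcal{F}}$ as algebras, so no twist identity is really used here beyond the adjoint-action formula. The left module compatibility $a\cdot_{\mathcal{R}_{\mathcal{F}}}X^{\mathcal{F}}=(a\cdot_{\mathcal{F}}X)^{\mathcal{F}}$ reduces, after unfolding both sides, to the equality $(a\cdot_{\mathcal{R}_{\mathcal{F}}}X^{\mathcal{F}})(b)=a\cdot_{\mathcal{F}}(X^{\mathcal{F}}(b))$, which in turn expands using $\cdot_{\mathcal{F}}$ and the identity $a\cdot_{\mathcal{F}}X=(\mathcal{F}_1^{-1}\rhd a)\cdot(\mathcal{F}_2^{-1}\rhd X)$ from Proposition~\ref{prop03}. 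The right module compatibility is more delicate because the right module structure is defined using the braiding in both pictures (see (\ref{eq26})); here I would compute $(X^{\mathcal{F}}\cdot_{\mathcal{R}_{\mathcal{F}}}a)(b)=X^{\mathcal{F}}((\mathcal{R}_{\mathcal{F}})_1^{-1}\rhd b)\cdot_{\mathcal{F}}((\mathcal{R}_{\mathcal{F}})_2^{-1}\rhd a)$ and use $\mathcal{R}_{\mathcal{F}}^{-1}=\mathcal{F}\mathcal{R}^{-1}\mathcal{F}_{21}^{-1}$ again, together with the definition of the right braided action by $\mathcal{F}$, to reassemble the right-hand side into $(X\cdot_{\mathcal{F}}a)^{\mathcal{F}}(b)$.

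Finally, bijectivity is the cheapest part: applying the same construction to the inverse twist $\mathcal{F}^{-1}$ on the triangular Hopf algebra $(H_{\mathcal{F}},\mathcal{R}_{\mathcal{F}})$, which by Lemma~\ref{lemma11} gives back $(H,\mathcal{R})$ and $\mathcal{A}$, produces an assignment $Y\mapsto Y^{\mathcal{F}^{-1}}$ that one checks is a two-sided inverse to $X\mapsto X^{\mathcal{F}}$ by a short calculation on arbitrary $a\in\mathcal{A}$. Together with the previous steps this establishes the isomorphism of $H_{\mathcal{F}}$-equivariant braided symmetric $\mathcal{A}_{\mathcal{F}}$-bimodules. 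The whole argument therefore boils down to bookkeeping with the 2-cocycle identity and the definition of $\mathcal{R}_{\mathcal{F}}$, with the braided Leibniz rule being the only nontrivial structural ingredient.
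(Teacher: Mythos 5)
Your overall architecture matches the paper's proof: you check that $X^{\mathcal{F}}$ is a braided derivation of $(\mathcal{A}_{\mathcal{F}},\cdot_{\mathcal{F}})$ via the $2$-cocycle identity and the relation $\mathcal{R}_{\mathcal{F}}^{-1}=\mathcal{F}\mathcal{R}^{-1}\mathcal{F}_{21}^{-1}$, you verify the two module compatibilities by unfolding both sides, and you obtain bijectivity from the inverse twist $\mathcal{F}^{-1}$, exactly as in the paper. The derivation check and the module-action checks are described accurately, including the correct identification of where the cocycle identity has to be used.

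There is one genuine gap: your treatment of the equivariance $\xi\rhd X^{\mathcal{F}}=(\xi\rhd X)^{\mathcal{F}}$. You claim this is ``essentially automatic'' because $H$ and $H_{\mathcal{F}}$ coincide as algebras and ``no twist identity is really used here beyond the adjoint-action formula.'' That reasoning does not go through. The left-hand side is computed with the \emph{twisted} adjoint action, $(\xi\rhd X^{\mathcal{F}})(a)=\xi_{\widehat{(1)}}\rhd\bigl(X^{\mathcal{F}}(S_{\mathcal{F}}(\xi_{\widehat{(2)}})\rhd a)\bigr)$, which involves $\Delta_{\mathcal{F}}$ and $S_{\mathcal{F}}(\cdot)=\beta S(\cdot)\beta^{-1}$, whereas the right-hand side uses the untwisted adjoint action followed by the map ${}^{\mathcal{F}}$. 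The equality of these two expressions is precisely the nontrivial content of the equivariance claim; the paper proves it by a ten-line leg-notation computation that repeatedly invokes the $2$-cocycle condition and the identity $\beta=\mathcal{F}_1S(\mathcal{F}_2)$ to cancel the twisted coproduct and twisted antipode against the $\mathcal{F}^{-1}$'s hidden in $X^{\mathcal{F}}$. Since your derivation check in the first step explicitly relies on this equivariance (you need $(\mathcal{R}_{\mathcal{F}2}^{-1}\rhd X)^{\mathcal{F}}=\mathcal{R}_{\mathcal{F}2}^{-1}\rhd X^{\mathcal{F}}$ to reassemble the braided Leibniz rule), you cannot defer or trivialize it: you must either carry out this computation or invoke the general naturality of the internal-hom isomorphism induced by the monoidal structure of $\mathrm{Drin}_{\mathcal{F}}$, neither of which your stated justification supplies.
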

\begin{proof}
The assignment is well-defined, since for every
$X\in\mathrm{Der}_\mathcal{R}(\mathcal{A})$ one obtains an element
$X^\mathcal{F}\in\mathrm{Der}_{\mathcal{R}_\mathcal{F}}(\mathcal{A}_\mathcal{F})$
because
\begin{allowdisplaybreaks}
\begin{align*}
    X^\mathcal{F}(a\cdot_\mathcal{F}b)
    =&(\mathcal{F}_1^{-1}\rhd X)
    (((\mathcal{F}_{2(1)}^{-1}\mathcal{F}_1^{'-1})\rhd a)
    \cdot((\mathcal{F}_{2(2)}^{-1}\mathcal{F}_2^{'-1})\rhd b))\\
    =&(\mathcal{F}_1^{-1}\rhd X)
    ((\mathcal{F}_{2(1)}^{-1}\mathcal{F}_1^{'-1})\rhd a)
    \cdot((\mathcal{F}_{2(2)}^{-1}\mathcal{F}_2^{'-1})\rhd b)\\
    &+((\mathcal{R}_1^{-1}\mathcal{F}_{2(1)}^{-1}\mathcal{F}_1^{'-1})\rhd a)
    \cdot((\mathcal{R}_2^{-1}\mathcal{F}_1^{-1})\rhd X)
    ((\mathcal{F}_{2(2)}^{-1}\mathcal{F}_2^{'-1})\rhd b)\\
    =&X^\mathcal{F}(a)\cdot_\mathcal{F}b
    +((\mathcal{R}_1^{-1}\mathcal{F}_{1(2)}^{-1}\mathcal{F}_2^{'-1})\rhd a)
    \cdot((\mathcal{R}_2^{-1}\mathcal{F}_{1(1)}^{-1}\mathcal{F}_1^{'-1})\rhd X)
    (\mathcal{F}_{2}^{-1}\rhd b)\\
    =&X^\mathcal{F}(a)\cdot_\mathcal{F}b
    +((\mathcal{F}_{1(1)}^{-1}\mathcal{R}_1^{-1}\mathcal{F}_2^{'-1})\rhd a)
    \cdot((\mathcal{F}_{1(2)}^{-1}\mathcal{R}_2^{-1}\mathcal{F}_1^{'-1})\rhd X)
    (\mathcal{F}_{2}^{-1}\rhd b)\\
    =&X^\mathcal{F}(a)\cdot_\mathcal{F}b
    +((\mathcal{F}_{1(1)}^{-1}\mathcal{F}_1^{'-1}\mathcal{R}_{\mathcal{F}1}^{-1})
    \rhd a)
    \cdot((\mathcal{F}_{1(2)}^{-1}\mathcal{F}_2^{'-1}\mathcal{R}_{\mathcal{F}2}^{-1})
    \rhd X)(\mathcal{F}_{2}^{-1}\rhd b)\\
    =&X^\mathcal{F}(a)\cdot_\mathcal{F}b
    +((\mathcal{F}_{1}^{-1}\mathcal{R}_{\mathcal{F}1}^{-1})\rhd a)
    \cdot((\mathcal{F}_{2(1)}^{-1}\mathcal{F}_1^{'-1}\mathcal{R}_{\mathcal{F}2}^{-1})
    \rhd X)((\mathcal{F}_{2(2)}^{-1}\mathcal{F}_2^{'-1})\rhd b)\\
    =&X^\mathcal{F}(a)\cdot_\mathcal{F}b
    +(\mathcal{R}_{\mathcal{F}1}^{-1}\rhd a)
    \cdot_\mathcal{F}(\mathcal{R}_{\mathcal{F}2}^{-1}\rhd X)^\mathcal{F}(b)\\
    =&X^\mathcal{F}(a)\cdot_\mathcal{F}b
    +(\mathcal{R}_{\mathcal{F}1}^{-1}\rhd a)
    \cdot_\mathcal{F}(\mathcal{R}_{\mathcal{F}2}^{-1}\rhd X^\mathcal{F})(b)
\end{align*}
\end{allowdisplaybreaks}
for all $a,b\in\mathcal{A}$, where we used that
$(\xi\rhd X)^\mathcal{F}=\xi\rhd X^\mathcal{F}$ for all $\xi\in H$. The
latter is true because
\begin{allowdisplaybreaks}
\begin{align*}
    (\xi\rhd X^\mathcal{F})(a)
    =&\xi_{\widehat{(1)}}\rhd(X^\mathcal{F}(S_\mathcal{F}
    (\xi_{\widehat{(2)}})\rhd a))\\
    =&(\mathcal{F}_1\xi_{(1)}\mathcal{F}_1^{'-1})\rhd
    ((\mathcal{F}_1^{''-1}\rhd X)((\mathcal{F}_2^{''-1}
    \beta S(\mathcal{F}_2\xi_{(2)}\mathcal{F}_2^{'-1})
    \beta^{-1})\rhd a))\\
    =&(\mathcal{F}_1\xi_{(1)})\rhd
    ((\mathcal{F}_1^{'-1}\rhd X)((\mathcal{F}_{2(1)}^{'-1}\mathcal{F}_1^{''-1}
    \beta S(\mathcal{F}_2\xi_{(2)}\mathcal{F}_{2(2)}^{'-1}\mathcal{F}_2^{''-1})
    \beta^{-1})\rhd a))\\
    =&(\mathcal{F}_1\xi_{(1)})\rhd
    ((\mathcal{F}_1^{'-1}\rhd X)((\mathcal{F}_{2(1)}^{'-1}
    S(\mathcal{F}_2\xi_{(2)}\mathcal{F}_{2(2)}^{'-1})
    \beta^{-1})\rhd a))\\
    =&(\mathcal{F}_1\xi_{(1)})\rhd
    (X((S(\mathcal{F}_2\xi_{(2)})\beta^{-1})\rhd a))\\
    =&((\mathcal{F}_{1(1)}\xi)\rhd X)
    ((\mathcal{F}_{1(2)}S(\mathcal{F}_2)\beta^{-1})\rhd a)\\
    =&((\mathcal{F}'_1\mathcal{F}_{1(1)}\xi)\rhd X)^\mathcal{F}
    ((\mathcal{F}'_2\mathcal{F}_{1(2)}S(\mathcal{F}_2)\beta^{-1})\rhd a)\\
    =&((\mathcal{F}_{1}\xi)\rhd X)^\mathcal{F}
    ((\mathcal{F}'_1\mathcal{F}_{2(1)}
    S(\mathcal{F}'_2\mathcal{F}_{2(2)})\beta^{-1})\rhd a)\\
    =&(\xi\rhd X)^\mathcal{F}
    ((\beta\beta^{-1})\rhd a)\\
    =&(\xi\rhd X)^\mathcal{F}(a).
\end{align*}
\end{allowdisplaybreaks}
Furthermore, the left and right $\mathcal{A}$-module actions are respected
since
\begin{align*}
    (a\cdot_\mathcal{F}X)^\mathcal{F}(b)
    =&((\mathcal{F}_{1(1)}^{-1}\mathcal{F}_1^{'-1})\rhd a)
    \cdot((\mathcal{F}_{1(2)}^{-1}\mathcal{F}_2^{'-1})\rhd X)
    (\mathcal{F}_2^{-1}\rhd b)\\
    =&(\mathcal{F}_{1}^{-1}\rhd a)
    \cdot((\mathcal{F}_{2(1)}^{-1}\mathcal{F}_1^{'-1})\rhd X)
    ((\mathcal{F}_{2(2)}^{-1}\mathcal{F}_2^{'-1})\rhd b)\\
    =&a\cdot_\mathcal{F}(X^\mathcal{F}(b))\\
    =&(a\cdot_\mathcal{F}X^\mathcal{F})(b)
\end{align*}
and
\begin{allowdisplaybreaks}
\begin{align*}
    (X\cdot_\mathcal{F}a)^\mathcal{F}(b)
    =&(((\mathcal{F}_{1(1)}^{-1}\mathcal{F}_1^{'-1})\rhd X)
    \cdot((\mathcal{F}_{1(2)}^{-1}\mathcal{F}_2^{'-1})\rhd a))
    (\mathcal{F}_2^{-1}\rhd b)\\
    =&(\mathcal{F}_{1}^{-1}\rhd X)
    ((\mathcal{R}_1^{-1}\mathcal{F}_{2(2)}^{-1}\mathcal{F}_2^{'-1})\rhd b)
    \cdot((\mathcal{R}_2^{-1}\mathcal{F}_{2(1)}^{-1}\mathcal{F}_1^{'-1})\rhd a)\\
    =&(\mathcal{F}_{1}^{-1}\rhd X)
    ((\mathcal{F}_{2(1)}^{-1}\mathcal{R}_1^{-1}\mathcal{F}_2^{'-1})\rhd b)
    \cdot((\mathcal{F}_{2(2)}^{-1}\mathcal{R}_2^{-1}\mathcal{F}_1^{'-1})\rhd a)\\
    =&(\mathcal{F}_{1}^{-1}\rhd X)
    ((\mathcal{F}_{2(1)}^{-1}\mathcal{F}_1^{'-1}\mathcal{R}_{\mathcal{F}1}^{-1})
    \rhd b)
    \cdot((\mathcal{F}_{2(2)}^{-1}\mathcal{F}_2^{'-1}\mathcal{R}_{\mathcal{F}2}^{-1})
    \rhd a)\\
    =&((\mathcal{F}_{1(1)}^{-1}\mathcal{F}_1^{'-1})\rhd X)
    ((\mathcal{F}_{1(2)}^{-1}\mathcal{F}_2^{'-1}\mathcal{R}_{\mathcal{F}1}^{-1})
    \rhd b)
    \cdot((\mathcal{F}_{2}^{-1}\mathcal{R}_{\mathcal{F}2}^{-1})\rhd a)\\
    =&X^\mathcal{F}(\mathcal{R}_{\mathcal{F}1}^{-1}\rhd b)
    \cdot_\mathcal{F}(\mathcal{R}_{\mathcal{F}2}^{-1}\rhd a)\\
    =&(X^\mathcal{F}\cdot_\mathcal{F}a)(b)
\end{align*}
\end{allowdisplaybreaks}
hold for all $a,b\in\mathcal{A}$. The inverse homomorphism is given by
$$
\mathrm{Der}_{\mathcal{R}_\mathcal{F}}(\mathcal{A}_\mathcal{F})
\ni\Xi\mapsto\Xi^{\mathcal{F}^{-1}}\in\mathrm{Der}_\mathcal{R}(\mathcal{A}),
$$
where $\Xi^{\mathcal{F}^{-1}}(a)=(\mathcal{F}_1\rhd\Xi)(\mathcal{F}_2\rhd a)$
for all $a\in\mathcal{A}$.
\end{proof}
This proves that we can work with $\mathfrak{X}^1_\mathcal{R}
(\mathcal{A})_\mathcal{F}$ instead of $\mathfrak{X}^1_{\mathcal{R}_\mathcal{F}}
(\mathcal{A}_\mathcal{F})$. Applying the Drinfel'd functor
on the wedge product
$$
\wedge_\mathcal{R}\colon\mathfrak{X}^1_\mathcal{R}(\mathcal{A})\otimes_\mathcal{A}
\mathfrak{X}^1_\mathcal{R}(\mathcal{A})\rightarrow
\mathfrak{X}^2_\mathcal{R}(\mathcal{A})
$$
gives
$$
\mathrm{Drin}_\mathcal{F}(\wedge_\mathcal{R})
\colon\big(\mathfrak{X}^1_\mathcal{R}(\mathcal{A})\otimes_\mathcal{A}
\mathfrak{X}^1_\mathcal{R}(\mathcal{A})\big)_\mathcal{F}\rightarrow
\mathfrak{X}^2_\mathcal{R}(\mathcal{A})_\mathcal{F}.
$$
So if we want to interpret the image of the wedge product as an actual
product on $\mathfrak{X}^\bullet_\mathcal{R}(\mathcal{A})_\mathcal{F}$ we
have to make use of the natural transformation $\varphi$. We define
$$
\wedge_\mathcal{F}
=\mathrm{Drin}(\wedge)\circ\varphi_{\mathfrak{X}^1_\mathcal{R}(\mathcal{A}),
\mathfrak{X}^1_\mathcal{R}(\mathcal{A})}\colon
\mathfrak{X}^1_\mathcal{R}(\mathcal{A})_\mathcal{F}
\otimes_{\mathcal{A}_\mathcal{F}}
\mathfrak{X}^1_\mathcal{R}(\mathcal{A})_\mathcal{F}
\rightarrow\mathfrak{X}^2_\mathcal{R}(\mathcal{A})_\mathcal{F}
$$
and call it the \textit{twisted wedge product}.
Furthermore we extend the isomorphism
$\mathfrak{X}^1_\mathcal{R}(\mathcal{A})_\mathcal{F}\rightarrow
\mathfrak{X}^1_{\mathcal{R}_\mathcal{F}}(\mathcal{A}_\mathcal{F})$
to higher wedge powers as a homomorphism of the twisted wedge product, i.e.
$$
(X\wedge_\mathcal{F}Y)^\mathcal{F}
=X^\mathcal{F}\wedge_{\mathcal{R}_\mathcal{F}}Y^\mathcal{F}
$$
for all $X,Y\in\mathfrak{X}^\bullet_\mathcal{R}(\mathcal{A})_\mathcal{F}$,
where $\wedge_\mathcal{F}=\mathrm{Drin}_\mathcal{F}(\wedge_\mathcal{R})
\circ\varphi_{\mathfrak{X}^\bullet_\mathcal{R}(\mathcal{A}),
\mathfrak{X}^\bullet_\mathcal{R}(\mathcal{A})}$.
By Lemma~\ref{lemma23}
this is well-defined. Inductively this leads to an isomorphism 
$
\mathfrak{X}^\bullet_\mathcal{R}(\mathcal{A})_\mathcal{F}\rightarrow
\mathfrak{X}^\bullet_{\mathcal{R}_\mathcal{F}}(\mathcal{A}_\mathcal{F})
$
of $H_\mathcal{F}$-equivariant braided symmetric
$\mathcal{A}_\mathcal{F}$-bimodules.
Also the \textit{twisted Schouten-Nijenhuis bracket}
$$
\llbracket\cdot,\cdot\rrbracket_\mathcal{F}\colon
\mathrm{Drin}_\mathcal{F}(\llbracket\cdot,\cdot\rrbracket_\mathcal{R})\circ
\varphi_{\mathfrak{X}^\bullet_\mathcal{R}(\mathcal{A}),
\mathfrak{X}^\bullet_\mathcal{R}(\mathcal{A})}
\colon\mathfrak{X}^\bullet_\mathcal{R}(\mathcal{A})_\mathcal{F}
\otimes_{\mathcal{A}_\mathcal{F}}
\mathfrak{X}^\bullet_\mathcal{R}(\mathcal{A})_\mathcal{F}
\rightarrow\mathfrak{X}^\bullet_\mathcal{R}(\mathcal{A})_\mathcal{F}
$$
can be defined.
On elements
$X,Y\in\mathfrak{X}^\bullet_\mathcal{R}(\mathcal{A})_\mathcal{F}$ the twisted
structures read
$$
X\wedge_\mathcal{F}Y
=(\mathcal{F}_1^{-1}\rhd X)\wedge_\mathcal{R}(\mathcal{F}_2^{-1}\rhd Y)
$$
and
$$
\llbracket X,Y\rrbracket_\mathcal{F}
=\llbracket\mathcal{F}_1^{-1}\rhd X,\mathcal{F}_2^{-1}\rhd Y\rrbracket_\mathcal{R},
$$
respectively.
\begin{proposition}\label{prop13}
This assignment
\begin{equation}
    {}^\mathcal{F}\colon
    (\mathfrak{X}^\bullet_\mathcal{R}
    (\mathcal{A})_\mathcal{F},\wedge_\mathcal{F},
    \llbracket\cdot,\cdot\rrbracket_\mathcal{F})
    \rightarrow
    (\mathfrak{X}^\bullet_{\mathcal{R}_\mathcal{F}}(\mathcal{A}_\mathcal{F}),
    \wedge_{\mathcal{R}_\mathcal{F}},
    \llbracket\cdot,\cdot\rrbracket_{\mathcal{R}_\mathcal{F}})
\end{equation}
is an isomorphism of braided Gerstenhaber algebras.
\end{proposition}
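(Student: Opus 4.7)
The plan is to decompose the statement into three independent pieces, which I would verify in turn: (i) the map ${}^\mathcal{F}$ is a well-defined isomorphism of $H_\mathcal{F}$-equivariant braided symmetric $\mathcal{A}_\mathcal{F}$-bimodules in each degree, (ii) it intertwines the (braided) wedge products, and (iii) it intertwines the braided Schouten-Nijenhuis brackets. Invertibility will be immediate from applying the same construction to $\mathcal{F}^{-1}$, which is a Drinfel'd twist on $H_\mathcal{F}$ (Lemma~\ref{lemma11}).

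For (i), Lemma~\ref{lemma23} already establishes the degree-one case. I would extend ${}^\mathcal{F}$ to higher braided wedge products by the rule $(X\wedge_\mathcal{F}Y)^\mathcal{F}=X^\mathcal{F}\wedge_{\mathcal{R}_\mathcal{F}}Y^\mathcal{F}$, so well-definedness amounts to checking that the braided skew relation of $\mathfrak{X}^\bullet_{\mathcal{R}_\mathcal{F}}(\mathcal{A}_\mathcal{F})$ is compatible with the corresponding relation of $\mathfrak{X}^\bullet_\mathcal{R}(\mathcal{A})_\mathcal{F}$ under the twist. Concretely, one has to verify
\begin{equation*}
Y^\mathcal{F}\wedge_{\mathcal{R}_\mathcal{F}}X^\mathcal{F}
=-(\mathcal{R}_{\mathcal{F}1}^{-1}\rhd X^\mathcal{F})\wedge_{\mathcal{R}_\mathcal{F}}(\mathcal{R}_{\mathcal{F}2}^{-1}\rhd Y^\mathcal{F})
\end{equation*}
via the identity $\mathcal{R}_\mathcal{F}=\mathcal{F}_{21}\mathcal{R}\mathcal{F}^{-1}$; this is a direct calculation pushing the twist factors through the $H$-equivariance of ${}^\mathcal{F}$ established in Lemma~\ref{lemma23}. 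Piece (ii) is then true essentially by definition of $\wedge_\mathcal{F}$ via the natural transformation $\varphi$ and the monoidality of $\mathrm{Drin}_\mathcal{F}$.

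Piece (iii) is the hard part, and I would handle it by a reduction to low degrees. Since both $\llbracket\cdot,\cdot\rrbracket_\mathcal{F}$ and $\llbracket\cdot,\cdot\rrbracket_{\mathcal{R}_\mathcal{F}}$ are braided Gerstenhaber brackets determined uniquely by their values on generators of degree $\leq 1$ (by the braided Leibniz rule, in analogy to Section~\ref{Sec3.2}), it suffices to check the intertwining property $(\llbracket X,Y\rrbracket_\mathcal{F})^\mathcal{F}=\llbracket X^\mathcal{F},Y^\mathcal{F}\rrbracket_{\mathcal{R}_\mathcal{F}}$ for $X,Y$ in $\mathcal{A}\cup\mathfrak{X}^1_\mathcal{R}(\mathcal{A})$. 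The cases where at least one argument lies in $\mathcal{A}$ unravel directly from the definitions of $X^\mathcal{F}$ on functions and the formulas for $\llbracket X,a\rrbracket_\mathcal{R}$, using normalization of $\mathcal{F}$. The main computation is the $(1,1)$-case, reducing to the identity
\begin{equation*}
[\mathcal{F}_1^{-1}\rhd X,\mathcal{F}_2^{-1}\rhd Y]_\mathcal{R}^\mathcal{F}
=[X^\mathcal{F},Y^\mathcal{F}]_{\mathcal{R}_\mathcal{F}}
\end{equation*}
as braided commutators of endomorphisms, which I would expand via $\mathcal{R}_\mathcal{F}=\mathcal{F}_{21}\mathcal{R}\mathcal{F}^{-1}$ and verify using the $2$-cocycle condition of $\mathcal{F}$ twice (analogously to the bimodule compatibility calculation in the proof of Lemma~\ref{lemma23}).

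The principal obstacle I expect is bookkeeping: the verification of the $(1,1)$-bracket identity involves interleaving three copies of $\mathcal{F}^{\pm 1}$ and one copy of $\mathcal{R}^{\pm 1}$, and the matching of tensor legs requires carefully applying both the $2$-cocycle property $(\mathcal{F}\otimes 1)(\Delta\otimes\mathrm{id})(\mathcal{F})=(1\otimes\mathcal{F})(\mathrm{id}\otimes\Delta)(\mathcal{F})$ and its inverse version, plus quasi-cocommutativity. Once this single identity is established, the braided Leibniz rule transports it inductively to all pairs of degrees, and combining (i)--(iii) yields the desired isomorphism of braided Gerstenhaber algebras.
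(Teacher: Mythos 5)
Your proposal is correct and follows essentially the same route as the paper: the well-definedness in higher degrees rests on Lemma~\ref{lemma23}, the wedge intertwining is definitional, and the only substantive computation is the degree-$(1,1)$ bracket identity, which the paper likewise settles by pushing the twist legs through the composition of braided derivations via the $2$-cocycle condition and the relation $\mathcal{R}_\mathcal{F}=\mathcal{F}_{21}\mathcal{R}\mathcal{F}^{-1}$, before extending to all degrees through the defining formula of the braided Schouten--Nijenhuis bracket. No gaps.
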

\begin{proof}
First note that
\begin{align*}
    (X\cdot_\mathcal{F}Y)^\mathcal{F}(a)
    =&((\mathcal{F}_{1(1)}^{-1}\mathcal{F}_1^{'-1})\rhd X)
    ((\mathcal{F}_{1(2)}^{-1}\mathcal{F}_2^{'-1})\rhd Y)
    (\mathcal{F}_2^{-1}\rhd a)\\
    =&(\mathcal{F}_{1}^{-1}\rhd X)
    ((\mathcal{F}_{2(1)}^{-1}\mathcal{F}_1^{'-1})\rhd Y)
    ((\mathcal{F}_{2(2)}^{-1}\mathcal{F}_2^{'-1})\rhd a)\\
    =&(X^\mathcal{F}\cdot_{\mathcal{R}_\mathcal{F}}Y^\mathcal{F})(a)
\end{align*}
for all $X,Y\in\mathfrak{X}^1_\mathcal{R}(\mathcal{A})_\mathcal{F}$ and
$a\in\mathcal{A}$. Then
\begin{allowdisplaybreaks}
\begin{align*}
    ([X,Y]_\mathcal{F})^\mathcal{F}
    =&([\mathcal{F}_1^{-1}\rhd X,
    \mathcal{F}_2^{-1}\rhd Y]_\mathcal{R})^\mathcal{F}\\
    =&((\mathcal{F}_1^{-1}\rhd X)
    \cdot_\mathcal{R}(\mathcal{F}_2^{-1}\rhd Y))^\mathcal{F}\\
    &-(((\mathcal{R}_1^{-1}\mathcal{F}_2^{-1})\rhd Y)
    \cdot_\mathcal{R}((\mathcal{R}_2^{-1}\mathcal{F}_1^{-1})\rhd X))^\mathcal{F}\\
    =&(X\cdot_\mathcal{F}Y)^\mathcal{F}
    -((\mathcal{R}_{\mathcal{F}1}^{-1}\rhd Y)
    \cdot_\mathcal{F}(\mathcal{R}_{\mathcal{F}2}^{-1}\rhd X))^\mathcal{F}\\
    =&X^\mathcal{F}\cdot_{\mathcal{R}_\mathcal{F}}Y^\mathcal{F}
    -(\mathcal{R}_{\mathcal{F}1}^{-1}\rhd Y)^\mathcal{F}
    \cdot_{\mathcal{R}_\mathcal{F}}
    (\mathcal{R}_{\mathcal{F}1}^{-1}\rhd Y)^\mathcal{F}\\
    =&X^\mathcal{F}\cdot_{\mathcal{R}_\mathcal{F}}Y^\mathcal{F}
    -(\mathcal{R}_{\mathcal{F}1}^{-1}\rhd Y^\mathcal{F})
    \cdot_{\mathcal{R}_\mathcal{F}}
    (\mathcal{R}_{\mathcal{F}1}^{-1}\rhd Y^\mathcal{F})\\
    =&[X^\mathcal{F},Y^\mathcal{F}]_{\mathcal{R}_\mathcal{F}}.
\end{align*}
\end{allowdisplaybreaks}
Using the defining formula (see Section~\ref{Sec3.2}) of the braided
Schouten-Nijenhuis bracket, this implies that
$$
(\llbracket X,Y\rrbracket_\mathcal{F})^\mathcal{F}
=\llbracket X^\mathcal{F},Y^\mathcal{F}\rrbracket_{\mathcal{R}_\mathcal{F}}
$$
for all $X,Y\in\mathfrak{X}^\bullet_\mathcal{R}(\mathcal{A})_\mathcal{F}$.
\end{proof}
Similarly we define an isomorphism
${}^\mathcal{F}\colon\Omega^\bullet_\mathcal{R}(\mathcal{A})_\mathcal{F}\rightarrow
\Omega^\bullet_{\mathcal{R}_\mathcal{F}}(\mathcal{A}_\mathcal{F})$ of
$H_\mathcal{F}$-equivariant braided symmetric $\mathcal{A}_\mathcal{F}$-bimodules.
On elements $\omega\in\Omega^1_\mathcal{R}(\mathcal{A})_\mathcal{F}$ it
reads
$$
\omega^\mathcal{F}(X^\mathcal{F})
=(\mathcal{F}_1^{-1}\rhd\omega)(\mathcal{F}_2^{-1}\rhd X)
$$
for all $X\in\mathfrak{X}^1_\mathcal{R}(\mathcal{A})_\mathcal{F}$. In fact
$\omega^\mathcal{F}$ is an element of
$\Omega^1_{\mathcal{R}_\mathcal{F}}(\mathcal{A}_\mathcal{F})$ since
\begin{align*}
    \omega^\mathcal{F}(X^\mathcal{F}\cdot_{\mathcal{R}_\mathcal{F}}a)
    =&\omega^\mathcal{F}((X\cdot_{\mathcal{F}}a)^\mathcal{F})\\
    =&(\mathcal{F}_1^{-1}\rhd\omega)
    (((\mathcal{F}_{2(1)}^{-1}\mathcal{F}_1^{'-1})\rhd X)
    \cdot_\mathcal{R}((\mathcal{F}_{2(2)}^{-1}\mathcal{F}_2^{'-1})\rhd a))\\
    =&((\mathcal{F}_{1(1)}^{-1}\mathcal{F}_1^{'-1})\rhd\omega)
    ((\mathcal{F}_{1(2)}^{-1}\mathcal{F}_2^{'-1})\rhd X)
    \cdot(\mathcal{F}_{2}^{-1}\rhd a)\\
    =&(\mathcal{F}_1^{'-1}\rhd\omega)
    (\mathcal{F}_2^{'-1}\rhd X)
    \cdot_\mathcal{F}a\\
    =&\omega^\mathcal{F}(X^\mathcal{F})\cdot_\mathcal{F}a
\end{align*}
for all $a\in\mathcal{A}$. Furthermore we define
$$
(\omega\wedge_\mathcal{F}\eta)^\mathcal{F}
=\omega^\mathcal{F}\wedge_{\mathcal{R}_\mathcal{F}}\eta^\mathcal{F}
$$
for all $\omega,\eta\in\Omega^\bullet_\mathcal{R}(\mathcal{A})_\mathcal{F}$.
Applying the Drinfel'd functor and the natural transformation $\varphi$
on $\mathscr{L}^\mathcal{R}$ and $\mathrm{i}^\mathcal{R}$ leads to
\begin{align*}
    \mathscr{L}^\mathcal{F}&\colon
    \mathfrak{X}^\bullet_\mathcal{R}(\mathcal{A})_\mathcal{F}
    \otimes_\mathcal{F}\Omega^\bullet_\mathcal{R}(\mathcal{A})_\mathcal{F}
    \rightarrow\Omega^\bullet_\mathcal{R}(\mathcal{A})_\mathcal{F},~\\
    \mathrm{i}^\mathcal{F}&\colon
    \mathfrak{X}^\bullet_\mathcal{R}(\mathcal{A})_\mathcal{F}
    \otimes_\mathcal{F}\Omega^\bullet_\mathcal{R}(\mathcal{A})_\mathcal{F}
\rightarrow\Omega^\bullet_\mathcal{R}(\mathcal{A})_\mathcal{F},
\end{align*}
while the de Rham differential becomes
$\mathrm{d}\colon\Omega^\bullet_\mathcal{R}(\mathcal{A})_\mathcal{F}
\rightarrow\Omega^{\bullet+1}_\mathcal{R}(\mathcal{A})_\mathcal{F}$ after utilizing
the Drinfel'd functor. On elements $X\in\mathfrak{X}^\bullet_\mathcal{R}
(\mathcal{A})_\mathcal{F}$ and $\omega\in\Omega^\bullet_\mathcal{R}
(\mathcal{A})_\mathcal{F}$ the \textit{twisted Lie derivative} and
\textit{twisted insertion} read
$$
\mathscr{L}^\mathcal{F}_X\omega
=\mathscr{L}^\mathcal{R}_{\mathcal{F}_1^{-1}\rhd X}(\mathcal{F}_2^{-1}\rhd\omega)
\text{ and }
\mathrm{i}^\mathcal{F}_X\omega
=\mathrm{i}^\mathcal{R}_{\mathcal{F}_1^{-1}\rhd X}(\mathcal{F}_2^{-1}\rhd\omega),
$$
while the de Rham differential remains undeformed. We refer to
$$
(\Omega^\bullet_\mathcal{R}(\mathcal{A})_\mathcal{F},\wedge_\mathcal{F},
\mathscr{L}^\mathcal{F},\mathrm{i}^\mathcal{F},\mathrm{d})
\text{ and }
(\mathfrak{X}^\bullet_\mathcal{R}
(\mathcal{A})_\mathcal{F},\wedge_\mathcal{F},
\llbracket\cdot,\cdot\rrbracket_\mathcal{F})
$$
as the \textit{twisted Cartan calculus} (with respect to $\mathcal{F}$ and 
$\mathcal{R}$).
\begin{theorem}
The twisted Cartan calculus with respect to $\mathcal{R}$ and
$\mathcal{F}$ is isomorphic to the braided Cartan calculus on
$\mathcal{A}_\mathcal{F}$ with respect
to $\mathcal{R}_\mathcal{F}$ via the isomorphism ${}^\mathcal{F}$.
In particular
\begin{align*}
    (\mathscr{L}^\mathcal{F}_X\omega)^\mathcal{F}
    =&\mathscr{L}^{\mathcal{R}_\mathcal{F}}_{X^\mathcal{F}}\omega^\mathcal{F},\\
    (\mathrm{i}^\mathcal{F}_X\omega)^\mathcal{F}
    =&\mathrm{i}^{\mathcal{R}_\mathcal{F}}_{X^\mathcal{F}}\omega^\mathcal{F},\\
    (\mathrm{d}\omega)^\mathcal{F}
    =&\mathrm{d}\omega^\mathcal{F}
\end{align*}
for all $X\in\mathfrak{X}^\bullet_\mathcal{R}(\mathcal{A})_\mathcal{F}$
and $\omega\in\Omega^\bullet_\mathcal{R}(\mathcal{A})_\mathcal{F}$.
\end{theorem}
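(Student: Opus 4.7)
The plan is to verify the three intertwining identities in order of increasing complexity: insertion first, de Rham differential second, and Lie derivative last via the Cartan formula. The preliminary step is to check that \({}^\mathcal{F}\) extends from braided multivector fields to braided differential forms as an isomorphism of \(H_\mathcal{F}\)-equivariant braided symmetric \(\mathcal{A}_\mathcal{F}\)-bimodules and as an isomorphism of graded algebras \((\Omega^\bullet_\mathcal{R}(\mathcal{A})_\mathcal{F},\wedge_\mathcal{F})\to(\Omega^\bullet_{\mathcal{R}_\mathcal{F}}(\mathcal{A}_\mathcal{F}),\wedge_{\mathcal{R}_\mathcal{F}})\). On degree one, \(\omega^\mathcal{F}(X^\mathcal{F})=(\mathcal{F}_1^{-1}\rhd\omega)(\mathcal{F}_2^{-1}\rhd X)\) will be checked to be right \(\mathcal{A}_\mathcal{F}\)-linear and \(H_\mathcal{F}\)-equivariant by the same style of manipulation as in Lemma~4.6.1; on higher degrees one sets \((\omega\wedge_\mathcal{F}\eta)^\mathcal{F}=\omega^\mathcal{F}\wedge_{\mathcal{R}_\mathcal{F}}\eta^\mathcal{F}\), which is forced by the fact that the construction \(\wedge_\mathcal{F}=\mathrm{Drin}_\mathcal{F}(\wedge_\mathcal{R})\circ\varphi\) is a transport along the braided monoidal equivalence.

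Next, for the insertion identity, I would verify it first in the simplest case \(X\in\mathfrak{X}^1\), \(\omega\in\Omega^1\). The key ingredient is the identity
\[
\mathcal{F}^{-1}\mathcal{R}_\mathcal{F}^{-1}=\mathcal{R}^{-1}\mathcal{F}_{21}^{-1}\qquad\text{in }H^{\otimes 2},
\]
which follows at once from \(\mathcal{R}_\mathcal{F}=\mathcal{F}_{21}\mathcal{R}\mathcal{F}^{-1}\). Unravelling the left-hand side of \((\mathrm{i}^\mathcal{F}_X\omega)^\mathcal{F}=\mathrm{i}^{\mathcal{R}_\mathcal{F}}_{X^\mathcal{F}}\omega^\mathcal{F}\) via the definition of \(\mathrm{i}^\mathcal{F}\) yields \((\mathcal{R}_1^{-1}\mathcal{F}_2^{-1}\rhd\omega)(\mathcal{R}_2^{-1}\mathcal{F}_1^{-1}\rhd X)\), while the right-hand side unfolds to \(((\mathcal{F}_1^{-1}\mathcal{R}_{\mathcal{F}1}^{-1})\rhd\omega)((\mathcal{F}_2^{-1}\mathcal{R}_{\mathcal{F}2}^{-1})\rhd X)\); these coincide by the displayed identity. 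The extension to higher degree is then inductive: for \(X\in\mathfrak{X}^1\) and homogeneous \(\omega\in\Omega^k\) one exploits that both \(\mathrm{i}^\mathcal{F}_X\) and \(\mathrm{i}^{\mathcal{R}_\mathcal{F}}_{X^\mathcal{F}}\) are graded braided derivations of degree \(-1\) (with respect to the respective braidings), combined with the algebra isomorphism property of \({}^\mathcal{F}\); for general \(X\in\mathfrak{X}^\bullet\) one uses \(\mathrm{i}^\mathcal{F}_{X\wedge_\mathcal{F} Y}=\mathrm{i}^\mathcal{F}_X\mathrm{i}^\mathcal{F}_Y\) and the corresponding identity for \(\mathrm{i}^{\mathcal{R}_\mathcal{F}}\), which are built into the definitions.

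For the de Rham identity \((\mathrm{d}\omega)^\mathcal{F}=\mathrm{d}\omega^\mathcal{F}\), note that \(\mathrm{d}\) is \(H\)-equivariant and hence undeformed under the Drinfel'd functor. On \(a\in\mathcal{A}\) the equality is characterised via the non-degeneracy of the braided dual pairing by pairing with \(X^\mathcal{F}\), and the insertion identity just established reduces the check to \(X^\mathcal{F}(a)=(\mathcal{F}_1^{-1}\rhd X)(\mathcal{F}_2^{-1}\rhd a)\), which holds by definition of \(X^\mathcal{F}\). The general case follows from the fact that \(\mathrm{d}\) is a graded derivation of both \(\wedge_\mathcal{F}\) and \(\wedge_{\mathcal{R}_\mathcal{F}}\) and \({}^\mathcal{F}\) intertwines the two products. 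Finally, the Lie derivative identity is immediate from the Cartan formula \(\mathscr{L}^\mathcal{F}_X=\mathrm{i}^\mathcal{F}_X\mathrm{d}-(-1)^k\mathrm{d}\,\mathrm{i}^\mathcal{F}_X\) (valid because \(\mathrm{d}\) is equivariant, so the graded braided commutator collapses to the ordinary graded one) together with the analogous formula for \(\mathscr{L}^{\mathcal{R}_\mathcal{F}}_{X^\mathcal{F}}\).

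The main obstacle I anticipate is the careful bookkeeping in the degree-one insertion identity, since this is where the twist and the universal \(\mathcal{R}\)-matrix genuinely intermingle; everything else in the proof is essentially forced by the fact that \(\mathrm{Drin}_\mathcal{F}\) is a braided monoidal equivalence and that \({}^\mathcal{F}\) has already been shown to be an isomorphism of braided Gerstenhaber algebras in Proposition~4.6.2. Once the core relation \(\mathcal{F}^{-1}\mathcal{R}_\mathcal{F}^{-1}=\mathcal{R}^{-1}\mathcal{F}_{21}^{-1}\) is exploited in the right place, the remaining arguments reduce to inductive applications of the braided Leibniz rules established in Theorem~\ref{ThmBraidedCC} together with the anti-bialgebra character of \(S\) and the \(2\)-cocycle property of \(\mathcal{F}\).
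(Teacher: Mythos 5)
Your proposal is correct and follows essentially the same route as the paper's proof: the core step is the relation $\mathcal{F}^{-1}\mathcal{R}_\mathcal{F}^{-1}=\mathcal{R}^{-1}\mathcal{F}_{21}^{-1}$ applied to the degree-one insertion identity, after which the de~Rham identity is checked on generators and the Lie derivative follows from the Cartan formula together with the equivariance of $\mathrm{d}$, everything being extended to higher degrees by the derivation properties and the fact that ${}^\mathcal{F}$ intertwines the wedge products. The only cosmetic difference is that the paper additionally writes out the explicit degree-one verification of $(\mathrm{d}\omega)^\mathcal{F}=\mathrm{d}\omega^\mathcal{F}$ on $X^\mathcal{F}\wedge_{\mathcal{R}_\mathcal{F}}Y^\mathcal{F}$, which your generation-by-lowest-degrees argument renders unnecessary.
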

\begin{proof}
For $a\in\mathcal{A}$, $X,Y\in\mathfrak{X}^1_\mathcal{R}(\mathcal{A})_\mathcal{F}$
and $\omega\in\Omega^1_\mathcal{R}(\mathcal{A})_\mathcal{F}$ we obtain
\begin{allowdisplaybreaks}
\begin{align*}
    (\mathrm{i}^\mathcal{F}_X\omega)^\mathcal{F}
    =&(\mathrm{i}^\mathcal{R}_{\mathcal{F}_1^{-1}\rhd X}
    (\mathcal{F}_2^{-1}\rhd\omega))^\mathcal{F}\\
    =&(((\mathcal{R}_1^{-1}\mathcal{F}_2^{-1})\rhd\omega)
    ((\mathcal{R}_2^{-1}\mathcal{F}_1^{-1})\rhd X))^\mathcal{F}\\
    =&((\mathcal{F}_1^{-1}\mathcal{R}_{\mathcal{F}1}^{-1})\rhd\omega)
    ((\mathcal{F}_2^{-1}\mathcal{R}_{\mathcal{F}2}^{-1})\rhd X)\\
    =&(\mathcal{R}_{\mathcal{F}1}^{-1}\rhd\omega)^\mathcal{F}
    ((\mathcal{R}_{\mathcal{F}2}^{-1}\rhd X)^\mathcal{F})\\
    =&(\mathcal{R}_{\mathcal{F}1}^{-1}\rhd\omega^\mathcal{F})
    (\mathcal{R}_{\mathcal{F}2}^{-1}\rhd X^\mathcal{F})\\
    =&\mathrm{i}^{\mathcal{R}_\mathcal{F}}_{X^\mathcal{F}}\omega^\mathcal{F},
\end{align*}
\end{allowdisplaybreaks}
since ${}^\mathcal{F}$ is an isomorphism of $H_\mathcal{F}$-equivariant
braided symmetric $\mathcal{A}_\mathcal{F}$-bimodules. Similarly
\begin{allowdisplaybreaks}
\begin{align*}
    (\mathrm{d}a)^\mathcal{F}(X^\mathcal{F})
    =&(\mathcal{F}_1^{-1}\rhd(\mathrm{d}a))(\mathcal{F}_2^{-1}\rhd X)\\
    =&(\mathrm{d}(\mathcal{F}_1^{-1}\rhd a))(\mathcal{F}_2^{-1}\rhd X)\\
    =&((\mathcal{R}_1^{-1}\mathcal{F}_2^{-1})\rhd X)
    ((\mathcal{R}_2^{-1}\mathcal{F}_1^{-1})\rhd a)\\
    =&((\mathcal{F}_1^{-1}\mathcal{R}_{\mathcal{F}_1}^{-1})\rhd X)
    ((\mathcal{F}_2^{-1}\mathcal{R}_{\mathcal{F}2}^{-1})\rhd a)\\
    =&(\mathcal{R}_{\mathcal{F}1}^{-1}\rhd X)^\mathcal{F}
    (\mathcal{R}_{\mathcal{F}2}^{-1}\rhd a)\\
    =&(\mathcal{R}_{\mathcal{F}1}^{-1}\rhd X^\mathcal{F})
    (\mathcal{R}_{\mathcal{F}2}^{-1}\rhd a)\\
    =&(\mathrm{d}a)(X^\mathcal{F}),
\end{align*}
\end{allowdisplaybreaks}
follows, and since ${}^\mathcal{F}$ respects the braided commutator we obtain
\begin{allowdisplaybreaks}
\begin{align*}
    \mathrm{d}\omega^\mathcal{F}
    (X^\mathcal{F}\wedge_{\mathcal{R}_\mathcal{F}}Y^\mathcal{F})
    =&(\mathcal{R}_{\mathcal{F}1}^{-1}\rhd X^\mathcal{F})
    ((\mathcal{R}_{\mathcal{F}2}^{-1}\rhd\omega^\mathcal{F})(Y^\mathcal{F}))\\
    &-(\mathcal{R}_{\mathcal{F}1}^{-1}\rhd Y^\mathcal{F})
    (\mathcal{R}_{\mathcal{F}2}^{-1}\rhd(\omega^\mathcal{F}(X^\mathcal{F})))\\
    &-\omega^\mathcal{F}([X^\mathcal{F},Y^\mathcal{F}]_{\mathcal{R}_\mathcal{F}})\\
    =&((\mathcal{F}_1^{-1}\mathcal{R}_{\mathcal{F}1}^{-1})\rhd X)
    (((\mathcal{F}_{2(1)}^{-1}\mathcal{F}_1^{'-1}\mathcal{R}_{\mathcal{F}2}^{-1})
    \rhd\omega)((\mathcal{F}_{2(2)}^{-1}\mathcal{F}_2^{'-1})\rhd Y))\\
    &-((\mathcal{F}_1^{-1}\mathcal{R}_{\mathcal{F}1}^{-1})\rhd Y)
    ((\mathcal{F}_{2}^{-1}\mathcal{R}_{\mathcal{F}2}^{-1})
    \rhd((\mathcal{F}_1^{'-1}\rhd\omega)(\mathcal{F}_2^{'-1}\rhd X)))\\
    &-\omega^\mathcal{F}(([X,Y]_{\mathcal{F}})^\mathcal{F})\\
    =&((\mathcal{F}_{1(1)}^{-1}\mathcal{F}_1^{'-1}\mathcal{R}_{\mathcal{F}1}^{-1})\rhd X)
    (((\mathcal{F}_{1(2)}^{-1}\mathcal{F}_2^{'-1}\mathcal{R}_{\mathcal{F}2}^{-1})
    \rhd\omega)(\mathcal{F}_{2}^{-1}\rhd Y))\\
    &-((\mathcal{R}_1^{-1}\mathcal{F}_2^{-1})\rhd Y)
    ((\mathcal{R}_2^{-1}\mathcal{F}_1^{-1})
    \rhd((\mathcal{F}_1^{'-1}\rhd\omega)(\mathcal{F}_2^{'-1}\rhd X)))\\
    &-(\mathcal{F}_1^{-1}\rhd\omega)(\mathcal{F}_2^{-1}\rhd[X,Y]_{\mathcal{F}})\\
    =&((\mathcal{F}_{1(1)}^{-1}\mathcal{R}_1^{-1}\mathcal{F}_2^{'-1})\rhd X)
    (((\mathcal{F}_{1(2)}^{-1}\mathcal{R}_2^{-1}\mathcal{F}_1^{'-1})
    \rhd\omega)(\mathcal{F}_{2}^{-1}\rhd Y))\\
    &-((\mathcal{R}_1^{-1}\mathcal{F}_2^{-1})\rhd Y)
    (\mathcal{R}_2^{-1}
    \rhd(((\mathcal{F}_{1(1)}^{-1}\mathcal{F}_1^{'-1})\rhd\omega)
    ((\mathcal{F}_{1(2)}^{-1}\mathcal{F}_2^{'-1})\rhd X)))\\
    &-(\mathcal{F}_1^{-1}\rhd\omega)
    ([(\mathcal{F}_{2(1)}^{-1}\mathcal{F}_1^{'-1})\rhd X,
    (\mathcal{F}_{2(2)}^{-1}\mathcal{F}_2^{'-1})\rhd Y]_{\mathcal{R}})\\
    =&((\mathcal{R}_1^{-1}\mathcal{F}_{2(1)}^{-1}\mathcal{F}_1^{'-1})\rhd X)
    (((\mathcal{R}_2^{-1}\mathcal{F}_{1}^{-1})
    \rhd\omega)(\mathcal{F}_{2(2)}^{-1}\mathcal{F}_2^{'-1})\rhd Y))\\
    &-((\mathcal{R}_1^{-1}\mathcal{F}_{2(2)}^{-1}\mathcal{F}_2^{'-1})\rhd Y)
    (\mathcal{R}_2^{-1}
    \rhd((\mathcal{F}_{1}^{-1}\rhd\omega)
    ((\mathcal{F}_{2(1)}^{-1}\mathcal{F}_1^{'-1})\rhd X)))\\
    &-(\mathcal{F}_1^{-1}\rhd\omega)
    ([(\mathcal{F}_{2(1)}^{-1}\mathcal{F}_1^{'-1})\rhd X,
    (\mathcal{F}_{2(2)}^{-1}\mathcal{F}_2^{'-1})\rhd Y]_{\mathcal{R}})\\
    =&(\mathrm{d}\omega)^\mathcal{F}((X\wedge_\mathcal{F}Y)^\mathcal{F})\\
    =&(\mathrm{d}\omega)^\mathcal{F}
    (X^\mathcal{F}\wedge_{\mathcal{R}_\mathcal{F}}Y^\mathcal{F}).
\end{align*}
\end{allowdisplaybreaks}
Moreover, since $\mathrm{d}$ is equivariant
\begin{allowdisplaybreaks}
\begin{align*}
    \mathscr{L}^{\mathcal{R}_\mathcal{F}}_{X^\mathcal{F}}\omega^\mathcal{F}
    =&[\mathrm{i}^{\mathcal{R}_\mathcal{F}}_{X^\mathcal{F}},
    \mathrm{d}]_{\mathcal{R}_\mathcal{F}}\omega^\mathcal{F}\\
    =&\mathrm{i}^{\mathcal{R}_\mathcal{F}}_{X^\mathcal{F}}\mathrm{d}\omega^\mathcal{F}
    -\mathrm{d}
    \mathrm{i}^{\mathcal{R}_\mathcal{F}}_{X^\mathcal{F}}\omega^\mathcal{F}\\
    =&\mathrm{i}^{\mathcal{R}_\mathcal{F}}_{X^\mathcal{F}}(\mathrm{d}\omega)^\mathcal{F}
    -\mathrm{d}
    (\mathrm{i}^{\mathcal{F}}_{X}\omega)^\mathcal{F}\\
    =&(\mathrm{i}^{\mathcal{F}}_{X}\mathrm{d}\omega)^\mathcal{F}
    -(\mathrm{d}
    \mathrm{i}^{\mathcal{F}}_{X}\omega)^\mathcal{F}\\
    =&([\mathrm{i}^\mathcal{F}_X,\mathrm{d}]_\mathcal{R}\omega)^\mathcal{F}\\
    =&(\mathscr{L}^\mathcal{F}_X\omega)^\mathcal{F},
\end{align*}
\end{allowdisplaybreaks}
follows. In degree zero
there is nothing to prove and higher degrees are generated by the lowest two degrees.
Since ${}^\mathcal{F}$ respects the wedge product this concludes the proof of
the theorem.
\end{proof}
Let $\mathcal{M}$ be an $H$-equivariant braided symmetric $\mathcal{A}$-bimodule and
$\nabla^\mathcal{R}\colon\mathfrak{X}^1_\mathcal{R}(\mathcal{A})\otimes
\mathcal{M}\rightarrow\mathcal{M}$ an equivariant covariant derivative with respect
to $\mathcal{R}$. Define the \textit{twisted covariant derivative}
as
$$
\nabla^\mathcal{F}=
\mathrm{Drin}_\mathcal{F}(\nabla^\mathcal{R})\circ
\varphi_{\mathfrak{X}^1_\mathcal{R}(\mathcal{A}),
\mathcal{M}}\colon\mathfrak{X}^1_\mathcal{R}(\mathcal{A})_\mathcal{F}
\otimes_\mathcal{F}\mathcal{M}_\mathcal{F}\rightarrow\mathcal{M}_\mathcal{F}.
$$
On elements $X\in\mathfrak{X}^1_\mathcal{R}(\mathcal{A})_\mathcal{F}$ and
$s\in\mathcal{M}_\mathcal{F}$ this reads
$$
\nabla^\mathcal{F}_Xs
=\nabla^\mathcal{R}_{\mathcal{F}_1^{-1}\rhd X}(\mathcal{F}_2^{-1}\rhd s).
$$
\begin{lemma}
$\nabla^\mathcal{F}$ satisfies
$\xi\rhd(\nabla^\mathcal{F}_Xs)=\nabla^\mathcal{F}_{\xi_{(1)}\rhd X}
(\xi_{(2)}\rhd s)$,
$\nabla^\mathcal{F}_{a\cdot_\mathcal{F}X}s
=a\cdot_\mathcal{F}\nabla^\mathcal{F}_Xs$ and
$$
\nabla^\mathcal{F}_X(a\cdot_\mathcal{F}s)
=(\mathscr{L}^\mathcal{F}_Xa)\cdot_\mathcal{F}s
+(\mathcal{R}_{\mathcal{F}1}^{-1}\rhd a)
\cdot_\mathcal{F}(\nabla^\mathcal{F}_{\mathcal{R}_{\mathcal{F}2}^{-1}\rhd X}s)
$$
for all $a\in\mathcal{A}$, $X\in\mathfrak{X}^1_\mathcal{R}(\mathcal{A})_\mathcal{F}$
and $s\in\mathcal{M}_\mathcal{F}$.
\end{lemma}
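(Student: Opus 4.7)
The plan is to verify each of the three assertions directly from the defining formula $\nabla^\mathcal{F}_X s = \nabla^\mathcal{R}_{\mathcal{F}_1^{-1}\rhd X}(\mathcal{F}_2^{-1}\rhd s)$, using the corresponding properties of the untwisted $\nabla^\mathcal{R}$, the $2$-cocycle condition for $\mathcal{F}$ (equivalently, its inverse form $(\Delta\otimes\mathrm{id})(\mathcal{F}^{-1})(\mathcal{F}^{-1}\otimes 1) = (\mathrm{id}\otimes\Delta)(\mathcal{F}^{-1})(1\otimes\mathcal{F}^{-1})$), the normalization, and the relation $\mathcal{R}_\mathcal{F} = \mathcal{F}_{21}\mathcal{R}\mathcal{F}^{-1}$. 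The arguments run along the same lines as the proofs of Lemma~\ref{lemma23} and Proposition~\ref{prop13}.

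First I would handle equivariance, which is the shortest. Applying $\xi \in H$ to $\nabla^\mathcal{F}_X s$ and using the $H$-equivariance of $\nabla^\mathcal{R}$, one gets $\nabla^\mathcal{R}_{(\xi_{(1)}\mathcal{F}_1^{-1})\rhd X}((\xi_{(2)}\mathcal{F}_2^{-1})\rhd s)$. The identity $\mathcal{F}^{-1}\Delta_\mathcal{F}(\xi) = \Delta(\xi)\mathcal{F}^{-1}$ (which is immediate from $\Delta_\mathcal{F}(\xi) = \mathcal{F}\Delta(\xi)\mathcal{F}^{-1}$) rewrites this as $\nabla^\mathcal{R}_{(\mathcal{F}_1^{-1}\xi_{\widehat{(1)}})\rhd X}((\mathcal{F}_2^{-1}\xi_{\widehat{(2)}})\rhd s) = \nabla^\mathcal{F}_{\xi_{\widehat{(1)}}\rhd X}(\xi_{\widehat{(2)}}\rhd s)$, i.e.\ $H_\mathcal{F}$-equivariance (the Sweedler indices $(1),(2)$ in the statement are to be read as the twisted coproduct).

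Next comes left $\mathcal{A}_\mathcal{F}$-linearity in the first slot. Expanding $a\cdot_\mathcal{F} X$ and pushing $\mathcal{F}_1^{-1}\rhd(\cdot)$ through the product via the coproduct, one obtains an expression in which the leftmost factor is $((\mathcal{F}_{1(1)}^{-1}\mathcal{F}_1^{'-1})\rhd a)$. Using left $\mathcal{A}$-linearity of $\nabla^\mathcal{R}$ in its first argument one factors this out; the inverse $2$-cocycle identity then recasts the remaining tensor factors as $(\mathcal{F}_1^{-1}\rhd a) \otimes (\mathcal{F}_{2(1)}^{-1}\mathcal{F}_1^{'-1}) \otimes (\mathcal{F}_{2(2)}^{-1}\mathcal{F}_2^{'-1})$, so that the final two legs combine through equivariance of $\nabla^\mathcal{R}$ and $\mathcal{F}_2^{-1}$ can be pulled out past the whole derivative, yielding precisely $a\cdot_\mathcal{F}(\nabla^\mathcal{F}_X s)$.

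The twisted Leibniz rule is the main obstacle and will occupy the bulk of the computation. Writing $\nabla^\mathcal{F}_X(a\cdot_\mathcal{F} s)$ as $\nabla^\mathcal{R}_{\mathcal{F}_1^{-1}\rhd X}$ applied to $((\mathcal{F}_{2(1)}^{-1}\mathcal{F}_1^{'-1})\rhd a)\cdot((\mathcal{F}_{2(2)}^{-1}\mathcal{F}_2^{'-1})\rhd s)$ and invoking the braided Leibniz rule for $\nabla^\mathcal{R}$ produces two summands. For the $\mathscr{L}^\mathcal{R}$-term, equivariance of $\mathscr{L}^\mathcal{R}$ together with one application of the inverse $2$-cocycle identity rearranges the $\mathcal{F}^{-1}$-legs into precisely the shape of $(\mathscr{L}^\mathcal{F}_X a)\cdot_\mathcal{F} s$. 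The second summand is the delicate one: it contains an $\mathcal{R}^{-1}$ interlacing the arguments, and to convert it into $(\mathcal{R}_{\mathcal{F}1}^{-1}\rhd a)\cdot_\mathcal{F}(\nabla^\mathcal{F}_{\mathcal{R}_{\mathcal{F}2}^{-1}\rhd X}s)$ one must relate $\mathcal{R}^{-1}\mathcal{F}^{-1}_{\text{bits}}$ to $\mathcal{F}^{-1}_{\text{bits}}\mathcal{R}_\mathcal{F}^{-1}$. The hard part is precisely this reshuffling: it amounts to using the hexagon relations for $\mathcal{R}$ to braid $\mathcal{R}^{-1}$ past certain legs of $\mathcal{F}^{-1}$, combined with $\mathcal{R}_\mathcal{F}^{-1} = \mathcal{F}\mathcal{R}^{-1}\mathcal{F}_{21}^{-1}$, until the identity $(\mathcal{F}^{-1}\otimes 1)(\mathrm{id}\otimes\Delta)(\mathcal{F}^{-1})\mathcal{R}^{-1}_{23} = \mathcal{R}_{\mathcal{F},23}^{-1}(1\otimes\mathcal{F}^{-1})(\Delta\otimes\mathrm{id})(\mathcal{F}^{-1})$ (or a suitable variant) materialises. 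Once this leg-shuffling is in place, the factor $\mathcal{R}_{\mathcal{F}1}^{-1}\rhd a$ comes free and the remaining $\nabla^\mathcal{R}$ recombines, by definition, into $\nabla^\mathcal{F}_{\mathcal{R}_{\mathcal{F}2}^{-1}\rhd X}s$. Alternatively, and perhaps more transparently, one can deduce the Leibniz rule by transporting the statement through the isomorphism $X\mapsto X^\mathcal{F}$ of Lemma~\ref{lemma23}: extending ${}^\mathcal{F}$ to $\mathcal{M}_\mathcal{F}$ by $s^\mathcal{F}=s$ and to $\nabla^\mathcal{F}$ by $(\nabla^\mathcal{F}_X s)^\mathcal{F}=\nabla^{\mathcal{R}_\mathcal{F}}_{X^\mathcal{F}}s^\mathcal{F}$, one verifies this map is well-defined and then the three properties reduce to the already-established properties of an equivariant covariant derivative on $(\mathcal{A}_\mathcal{F},\mathcal{R}_\mathcal{F})$.
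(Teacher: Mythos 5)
Your main line of argument is the paper's own: a direct computation from $\nabla^\mathcal{F}_Xs=\nabla^\mathcal{R}_{\mathcal{F}_1^{-1}\rhd X}(\mathcal{F}_2^{-1}\rhd s)$ using the $H$-equivariance and braided Leibniz rule of $\nabla^\mathcal{R}$, the inverse $2$-cocycle condition, and $\mathcal{R}_\mathcal{F}^{-1}=\mathcal{F}\mathcal{R}^{-1}\mathcal{F}_{21}^{-1}$, and it goes through. Two corrections, though. First, the identity that lets you move $\mathcal{R}^{-1}$ past the coproduct legs of $\mathcal{F}^{-1}$ in the second Leibniz summand is the quasi-cocommutativity $\mathcal{R}\Delta(\xi)\mathcal{R}^{-1}=\Delta_{21}(\xi)$, not the hexagon relations; the hexagons play no role in this step. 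Second, your proposed shortcut via the isomorphism $X\mapsto X^\mathcal{F}$ is circular as stated: the fact that $\nabla^{\mathcal{R}_\mathcal{F}}_{X^\mathcal{F}}s:=\nabla^\mathcal{F}_Xs$ defines an equivariant covariant derivative with respect to $\mathcal{R}_\mathcal{F}$ (Proposition~\ref{prop12}) is in the paper a consequence of the present lemma, so those properties are not available independently of it.
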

\begin{proof}
Let $\xi\in H$, $a\in\mathcal{A}$, $X\in\mathfrak{X}^1_\mathcal{R}
(\mathcal{A})_\mathcal{F}$ and $s\in\mathcal{M}_\mathcal{F}$. Then
\begin{align*}
    \xi\rhd(\nabla^\mathcal{F}_Xs)
    =\nabla^\mathcal{R}_{(\xi_{(1)}\mathcal{F}_1^{-1})\rhd X}
    ((\xi_{(2)}\mathcal{F}_2^{-1})\rhd s)
    =\nabla^\mathcal{F}_{\xi_{\widehat{(1)}}\rhd X}(\xi_{\widehat{(2)}}\rhd s)
\end{align*}
shows that $\nabla^\mathcal{F}$ is $H_\mathcal{F}$-equivariant, while also
\begin{align*}
    \nabla^\mathcal{F}_{a\cdot_\mathcal{F}X}s
    =&((\mathcal{F}_{1(1)}^{-1}\mathcal{F}_1^{'-1})\rhd a)
    \cdot(\nabla^\mathcal{R}_{(\mathcal{F}_{1(2)}^{-1}\mathcal{F}_2^{'-1})\rhd X}
    (\mathcal{F}_2^{-1}\rhd s))\\
    =&(\mathcal{F}_{1}^{-1}\rhd a)
    \cdot(\nabla^\mathcal{R}_{(\mathcal{F}_{2(1)}^{-1}\mathcal{F}_1^{'-1})\rhd X}
    ((\mathcal{F}_{2(2)}^{-1}\mathcal{F}_2^{'-1})\rhd s))\\
    =&(\mathcal{F}_{1}^{-1}\rhd a)
    \cdot(\mathcal{F}_2^{-1}\rhd(
    \nabla^\mathcal{R}_{\mathcal{F}_1^{'-1}\rhd X}
    (\mathcal{F}_2^{'-1}\rhd s)))\\
    =&a\cdot_\mathcal{F}(\nabla^\mathcal{F}_Xs)
\end{align*}
and
\begin{allowdisplaybreaks}
\begin{align*}
    \nabla^\mathcal{F}_X(a\cdot_\mathcal{F}s)
    =&\nabla^\mathcal{R}_{\mathcal{F}_1^{-1}\rhd X}(
    ((\mathcal{F}_{2(1)}^{-1}\mathcal{F}_1^{'-1})\rhd a)
    \cdot((\mathcal{F}_{2(1)}^{-1}\mathcal{F}_1^{'-1})\rhd a))\\
    =&(\mathscr{L}^\mathcal{R}_{\mathcal{F}_1^{-1}\rhd X}
    ((\mathcal{F}_{2(1)}^{-1}\mathcal{F}_1^{'-1})\rhd a))
    \cdot((\mathcal{F}_{2(2)}^{-1}\mathcal{F}_2^{'-1})\rhd s)\\
    &+((\mathcal{R}_1^{-1}\mathcal{F}_{2(1)}^{-1}\mathcal{F}_1^{'-1})\rhd a)
    \cdot(\nabla^\mathcal{R}_{(\mathcal{R}_2^{-1}\mathcal{F}_1^{-1})\rhd X}
    ((\mathcal{F}_{2(2)}^{-1}\mathcal{F}_2^{'-1})\rhd s))\\
    =&(\mathcal{F}_1^{-1}\rhd(
    \mathscr{L}^\mathcal{R}_{\mathcal{F}_1^{'-1}\rhd X}
    (\mathcal{F}_2^{'-1}\rhd a)))
    \cdot(\mathcal{F}_{2}^{-1}\rhd s)\\
    &+((\mathcal{F}_{2(1)}^{-1}\mathcal{R}_1^{-1}\mathcal{F}_2^{'-1})\rhd a)
    \cdot(\nabla^\mathcal{R}_{(\mathcal{F}_{2(2)}^{-1}\mathcal{R}_2^{-1}
    \mathcal{F}_1^{'-1})\rhd X}
    (\mathcal{F}_{2}^{-1}\rhd s))\\
    =&(\mathscr{L}^\mathcal{F}_Xa)\cdot_\mathcal{F}s
    +((\mathcal{F}_{1(1)}^{-1}\mathcal{F}_1^{-1}\mathcal{R}_{\mathcal{F}1}^{-1})
    \rhd a)\cdot
    (\nabla^\mathcal{R}_{(\mathcal{F}_{1(2)}^{-1}
    \mathcal{F}_2^{-1}\mathcal{R}_{\mathcal{F}2}^{-1})
    \rhd X}(\mathcal{F}_2^{-1}\rhd s))\\
    =&(\mathscr{L}^\mathcal{F}_Xa)\cdot_\mathcal{F}s
    +(\mathcal{R}_{\mathcal{F}1}^{-1}\rhd a)\cdot_\mathcal{F}
    (\nabla^\mathcal{F}_{\mathcal{R}_{\mathcal{F}2}\rhd X}s)
\end{align*}
\end{allowdisplaybreaks}
hold.
\end{proof}
Note however that strictly speaking $\nabla^\mathcal{F}$ is
\textit{not} an equivariant covariant derivative with respect to
$\mathcal{R}_\mathcal{F}$, since it is a map
$\mathfrak{X}^1_\mathcal{R}(\mathcal{A})_\mathcal{F}
\otimes_\mathcal{F}\mathcal{M}_\mathcal{F}\rightarrow\mathcal{M}_\mathcal{F}$.
Using the isomorphism $\mathfrak{X}^1_\mathcal{R}(\mathcal{A})_\mathcal{F}
\rightarrow\mathfrak{X}^1_{\mathcal{R}_\mathcal{F}}(\mathcal{A}_\mathcal{F})$
we are able to view $\nabla^\mathcal{F}$ as an equivariant covariant derivative on
$\mathcal{M}_\mathcal{F}$
with respect to $\mathcal{R}_\mathcal{F}$ nevertheless. This is done in the next
proposition.
\begin{proposition}\label{prop12}
Let $\nabla^\mathcal{R}$ be an equivariant covariant derivative with respect to
$\mathcal{R}$ on an object $\mathcal{M}$ in ${}^H_\mathcal{A}\mathcal{M}_\mathcal{A}$.
Then we can define an equivariant covariant derivative
$$
\nabla^{\mathcal{R}_\mathcal{F}}
\colon\mathfrak{X}^1_{\mathcal{R}_\mathcal{F}}(\mathcal{A}_\mathcal{F})
\otimes_{\mathcal{F}}\mathcal{M}_\mathcal{F}
\rightarrow\mathcal{M}_\mathcal{F}
$$
with respect to $\mathcal{R}_\mathcal{F}$ via
$$
\nabla^{\mathcal{R}_\mathcal{F}}_{X^\mathcal{F}}s
=\nabla^\mathcal{F}_Xs
$$
for all $X\in\mathfrak{X}^1_\mathcal{R}(\mathcal{A})_\mathcal{F}$ and
$s\in\mathcal{M}_\mathcal{F}$. If $\nabla^\mathcal{R}$ is an equivariant covariant
derivative with respect to $\mathcal{R}$ on $\mathfrak{X}^1_\mathcal{R}(\mathcal{A})$
there is an equivariant covariant derivative
$$
\nabla^{\mathcal{R}_\mathcal{F}}\colon
\mathfrak{X}^1_{\mathcal{R}_\mathcal{F}}(\mathcal{A}_\mathcal{F})
\otimes_\mathcal{F}\mathfrak{X}^1_{\mathcal{R}_\mathcal{F}}(\mathcal{A}_\mathcal{F})
\rightarrow\mathfrak{X}^1_{\mathcal{R}_\mathcal{F}}(\mathcal{A}_\mathcal{F})
$$
with respect to $\mathcal{R}_\mathcal{F}$ defined by
$$
\nabla^{\mathcal{R}_\mathcal{F}}_{X^\mathcal{F}}Y^\mathcal{F}
=(\nabla^\mathcal{F}_XY)^\mathcal{F}
$$
for all $X,Y\in\mathfrak{X}^1_\mathcal{R}(\mathcal{A})_\mathcal{F}$.
The corresponding extensions
$$
\nabla^{\mathcal{R}_\mathcal{F}}\colon
\mathfrak{X}^1_{\mathcal{R}_\mathcal{F}}(\mathcal{A}_\mathcal{F})
\otimes_\mathcal{F}\mathfrak{X}^\bullet_{\mathcal{R}_\mathcal{F}}(\mathcal{A}_\mathcal{F})
\rightarrow\mathfrak{X}^\bullet_{\mathcal{R}_\mathcal{F}}(\mathcal{A}_\mathcal{F})
$$
and
$$
\tilde{\nabla}^{\mathcal{R}_\mathcal{F}}\colon
\mathfrak{X}^1_{\mathcal{R}_\mathcal{F}}(\mathcal{A}_\mathcal{F})
\otimes_\mathcal{F}\Omega^\bullet_{\mathcal{R}_\mathcal{F}}(\mathcal{A}_\mathcal{F})
\rightarrow\Omega^\bullet_{\mathcal{R}_\mathcal{F}}(\mathcal{A}_\mathcal{F})
$$
of the latter
to braided multivector fields and braided differential forms satisfy
$$
\nabla^{\mathcal{R}_\mathcal{F}}_{X^\mathcal{F}}Y^\mathcal{F}
=(\nabla^\mathcal{F}_XY)^\mathcal{F}
\text{ and }
\tilde{\nabla}^{\mathcal{R}_\mathcal{F}}_{X^\mathcal{F}}\omega^\mathcal{F}
=(\tilde{\nabla}^\mathcal{F}_X\omega)^\mathcal{F}
$$
for all $X\in\mathfrak{X}^1_\mathcal{R}(\mathcal{A})_\mathcal{F}$,
$Y\in\mathfrak{X}^\bullet_\mathcal{R}(\mathcal{A})_\mathcal{F}$ and
$\omega\in\Omega^\bullet_\mathcal{R}(\mathcal{A})_\mathcal{F}$.
\end{proposition}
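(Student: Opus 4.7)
My plan is to leverage the isomorphism ${}^\mathcal{F}\colon\mathfrak{X}^1_\mathcal{R}(\mathcal{A})_\mathcal{F}\to\mathfrak{X}^1_{\mathcal{R}_\mathcal{F}}(\mathcal{A}_\mathcal{F})$ of $H_\mathcal{F}$-equivariant braided symmetric $\mathcal{A}_\mathcal{F}$-bimodules established in Lemma~\ref{lemma23}, together with the linearity properties of $\nabla^\mathcal{F}$ already proved in the lemma immediately preceding the proposition. Since ${}^\mathcal{F}$ is a bijection on the first tensor slot and the identity elsewhere, the assignment $\nabla^{\mathcal{R}_\mathcal{F}}_{X^\mathcal{F}}s:=\nabla^\mathcal{F}_Xs$ is well-defined; the only issue is to verify that the properties of $\nabla^\mathcal{F}$ (equivariance, left $\mathcal{A}_\mathcal{F}$-linearity, braided Leibniz rule with the twisted $\mathscr{L}^\mathcal{F}$ and twisted braiding $\mathcal{R}_\mathcal{F}$) translate correctly after replacing $X$ by $X^\mathcal{F}$. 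Because ${}^\mathcal{F}$ intertwines the $H_\mathcal{F}$-actions and the $\mathcal{A}_\mathcal{F}$-bimodule structures, each of the three identities for $\nabla^\mathcal{F}$ gets transported to exactly the corresponding identity for $\nabla^{\mathcal{R}_\mathcal{F}}$; for instance $\nabla^{\mathcal{R}_\mathcal{F}}_{a\cdot_{\mathcal{R}_\mathcal{F}}X^\mathcal{F}}s=\nabla^{\mathcal{R}_\mathcal{F}}_{(a\cdot_\mathcal{F}X)^\mathcal{F}}s=\nabla^\mathcal{F}_{a\cdot_\mathcal{F}X}s=a\cdot_\mathcal{F}\nabla^\mathcal{F}_Xs$, and analogously for the other two properties.

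For the case $\mathcal{M}=\mathfrak{X}^1_\mathcal{R}(\mathcal{A})$ one additionally has to interpret the output via ${}^\mathcal{F}$, i.e. set $\nabla^{\mathcal{R}_\mathcal{F}}_{X^\mathcal{F}}Y^\mathcal{F}:=(\nabla^\mathcal{F}_XY)^\mathcal{F}$. Well-definedness and the equivariant covariant derivative axioms follow by the same bimodule-isomorphism argument, now applied on both arguments: the key point is that ${}^\mathcal{F}$ is an $H_\mathcal{F}$-equivariant $\mathcal{A}_\mathcal{F}$-bimodule isomorphism, so $H_\mathcal{F}$-equivariance and $\mathcal{A}_\mathcal{F}$-linearity properties on the codomain automatically correspond to the already-verified properties of $\nabla^\mathcal{F}$.

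For the extensions to $\mathfrak{X}^\bullet_{\mathcal{R}_\mathcal{F}}(\mathcal{A}_\mathcal{F})$ and $\Omega^\bullet_{\mathcal{R}_\mathcal{F}}(\mathcal{A}_\mathcal{F})$, the idea is to proceed inductively on degree. By Proposition~\ref{prop13} the map ${}^\mathcal{F}$ respects the wedge product, and by the theorem immediately after we have $\mathscr{L}^{\mathcal{R}_\mathcal{F}}_{X^\mathcal{F}}\omega^\mathcal{F}=(\mathscr{L}^\mathcal{F}_X\omega)^\mathcal{F}$. In degree zero the identities $\nabla^{\mathcal{R}_\mathcal{F}}_{X^\mathcal{F}}a=X^\mathcal{F}(a)=(\mathscr{L}^\mathcal{F}_Xa)^\mathcal{F}$ hold trivially. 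For the inductive step I will apply the braided Leibniz rules on both sides: the extension of $\nabla^{\mathcal{R}_\mathcal{F}}$ is defined by the graded braided Leibniz rule with braiding $\mathcal{R}_\mathcal{F}$ and $\wedge_{\mathcal{R}_\mathcal{F}}$, while $(\nabla^\mathcal{F}_XY)^\mathcal{F}$ unfolds via the Leibniz rule for $\nabla^\mathcal{F}$ involving $\mathcal{R}$ and $\wedge_\mathcal{R}$ before being pulled through ${}^\mathcal{F}$. The computation reduces to matching the two expressions term by term using $\mathcal{R}_\mathcal{F}=\mathcal{F}_{21}\mathcal{R}\mathcal{F}^{-1}$ and the identity $(U\wedge_\mathcal{F}V)^\mathcal{F}=U^\mathcal{F}\wedge_{\mathcal{R}_\mathcal{F}}V^\mathcal{F}$, which will be the main bookkeeping step.

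The hard part will be the bookkeeping in this inductive matching, since one must carefully track how the braiding shifts from $\mathcal{R}$ to $\mathcal{R}_\mathcal{F}$ when the twist $\mathcal{F}$ is absorbed into ${}^\mathcal{F}$ on each factor; however, conceptually this is the same mechanism used in Proposition~\ref{prop13} and the preceding theorem, so no new input is required beyond the compatibility identities already established. Once this is done for $\nabla^{\mathcal{R}_\mathcal{F}}$ on braided multivector fields, the statement for $\tilde\nabla^{\mathcal{R}_\mathcal{F}}$ on braided differential forms follows from the defining formula $\langle\tilde\nabla^{\mathcal{R}_\mathcal{F}}_{X^\mathcal{F}}\omega^\mathcal{F},Y^\mathcal{F}\rangle_{\mathcal{R}_\mathcal{F}}=\mathscr{L}^{\mathcal{R}_\mathcal{F}}_{X^\mathcal{F}}\langle\omega^\mathcal{F},Y^\mathcal{F}\rangle_{\mathcal{R}_\mathcal{F}}-\langle\mathcal{R}_{\mathcal{F}1}^{-1}\rhd\omega^\mathcal{F},\nabla^{\mathcal{R}_\mathcal{F}}_{\mathcal{R}_{\mathcal{F}2}^{-1}\rhd X^\mathcal{F}}Y^\mathcal{F}\rangle_{\mathcal{R}_\mathcal{F}}$, combined with the compatibility of ${}^\mathcal{F}$ with the braided dual pairing, which itself reduces to the already-known intertwining with the module actions and $H$-actions.
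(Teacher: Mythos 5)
Your proposal is correct and is exactly the argument the paper intends: the proposition is stated without an explicit proof precisely because it follows by transporting the three axioms verified for $\nabla^\mathcal{F}$ in the preceding lemma through the bimodule isomorphism ${}^\mathcal{F}$ of Lemma~\ref{lemma23}, together with the compatibility of ${}^\mathcal{F}$ with $\wedge_{\mathcal{R}_\mathcal{F}}$, the braided Lie derivative and the dual pairing established in Proposition~\ref{prop13} and the surrounding theorem. Your identification of the inductive Leibniz-rule matching as the only bookkeeping step, and your reduction of the $\Omega^\bullet$ case to the pairing formula, are both consistent with how the paper handles the analogous verifications.
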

Let ${\bf g}$ be an equivariant metric on $\mathcal{A}$. Then, the
\textit{twisted metric} ${\bf g}_\mathcal{F}$ is defined by
$$
{\bf g}_\mathcal{F}(X,Y)
={\bf g}(\mathcal{F}_1^{-1}\rhd X,\mathcal{F}_2^{-1}\rhd Y)
$$
for all $X,Y\in\mathfrak{X}^1_\mathcal{R}(\mathcal{A})$. In the next lemma
we prove that it actually deserves this name, i.e. that ${\bf g}_\mathcal{F}$
is an equivariant metric. Furthermore we show that the assignment
$\mathrm{LC}\colon {\bf g}\mapsto\nabla^\mathcal{R}$, attributing to a 
non-degenerate equivariant metric
${\bf g}$ its corresponding equivariant Levi-Civita covariant derivative 
$\nabla^\mathcal{R}$, is respected by the Drinfel'd functor, i.e.
\begin{equation*}
\begin{tikzcd}
{\bf g}
\arrow[mapsto]{r}{\mathrm{LC}}
\arrow[mapsto]{d}[swap]{\mathrm{Drin}_\mathcal{F}}
& \nabla^\mathcal{R}
\arrow[mapsto]{d}{\mathrm{Drin}_\mathcal{F}} \\
{\bf g}_\mathcal{F}
\arrow[mapsto]{r}{\mathrm{LC}}
& \nabla^\mathcal{F}
\end{tikzcd}
\end{equation*}
commutes.
Remark that ${\bf g}_\mathcal{F}$ might not be non-degenerate in general.
Nonetheless, $\nabla^\mathcal{F}$ is well-defined as twist deformation
of $\nabla^\mathcal{R}$. Furthermore it is metric with respect to 
${\bf g}_\mathcal{F}$ and torsion-free, while it might not be the unique
equivariant covariant derivative with those properties. However, any
torsion-free equivariant covariant derivative $\tilde{\nabla}$ on
$\mathcal{A}_\mathcal{F}$ which is metric with respect to ${\bf g}_\mathcal{F}$
is mapped to $\nabla^\mathcal{R}$ via the inverse twist
$\mathrm{Drin}_{\mathcal{F}^{-1}}$. This follows from the uniqueness of
the equivariant Levi-Civita covariant derivative $\nabla^\mathcal{R}$ on
$(\mathcal{A},{\bf g}=({\bf g}_\mathcal{F})_{\mathcal{F}^{-1}})$. In other words,
$\nabla^\mathcal{F}$ is the unique equivariant Levi-Civita covariant derivative
on $(\mathcal{A}_\mathcal{F},{\bf g}_\mathcal{F})$ up to the kernel of
$\mathcal{F}^{-1}\rhd\colon
\mathfrak{X}^1_\mathcal{R}(\mathcal{A})
\otimes\mathfrak{X}^1_\mathcal{R}(\mathcal{A})
\rightarrow
\mathfrak{X}^1_\mathcal{R}(\mathcal{A})
\otimes\mathfrak{X}^1_\mathcal{R}(\mathcal{A})$.
\begin{lemma}\label{lemma24}
For any equivariant metric ${\bf g}$ on $\mathcal{A}$, the twisted metric
${\bf g}_\mathcal{F}$ is an equivariant metric
with respect to $\mathcal{R}_\mathcal{F}$ on $\mathcal{A}_\mathcal{F}$.
Moreover, twisting the equivariant Levi-Civita covariant derivative with
respect to ${\bf g}$ leads to a torsion-free equivariant covariant derivative
$\nabla^\mathcal{F}$, which is metric with respect to ${\bf g}_\mathcal{F}$.
If ${\bf g}_\mathcal{F}$ is non-degenerate, $\nabla^\mathcal{F}$ is the unique
equivariant covariant derivative with those properties.
\end{lemma}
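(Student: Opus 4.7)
The plan is to prove the three assertions in turn, leaning heavily on the functorial machinery already established, namely Proposition~\ref{prop12} and the isomorphism ${}^\mathcal{F}$ of braided Cartan calculi. First I would verify directly that ${\bf g}_\mathcal{F}$ is an equivariant metric on $\mathcal{A}_\mathcal{F}$ with respect to $\mathcal{R}_\mathcal{F}$. The $\Bbbk$-linearity is automatic, and the $H_\mathcal{F}$-equivariance follows by applying $\xi_{\widehat{(1)}}\otimes\xi_{\widehat{(2)}}=\mathcal{F}\Delta(\xi)\mathcal{F}^{-1}$, using that ${\bf g}$ is $H$-equivariant, and then collapsing $\mathcal{F}\mathcal{F}^{-1}=1\otimes 1$. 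For left $\mathcal{A}_\mathcal{F}$-linearity in the first slot one expands $a\cdot_\mathcal{F}X$ using the twisted product, invokes left $\mathcal{A}$-linearity of ${\bf g}$, and reassembles the factors via the $2$-cocycle identity. Braided symmetry with respect to $\mathcal{R}_\mathcal{F}$ is the one genuinely delicate point: one has
\[
{\bf g}_\mathcal{F}(Y,X)
={\bf g}(\mathcal{F}_1^{-1}\rhd Y,\mathcal{F}_2^{-1}\rhd X)
={\bf g}(\mathcal{R}_1^{-1}\mathcal{F}_2^{-1}\rhd X,\mathcal{R}_2^{-1}\mathcal{F}_1^{-1}\rhd Y),
\]
and one then rewrites $\mathcal{R}_1^{-1}\mathcal{F}_2^{-1}\otimes\mathcal{R}_2^{-1}\mathcal{F}_1^{-1}=\mathcal{F}^{-1}\mathcal{R}_\mathcal{F}^{-1}\mathcal{F}_{21}$ using $\mathcal{R}_\mathcal{F}=\mathcal{F}_{21}\mathcal{R}\mathcal{F}^{-1}$ to recover ${\bf g}_\mathcal{F}(\mathcal{R}_{\mathcal{F}1}^{-1}\rhd X,\mathcal{R}_{\mathcal{F}2}^{-1}\rhd Y)$.

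For the second assertion I would transport the statement through the isomorphism ${}^\mathcal{F}$. By Proposition~\ref{prop12} the twisted covariant derivative $\nabla^\mathcal{F}$ corresponds to an equivariant covariant derivative $\nabla^{\mathcal{R}_\mathcal{F}}$ on $\mathcal{A}_\mathcal{F}$ via $\nabla^{\mathcal{R}_\mathcal{F}}_{X^\mathcal{F}}Y^\mathcal{F}=(\nabla^\mathcal{F}_XY)^\mathcal{F}$. The torsion-freeness of $\nabla^{\mathcal{R}_\mathcal{F}}$ becomes the identity
\[
(\nabla^\mathcal{F}_X Y)^\mathcal{F}
-(\nabla^\mathcal{F}_{\mathcal{R}_{\mathcal{F}1}^{-1}\rhd Y}(\mathcal{R}_{\mathcal{F}2}^{-1}\rhd X))^\mathcal{F}
=([X,Y]_\mathcal{F})^\mathcal{F},
\]
and since Proposition~\ref{prop13} shows that ${}^\mathcal{F}$ intertwines the twisted and braided commutators, and the twisted Leibniz bracket equals the braided commutator under $\mathcal{F}^{-1}\rhd$, the identity reduces after pulling the twists inside to the torsion-freeness of $\nabla^\mathcal{R}$ itself. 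The metric property is handled analogously: the compatibility of $\nabla^{\mathcal{R}_\mathcal{F}}$ with ${\bf g}_\mathcal{F}$ is equivalent, after applying ${}^\mathcal{F}$ to all three entries and using the intertwining of $\mathscr{L}^\mathcal{F}$ with $\mathscr{L}^{\mathcal{R}_\mathcal{F}}$, to the compatibility of $\nabla^\mathcal{R}$ with ${\bf g}$, which holds by hypothesis on the Levi-Civita derivative.

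For the third claim I would simply invoke the Levi-Civita proposition of Section~\ref{Sec3.5} applied to the triangular Hopf algebra $(H_\mathcal{F},\mathcal{R}_\mathcal{F})$ and the braided commutative algebra $\mathcal{A}_\mathcal{F}$: when ${\bf g}_\mathcal{F}$ is non-degenerate, there is a unique torsion-free metric equivariant covariant derivative on $(\mathcal{A}_\mathcal{F},{\bf g}_\mathcal{F})$, so $\nabla^{\mathcal{R}_\mathcal{F}}$, and hence $\nabla^\mathcal{F}$, is that unique derivative.

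I expect the principal technical obstacle to be the bookkeeping in the braided symmetry calculation for ${\bf g}_\mathcal{F}$ and, more importantly, in verifying that the metric and torsion conditions survive the passage through ${}^\mathcal{F}$: each of these requires carefully repositioning several copies of $\mathcal{F}^{\pm 1}$ past coproducts and past $\mathcal{R}^{\pm 1}$, using the hexagon relations and the $2$-cocycle identity. Once one accepts Propositions~\ref{prop13} and \ref{prop12} as established, however, no new conceptual ingredient is needed; the rest is formal transport along the braided monoidal equivalence induced by the Drinfel'd twist.
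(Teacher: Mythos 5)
Your proposal is correct and follows essentially the same route as the paper: the metric compatibility and torsion-freeness of $\nabla^\mathcal{F}$ are obtained by pushing the legs of $\mathcal{F}^{-1}$ through the corresponding untwisted identities using the $H$-equivariance of ${\bf g}$, $\nabla^\mathcal{R}$ and $\mathscr{L}^\mathcal{R}$, and uniqueness follows from the Levi-Civita proposition applied to $(\mathcal{A}_\mathcal{F},{\bf g}_\mathcal{F})$ (the paper is merely far terser, leaving the verification that ${\bf g}_\mathcal{F}$ is an equivariant metric implicit). One small slip: in your braided-symmetry step the displayed identity should read $\mathcal{R}_1^{-1}\mathcal{F}_2^{-1}\otimes\mathcal{R}_2^{-1}\mathcal{F}_1^{-1}=\mathcal{F}^{-1}\mathcal{R}_\mathcal{F}^{-1}$ (i.e.\ $\mathcal{R}^{-1}\mathcal{F}_{21}^{-1}=\mathcal{F}^{-1}\mathcal{R}_\mathcal{F}^{-1}$, immediate from $\mathcal{R}_\mathcal{F}=\mathcal{F}_{21}\mathcal{R}\mathcal{F}^{-1}$); the extra factor $\mathcal{F}_{21}$ you wrote would collapse the left-hand side to $\mathcal{R}^{-1}$, but the conclusion you draw is the correct one.
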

\begin{proof}
Let $X,Y\in\mathfrak{X}_\mathcal{R}^1(\mathcal{A})$. The relation
$$
\mathscr{L}^\mathcal{F}_X({\bf g}_\mathcal{F}(Y,Z))
={\bf g}_\mathcal{F}(\nabla^\mathcal{F}_XY,Z)
+{\bf g}_\mathcal{F}(\mathcal{R}_{\mathcal{F}1}^{-1}\rhd Y,
\nabla^\mathcal{F}_{\mathcal{R}_{\mathcal{F}2}^{-1}\rhd X}Z)
$$
follows from $\mathscr{L}^\mathcal{R}_X{\bf g}(Y,Z)
={\bf g}(\nabla^\mathcal{R}_XY,Z)
+{\bf g}(\mathcal{R}_1^{-1}\rhd Y,\nabla^\mathcal{R}_{\mathcal{R}_2^{-1}\rhd X}Z)$
and the
$H$-equivariance of ${\bf g}$, $\nabla^\mathcal{R}$ and $\mathscr{L}^\mathcal{R}$.
The last statement holds since $\mathrm{Tor}^{\nabla^\mathcal{F}}=0$ if
$\mathrm{Tor}^{\nabla^\mathcal{R}}=0$.
\end{proof}
Notice that Lemma~\ref{lemma24} even holds for metrics which are \textit{not}
equivariant if one assumes $\mathcal{F}$ to consist of affine Killing vector fields
(c.f. \cite{Aschieri2010}~Sec.~6.2).

\chapter{Submanifold Algebras}\label{chap05}
We introduce the notion of submanifold algebra on a braided commutative
left $H$-module algebra $\mathcal{A}$ for a triangular Hopf algebra $(H,\mathcal{R})$,
slightly modifying the approach of \cite{Masson1995} (see \cite{Francesco2019}
for a recent discussion on submanifold algebras).
It generalizes the concept of closed embedded submanifolds from differential geometry.
In a nutshell a submanifold algebra
is given by an algebra ideal $\mathcal{C}\subseteq\mathcal{A}$
which is closed under the Hopf algebra action. In particular, the surjective
projection $\mathrm{pr}\colon\mathcal{A}\rightarrow\mathcal{A}/\mathcal{C}$
commutes with the $H$-action. In the course of this chapter we want to make sense
of the following commutative diagram
\begin{equation}\label{DiagramSubmanifolds}
\begin{tikzcd}
\text{Geometry on }\mathcal{A}
\arrow[mapsto]{r}{\mathrm{pr}}
\arrow[mapsto]{d}[swap]{\mathrm{Drin}_\mathcal{F}}
& \text{Geometry on }\mathcal{A}/\mathcal{C}
\arrow[mapsto]{d}{\mathrm{Drin}_\mathcal{F}} \\
\text{Geometry on }\mathcal{A}_\mathcal{F}
\arrow[mapsto]{r}{\mathrm{pr}}
& \text{Geometry on }\mathcal{A}_\mathcal{F}/\mathcal{C}_\mathcal{F}
=\big(\mathcal{A}/\mathcal{C}\big)_\mathcal{F}
\end{tikzcd},
\end{equation}
where $\mathcal{F}$ is a Drinfel'd twist on $H$. This vague picture should be
interpreted in the following way: first, we prove that the geometric data on
$\mathcal{A}/\mathcal{C}$, namely the braided Cartan calculus, equivariant metrics,
covariant derivatives, curvature and torsion,
are gained as projections of
the corresponding objects in $\mathcal{A}$ and secondly, we prove that
this projection commutes with the Drinfel'd functor. In
Section~\ref{Sec5.1} and Section~\ref{Sec5.2} we study the horizontal arrow,
projecting the braided Cartan calculus and equivariant covariant derivatives,
respectively, while the vertical arrow together with the commutativity of
the diagram are examined in Section~\ref{Sec5.3}. Note that we have to
accept two axioms in order to receive well-defined projected equivariant metrics
and covariant derivatives. However, since those assumptions are quite
mild we still obtain Riemannian geometry on smooth submanifolds as a special case
of our theory. Finally, in Section~\ref{Sec4.4}, we give an explicit example
of twist deformation of the $2$-sheet elliptic hyperboloid. Starting from
the commutative algebra of smooth functions with the pointwise product,
the vertical arrows of (\ref{DiagramSubmanifolds}) correspond to a 
quantization and the commutativity of (\ref{DiagramSubmanifolds}) implies that
twist deformation quantization and projection to the quadric surface commute.
A different approach to Riemannian geometry on noncommutative submanifolds, based on
the choice of a finite-dimensional Lie subalgebra $\mathfrak{g}$ of 
$\mathrm{Der}(\mathcal{A})$ and a vector space homomorphism 
$\mathfrak{g}\rightarrow\mathcal{M}$ into a right $\mathcal{A}$-bimodule $\mathcal{M}$,
is considered in \cite{Arnlind2019}.

Fix a triangular Hopf algebra $(H,\mathcal{R})$ and a braided
commutative left $H$-module algebra $\mathcal{A}$ for the rest of
this chapter.

\section{Braided Cartan Calculi on Submanifold Algebras}\label{Sec5.1}

For any algebra ideal $\mathcal{C}$ the coset space $\mathcal{A}/\mathcal{C}$
becomes an algebra with unit and product induced from $\mathcal{A}$. The elements
of $\mathcal{A}/\mathcal{C}$ are equivalence classes of elements in
$\mathcal{A}$, where $a$ and $b$ are in the same equivalence class if and only
if there exists an element $c\in\mathcal{C}$ such that $a=b+c$. Choosing an
arbitrary representative $a\in\mathcal{A}$ we denote the corresponding
equivalence class by $[a]$ or $a+\mathcal{C}$. This constitutes a surjective
projection $\mathrm{pr}\colon\mathcal{A}\rightarrow\mathcal{A}/\mathcal{C}$,
which assigns any element of $\mathcal{A}$ its equivalence class. It is easy
to verify that $\mathrm{pr}$ is an algebra homomorphism. The projection
is injective if and only if $\mathcal{C}=\{0\}$.
\begin{lemma}
Let $\mathcal{C}\subseteq\mathcal{A}$ be an algebra ideal such that
$H\rhd\mathcal{C}\subseteq\mathcal{C}$. Then
$\mathcal{A}/\mathcal{C}$ is a left $H$-module algebra and
braided commutative with respect to the same triangular structure
$\mathcal{R}$.
\end{lemma}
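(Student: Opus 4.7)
The plan is to first observe that the hypothesis $H \rhd \mathcal{C} \subseteq \mathcal{C}$ together with $\mathcal{C}$ being a two-sided ideal are exactly what we need to descend every piece of structure on $\mathcal{A}$ to the quotient $\mathcal{A}/\mathcal{C}$ along the canonical projection $\mathrm{pr}\colon\mathcal{A}\rightarrow\mathcal{A}/\mathcal{C}$. So the strategy is to define the candidate structures on $\mathcal{A}/\mathcal{C}$ by
\begin{equation*}
    \xi\rhd[a]=[\xi\rhd a],\quad [a]\cdot[b]=[a\cdot b],\quad 1_{\mathcal{A}/\mathcal{C}}=[1],
\end{equation*}
and verify that each is well-defined on representatives, before checking the axioms.

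First I would show that the formula $\xi\rhd[a]=[\xi\rhd a]$ gives a well-defined left $H$-module structure. Well-definedness: if $a-b\in\mathcal{C}$ then $\xi\rhd a-\xi\rhd b=\xi\rhd(a-b)\in\mathcal{C}$ by the $\Bbbk$-linearity of $\rhd$ and the hypothesis $H\rhd\mathcal{C}\subseteq\mathcal{C}$. The module axioms $(\xi\chi)\rhd[a]=\xi\rhd(\chi\rhd[a])$ and $1\rhd[a]=[a]$ are then immediate from the corresponding identities in $\mathcal{A}$ and the fact that $\mathrm{pr}$ is $\Bbbk$-linear.

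Next I would verify the left $H$-module algebra property. The product on $\mathcal{A}/\mathcal{C}$ is well-defined precisely because $\mathcal{C}$ is a two-sided ideal. Using that $\mathcal{A}$ is a left $H$-module algebra we get
\begin{equation*}
    \xi\rhd([a]\cdot[b])=[\xi\rhd(ab)]=[(\xi_{(1)}\rhd a)(\xi_{(2)}\rhd b)]
    =(\xi_{(1)}\rhd[a])\cdot(\xi_{(2)}\rhd[b])
\end{equation*}
and $\xi\rhd[1]=[\xi\rhd 1]=\epsilon(\xi)[1]$, so the Hopf algebra action respects the algebra structure on the quotient.

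Finally, braided commutativity with respect to the same $\mathcal{R}$ is transported through the projection:
\begin{equation*}
    [b]\cdot[a]=[b\cdot a]=[(\mathcal{R}_1^{-1}\rhd a)\cdot(\mathcal{R}_2^{-1}\rhd b)]
    =(\mathcal{R}_1^{-1}\rhd[a])\cdot(\mathcal{R}_2^{-1}\rhd[b]).
\end{equation*}
No step poses a genuine obstacle; the entire content of the proof is really the observation that the hypotheses on $\mathcal{C}$ are precisely tailored so that $\mathrm{pr}$ is a surjective morphism in ${}^H_{\phantom{H}}\mathcal{A}$, and structures in such a category always descend to quotients by subobjects. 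The only small point that deserves explicit mention is why $\mathrm{pr}$ is $H$-equivariant (which is the well-definedness check above), since this is what allows $\mathcal{R}$ itself, rather than any twisted version, to control the braided commutativity on $\mathcal{A}/\mathcal{C}$.
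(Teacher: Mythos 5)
Your proof is correct and follows essentially the same route as the paper's: the paper likewise observes that the ideal property makes the quotient an algebra, that $H\rhd\mathcal{C}\subseteq\mathcal{C}$ makes the induced action well-defined and a module algebra action, and that braided commutativity descends because the braiding is encoded entirely through the Hopf algebra action. Your version simply writes out the verifications that the paper leaves as one-line remarks.
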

\begin{proof}
From general algebra we know that the coset space $\mathcal{A}/\mathcal{C}$
is an algebra with respect to the induced multiplication if and only if
$\mathcal{C}$ is an ideal. The induced left $H$-action, i.e.
$\xi\rhd\mathrm{pr}(a)=\mathrm{pr}(\xi\rhd a)$, where $a\in\mathcal{A}$ and
$\xi\in H$, is well-defined if and only if $H\rhd\mathcal{C}\subseteq\mathcal{C}$.
For the same reason the action respects the algebra structure and
$\rhd$ descends to a left $H$-module algebra action. Since the braiding is
encoded via the Hopf algebra action $\mathcal{A}/\mathcal{C}$ is braided
commutative.
\end{proof}
It is intuitive that not all braided vector fields on $\mathcal{A}$ can be
projected to a braided vector field on $\mathcal{A}/\mathcal{C}$.
In the end braided vector fields
are braided derivations and in particular endomorphisms of $\mathcal{A}$.
They only descend to an endomorphism of $\mathcal{A}/\mathcal{C}$ if
$\mathcal{C}$ is a subspace of the kernel. It turns out that this is the
only obstruction: let $X\in\mathrm{Der}_\mathcal{R}(\mathcal{A})$ such that
$X(\mathcal{C})\subseteq\mathcal{C}$, then
\begin{equation}\label{eq27}
    \mathrm{pr}(X)(\mathrm{pr}(a))
    =\mathrm{pr}(X(a)),
\end{equation}
where $a\in\mathcal{A}$, defines a braided derivation $\mathrm{pr}(X)$
on $\mathcal{A}/\mathcal{C}$ with respect to $\mathcal{R}$. In fact,
$\mathrm{pr}(X)$ is well-defined exactly because of the condition
$X(\mathcal{C})\subseteq\mathcal{C}$ and it inherits the braided
derivation property from $X$.
\begin{definition}
A braided derivation $X\in\mathrm{Der}_\mathcal{R}(\mathcal{A})$ is said to be
a braided tangent vector field (with respect to $\mathcal{C}$) if
$X(\mathcal{C})\subseteq\mathcal{C}$. We denote the $\Bbbk$-module
of braided tangent vector fields on $\mathcal{A}$ with respect to
$\mathcal{C}$ by $\mathfrak{X}^1_t(\mathcal{A})$. The
$\Bbbk$-submodule of braided tangent vector fields
$X\in\mathfrak{X}^1_t(\mathcal{A})$ satisfying $X(\mathcal{A})
\subseteq\mathcal{C}$ is denoted by $\mathfrak{X}^1_0(\mathcal{A})$.
The corresponding elements are called vanishing vector fields.
\end{definition}
The braided tangent vector fields are closed under the module actions
and the braided commutator. This is discussed in the next lemma. Note
that we are able to define a left $H$-action on the image of
$\mathrm{pr}\colon\mathfrak{X}^1_t(\mathcal{A})\rightarrow
\mathfrak{X}^1_\mathcal{R}(\mathcal{A}/\mathcal{C})$ by
\begin{equation}\label{eq31}
    \xi\rhd\mathrm{pr}(X)=\mathrm{pr}(\xi\rhd X)
\end{equation}
for all $\xi\in H$ and $X\in\mathfrak{X}^1_t(\mathcal{A})$.
With (\ref{eq31}) we have a natural candidate for a left $H$-action
on $\mathfrak{X}^1_\mathcal{R}(\mathcal{A}/\mathcal{C})$. However,
since it is only defined on the image of the projection this sets a 
further condition on the ideal $\mathcal{C}$.
\begin{definition}\label{def05}
If the $\Bbbk$-linear map 
$\mathrm{pr}\colon\mathfrak{X}^1_t(\mathcal{A})\rightarrow
\mathfrak{X}^1_\mathcal{R}(\mathcal{A}/\mathcal{C})$ defined in 
eq.(\ref{eq27}) is surjective, the braided commutative
algebra $\mathcal{A}/\mathcal{C}$ is said to be a submanifold algebra
and $\mathcal{C}$ a submanifold ideal.
\end{definition}
Fix a submanifold ideal $\mathcal{C}$ for the rest of this section.
We want to stress that by our definition this includes the property
$H\rhd\mathcal{C}\subseteq\mathcal{C}$.
\begin{lemma}\label{lemma12}
$\mathfrak{X}^1_t(\mathcal{A})$ is an $H$-equivariant braided symmetric
$\mathcal{A}$-bimodule and a braided Lie algebra, while
$\mathfrak{X}^1_0(\mathcal{A})$ is an
$H$-equivariant braided symmetric
$\mathcal{A}$-sub-bimodule and a braided Lie ideal. Furthermore,
\begin{equation}
    \mathrm{pr}\colon\mathfrak{X}^1_t(\mathcal{A})\rightarrow
    \mathfrak{X}^1_\mathcal{R}(\mathcal{A}/\mathcal{C})
\end{equation}
is a homomorphism of $H$-equivariant braided symmetric
$\mathcal{A}$-bimodules and a homomorphism of braided Lie algebras
with kernel $\mathfrak{X}^1_0(\mathcal{A})$.
\end{lemma}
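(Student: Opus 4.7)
The plan is to verify the three structural closures on $\mathfrak{X}^1_t(\mathcal{A})$ (under the $H$-action, the $\mathcal{A}$-bimodule actions, and the braided commutator), then to deduce the analogous statements for $\mathfrak{X}^1_0(\mathcal{A})$ as a sub-bimodule and braided Lie ideal, and finally to check that $\mathrm{pr}$ intertwines all structures and has the claimed kernel. Since the ambient structures on $\mathrm{Der}_\mathcal{R}(\mathcal{A})$ are already established in Lemma~\ref{lemma08}, the bulk of the work is to show that the defining condition $X(\mathcal{C})\subseteq\mathcal{C}$ is preserved.

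First I would verify the $H$-stability: for $X\in\mathfrak{X}^1_t(\mathcal{A})$, $\xi\in H$ and $c\in\mathcal{C}$,
\begin{equation*}
(\xi\rhd X)(c)=\xi_{(1)}\rhd X(S(\xi_{(2)})\rhd c)\in\mathcal{C},
\end{equation*}
using the assumption $H\rhd\mathcal{C}\subseteq\mathcal{C}$ twice and the tangency of $X$ in between. The left action $(a\cdot X)(c)=a\cdot X(c)$ lies in $\mathcal{C}$ because $X(c)\in\mathcal{C}$ and $\mathcal{C}$ is an algebra ideal, and the right action $(X\cdot a)(c)=X(\mathcal{R}_1^{-1}\rhd c)\cdot(\mathcal{R}_2^{-1}\rhd a)$ is in $\mathcal{C}$ by the same two facts combined with $H$-stability of $\mathcal{C}$. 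For the braided commutator, both terms of $[X,Y]_\mathcal{R}(c)=X(Y(c))-(\mathcal{R}_1^{-1}\rhd Y)((\mathcal{R}_2^{-1}\rhd X)(c))$ lie in $\mathcal{C}$ using tangency, $H$-stability, and the fact that $H$-translates of tangent fields are again tangent (just shown). The analogous verifications for $\mathfrak{X}^1_0(\mathcal{A})$ are formally identical, with $\mathcal{C}$ on the right replaced by the stronger condition $X(\mathcal{A})\subseteq\mathcal{C}$; the braided Lie ideal property $[X,Y]_\mathcal{R}(\mathcal{A})\subseteq\mathcal{C}$ for $X\in\mathfrak{X}^1_0$ and $Y\in\mathfrak{X}^1_t$ follows since $Y(\mathcal{A})\subseteq\mathcal{A}$ lands in something $X$ maps to $\mathcal{C}$, and the second term is handled by tangency of the inner $\mathcal{R}_2^{-1}\rhd X\in\mathfrak{X}^1_0$.

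Next I would turn to $\mathrm{pr}$. The definition \eqref{eq27} makes it well-defined on representatives because $X(\mathcal{C})\subseteq\mathcal{C}$, and the assumption that $\mathrm{pr}$ is surjective (Definition~\ref{def05}) together with the definition \eqref{eq31} of the induced $H$-action on $\mathfrak{X}^1_\mathcal{R}(\mathcal{A}/\mathcal{C})$ shows equivariance on the nose. The compatibility with the $\mathcal{A}$-bimodule actions is immediate from \eqref{eq27}: for $a\in\mathcal{A}$ and $X\in\mathfrak{X}^1_t(\mathcal{A})$,
\begin{equation*}
\mathrm{pr}(a\cdot X)(\mathrm{pr}(b))=\mathrm{pr}(a\cdot X(b))=\mathrm{pr}(a)\cdot\mathrm{pr}(X)(\mathrm{pr}(b)),
\end{equation*}
and similarly for the right action, using that $\mathrm{pr}$ commutes with $\rhd$ and the algebra structure. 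The braided Lie algebra homomorphism property then follows because $\mathrm{pr}$ is an equivariant algebra map, so it preserves the braided commutator of endomorphisms.

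Finally, the kernel identification $\ker\mathrm{pr}=\mathfrak{X}^1_0(\mathcal{A})$ is a direct equivalence: $\mathrm{pr}(X)=0$ iff $\mathrm{pr}(X(a))=0$ for all $a\in\mathcal{A}$, iff $X(a)\in\mathcal{C}$ for all $a\in\mathcal{A}$, iff $X\in\mathfrak{X}^1_0(\mathcal{A})$. The main obstacle I anticipate is bookkeeping the braiding in the right action and braided commutator verifications—specifically making sure the $\mathcal{R}^{-1}$-legs land on $\mathcal{C}$-elements before any derivation is applied—but since $\mathcal{C}$ is stable under the entire Hopf algebra action, each such leg can be absorbed by the closure hypothesis without further effort.
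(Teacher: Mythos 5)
Your proposal is correct and follows essentially the same route as the paper: reduce to checking that the tangency condition $X(\mathcal{C})\subseteq\mathcal{C}$ (respectively $X(\mathcal{A})\subseteq\mathcal{C}$) is preserved by the $H$-action, both $\mathcal{A}$-module actions and the braided commutator, using $H\rhd\mathcal{C}\subseteq\mathcal{C}$ and the ideal property, then verify that $\mathrm{pr}$ intertwines all structures and identify the kernel by definition. The only cosmetic difference is that the paper writes out the braided-commutator compatibility of $\mathrm{pr}$ as an explicit chain of equalities where you invoke equivariance of $\mathrm{pr}$ more compactly, but the underlying argument is identical.
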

\begin{proof}
According to Lemma~\ref{lemma08}
$\mathfrak{X}^1_\mathcal{R}(\mathcal{A})$ is an
$H$-equivariant braided symmetric $\mathcal{A}$-bimodule and a
braided Lie algebra with respect to the braided commutator. So
it is sufficient to prove that the module actions and the braided commutator
are closed in $\mathfrak{X}^1_t(\mathcal{A})$. Let $a,b\in\mathcal{A}$,
$X,Y\in\mathfrak{X}^1_t(\mathcal{A})$ and $\xi\in H$. Then
$(a\cdot X)(\mathcal{C})=a\cdot X(\mathcal{C})\subseteq\mathcal{C}$,
\begin{align*}
    (X\cdot a)(\mathcal{C})
    =X(\mathcal{R}_1^{-1}\rhd\mathcal{C})\cdot(\mathcal{R}_2^{-1}\rhd a)
    \subseteq\mathcal{C}
\end{align*}
and 
$
(\xi\rhd X)(\mathcal{C})
=\xi_{(1)}\rhd(X(S(\xi_{(2)})\rhd\mathcal{C}))
\subseteq\mathcal{C},
$
while
\begin{align*}
    [X,Y]_\mathcal{R}(\mathcal{C})
    =X(Y(\mathcal{C}))
    -(\mathcal{R}_1^{-1}\rhd Y)(\mathcal{R}_2^{-1}\rhd X)(\mathcal{C})
    \subseteq\mathcal{C}
\end{align*}
since $\mathcal{C}$ is an ideal and $H\rhd\mathcal{C}\subseteq\mathcal{C}$. This proves that
$\mathfrak{X}^1_t(\mathcal{A})$ is an $H$-equivariant braided symmetric
$\mathcal{A}$-sub-bimodule and a braided Lie subalgebra of
$\mathfrak{X}^1_\mathcal{R}(\mathcal{A})$.
If $X\in\mathfrak{X}^1_0(\mathcal{A})$ and
$Y\in\mathfrak{X}^1_t(\mathcal{A})$ we obtain
$(a\cdot X)(\mathcal{A})=a\cdot X(\mathcal{A})\subseteq\mathcal{C}$,
$$
(X\cdot a)(\mathcal{A})
=X(\mathcal{R}_1^{-1}\rhd\mathcal{A})\cdot(\mathcal{R}_2^{-1}\rhd a)
\subseteq\mathcal{C},
$$
$(\xi\rhd X)(\mathcal{A})=\xi_{(1)}\rhd X(S(\xi_{(2)})\rhd\mathcal{A})
\subseteq\mathcal{C}$
and
$$
[X,Y]_\mathcal{R}(\mathcal{A})
=X(Y(\mathcal{A}))
-(\mathcal{R}_1^{-1}\rhd Y)((\mathcal{R}_2^{-1}\rhd X)(\mathcal{A}))
\subseteq\mathcal{C}
$$
for all $a\in\mathcal{A}$ and $\xi\in H$, since $\mathcal{C}$ is an
ideal and $H\rhd\mathcal{C}\subseteq\mathcal{C}$.
The projection respects the $H$-module action by definition, while
it respects the left and right $\mathcal{A}$-module actions since
\begin{align*}
    \mathrm{pr}(a\cdot X)(\mathrm{pr}(b))
    =&\mathrm{pr}(a\cdot X(b))\\
    =&\mathrm{pr}(a)\cdot\mathrm{pr}(X(b))\\
    =&(\mathrm{pr}(a)\cdot\mathrm{pr}(X))(\mathrm{pr}(b))
\end{align*}
and
\begin{align*}
    \mathrm{pr}(X\cdot a)(\mathrm{pr}(b))
    =&\mathrm{pr}((X\cdot a)(b))\\
    =&\mathrm{pr}(X(\mathcal{R}_1^{-1}\rhd b)
    \cdot(\mathcal{R}_2^{-1}\rhd a))\\
    =&\mathrm{pr}(X(\mathcal{R}_1^{-1}\rhd b))
    \cdot\mathrm{pr}((\mathcal{R}_2^{-1}\rhd a))\\
    =&\mathrm{pr}(X)\mathrm{pr}((\mathcal{R}_1^{-1}\rhd b))
    \cdot\mathrm{pr}((\mathcal{R}_2^{-1}\rhd a))\\
    =&\mathrm{pr}(X)(\mathcal{R}_1^{-1}\rhd\mathrm{pr}(b))
    \cdot(\mathcal{R}_2^{-1}\rhd\mathrm{pr}(a))\\
    =&(\mathrm{pr}(X)\cdot\mathrm{pr}(a))(\mathrm{pr}(b))
\end{align*}
hold. Finally,
\begin{align*}
    \mathrm{pr}([X,Y]_\mathcal{R})(\mathrm{pr}(a))
    =&\mathrm{pr}([X,Y]_\mathcal{R}(a))\\
    =&\mathrm{pr}(X(Y(a))-(\mathcal{R}_1^{-1}\rhd Y)
    (\mathcal{R}_2^{-1}\rhd X)(a))\\
    =&\mathrm{pr}(X)(\mathrm{pr}(Y(a)))
    -\mathrm{pr}(\mathcal{R}_1^{-1}\rhd Y)
    (\mathrm{pr}((\mathcal{R}_2^{-1}\rhd X)(a)))\\
    =&\mathrm{pr}(X)(\mathrm{pr}(Y)(\mathrm{pr}(a)))
    -(\mathcal{R}_1^{-1}\rhd\mathrm{pr}(Y))(
    (\mathcal{R}_2^{-1}\rhd\mathrm{pr}(X))(\mathrm{pr}(a)))\\
    =&[\mathrm{pr}(X),\mathrm{pr}(Y)]_\mathcal{R}(\mathrm{pr}(a))
\end{align*}
proves that $\mathrm{pr}$ is a homomorphism of braided Lie algebras.
By definition $\mathfrak{X}^1_0(\mathcal{A})$ is the kernel of
the projection.
\end{proof}
In the light of Lemma~\ref{lemma12} we are able to reformulate
Definition~\ref{def05} in the following way.
\begin{definition}
Let $\mathcal{C}\subseteq\mathcal{A}$ be an algebra ideal. Then
$\mathcal{A}/\mathcal{C}$ is a submanifold algebra if
there is a short exact sequence
$$
0\rightarrow\mathfrak{X}^1_0(\mathcal{A})\rightarrow
\mathfrak{X}^1_t(\mathcal{A})\xrightarrow{\mathrm{pr}}
\mathfrak{X}^1_\mathcal{R}(\mathcal{A}/\mathcal{C})
\rightarrow 0
$$
of $H$-equivariant braided symmetric $\mathcal{A}$-bimodules and
braided Lie algebras.
\end{definition}
In particular, the braided exterior algebra 
$$
\mathfrak{X}^\bullet_t(\mathcal{A})
=\Lambda^\bullet\mathfrak{X}^1_t(\mathcal{A})
=\mathcal{A}\oplus\mathfrak{X}^1_t(\mathcal{A})
\oplus(\mathfrak{X}^1_t(\mathcal{A})\wedge_\mathcal{R}
\mathfrak{X}^1_t(\mathcal{A}))\oplus\cdots
$$
of $\mathfrak{X}^1_t(\mathcal{A})$ is an $H$-equivariant braided symmetric
$\mathcal{A}$-sub-bimodule of $\mathfrak{X}^\bullet_\mathcal{R}(\mathcal{A})$ according to
Proposition~\ref{prop14}.
If we extend $\mathrm{pr}\colon\mathfrak{X}^1_t(\mathcal{A})
\rightarrow\mathfrak{X}^1_\mathcal{R}(\mathcal{A}/\mathcal{C})$ as a
homomorphism of the braided wedge product, i.e.
$$
\mathrm{pr}(X\wedge_\mathcal{R}Y)
=\mathrm{pr}(X)\wedge_\mathcal{R}\mathrm{pr}(Y)
$$
for all $X,Y\in\mathfrak{X}^\bullet_t(\mathcal{A})$, we obtain the
following result.
\begin{proposition}\label{prop16}
There is a short exact sequence
$$
0\rightarrow\ker\mathrm{pr}\rightarrow
\mathfrak{X}^\bullet_t(\mathcal{A})\xrightarrow{\mathrm{pr}}
\mathfrak{X}^\bullet_\mathcal{R}(\mathcal{A}/\mathcal{C})
\rightarrow 0
$$
of braided Gerstenhaber algebras. In particular,
$$
\mathrm{pr}(\llbracket X,Y\rrbracket_\mathcal{R})
=\llbracket\mathrm{pr}(X),\mathrm{pr}(Y)\rrbracket_\mathcal{R}
$$
for all $X,Y\in\mathfrak{X}^\bullet_t(\mathcal{A})$.
\end{proposition}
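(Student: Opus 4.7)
The plan is to bootstrap from the degree one statement already established in Lemma~\ref{lemma12} to all higher degrees by exploiting the universal characterizations of the braided wedge product and the braided Schouten--Nijenhuis bracket. First I would verify that $\mathrm{pr}\colon\mathfrak{X}^\bullet_t(\mathcal{A})\to\mathfrak{X}^\bullet_\mathcal{R}(\mathcal{A}/\mathcal{C})$ is well defined as a homomorphism of braided Gra\ss{}mann algebras. Since $\mathrm{pr}$ is already a homomorphism of $H$-equivariant braided symmetric $\mathcal{A}$-bimodules in degree $1$, it extends to the tensor algebra over $\mathcal{A}$, and by Lemma~\ref{lemma14} and Proposition~\ref{prop14} it descends to the quotient by the graded braided commutativity ideal, giving a graded $H$-equivariant $\mathcal{A}$-bilinear algebra homomorphism with respect to $\wedge_\mathcal{R}$. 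Surjectivity in higher degrees then follows immediately from surjectivity in degree~$1$ (which is the defining property of a submanifold ideal) together with the fact that $\mathfrak{X}^\bullet_\mathcal{R}(\mathcal{A}/\mathcal{C})$ is generated as an algebra by $\mathcal{A}/\mathcal{C}$ and $\mathfrak{X}^1_\mathcal{R}(\mathcal{A}/\mathcal{C})$.

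The core of the proposition is the compatibility $\mathrm{pr}(\llbracket X,Y\rrbracket_\mathcal{R})=\llbracket\mathrm{pr}(X),\mathrm{pr}(Y)\rrbracket_\mathcal{R}$. My plan here is a double induction on the degrees $k$ and $\ell$ of $X\in\mathfrak{X}^k_t(\mathcal{A})$ and $Y\in\mathfrak{X}^\ell_t(\mathcal{A})$, using the uniqueness statement of the braided Schouten--Nijenhuis bracket: it is the unique braided Gerstenhaber bracket extending $\llbracket X,a\rrbracket_\mathcal{R}=X(a)$ and $\llbracket X,Y\rrbracket_\mathcal{R}=[X,Y]_\mathcal{R}$ on $\mathcal{A}$ and $\mathfrak{X}^1_\mathcal{R}(\cdot)$. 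For the base cases, $\mathrm{pr}(\llbracket a,b\rrbracket_\mathcal{R})=0=\llbracket\mathrm{pr}(a),\mathrm{pr}(b)\rrbracket_\mathcal{R}$ is trivial, $\mathrm{pr}(\llbracket X,a\rrbracket_\mathcal{R})=\mathrm{pr}(X(a))=\mathrm{pr}(X)(\mathrm{pr}(a))=\llbracket\mathrm{pr}(X),\mathrm{pr}(a)\rrbracket_\mathcal{R}$ by the definition of $\mathrm{pr}(X)$ in~\eqref{eq27}, and the case $X,Y\in\mathfrak{X}^1_t(\mathcal{A})$ is precisely the braided Lie algebra homomorphism property established in Lemma~\ref{lemma12}.

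The inductive step uses the graded braided Leibniz rule. Writing $Y\wedge_\mathcal{R}Z$ in place of $Y$ for $Y,Z$ of lower degree, I would compute
\begin{align*}
\mathrm{pr}(\llbracket X,Y\wedge_\mathcal{R}Z\rrbracket_\mathcal{R})
=&\,\mathrm{pr}(\llbracket X,Y\rrbracket_\mathcal{R}\wedge_\mathcal{R}Z)
+(-1)^{(k-1)\ell}\mathrm{pr}((\mathcal{R}_1^{-1}\rhd Y)\wedge_\mathcal{R}\llbracket\mathcal{R}_2^{-1}\rhd X,Z\rrbracket_\mathcal{R})\\
=&\,\llbracket\mathrm{pr}(X),\mathrm{pr}(Y)\rrbracket_\mathcal{R}\wedge_\mathcal{R}\mathrm{pr}(Z)
+(-1)^{(k-1)\ell}(\mathcal{R}_1^{-1}\rhd\mathrm{pr}(Y))\wedge_\mathcal{R}\llbracket\mathcal{R}_2^{-1}\rhd\mathrm{pr}(X),\mathrm{pr}(Z)\rrbracket_\mathcal{R}\\
=&\,\llbracket\mathrm{pr}(X),\mathrm{pr}(Y)\wedge_\mathcal{R}\mathrm{pr}(Z)\rrbracket_\mathcal{R}
=\llbracket\mathrm{pr}(X),\mathrm{pr}(Y\wedge_\mathcal{R}Z)\rrbracket_\mathcal{R},
\end{align*}
using that $\mathrm{pr}$ is an $H$-equivariant homomorphism of the braided wedge product and the inductive hypothesis for the lower degree entries, and an analogous step for the left argument. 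The main technical obstacle is keeping the braidings consistent through these manipulations, but since $\mathrm{pr}$ intertwines the $H$-actions this amounts to a routine bookkeeping identical in shape to the derivation of the graded braided Leibniz rule itself. Finally, once the bracket is intertwined, the kernel $\ker\mathrm{pr}$ is automatically closed under both $\wedge_\mathcal{R}$ and $\llbracket\cdot,\cdot\rrbracket_\mathcal{R}$, hence a braided Gerstenhaber ideal, completing the short exact sequence.
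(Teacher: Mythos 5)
Your proposal is correct and follows essentially the same route as the paper: the paper's (very terse) proof likewise reduces everything to Lemma~\ref{lemma12} via the fact that braided multivector fields are generated in degrees zero and one, and verifies the bracket compatibility by appealing to the characterization of $\llbracket\cdot,\cdot\rrbracket_\mathcal{R}$ on factorizing elements, which is exactly the graded braided Leibniz-rule induction you spell out. Your write-up just makes explicit what the paper leaves implicit, including the (minor, and also implicit in the paper) point that the inductive step simultaneously shows $\mathfrak{X}^\bullet_t(\mathcal{A})$ is closed under the braided Schouten--Nijenhuis bracket so that $\mathrm{pr}$ of the bracket is even defined.
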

\begin{proof}
This is a consequence of Lemma~\ref{lemma12} since the braided multivector
fields are generated in degree zero and one. To prove that the projection
respects the braided Schouten-Nijenhuis bracket one recalls the expression
of $\llbracket\cdot,\cdot\rrbracket_\mathcal{R}$ on factorizing elements
(see Section~\ref{Sec3.2}).
\end{proof}
For braided differential forms $\omega=a_0\cdot\mathrm{d}a_1\wedge_\mathcal{R}
\cdots\wedge_\mathcal{R}\mathrm{d}a_n
\in\Omega^\bullet_\mathcal{R}(\mathcal{A})$ one defines
$$
\mathrm{pr}(\omega)
=\mathrm{pr}(a_0)\mathrm{d}(\mathrm{pr}(a_1))\wedge_\mathcal{R}\cdots
\wedge_\mathcal{R}\mathrm{d}(\mathrm{pr}(a_n))
$$
and as for braided multivector fields we define
a left $H$-action on $\Omega^\bullet_\mathcal{R}(\mathcal{A}/\mathcal{C})$
by $\xi\rhd\mathrm{pr}(\omega)=\mathrm{pr}(\xi\rhd\omega)$
for all $\xi\in H$ and $\omega\in\Omega^\bullet_\mathcal{R}(\mathcal{A})$.
\begin{proposition}\label{prop18}
There is a short exact sequence of differential graded algebras
$$
0\rightarrow\mathrm{ker}~\mathrm{pr}
\rightarrow\Omega^\bullet_\mathcal{R}(\mathcal{A})
\xrightarrow{\mathrm{pr}}\Omega^\bullet_\mathcal{R}(\mathcal{A}/\mathcal{C})
\rightarrow 0,
$$
where $\mathrm{ker}~\mathrm{pr}
=\bigoplus_{k\geq 0}\mathrm{ker}~\mathrm{pr}^k$ is defined recursively
by $\mathrm{ker}~\mathrm{pr}^0=\mathcal{C}$ and
$$
\mathrm{ker}~\mathrm{pr}^{k+1}
=\{\omega\in\Omega^{k+1}_\mathcal{R}(\mathcal{A})~|~
\mathrm{i}^\mathcal{R}_X\omega\in\mathrm{ker}~\mathrm{pr}^k
\text{ for all }X\in\mathfrak{X}^1_t(\mathcal{A})\}
$$
for $k\geq 0$.
\end{proposition}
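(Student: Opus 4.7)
The plan is to establish the result in three stages. First I would verify that $\mathrm{pr}$ is well-defined as a surjective homomorphism of differential graded algebras. Since $\Omega^\bullet_\mathcal{R}(\mathcal{A})$ is the smallest differential graded subalgebra of $\underline{\Omega}^\bullet_\mathcal{R}(\mathcal{A})$ containing $\mathcal{A}$, every element is a finite sum of expressions $a_0\,\mathrm{d}a_1\wedge_\mathcal{R}\cdots\wedge_\mathcal{R}\mathrm{d}a_n$, so the defining formula on such expressions determines $\mathrm{pr}$ uniquely up to checking well-definedness. The latter follows because $\mathrm{pr}\colon\mathcal{A}\to\mathcal{A}/\mathcal{C}$ is an $H$-equivariant algebra homomorphism, so the braided graded commutativity and bimodule relations of $\wedge_\mathcal{R}$ are preserved, as are the Leibniz rule and the differential relations. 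Surjectivity is immediate: $\Omega^\bullet_\mathcal{R}(\mathcal{A}/\mathcal{C})$ is itself generated by $\mathcal{A}/\mathcal{C}$ and $\mathrm{d}(\mathcal{A}/\mathcal{C})$, both of which lie in the image.

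Second, the key technical lemma I would prove is the intertwining identity
\begin{equation*}
\mathrm{pr}(\mathrm{i}^\mathcal{R}_X\omega)=\mathrm{i}^\mathcal{R}_{\mathrm{pr}(X)}\mathrm{pr}(\omega)
\end{equation*}
for all $X\in\mathfrak{X}^1_t(\mathcal{A})$ and $\omega\in\Omega^\bullet_\mathcal{R}(\mathcal{A})$. Proceeding by induction on the degree of $\omega$, the base case reduces to $\mathrm{pr}(X(a))=\mathrm{pr}(X)(\mathrm{pr}(a))$, which is exactly the definition of $\mathrm{pr}(X)$ as a braided derivation on $\mathcal{A}/\mathcal{C}$. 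The inductive step combines the graded braided derivation property of $\mathrm{i}^\mathcal{R}_X$ with respect to $\wedge_\mathcal{R}$ with the compatibility of $\mathrm{pr}$ with the wedge product and the $H$-action (both of which have been installed in the first stage).

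Third, I would prove the kernel identification $\ker\mathrm{pr}\cap\Omega^k_\mathcal{R}(\mathcal{A})=\ker\mathrm{pr}^k$ by induction on $k$. The degree zero case is definitional. For the inductive step, both inclusions pass through the intertwining identity. If $\omega\in\ker\mathrm{pr}^{k+1}$ then $\mathrm{i}^\mathcal{R}_X\omega\in\ker\mathrm{pr}^k$ for all $X\in\mathfrak{X}^1_t(\mathcal{A})$, hence $\mathrm{i}^\mathcal{R}_{\mathrm{pr}(X)}\mathrm{pr}(\omega)=0$ by the inductive hypothesis. Because $\mathcal{A}/\mathcal{C}$ is a submanifold algebra, every $Y\in\mathfrak{X}^1_\mathcal{R}(\mathcal{A}/\mathcal{C})$ arises as $\mathrm{pr}(X)$ for some such $X$, so all insertions kill $\mathrm{pr}(\omega)$; invoking $\Omega^\bullet_\mathcal{R}(\mathcal{A}/\mathcal{C})\subseteq\underline{\Omega}^\bullet_\mathcal{R}(\mathcal{A}/\mathcal{C})$ and iterating the insertion argument degree by degree forces $\mathrm{pr}(\omega)=0$. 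Conversely, if $\mathrm{pr}(\omega)=0$ then every insertion projects to zero, so $\mathrm{i}^\mathcal{R}_X\omega\in\ker\mathrm{pr}\cap\Omega^k_\mathcal{R}(\mathcal{A})=\ker\mathrm{pr}^k$ by induction and thus $\omega\in\ker\mathrm{pr}^{k+1}$.

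The main obstacle I anticipate is the non-degeneracy step in the kernel identification, namely justifying that vanishing of all single insertions $\mathrm{i}^\mathcal{R}_Y\mathrm{pr}(\omega)=0$ forces $\mathrm{pr}(\omega)=0$ rather than merely vanishing on factorizing tuples. The clean route is to use the embedding $\Omega^k_\mathcal{R}(\mathcal{A}/\mathcal{C})\hookrightarrow\underline{\Omega}^k_\mathcal{R}(\mathcal{A}/\mathcal{C})$ from Section~\ref{Sec3.3}, where elements are by construction right $\mathcal{A}/\mathcal{C}$-linear multilinear maps on braided derivations, so iterated insertion is tautologically faithful; care is then required to propagate this through the inductive base case $\ker\mathrm{pr}^0=\mathcal{C}$, and to ensure that the natural $H$-action and braided bimodule structure on the image of $\mathrm{pr}$ agree with the ones on $\mathfrak{X}^1_\mathcal{R}(\mathcal{A}/\mathcal{C})$ and $\Omega^1_\mathcal{R}(\mathcal{A}/\mathcal{C})$ as defined intrinsically.
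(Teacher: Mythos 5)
Your proposal is correct and follows the same route as the paper: surjectivity comes from the generator description of braided differential forms as sums of $a_0\,\mathrm{d}a_1\wedge_\mathcal{R}\cdots\wedge_\mathcal{R}\mathrm{d}a_n$ together with surjectivity of $\mathrm{pr}\colon\mathcal{A}\to\mathcal{A}/\mathcal{C}$, and compatibility with $\mathrm{d}$ holds by construction. The paper's own proof is considerably terser and does not spell out the identification of the recursively defined $\ker\mathrm{pr}^k$ with the actual kernel of the projection, whereas your third stage supplies exactly that argument via the intertwining identity $\mathrm{pr}(\mathrm{i}^\mathcal{R}_X\omega)=\mathrm{i}^\mathcal{R}_{\mathrm{pr}(X)}\mathrm{pr}(\omega)$ (which the paper only establishes later, in Theorem~\ref{thm03}), and your appeal to the embedding $\Omega^\bullet_\mathcal{R}(\mathcal{A}/\mathcal{C})\subseteq\underline{\Omega}^\bullet_\mathcal{R}(\mathcal{A}/\mathcal{C})$ combined with surjectivity of $\mathrm{pr}$ on tangent vector fields is the correct way to close the faithfulness step.
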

\begin{proof}
Since every braided differential form can be written as a finite sum of elements
$a_0\cdot\mathrm{d}a_1\wedge_\mathcal{R}\cdots\wedge_\mathcal{R}\mathrm{d}a_n$,
where $a_0,\ldots,a_n$ and the projection $\mathrm{pr}\colon
\mathcal{A}\rightarrow\mathcal{A}/\mathcal{C}$ is surjective, it follows that
the above sequence is exact. By definition the projection commutes
with the de Rham differential.
\end{proof}
There are braided Cartan calculi on $\mathcal{A}$ and
$\mathcal{A}/\mathcal{C}$ with respect to $\mathcal{R}$
according to Theorem~\ref{ThmBraidedCC}. In Proposition~\ref{prop16} and
Proposition~\ref{prop18} we proved that the projection respects the
algebraic structure of $\mathfrak{X}_\mathcal{R}^\bullet(\mathcal{A})$
and $\Omega^\bullet_\mathcal{R}(\mathcal{A})$. As a natural question we
ask if also the geometric data of the braided Cartan calculus is
intertwined by the projection. A positive answer is given in the following
theorem.
\begin{theorem}\label{thm03}
The braided Cartan calculus on $\mathcal{A}/\mathcal{C}$ is the projection
of the braided Cartan calculus on $\mathcal{A}$. Namely,
\begin{align*}
    \mathscr{L}^\mathcal{R}_{\mathrm{pr}(X)}\mathrm{pr}(\omega)
    =&\mathrm{pr}(\mathscr{L}^\mathcal{R}_X\omega),\\
    \mathrm{i}^\mathcal{R}_{\mathrm{pr}(X)}\mathrm{pr}(\omega)
    =&\mathrm{pr}(\mathrm{i}^\mathcal{R}_X\omega),\\
    \mathrm{d}(\mathrm{pr}(\omega))
    =&\mathrm{pr}(\mathrm{d}\omega)
\end{align*}
hold for $X\in\mathfrak{X}^\bullet_t(\mathcal{A})$ and
$\omega\in\Omega^\bullet_\mathcal{R}(\mathcal{A})$.
\end{theorem}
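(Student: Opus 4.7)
The plan is to prove the three identities in the order $\mathrm{d}$, $\mathrm{i}^\mathcal{R}$, $\mathscr{L}^\mathcal{R}$, since the last is the graded braided commutator of the first two via the Cartan magic formula $\mathscr{L}^\mathcal{R}_X=[\mathrm{i}^\mathcal{R}_X,\mathrm{d}]_\mathcal{R}$ from Theorem~\ref{ThmBraidedCC}. The compatibility with $\mathrm{d}$ requires no new argument at all: it is already contained in Proposition~\ref{prop18}, where the short exact sequence was asserted to be one of differential graded algebras. So I would invoke that result and move on.

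For the insertion, I would proceed by a double induction, first on the degree of $\omega\in\Omega^\bullet_\mathcal{R}(\mathcal{A})$ and then on the degree of $X\in\mathfrak{X}^\bullet_t(\mathcal{A})$. The base cases are easy: if $\omega=a\in\mathcal{A}$ and $X\in\mathfrak{X}^k_t(\mathcal{A})$ with $k>0$, both sides vanish; if $X\in\mathfrak{X}^1_t(\mathcal{A})$ and $\omega=\mathrm{d}a$, then $\mathrm{pr}(\mathrm{i}^\mathcal{R}_X\mathrm{d}a)=\mathrm{pr}(X(a))=\mathrm{pr}(X)(\mathrm{pr}(a))=\mathrm{i}^\mathcal{R}_{\mathrm{pr}(X)}\mathrm{d}(\mathrm{pr}(a))=\mathrm{i}^\mathcal{R}_{\mathrm{pr}(X)}\mathrm{pr}(\mathrm{d}a)$, where the crucial middle equality uses the very definition (\ref{eq27}) of $\mathrm{pr}$ on braided tangent vector fields, and the last one uses Proposition~\ref{prop18}. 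For the inductive step in $\omega$, I would use that $\mathrm{i}^\mathcal{R}_X$ is a graded braided derivation of $\wedge_\mathcal{R}$ for $X\in\mathfrak{X}^1_t(\mathcal{A})$, together with the fact that $\mathrm{pr}$ is a graded algebra homomorphism (Propositions~\ref{prop16} and~\ref{prop18}) and $H$-equivariant by construction, so the braidings commute through $\mathrm{pr}$. Since $\Omega^\bullet_\mathcal{R}(\mathcal{A})$ is generated in degrees $0$ and $1$ by $\mathcal{A}$ and $\mathrm{d}\mathcal{A}$, this concludes the induction on the form-degree. The induction on $X$ is then immediate from $\mathrm{i}^\mathcal{R}_{X\wedge_\mathcal{R}Y}=\mathrm{i}^\mathcal{R}_X\mathrm{i}^\mathcal{R}_Y$ and $\mathrm{pr}(X\wedge_\mathcal{R}Y)=\mathrm{pr}(X)\wedge_\mathcal{R}\mathrm{pr}(Y)$.

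Having established the $\mathrm{d}$- and $\mathrm{i}^\mathcal{R}$-formulas, the $\mathscr{L}^\mathcal{R}$-formula follows by a purely formal calculation: for $X\in\mathfrak{X}^k_t(\mathcal{A})$,
\begin{align*}
\mathscr{L}^\mathcal{R}_{\mathrm{pr}(X)}\mathrm{pr}(\omega)
&=\mathrm{i}^\mathcal{R}_{\mathrm{pr}(X)}\mathrm{d}\,\mathrm{pr}(\omega)
-(-1)^k\mathrm{d}\,\mathrm{i}^\mathcal{R}_{\mathrm{pr}(X)}\mathrm{pr}(\omega)\\
&=\mathrm{pr}(\mathrm{i}^\mathcal{R}_X\mathrm{d}\omega)
-(-1)^k\mathrm{pr}(\mathrm{d}\,\mathrm{i}^\mathcal{R}_X\omega)
=\mathrm{pr}(\mathscr{L}^\mathcal{R}_X\omega),
\end{align*}
where the first step uses the Cartan magic formula and the $H$-equivariance of $\mathrm{d}$ (so no $\mathcal{R}$ appears explicitly), and the second applies the two previously-established identities plus linearity of $\mathrm{pr}$.

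The main potential obstacle is the inductive step for the insertion, because the graded braided Leibniz rule involves the universal $\mathcal{R}$-matrix acting on both $\omega$ and $X$, and one must be sure that the projection $\mathrm{pr}$ genuinely commutes with these braidings on each factor. This is however not really a difficulty: $\mathrm{pr}$ was defined on $\mathfrak{X}^1_t(\mathcal{A})$ (by Lemma~\ref{lemma12}) and on $\Omega^\bullet_\mathcal{R}(\mathcal{A})$ (by construction before Proposition~\ref{prop18}) to be $H$-equivariant, so $\mathrm{pr}(\mathcal{R}_1^{-1}\rhd\omega)=\mathcal{R}_1^{-1}\rhd\mathrm{pr}(\omega)$ and likewise for $X$, and the braiding passes through automatically. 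Once this observation is in place the calculations reduce to bookkeeping.
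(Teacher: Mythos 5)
Your proposal is correct and follows essentially the same route as the paper's proof: the $\mathrm{d}$-compatibility is taken from Proposition~\ref{prop18}, the insertion identity is verified on $\omega=a\,\mathrm{d}b$ with $X\in\mathfrak{X}^1_t(\mathcal{A})$ using the definition of $\mathrm{pr}$ on tangent vector fields and then extended via the graded braided derivation property and $\mathrm{i}^\mathcal{R}_{X\wedge_\mathcal{R}Y}=\mathrm{i}^\mathcal{R}_X\mathrm{i}^\mathcal{R}_Y$, and the Lie-derivative identity follows formally from the Cartan magic formula. Your explicit remark that the $H$-equivariance of $\mathrm{pr}$ lets the braidings pass through is exactly the point the paper uses implicitly in its computation.
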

\begin{proof}
By definition, the projection respects the de Rham differential.
In a next step we prove that this is also the case for the insertion. Let
$X\in\mathfrak{X}^1_t(\mathcal{A})$ and $\omega\in
\Omega^1_\mathcal{R}(\mathcal{A})$. We can assume without loss of generality that
$\omega=a\mathrm{d}b$ for $a,b\in\mathcal{A}$. Then
\begin{align*}
    \mathrm{i}^\mathcal{R}_{\mathrm{pr}(X)}\mathrm{pr}(\omega)
    =&\mathrm{i}^\mathcal{R}_{\mathrm{pr}(X)}
    (\mathrm{pr}(a)\mathrm{d}(\mathrm{pr}(b)))\\
    =&\mathrm{pr}(\mathcal{R}_1^{-1}\rhd a)
    \cdot\mathrm{pr}(\mathcal{R}_2^{-1}\rhd X)(\mathrm{pr}(b))\\
    =&\mathrm{pr}(\mathcal{R}_1^{-1}\rhd a)
    \cdot\mathrm{pr}((\mathcal{R}_2^{-1}\rhd X)(b))\\
    =&\mathrm{pr}((\mathcal{R}_1^{-1}\rhd a)
    \cdot(\mathcal{R}_2^{-1}\rhd X)(b))\\
    =&\mathrm{pr}(\mathrm{i}^\mathcal{R}_X\omega)
\end{align*}
holds. Similarly one proves that the insertion of a braided vector field
into a higher order braided differential form is respected by the
projection and since $\mathrm{i}^\mathcal{R}_{X\wedge_\mathcal{R}Y}
=\mathrm{i}^\mathcal{R}_X\mathrm{i}^\mathcal{R}_Y$ for another
$Y\in\mathfrak{X}^1_t(\mathcal{A})$ the same is true for the insertion
of braided multivector fields of any degree. Since the braided Lie derivative
is the graded braided commutator of the insertion and the de Rham differential
it is also respected by the projection. Furthermore, according to
Proposition~\ref{prop16} and Proposition~\ref{prop18},
$\mathrm{pr}\colon\mathfrak{X}^\bullet_t(\mathcal{A})\rightarrow
\mathfrak{X}^\bullet_\mathcal{R}(\mathcal{A}/\mathcal{C})$ and
$\mathrm{pr}\colon\Omega^\bullet_\mathcal{R}(\mathcal{A})\rightarrow
\Omega^\bullet_\mathcal{R}(\mathcal{A}/\mathcal{C})$ are surjections.
This concludes the proof of the theorem.
\end{proof}
As a special case we recover that the classical Cartan calculus on a
closed embedded submanifold $\iota\colon N\rightarrow M$ of a smooth manifold
$M$ is obtained by the pullback 
$\iota^*\colon\Omega^\bullet(M)\rightarrow\Omega^\bullet(N)$ of differential forms
and restriction $\iota^*\colon\mathfrak{X}_t^\bullet(M)\rightarrow
\mathfrak{X}^\bullet(N)$ of tangent multivector fields to $N$.
The latter is defined for any $X\in\mathfrak{X}^1_t(M)$ as the unique vector field
$X|_N\in\mathfrak{X}^1(N)$, which is $\iota$-related to $X$, i.e.
$T_q\iota(X|_N)_q=X_{\iota(q)}$ for all $q\in N$, where
$T_q\iota\colon T_qN\rightarrow T_{\iota(q)}M$ denotes the tangent map
(c.f. \cite{Lee2003}~Lem.~5.39).
In particular,
\begin{align*}
    \mathscr{L}_{\iota^*(X)}\iota^*(\omega)
    =\iota^*(\mathscr{L}_X\omega),~~
    \mathrm{i}_{\iota^*(X)}\iota^*(\omega)
    =\iota^*(\mathrm{i}_X\omega)
    \text{ and }
    \mathrm{d}\iota^*(\omega)
    =\iota^*(\mathrm{d}\omega)
\end{align*}
for all $X\in\mathfrak{X}^\bullet(M)$ and $\omega\in\Omega^\bullet(M)$.

\section{Equivariant Covariant Derivatives on Submanifold Algebras}\label{Sec5.2}

Fix a submanifold ideal $\mathcal{C}$ of $\mathcal{A}$ and a strongly
non-degenerate equivariant metric ${\bf g}$ on $\mathcal{A}$ in the following.
There is a direct sum decomposition
$$
\mathfrak{X}^1_\mathcal{R}(\mathcal{A})
=\mathfrak{X}^1_t(\mathcal{A})\oplus\mathfrak{X}^1_n(\mathcal{A}),
$$
where $\mathfrak{X}^1_n(\mathcal{A})$ are the so-called \textit{braided normal
vector fields} with respect to $\mathcal{C}$ and ${\bf g}$, defined to be the
subspace orthogonal to $\mathfrak{X}^1_t(\mathcal{A})$ with respect to ${\bf g}$.
Namely, $X\in\mathfrak{X}^1_\mathcal{R}(\mathcal{A})$ is an element
of $\mathfrak{X}^1_n(\mathcal{A})$ if and only if
${\bf g}(X,Y)=0$ for all $Y\in\mathfrak{X}^1_t(\mathcal{A})$.
Then we define
$\mathrm{pr}_{\bf g}\colon\mathfrak{X}^1_\mathcal{R}(\mathcal{A})
\rightarrow\mathfrak{X}^1_\mathcal{R}(\mathcal{A}/\mathcal{C})$ as the
$\Bbbk$-linear map
which first projects to the first subspace in the above decomposition and applies
$\mathrm{pr}\colon\mathfrak{X}^1_t(\mathcal{A})\rightarrow
\mathfrak{X}^1_\mathcal{R}(\mathcal{A}/\mathcal{C})$ afterwards.
In particular $\mathrm{pr}_{\bf g}(X)
=\mathrm{pr}(X)$ for all $X\in\mathfrak{X}^1_t(\mathcal{A})$. 
\begin{lemma}
The braided normal vector fields $\mathfrak{X}^1_n(\mathcal{A})$
are an $H$-equivariant braided symmetric $\mathcal{A}$-sub-bimodule
of $\mathfrak{X}^1_\mathcal{R}(\mathcal{A})$ and
$\mathrm{pr}_{\bf g}\colon\mathfrak{X}^1_\mathcal{R}(\mathcal{A})
\rightarrow\mathfrak{X}^1_\mathcal{R}(\mathcal{A}/\mathcal{C})$
is a homomorphism of $H$-equivariant braided symmetric
$\mathcal{A}$-bimodules.
\end{lemma}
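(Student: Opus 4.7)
The plan is to verify the two claims in the order in which they logically depend on each other: first close $\mathfrak{X}^1_n(\mathcal{A})$ under the three actions, and then use the resulting direct sum decomposition as a decomposition in the category of $H$-equivariant braided symmetric $\mathcal{A}$-bimodules to conclude that $\mathrm{pr}_{\bf g}$ is a morphism. Throughout I will test membership in $\mathfrak{X}^1_n(\mathcal{A})$ directly via the defining orthogonality ${\bf g}(\cdot,Y)=0$ for all $Y\in\mathfrak{X}^1_t(\mathcal{A})$, exploiting the linearity properties of ${\bf g}$ listed just before the Levi-Civita proposition (left $\mathcal{A}$-linearity in the first slot, braided right $\mathcal{A}$-linearity in the first slot, $H$-equivariance, braided symmetry) together with the fact established in Lemma~\ref{lemma12} that $\mathfrak{X}^1_t(\mathcal{A})$ is itself an $H$-equivariant braided symmetric $\mathcal{A}$-sub-bimodule.

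First I would dispose of the left $\mathcal{A}$-action: for $X\in\mathfrak{X}^1_n(\mathcal{A})$, $a\in\mathcal{A}$ and $Y\in\mathfrak{X}^1_t(\mathcal{A})$, left $\mathcal{A}$-linearity of ${\bf g}$ in the first slot gives ${\bf g}(a\cdot X,Y)=a\cdot {\bf g}(X,Y)=0$, hence $a\cdot X\in\mathfrak{X}^1_n(\mathcal{A})$. Next I would treat the $H$-action: $H$-equivariance of ${\bf g}$ yields ${\bf g}(\xi\rhd X,Y)=\xi_{(1)}\rhd{\bf g}(X,S(\xi_{(2)})\rhd Y)$, and since $\mathfrak{X}^1_t(\mathcal{A})$ is stable under the $H$-action the inner argument lies in $\mathfrak{X}^1_t(\mathcal{A})$, so the expression vanishes. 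The $H$-step must come before the right-action step, because I will feed it into the latter: using the braided symmetry $X\cdot a=(\mathcal{R}_1^{-1}\rhd a)\cdot(\mathcal{R}_2^{-1}\rhd X)$ together with left $\mathcal{A}$-linearity of ${\bf g}$, one rewrites ${\bf g}(X\cdot a,Y)=(\mathcal{R}_1^{-1}\rhd a)\cdot{\bf g}(\mathcal{R}_2^{-1}\rhd X,Y)$; by the $H$-step just proven $\mathcal{R}_2^{-1}\rhd X\in\mathfrak{X}^1_n(\mathcal{A})$, so this also vanishes. Braided symmetry of the resulting $\mathcal{A}$-bimodule structure on $\mathfrak{X}^1_n(\mathcal{A})$ is automatic because the identity $X\cdot a=(\mathcal{R}_1^{-1}\rhd a)\cdot(\mathcal{R}_2^{-1}\rhd X)$ holds inside the ambient bimodule $\mathfrak{X}^1_\mathcal{R}(\mathcal{A})$ and both sides now make sense inside the sub-bimodule.

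For the second claim, strong non-degeneracy of ${\bf g}$ provides the direct sum decomposition $\mathfrak{X}^1_\mathcal{R}(\mathcal{A})=\mathfrak{X}^1_t(\mathcal{A})\oplus\mathfrak{X}^1_n(\mathcal{A})$ already used to define $\mathrm{pr}_{\bf g}$; since by Lemma~\ref{lemma12} and the previous paragraph both summands are $H$-equivariant braided symmetric $\mathcal{A}$-sub-bimodules, this is a direct sum in the category of such bimodules. Consequently the projector $\pi_t\colon\mathfrak{X}^1_\mathcal{R}(\mathcal{A})\to\mathfrak{X}^1_t(\mathcal{A})$ along $\mathfrak{X}^1_n(\mathcal{A})$ is a morphism in that category, and composing with the morphism $\mathrm{pr}\colon\mathfrak{X}^1_t(\mathcal{A})\to\mathfrak{X}^1_\mathcal{R}(\mathcal{A}/\mathcal{C})$ from Lemma~\ref{lemma12} gives that $\mathrm{pr}_{\bf g}=\mathrm{pr}\circ\pi_t$ is left $\mathcal{A}$-linear, right $\mathcal{A}$-linear and $H$-equivariant, as required.

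The main obstacle is conceptual rather than computational: one must be careful that the right-action closure cannot be handled directly from right $\mathcal{A}$-linearity of ${\bf g}$ in the first slot (that property is only \emph{braided} right linear, not plainly right linear), and the natural workaround, converting the right action into a left action via braided commutativity, only succeeds after one knows that $\mathfrak{X}^1_n(\mathcal{A})$ is $H$-stable. Organising the three closure checks in the order left, $H$, right is therefore essential, and I would emphasise this ordering in the write-up.
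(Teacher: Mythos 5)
Your proposal is correct and follows essentially the same route as the paper: verify closure of $\mathfrak{X}^1_n(\mathcal{A})$ under the $H$-action and the two $\mathcal{A}$-actions using the linearity and equivariance of ${\bf g}$ together with the stability of $\mathfrak{X}^1_t(\mathcal{A})$, then deduce that $\mathrm{pr}_{\bf g}$ is a morphism from the fact that both summands of the decomposition are sub-bimodules and $\mathrm{pr}$ on tangent fields is already a morphism. The only cosmetic difference is in the right-action step: the paper invokes the braided right $\mathcal{A}$-linearity of ${\bf g}$ in its first slot to move the action onto $Y$ (which stays tangent), whereas you convert $X\cdot a$ into a left action on $\mathcal{R}_2^{-1}\rhd X$ via braided commutativity and then use $H$-stability of the normal fields — both are valid.
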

\begin{proof}
Corresponding the first claim,
it is sufficient to prove that the module actions
are closed in $\mathfrak{X}^1_n(\mathcal{A})$. Let $a,b\in\mathcal{A}$,
$\xi\in H$ and $X\in\mathfrak{X}^1_n(\mathcal{A})$. Then
\begin{align*}
    {\bf g}(a\cdot X\cdot b,Y)
    =a\cdot {\bf g}(X,\mathcal{R}_1^{-1}\rhd Y)\cdot(\mathcal{R}_2^{-1}\rhd b)
    =0
\end{align*}
and
\begin{align*}
    {\bf g}(\xi\rhd X,Y)
    =\xi_{(1)}\rhd {\bf g}(X,S(\xi_{(2)})\rhd Y)
    =0
\end{align*}
follow for all $Y\in\mathfrak{X}^1_t(\mathcal{A})$, since
$\mathfrak{X}^1_t(\mathcal{A})$ is an $H$-equivariant braided symmetric 
$\mathcal{A}$-sub-bimodule and ${\bf g}$ is $H$-equivariant as well as
left $\mathcal{A}$-linear and braided right $\mathcal{A}$-linear in
the first argument. In particular, this implies that $\mathrm{pr}_{\bf g}$
respects the $H$-action and $\mathcal{A}$-bimodule actions: the tangent and
normal parts are closed under the actions and $\mathrm{pr}\colon
\mathfrak{X}^1_t(\mathcal{A})\rightarrow\mathfrak{X}^1_\mathcal{R}(
\mathcal{A}/\mathcal{C})$ is a homomorphism of $H$-equivariant braided
symmetric $\mathcal{A}$-bimodules according to Proposition~\ref{prop16}.
This concludes the proof.
\end{proof}
Note that the definition of braided normal vector fields still makes sense
for non-degenerate braided metrics and that also in this case they form
an $H$-equivariant braided symmetric $\mathcal{A}$-sub-bimodule of
$\mathfrak{X}^1_\mathcal{R}(\mathcal{A})$. Nonetheless we stick to
strongly non-degenerate braided metrics in this section, which is motivated in
the following lines. Recall that the
\textit{vanishing vector fields} $\mathfrak{X}^1_0(\mathcal{A})$
were defined as the kernel of the projection
$\mathrm{pr}\colon
\mathfrak{X}^1_t(\mathcal{A})\rightarrow\mathfrak{X}^1_\mathcal{R}
(\mathcal{A}/\mathcal{C})$.
Consider the $\Bbbk$-linear map 
\begin{equation}
    {\bf g}_{\mathcal{A}/\mathcal{C}}\colon
    \mathfrak{X}^1_\mathcal{R}(\mathcal{A}/\mathcal{C})
    \otimes_{\mathcal{A}/\mathcal{C}}
    \mathfrak{X}^1_\mathcal{R}(\mathcal{A}/\mathcal{C})
    \rightarrow\mathcal{A}/\mathcal{C}
\end{equation}
which is determined by
$
{\bf g}_{\mathcal{A}/\mathcal{C}}(\mathrm{pr}_{\bf g}(X),\mathrm{pr}_{\bf g}(Y))
=\mathrm{pr}(g(X,Y))
$
for all $X,Y\in\mathfrak{X}^1_\mathcal{R}(\mathcal{A})$. It is well-defined
if the following property holds.
\begin{align*}
    \textbf{Axiom 1: }&
    \text{for every $X\in\mathfrak{X}^1_0(\mathcal{A})$ there are finitely many
    $c_i\in\mathcal{C}$ and $X^i\in\mathfrak{X}^1_t(\mathcal{A})$}\\
    &\text{such that
    $X=\sum_ic_i\cdot X^i$.}
\end{align*}
This is for example the case if $\mathfrak{X}^1_0(\mathcal{A})$ is finitely
generated as a $\mathcal{C}$-bimodule.
Note that every linear combination $c\cdot X$ with $c\in\mathcal{C}$ and
$X\in\mathfrak{X}^1_t(\mathcal{A})$ defines a vanishing vector field.
Moreover, since we assumed ${\bf g}$ to be strongly non-degenerate it follows
that ${\bf g}_{\mathcal{A}/\mathcal{C}}$ is strongly non-degenerate as well, if
the following property holds.
\begin{equation*}
    \textbf{Axiom 2: }
    \text{if $X\in\mathfrak{X}^1_t(\mathcal{A})$, then
    ${\bf g}(X,X)\in\mathcal{C}$ implies $X\in\mathfrak{X}^1_0(\mathcal{A})$.}
\end{equation*}
Note that non-degeneracy of ${\bf g}$ in combination with Axiom~2 is not sufficient
to prove non-degeneracy of ${\bf g}_{\mathcal{A}/\mathcal{C}}$. We further would
like to point out that in the case of closed embedded smooth submanifolds
both, Axiom~1 and Axiom~2 are satisfied and that any Riemannian metric
is strongly non-degenerate in our sense. Let us perform a rigorous proof 
of the above discussion.
\begin{lemma}\label{lemma15}
Let $\mathcal{C}$ be a submanifold ideal of $\mathcal{A}$ and ${\bf g}$ a
strongly non-degenerate equivariant metric on $\mathcal{A}$ such that
Axiom~1 and Axiom~2 are satisfied. Then ${\bf g}_{\mathcal{A}/\mathcal{C}}$ is a
well-defined strongly non-degenerate equivariant metric on 
$\mathcal{A}/\mathcal{C}$ and any
equivariant covariant derivative $\nabla^\mathcal{R}\colon
\mathfrak{X}^1_\mathcal{R}(\mathcal{A})
\otimes\mathfrak{X}^1_\mathcal{R}(\mathcal{A})
\rightarrow\mathfrak{X}^1_\mathcal{R}(\mathcal{A})$ on $\mathcal{A}$ projects to
an equivariant covariant derivative
$$
\nabla^{\mathcal{A}/\mathcal{C}}_{\mathrm{pr}(X)}\mathrm{pr}(Y)
=\mathrm{pr}_{\bf g}(\nabla^\mathcal{R}_XY),
$$
on $\mathcal{A}/\mathcal{C}$ with respect to $\mathcal{R}$,
where $X,Y\in\mathfrak{X}^1_t(\mathcal{A})$.
\end{lemma}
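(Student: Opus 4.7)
The plan is to carry out four steps in order: well-definedness of ${\bf g}_{\mathcal{A}/\mathcal{C}}$; transfer of the equivariant-metric axioms; strong non-degeneracy via Axiom~2; and finally the construction and verification of $\nabla^{\mathcal{A}/\mathcal{C}}$. Throughout I will exploit the decomposition $\mathfrak{X}^1_\mathcal{R}(\mathcal{A})=\mathfrak{X}^1_t(\mathcal{A})\oplus\mathfrak{X}^1_n(\mathcal{A})$ and the fact that $\mathrm{pr}_{\bf g}$ kills $\mathfrak{X}^1_n(\mathcal{A})$ and agrees with $\mathrm{pr}$ on $\mathfrak{X}^1_t(\mathcal{A})$.

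First, to see that ${\bf g}_{\mathcal{A}/\mathcal{C}}$ is well-defined, it suffices to check $\mathrm{pr}({\bf g}(X,Y))=0$ whenever either $X$ or $Y$ is normal, or whenever $X\in\mathfrak{X}^1_0(\mathcal{A})$ and $Y\in\mathfrak{X}^1_t(\mathcal{A})$ (and symmetrically). The normal case is immediate from the defining orthogonality of $\mathfrak{X}^1_n(\mathcal{A})$; for the vanishing case I invoke Axiom~1 to write $X=\sum_i c_i\cdot X^i$ with $c_i\in\mathcal{C}$ and $X^i\in\mathfrak{X}^1_t(\mathcal{A})$, so that left $\mathcal{A}$-linearity of ${\bf g}$ yields ${\bf g}(X,Y)=\sum_i c_i\cdot{\bf g}(X^i,Y)\in\mathcal{C}$, and symmetry is handled by braided right $\mathcal{A}$-linearity together with $H$-invariance of $\mathcal{C}$. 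The equivariant-metric axioms for ${\bf g}_{\mathcal{A}/\mathcal{C}}$ ($H$-equivariance, braided symmetry, left $\mathcal{A}/\mathcal{C}$-linearity in the first slot) then descend directly from those of ${\bf g}$ via Lemma~\ref{lemma12}, since $\mathrm{pr}$ and $\mathrm{pr}_{\bf g}$ intertwine the $H$-action and the bimodule actions.

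For strong non-degeneracy, suppose ${\bf g}_{\mathcal{A}/\mathcal{C}}(\mathrm{pr}_{\bf g}(Z),\mathrm{pr}_{\bf g}(Z))=0$ for $Z=X+N$ with $X\in\mathfrak{X}^1_t(\mathcal{A})$ and $N\in\mathfrak{X}^1_n(\mathcal{A})$. By the previous step this reduces to $\mathrm{pr}({\bf g}(X,X))=0$, i.e.\ ${\bf g}(X,X)\in\mathcal{C}$, and Axiom~2 then forces $X\in\mathfrak{X}^1_0(\mathcal{A})$, so that $\mathrm{pr}_{\bf g}(Z)=\mathrm{pr}(X)=0$.

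Finally, I define $\nabla^{\mathcal{A}/\mathcal{C}}_{\mathrm{pr}(X)}\mathrm{pr}(Y):=\mathrm{pr}_{\bf g}(\nabla^\mathcal{R}_XY)$ for tangent $X,Y$; the surjectivity $\mathrm{pr}\colon\mathfrak{X}^1_t(\mathcal{A})\twoheadrightarrow\mathfrak{X}^1_\mathcal{R}(\mathcal{A}/\mathcal{C})$ from the submanifold-algebra assumption ensures this specifies the map on all of $\mathfrak{X}^1_\mathcal{R}(\mathcal{A}/\mathcal{C})\otimes\mathfrak{X}^1_\mathcal{R}(\mathcal{A}/\mathcal{C})$. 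The main obstacle is well-definedness: given $X'-X,\,Y'-Y\in\mathfrak{X}^1_0(\mathcal{A})$, I apply Axiom~1 to write these differences as $\sum_i c_iX^i$ and $\sum_j d_jY^j$, and then expand $\nabla^\mathcal{R}_{X'}Y'-\nabla^\mathcal{R}_XY$ using left $\mathcal{A}$-linearity in the first slot and the braided Leibniz rule in the second into terms of the form $c_i\cdot\nabla^\mathcal{R}_{X^i}Y$, $(\mathscr{L}^\mathcal{R}_{X'}d_j)\cdot Y^j$ and $(\mathcal{R}_1^{-1}\rhd d_j)\cdot\nabla^\mathcal{R}_{\mathcal{R}_2^{-1}\rhd X'}Y^j$. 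The first and third sit in $\mathcal{C}\cdot\mathfrak{X}^1_\mathcal{R}(\mathcal{A})$ (using $H$-invariance of $\mathcal{C}$), and the middle one as well because $X'\in\mathfrak{X}^1_t(\mathcal{A})$ gives $X'(d_j)\in\mathcal{C}$. Since $\mathfrak{X}^1_n(\mathcal{A})$ is an $\mathcal{A}$-sub-bimodule, the normal component of each such term remains normal and is annihilated by $\mathrm{pr}_{\bf g}$, while its tangent component lies in $\mathfrak{X}^1_0(\mathcal{A})$ and is also annihilated. The equivariance, left $\mathcal{A}/\mathcal{C}$-linearity in the first slot and braided Leibniz rule in the second for $\nabla^{\mathcal{A}/\mathcal{C}}$ then follow from the corresponding properties of $\nabla^\mathcal{R}$ by applying $\mathrm{pr}_{\bf g}$ and using once more that $\mathrm{pr}$ intertwines the $H$-action, the bimodule actions and the braided Lie derivative (Theorem~\ref{thm03}).
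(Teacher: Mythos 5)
Your proof is correct and follows essentially the same route as the paper's: Axiom~1 handles well-definedness of both ${\bf g}_{\mathcal{A}/\mathcal{C}}$ and $\nabla^{\mathcal{A}/\mathcal{C}}$ by expressing vanishing vector fields as $\mathcal{C}$-linear combinations of tangent ones, Axiom~2 yields strong non-degeneracy via a tangent lift, and the remaining axioms descend because $\mathrm{pr}$ and $\mathrm{pr}_{\bf g}$ intertwine the $H$-action and the module structures. One small caution: ${\bf g}(X,Y)$ vanishes for $X$ normal only when the other argument is tangent (two normal fields may pair nontrivially), so the well-definedness of ${\bf g}_{\mathcal{A}/\mathcal{C}}$ should be — and in effect is, both in your argument and in the paper's — checked on tangent representatives, where only the $\mathfrak{X}^1_0(\mathcal{A})$ ambiguity arises.
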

\begin{proof}
As already claimed, Axiom~1 assures ${\bf g}_{\mathcal{A}/\mathcal{C}}$ to be
well-defined, since
\begin{align*}
    {\bf g}_{\mathcal{A}/\mathcal{C}}(\mathrm{pr}_{\bf g}(X),\mathrm{pr}_{\bf g}(Y))
    =&{\bf g}_{\mathcal{A}/\mathcal{C}}\bigg(
    \mathrm{pr}_{\bf g}\bigg(\sum_ic_i\cdot X^i\bigg),
    \mathrm{pr}_{\bf g}(Y)\bigg)
    =\mathrm{pr}\bigg({\bf g}\bigg(\sum_ic_i\cdot X^i,Y\bigg)\bigg)\\
    =&\mathrm{pr}\bigg(\underbrace{\sum_ic_i\cdot 
    {\bf g}(X^i,Y)}_{\in\mathcal{C}}\bigg)
    =0
\end{align*}
and similarly ${\bf g}_{\mathcal{A}/\mathcal{C}}
(\mathrm{pr}_{\bf g}(Y),\mathrm{pr}_{\bf g}(X))=0$
for all $X\in\mathfrak{X}^1_0(\mathcal{A})$ and
$Y\in\mathfrak{X}^1_\mathcal{R}(\mathcal{A})$. We prove that
${\bf g}_{\mathcal{A}/\mathcal{C}}$ has the correct linearity and symmetry
properties. Let $a\in\mathcal{A}$, $X,Y\in\mathfrak{X}^1_\mathcal{R}(\mathcal{A})$
and $\xi\in H$. Then
\begin{allowdisplaybreaks}
\begin{align*}
    {\bf g}_{\mathcal{A}/\mathcal{C}}(\mathrm{pr}(a)\cdot\mathrm{pr}_{\bf g}(X),
    \mathrm{pr}_{\bf g}(Y))
    =&{\bf g}_{\mathcal{A}/\mathcal{C}}(\mathrm{pr}_{\bf g}(a\cdot X),\mathrm{pr}_{\bf g}(Y))\\
    =&\mathrm{pr}({\bf g}(a\cdot X,Y))\\
    =&\mathrm{pr}(a\cdot {\bf g}(X,Y))\\
    =&\mathrm{pr}(a)\cdot\mathrm{pr}({\bf g}(X,Y))\\
    =&\mathrm{pr}(a)\cdot {\bf g}_{\mathcal{A}/\mathcal{C}}(\mathrm{pr}_{\bf g}(X),
    \mathrm{pr}_{\bf g}(Y)),
\end{align*}
\end{allowdisplaybreaks}
\begin{allowdisplaybreaks}
\begin{align*}
    {\bf g}_{\mathcal{A}/\mathcal{C}}(\mathrm{pr}_{\bf g}(X)\cdot\mathrm{pr}(a),
    \mathrm{pr}_{\bf g}(Y))
    =&{\bf g}_{\mathcal{A}/\mathcal{C}}(\mathrm{pr}_{\bf g}(X\cdot a),
    \mathrm{pr}_{\bf g}(Y))\\
    =&\mathrm{pr}({\bf g}(X\cdot a,Y))\\
    =&\mathrm{pr}({\bf g}(X,a\cdot Y))\\
    =&{\bf g}_{\mathcal{A}/\mathcal{C}}(\mathrm{pr}_{\bf g}(X),
    \mathrm{pr}_{\bf g}(a\cdot Y))\\
    =&{\bf g}_{\mathcal{A}/\mathcal{C}}(\mathrm{pr}_{\bf g}(X),
    \mathrm{pr}(a)\cdot\mathrm{pr}_{\bf g}(Y))
\end{align*}
\end{allowdisplaybreaks}
and
\begin{allowdisplaybreaks}
\begin{align*}
    \xi\rhd {\bf g}_{\mathcal{A}/\mathcal{C}}(\mathrm{pr}_{\bf g}(X),
    \mathrm{pr}_{\bf g}(Y))
    =&\xi\rhd\mathrm{pr}({\bf g}(X,Y))\\
    =&\mathrm{pr}(\xi\rhd {\bf g}(X,Y))\\
    =&\mathrm{pr}({\bf g}(\xi_{(1)}\rhd X,\xi_{(2)}\rhd Y))\\
    =&{\bf g}_{\mathcal{A}/\mathcal{C}}(\mathrm{pr}_{\bf g}(\xi_{(1)}\rhd X),
    \mathrm{pr}_{\bf g}(\xi_{(2)}\rhd Y))\\
    =&{\bf g}_{\mathcal{A}/\mathcal{C}}(\xi_{(1)}\rhd\mathrm{pr}_{\bf g}(X),
    \xi_{(2)}\rhd\mathrm{pr}_{\bf g}(Y))
\end{align*}
\end{allowdisplaybreaks}
hold, proving that ${\bf g}_{\mathcal{A}/\mathcal{C}}$ is a $\Bbbk$-linear map
$\mathfrak{X}^1_\mathcal{R}(\mathcal{A}/\mathcal{C})
\otimes_{\mathcal{A}/\mathcal{C}}
\mathfrak{X}^1_\mathcal{R}(\mathcal{A}/\mathcal{C})
\rightarrow\mathcal{A}/\mathcal{C}$. It is braided symmetric since
\begin{align*}
    {\bf g}_{\mathcal{A}/\mathcal{C}}(\mathrm{pr}_{\bf g}(Y),\mathrm{pr}_{\bf g}(X))
    =&\mathrm{pr}({\bf g}(Y,X))\\
    =&\mathrm{pr}({\bf g}(\mathcal{R}_1^{-1}\rhd X,\mathcal{R}_2^{-1}\rhd Y))\\
    =&{\bf g}_{\mathcal{A}/\mathcal{C}}(\mathrm{pr}_{\bf g}(\mathcal{R}_1^{-1}\rhd X),
    \mathrm{pr}_{\bf g}(\mathcal{R}_2^{-1}\rhd Y))\\
    =&{\bf g}_{\mathcal{A}/\mathcal{C}}(\mathcal{R}_1^{-1}\rhd\mathrm{pr}_{\bf g}(X),
    \mathcal{R}_2^{-1}\rhd\mathrm{pr}_{\bf g}(Y))
\end{align*}
for all $X,Y\in\mathfrak{X}^1_\mathcal{R}(\mathcal{A})$. Using Axiom~2
we prove that ${\bf g}_{\mathcal{A}/\mathcal{C}}$
is strongly non-degenerate. Let $X\in\mathfrak{X}^1_\mathcal{R}(
\mathcal{A}/\mathcal{C})$ and choose $Y\in\mathfrak{X}^1_t(\mathcal{A})$ such 
that $\mathrm{pr}(Y)=X$. Then
\begin{align*}
    0
    ={\bf g}_{\mathcal{A}/\mathcal{C}}(X,X)
    =\mathrm{pr}({\bf g}(Y,Y))
\end{align*}
implies ${\bf g}(Y,Y)\in\mathcal{C}$, i.e. $Y\in\mathfrak{X}^1_0(\mathcal{A})$ by
Axiom~2. In other words ${\bf g}_{\mathcal{A}/\mathcal{C}}(X,X)=0$ implies
$X=0$, which is equivalent to the statement that $X\neq 0$ implies
${\bf g}_{\mathcal{A}/\mathcal{C}}(X,X)\neq 0$, i.e. strong non-degeneracy of
${\bf g}_{\mathcal{A}/\mathcal{C}}$. From Axiom~1 it follows that
$\nabla^{\mathcal{A}/\mathcal{C}}$ is well-defined. In fact, for
$X=\sum_ic_i\cdot X^i\in\mathfrak{X}^1_0(\mathcal{A})$ and
$Y\in\mathfrak{X}^1_t(\mathcal{A})$ we obtain
\begin{align*}
    \nabla^{\mathcal{A}/\mathcal{C}}_{\mathrm{pr}(X)}\mathrm{pr}(Y)
    =\mathrm{pr}_{\bf g}(\nabla^\mathcal{R}_XY)
    =\mathrm{pr}_{\bf g}\bigg(\underbrace{
    \sum_ic_i\cdot\nabla^\mathcal{R}_{X^i}Y}_{
    \in\mathfrak{X}^1_0(\mathcal{A})}\bigg)
    =0
\end{align*}
and
\begin{align*}
    \nabla^{\mathcal{A}/\mathcal{C}}_{\mathrm{pr}(Y)}\mathrm{pr}(X)
    =&\mathrm{pr}_{\bf g}\bigg(\sum_i\nabla^\mathcal{R}_Y(c_i\cdot X^i)\bigg)\\
    =&\sum_i\mathrm{pr}_{\bf g}(\underbrace{\overbrace{
    (\mathscr{L}^\mathcal{R}_Yc_i)}^{\in\mathcal{C}}\cdot X^i}_{
    \in\mathfrak{X}^1_0(\mathcal{A})}
    +\underbrace{\overbrace{(\mathcal{R}_1^{-1}\rhd c_i)}^{\in\mathcal{C}}\cdot
    \nabla^\mathcal{R}_{\mathcal{R}_2^{-1}\rhd X^i}Y}_{
    \in\mathfrak{X}^1_0(\mathcal{A})})\\
    =&0,
\end{align*}
since $\nabla^\mathcal{R}$ is left $\mathcal{A}$-linear in the first
argument and satisfies a braided Leibniz rule in the second
argument. Clearly $\nabla^{\mathcal{A}/\mathcal{C}}$ is
$\Bbbk$-linear. It is left $\mathcal{A}/\mathcal{C}$-linear in the
first argument since
\begin{align*}
    \nabla^{\mathcal{A}/\mathcal{C}}_{\mathrm{pr}(a)\cdot\mathrm{pr}(X)}
    \mathrm{pr}(Y)
    =&\nabla^{\mathcal{A}/\mathcal{C}}_{\mathrm{pr}(a\cdot X)}
    \mathrm{pr}(Y)
    =\mathrm{pr}_{\bf g}(\nabla^\mathcal{R}_{a\cdot X}Y)
    =\mathrm{pr}_{\bf g}(a\cdot\nabla^\mathcal{R}_XY)\\
    =&\mathrm{pr}(a)\cdot\mathrm{pr}_{\bf g}(\nabla^\mathcal{R}_XY)
    =\mathrm{pr}(a)\cdot
    (\nabla^{\mathcal{A}/\mathcal{C}}_{\mathrm{pr}(X)}\mathrm{pr}(Y))
\end{align*}
and for all $X,Y\in\mathfrak{X}^1_t(\mathcal{A})$ and $a\in\mathcal{A}$
it satisfies a braided Leibniz rule 
\begin{allowdisplaybreaks}
\begin{align*}
    \nabla^{\mathcal{A}/\mathcal{C}}_{\mathrm{pr}(X)}
    (\mathrm{pr}(a)\cdot\mathrm{pr}(Y))
    =&\nabla^{\mathcal{A}/\mathcal{C}}_{\mathrm{pr}(X)}
    (\mathrm{pr}(a\cdot Y))\\
    =&\mathrm{pr}_{\bf g}(\nabla^\mathcal{R}_X(a\cdot Y))\\
    =&\mathrm{pr}_{\bf g}((\mathscr{L}^\mathcal{R}_Xa)\cdot Y
    +(\mathcal{R}_1^{-1}\rhd a)\cdot
    (\nabla^\mathcal{R}_{\mathcal{R}_2^{-1}\rhd X}Y))\\
    =&\mathrm{pr}(\mathscr{L}^\mathcal{R}_Xa)\cdot\mathrm{pr}(Y)
    +\mathrm{pr}(\mathcal{R}_1^{-1}\rhd a)\cdot
    \mathrm{pr}_{\bf g}(\nabla^\mathcal{R}_{\mathcal{R}_2^{-1}\rhd X}Y)\\
    =&(\mathscr{L}^\mathcal{R}_{\mathrm{pr}(X)}\mathrm{pr}(a))
    \cdot\mathrm{pr}(Y)
    +(\mathcal{R}_1^{-1}\rhd\mathrm{pr}(a))\cdot
    (\nabla^{\mathcal{A}/\mathcal{C}}_{\mathcal{R}_2^{-1}\rhd\mathrm{pr}(X)}
    \mathrm{pr}(Y))
\end{align*}
\end{allowdisplaybreaks}
in the second argument. This concludes the proof of the lemma.
\end{proof}
The curvature and torsion of a projected equivariant covariant
derivative coincide with the projections of curvature and torsion of the
initial equivariant covariant derivative. This underlines how the concepts of
braided commutative geometry on submanifold algebras
$\mathcal{A}/\mathcal{C}$ can be obtained from the ones on $\mathcal{A}$.
\begin{corollary}
Under the assumptions of Lemma~\ref{lemma15},
the curvature $R^{\nabla^{\mathcal{A}/\mathcal{C}}}$ and the torsion
$\mathrm{Tor}^{\nabla^{\mathcal{A}/\mathcal{C}}}$ of the projected
equivariant covariant derivative
$\nabla^{\mathcal{A}/\mathcal{C}}$ are given by
$$
R^{\nabla^{\mathcal{A}/\mathcal{C}}}(\mathrm{pr}(X),\mathrm{pr}(Y))(\mathrm{pr}(Z))
=\mathrm{pr}_{\bf g}(R^{\nabla^\mathcal{R}}(X,Y)Z)
$$
and
$$
\mathrm{Tor}^{\nabla^{\mathcal{A}/\mathcal{C}}}(\mathrm{pr}(X),\mathrm{pr}(Y))
=\mathrm{pr}_{\bf g}(\mathrm{Tor}^{\nabla^\mathcal{R}}(X,Y))
$$
for all $X,Y,Z\in\mathfrak{X}^1_t(\mathcal{A})$.
If furthermore,
$\nabla^\mathcal{R}$ is the equivariant Levi-Civita covariant derivative with
respect to ${\bf g}$, $\nabla^{\mathcal{A}/\mathcal{C}}$ is the equivariant Levi-Civita
covariant derivative on $\mathcal{A}/\mathcal{C}$ with respect to
${\bf g}_{\mathcal{A}/\mathcal{C}}$.
\end{corollary}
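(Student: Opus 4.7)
The plan is to prove all three assertions by direct expansion of the defining formulas for torsion, curvature, and the Levi-Civita characterization, leveraging two already-established facts throughout: the projection $\mathrm{pr}$ is a homomorphism of braided Lie algebras that intertwines $[\cdot,\cdot]_\mathcal{R}$ with itself (Lemma~\ref{lemma12}), and the definition $\nabla^{\mathcal{A}/\mathcal{C}}_{\mathrm{pr}(X)}\mathrm{pr}(Y)=\mathrm{pr}_{\bf g}(\nabla^\mathcal{R}_XY)$ from Lemma~\ref{lemma15}, together with the fact that $\mathrm{pr}_{\bf g}$ and $\mathrm{pr}$ agree on $\mathfrak{X}^1_t(\mathcal{A})$.

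First I would dispatch the torsion identity, which is essentially a one-line unwinding. Expanding $\mathrm{Tor}^{\nabla^{\mathcal{A}/\mathcal{C}}}(\mathrm{pr}(X),\mathrm{pr}(Y))$ via its definition, pushing the $\mathcal{R}^{-1}$-actions through the equivariant map $\mathrm{pr}$, substituting each $\nabla^{\mathcal{A}/\mathcal{C}}_{\mathrm{pr}(\cdot)}\mathrm{pr}(\cdot)$ by $\mathrm{pr}_{\bf g}\circ\nabla^\mathcal{R}_{(\cdot)}(\cdot)$, and invoking Lemma~\ref{lemma12} to rewrite $[\mathrm{pr}(X),\mathrm{pr}(Y)]_\mathcal{R}$ as $\mathrm{pr}([X,Y]_\mathcal{R})=\mathrm{pr}_{\bf g}([X,Y]_\mathcal{R})$, the three terms collect by $\Bbbk$-linearity of $\mathrm{pr}_{\bf g}$ into $\mathrm{pr}_{\bf g}(\mathrm{Tor}^{\nabla^\mathcal{R}}(X,Y))$.

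Next I would address the curvature, following the same template but encountering a genuinely iterated step that I expect to be the main obstacle. Writing $\nabla^\mathcal{R}_YZ=T+N$ with $T\in\mathfrak{X}^1_t(\mathcal{A})$ and $N\in\mathfrak{X}^1_n(\mathcal{A})$ yields $\nabla^{\mathcal{A}/\mathcal{C}}_{\mathrm{pr}(X)}\nabla^{\mathcal{A}/\mathcal{C}}_{\mathrm{pr}(Y)}\mathrm{pr}(Z)=\mathrm{pr}_{\bf g}(\nabla^\mathcal{R}_XT)$, whereas the desired right-hand side is $\mathrm{pr}_{\bf g}(\nabla^\mathcal{R}_X(T+N))$. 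The whole identity therefore reduces to showing $\mathrm{pr}_{\bf g}(\nabla^\mathcal{R}_XN)=0$ whenever $X\in\mathfrak{X}^1_t(\mathcal{A})$ and $N\in\mathfrak{X}^1_n(\mathcal{A})$; I would attack this by contracting with an arbitrary tangent test field $V$ and converting ${\bf g}(\nabla^\mathcal{R}_XN,V)$, via metric compatibility of $\nabla^\mathcal{R}$, into derivatives of ${\bf g}(N,V)$ together with a ${\bf g}$-pairing of $N$ against $\nabla^\mathcal{R}_{(\cdot)}V$, then repeatedly exploit the $H$-stability of $\mathcal{C}$ together with Axioms~1 and~2 to land the result in $\mathcal{C}$ and invoke the strong non-degeneracy of ${\bf g}_{\mathcal{A}/\mathcal{C}}$ on the quotient. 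Once this key vanishing is settled, the three pieces of $R^{\nabla^{\mathcal{A}/\mathcal{C}}}$ reassemble exactly as in the torsion case into $\mathrm{pr}_{\bf g}(R^{\nabla^\mathcal{R}}(X,Y)Z)$.

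For the Levi-Civita assertion, torsion-freeness of $\nabla^{\mathcal{A}/\mathcal{C}}$ is immediate from the torsion formula of step one. Metric compatibility is obtained by applying $\mathscr{L}^\mathcal{R}_{\mathrm{pr}(X)}$ to ${\bf g}_{\mathcal{A}/\mathcal{C}}(\mathrm{pr}(Y),\mathrm{pr}(Z))=\mathrm{pr}({\bf g}(Y,Z))$, commuting the braided Lie derivative past $\mathrm{pr}$ by Theorem~\ref{thm03}, invoking metric compatibility of $\nabla^\mathcal{R}$ in the ambient algebra, and projecting the resulting tangent terms back through $\mathrm{pr}_{\bf g}$. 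Uniqueness of $\nabla^{\mathcal{A}/\mathcal{C}}$ as the equivariant Levi-Civita covariant derivative of $(\mathcal{A}/\mathcal{C},{\bf g}_{\mathcal{A}/\mathcal{C}})$ is then automatic from the uniqueness half of the Levi-Civita proposition applied to the strongly non-degenerate equivariant metric ${\bf g}_{\mathcal{A}/\mathcal{C}}$ supplied by Lemma~\ref{lemma15}.
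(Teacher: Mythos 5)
Your handling of the torsion identity and of the Levi--Civita statement is correct and coincides with the paper's route: expand the definitions, use that $\mathrm{pr}$ intertwines the braided commutator and that $\mathrm{pr}_{\bf g}=\mathrm{pr}$ on tangent fields, substitute $\nabla^{\mathcal{A}/\mathcal{C}}_{\mathrm{pr}(\cdot)}\mathrm{pr}(\cdot)=\mathrm{pr}_{\bf g}(\nabla^\mathcal{R}_{(\cdot)}(\cdot))$, and for metric compatibility apply $\mathscr{L}^\mathcal{R}_{\mathrm{pr}(X)}$ to $\mathrm{pr}({\bf g}(Y,Z))$ and push everything through the projection; uniqueness then follows from the non-degeneracy of ${\bf g}_{\mathcal{A}/\mathcal{C}}$ exactly as you say. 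None of these steps involves iterating the covariant derivative, so $\Bbbk$-linearity of $\mathrm{pr}_{\bf g}$ suffices.

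The curvature identity is where your proposal has a genuine gap, and it is a gap you yourself locate precisely. Your reduction is right: writing $\nabla^\mathcal{R}_YZ=T+N$ with $T$ tangent and $N$ normal, the claimed formula is equivalent to the vanishing $\mathrm{pr}_{\bf g}(\nabla^\mathcal{R}_XN)=0$ for all $X\in\mathfrak{X}^1_t(\mathcal{A})$ and $N\in\mathfrak{X}^1_n(\mathcal{A})$. But the argument you sketch for this does not close. Metric compatibility turns ${\bf g}(\nabla^\mathcal{R}_XN,V)$, for a tangent test field $V$, into $\mathscr{L}^\mathcal{R}_X{\bf g}(N,V)$ minus a braided pairing of $N$ against $\nabla^\mathcal{R}_{\cdot}V$; the first term vanishes since ${\bf g}(N,V)=0$, but the second is precisely (minus) a second fundamental form, and since $\nabla^\mathcal{R}_XV$ need not be tangent this pairing has no reason to lie in $\mathcal{C}$ --- Axioms~1 and~2 and the $H$-stability of $\mathcal{C}$ do not intervene. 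Indeed the vanishing already fails in the classical special case $\mathcal{R}=1\otimes 1$, $\mathcal{A}=\mathscr{C}^\infty(\mathbb{R}^3)$ with the flat connection and Euclidean metric, and $\mathcal{C}$ the vanishing ideal of $\mathbb{S}^2$: for $N=\sum_ix^i\partial_i$ one has $\nabla_XN=X$, hence $\mathrm{pr}_{\bf g}(\nabla_XN)=\mathrm{pr}(X)\neq 0$, and correspondingly the projected (round) connection has nonzero curvature while $\mathrm{pr}_{\bf g}(R^{\nabla}(X,Y)Z)=0$; this is the classical Gauss equation, in which second-fundamental-form corrections appear. For what it is worth, the paper's own proof passes over exactly this point in silence, equating $\nabla^{\mathcal{A}/\mathcal{C}}_{\mathrm{pr}(X)}\nabla^{\mathcal{A}/\mathcal{C}}_{\mathrm{pr}(Y)}\mathrm{pr}(Z)$ with $\mathrm{pr}_{\bf g}(\nabla^\mathcal{R}_X\nabla^\mathcal{R}_YZ)$ without justification, so your analysis exposes a missing hypothesis (e.g.\ that $\nabla^\mathcal{R}$ preserve $\mathfrak{X}^1_t(\mathcal{A})$, in which case $\mathrm{pr}_{\bf g}$ can be replaced by $\mathrm{pr}$ throughout) rather than a defect in your reduction; but as written your proof of the key vanishing would fail.
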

\begin{proof}
Let $X,Y,Z\in\mathfrak{X}^1_t(\mathcal{A})$. According to Lemma~\ref{lemma15}
$\nabla^{\mathcal{A}/\mathcal{C}}$ is an equivariant
covariant derivative on $\mathcal{A}/\mathcal{C}$ with respect to $\mathcal{R}$.
In particular, its curvature and torsion
are well-defined and
\begin{allowdisplaybreaks}
\begin{align*}
    R^{\nabla^{\mathcal{A}/\mathcal{C}}}(\mathrm{pr}(X),\mathrm{pr}(Y))
    (\mathrm{pr}(Z))
    =&\nabla^{\mathcal{A}/\mathcal{C}}_{\mathrm{pr}(X)}
    \nabla^{\mathcal{A}/\mathcal{C}}_{\mathrm{pr}(Y)}\mathrm{pr}(Z)
    -\nabla^{\mathcal{A}/\mathcal{C}}_{\mathcal{R}_1^{-1}\rhd\mathrm{pr}(Y)}
    \nabla^{\mathcal{A}/\mathcal{C}}_{\mathcal{R}_2^{-1}\rhd\mathrm{pr}(X)}
    \mathrm{pr}(Z)\\
    &-\nabla^{\mathcal{A}/\mathcal{C}}_{[\mathrm{pr}(X)
    ,\mathrm{pr}(Y)]_\mathcal{R}}\mathrm{pr}(Z)\\
    =&\nabla^{\mathcal{A}/\mathcal{C}}_{\mathrm{pr}(X)}
    \nabla^{\mathcal{A}/\mathcal{C}}_{\mathrm{pr}(Y)}\mathrm{pr}(Z)
    -\nabla^{\mathcal{A}/\mathcal{C}}_{\mathrm{pr}(\mathcal{R}_1^{-1}\rhd Y)}
    \nabla^{\mathcal{A}/\mathcal{C}}_{\mathrm{pr}(\mathcal{R}_2^{-1}\rhd X)}
    \mathrm{pr}(Z)\\
    &-\nabla^{\mathcal{A}/\mathcal{C}}_{\mathrm{pr}([X,Y]_\mathcal{R})}
    \mathrm{pr}(Z)\\
    =&\mathrm{pr}_{\bf g}(\nabla^\mathcal{R}_X\nabla^\mathcal{R}_YZ
    -\nabla^\mathcal{R}_{\mathcal{R}_1^{-1}\rhd Y}
    \nabla^\mathcal{R}_{\mathcal{R}_2^{-1}\rhd X}Z
    -\nabla^\mathcal{R}_{[X,Y]_\mathcal{R}}Z)\\
    =&\mathrm{pr}_{\bf g}(R^{\nabla^\mathcal{R}}(X,Y)Z)
\end{align*}
\end{allowdisplaybreaks}
as well as
\begin{allowdisplaybreaks}
\begin{align*}
    \mathrm{Tor}^{\nabla^{\mathcal{A}/\mathcal{C}}}
    (\mathrm{pr}(X),\mathrm{pr}(Y))
    =&\nabla^{\mathcal{A}/\mathcal{C}}_{\mathrm{pr}(X)}\mathrm{pr}(Y)
    -\nabla^{\mathcal{A}/\mathcal{C}}_{\mathcal{R}_1^{-1}\rhd\mathrm{pr}(Y)}
    (\mathcal{R}_2^{-1}\rhd\mathrm{pr}(X))\\
    &-[\mathrm{pr}(X),\mathrm{pr}(Y)]_\mathcal{R}\\
    =&\mathrm{pr}_{\bf g}(\nabla^\mathcal{R}_XY
    -\nabla^\mathcal{R}_{\mathcal{R}_1^{-1}\rhd Y}(\mathcal{R}_2^{-1}\rhd X)
    -[X,Y]_\mathcal{R})\\
    =&\mathrm{pr}_{\bf g}(\mathrm{Tor}^{\nabla^\mathcal{R}}(X,Y))
\end{align*}
\end{allowdisplaybreaks}
hold. If $\nabla^\mathcal{R}$ is the Levi-Civita covariant derivative
with respect to ${\bf g}$, then
\begin{align*}
    \mathscr{L}^\mathcal{R}_X{\bf g}(Y,Z)
    ={\bf g}(\nabla^\mathcal{R}_XY,Z)
    +{\bf g}(\mathcal{R}_1^{-1}\rhd Y,\nabla^\mathcal{R}_{\mathcal{R}_2^{-1}\rhd X}Z),
\end{align*}
where $X,Y,Z\in\mathfrak{X}^1_\mathcal{R}(\mathcal{A})$, implies
\begin{allowdisplaybreaks}
\begin{align*}
    \mathscr{L}^\mathcal{R}_{\mathrm{pr}(X)}
    {\bf g}_{\mathcal{A}/\mathcal{C}}(\mathrm{pr}(Y),\mathrm{pr}(Z))
    =&\mathscr{L}^\mathcal{R}_{\mathrm{pr}(X)}
    \mathrm{pr}({\bf g}(Y,Z))\\
    =&\mathrm{pr}(\mathscr{L}^\mathcal{R}_X{\bf g}(Y,Z))\\
    =&\mathrm{pr}({\bf g}(\nabla^\mathcal{R}_XY,Z)
    +{\bf g}(\mathcal{R}_1^{-1}\rhd Y,\nabla^\mathcal{R}_{\mathcal{R}_2^{-1}\rhd X}Z))\\
    =&{\bf g}_{\mathcal{A}/\mathcal{C}}(\mathrm{pr}_{\bf g}(\nabla^\mathcal{R}_XY),\mathrm{pr}(Z))\\
    &+{\bf g}_{\mathcal{A}/\mathcal{C}}(\mathrm{pr}(\mathcal{R}_1^{-1}\rhd Y),
    \mathrm{pr}_{\bf g}(\nabla^\mathcal{R}_{\mathcal{R}_2^{-1}\rhd X}Z))\\
    =&{\bf g}_{\mathcal{A}/\mathcal{C}}(
    \nabla^{\mathcal{A}/\mathcal{C}}_{\mathrm{pr}(X)}\mathrm{pr}(Y),
    \mathrm{pr}(Z))\\
    &+{\bf g}_{\mathcal{A}/\mathcal{C}}(\mathcal{R}_1^{-1}\rhd\mathrm{pr}(Y),
    \nabla^{\mathcal{A}/\mathcal{C}}_{\mathcal{R}_2^{-1}\rhd\mathrm{pr}(X)}
    \mathrm{pr}(Z))
\end{align*}
\end{allowdisplaybreaks}
for all $X,Y,Z\in\mathfrak{X}^1_t(\mathcal{A})$. Since
$$
\mathrm{Tor}^{\nabla^{\mathcal{A}/\mathcal{C}}}(\mathrm{pr}(X),\mathrm{pr}(Y))
=\mathrm{pr}_{\bf g}(\mathrm{Tor}^{\nabla^\mathcal{R}}(X,Y))
=\mathrm{pr}_{\bf g}(0)
=0
$$
$\nabla^{\mathcal{A}/\mathcal{C}}$ is the unique equivariant Levi-Civita covariant
derivative on $\mathcal{A}/\mathcal{C}$ with respect to ${\bf g}_{\mathcal{A}/\mathcal{C}}$.
This concludes the proof.
\end{proof}
One extends the projection $\mathrm{pr}_{\bf g}\colon
\mathfrak{X}^\bullet_\mathcal{R}(\mathcal{A})
\rightarrow\mathfrak{X}^\bullet_\mathcal{R}(\mathcal{A}/\mathcal{R})$
to braided multivector fields by defining it to coincide with $\mathrm{pr}$
on $\mathcal{A}$ and to be a homomorphism of the braided wedge product on
higher wedge powers. On braided differential forms we set
$\mathrm{pr}_{\bf g}=\mathrm{pr}$.
\begin{corollary}
Under the assumptions of Lemma~\ref{lemma15},
the equivariant covariant derivatives
$\nabla^{\mathcal{A}/\mathcal{C}}\colon
\mathfrak{X}_\mathcal{R}^1(\mathcal{A}/\mathcal{C})
\otimes\mathfrak{X}_\mathcal{R}^\bullet(\mathcal{A}/\mathcal{C})
\rightarrow\mathfrak{X}_\mathcal{R}^\bullet(\mathcal{A}/\mathcal{C})$ and
$\tilde{\nabla}^{\mathcal{A}/\mathcal{C}}
\colon\mathfrak{X}_\mathcal{R}^1(\mathcal{A}/\mathcal{C})
\otimes\Omega_\mathcal{R}^\bullet(\mathcal{A}/\mathcal{C})
\rightarrow\Omega_\mathcal{R}^\bullet(\mathcal{A}/\mathcal{C})$,
induced by the projected covariant derivative $\nabla^{\mathcal{A}/\mathcal{C}}$
on $\mathcal{A}/\mathcal{C}$ are
projected from the covariant derivatives induced by $\nabla^\mathcal{R}$. Namely, 
\begin{equation}\label{eq36}
    \nabla^{\mathcal{A}/\mathcal{C}}_{\mathrm{pr}(X)}\mathrm{pr}(Y)
    =\mathrm{pr}_{\bf g}(\nabla^\mathcal{R}_XY)
    \text{ and }
    \tilde{\nabla}^{\mathcal{A}/\mathcal{C}}_{\mathrm{pr}(X)}\mathrm{pr}(\omega)
    =\mathrm{pr}(\tilde{\nabla}^\mathcal{R}_X\omega)
\end{equation}
for
all $X\in\mathfrak{X}^1_t(\mathcal{A})$, $Y\in\mathfrak{X}^\bullet_t(\mathcal{A})$ and
$\omega\in\Omega_\mathcal{R}^\bullet(\mathcal{A})$.
\end{corollary}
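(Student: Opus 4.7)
The plan is to prove both identities by induction on the tensor degree, using the braided Leibniz rule that characterises the extensions of $\nabla^\mathcal{R}$ and $\tilde{\nabla}^\mathcal{R}$, together with Theorem~\ref{thm03} and the earlier projection compatibilities (Propositions~\ref{prop16} and \ref{prop18}).

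For the braided multivector field statement I would first check the base cases. In degree $0$, both sides reduce to $\mathscr{L}^\mathcal{R}_{\mathrm{pr}(X)}\mathrm{pr}(a)$ on the left and $\mathrm{pr}(\mathscr{L}^\mathcal{R}_X a)$ on the right, and these are equal by Theorem~\ref{thm03}. In degree $1$, the identity is precisely the defining formula of $\nabla^{\mathcal{A}/\mathcal{C}}$ from Lemma~\ref{lemma15}. For the inductive step, take $Y\in\mathfrak{X}^\bullet_t(\mathcal{A})$ of a fixed degree and $Z\in\mathfrak{X}^1_t(\mathcal{A})$; apply the braided Leibniz rule on both sides of the asserted equation, noting that $\mathrm{pr}_{\bf g}$ coincides with $\mathrm{pr}$ on $\mathfrak{X}^\bullet_t(\mathcal{A})$ and is a homomorphism of $\wedge_\mathcal{R}$ by construction. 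The equivariance of $\mathrm{pr}$ ensures that factors of the form $\mathcal{R}_1^{-1}\rhd Y$ remain tangent, so the induction hypothesis applies term by term.

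For the braided differential form statement I would exploit the non-degeneracy of the braided dual pairing $\langle\cdot,\cdot\rangle_\mathcal{R}$ on $\mathcal{A}/\mathcal{C}$. In degree $0$ there is nothing to prove, and for $\omega\in\Omega^1_\mathcal{R}(\mathcal{A})$ I would test the alleged identity against an arbitrary $\mathrm{pr}(Y)$ with $Y\in\mathfrak{X}^1_t(\mathcal{A})$. Using the defining formula
\[
\langle\tilde{\nabla}^{\mathcal{A}/\mathcal{C}}_{\mathrm{pr}(X)}\mathrm{pr}(\omega),\mathrm{pr}(Y)\rangle_\mathcal{R}
=\mathscr{L}^\mathcal{R}_{\mathrm{pr}(X)}\langle\mathrm{pr}(\omega),\mathrm{pr}(Y)\rangle_\mathcal{R}
-\langle\mathcal{R}_1^{-1}\rhd\mathrm{pr}(\omega),
\nabla^{\mathcal{A}/\mathcal{C}}_{\mathcal{R}_2^{-1}\rhd\mathrm{pr}(X)}\mathrm{pr}(Y)\rangle_\mathcal{R},
\]
the compatibility $\langle\mathrm{pr}(\omega),\mathrm{pr}(Y)\rangle_\mathcal{R}=\mathrm{pr}(\langle\omega,Y\rangle_\mathcal{R})$, Theorem~\ref{thm03} for the Lie derivative term, and the already-established multivector version of the statement for the covariant derivative term, reduce the right-hand side to $\mathrm{pr}(\langle\tilde{\nabla}^\mathcal{R}_X\omega,Y\rangle_\mathcal{R})=\langle\mathrm{pr}(\tilde{\nabla}^\mathcal{R}_X\omega),\mathrm{pr}(Y)\rangle_\mathcal{R}$. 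Since $\mathrm{pr}\colon\mathfrak{X}^1_t(\mathcal{A})\to\mathfrak{X}^1_\mathcal{R}(\mathcal{A}/\mathcal{C})$ is surjective and the dual pairing is non-degenerate, this settles the degree $1$ case. The higher-degree cases then follow by the braided Leibniz rule in exactly the same way as for multivector fields, now using Proposition~\ref{prop18} to ensure $\mathrm{pr}$ is a homomorphism of $\wedge_\mathcal{R}$ on $\Omega^\bullet_\mathcal{R}(\mathcal{A})$.

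The main technical point to watch is the interplay between $\mathrm{pr}_{\bf g}$ and the braided Leibniz rule in the multivector case: although $\nabla^\mathcal{R}_X Y$ need not be tangent even for tangent $X,Y$, both the tangent and normal components of the Leibniz expansion are handled correctly because $\mathrm{pr}_{\bf g}$ is defined as a homomorphism of $\wedge_\mathcal{R}$ and because the actions of $\mathcal{R}_1^{-1}$ and $\mathcal{R}_2^{-1}$ preserve $\mathfrak{X}^\bullet_t(\mathcal{A})$ by Lemma~\ref{lemma12}. Once this bookkeeping is in place, the induction closes immediately and no further calculation beyond the braided Leibniz identities is needed.
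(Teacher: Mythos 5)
Your proposal is correct and follows essentially the same route as the paper: the degree-one case for forms is settled by testing against $\mathrm{pr}(Y)$ via the non-degenerate braided dual pairing and Theorem~\ref{thm03}, and the higher degrees for both multivector fields and forms are obtained by induction through the braided Leibniz rule, using that $\mathrm{pr}$ (resp. $\mathrm{pr}_{\bf g}$) is a homomorphism of $\wedge_\mathcal{R}$. Your explicit remark on why the $\mathrm{pr}_{\bf g}$ bookkeeping is harmless is a point the paper leaves implicit, but it does not change the argument.
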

\begin{proof}
Let $X,Y\in\mathfrak{X}^1_t(\mathcal{A})$ and
$\omega\in\Omega^\bullet_\mathcal{R}(\mathcal{A})$. Then
\begin{align*}
    \langle\tilde{\nabla}^{\mathcal{A}/\mathcal{C}}_{\mathrm{pr}(X)}
    \mathrm{pr}(\omega),\mathrm{pr}(Y)\rangle_\mathcal{R}
    =&\mathscr{L}^\mathcal{R}_{\mathrm{pr}(X)}\langle\mathrm{pr}(\omega),
    \mathrm{pr}(Y)\rangle_\mathcal{R}
    -\langle\mathcal{R}_1^{-1}\rhd\mathrm{pr}(\omega),
    \nabla^{\mathcal{A}/\mathcal{C}}_{\mathcal{R}_2^{-1}\rhd\mathrm{pr}(X)}
    \mathrm{pr}(Y)\rangle_\mathcal{R}\\
    =&\mathscr{L}^\mathcal{R}_{\mathrm{pr}(X)}\mathrm{pr}(\langle\omega,
    Y\rangle_\mathcal{R})
    -\langle\mathrm{pr}(\mathcal{R}_1^{-1}\rhd\omega),
    \mathrm{pr}_{\bf g}(\nabla^\mathcal{R}_{\mathcal{R}_2^{-1}\rhd X}
    Y)\rangle_\mathcal{R}\\
    =&\mathrm{pr}(\mathscr{L}^\mathcal{R}_X\langle\omega,Y\rangle_\mathcal{R}
    -\langle\mathcal{R}_1^{-1}\rhd\omega,
    \nabla^\mathcal{R}_{\mathcal{R}_2^{-1}\rhd X}Y\rangle_\mathcal{R})\\
    =&\mathrm{pr}(\langle\tilde{\nabla}^\mathcal{R}_X\omega,
    Y\rangle_\mathcal{R})\\
    =&\langle\mathrm{pr}(\tilde{\nabla}^\mathcal{R}_X\omega),
    \mathrm{pr}(Y)\rangle_\mathcal{R}
\end{align*}
implies $\tilde{\nabla}^{\mathcal{A}/\mathcal{C}}_{\mathrm{pr}(X)}
\mathrm{pr}(\omega)=\mathrm{pr}(\tilde{\nabla}^\mathcal{R}_X\omega)$ by
the non-degeneracy of $\langle\cdot,\cdot\rangle_\mathcal{R}$. Assume now
that we proved (\ref{eq36}) for all $X\in\mathfrak{X}^1_t(\mathcal{A})$,
$Y\in\mathfrak{X}^k_t(\mathcal{A})$ and $\omega\in\Omega^k_\mathcal{R}
(\mathcal{A})$ for a fixed $k>0$. Let $X,Y\in\mathfrak{X}^1_t(\mathcal{A})$,
$Z\in\mathfrak{X}^k_t(\mathcal{A})$, $\omega\in\Omega^1_\mathcal{R}(\mathcal{A})$
and $\eta\in\Omega^k_\mathcal{R}(\mathcal{A})$.
Then
\begin{align*}
    \nabla^{\mathcal{A}/\mathcal{C}}_{\mathrm{pr}(X)}
    (\mathrm{pr}(Y)\wedge_\mathcal{R}\mathrm{pr}(Z))
    =&(\nabla^{\mathcal{A}/\mathcal{C}}_{\mathrm{pr}(X)}\mathrm{pr}(Y))
    \wedge_\mathcal{R}\mathrm{pr}(Z)\\
    &+(\mathcal{R}_1^{-1}\rhd Y)\wedge_\mathcal{R}
    (\nabla^{\mathcal{A}/\mathcal{C}}_{\mathcal{R}_2^{-1}\rhd\mathrm{pr}(X)}
    \mathrm{pr}(Z))\\
    =&\mathrm{pr}_{\bf g}((\nabla^\mathcal{R}_XY)\wedge_\mathcal{R}Z
    +(\mathcal{R}_1^{-1}\rhd Y)\wedge_\mathcal{R}
    (\nabla^\mathcal{R}_{\mathcal{R}_2^{-1}\rhd X}Z))\\
    =&\mathrm{pr}_{\bf g}(\nabla^\mathcal{R}_X(Y\wedge_\mathcal{R}Z))
\end{align*}
and
\begin{align*}
    \tilde{\nabla}^{\mathcal{A}/\mathcal{C}}_{\mathrm{pr}(X)}
    (\mathrm{pr}(\omega)\wedge_\mathcal{R}\mathrm{pr}(\eta))
    =&(\tilde{\nabla}^{\mathcal{A}/\mathcal{C}}_{\mathrm{pr}(X)}\mathrm{pr}
    (\omega))
    \wedge_\mathcal{R}\mathrm{pr}(\eta)\\
    &+(\mathcal{R}_1^{-1}\rhd\omega)\wedge_\mathcal{R}
    (\tilde{\nabla}^{\mathcal{A}/\mathcal{C}}_{
    \mathcal{R}_2^{-1}\rhd\mathrm{pr}(X)}
    \mathrm{pr}(\eta))\\
    =&\mathrm{pr}_{\bf g}((\tilde{\nabla}^\mathcal{R}_X\omega)\wedge_\mathcal{R}\eta
    +(\mathcal{R}_1^{-1}\rhd\omega)\wedge_\mathcal{R}
    (\tilde{\nabla}^\mathcal{R}_{\mathcal{R}_2^{-1}\rhd X}\eta))\\
    =&\mathrm{pr}_{\bf g}(\tilde{\nabla}^\mathcal{R}_X(\omega\wedge_\mathcal{R}\eta))
\end{align*}
prove that (\ref{eq36}) even holds for all
$X\in\mathfrak{X}^1_t(\mathcal{A})$,
$Y\in\mathfrak{X}^{k+1}_t(\mathcal{A})$ and $\omega\in\Omega^k_\mathcal{R}
(\mathcal{A})$. By an inductive argument we conclude the proof of the corollary.
\end{proof}
As a special case, we recover the observation
(see e.g. \cite{KobayashiII}~Chap.~VII~Prop.~3.1)
that the Levi-Civita covariant derivative on a Riemannian manifold projects
to any Riemannian submanifold.

\section{Gauge Equivalences and Submanifold Algebras}\label{Sec5.3}

In the next theorem we prove that the gauge equivalence given by the Drinfel'd functor
is compatible with the notion of submanifold ideals. In other words, the projection
to submanifold algebras and twisting commutes. In the particular case of a
cocommutative Hopf algebra with trivial triangular structure this means that
twist quantization and projection to the submanifold algebra commute.
\begin{theorem}\label{thm07}
Let $\mathcal{C}$ be a submanifold ideal of $\mathcal{A}$.
Then, for any twist $\mathcal{F}$ on $H$,
the projection of the twisted Gerstenhaber algebra 
$(\mathfrak{X}_t^\bullet(\mathcal{A})_\mathcal{F},\wedge_\mathcal{F},
\llbracket\cdot,\cdot\rrbracket_\mathcal{F})$ 
of braided multivector fields on $\mathcal{A}$ which are tangent to 
$\mathcal{A}/\mathcal{C}$ coincides with the twisted Gerstenhaber algebra 
$(\mathfrak{X}^\bullet_\mathcal{R}(\mathcal{A}/\mathcal{C})_\mathcal{F},
\wedge_\mathcal{F},
\llbracket\cdot,\cdot\rrbracket_\mathcal{F})$ on $\mathcal{A}/\mathcal{C}$.
Moreover, the twisted Cartan calculus on $\mathcal{A}/\mathcal{C}$
is given by the projection of the twisted Cartan calculus on $\mathcal{A}$.
Namely, $\Omega^\bullet_\mathcal{R}(\mathcal{A}/\mathcal{C})_\mathcal{F}
=\mathrm{pr}(\Omega^\bullet_\mathcal{R}(\mathcal{A})_\mathcal{F})$,
\begin{align*}
    \mathscr{L}^\mathcal{F}_{\mathrm{pr}(X)}\mathrm{pr}(\omega)
    =\mathrm{pr}(\mathscr{L}^\mathcal{F}_X\omega),
    \mathrm{i}^\mathcal{F}_{\mathrm{pr}(X)}\mathrm{pr}(\omega)
    =\mathrm{pr}(\mathrm{i}^\mathcal{F}_X\omega)
    \text{ and }
    \mathrm{d}(\mathrm{pr}(\omega))
    =\mathrm{pr}(\mathrm{d}\omega)
\end{align*}
for all $X\in\mathfrak{X}^\bullet_t(\mathcal{A})$ and
$\omega\in\Omega^\bullet_\mathcal{R}(\mathcal{A})$.
\end{theorem}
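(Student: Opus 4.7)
My plan is to leverage the two main ingredients already established in the excerpt: (i) the projection compatibility of the untwisted braided Cartan calculus (Theorem 3.4.1, together with Proposition 5.1.5 and Proposition 5.1.6), and (ii) the explicit formulae expressing the twisted operations as pullbacks of the untwisted ones through the Drinfel'd functor (the formulae $\mathscr{L}^\mathcal{F}_X\omega=\mathscr{L}^\mathcal{R}_{\mathcal{F}_1^{-1}\rhd X}(\mathcal{F}_2^{-1}\rhd\omega)$, $\mathrm{i}^\mathcal{F}_X\omega=\mathrm{i}^\mathcal{R}_{\mathcal{F}_1^{-1}\rhd X}(\mathcal{F}_2^{-1}\rhd\omega)$, $X\wedge_\mathcal{F}Y=(\mathcal{F}_1^{-1}\rhd X)\wedge_\mathcal{R}(\mathcal{F}_2^{-1}\rhd Y)$, $\llbracket X,Y\rrbracket_\mathcal{F}=\llbracket\mathcal{F}_1^{-1}\rhd X,\mathcal{F}_2^{-1}\rhd Y\rrbracket_\mathcal{R}$, and $\mathrm{d}$ undeformed). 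The key observation that makes everything run is that the submanifold ideal condition $H\rhd\mathcal{C}\subseteq\mathcal{C}$ implies $\mathcal{F}^{\pm 1}\rhd\mathcal{C}\subseteq\mathcal{C}$, so in particular $\mathcal{F}_i^{-1}\rhd X$ remains tangent whenever $X$ is tangent (this was already recorded in Lemma 5.1.4 for single elements of $H$, and extends verbatim to the two legs of $\mathcal{F}^{-1}$).

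First I would verify that $\mathcal{C}$ is still an ideal of $\mathcal{A}_\mathcal{F}$: for $c\in\mathcal{C}$ and $a\in\mathcal{A}$ one has $a\cdot_\mathcal{F}c=(\mathcal{F}_1^{-1}\rhd a)\cdot(\mathcal{F}_2^{-1}\rhd c)\in\mathcal{A}\cdot\mathcal{C}\subseteq\mathcal{C}$, and similarly from the right, so the quotient $\mathcal{A}_\mathcal{F}/\mathcal{C}$ makes sense and is tautologically equal to $(\mathcal{A}/\mathcal{C})_\mathcal{F}$ as a left $H_\mathcal{F}$-module algebra. In particular $\mathrm{pr}\colon\mathcal{A}_\mathcal{F}\to(\mathcal{A}/\mathcal{C})_\mathcal{F}$ is an $H_\mathcal{F}$-equivariant algebra homomorphism. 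Since $\mathrm{pr}$ is equivariant for the $H$-action (hence commutes with any legs of $\mathcal{F}^{\pm 1}$), applying $\mathrm{pr}$ to the definition
\[
X\wedge_\mathcal{F}Y=(\mathcal{F}_1^{-1}\rhd X)\wedge_\mathcal{R}(\mathcal{F}_2^{-1}\rhd Y)
\]
and using Proposition 5.1.5 yields $\mathrm{pr}(X\wedge_\mathcal{F}Y)=\mathrm{pr}(X)\wedge_\mathcal{F}\mathrm{pr}(Y)$, and the same manipulation applied to the definition of $\llbracket\cdot,\cdot\rrbracket_\mathcal{F}$ gives $\mathrm{pr}(\llbracket X,Y\rrbracket_\mathcal{F})=\llbracket\mathrm{pr}(X),\mathrm{pr}(Y)\rrbracket_\mathcal{F}$. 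This handles the Gerstenhaber-algebra part. Surjectivity onto $\mathfrak{X}^\bullet_\mathcal{R}(\mathcal{A}/\mathcal{C})_\mathcal{F}$ is identical to the untwisted statement because the underlying $\Bbbk$-modules agree with the untwisted ones.

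Next I would treat the differential-form side and the Cartan operations identically. The equality $\Omega^\bullet_\mathcal{R}(\mathcal{A}/\mathcal{C})_\mathcal{F}=\mathrm{pr}(\Omega^\bullet_\mathcal{R}(\mathcal{A})_\mathcal{F})$ is a statement about underlying $\Bbbk$-modules (and thus follows directly from Proposition 5.1.6), since twisting does not alter the underlying module structure — it only reinterprets the wedge product, which is handled as above. For the three Cartan identities I would compute, for example,
\[
\mathrm{pr}(\mathscr{L}^\mathcal{F}_X\omega)=\mathrm{pr}\bigl(\mathscr{L}^\mathcal{R}_{\mathcal{F}_1^{-1}\rhd X}(\mathcal{F}_2^{-1}\rhd\omega)\bigr)=\mathscr{L}^\mathcal{R}_{\mathrm{pr}(\mathcal{F}_1^{-1}\rhd X)}\mathrm{pr}(\mathcal{F}_2^{-1}\rhd\omega),
\]
applying Theorem 3.4.1 in the middle equality, and then pushing the legs of $\mathcal{F}^{-1}$ through $\mathrm{pr}$ by equivariance to recover $\mathscr{L}^\mathcal{F}_{\mathrm{pr}(X)}\mathrm{pr}(\omega)$. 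The insertion identity is verbatim the same argument, and the de Rham part is trivial because $\mathrm{d}$ is undeformed and already commutes with $\mathrm{pr}$ by Theorem 3.4.1.

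The only genuinely delicate point — and where I expect the reader to want a cross-reference rather than a surprise — is the need to know that both legs of $\mathcal{F}^{\pm 1}$ preserve tangency, so that the formulae above make sense inside $\mathfrak{X}^\bullet_t(\mathcal{A})$ and $\Omega^\bullet_\mathcal{R}(\mathcal{A})$ before we project. This, however, is immediate from $H\rhd\mathcal{C}\subseteq\mathcal{C}$ applied to each leg individually (together with Lemma 5.1.4 for the multivector case). Thus no genuinely new computation is required beyond combining Theorem 3.4.1 with the Drinfel'd-functor formulae, and the theorem follows.
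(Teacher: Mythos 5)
Your proposal is correct and follows essentially the same route as the paper's own proof: both reduce everything to the untwisted projection compatibility (Theorem~\ref{thm03} together with Propositions~\ref{prop16} and \ref{prop18}), the leg-notation formulae for the twisted operations, and the $H$-equivariance of $\mathrm{pr}$ (equivalently, the fact that $\mathfrak{X}^\bullet_t(\mathcal{A})$ is an $H$-submodule, which is what lets the legs of $\mathcal{F}^{-1}$ stay tangent and pass through the projection). The only cosmetic differences are that you run the Lie-derivative computation from $\mathrm{pr}(\mathscr{L}^\mathcal{F}_X\omega)$ outward while the paper starts from $\mathscr{L}^\mathcal{F}_{\mathrm{pr}(X)}\mathrm{pr}(\omega)$, and that you make explicit the (correct) preliminary check that $\mathcal{C}$ remains an ideal for $\cdot_\mathcal{F}$, which the paper records afterwards as equation~(\ref{eq53}).
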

\begin{proof}
Note that the twisted multivector fields are a braided Gerstenhaber algebra 
since the braided multivector fields which are tangent to $\mathcal{A}/\mathcal{C}$
are an $H$-submodule and a braided symmetric $\mathcal{A}$-sub-bimodule of
$\mathfrak{X}^\bullet_\mathcal{R}(\mathcal{A})$.
We already noticed that $\mathrm{pr}\colon\mathfrak{X}^\bullet_t(\mathcal{A})
\rightarrow\mathfrak{X}^\bullet(\mathcal{A}/\mathcal{C})$ is surjective.
Let $X,Y\in\mathfrak{X}^\bullet_t(\mathcal{A})$ and $a\in\mathcal{A}$. Then
$$
\mathrm{pr}(X)\wedge_\mathcal{F}\mathrm{pr}(Y)
=(\mathcal{F}_1^{-1}\rhd\mathrm{pr}(X))
\wedge_\mathcal{R}(\mathcal{F}_2^{-1}\rhd\mathrm{pr}(Y))
=\mathrm{pr}(X\wedge_\mathcal{F}Y),
$$
and similarly $\llbracket\mathrm{pr}(X),\mathrm{pr}(Y)\rrbracket_\mathcal{F}
=\mathrm{pr}(\llbracket X,Y\rrbracket_\mathcal{F})$
and $\mathrm{pr}(a)\cdot_\mathcal{F}\mathrm{pr}(X)
=\mathrm{pr}(a\cdot_\mathcal{F}X)$ follow.
Moreover,
$$
\mathscr{L}^\mathcal{F}_{\mathrm{pr}(X)}\mathrm{pr}(\omega)
=\mathscr{L}^\mathcal{R}_{\mathcal{F}_1^{-1}\rhd\mathrm{pr}(X)}
(\mathcal{F}_2^{-1}\rhd\mathrm{pr}(\omega))
=\mathrm{pr}(\mathscr{L}^\mathcal{F}_X\omega)
$$
and
$$
\mathrm{i}^\mathcal{F}_{\mathrm{pr}(X)}\mathrm{pr}(\omega)
=\mathrm{i}^\mathcal{R}_{\mathcal{F}_1^{-1}\rhd\mathrm{pr}(X)}
(\mathcal{F}_2^{-1}\rhd\mathrm{pr}(\omega))
=\mathrm{pr}(\mathrm{i}^\mathcal{F}_X\omega)
$$
for all $X\in\mathfrak{X}^\bullet_t(\mathcal{A})$ and 
$\omega\in\Omega^\bullet_\mathcal{R}(\mathcal{A})$ by Theorem~\ref{thm03}.
\end{proof}
In zero degree $\mathfrak{X}^0_t(\mathcal{A})=\mathcal{A}$. Thus,
Theorem~\ref{thm07} implies that the twisted product on $(\mathcal{A}/\mathcal{C})_\mathcal{F}$
\begin{equation}\label{eq53}
    \mathrm{pr}(a)\cdot_\mathcal{F}\mathrm{pr}(b)
    =\mathrm{pr}(a\cdot_\mathcal{F}b),
\end{equation}
where $a,b\in\mathcal{A}$, coincides with the projection of
the twisted product on $\mathcal{A}_\mathcal{F}$.
Furthermore, twisted equivariant covariant derivatives behave well under projection.
Fix a strongly non-degenerate equivariant metric
${\bf g}$ and a submanifold ideal $\mathcal{C}$ such that Axiom~1 and Axiom~2,
defined in the previous section, are satisfied.
Also fix an equivariant covariant derivative
$\nabla^\mathcal{R}$ on $\mathcal{A}$.
\begin{proposition}
For any twist $\mathcal{F}$ on $H$, the projection
of the twisted covariant derivative coincides with the twist deformation
of the projected equivariant covariant derivative, i.e.
$(\nabla^{\mathcal{A}/\mathcal{C}})^\mathcal{F}_{\mathrm{pr}(X)}\mathrm{pr}(Y)
=\mathrm{pr}_{\bf g}(\nabla^\mathcal{F}_XY)$
for all $X,Y\in\mathfrak{X}_t^1(\mathcal{A})$. Similar statements hold for the
induced (twisted) equivariant covariant derivatives on braided differential forms and
braided multivector fields.
\end{proposition}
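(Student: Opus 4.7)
The plan is to unfold both sides of the identity and show they coincide by a short chain of equalities, using only the definition of the twisted covariant derivative, the $H$-equivariance of the projection $\mathrm{pr}$, and the defining formula of $\nabla^{\mathcal{A}/\mathcal{C}}$ from Lemma~\ref{lemma15}. First I would observe that since $H\rhd \mathfrak{X}^1_t(\mathcal{A})\subseteq \mathfrak{X}^1_t(\mathcal{A})$ (shown in Lemma~\ref{lemma12}, using that $\mathcal{C}$ is an $H$-stable ideal), for any $X,Y\in\mathfrak{X}^1_t(\mathcal{A})$ the elements $\mathcal{F}_1^{-1}\rhd X$ and $\mathcal{F}_2^{-1}\rhd Y$ are again tangent, so that all the symbols below are well-defined.

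Next I would compute
\begin{align*}
(\nabla^{\mathcal{A}/\mathcal{C}})^{\mathcal{F}}_{\mathrm{pr}(X)}\mathrm{pr}(Y)
&= \nabla^{\mathcal{A}/\mathcal{C}}_{\mathcal{F}_1^{-1}\rhd \mathrm{pr}(X)}\bigl(\mathcal{F}_2^{-1}\rhd \mathrm{pr}(Y)\bigr)\\
&= \nabla^{\mathcal{A}/\mathcal{C}}_{\mathrm{pr}(\mathcal{F}_1^{-1}\rhd X)}\mathrm{pr}(\mathcal{F}_2^{-1}\rhd Y)\\
&= \mathrm{pr}_{\bf g}\bigl(\nabla^{\mathcal{R}}_{\mathcal{F}_1^{-1}\rhd X}(\mathcal{F}_2^{-1}\rhd Y)\bigr)
= \mathrm{pr}_{\bf g}(\nabla^{\mathcal{F}}_X Y),
\end{align*}
where the first equality is the definition of the twisted covariant derivative applied to $\nabla^{\mathcal{A}/\mathcal{C}}$, the second uses that the $H$-action on $\mathfrak{X}^1_{\mathcal{R}}(\mathcal{A}/\mathcal{C})$ is by construction the pushforward of the one on $\mathfrak{X}^1_t(\mathcal{A})$, and the third is Lemma~\ref{lemma15}. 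This is the heart of the statement, and it really is a one-line verification once the preceding framework is in place.

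For the extension to braided multivector fields I would argue by induction on the degree, using that both sides obey the same braided Leibniz rule with respect to $\wedge_{\mathcal{F}}$ and that the induction base (degree one) has just been established; compatibility with $\wedge_{\mathcal{F}}$ on the projection side is Theorem~\ref{thm07}, and on the twist side it is the definition of the extension. For braided differential forms I would dualise via the braided pairing $\langle\cdot,\cdot\rangle_{\mathcal{R}}$: writing
\[
\langle (\tilde{\nabla}^{\mathcal{A}/\mathcal{C}})^{\mathcal{F}}_{\mathrm{pr}(X)}\mathrm{pr}(\omega),\mathrm{pr}(Y)\rangle_{\mathcal{R}}
\]
and expanding using the twisted analogue of the defining relation $\langle\tilde{\nabla}\omega,Y\rangle = \mathscr{L}\langle\omega,Y\rangle - \langle\omega,\nabla Y\rangle$, everything reduces to the multivector case together with the already-established compatibility of the projection with $\mathscr{L}^{\mathcal{F}}$ and with the pairing from Theorem~\ref{thm07}. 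Non-degeneracy of $\langle\cdot,\cdot\rangle_{\mathcal{R}}$ on $\mathfrak{X}^1_{\mathcal{R}}(\mathcal{A}/\mathcal{C})$ then forces the equality.

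The main potential obstacle is not the computation itself but making sure that the various projected objects are well-defined in the twisted setting: one must check that the assumptions guaranteeing Lemma~\ref{lemma15} (Axiom~1 and Axiom~2) interact properly with $\mathcal{F}$, i.e.\ that replacing the products and module actions by their twisted counterparts does not enlarge $\mathfrak{X}^1_0(\mathcal{A})$ nor spoil non-degeneracy of ${\bf g}_{\mathcal{A}/\mathcal{C}}$. This is harmless because $\mathfrak{X}^1_0(\mathcal{A})$ is characterised by an $H$-stable condition ($X(\mathcal{A})\subseteq\mathcal{C}$) and the twisted metric ${\bf g}_{\mathcal{F}}$ only reshuffles arguments via $\mathcal{F}$, but spelling this out carefully is the only place where care is required.
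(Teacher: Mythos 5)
Your chain of equalities is exactly the paper's proof (written in the opposite direction): unfold the twisted covariant derivative, commute the $H$-action with $\mathrm{pr}$, and apply the defining identity of $\nabla^{\mathcal{A}/\mathcal{C}}$ from Lemma~\ref{lemma15}. The paper likewise dispatches the extensions to multivector fields and forms with a "similarly one proves" remark, so your additional inductive/duality sketch is consistent with, if slightly more explicit than, the original.
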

\begin{proof}
For all $X,Y\in\mathfrak{X}^1_t(\mathcal{A})$ one obtains
\begin{align*}
    \mathrm{pr}_{\bf g}(\nabla^\mathcal{F}_XY)
    =&\mathrm{pr}_{\bf g}(\nabla^\mathcal{R}_{\mathcal{F}_1^{-1}\rhd X}
    (\mathcal{F}_2^{-1}\rhd Y))
    =\nabla^{\mathcal{A}/\mathcal{C}}_{\mathrm{pr}(\mathcal{F}_1^{-1}\rhd X)}
    (\mathrm{pr}(\mathcal{F}_2^{-1}\rhd Y))\\
    =&\nabla^{\mathcal{A}/\mathcal{C}}_{\mathcal{F}_1^{-1}\rhd\mathrm{pr}(X)}
    (\mathcal{F}_2^{-1}\rhd\mathrm{pr}(Y))
    =(\nabla^{\mathcal{A}/\mathcal{C}})^\mathcal{F}_{\mathrm{pr}(X)}\mathrm{pr}(Y)
\end{align*}
and similarly one proves the statements about the induced
equivariant covariant derivatives.
\end{proof}
There are explicit formulas for the curvature and torsion of the twisted
covariant derivative on a submanifold algebra in terms of the initial
curvature and torsion.
\begin{corollary}
For all $X,Y,Z\in\mathfrak{X}^1_t(\mathcal{A})$
\begin{allowdisplaybreaks}
\begin{align*}
    R^{(\nabla^{\mathcal{A}/\mathcal{C}})^\mathcal{F}}&
    (\mathrm{pr}(X),\mathrm{pr}(Y))(\mathrm{pr}(Z))\\
    =&R^{\nabla^{\mathcal{A}/\mathcal{C}}}\bigg(
    (\mathcal{F}_{1(1)}^{-1}\mathcal{F}_1^{'-1})
    \rhd\mathrm{pr}(X),
    (\mathcal{F}_{1(2)}^{-1}\mathcal{F}_2^{'-1})\rhd\mathrm{pr}(Y)
    \bigg)(\mathcal{F}_2^{-1}\rhd\mathrm{pr}(Z))\\
    =&\mathrm{pr}\bigg(R^{\nabla^\mathcal{R}}\bigg(
    (\mathcal{F}_{1(1)}^{-1}\mathcal{F}_1^{'-1})
    \rhd X,
    (\mathcal{F}_{1(2)}^{-1}\mathcal{F}_2^{'-1})\rhd Y
    \bigg)(\mathcal{F}_2^{-1}\rhd Z)
    \bigg)
\end{align*}
\end{allowdisplaybreaks}
and
\begin{align*}
    \mathrm{Tor}^{(\nabla^{{\mathcal{A}/\mathcal{C}}})^\mathcal{F}}
    (\mathrm{pr}(X),\mathrm{pr}(Y))
    =&\mathrm{Tor}^{\nabla^{\mathcal{A}/\mathcal{C}}}
    (\mathcal{F}_1^{-1}\rhd\mathrm{pr}(X),
    \mathcal{F}_2^{-1}\rhd\mathrm{pr}(Y))\\
    =&\mathrm{pr}\bigg(\mathrm{Tor}^{\nabla^\mathcal{R}}(\mathcal{F}_1^{-1}\rhd X,
    \mathcal{F}_2^{-1}\rhd Y)\bigg)
\end{align*}
hold.
\end{corollary}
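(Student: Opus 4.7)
The plan is to derive both identities by unpacking the twisted curvature and twisted torsion according to the definition of an equivariant covariant derivative with respect to $\mathcal{R}_\mathcal{F}$, combined with the previous proposition (projection commutes with twist deformation of covariant derivatives) and Proposition~\ref{prop13} (twist deformation commutes with the braided Schouten--Nijenhuis bracket, via the isomorphism ${}^\mathcal{F}$). The first equality of each identity is a general statement about twist deformations of equivariant covariant derivatives; the second is an application of the projection intertwining property.

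First I would focus on torsion, which is simpler and will illustrate the mechanism. Expanding $\mathrm{Tor}^{(\nabla^{\mathcal{A}/\mathcal{C}})^\mathcal{F}}(\mathrm{pr}(X),\mathrm{pr}(Y))$ by definition of torsion with respect to $\mathcal{R}_\mathcal{F}$, one has three terms: $(\nabla^{\mathcal{A}/\mathcal{C}})^\mathcal{F}_{\mathrm{pr}(X)}\mathrm{pr}(Y)$, minus $(\nabla^{\mathcal{A}/\mathcal{C}})^\mathcal{F}_{\mathcal{R}_{\mathcal{F}1}^{-1}\rhd\mathrm{pr}(Y)}(\mathcal{R}_{\mathcal{F}2}^{-1}\rhd\mathrm{pr}(X))$, minus $[\mathrm{pr}(X),\mathrm{pr}(Y)]_\mathcal{F}$. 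Expanding each twisted covariant derivative via $(\nabla)^\mathcal{F}_U V=\nabla_{\mathcal{F}_1^{-1}\rhd U}(\mathcal{F}_2^{-1}\rhd V)$, and using $\mathcal{R}_\mathcal{F}=\mathcal{F}_{21}\mathcal{R}\mathcal{F}^{-1}$ to simplify the second term, one obtains $\nabla^{\mathcal{A}/\mathcal{C}}_{\mathcal{F}_1^{-1}\rhd\mathrm{pr}(X)}(\mathcal{F}_2^{-1}\rhd\mathrm{pr}(Y)) - \nabla^{\mathcal{A}/\mathcal{C}}_{\mathcal{F}_1^{-1}\rhd(\mathcal{R}_1^{-1}\rhd\mathrm{pr}(Y))}(\mathcal{F}_2^{-1}\rhd(\mathcal{R}_2^{-1}\rhd\mathrm{pr}(X))) - [\mathcal{F}_1^{-1}\rhd\mathrm{pr}(X),\mathcal{F}_2^{-1}\rhd\mathrm{pr}(Y)]_\mathcal{R}$, which is exactly $\mathrm{Tor}^{\nabla^{\mathcal{A}/\mathcal{C}}}(\mathcal{F}_1^{-1}\rhd\mathrm{pr}(X),\mathcal{F}_2^{-1}\rhd\mathrm{pr}(Y))$. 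Then the previous proposition (projection intertwines twist deformation) and $\mathrm{pr}_{\mathbf{g}}(\nabla^\mathcal{R}_U V)=\nabla^{\mathcal{A}/\mathcal{C}}_{\mathrm{pr}(U)}\mathrm{pr}(V)$ for tangent fields deliver the second equality.

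For curvature the computation is analogous but involves two nested twisted covariant derivatives. The key step will be that $(\nabla^{\mathcal{A}/\mathcal{C}})^\mathcal{F}_X (\nabla^{\mathcal{A}/\mathcal{C}})^\mathcal{F}_Y Z$ expands, using $H$-equivariance of $\nabla^{\mathcal{A}/\mathcal{C}}$, into $\nabla^{\mathcal{A}/\mathcal{C}}_{\mathcal{F}_1^{-1}\rhd X}\nabla^{\mathcal{A}/\mathcal{C}}_{(\mathcal{F}_{2(1)}^{-1}\mathcal{F}_1^{'-1})\rhd Y}((\mathcal{F}_{2(2)}^{-1}\mathcal{F}_2^{'-1})\rhd Z)$; the inverse $2$-cocycle condition
\[
(\Delta\otimes\mathrm{id})(\mathcal{F}^{-1})(\mathcal{F}^{-1}\otimes 1)
=(\mathrm{id}\otimes\Delta)(\mathcal{F}^{-1})(1\otimes\mathcal{F}^{-1})
\]
then rewrites the leading two legs as $\mathcal{F}_{1(1)}^{-1}\mathcal{F}_1^{'-1}\otimes\mathcal{F}_{1(2)}^{-1}\mathcal{F}_2^{'-1}$ paired with $\mathcal{F}_2^{-1}$ in the third slot, producing precisely the arguments appearing in the statement. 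The second nested term and the bracket term are handled in parallel, and the difference assembles into $R^{\nabla^{\mathcal{A}/\mathcal{C}}}$ applied to those same arguments. Finally, one more application of the previous proposition, applied twice to move $\mathrm{pr}$ through both covariant derivatives and once to move it through the braided Schouten--Nijenhuis bracket (Proposition~\ref{prop16}), gives the second equality.

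The main obstacle will be purely combinatorial: keeping track of the nested Sweedler indices of $\mathcal{F}^{-1}$ and invoking the inverse $2$-cocycle condition in the correct direction to match the specific grouping $(\mathcal{F}_{1(1)}^{-1}\mathcal{F}_1^{'-1})\otimes(\mathcal{F}_{1(2)}^{-1}\mathcal{F}_2^{'-1})\otimes\mathcal{F}_2^{-1}$ appearing in the statement. Once the cocycle identity has been applied to bring the second nested term and the twisted bracket into the same index configuration as the first, the three terms recombine into $R^{\nabla^{\mathcal{A}/\mathcal{C}}}$ with the displayed arguments, and the rest of the proof is formal.
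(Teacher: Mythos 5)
Your proposal follows essentially the same route as the paper: expand the twisted curvature and torsion by their definitions with respect to $\mathcal{R}_\mathcal{F}$, push the legs of $\mathcal{F}^{-1}$ through using $H$-equivariance, the inverse $2$-cocycle identity and $\mathcal{R}_\mathcal{F}=\mathcal{F}_{21}\mathcal{R}\mathcal{F}^{-1}$, reassemble into $R^{\nabla^{\mathcal{A}/\mathcal{C}}}$ and $\mathrm{Tor}^{\nabla^{\mathcal{A}/\mathcal{C}}}$ with the twisted arguments, and conclude with the projection results for curvature and torsion. One bookkeeping point to fix when writing it out: in your intermediate torsion formula the second term should be $\nabla^{\mathcal{A}/\mathcal{C}}_{(\mathcal{R}_1^{-1}\mathcal{F}_2^{-1})\rhd\mathrm{pr}(Y)}((\mathcal{R}_2^{-1}\mathcal{F}_1^{-1})\rhd\mathrm{pr}(X))$, since the cancellation $\mathcal{F}^{-1}\mathcal{R}_\mathcal{F}^{-1}=\mathcal{R}^{-1}\mathcal{F}_{21}^{-1}$ places $\mathcal{R}^{-1}$ on the outside rather than $\mathcal{F}^{-1}$ acting first as you wrote.
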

\begin{proof}
Let $X,Y,Z\in\mathfrak{X}^1_t(\mathcal{A})$. Then
\begin{allowdisplaybreaks}
\begin{align*}
    R^{(\nabla^{\mathcal{A}/\mathcal{C}})^\mathcal{F}}&
    (\mathrm{pr}(X),\mathrm{pr}(Y))(\mathrm{pr}(Z))\\
    =&(\nabla^{\mathcal{A}/\mathcal{C}})^\mathcal{F}_{\mathrm{pr}(X)}
    (\nabla^{\mathcal{A}/\mathcal{C}})^\mathcal{F}_{\mathrm{pr}(Y)}
    \mathrm{pr}(Z)
    -(\nabla^{\mathcal{A}/\mathcal{C}})^\mathcal{F}_{
    \mathcal{R}_{\mathcal{F}1}^{-1}\rhd\mathrm{pr}(Y)}
    (\nabla^{\mathcal{A}/\mathcal{C}})^\mathcal{F}_{
    \mathcal{R}_{\mathcal{F}2}^{-1}\rhd\mathrm{pr}(X)}\mathrm{pr}(Z)\\
    &-(\nabla^{\mathcal{A}/\mathcal{C}})^\mathcal{F}_{[\mathrm{pr}(X),
    \mathrm{pr}(Y)]_{\mathcal{R}_\mathcal{F}}}\mathrm{pr}(Z)\\
    =&\nabla^{\mathcal{A}/\mathcal{C}}_{\mathcal{F}_1^{-1}\rhd\mathrm{pr}(X)}
    \nabla^{\mathcal{A}/\mathcal{C}}_{
    (\mathcal{F}_{2(1)}^{-1}\mathcal{F}_1^{'-1})\rhd\mathrm{pr}(Y)}
    ((\mathcal{F}_{2(2)}^{-1}\mathcal{F}_2^{'-1})\rhd\mathrm{pr}(Z))\\
    &-\nabla^{\mathcal{A}/\mathcal{C}}_{
    (\mathcal{F}_1^{-1}\mathcal{R}_{\mathcal{F}1}^{-1})\rhd\mathrm{pr}(Y)}
    \nabla^{\mathcal{A}/\mathcal{C}}_{
    (\mathcal{F}_{2(1)}^{-1}\mathcal{F}_1^{'-1}\mathcal{R}_{\mathcal{F}2}^{-1})
    \rhd\mathrm{pr}(X)}
    ((\mathcal{F}_{2(2)}^{-1}\mathcal{F}_2^{'-1})\rhd\mathrm{pr}(Z))\\
    &-\nabla^{\mathcal{A}/\mathcal{C}}_{[\mathcal{F}_{1\widehat{(1)}}^{-1}\rhd
    \mathrm{pr}(X),
    \mathcal{F}_{1\widehat{(2)}}^{-1}\rhd\mathrm{pr}(Y)]_{
    \mathcal{R}_\mathcal{F}}}
    (\mathcal{F}_2^{-1}\rhd\mathrm{pr}(Z))\\
    =&\nabla^{\mathcal{A}/\mathcal{C}}_{\mathcal{F}_1^{-1}\rhd\mathrm{pr}(X)}
    \nabla^{\mathcal{A}/\mathcal{C}}_{
    (\mathcal{F}_{2(1)}^{-1}\mathcal{F}_1^{'-1})\rhd\mathrm{pr}(Y)}
    ((\mathcal{F}_{2(2)}^{-1}\mathcal{F}_2^{'-1})\rhd\mathrm{pr}(Z))\\
    &-\nabla^{\mathcal{A}/\mathcal{C}}_{
    (\mathcal{F}_{1(1)}^{-1}\mathcal{F}_1^{'-1}\mathcal{R}_{\mathcal{F}1}^{-1})
    \rhd\mathrm{pr}(Y)}
    \nabla^{\mathcal{A}/\mathcal{C}}_{
    (\mathcal{F}_{1(2)}^{-1}\mathcal{F}_2^{'-1}\mathcal{R}_{\mathcal{F}2}^{-1})
    \rhd\mathrm{pr}(X)}
    (\mathcal{F}_{2}^{-1}\rhd\mathrm{pr}(Z))\\
    &-\nabla^{\mathcal{A}/\mathcal{C}}_{[
    (\mathcal{F}_{1(1)}^{-1}\mathcal{F}_1^{'-1})\rhd
    \mathrm{pr}(X),
    (\mathcal{F}_{1(2)}^{-1}\mathcal{F}_2^{'-1})\rhd\mathrm{pr}(Y)]_\mathcal{R}}
    (\mathcal{F}_2^{-1}\rhd\mathrm{pr}(Z))\\
    =&\nabla^{\mathcal{A}/\mathcal{C}}_{\mathcal{F}_1^{-1}\rhd\mathrm{pr}(X)}
    \nabla^{\mathcal{A}/\mathcal{C}}_{
    (\mathcal{F}_{2(1)}^{-1}\mathcal{F}_1^{'-1})\rhd\mathrm{pr}(Y)}
    ((\mathcal{F}_{2(2)}^{-1}\mathcal{F}_2^{'-1})\rhd\mathrm{pr}(Z))\\
    &-\nabla^{\mathcal{A}/\mathcal{C}}_{
    (\mathcal{R}_{1}^{-1}\mathcal{F}_{1(2)}^{-1}\mathcal{F}_2^{'-1})
    \rhd\mathrm{pr}(Y)}
    \nabla^{\mathcal{A}/\mathcal{C}}_{
    (\mathcal{R}_{\mathcal{F}1}^{-1}\mathcal{F}_{1(1)}^{-1}\mathcal{F}_2^{'-1})
    \rhd\mathrm{pr}(X)}
    (\mathcal{F}_{2}^{-1}\rhd\mathrm{pr}(Z))\\
    &-\nabla^{\mathcal{A}/\mathcal{C}}_{[
    (\mathcal{F}_{1(1)}^{-1}\mathcal{F}_1^{'-1})\rhd
    \mathrm{pr}(X),
    (\mathcal{F}_{1(2)}^{-1}\mathcal{F}_2^{'-1})\rhd\mathrm{pr}(Y)]_\mathcal{R}}
    (\mathcal{F}_2^{-1}\rhd\mathrm{pr}(Z))\\
    =&R^{\mathcal{A}/\mathcal{C}}(
    (\mathcal{F}_{1(1)}^{-1}\mathcal{F}_1^{'-1})\rhd\mathrm{pr}(X),
    (\mathcal{F}_{1(2)}^{-1}\mathcal{F}_2^{'-1})\rhd\mathrm{pr}(Y))
    (\mathcal{F}_2^{-1}\rhd\mathrm{pr}(Z))
\end{align*}
\end{allowdisplaybreaks}
and
\begin{allowdisplaybreaks}
\begin{align*}
    \mathrm{Tor}^{(\nabla^{\mathcal{A}/\mathcal{C}})^\mathcal{F}}
    (\mathrm{pr}(X),\mathrm{pr}(Y))
    =&(\nabla^{\mathcal{A}/\mathcal{C}})^\mathcal{F}_{\mathrm{pr}(X)}
    \mathrm{pr}(Y)
    -(\nabla^{\mathcal{A}/\mathcal{C}})^\mathcal{F}_{
    \mathcal{R}_{\mathcal{F}1}^{-1}\rhd\mathrm{pr}(Y)}
    (\mathcal{R}_{\mathcal{F}2}^{-1}\rhd\mathrm{pr}(X))\\
    &-[\mathrm{pr}(X),\mathrm{pr}(Y)]_{\mathcal{R}_\mathcal{F}}\\
    =&\nabla^{\mathcal{A}/\mathcal{C}}_{\mathcal{F}_1^{-1}\rhd\mathrm{pr}(X)}
    (\mathcal{F}_2^{-1}\rhd\mathrm{pr}(Y))\\
    &-\nabla^{\mathcal{A}/\mathcal{C}}_{
    (\mathcal{F}_1^{-1}\mathcal{R}_{\mathcal{F}1}^{-1})\rhd\mathrm{pr}(Y)}
    ((\mathcal{F}_2^{-1}\mathcal{R}_{\mathcal{F}2}^{-1})\rhd\mathrm{pr}(X))\\
    &-[\mathcal{F}_1^{-1}\rhd\mathrm{pr}(X),
    \mathcal{F}_2^{-1}\rhd\mathrm{pr}(Y)]_\mathcal{R}\\
    =&\nabla^{\mathcal{A}/\mathcal{C}}_{\mathcal{F}_1^{-1}\rhd\mathrm{pr}(X)}
    (\mathcal{F}_2^{-1}\rhd\mathrm{pr}(Y))\\
    &-\nabla^{\mathcal{A}/\mathcal{C}}_{
    (\mathcal{R}_{1}^{-1}\mathcal{F}_2^{-1})\rhd\mathrm{pr}(Y)}
    ((\mathcal{R}_{2}^{-1}\mathcal{F}_1^{-1})\rhd\mathrm{pr}(X))\\
    &-[\mathcal{F}_1^{-1}\rhd\mathrm{pr}(X),
    \mathcal{F}_2^{-1}\rhd\mathrm{pr}(Y)]_\mathcal{R}\\
    =&\mathrm{Tor}^{\nabla^{\mathcal{A}/\mathcal{C}}}
    (\mathcal{F}_1^{-1}\rhd\mathrm{pr}(X),
    \mathcal{F}_2^{-1}\rhd\mathrm{pr}(Y))
\end{align*}
\end{allowdisplaybreaks}
follow, where we viewed $(\nabla^{\mathcal{A}/\mathcal{C}})^\mathcal{F}$
as a covariant derivative with respect to $\mathcal{R}_\mathcal{F}$ 
(see Proposition~\ref{prop12}) and we identified $[\cdot,\cdot]_{
\mathcal{R}_\mathcal{F}}$ with the twisted commutator as in 
Proposition~\ref{prop13}.
\end{proof}
This completes the discussion of the commutative diagram
(\ref{DiagramSubmanifolds}). We conclude the chapter with the study
of an explicit example of twist deformation quantization on a smooth
submanifold.

\section{Twist Deformation of Quadric Surfaces}\label{Sec4.4}

The purpose of this section is to suggest an explicitly construction scheme for twisted 
Cartan calculi and exemplifying this by elaborating one example.
The strategy is to consider some classes of submanifolds of $\mathbb{R}^D$,
find suitable symmetries which inherit explicit Drinfel'd twists that also respect the
submanifolds and project the twisted Cartan calculi to the submanifolds.
This should illustrate the utility of the abstract machinery we developed in the
previous sections. The reason not to consider a deformation of the submanifolds
from the beginning but rather performing a detour, is that the Cartan calculus on
$\mathbb{R}^D$
is much easier to handle. Furthermore, the submanifolds are given in terms
of relations in coordinates of $\mathbb{R}^D$. From our previous results we know
that projection and twist deformation commute, so we are able to
first quantize $\mathbb{R}^D$ and pass to the submanifolds afterwards.
Let us quickly recall the notion of symmetries for $\mathbb{R}^D$.
Consider global coordinates
$x=(x^1,\ldots,x^D)$ of $\mathbb{R}^D$ together with the dual basis
$(\partial_1,\ldots,\partial_D)$
of vector fields with ${}^*$-involutions $(x^i)^*=x^i$ and $\partial_i^*=-\partial_i$.
Assume that there is a $\mathscr{U}\mathfrak{g}$-module ${}^*$-algebra action
$\rhd\colon\mathscr{U}\mathfrak{g}\otimes\mathscr{C}^\infty(\mathbb{R}^D)\rightarrow
\mathscr{C}^\infty(\mathbb{R}^D)$ for a complex Lie ${}^*$-algebra $\mathfrak{g}$.
The induced $\mathscr{U}\mathfrak{g}$-actions on
$X=
X^{i_1\cdots i_k}
\partial_{i_1}\wedge\ldots\wedge\partial_{i_k}
\in\mathfrak{X}^k(\mathbb{R}^D)$
and
$\omega=
\omega_{i_1\cdots i_k}
\mathrm{d}x^{i_1}\wedge\ldots\wedge\mathrm{d}x^{i_k}
\in\Omega^k(\mathbb{R}^D)$ are
$$
\xi\rhd X
=(\xi_{(1)}\rhd X^{i_1\cdots i_k})
(\xi_{(2)}\rhd\partial_{i_1})\wedge\ldots\wedge
(\xi_{(k+1)}\rhd\partial_{i_k})\in\mathfrak{X}^k(\mathbb{R}^D)
$$
and
$$
\xi\rhd\omega
=(\xi_{(1)}\rhd\omega_{i_1\cdots i_k})
(\mathrm{d}(\xi_{(2)}\rhd x^{i_1}))\wedge\ldots\wedge
(\mathrm{d}(\xi_{(2)}\rhd x^{i_k}))
\in\Omega^k(\mathbb{R}^D)
$$
for all $\xi\in\mathscr{U}\mathfrak{g}$, respectively, where
$\xi\rhd\partial_i\in\mathfrak{X}^1(\mathbb{R}^D)$ is defined by
$$
\mathscr{L}_{\xi\rhd\partial_i}f
=\xi_{(1)}\rhd\big(\mathscr{L}_{\partial_i}(S(\xi_{(2)})\rhd f)\big)
\in\mathscr{C}^\infty(\mathbb{R}^D)
$$
for $f\in\mathscr{C}^\infty(\mathbb{R}^D)$.
As symmetries $\mathfrak{g}$ of $\mathbb{R}^D$, we choose a finite-dimensional
Lie ${}^*$-subalgebra of $\mathfrak{X}^1(\mathbb{R}^D)$ with module algebra action
$\rhd\colon\mathscr{U}\mathfrak{g}\otimes\mathscr{C}^\infty(\mathbb{R}^D)
\rightarrow\mathscr{C}^\infty(\mathbb{R}^D)$, given by the Lie derivative
$\mathscr{L}$. In fact, for all $\xi,\eta\in\mathscr{U}\mathfrak{g}$ one has
$\mathscr{L}_{\xi\eta}f=\mathscr{L}_\xi(\mathscr{L}_\eta f)$,
$\mathscr{L}_1f=f$
and
$
\mathscr{L}_\xi(fg)=(\mathscr{L}_{\xi_{(1)}}f)(\mathscr{L}_{\xi_{(2)}}g),~
\mathscr{L}_\xi 1=\epsilon(\xi)1
$
for all $f,g\in\mathscr{C}^\infty(\mathbb{R}^D)$, which is easily verified
on primitive elements.
Now, let us turn to the submanifolds we are interested in.
Consider a smooth
function $F\colon\mathbb{R}^D\rightarrow\mathbb{R}$ having zero as a regular value.
According to the regular value theorem (see e.g. \cite{Lee2003}~Cor.~5.24),
the zero set $N=F^{-1}(\{0\})$ is a
closed embedded submanifold of dimension $D-1$. We denote the embedding by
$\iota\colon N\hookrightarrow \mathbb{R}^D$ and the corresponding vanishing ideal
of functions by $\mathcal{C}$. As an additional assumption we suppose that $N$
is closed under the ${}^*$-involution. The surjective projection
$\mathrm{pr}\colon\Omega^\bullet(\mathbb{R}^D)\rightarrow\Omega^\bullet(N)$ is
given by the pullback of $\iota$. On functions it reads
$\mathrm{pr}\colon\mathscr{C}^\infty(\mathbb{R}^D)\ni f\mapsto f+\mathcal{C}
\in\mathscr{C}^\infty(N)$. A vector field $X\in\mathfrak{X}^1(\mathbb{R}^D)$
on $\mathbb{R}^D$ is tangent to $N$ if and only if its action on functions respects
the vanishing ideal, i.e. if and only if $\mathscr{L}_X\mathcal{C}\subseteq\mathcal{C}$.
As usual we write $X\in\mathfrak{X}^1_t(\mathbb{R}^D)$
in this case. The Lie ${}^*$-algebra $\mathfrak{X}^1_t(\mathbb{R}^D)$ can be projected
to $\mathfrak{X}^1(N)$ by assigning to every tangent vector field on $X\in\mathfrak{X}^1_t(\mathbb{R}^D)$ 
the unique $\iota$-related vector field on $N$ (c.f. \cite{Lee2003}~Lem.~5.39).
Geometrically one might think of
this $\iota$-related vector field as the restriction of $X$ to $\iota(N)$.
On the other hand one might view this projection as assigning to $X$ its
equivalence class consisting of all vector fields on $\mathbb{R}^D$ that coincide
with $X$ up to vector fields $X_0\in\mathfrak{X}^1(\mathbb{R}^D)$ satisfying
$\mathscr{L}_{X_0}\mathscr{C}^\infty(\mathbb{R}^D)\subseteq\mathcal{C}$.
More generally, $\mathfrak{X}^\bullet_t(\mathbb{R}^D)$ is the
Gerstenhaber algebra of multivector fields on $\mathbb{R}^D$ which are tangent to $N$
and $\mathrm{pr}\colon\mathfrak{X}^\bullet_t(\mathbb{R}^D)
\rightarrow\mathfrak{X}^\bullet(N)$ denotes the surjective projection
with kernel $\mathfrak{X}^\bullet_0(\mathbb{R}^D)$.
Since we are interested in quantizing the submanifold $N$
we have to require $\mathfrak{g}\subseteq\mathfrak{X}^1_t(\mathbb{R}^D)
\subseteq\mathfrak{X}^1(\mathbb{R}^D)$. In other words, we have
to choose $\mathfrak{g}$ such that
$
\mathscr{L}_\mathfrak{g}\mathcal{C}\subseteq\mathcal{C}.
$
If this is achieved, the extension to the universal enveloping algebra automatically
satisfies
$
\mathscr{L}_{\mathscr{U}\mathfrak{g}}\mathcal{C}\subseteq\mathcal{C},
$
giving a well-defined $\mathscr{U}\mathfrak{g}$-module ${}^*$-algebra action on
$\mathscr{C}^\infty(\mathbb{R}^D)$ that projects to a $\mathscr{U}\mathfrak{g}$-module
${}^*$-algebra action on $\mathscr{C}^\infty(N)$. From now on $D=3$. We are going to
discuss a twist quantization of the $2$-sheet elliptic hyperboloid, which is
a quadric surface of $\mathbb{R}^3$. The cases of the $1$-sheet elliptic hyperboloid and the
elliptic cone are entirely similar and likewise all quadric surfaces of $\mathbb{R}^3$ 
admit a twist quantization (see \cite{GaetanoThomas19}). Furthermore, we limit our consideration to
the Cartan calculus and its twist deformation and only mention that functions,
vector fields and differential forms on the submanifold are determined by relations
that admit twist quantization, such that the latter controll the twisted objects.
This point of view is immersed in \cite{GaetanoThomas19}.

\subsection*{$2$-Sheet Elliptic Hyperboloid}

Let $a,c>0$ be two positive parameters. The zero set
$N=f^{-1}_{2EH}(\{0\})$, where
\begin{equation}
    f_{2EH}(x)
    =\frac{1}{2}x^1x^3+\frac{a}{2}(x^2)^2+c
\end{equation}
is said to be the $2$-sheet elliptic hyperboloid
in light-like coordinates. It is a closed embedded
smooth submanifold of $\mathbb{R}^3$ according to the regular value theorem.
It is obtained from the more common normal form
$f_{\mathrm{EC}}(y)=\frac{1}{2}((y^1)^2+a(y^2)^2-(y^2)^2)+c$ of the defining
equation via the coordinate transformation
\begin{equation}\label{eq54}
    x^1
    =y^1+y^3,~
    x^2
    =y^2,~
    x^3
    =y^1-y^3
\end{equation}
from Cartesian coordinates $(y^1,y^2,y^3)$. The three vector fields 
\begin{align*}
    H=&2x^1\partial_1-2x^3\partial_3,\\
    E=&\frac{1}{\sqrt{a}}x^1\partial_2-2\sqrt{a}x^2\partial_3,\\
    E'=&\frac{1}{\sqrt{a}}x^3\partial_2-2\sqrt{a}x^2\partial_1
\end{align*}
of $\mathbb{R}^3$ satisfy
\begin{align*}
    [H,E]=2E,~
    [H,E']=-2E'
    \text{ and }
    [E',E]=H.
\end{align*}
They span the Lie ${}^*$-algebra $\mathfrak{g}=\mathfrak{so}(2,1)$
and provide a basis of the tangent vector fields $\mathfrak{X}^1_t(\mathbb{R}^3)$,
since
$$
H(f_{\mathrm{2EH}})=E(f_{\mathrm{2EH}})=E'(f_{\mathrm{2EH}})=0.
$$
Then, according to Example~\ref{example05}~ii.)
\begin{equation}\label{eq51}
    \mathcal{F}=\exp\bigg(\frac{H}{2}\otimes\log(1+\mathrm{i}\hbar E)\bigg)
    \in(\mathscr{U}\mathfrak{g}\otimes\mathscr{U}\mathfrak{g})[[\hbar]]
\end{equation}
is a unitary Jordanian Drinfel'd twist and Theorem~\ref{thm07}, also in the form of
eq.(\ref{eq53}), implies the following proposition.
\begin{proposition}
The twist star product
$$
f\star_\mathcal{F}g
=(\mathcal{F}_1^{-1}\rhd f)(\mathcal{F}_2^{-1}\rhd g),
\text{ where }f,g\in\mathscr{C}^\infty(\mathbb{R}^3),
$$
induced by the unitary twist (\ref{eq51})
projects to a twist star product on the $2$-sheet elliptic hyperboloid $N$,
i.e. $f\star_\mathcal{F}g\in\mathscr{C}^\infty(N)[[\hbar]]$ for all
$f,g\in\mathscr{C}^\infty(N)$. Note that
$(\mathscr{C}^\infty(N)[[\hbar]],\star_\mathcal{F})$ is a ${}^*$-algebra with
${}^*$-involution
\begin{equation}
    f^{*_\mathcal{F}}
    =S(\beta)\rhd\overline{f}
\end{equation}
for all $f\in\mathscr{C}^\infty(N)$, where $\beta=\mathcal{F}_1S(\mathcal{F}_2)$.
Furthermore, we obtain twist deformations 
$(\mathfrak{X}^\bullet(N)_\mathcal{F},\wedge_\mathcal{F},
\llbracket\cdot,\cdot\rrbracket_\mathcal{F})$ and
$(\Omega^\bullet(N)_\mathcal{F},\wedge_\mathcal{F})$ as projections from $\mathbb{R}^3$.
\end{proposition}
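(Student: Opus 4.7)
The plan is to reduce the proposition to three prior results: (i) that the Jordanian element indeed qualifies as a unitary Drinfel'd twist on $\mathscr{U}\mathfrak{g}[[\hbar]]$, (ii) the compatibility of twist deformation with submanifold projection (Theorem~\ref{thm07} and the special case eq.(\ref{eq53})), and (iii) Proposition~\ref{prop17} on ${}^*$-involutions under unitary twists of cocommutative Hopf algebras. So the first step is to verify that $\mathcal{F}$ of eq.(\ref{eq51}) fits the hypotheses: since $\mathfrak{g}=\mathfrak{so}(2,1)$ is an $\mathbb{R}$-Lie ${}^*$-subalgebra of $\mathfrak{X}^1(\mathbb{R}^3)$ with generators $H,E,E'$ anti-Hermitian and satisfying $[H,E]=2E$, Example~\ref{example05}~ii.) directly provides that $\mathcal{F}$ is a unitary Drinfel'd twist on $\mathscr{U}\mathfrak{g}[[\hbar]]$; in particular $\mathscr{U}\mathfrak{g}$ is cocommutative, so Proposition~\ref{prop17} is applicable.

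Next I would verify that the vanishing ideal $\mathcal{C}\subseteq\mathscr{C}^\infty(\mathbb{R}^3)$ of $N$ is a submanifold ideal in the sense of Definition~\ref{def05}. The equalities $H(f_{2EH})=E(f_{2EH})=E'(f_{2EH})=0$ recorded in the text show that the generating $\mathbb{R}$-basis of $\mathfrak{g}$ acts as tangent derivations; by the Leibniz rule, for $g\in\mathscr{C}^\infty(\mathbb{R}^3)$ and $\xi\in\mathfrak{g}$ one has $\xi\rhd(g\cdot f_{2EH})=(\xi\rhd g)\cdot f_{2EH}+g\cdot(\xi\rhd f_{2EH})\in\mathcal{C}$, and since $\mathcal{C}$ is (smoothly/locally) generated by $f_{2EH}$, this extends to $\mathfrak{g}\rhd\mathcal{C}\subseteq\mathcal{C}$, and hence, by iteration on primitive elements, to $\mathscr{U}\mathfrak{g}\rhd\mathcal{C}\subseteq\mathcal{C}$. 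Surjectivity of $\mathrm{pr}\colon\mathfrak{X}^1_t(\mathbb{R}^3)\to\mathfrak{X}^1(N)$ is the classical extension property for vector fields on a closed embedded submanifold.

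Once $\mathcal{C}$ is certified as a submanifold ideal, Theorem~\ref{thm07}, and in particular the displayed consequence eq.(\ref{eq53}), yields $\mathrm{pr}(f)\star_\mathcal{F}\mathrm{pr}(g)=\mathrm{pr}(f\star_\mathcal{F}g)$, which is exactly the statement that $\star_\mathcal{F}$ descends to $\mathscr{C}^\infty(N)[[\hbar]]$. The same theorem, applied to the Gerstenhaber algebra of tangent multivector fields and the differential graded algebra of differential forms, delivers the twisted structures $(\mathfrak{X}^\bullet(N)_\mathcal{F},\wedge_\mathcal{F},\llbracket\cdot,\cdot\rrbracket_\mathcal{F})$ and $(\Omega^\bullet(N)_\mathcal{F},\wedge_\mathcal{F})$ as genuine projections of the corresponding objects on $\mathbb{R}^3$. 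For the ${}^*$-structure, I would invoke Proposition~\ref{prop17}: it gives a twisted involution $f^{*_\mathcal{F}}=S(\beta)\rhd\overline{f}$ on the deformed algebra $\mathscr{C}^\infty(\mathbb{R}^3)_\mathcal{F}$. The descent of ${}^{*_\mathcal{F}}$ to $N$ follows from two facts already in hand: $\overline{\mathcal{C}}\subseteq\mathcal{C}$ because $N$ was assumed to be closed under the ${}^*$-involution, and $\mathscr{U}\mathfrak{g}\rhd\mathcal{C}\subseteq\mathcal{C}$ ensures $S(\beta)\rhd\overline{\mathcal{C}}\subseteq\mathcal{C}$.

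The argument is thus mainly an assembly of machinery, and I do not expect serious technical obstacles. The most delicate point is the identification of $\mathcal{C}$ as a sheaf-theoretically rather than algebraically generated ideal: one must be mildly careful that the Leibniz argument above really covers all smooth functions vanishing on $N$, not only elements of the form $g\cdot f_{2EH}$. This is a standard consequence of the fact that $N$ is cut out transversally by $f_{2EH}$ (since $0$ is a regular value), so $\mathcal{C}$ is locally the ideal generated by $f_{2EH}$ in $\mathscr{C}^\infty$; the tangency of $H,E,E'$ to $N$ then upgrades to $\mathfrak{g}\rhd\mathcal{C}\subseteq\mathcal{C}$ globally, after which everything else is a direct application of Theorem~\ref{thm07} and Proposition~\ref{prop17}.
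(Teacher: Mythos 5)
Your proposal is correct and follows essentially the same route as the paper, which derives the proposition directly from Example~\ref{example05}~ii.) (unitarity of the Jordanian twist), the tangency relations $H(f_{2EH})=E(f_{2EH})=E'(f_{2EH})=0$ identifying $\mathfrak{g}=\mathfrak{so}(2,1)$ with $\mathfrak{X}^1_t(\mathbb{R}^3)$, and Theorem~\ref{thm07} together with eq.(\ref{eq53}) and Proposition~\ref{prop17}. The extra care you take in checking that the vanishing ideal is locally generated by $f_{2EH}$ (via the regular value theorem) and that $\mathscr{U}\mathfrak{g}\rhd\mathcal{C}\subseteq\mathcal{C}$ follows from the action on primitive elements is a welcome elaboration of steps the paper leaves implicit in its general setup for quadric surfaces, but it does not constitute a different method.
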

There is an explicit description of the twist
deformed Hopf ${}^*$-algebra structure of $\mathscr{U}\mathfrak{g}_\mathcal{F}$
(compare also to \cite{GiZh98}). 
Using the commutation relations of $H,E,E'$ as well as the series expansions
\begin{equation}
\begin{split}
    \log(1+\mathrm{i}\hbar E)
    =&-\sum_{n=1}^\infty\frac{(-\mathrm{i}\hbar E)^n}{n},\\
    \frac{1}{(1+\mathrm{i}\hbar E)^2}
    =&\sum_{n=1}^\infty n(-\mathrm{i}\hbar E)^{n-1}
\end{split}
\hspace{1cm}
\begin{split}
    \frac{1}{1+\mathrm{i}\hbar E}
    =\sum_{n=0}^\infty(-\mathrm{i}\hbar E)^n,
\end{split}
\end{equation}
we prove some preliminary equations.
\begin{lemma}
For all $n\geq 0$
\begin{equation}
\begin{split}
    E^n
    =&(H-2n)E^n,\\
    \bigg(\frac{H}{2}\bigg)^nE
    =&E\bigg(\frac{H}{2}+1\bigg)^n,
\end{split}
\hspace{1cm}
\begin{split}
    E^{'n}
    =&(H+2n)E^{'n},\\
    \bigg(\frac{H}{2}\bigg)^nE'
    =&E'\bigg(\frac{H}{2}-1\bigg)^n
\end{split}
\end{equation}
hold. Furthermore we obtain
\begin{equation}
    \log(1+\mathrm{i}\hbar E)^nH
    =H\log(1+\mathrm{i}\hbar E)^n
    -2\mathrm{i}\hbar n\frac{E}{1+\mathrm{i}\hbar E}\log(1+\mathrm{i}\hbar E)^{n-1}
\end{equation}
and
\begin{allowdisplaybreaks}
\begin{align*}
    \log(1+\mathrm{i}\nu E)^nE'
    =&E'\log(1+\mathrm{i}\hbar E)^{n}\\
    &-\mathrm{i}\hbar nH\log(1+\mathrm{i}\hbar E)^{n-1}\frac{1}{1+\mathrm{i}\hbar E}\\
    &+n\hbar^2\frac{E}{(1+\mathrm{i}\hbar E)^2}\log(1+\mathrm{i}\hbar E)^{n-1}\\
    &+\hbar^2n(n-1)\frac{E}{(1+\mathrm{i}\hbar E)^2}\log(1+\mathrm{i}\hbar E)^{n-2}
\end{align*}
\end{allowdisplaybreaks}
for all $n\geq 0$.
\end{lemma}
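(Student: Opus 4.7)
The plan is to establish the four basic relations first and then bootstrap to the two logarithm identities by series expansion. The four initial identities (modulo a typographical $H$ that is evidently missing on the left-hand sides of the first pair: the intended reading is $E^nH=(H-2n)E^n$ and $E^{\prime n}H=(H+2n)E^{\prime n}$) are all statements about commuting a single degree-zero generator past a power of a single raising/lowering generator in $\mathscr{U}\mathfrak{g}$. I would prove them all by a direct induction on $n$, using only the commutation relations $[H,E]=2E$, $[H,E']=-2E'$. For example, $EH=HE-[H,E]=(H-2)E$ gives the $n=1$ case of $E^nH=(H-2n)E^n$, and if the identity holds for $n$ then $E^{n+1}H=E\cdot E^nH=E(H-2n)E^n=(H-2)E\cdot E^n-2nE^{n+1}=(H-2(n+1))E^{n+1}$. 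The other three identities are entirely analogous.

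Next, I would treat the identity for $\log(1+\mathrm{i}\hbar E)^nH$ by induction on $n$, with the base case $n=1$ computed termwise from the expansion $\log(1+\mathrm{i}\hbar E)=-\sum_{k\geq 1}(-\mathrm{i}\hbar E)^k/k$. Concretely, pushing $H$ to the left inside each summand via $E^kH=(H-2k)E^k$ yields
\[
\log(1+\mathrm{i}\hbar E)\,H
=H\log(1+\mathrm{i}\hbar E)+2\sum_{k\geq 1}(-\mathrm{i}\hbar E)^k
=H\log(1+\mathrm{i}\hbar E)-2\mathrm{i}\hbar\,\frac{E}{1+\mathrm{i}\hbar E},
\]
where in the last step I recognize the geometric series. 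For the inductive step, factor $\log(1+\mathrm{i}\hbar E)^{n}=\log(1+\mathrm{i}\hbar E)^{n-1}\cdot\log(1+\mathrm{i}\hbar E)$, apply the inductive hypothesis to move $H$ past the second factor, and note that $E$, $\log(1+\mathrm{i}\hbar E)$, and $E/(1+\mathrm{i}\hbar E)$ all commute among themselves so the two correction terms combine to the expected coefficient $-2\mathrm{i}\hbar n$. This step is completely mechanical once the $n=1$ case is in hand.

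The real work is in the final formula, $\log(1+\mathrm{i}\hbar E)^nE'$. Here pushing $E'$ through powers of $E$ does not close up after one step, because $[E',E]=H$ and then $[H,E]=2E$, so iterated commutators with $E$ bring in both $H$ and lower powers of $E$. I plan to first establish a closed form for $E'E^k$ by a double induction: using $E'E=EE'+H$ and $HE=E(H+2)$, one obtains $E'E^k=E^kE'+kE^{k-1}H+k(k-1)E^{k-1}$ (the $E^{k-1}H$ comes from $H$ dragged through once via the basic relations, the $k(k-1)E^{k-1}$ from the subsequent $[H,E^{k-1}]$-terms). Substituting this into the $\log$ series and resumming gives three contributions: an $E'\log(1+\mathrm{i}\hbar E)^n$ term, an $H$-linear term proportional to $\sum_k(-\mathrm{i}\hbar)^kE^{k-1}=\frac{1}{1+\mathrm{i}\hbar E}$ times $\log(\cdot)^{n-1}$ (after an inductive step analogous to the $H$ case), and a purely $E$-term proportional to $E/(1+\mathrm{i}\hbar E)^2$. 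The inductive step from $n$ to $n+1$ then has to combine these using the already-established identity for $\log(\cdot)^nH$, which is precisely why the formula ends up displaying both a $\log(\cdot)^{n-1}$ and a $\log(\cdot)^{n-2}$ summand.

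The main obstacle will be the bookkeeping of this last step: one needs to propagate both the $H$-term generated by $[E',E^k]$ and the $H$-term picked up when $H$ itself is later commuted past $\log(\cdot)^{m}$ for $m<n$, and these mix to produce the $n(n-1)$-coefficient on the $\log(\cdot)^{n-2}$ piece. I expect the cleanest presentation is to prove the formula by induction on $n$, using the $n=1$ case (computed directly from the series together with $E'E^k=E^kE'+kE^{k-1}H+k(k-1)E^{k-1}$ and the geometric-series identities for $\sum(-\mathrm{i}\hbar E)^k$ and $\sum k(-\mathrm{i}\hbar E)^{k-1}$), and then feeding the $\log(\cdot)^{n-1}H$ identity from the previous step into the inductive calculation so that all terms reassemble into the stated three summands.
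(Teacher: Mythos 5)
Your proposal is correct and follows essentially the same route as the paper: prove the basic commutation identities (with the evidently missing $H$ on the left-hand sides, as you note) by induction from $[H,E]=2E$, obtain the $n=1$ cases of the logarithm identities by expanding the series termwise and resumming the resulting geometric series, and then induct on $n$, feeding the $\log(1+\mathrm{i}\hbar E)^{m}H$ identity into the $E'$ computation to produce the $\log(\cdot)^{n-2}$ term with coefficient $n(n-1)$. Your intermediate formula $E'E^k=E^kE'+kE^{k-1}H+k(k-1)E^{k-1}$ is an equivalent rearrangement (via $HE^{k-1}=E^{k-1}H+2(k-1)E^{k-1}$) of the paper's $E^kE'=E'E^k+k(k-1)E^{k-1}-kHE^{k-1}$, so no substantive difference arises.
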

\begin{proof}
First remark that
\begin{align*}
    E^nH
    =E^{n-1}(H-2)E
    =(H-2n)E^n,
\end{align*}
where $n\geq 0$, implies
\begin{align*}
    \log(1+\mathrm{i}\hbar E)H
    =&-\sum_{n=1}^\infty\frac{(-\mathrm{i}\hbar)^n}{n}E^nH\\
    =&-\sum_{n=1}^\infty\frac{(-\mathrm{i}\hbar)^n}{n}(H-2n)E^n\\
    =&H\log(1+\mathrm{i}\hbar E)
    -2\mathrm{i}\hbar E\sum_{n=1}^\infty(-\mathrm{i}\hbar E)^{n-1}\\
    =&H\log(1+\mathrm{i}\hbar E)
    -2\mathrm{i}\hbar\frac{E}{1+\mathrm{i}\hbar E}.
\end{align*}
Inductively, this leads to
\begin{align*}
    \log(1+\mathrm{i}\hbar E)^nH
    =&\log(1+\mathrm{i}\hbar E)^{n-1}\bigg(
    H\log(1+\mathrm{i}\hbar E)
    -2\mathrm{i}\hbar\frac{E}{1+\mathrm{i}\hbar E}
    \bigg)\\
    =&H\log(1+\mathrm{i}\hbar E)^n
    -2\mathrm{i}\hbar n\frac{E}{1+\mathrm{i}\hbar E}\log(1+\mathrm{i}\hbar E)^{n-1}
\end{align*}
for all $n\geq 0$. Furthermore
\begin{align*}
    E^nE'
    =&E^{n-1}(E'E-H)\\
    =&E^{n-1}E'E
    +(2(n-1)-H)E^{n-1}\\
    =&E'E^n
    +(2((n-1)+(n-2)+\cdots+1)-nH)E^{n-1}\\
    =&E'E^n+n(n-1)E^{n-1}-nHE^{n-1},
\end{align*}
where we employed the "little Gauß" $\sum_{n=1}^nn=\frac{n(n+1)}{2}$.
Then
\begin{allowdisplaybreaks}
\begin{align*}
    \log(1+\mathrm{i}\hbar E)E'
    =&-\sum_{n=1}^\infty\frac{(-\mathrm{i}\hbar)^n}{n}E^nE'\\
    =&-\sum_{n=1}^\infty\frac{(-\mathrm{i}\hbar)^n}{n}
    (E'E^n+n(n-1)E^{n-1}-nHE^{n-1})\\
    =&E'\log(1+\mathrm{i}\hbar E)
    -\mathrm{i}\hbar H\frac{1}{1+\mathrm{i}\hbar E}
    +\mathrm{i}\hbar\sum_{n=1}^\infty(-\mathrm{i}\hbar)^{n-1}(n-1)E^{n-1}\\
    =&E'\log(1+\mathrm{i}\hbar E)
    -\mathrm{i}\hbar H\frac{1}{1+\mathrm{i}\hbar E}
    +\hbar^2E\sum_{n=2}^\infty(-\mathrm{i}\hbar)^{n-2}(n-1)E^{n-2}\\
    =&E'\log(1+\mathrm{i}\hbar E)
    -\mathrm{i}\hbar H\frac{1}{1+\mathrm{i}\hbar E}
    +\hbar^2\frac{E}{(1+\mathrm{i}\hbar E)^2}
\end{align*}
\end{allowdisplaybreaks}
and inductively, for $n>1$ we obtain
\begin{allowdisplaybreaks}
\begin{align*}
    \log(1+\mathrm{i}\hbar E)^nE'
    =&\log(1+\mathrm{i}\hbar E)^{n-1}\bigg(
    E'\log(1+\mathrm{i}\hbar E)\\
    &-\mathrm{i}\hbar H\frac{1}{1+\mathrm{i}\hbar E}
    +\hbar^2\frac{E}{(1+\mathrm{i}\hbar E)^2}
    \bigg)\\
    =&\log(1+\mathrm{i}\hbar E)^{n-1}E'\log(1+\mathrm{i}\hbar E)\\
    &-\mathrm{i}\hbar\log(1+\mathrm{i}\hbar E)^{n-1}H\frac{1}{1+\mathrm{i}\hbar E}\\
    &+\hbar^2\frac{E}{(1+\mathrm{i}\hbar E)^2}\log(1+\mathrm{i}\hbar E)^{n-1}\\
    =&\log(1+\mathrm{i}\hbar E)^{n-1}E'\log(1+\mathrm{i}\hbar E)\\
    &-\mathrm{i}\hbar\bigg(
    H\log(1+\mathrm{i}\hbar E)^{n-1}\\
    &-2\mathrm{i}\hbar(n-1)\frac{E}{1+\mathrm{i}\hbar E}\log(1+\mathrm{i}\hbar E)^{n-2}
    \bigg)\frac{1}{1+\mathrm{i}\hbar E}\\
    &+\hbar^2\frac{E}{(1+\mathrm{i}\hbar E)^2}\log(1+\mathrm{i}\hbar E)^{n-1}\\
    =&\log(1+\mathrm{i}\hbar E)^{n-1}E'\log(1+\mathrm{i}\hbar E)\\
    &-\mathrm{i}\hbar H\log(1+\mathrm{i}\hbar E)^{n-1}\frac{1}{1+\mathrm{i}\hbar E}\\
    &+2\hbar^2(n-1)\frac{E}{(1+\mathrm{i}\hbar E)^2}\log(1+\mathrm{i}\hbar E)^{n-2}\\
    &+\hbar^2\frac{E}{(1+\mathrm{i}\hbar E)^2}\log(1+\mathrm{i}\hbar E)^{n-1}\\
    =&E'\log(1+\mathrm{i}\hbar E)^{n}\\
    &-\mathrm{i}\hbar nH\log(1+\mathrm{i}\hbar E)^{n-1}\frac{1}{1+\mathrm{i}\hbar E}\\
    &+2\hbar^2((n-1)+(n-2)+\cdots+1)
    \frac{E}{(1+\mathrm{i}\hbar E)^2}\log(1+\mathrm{i}\hbar E)^{n-2}\\
    &+n\hbar^2\frac{E}{(1+\mathrm{i}\hbar E)^2}\log(1+\mathrm{i}\hbar E)^{n-1}\\
    =&E'\log(1+\mathrm{i}\hbar E)^{n}\\
    &-\mathrm{i}\hbar nH\log(1+\mathrm{i}\hbar E)^{n-1}\frac{1}{1+\mathrm{i}\hbar E}\\
    &+\hbar^2n(n-1)\frac{E}{(1+\mathrm{i}\hbar E)^2}\log(1+\mathrm{i}\hbar E)^{n-2}\\
    &+n\hbar^2\frac{E}{(1+\mathrm{i}\hbar E)^2}\log(1+\mathrm{i}\hbar E)^{n-1}.
\end{align*}
\end{allowdisplaybreaks}
\end{proof}
\begin{lemma}
The twisted coproduct and antipode of $\mathscr{U}\mathfrak{g}_\mathcal{F}$
are given by
\begin{allowdisplaybreaks}
\begin{align*}
    \Delta_\mathcal{F}(H)
    =&\Delta(H)-\mathrm{i}\hbar\bigg(H\otimes\frac{E}{1+\mathrm{i}\hbar E}\bigg),\\
    \Delta_\mathcal{F}(E)
    =&\Delta(E)+\mathrm{i}\hbar E\otimes E,\\
    \Delta_\mathcal{F}(E')
    =&1\otimes E'
    +E'\otimes\frac{1}{1+\mathrm{i}\hbar E}
    -\frac{\mathrm{i}\hbar}{2}\bigg(H\otimes H\frac{1}{1+\mathrm{i}\hbar E}\bigg)\\
    &+\frac{\hbar^2}{2}\bigg(
    H\bigg(\frac{H}{2}+1\bigg)\otimes\frac{E}{(1+\mathrm{i}\hbar E)^2}
    \bigg)
\end{align*}
\end{allowdisplaybreaks}
and
\begin{allowdisplaybreaks}
\begin{align*}
    S_\mathcal{F}(H)
    =&S(H)(1+\mathrm{i}\hbar E),\\
    S_\mathcal{F}(E)
    =&\frac{S(E)}{1+\mathrm{i}\hbar E},\\
    S_\mathcal{F}(E')
    =&S(E')(1+\mathrm{i}\hbar E)
    -\frac{\mathrm{i}\hbar}{2}H^2
    +\frac{\hbar^2}{2}\bigg(\frac{H}{2}-1\bigg)HE
    +\frac{\mathrm{i}\hbar^3}{2}\bigg(1-\frac{H}{2}\bigg)HE^2,
\end{align*}
\end{allowdisplaybreaks}
respectively.
\end{lemma}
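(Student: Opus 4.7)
The plan is to compute $\mathcal{F}\Delta(\xi)\mathcal{F}^{-1}$ and $\beta S(\xi)\beta^{-1}$ directly for $\xi \in \{H,E,E'\}$, using the commutation relations of the preliminary lemma together with the series expansions of $\log(1+\mathrm{i}\hbar E)$ and $(1+\mathrm{i}\hbar E)^{-1}$. The key technical trick is, for each generator $\xi$, to move it through $\mathcal{F}$ order by order in the expansion $\mathcal{F} = \sum_{n \geq 0} \frac{1}{n!}(H/2)^n \otimes \log(1+\mathrm{i}\hbar E)^n$ and recognize the result as a power of $\mathcal{F}$ (possibly with a shift).

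First I would dispatch $\Delta_\mathcal{F}(E)$, which is the cleanest case. Since $E$ commutes with itself and with every power of $\log(1+\mathrm{i}\hbar E)$, the element $1 \otimes E$ commutes with $\mathcal{F}$. For $E \otimes 1$, apply $(H/2)^n E = E(H/2+1)^n$ from the lemma to get
\[
\mathcal{F}(E \otimes 1) = (E \otimes 1)\exp\!\left(\bigl(H/2+1\bigr) \otimes \log(1+\mathrm{i}\hbar E)\right) = (E \otimes 1)\cdot\bigl(1 \otimes (1+\mathrm{i}\hbar E)\bigr)\,\mathcal{F},
\]
so that $\mathcal{F}(E \otimes 1)\mathcal{F}^{-1} = E \otimes (1+\mathrm{i}\hbar E)$, which gives the claimed $\Delta_\mathcal{F}(E)$. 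Next I would handle $\Delta_\mathcal{F}(H)$: the factor $H \otimes 1$ commutes with $\mathcal{F}$ since $[H,H]=0$, and for $1 \otimes H$ I would use the identity
\[
\log(1+\mathrm{i}\hbar E)^n H = H\log(1+\mathrm{i}\hbar E)^n - 2\mathrm{i}\hbar n\,\tfrac{E}{1+\mathrm{i}\hbar E}\log(1+\mathrm{i}\hbar E)^{n-1}
\]
to commute $H$ to the left inside each term of the expansion. After reindexing $n \mapsto n-1$ in the correction sum and factoring out $(H/2) \otimes \frac{E}{1+\mathrm{i}\hbar E}$, the remaining series reassembles into $\mathcal{F}$, yielding $\mathcal{F}(1 \otimes H)\mathcal{F}^{-1} = 1 \otimes H - \mathrm{i}\hbar H \otimes \frac{E}{1+\mathrm{i}\hbar E}$.

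The computation of $\Delta_\mathcal{F}(E')$ is the main obstacle and where I expect the real work. Here both legs of $\mathcal{F}$ fail to commute with $E'$: on the first leg I would apply $(H/2)^n E' = E'(H/2-1)^n$, and on the second leg the more intricate identity from the lemma for $\log(1+\mathrm{i}\hbar E)^n E'$, which produces three correction terms involving $H\log^{n-1}\!/\!(1+\mathrm{i}\hbar E)$, $E(1+\mathrm{i}\hbar E)^{-2}\log^{n-1}$, and $E(1+\mathrm{i}\hbar E)^{-2}\log^{n-2}$. Each correction, after the appropriate reindexing, repackages as a product of an explicit element in $\mathscr{U}\mathfrak{g}^{\otimes 2}$ with $\mathcal{F}$; multiplying on the right by $\mathcal{F}^{-1}$ gives the stated three-term contribution on top of the undeformed piece $1 \otimes E' + E' \otimes (1+\mathrm{i}\hbar E)^{-1}$. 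The careful bookkeeping between the combinatorial factor $n(n-1)$ from the lemma and the factorials from the exponential is the delicate step.

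For the antipode, I would first compute $\beta = \mu \circ (\mathrm{id} \otimes S)(\mathcal{F})$. Using $S(\log(1+\mathrm{i}\hbar E)^n) = (-\log(1+\mathrm{i}\hbar E))^n$ and the same $(H/2)$-shift lemma to order the factors, $\beta$ simplifies to a closed expression (a power of $1+\mathrm{i}\hbar E$ weighted by $H/2$) whose commutation with $H$, $E$, $E'$ follows again from the preliminary lemma. Then a direct application of $S_\mathcal{F} = \beta S(\cdot)\beta^{-1}$ gives the formulas for $S_\mathcal{F}(H)$ and $S_\mathcal{F}(E)$ immediately, while $S_\mathcal{F}(E')$ produces precisely the four-term expression stated, the last three terms arising from the same correction structure as in $\Delta_\mathcal{F}(E')$. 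Consistency can be cross-checked by verifying $\mu \circ (S_\mathcal{F} \otimes \mathrm{id}) \circ \Delta_\mathcal{F} = \eta \circ \epsilon$ on each generator, which I would include as a sanity check at the end.
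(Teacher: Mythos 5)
Your treatment of the three coproducts is essentially the paper's own: commute $H\otimes 1$, $1\otimes E$ trivially, push $E\otimes 1$ through via $(H/2)^nE=E(H/2+1)^n$ and recombine the shifted exponential, and handle $1\otimes H$ and both legs of $E'$ with the correction identities from the preliminary lemma, reindexing the sums so that each correction factors as an explicit element times $\mathcal{F}$. That part is sound and matches the paper step for step.

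For the antipode you diverge, and this is where your plan has a soft spot. You propose to compute $\beta=\mathcal{F}_1S(\mathcal{F}_2)$ explicitly and conjugate, asserting that $\beta$ "simplifies to a closed expression (a power of $1+\mathrm{i}\hbar E$ weighted by $H/2$)". That is not obvious: $\beta=\sum_{n\geq 0}\frac{1}{n!}\left(\frac{H}{2}\right)^n\log(1-\mathrm{i}\hbar E)^n$ is a \emph{normal-ordered} exponential, and since $H$ and $E$ do not commute it does not collapse to $\exp\bigl(\frac{H}{2}\log(1-\mathrm{i}\hbar E)\bigr)$ (already at second order $\frac{1}{2}A^2B^2\neq\frac{1}{2}ABAB$). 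A closed form for $\beta$ can be extracted for the Jordanian twist, but it requires its own nontrivial computation which you have not supplied, and your entire antipode derivation rests on it. The paper avoids this entirely: it never computes $\beta$, but instead uses the antipode axiom $S_\mathcal{F}(\xi_{\widehat{(1)}})\xi_{\widehat{(2)}}=\epsilon(\xi)1$ together with the twisted coproducts already in hand, and solves the resulting linear equation for $S_\mathcal{F}(H)$, $S_\mathcal{F}(E)$, $S_\mathcal{F}(E')$ (uniqueness of the antipode guarantees this determines $S_\mathcal{F}$). For instance $0=S_\mathcal{F}(H)+H-\mathrm{i}\hbar S_\mathcal{F}(H)\frac{E}{1+\mathrm{i}\hbar E}$ immediately gives $S_\mathcal{F}(H)=S(H)(1+\mathrm{i}\hbar E)$. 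I would either adopt that route or fill in the explicit computation of $\beta$ and $\beta^{-1}$ before relying on it; your concluding sanity check $\mu\circ(S_\mathcal{F}\otimes\mathrm{id})\circ\Delta_\mathcal{F}=\eta\circ\epsilon$ is, in the paper's proof, not a check but the derivation itself.
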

\begin{proof}
Note that $(H\otimes 1)$ commutes with $\mathcal{F}$. However
\begin{align*}
    \mathcal{F}(1\otimes H)
    =&\sum_{n=0}^\infty\frac{1}{n!}\bigg(\frac{H}{2}\bigg)^n\otimes
    \log(1+\mathrm{i}\hbar E)^nH\\
    =&\sum_{n=0}^\infty\frac{1}{n!}\bigg(\frac{H}{2}\bigg)^n\otimes
    \bigg(
    H\log(1+\mathrm{i}\hbar E)^n
    -2\mathrm{i}\hbar n\frac{E}{1+\mathrm{i}\hbar E}\log(1+\mathrm{i}\hbar E)^{n-1}
    \bigg)\\
    =&(1\otimes H)\mathcal{F}
    -\mathrm{i}\hbar\bigg(H\otimes\frac{E}{1+\mathrm{i}\hbar E}\bigg)\mathcal{F}
\end{align*}
only commutes up to the second term, proving
\begin{equation*}
    \Delta_\mathcal{F}(H)
    =\Delta(H)-\mathrm{i}\hbar\bigg(H\otimes\frac{E}{1+\mathrm{i}\hbar E}\bigg),
\end{equation*}
since $H$ is primitive, i.e. $\Delta(H)=H\otimes 1+1\otimes H$
and $\Delta_\mathcal{F}(H)=\mathcal{F}\Delta(H)\mathcal{F}^{-1}$.
On the other hand $1\otimes E$ commutes with $\mathcal{F}$, while
\begin{allowdisplaybreaks}
\begin{align*}
    \mathcal{F}(E\otimes 1)
    =&\sum_{n=0}^\infty\frac{1}{n!}\bigg(\frac{H}{2}\bigg)^nE
    \otimes\log(1+\mathrm{i}\hbar E)^n\\
    =&(E\otimes 1)\sum_{n=0}^\infty\frac{1}{n!}\bigg(\frac{H}{2}+1\bigg)^n
    \otimes\log(1+\mathrm{i}\hbar E)^n\\
    =&(E\otimes 1)\exp\bigg(\bigg(\frac{H}{2}+1\bigg)\otimes\log(1+\mathrm{i}\hbar E)\bigg).
\end{align*}
\end{allowdisplaybreaks}
Then
\begin{align*}
    \Delta_\mathcal{F}(E)
    =&1\otimes E\\
    &+(E\otimes 1)\exp\bigg(\bigg(\frac{H}{2}+1\bigg)\otimes\log(1+\mathrm{i}\hbar E)\bigg)
    \exp\bigg(-\frac{H}{2}\otimes\log(1+\mathrm{i}\hbar E)\bigg)\\
    =&1\otimes E
    +(E\otimes 1)\exp\bigg(\bigg(\frac{H}{2}+1\bigg)\otimes\log(1+\mathrm{i}\hbar E)
    -\frac{H}{2}\otimes\log(1+\mathrm{i}\hbar E)\bigg)\\
    =&1\otimes E
    +(E\otimes 1)\exp(1\otimes\log(1+\mathrm{i}\hbar E))\\
    =&\Delta(E)+\mathrm{i}\hbar E\otimes E,
\end{align*}
where we used in the second equation that the exponents commute,
leading to a trivial BCH series. Similarly to the last computation we obtain
\begin{align*}
    \mathcal{F}(E'\otimes 1)
    =(E'\otimes 1)
    \exp\bigg(\bigg(\frac{H}{2}-1\bigg)\otimes\log(1+\mathrm{i}\hbar E)\bigg),
\end{align*}
which implies
\begin{align*}
    \mathcal{F}(E'\otimes 1)\mathcal{F}^{-1}
    =(E'\otimes 1)\exp(-1\otimes\log(1+\mathrm{i}\hbar E))
    =E'\otimes\frac{1}{1+\mathrm{i}\hbar E}.
\end{align*}
On the other hand
\begin{allowdisplaybreaks}
\begin{align*}
    \mathcal{F}(1\otimes E')
    =&\sum_{n=0}^\infty\frac{1}{n!}\bigg(\frac{H}{2}\bigg)^n\otimes
    \log(1+\mathrm{i}\hbar E)^nE'\\
    =&\sum_{n=0}^\infty\frac{1}{n!}\bigg(\frac{H}{2}\bigg)^n\otimes
    \bigg(
    E'\log(1+\mathrm{i}\hbar E)^{n}
    -\mathrm{i}\hbar nH\log(1+\mathrm{i}\hbar E)^{n-1}\frac{1}{1+\mathrm{i}\hbar E}\\
    &+n\hbar^2\frac{E}{(1+\mathrm{i}\hbar E)^2}\log(1+\mathrm{i}\hbar E)^{n-1}\\
    &+\hbar^2n(n-1)\frac{E}{(1+\mathrm{i}\hbar E)^2}\log(1+\mathrm{i}\hbar E)^{n-2}
    \bigg)\\
    =&(1\otimes E')\mathcal{F}
    -\frac{\mathrm{i}\hbar}{2}\bigg(H\otimes H\frac{1}{1+\mathrm{i}\hbar E}\bigg)\mathcal{F}
    +\frac{\hbar^2}{2}\bigg(H\otimes\frac{E}{(1+\mathrm{i}\hbar E)^2}\bigg)\mathcal{F}\\
    &+\frac{\hbar^2}{4}\bigg(H^2\otimes\frac{E}{(1+\mathrm{i}\hbar E)^2}\bigg)\mathcal{F}\\
    =&(1\otimes E')\mathcal{F}
    -\frac{\mathrm{i}\hbar}{2}\bigg(H\otimes H\frac{1}{1+\mathrm{i}\hbar E}\bigg)\mathcal{F}\\
    &+\frac{\hbar^2}{2}\bigg(H\bigg(\frac{H}{2}+1\bigg)
    \otimes\frac{E}{(1+\mathrm{i}\hbar E)^2}\bigg)\mathcal{F},
\end{align*}
\end{allowdisplaybreaks}
leading to
\begin{align*}
    \mathcal{F}(1\otimes E')\mathcal{F}^{-1}
    =&(1\otimes E')
    -\frac{\mathrm{i}\hbar}{2}\bigg(H\otimes H\frac{1}{1+\mathrm{i}\hbar E}\bigg)\\
    &+\frac{\hbar^2}{2}\bigg(H\bigg(\frac{H}{2}+1\bigg)
    \otimes\frac{E}{(1+\mathrm{i}\hbar E)^2}\bigg).
\end{align*}
Combining this with $\mathcal{F}(E'\otimes 1)\mathcal{F}^{-1}$ we obtain the
formula for the twisted coproduct of $E'$.
For the twisted antipode $S_\mathcal{F}$ note that
$\xi_{\widehat{(1)}}S_\mathcal{F}(\xi_{\widehat{(2)}})
=\epsilon(\xi)1
=S_\mathcal{F}(\xi_{\widehat{(1)}})\xi_{\widehat{(2)}}$ for all
$\xi\in\mathscr{U}\mathfrak{g}$. Applying this to $\xi=H$ gives
\begin{align*}
    0
    =\epsilon(H)1
    =S_\mathcal{F}(H_{\widehat{(1)}})H_{\widehat{(2)}}
    =S_\mathcal{F}(H)+H-\mathrm{i}\hbar S_\mathcal{F}(H)\frac{E}{1+\mathrm{i}\hbar E},
\end{align*}
implying
\begin{align*}
    S(H)
    =S_\mathcal{F}(H)\bigg(1-\mathrm{i}\hbar\frac{E}{1+\mathrm{i}\hbar E}\bigg)
    =S_\mathcal{F}(H)\frac{1}{1+\mathrm{i}\hbar E}.
\end{align*}
Since $S(H)=-H$ and
$1+\mathrm{i}\hbar E$ is the inverse of $\frac{1}{1+\mathrm{i}\hbar E}$ this gives
$S_\mathcal{F}(H)=S(H)(1+\mathrm{i}\hbar E)$. Similarly,
$S_\mathcal{F}(E)=\frac{S(E)}{1+\mathrm{i}\hbar E}$ follows. Finally, for the
twisted antipode of $E'$ we obtain
\begin{align*}
    0
    =&\epsilon(E')1
    =S_\mathcal{F}(E'_{\widehat{(1)}})E'_{\widehat{(2)}}\\
    =&E'+S_\mathcal{F}(E')\frac{1}{1+\mathrm{i}\hbar E}
    -\frac{\mathrm{i}\hbar}{2}S_\mathcal{F}(H)H\frac{1}{1+\mathrm{i}\hbar E}
    +\frac{\hbar^2}{2}S_\mathcal{F}\bigg(H\bigg(\frac{H}{2}+1\bigg)\bigg)
    \frac{E}{(1+\mathrm{i}\hbar E)^2},
\end{align*}
which implies
\begin{allowdisplaybreaks}
\begin{align*}
    S_\mathcal{F}(E')
    =&S(E')(1+\mathrm{i}\hbar E)
    +\frac{\mathrm{i}\hbar}{2}S_\mathcal{F}(H)H
    -\frac{\hbar^2}{2}S_\mathcal{F}\bigg(H\bigg(\frac{H}{2}+1\bigg)\bigg)
    \frac{E}{1+\mathrm{i}\hbar E}\\
    =&S(E')(1+\mathrm{i}\hbar E)
    +\frac{\mathrm{i}\hbar}{2}S(H)(1+\mathrm{i}\hbar E)H\\
    &-\frac{\hbar^2}{2}
    \bigg(\frac{S(H)(1+\mathrm{i}\hbar E)}{2}+1\bigg)S(H)(1+\mathrm{i}\hbar E)
    \frac{E}{1+\mathrm{i}\hbar E}\\
    =&S(E')(1+\mathrm{i}\hbar E)
    -\frac{\mathrm{i}\hbar}{2}H^2(1+\mathrm{i}\hbar E)
    -\hbar^2HE\\
    &+\frac{\hbar^2}{2}
    \bigg(\frac{S(H)(1+\mathrm{i}\hbar E)}{2}+1\bigg)HE\\
    =&S(E')(1+\mathrm{i}\hbar E)
    -\frac{\mathrm{i}\hbar}{2}H^2
    +\frac{\hbar^2}{2}H^2E
    -\hbar^2HE
    +\frac{\hbar^2}{2}HE\\
    &+\frac{\hbar^2}{4}S(H)(1+\mathrm{i}\hbar E)HE\\
    =&S(E')(1+\mathrm{i}\hbar E)
    -\frac{\mathrm{i}\hbar}{2}H^2
    +\frac{\hbar^2}{2}H^2E
    -\hbar^2HE
    +\frac{\hbar^2}{2}HE\\
    &-\frac{\hbar^2}{4}H^2(1+\mathrm{i}\hbar E)E
    +\frac{\mathrm{i}\hbar^3}{2}HE^2\\
    =&S(E')(1+\mathrm{i}\hbar E)
    -\frac{\mathrm{i}\hbar}{2}H^2
    +\frac{\hbar^2}{2}H^2E
    -\hbar^2HE
    +\frac{\hbar^2}{2}HE\\
    &-\frac{\hbar^2}{4}H^2E
    -\frac{\mathrm{i}\hbar^3}{4}H^2E^2
    +\frac{\mathrm{i}\hbar^3}{2}HE^2\\
    =&S(E')(1+\mathrm{i}\hbar E)
    -\frac{\mathrm{i}\hbar}{2}H^2
    +\frac{\hbar^2}{2}\bigg(\frac{H}{2}-1\bigg)HE
    +\frac{\mathrm{i}\hbar^3}{2}\bigg(1-\frac{H}{2}\bigg)HE^2.
\end{align*}
\end{allowdisplaybreaks}
\end{proof}
Since $N$ is given in terms of relations of coordinate functions
$x^1,x^2,x^3$, it is worth to study their twist deformation in detail.
Note that $H\rhd x^i=\lambda_ix^i$, where we define $\lambda_1=2=-\lambda_3$ and
$\lambda_2=0$. Then
\begin{align*}
    (\mathcal{F}_1^{-1}\rhd x^i)\otimes
    (\mathcal{F}_2^{-1}\rhd x^j)
    =&\sum_{n=0}^\infty\frac{(-1)^n}{n!}
    \bigg(\bigg(\frac{H}{2}\bigg)^n\rhd x^i\bigg)
    \otimes(\log(1+\mathrm{i}\hbar E)^n\rhd x^j)\\
    =&\sum_{n=0}^\infty\frac{(-1)^n}{n!}
    \bigg(\frac{\lambda_i}{2}\bigg)^nx^i\otimes(\log(1+\mathrm{i}\hbar E)^n\rhd x^j)\\
    =&x^i\otimes((1+\mathrm{i}\hbar E)^{-\frac{\lambda_i}{2}}\rhd x^j)
\end{align*}
and
\begin{align*}
    (x^i)^{*_\mathcal{F}}
    =&S(\beta)\rhd(x^i)^*
    =(\mathcal{F}_2S(\mathcal{F}_1))\rhd x^i\\
    =&\sum_{n=0}^\infty\frac{(-1)^n}{n!}
    \bigg(\log(1+\mathrm{i}\hbar E)^n\bigg(\frac{H}{2}\bigg)^n\bigg)\rhd x^i\\
    =&(1+\mathrm{i}\hbar E)^{-\frac{\lambda_i}{2}}\rhd x^i
\end{align*}
follow. Since $E\rhd x^i=\delta^i_2\frac{1}{\sqrt{a}}x^1-2\delta^i_3\sqrt{a}x^2$,
$E^2\rhd x^i=\delta^i_3x^1$ and $E^n\rhd x^i=0$ for $n>2$ we conclude the
following result.
\begin{lemma}
The twisted star products of coordinate functions on $N$ are
\begin{align*}
    x^1\star_\mathcal{F}x^1
    =&(x^1)^2,\\
    x^1\star_\mathcal{F}x^2
    =&x^1x^2-\frac{\mathrm{i}\hbar}{\sqrt{a}}(x^1)^2,\\
    x^1\star_\mathcal{F} x^3
    =&x^1x^3+2\mathrm{i}\hbar\sqrt{a}x^1x^2-\hbar^2(x^1)^2,\\
    x^2\star_\mathcal{F} x^i
    =&x^2x^i,\text{ for all }1\leq i\leq 3\\
    x^3\star_\mathcal{F} x^1
    =&x^1x^3,\\
    x^3\star_\mathcal{F} x^2
    =&x^2x^3+\frac{\mathrm{i}\hbar}{\sqrt{a}}x^1x^3,\\
    x^3\star_\mathcal{F} x^3
    =&(x^3)^2-2\mathrm{i}\hbar\sqrt{a}x^2x^3.
\end{align*}
Furthermore, the twisted ${}^*$-involution on coordinate functions is given by
\begin{align*}
    (x^1)^{*_\mathcal{F}}=x^1,~
    (x^2)^{*_\mathcal{F}}=x^2
    \text{ and }
    (x^3)^{*_\mathcal{F}}=x^3-2\mathrm{i}\hbar\sqrt{a}x^2,
\end{align*}
The relation defining $N$ becomes
\begin{equation}
    \frac{1}{2}x^3\star_\mathcal{F}x^1+\frac{a}{2}x^2\star_\mathcal{F}x^2+c
    =0.
\end{equation}
\end{lemma}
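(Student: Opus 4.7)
The plan is to simply evaluate the general formulas already derived just before the statement, namely
\[
(\mathcal{F}_1^{-1}\rhd x^i)\otimes(\mathcal{F}_2^{-1}\rhd x^j)
=x^i\otimes\bigl((1+\mathrm{i}\hbar E)^{-\lambda_i/2}\rhd x^j\bigr)
\quad\text{and}\quad
(x^i)^{*_\mathcal{F}}=(1+\mathrm{i}\hbar E)^{-\lambda_i/2}\rhd x^i,
\]
against the explicit action of $E$ on coordinates. The crucial preparatory observation is that $E$ is nilpotent of order at most three on each coordinate, since $E\rhd x^i=\tfrac{1}{\sqrt a}\delta^i_2\,x^1-2\sqrt a\,\delta^i_3\,x^2$, and a second application produces only multiples of $x^1$ (which are themselves annihilated by $E$). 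Hence each series $(1+\mathrm{i}\hbar E)^{\pm 1}\rhd x^j$ terminates after at most three terms and no convergence issue arises.

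First I would run through the nine pairs $(i,j)$ case by case, organized by the weight $\lambda_i\in\{2,0,-2\}$. When $\lambda_i=0$, i.e.\ $i=2$, the operator $(1+\mathrm{i}\hbar E)^{0}$ is the identity, which immediately gives the four trivial identities $x^2\star_\mathcal{F}x^j=x^2x^j$ and $x^i\star_\mathcal{F}x^2=x^ix^2$ (the latter read from the list after using that $E$ applied to $x^2$ produces $x^1$, which is then multiplied by $x^i$ from the left in the pointwise product). When $\lambda_i=-2$, i.e.\ $i=3$, one has $(1+\mathrm{i}\hbar E)^{1}=1+\mathrm{i}\hbar E$ truncating at the linear level, so $x^3\star_\mathcal{F}x^j=x^3(x^j+\mathrm{i}\hbar E\rhd x^j)$, which yields the three formulas in the $i=3$ row. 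When $\lambda_i=2$, i.e.\ $i=1$, we expand $(1+\mathrm{i}\hbar E)^{-1}=\sum_{n\geq 0}(-\mathrm{i}\hbar E)^n$; this series has only the terms $n=0,1,2$ because $E^n\rhd x^j=0$ for $n\geq 3$, giving the $i=1$ row (including the quadratic $\hbar^2$ contribution in $x^1\star_\mathcal{F}x^3$, which comes from $E^2\rhd x^3$).

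Next I would compute the ${}^*$-involutions in the same manner: $(x^1)^{*_\mathcal{F}}=(1+\mathrm{i}\hbar E)^{-1}\rhd x^1=x^1$ and $(x^2)^{*_\mathcal{F}}=x^2$ are immediate because $E\rhd x^1=0$ and $\lambda_2=0$, while $(x^3)^{*_\mathcal{F}}=(1+\mathrm{i}\hbar E)\rhd x^3=x^3-2\mathrm{i}\hbar\sqrt a\,x^2$ follows from $E\rhd x^3=-2\sqrt a\,x^2$. Finally, for the defining relation I would substitute the already-computed star products into $\tfrac12 x^3\star_\mathcal{F}x^1+\tfrac a2 x^2\star_\mathcal{F}x^2+c$, noting $x^3\star_\mathcal{F}x^1=x^1x^3$ and $x^2\star_\mathcal{F}x^2=(x^2)^2$, which reduces the expression to the classical defining polynomial $f_{2EH}$ and hence vanishes modulo the vanishing ideal $\mathcal{C}$.

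The calculations are entirely mechanical, so no real obstacle is expected; the only point demanding care is the correct interpretation of $\lambda_i/2$ in the exponent together with the signs arising from iterating $E$ on $x^3$ (which is where both the imaginary coefficient $2\mathrm{i}\hbar\sqrt a$ and the real coefficient in front of $(x^1)^2$ originate). Once the tabulation is finished, the formulas in the lemma follow by direct comparison, and descending to $N$ is automatic by eq.~(\ref{eq53}) of the preceding section, which tells us that the ambient twist star product projects consistently onto the submanifold algebra.
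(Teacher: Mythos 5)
Your proposal follows exactly the route the paper itself takes: the lemma is read off directly from the two displayed formulas $(\mathcal{F}_1^{-1}\rhd x^i)\otimes(\mathcal{F}_2^{-1}\rhd x^j)=x^i\otimes((1+\mathrm{i}\hbar E)^{-\lambda_i/2}\rhd x^j)$ and $(x^i)^{*_\mathcal{F}}=(1+\mathrm{i}\hbar E)^{-\lambda_i/2}\rhd x^i$, combined with the tabulated action $E\rhd x^i$, $E^2\rhd x^i$, $E^n\rhd x^i=0$ for $n>2$; the paper offers no further argument, so your case-by-case tabulation by the weight $\lambda_i$ is precisely its proof.

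One slip to correct: in your $\lambda_i=0$ paragraph you assert that $x^i\star_\mathcal{F}x^2=x^ix^2$ for all $i$, grouped among the ``trivial identities.'' This is false for $i=1,3$ and contradicts the lemma itself, which records $x^1\star_\mathcal{F}x^2=x^1x^2-\tfrac{\mathrm{i}\hbar}{\sqrt{a}}(x^1)^2$ and $x^3\star_\mathcal{F}x^2=x^2x^3+\tfrac{\mathrm{i}\hbar}{\sqrt{a}}x^1x^3$. The exponent in $(1+\mathrm{i}\hbar E)^{-\lambda_i/2}$ is governed by the weight of the \emph{first} factor, so only the products with $x^2$ in the first slot are undeformed; having $x^2$ in the second slot does not help, since $E\rhd x^2=\tfrac{1}{\sqrt{a}}x^1\neq 0$. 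Your subsequent $\lambda_i=\pm 2$ paragraphs do recompute these entries correctly, so the error is an internal inconsistency rather than a flaw in the method, but the sentence should be deleted or restricted to the first-slot products $x^2\star_\mathcal{F}x^j$.
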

Similarly one calculates the twisted wedge product of coordinate vector fields
and differential $1$-forms and determines the submanifold condition in terms
of the deformed generators (see \cite{GaetanoThomas19}). Instead we further examine the twisted
insertion and Lie derivative. Note that the de Rham differential is undeformed
since it commutes with the Hopf ${}^*$-algebra action.
\begin{lemma}
One coordinate vector fields the twisted insertion and Lie derivative read
\begin{align*}
    \mathrm{i}^\mathcal{F}_{\partial_i}\omega
    =&\mathrm{i}_{\partial_i}((1+\mathrm{i}\hbar E)^{\frac{\lambda_i}{2}}\rhd\omega),\\
    \mathscr{L}^\mathcal{F}_{\partial_i}\omega
    =&\mathscr{L}_{\partial_i}((1+\mathrm{i}\hbar E)^{\frac{\lambda_i}{2}}\rhd\omega),
\end{align*}
for all $\omega\in\Omega^\bullet(N)$. Using the left 
$\mathscr{C}^\infty(N)$-linearity of $\mathrm{i}^\mathcal{F}$ and
$\mathscr{L}^\mathcal{F}$ in the first argument as well as
$\mathrm{i}^\mathcal{F}_{X\wedge_\mathcal{F}Y}
=\mathrm{i}^\mathcal{F}_X\mathrm{i}^\mathcal{F}_Y$ and
$\mathscr{L}^\mathcal{F}_{X\wedge_\mathcal{F}Y}
=\mathrm{i}^\mathcal{F}_X\mathscr{L}^\mathcal{F}_Y
+(-1)^\ell\mathscr{L}^\mathcal{F}_X\mathrm{i}^\mathcal{F}_Y$
for all $X\in\mathfrak{X}^\bullet(N)$ and $Y\in\mathfrak{X}^\ell(N)$,
these formulas can be used to determine the action of higher multivector fields.
\end{lemma}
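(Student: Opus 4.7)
The strategy is to apply the defining formulas
$$\mathrm{i}^\mathcal{F}_X\omega=\mathrm{i}^\mathcal{R}_{\mathcal{F}_1^{-1}\rhd X}(\mathcal{F}_2^{-1}\rhd\omega),\qquad \mathscr{L}^\mathcal{F}_X\omega=\mathscr{L}^\mathcal{R}_{\mathcal{F}_1^{-1}\rhd X}(\mathcal{F}_2^{-1}\rhd\omega)$$
from Section~\ref{Sec3.6} to $X=\partial_i$, expand $\mathcal{F}^{-1}$ as a power series, and exploit the fact that $\partial_i$ is an eigenvector of $H$ under the adjoint action. Since the underlying Hopf algebra $\mathscr{U}\mathfrak{g}$ is cocommutative ($\mathcal{R}=1\otimes 1$), the braided insertion and braided Lie derivative on $\mathcal{A}$ reduce to their classical counterparts, so only the twist action needs to be analyzed.

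First I would compute $H\rhd\partial_i$. Since $H$ is primitive and $S(H)=-H$, the adjoint action on vector fields gives, for any $f\in\mathscr{C}^\infty(\mathbb{R}^3)$,
$$\mathscr{L}_{H\rhd\partial_i}f=\mathscr{L}_H(\mathscr{L}_{\partial_i}f)-\mathscr{L}_{\partial_i}(\mathscr{L}_Hf).$$
Testing on $f=x^j$ and using $H\rhd x^j=\lambda_jx^j$ yields $(H\rhd\partial_i)(x^j)=-\lambda_j\delta^j_i=-\lambda_i\delta^j_i$, hence $H\rhd\partial_i=-\lambda_i\partial_i$. Consequently $(H/2)^n\rhd\partial_i=(-\lambda_i/2)^n\partial_i$, so $\partial_i$ is an eigenvector for every power of $H/2$.

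Next I would use the explicit expansion $\mathcal{F}^{-1}=\sum_{n\geq 0}\frac{(-1)^n}{n!}(H/2)^n\otimes\log(1+\mathrm{i}\hbar E)^n$, which is valid because the two legs commute individually with themselves. Substituting and using $\mathbb{C}$-linearity of $\mathrm{i}$ in its first argument:
\begin{align*}
\mathrm{i}^\mathcal{F}_{\partial_i}\omega
&=\sum_{n\geq 0}\frac{(-1)^n}{n!}\mathrm{i}_{(H/2)^n\rhd\partial_i}\bigl(\log(1+\mathrm{i}\hbar E)^n\rhd\omega\bigr)\\
&=\sum_{n\geq 0}\frac{(\lambda_i/2)^n}{n!}\,\mathrm{i}_{\partial_i}\bigl(\log(1+\mathrm{i}\hbar E)^n\rhd\omega\bigr)\\
&=\mathrm{i}_{\partial_i}\bigl(\exp\bigl(\tfrac{\lambda_i}{2}\log(1+\mathrm{i}\hbar E)\bigr)\rhd\omega\bigr)=\mathrm{i}_{\partial_i}\bigl((1+\mathrm{i}\hbar E)^{\lambda_i/2}\rhd\omega\bigr),
\end{align*}
where the penultimate step resums the series and the last step uses the definition of the (formal) fractional power via the exponential. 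The identical argument with $\mathrm{i}$ replaced by $\mathscr{L}$, using the $\mathbb{C}$-linearity of $\mathscr{L}_{(\cdot)}$ in its vector field entry, yields the second formula.

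The potential technical obstacle is making sure the tensor legs of $\mathcal{F}^{-1}$ act on the correct arguments: $(H/2)^n$ acts only on the vector field $\partial_i$ and $\log(1+\mathrm{i}\hbar E)^n$ acts only on the form $\omega$, so no cross-term or BCH-type correction appears. Once the eigenvalue computation $(H/2)^n\rhd\partial_i=(-\lambda_i/2)^n\partial_i$ is in place and the $\mathbb{C}$-linearity of $\mathrm{i}$ and $\mathscr{L}$ in the vector field slot is invoked, the resummation is routine. Since the formula holds first on $\mathbb{R}^3$ and everything involved (the twist, the eigenvalue action of $H$, insertion and Lie derivative) is respected by the projection $\mathrm{pr}\colon\Omega^\bullet(\mathbb{R}^3)\to\Omega^\bullet(N)$ of Theorem~\ref{thm07}, the same identities descend to $\Omega^\bullet(N)$.
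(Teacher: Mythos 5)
Your proposal is correct and follows exactly the computational scheme the paper uses throughout this section (the paper states this lemma without a printed proof, but proves the neighboring statements for coordinate functions and for $H,E,E'$ by the identical eigenvector-resummation argument): one checks $H\rhd\partial_i=[H,\partial_i]=-\lambda_i\partial_i$, expands $\mathcal{F}^{-1}=\exp\bigl(-\tfrac{H}{2}\otimes\log(1+\mathrm{i}\hbar E)\bigr)$, lets the first leg act on $\partial_i$ so that the signs $(-1)^n(-\lambda_i/2)^n=(\lambda_i/2)^n$ resum to $(1+\mathrm{i}\hbar E)^{\lambda_i/2}$ on the second leg, and descends to $N$ via the projection of Theorem~\ref{thm07}. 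No gaps.
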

As a last observation we consider the Minkowski metric
\begin{equation}\label{eq52}
    {\bf g}=\frac{1}{2}(\mathrm{d}x^1\otimes\mathrm{d}x^3
    +\mathrm{d}x^3\otimes\mathrm{d}x^1)
    +\mathrm{d}x^2\otimes\mathrm{d}x^2
\end{equation}
in the case $a=b=1$, i.e. for the \textit{circular $2$-sheet elliptic hyperboloid}.
Note that (\ref{eq52}) is in fact the Minkowski metric, however not in Cartesian
coordinates but rather in the coordinates (\ref{eq54}).
\begin{proposition}
The Minkowski metric (\ref{eq52}) on the circular $2$-sheet elliptic hyperboloid $N$
is $\mathscr{U}\mathfrak{g}$-equivariant and admits a twist quantization 
\begin{equation}
    {\bf g}_\mathcal{F}
    =\frac{1}{2}(\mathrm{d}x^1\otimes_\mathcal{F}\mathrm{d}x^3
    +\mathrm{d}x^3\otimes_\mathcal{F}\mathrm{d}x^1)
    +\mathrm{d}x^2\otimes_\mathcal{F}\mathrm{d}x^2
    +2\mathrm{i}\hbar\sqrt{a}\mathrm{d}x^1\otimes_\mathcal{F}\mathrm{d}x^2
    +\hbar^2\mathrm{d}x^1\otimes_\mathcal{F}\mathrm{d}x^1
\end{equation}
The corresponding twisted Levi-Civita covariant derivative reads
\begin{equation}\label{eq56}
\begin{split}
    \nabla^\mathcal{F}_EH
    =&\nabla_EH+2\mathrm{i}\nu\nabla_EE,\\
    \nabla^\mathcal{F}_{E'}H
    =&\nabla_{E'}H-2\mathrm{i}\nu\nabla_{E'}E,\\
    \nabla^\mathcal{F}_EE'
    =&\nabla_EE'+\mathrm{i}\nu\nabla_EH-2\nu^2\nabla_EE,\\
    \nabla^\mathcal{F}_{E'}E'
    =&\nabla_{E'}E'
    -\mathrm{i}\nu\nabla_{E'}H
\end{split}
\end{equation}
on generators of $\mathfrak{g}$, while we are not listing the combinations that
keep undeformed. Using the left $\mathscr{C}^\infty(N)$-linearity in the first
argument and the braided Leibniz rule in the second argument these formulas
determine the twisted Levi-Civita covariant derivative on
higher order vector fields.
\end{proposition}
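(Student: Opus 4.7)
The proof proposal naturally splits into three parts: equivariance, explicit computation of $\mathbf{g}_\mathcal{F}$, and derivation of the covariant derivative formulas. First, to establish $\mathscr{U}\mathfrak{g}$-equivariance of $\mathbf{g}$, it suffices by the universal property of $\mathscr{U}\mathfrak{g}$ to verify that $\mathscr{L}_X\mathbf{g}=0$ for the three generators $X\in\{H,E,E'\}$ of $\mathfrak{so}(2,1)$. Since $\mathbf{g}$ is written in the coordinates $(x^1,x^2,x^3)$, I would first compute the action of each generator on the $\mathrm{d}x^i$ using the dual basis relations together with $H\rhd x^i=\lambda_ix^i$ (with $\lambda_1=2$, $\lambda_2=0$, $\lambda_3=-2$) and the known action of $E, E'$ on coordinate functions. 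The three checks reduce to small algebraic identities that express precisely the preservation of the indefinite bilinear form on $\mathbb{R}^3$ defined by $\mathbf{g}$ under the $\mathfrak{so}(2,1)$-action; this is the algebraic incarnation of the fact that $\mathfrak{so}(2,1)$ is the Lie algebra of the isometry group of this quadratic form.

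For the second part, I would exploit Lemma~\ref{lemma24}, which guarantees that $\mathbf{g}_\mathcal{F}$ defined by $\mathbf{g}_\mathcal{F}(X,Y)=\mathbf{g}(\mathcal{F}_1^{-1}\rhd X,\mathcal{F}_2^{-1}\rhd Y)$ is an equivariant metric with respect to $\mathcal{R}_\mathcal{F}$ on $\mathcal{A}_\mathcal{F}$. To obtain the stated explicit form, the computation reduces to evaluating $\mathcal{F}^{-1}$ on pairs $\mathrm{d}x^i\otimes\mathrm{d}x^j$. Since $\mathrm{d}x^1,\mathrm{d}x^2,\mathrm{d}x^3$ are eigenvectors of $H$ with weights $2,0,-2$ respectively, the left tensor leg $\exp(-\tfrac{H}{2}\otimes\log(1+\mathrm{i}\hbar E))$ reduces to powers of $(1+\mathrm{i}\hbar E)^{-\lambda_i/2}$ acting on the right leg, and the action of $E$ on coordinate $1$-forms is nilpotent of low order (mirroring the computation done for coordinate functions in the preceding lemma). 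Expanding yields precisely the four terms claimed, where the correction terms $2\mathrm{i}\hbar\sqrt{a}\mathrm{d}x^1\otimes_\mathcal{F}\mathrm{d}x^2$ and $\hbar^2\mathrm{d}x^1\otimes_\mathcal{F}\mathrm{d}x^1$ arise from the single nontrivial action of $E$ on $\mathrm{d}x^3$ and its square on $\mathrm{d}x^3$ respectively (with all other $E$-actions on $\mathrm{d}x^i$ either vanishing or reabsorbed by the equivariance of $\mathbf{g}$).

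For the covariant derivative, the key tool is again Lemma~\ref{lemma24}: twisting commutes with the Levi-Civita construction, so $\nabla^\mathcal{F}_XY=\nabla_{\mathcal{F}_1^{-1}\rhd X}(\mathcal{F}_2^{-1}\rhd Y)$. Plugging in $X,Y\in\{H,E,E'\}$ and using the adjoint action of $\mathscr{U}\mathfrak{g}$ on itself, the outer $H/2$-leg acts by a weight which is zero on $H$ and $\mp 2$ on $E^{\pm}$, while the $\log(1+\mathrm{i}\hbar E)$-leg produces commutators with $E$. Using $[E,H]=-2E$, $[E,E']=-H$, $[E,E]=0$, the infinite series truncate after low order because $\mathrm{ad}_E$ is nilpotent on $\mathfrak{g}$ (specifically $\mathrm{ad}_E^2H=0$ and $\mathrm{ad}_E^3E'=0$), and the stated formulas (\ref{eq56}) drop out by collecting terms. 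The combinations $(H,H)$, $(H,E)$, $(H,E')$, $(E,H)$, $(E,E)$, $(E',E)$ etc.\ that do not appear can be checked to give undeformed results precisely because one of the two legs is annihilated by $E$.

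The main obstacle will be the third part: although the strategy is transparent, correctly bookkeeping the weights, the braided commutativity of the wedge product on $\Omega^1_{\mathcal{R}_\mathcal{F}}(\mathcal{A}_\mathcal{F})$, and the interplay between the action of $\mathcal{F}^{-1}$ on tensor products of generators and the Lie algebra brackets is intricate — especially for the $E'$ terms, where the appearance of $\mathrm{ad}_E^2E'$ produces the nonzero $-2\nu^2\nabla_EE$ term and requires tracking both legs of $\mathcal{F}^{-1}$ through the full expansion. A clean way to organize the computation would be to first verify the formulas for $\nabla^\mathcal{F}_X H$ (where the second argument is annihilated by $E$, simplifying the right leg of $\mathcal{F}^{-1}$ to the identity and reducing everything to a weight computation in the left leg), and then bootstrap to $\nabla^\mathcal{F}_XE$ and $\nabla^\mathcal{F}_XE'$ using the braided Leibniz rule and the identity $E=\tfrac{1}{2}[H,E]$ to cross-check consistency.
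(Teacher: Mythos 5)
Your strategy for the covariant derivative formulas is exactly the paper's: the paper's entire proof consists of computing $(\mathcal{F}_1^{-1}\rhd Z)\otimes(\mathcal{F}_2^{-1}\rhd X)$ for $Z\in\{H,E,E'\}$, which yields $\nabla^\mathcal{F}_H=\nabla_H$, $\nabla^\mathcal{F}_EY=\nabla_E\big((1+\mathrm{i}\hbar E)^{-1}\rhd Y\big)$ and $\nabla^\mathcal{F}_{E'}Y=\nabla_{E'}\big((1+\mathrm{i}\hbar E)\rhd Y\big)$, after which (\ref{eq56}) follows from the nilpotency of $\mathrm{ad}_E$ exactly as you describe. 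Your first two parts (equivariance of ${\bf g}$ via the Killing property of the $\mathfrak{so}(2,1)$-generators for the invariant quadratic form, and the explicit expansion of ${\bf g}_\mathcal{F}$ by evaluating $\mathcal{F}^{-1}$ on $\mathrm{d}x^i\otimes\mathrm{d}x^j$ using $H$-weights and the nilpotent action of $E$) are not carried out in the paper's proof at all, so in that respect your proposal is more complete than the published argument.

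There is, however, a concrete slip in your closing remarks that would derail the computation if followed literally. You list $(E,H)$ among the undeformed combinations and propose to begin with $\nabla^\mathcal{F}_XH$ on the grounds that the second argument is annihilated by $E$. But $E\rhd H=[E,H]=-2E\neq 0$; it is $E$, not $H$, that is killed by $\mathrm{ad}_E$, while it is $H$ (having $\mathrm{ad}_{H/2}$-weight zero) on which the \emph{left} leg of $\mathcal{F}^{-1}$ acts trivially, making $\nabla^\mathcal{F}_H$ the undeformed direction. Indeed the first two formulas of (\ref{eq56}) are precisely $\nabla^\mathcal{F}_EH$ and $\nabla^\mathcal{F}_{E'}H$, and they are deformed; your own correct use of $[E,H]=-2E$ and $\mathrm{ad}_E^2H=0$ earlier in the same paragraph already produces the $\pm 2\mathrm{i}\nu\nabla E$ corrections, so the two statements contradict each other. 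Also be careful with the weight bookkeeping: under $\mathrm{ad}_{H/2}$ the generators $E$ and $E'$ carry weights $+1$ and $-1$, so the right leg of $\mathcal{F}^{-1}$ contributes $(1+\mathrm{i}\hbar E)^{-1}$ when the first slot is $E$ and $(1+\mathrm{i}\hbar E)^{+1}$ when it is $E'$; your ``$\mp 2$ on $E^\pm$'' has both the normalization and the sign assignment reversed.
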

\begin{proof}
Let $X\in\mathfrak{X}^1_t(\mathbb{R}^3)$. Then
\begin{align*}
    (\mathcal{F}_1^{-1}\rhd E)\otimes(\mathcal{F}_2^{-1}\rhd X)
    =&\sum_{n=0}^\infty\frac{(-1)^n}{n!}\bigg(\bigg(\frac{H}{2}\bigg)^n\rhd E\bigg)
    \otimes(\log(1+\mathrm{i}\hbar E)^n\rhd X)\\
    =&\sum_{n=0}^\infty\frac{(-1)^n}{n!}
    E\otimes(\log(1+\mathrm{i}\hbar E)^n\rhd X)\\
    =&E\otimes((1+\mathrm{i}\hbar E)^{-1}\rhd X)
\end{align*}
and similarly
\begin{align*}
    (\mathcal{F}_1^{-1}\rhd E')\otimes(\mathcal{F}_2^{-1}\rhd X)
    =&E'\otimes((1+\mathrm{i}\hbar E)\rhd X),\\
    (\mathcal{F}_1^{-1}\rhd H)\otimes(\mathcal{F}_2^{-1}\rhd X)
    =&H\otimes X
\end{align*}
follow, implying (\ref{eq56}).
\end{proof}

\appendix

\renewcommand\chaptername{Appendix}

\chapter{Monoidal Categories}\label{App01}
This first appendix recalls some well-known concepts of category
theory which are used without reference throughout the thesis.
It is included for convenience of the reader and should be seen as
a reference section rather than a self-contained chapter. After
recalling the definitions of category, functor and natural transformation
we continue by discussing their (braided) monoidal versions. We close
this appendix by explaining rigidity and some fundamental
properties of rigid monoidal categories.
As sources we refer to \cite{Ka95}~Part~Three, \cite{Ma95}~Chap.~9
and \cite{ChPr94}~Chap.~5. For a general introduction to
category theory not only focusing on quantum groups see \cite{Aw10}.

A \textit{category} $\mathcal{C}$ consists of a class $\mathrm{ob}(\mathcal{C})$
of \textit{objects} and a class $\mathrm{hom}(\mathcal{C})$ of \textit{morphisms}.
For every morphisms $f$ of $\mathcal{C}$ there is a \textit{source} object $A$ and a
\textit{target} object $B$ and we say that $f$ is a morphism from $A$ to $B$,
writing $f\colon A\rightarrow B$. The class
of morphisms from $A$ to $B$ is denoted by $\mathrm{hom}(A,B)$. For three objects
$A$, $B$ and $C$ of $\mathcal{C}$ we require the morphisms $\mathrm{hom}(A,B)$ and
$\mathrm{hom}(B,C)$ to be \textit{composable}, i.e. for two morphisms
$f\colon A\rightarrow B$ and $g\colon B\rightarrow C$ there is a morphisms in
$\mathrm{hom}(A,C)$, denoted by $g\circ f$. The composition is defined to be
\textit{associative}, which means that for three composable morphisms $f,g,h$
one postulates $(h\circ g)\circ f=h\circ(g\circ f)$. The last axiom says that
for each object $A$ of $\mathcal{C}$ there is an \textit{identity morphism}
$\mathrm{id}_A\colon A\rightarrow A$, satisfying $f\circ\mathrm{id}_A=f$ and
$\mathrm{id}_A\circ g=g$ for any morphism $f\colon A\rightarrow B$ and
$g\colon B\rightarrow A$. One calls a morphism $f\colon A\rightarrow B$
an \textit{isomorphism} if there exists a morphism $f^{-1}\colon B\rightarrow A$
such that $f^{-1}\circ f=\mathrm{id}_A$ and $f\circ f^{-1}=\mathrm{id}_B$.
The category $\mathcal{C}$ is said to be \textit{small}
if $\mathrm{ob}(\mathcal{C})$ and $\mathrm{hom}(\mathcal{C})$ are sets. If for
a category $\mathcal{C}$ the morphisms $\mathrm{hom}(A,B)$ form a set for every
pair of objects $A,B$ it is said to be \textit{locally small}.
Prominent examples are the categories $\mathrm{Set}$ with sets as objects and
functions between sets as morphisms, ${}_\mathbb{K}\mathrm{Vec}$ with objects
being vector spaces over a fixed ground field $\mathbb{K}$ of characteristic zero
and linear maps as morphisms and ${}_\Bbbk\mathcal{A}$ with $\Bbbk$-algebras
as objects and algebra homomorphisms as morphisms.
The appropriate
notion of morphism between categories $\mathcal{C}$ and $\mathcal{D}$ is given by
a (covariant) \textit{functor} $F\colon\mathcal{C}\rightarrow\mathcal{D}$.
It assigns to every object $A$ of $\mathcal{C}$ an object $F(A)$ in $\mathcal{D}$
and to any morphism $f\colon A\rightarrow B$ in $\mathcal{C}$ a morphism
$F(f)\colon F(A)\rightarrow F(B)$ such that $F(g\circ f)=F(g)\circ F(f)$ and
$F(\mathrm{id}_A)=\mathrm{id}_{F(A)}$ for all morphisms $f\colon A\rightarrow B$ and
$g\colon B\rightarrow C$ in $\mathcal{C}$. A contravariant functor
$F\colon\mathcal{C}\rightarrow\mathcal{D}$ reverses the order in the sense that
$F(f)\colon F(B)\rightarrow F(A)$ for $f\colon A\rightarrow B$. In this case
$F(g\circ f)=F(f)\circ F(g)$ has to be adapted in the definition.
Functors can be composed in an obvious way and for every category there exists
the identity functor. This leads to the category of
categories with categories as objects and functors as morphisms.
Two categories $\mathcal{C}$ and $\mathcal{D}$ are said to be \textit{isomorphic}
if they are isomorphic in the category of categories. Plunging down deeper into
the abyss of category theory we define a \textit{natural transformation}
$\Theta\colon F\rightarrow G$ of two (contravariant) functors $F,G\colon\mathcal{C}
\rightarrow\mathcal{D}$ to be a collection of morphisms in $\mathcal{D}$ of the
form $\Theta_A\colon F(A)\rightarrow G(A)$ for every object $A$ of $\mathcal{C}$,
such that for every morphism $f\colon A\rightarrow B$ in $\mathcal{C}$ one has
$\Theta_B\circ F(f)=G(f)\circ\Theta_A$. If all $\Theta_A$ are isomorphisms
the functors $F$ and $G$ are said to be \textit{naturally equivalent}. Furthermore,
we say that two categories $\mathcal{C}$ and $\mathcal{D}$ are \textit{naturally
equivalent} if there are two functors $F\colon\mathcal{C}\rightarrow\mathcal{D}$
and $G\colon\mathcal{D}\rightarrow\mathcal{C}$ such that $G\circ F$ and $F\circ G$
are naturally equivalent to the identity functors.
To digest this bunch of definitions and absorb some of its nutrition we
invite the reader to consider Proposition~\ref{prop08} in 
Section~\ref{SectionHopfAlgebraModules}.

The notion of categories is extremely useful to organize concepts.
However, many of the examples we want to encounter inherit more structure.
For this reason we review the definition of a \textit{monoidal category}.
In a nutshell one mimics the properties of the tensor product of vector spaces
on a categorical level. Starting with a general category $\mathcal{C}$
we assume the existence of a functor
$
\otimes\colon\mathcal{C}\times\mathcal{C}\rightarrow\mathcal{C}
$
obeying an \textit{associativity constraint}. Namely we postulate the
existence of an isomorphism $\alpha_{U,V,W}\colon(U\otimes V)\otimes W
\rightarrow U\otimes(V\otimes W)$ for any triple $U,V,W$ of objects in $\mathcal{C}$,
such that
\begin{equation*}
\begin{tikzcd}
(U\otimes V)\otimes W \arrow{r}{\alpha_{U,V,W}}
\arrow{d}[swap]{(f\otimes g)\otimes h}
& U\otimes(V\otimes W) \arrow{d}{f\otimes(g\otimes h)} \\
(U'\otimes V')\otimes W' \arrow{r}{\alpha_{U',V',W'}}
& U'\otimes(V'\otimes W')
\end{tikzcd}
\end{equation*}
commutes for all morphisms $f\colon U\rightarrow U'$, $g\colon V\rightarrow V'$ 
and $h\colon W\rightarrow W'$ in $\mathcal{C}$. In other words, there is
a natural equivalence 
$\alpha\colon\otimes\circ(\otimes\times\mathrm{id})\rightarrow
\otimes\circ(\mathrm{id}\times\otimes)$
of functors $\mathcal{C}^{\times 3}\rightarrow\mathcal{C}$. For a $\mathbb{K}$-vector
space $V$ we know that $\mathbb{K}\otimes V\cong V\cong V\otimes\mathbb{K}$.
This result is generalized by the left and right \textit{unit constraint} of
an object $I$ in $\mathcal{C}$, which are natural equivalences
$\ell\colon\otimes\circ(I\times\mathrm{id})\rightarrow\mathrm{id}$ and
$r\colon\otimes\circ(\mathrm{id}\times I)\rightarrow\mathrm{id}$
of functors $\mathcal{C}\rightarrow\mathcal{C}$, respectively.
\begin{definitionApp}[Monoidal Category]
A category $\mathcal{C}$ together with a functor
$\otimes\colon\mathcal{C}\times\mathcal{C}\rightarrow\mathcal{C}$ satisfying
an associativity constraint with respect to $\alpha$ and an object $I$ of
$\mathcal{C}$ satisfying a left and right unit constraint with respect to
$\ell$ and $r$ is said to be a monoidal category, if in addition the
\textit{pentagon relation}
\begin{equation*}
\begin{tikzcd}
((U\otimes V)\otimes W)\otimes X
\arrow{r}{\alpha_{U\otimes V,W,X}}
\arrow{d}[swap]{\alpha_{U,V,W}\otimes\mathrm{id}_X}
& (U\otimes V)\otimes (W\otimes X)
\arrow{r}{\alpha_{U,V,W\otimes X}}
& U\otimes(V\otimes(W\otimes X)) \\
(U\otimes(V\otimes W))\otimes X
\arrow{rr}{\alpha_{U,V\otimes W,X}}
& & U\otimes((V\otimes W)\otimes X)
\arrow{u}[swap]{\mathrm{id}_U\otimes\alpha_{V,W,X}}
\end{tikzcd}
\end{equation*}
and the \textit{triangle relation}
\begin{equation*}
\begin{tikzcd}
(V\otimes I)\otimes W
\arrow{rr}{\alpha_{V,I,W}}
\arrow{rd}[swap]{r_V\otimes\mathrm{id}_W}
& & V\otimes(I\otimes W)
\arrow{ld}{\mathrm{id}_V\otimes\ell_W} \\
& V\otimes W &
\end{tikzcd}
\end{equation*}
hold, for all objects $U,V,W,X$ in $\mathcal{C}$. If in addition
$\alpha,\ell$ and $r$ are identities the category $\mathcal{C}$ is called
\textit{strict monoidal}.
\end{definitionApp}
In this thesis we assume all monoidal categories to be strict monoidal, which can 
be done without loss of generality according to \cite{Ka95}~Sec.~XI.5.
As already mentioned as a motivating example, the category ${}_\mathbb{K}\mathrm{Vec}$
is monoidal with functor given by the usual tensor product of vector spaces and
unit object $\mathbb{K}$.
Also the categories $\mathrm{Set}$ and ${}_\Bbbk\mathcal{A}$ of sets and
associative unital algebras over a commutative unital ring $\Bbbk$ are monoidal with
functor given by the Cartesian product and the tensor product of algebras,
respectively. The latter coincides with the tensor product of
$\Bbbk$-modules but endows the tensor product of two algebras with the
tensor product multiplication, i.e. $(a\otimes x)\cdot(b\otimes y)
=(ab)\otimes(xy)$ for all $a,b\in\mathcal{A}$, $x,y\in\mathcal{X}$ defines an
associative unital product on the tensor product of two algebras $\mathcal{A}$
and $\mathcal{X}$. However, the category ${}_\mathcal{A}\mathcal{M}$ of
left $\mathcal{A}$-modules for an algebra $\mathcal{A}$ is not monoidal in
general. In fact, it is monoidal with respect to the usual associativity and
unit constraints of $\Bbbk$-modules if and only if
$\mathcal{A}$ is a bialgebra (see Proposition~\ref{prop04}). Since categories are
always considered together with their morphisms, the next definition is relevant in
this context.
\begin{definitionApp}[Monoidal Functor]
A functor $F\colon\mathcal{C}\rightarrow\mathcal{D}$ between two monoidal
categories
$(\mathcal{C},\otimes_\mathcal{C},I_\mathcal{C},
\alpha^\mathcal{C},\ell^\mathcal{C},r^\mathcal{C})$
and
$(\mathcal{D},\otimes_\mathcal{D},I_\mathcal{D},
\alpha^\mathcal{D},\ell^\mathcal{D},r^\mathcal{D})$
is said to be a \textit{monoidal functor} if there is a natural transformation
$$
\Xi_{A,B}\colon F(A)\otimes_\mathcal{D}F(B)
\rightarrow F(A\otimes_\mathcal{C}B)
$$
for any pair $(A,B)$ of objects in $\mathcal{C}$ and a morphism
$\phi\colon I_\mathcal{D}\rightarrow F(I_\mathcal{C})$ such that
\begin{equation*}
\begin{tikzcd}
(F(A)\otimes_\mathcal{D}F(B))\otimes_\mathcal{D}F(C)
\arrow{rr}{\alpha^\mathcal{D}_{F(A),F(B),F(C)}}
\arrow{d}[swap]{\Xi_{A,B}\otimes_\mathcal{D}\mathrm{id}_{F(C)}}
& &  F(A)\otimes_\mathcal{D}(F(B)\otimes_\mathcal{D}F(C))
\arrow{d}{\mathrm{id}_{F(A)}\otimes_\mathcal{D}\Xi_{B,C}} \\
F(A\otimes_\mathcal{C}B)\otimes_\mathcal{D}F(C)
\arrow{d}[swap]{\Xi_{A\otimes_\mathcal{C}B,C}}
& & F(A)\otimes_\mathcal{D}F(B\otimes_\mathcal{C}C)
\arrow{d}{\Xi_{A,B\otimes_\mathcal{C}C}} \\
F((A\otimes_\mathcal{C}B)\otimes_\mathcal{C}C)
\arrow{rr}{F(\alpha^\mathcal{C}_{A,B,C})}
& & F(A\otimes_\mathcal{C}(B\otimes_\mathcal{C}C))
\end{tikzcd},
\end{equation*}
\begin{equation*}
\begin{tikzcd}
F(A)\otimes_\mathcal{D}I_\mathcal{D}
\arrow{r}{\mathrm{id}\otimes_\mathcal{D}\phi}
\arrow{d}[swap]{r_\mathcal{D}}
& F(A)\otimes_\mathcal{D}F(I_\mathcal{C})
\arrow{d}{\Xi_{A,I_\mathcal{C}}} \\
F(A)
& F(A\otimes_\mathcal{C}I_\mathcal{C})
\arrow{l}[swap]{F(r_\mathcal{C})}
\end{tikzcd}
\text{ and }
\begin{tikzcd}
I_\mathcal{D}\otimes_\mathcal{D}F(A)
\arrow{r}{\phi\otimes_\mathcal{D}\mathrm{id}}
\arrow{d}[swap]{\ell_\mathcal{D}}
& F(I_\mathcal{C})\otimes_\mathcal{D}F(A)
\arrow{d}{\Xi_{I_\mathcal{C},A}} \\
F(A)
& F(I_\mathcal{C}\otimes_\mathcal{C}A)
\arrow{l}[swap]{F(\ell_\mathcal{C})}
\end{tikzcd}
\end{equation*}
commute as diagrams in $\mathcal{D}$. If $\Xi$ and $\phi$ are isomorphisms
in $\mathcal{D}$, the monoidal functor $F$ is said to be \textit{strong monoidal}.
If $\Xi$ and $\phi$ are even identities in $\mathcal{D}$, $F$ is said to be
\textit{strict monoidal}.
\end{definitionApp}
It is clear that natural transformations
of monoidal functors should respect the underlying monoidal structure in addition.
\begin{definitionApp}[Monoidal Equivalence]
A natural transformation $\Theta\colon F\rightarrow F'$
between monoidal functors $(F,\Xi,\phi)$ and $(F',\Xi',\phi')$ between
monoidal categories $\mathcal{C}$ and $\mathcal{D}$ is said to be a
\textit{monoidal natural transformation} if
\begin{equation*}
\begin{tikzcd}
 & I_\mathcal{D} 
 \arrow{ld}[swap]{\phi}
 \arrow{rd}{\phi'}
 & \\
 F(I_\mathcal{C})
 \arrow{rr}{\Theta_{I_\mathcal{C}}}
 & & F'(I_\mathcal{C})
\end{tikzcd}
\text{ and }
\begin{tikzcd}
 F(A)\otimes_\mathcal{D}F(B)
 \arrow{r}{\Xi_{A,B}}
 \arrow{d}[swap]{\Theta_A\otimes_\mathcal{D}\Theta_B}
 & F(A\otimes_\mathcal{C}B)
 \arrow{d}{\Theta_{A\otimes_\mathcal{C}B}} \\
 F'(A)\otimes_\mathcal{D}F'(B)
 \arrow{r}{\Xi'_{A,B}}
 & F'(A\otimes_\mathcal{C}B)
\end{tikzcd}
\end{equation*}
commute for all pairs $(A,B)$ of objects in $\mathcal{C}$ as diagrams in
$\mathcal{D}$. If $\Theta$ is a natural isomorphism
in addition, it is said to be a \textit{monoidal natural isomorphism}.
Finally, two monoidal categories $\mathcal{C}$ and $\mathcal{D}$ are
called \textit{monoidally equivalent} if there exist two monoidal functors
$F\colon\mathcal{C}\rightarrow\mathcal{D}$ and
$F'\colon\mathcal{D}\rightarrow\mathcal{C}$ together with monoidal natural
isomorphisms $F'\circ F\rightarrow\mathrm{id}_\mathcal{C}$ and
$F\circ F'\rightarrow\mathrm{id}_\mathcal{D}$.
\end{definitionApp}
\textit{Commutativity constraints} of a
monoidal category $(\mathcal{C},\otimes,I,\alpha,\ell,r)$ are natural
isomorphisms $\beta\colon\otimes\rightarrow\otimes\circ\tau$ of
functors $\mathcal{C}\times\mathcal{C}\rightarrow\mathcal{C}$, where
$\tau\colon\mathcal{C}\times\mathcal{C}\rightarrow
\mathcal{C}\times\mathcal{C}$ denotes the \textit{flip functor}
$\tau(A,B)=(B,A)$ for any pair $(A,B)$ of objects in $\mathcal{C}$.
Demanding compatibility with the associativity constraint we end up
with the definition of a braiding.
\begin{definitionApp}[Braided Monoidal Category]
A monoidal category $(\mathcal{C},\otimes,I,\alpha,\ell,r)$ is said to
be \textit{braided monoidal} if there is a commutativity constraint $\beta$
satisfying the \textit{hexagon relations}
\begin{equation*}
\begin{tikzcd}
 (A\otimes B)\otimes C
 \arrow{r}{\alpha_{A,B,C}}
 \arrow{d}[swap]{\beta_{A,B}\otimes\mathrm{id}_C}
 & A\otimes(B\otimes C)
 \arrow{r}{\beta_{A,B\otimes C}}
 & (B\otimes C)\otimes A
 \arrow{d}{\alpha_{B,C,A}} \\
 (B\otimes A)\otimes C
 \arrow{r}{\alpha_{B,A,C}}
 & B\otimes(A\otimes C)
 \arrow{r}{\mathrm{id}_B\otimes\beta_{A,C}}
 & B\otimes(C\otimes A)
\end{tikzcd}
\end{equation*}
and
\begin{equation*}
\begin{tikzcd}
  A\otimes(B\otimes C)
 \arrow{r}{\alpha^{-1}_{A,B,C}}
 \arrow{d}[swap]{\mathrm{id}_A\otimes\beta_{B,C}}
 & (A\otimes B)\otimes C
 \arrow{r}{\beta_{A\otimes B,C}}
 & C\otimes(A\otimes B)
 \arrow{d}{\alpha^{-1}_{C,A,B}} \\
 A\otimes(C\otimes B)
 \arrow{r}{\alpha^{-1}_{A,C,B}}
 & (A\otimes C)\otimes B
 \arrow{r}{\beta_{A,C}\otimes\mathrm{id}_B}
 & (C\otimes A)\otimes B
\end{tikzcd},
\end{equation*}
which are commutative diagrams in $\mathcal{C}$. If 
$\beta_{B,A}\circ\beta_{A,B}=\mathrm{id}_{A\otimes B}$ the braided monoidal
category is said to be \textit{symmetric}. A monoidal functor
$F\colon\mathcal{C}\rightarrow\mathcal{D}$ between braided monoidal
categories is called \textit{braided monoidal functor} if
\begin{equation*}
\begin{tikzcd}
F(A)\otimes_\mathcal{D}F(B)
\arrow{r}{\Xi_{A,B}}
\arrow{d}[swap]{\beta^\mathcal{D}_{F(A),F(B)}}
& F(A\otimes_\mathcal{C}B)
\arrow{d}{F(\beta^\mathcal{C}_{A,B})} \\
F(B)\otimes_\mathcal{D}F(A)
\arrow{r}{\Xi_{B,A}}
& F(B\otimes_\mathcal{C}A)
\end{tikzcd}
\end{equation*}
commutes in $\mathcal{D}$ for any pair $(A,B)$ of objects in $\mathcal{C}$.
\end{definitionApp}
The commutativity constraint $\beta$ of a braided monoidal category is also
called \textit{braiding}. It follows that the braiding respects the unit
object, i.e. that
\begin{equation*}
\begin{tikzcd}
A\otimes I
\arrow{rr}{\beta_{A,I}}
\arrow{rd}[swap]{r_A}
& & I\otimes A
\arrow{ld}{\ell_A} \\
& A
&
\end{tikzcd}
\end{equation*}
commutes for every object $A$ in the braided monoidal category.
Another interesting class of monoidal categories is given by rigid ones.
Their key feature is that they admit dual objects in a way which generalizes the
following example: a finite-dimensional $\mathbb{K}$-vector spaces $V$
possesses a basis $e_1,\ldots,e_n\in V$ and a corresponding
\textit{dual basis} $e^1,\ldots,e^n\in V^*
=\mathrm{Hom}_\mathbb{K}(V,\mathbb{K})$, such that
$e^i(e_j)=\delta^i_j$ and every element $v\in V$ can be represented as
$v=\sum_{i=1}^ne^i(v)e_i$.
\begin{definitionApp}[Duality]
For a strict monoidal category $(\mathcal{C},\otimes,I)$ we say
that an object $A^*$ of $\mathcal{C}$ is an \textit{left dual object}
of an object $A$ of $\mathcal{C}$ if there are morphisms
$\mathrm{ev}_A\colon A^*\otimes A\rightarrow I$ and
$\pi_A\colon I\rightarrow A\otimes A^*$ in $\mathcal{C}$ such that
$$
(\mathrm{id}_A\otimes\mathrm{ev}_A)\circ(\pi_A\otimes\mathrm{id}_A)
=\mathrm{id}_A
$$
and
$$
(\mathrm{ev}_A\otimes\mathrm{id}_{A^*})\circ(\mathrm{id}_{A^*}\otimes\pi_A)
=\mathrm{id}_{A^*}
$$
hold. If there are left duals for two objects $A$ and $B$ we can define
the \textit{left transpose} $f^*\colon B^*\rightarrow A^*$
of a morphism $f\colon A\rightarrow B$ by
$$
f^*
=(\mathrm{ev}_B\otimes\mathrm{id}_{A^*})
\circ(\mathrm{id}_{B^*}\otimes f\otimes\mathrm{id}_{A^*})
\circ(\mathrm{id}_{B^*}\otimes\pi_A).
$$
An object ${}^*A$ is said to be a \textit{right dual} of an object
$A$ if there are morphisms
$
\mathrm{ev}'_A\colon A\otimes {}^*A\rightarrow I
\text{ and }
\pi'_A\colon I\rightarrow {}^*A\otimes A
$
such that
$$
(\mathrm{ev}'_A\otimes\mathrm{id}_A)\circ(\mathrm{id}_A\otimes\pi'_A)
=\mathrm{id}_A
$$
and
$$
(\mathrm{id}_{{}^*A}\otimes\mathrm{ev}'_A)
\circ(\pi'_A\otimes\mathrm{id}_{{}^*A})
=\mathrm{id}_{{}^*A}
$$
hold. If there are right duals ${}^*A$ and ${}^*B$ of two objects $A$ and $B$,
the \textit{right transpose} ${}^*f\colon{}^*B\rightarrow {}^*A$
of a morphism $f\colon A\rightarrow B$ is defined by
$$
{}^*f
=(\mathrm{id}_{{}^*B}\otimes\mathrm{ev}'_B)
\circ(\mathrm{id}_{{}^*B}\otimes f\otimes\mathrm{id}_{{}^*A})
\circ(\pi'_A\otimes\mathrm{id}_{{}^*A}).
$$
If there is a left and a right dual for every object in $\mathcal{C}$
we call the strict monoidal category $\mathcal{C}$ rigid.
\end{definitionApp}
In fact, rigid strict monoidal categories behave very much like
the category ${}_\mathbb{K}\mathrm{Vec}_f$ of finite-dimensional
vector spaces. This is underlined in the next proposition, taken from
\cite{Ka95}~Prop.~XIV.2.2.
\begin{propositionApp}
Let $(\mathcal{C},\otimes,I)$ be a rigid strict monoidal category.
\begin{enumerate}
\item[i.)] For all morphisms $f\colon A\rightarrow B$ and
$g\colon B\rightarrow C$ in $\mathcal{C}$ one has
$$
(g\circ f)^*
=f^*\circ g^*
\text{ and }
\mathrm{id}^*_A=\mathrm{id}_{A^*}.
$$

\item[ii.)] There are natural bijections
$$
\mathrm{Hom}(A\otimes B,C)
\cong\mathrm{Hom}(A,C\otimes B^*)
$$
and
$$
\mathrm{Hom}(A^*\otimes B,C)
\cong\mathrm{Hom}(B,A\otimes C),
$$
where $A,B,C$ are objects in $\mathcal{C}$.

\item[iii.)] There is an isomorphism 
$
(A\otimes B)^*
\cong B^*\otimes A^*
$
for any two objects $A,B$ in $\mathcal{C}$.
\end{enumerate}
Similar statements hold for the right transpose. Moreover, there are
isomorphisms
$$
{}^*(A^*)\cong A\cong({}^*A)^*.
$$
\end{propositionApp}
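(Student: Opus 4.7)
The plan is to work throughout with the defining snake identities $(\mathrm{id}_A\otimes\mathrm{ev}_A)\circ(\pi_A\otimes\mathrm{id}_A)=\mathrm{id}_A$ and $(\mathrm{ev}_A\otimes\mathrm{id}_{A^*})\circ(\mathrm{id}_{A^*}\otimes\pi_A)=\mathrm{id}_{A^*}$, which are the only tool available. Strictness is a genuine convenience here: it lets me suppress every associator and unitor so that the arguments reduce to repeated insertion and absorption of loops $\pi \otimes \mathrm{id}$ and $\mathrm{id} \otimes \mathrm{ev}$. I will treat the right-dual statements as mirror images of the left-dual arguments throughout, since the definition of right dual is formally dual in the obvious sense.

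For i.), the identity statement is immediate: the definition of $\mathrm{id}_A^*$ is $(\mathrm{ev}_A\otimes\mathrm{id}_{A^*})\circ(\mathrm{id}_{B^*}\otimes\mathrm{id}_A\otimes\mathrm{id}_{A^*})\circ(\mathrm{id}_{A^*}\otimes\pi_A)$, which is a single snake identity. For the contravariance $(g\circ f)^*=f^*\circ g^*$, I would expand both sides: the left-hand side is one big expression built from $\pi_A$, $g\circ f$, and $\mathrm{ev}_C$, while the right-hand side chains together the corresponding expressions for $f^*$ and $g^*$. I would insert a snake $(\mathrm{id}_{B^*}\otimes\pi_B\otimes\mathrm{id}_{A^*})\circ(\mathrm{id}_{B^*}\otimes\mathrm{ev}_B\otimes\mathrm{id}_{A^*})$ into the definition of $(g\circ f)^*$ at the appropriate slot (this insertion is the identity by the first snake identity), then rearrange to recognize the two halves as exactly $g^*$ followed by $f^*$.

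For ii.), I would exhibit the bijection $\Phi\colon\mathrm{Hom}(A\otimes B,C)\to\mathrm{Hom}(A,C\otimes B^*)$ explicitly by $\Phi(f)=(f\otimes\mathrm{id}_{B^*})\circ(\mathrm{id}_A\otimes\pi_B)$, with inverse $\Psi(g)=(\mathrm{id}_C\otimes\mathrm{ev}_B)\circ(g\otimes\mathrm{id}_B)$; the compositions $\Phi\circ\Psi$ and $\Psi\circ\Phi$ each collapse to the identity via one application of a snake identity. The second bijection is entirely analogous with the roles of $A$ and $A^*$ swapped, using $\pi_A'$ and $\mathrm{ev}_A'$ if working on the same side, or using the left dual of $A$ via the evaluation $\mathrm{ev}_A\colon A^*\otimes A\to I$ read backwards. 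Naturality in $A$, $B$, $C$ is routine because the structure maps $\mathrm{ev}$ and $\pi$ are natural in the relevant variables.

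For iii.), the strategy is to construct a candidate left dual structure on $B^*\otimes A^*$ for the object $A\otimes B$ and invoke uniqueness of duals up to canonical isomorphism. I would define $\mathrm{ev}_{A\otimes B}\colon(B^*\otimes A^*)\otimes(A\otimes B)\to I$ as $\mathrm{ev}_B\circ(\mathrm{id}_{B^*}\otimes\mathrm{ev}_A\otimes\mathrm{id}_B)$ and $\pi_{A\otimes B}\colon I\to(A\otimes B)\otimes(B^*\otimes A^*)$ as $(\mathrm{id}_A\otimes\pi_B\otimes\mathrm{id}_{A^*})\circ\pi_A$, then verify the two snake identities for the pair $(A\otimes B,B^*\otimes A^*)$ by inserting and collapsing the inner $\pi_B,\mathrm{ev}_B$ pair first, and then the outer $\pi_A,\mathrm{ev}_A$ pair. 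For the final assertion ${}^*(A^*)\cong A\cong (^*A)^*$, I would observe that the data $(\mathrm{ev}_A,\pi_A)$ also exhibits $A$ as a right dual of $A^*$ (with roles of $\mathrm{ev}$ and $\pi$ reinterpreted), so $A$ is a candidate for ${}^*(A^*)$, and appeal once more to uniqueness of duals. The main obstacle is bookkeeping: without a graphical calculus the snake identities must be applied at specific tensor positions, and keeping track of where to insert the identity $\pi\otimes\mathrm{id}\;;\;\mathrm{id}\otimes\mathrm{ev}$ is the only place errors tend to creep in.
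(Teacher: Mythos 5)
Your overall strategy is correct and is the standard one; in fact the paper itself gives no proof of this proposition in the compiled text (it is cited to Kassel, Prop.~XIV.2.2), and the only trace of an intended argument — an unfinished draft left in a comment block — defines exactly your maps $\Phi(f)=(f\otimes\mathrm{id}_{B^*})\circ(\mathrm{id}_A\otimes\pi_B)$ and $\Psi(g)=(\mathrm{id}_C\otimes\mathrm{ev}_B)\circ(g\otimes\mathrm{id}_B)$ for part ii.). Your treatment of iii.) via exhibiting $(B^*\otimes A^*,\ \mathrm{ev}_B\circ(\mathrm{id}_{B^*}\otimes\mathrm{ev}_A\otimes\mathrm{id}_B),\ (\mathrm{id}_A\otimes\pi_B\otimes\mathrm{id}_{A^*})\circ\pi_A)$ as a left dual of $A\otimes B$ and of the last claim via reading $(\mathrm{ev}_A,\pi_A)$ as right-dual data for $A^*$ is exactly right, provided you state and use uniqueness of (left and right) duals up to canonical isomorphism, which you invoke but do not prove; that lemma is itself a two-line snake computation and should be recorded.

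One concrete slip in part i.): the morphism you propose to insert, $(\mathrm{id}_{B^*}\otimes\pi_B\otimes\mathrm{id}_{A^*})\circ(\mathrm{id}_{B^*}\otimes\mathrm{ev}_B\otimes\mathrm{id}_{A^*})$, is not an identity — it is $\mathrm{id}_{B^*}\otimes(\pi_B\circ\mathrm{ev}_B)\otimes\mathrm{id}_{A^*}$, whose source $B^*\otimes B^*\otimes B\otimes A^*$ and target $B^*\otimes B\otimes B^*\otimes A^*$ do not even agree, and $\pi_B\circ\mathrm{ev}_B$ is not covered by either snake identity. The correct move is to insert $(\mathrm{id}_B\otimes\mathrm{ev}_B)\circ(\pi_B\otimes\mathrm{id}_B)=\mathrm{id}_B$ between $f$ and $g$ inside the expression for $(g\circ f)^*$ (tensored on the left with $\mathrm{id}_{C^*}$ and on the right with $\mathrm{id}_{A^*}$), and then use bifunctoriality of $\otimes$ to slide the factors apart into $f^*\circ g^*$. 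With that correction the argument goes through; the rest of the proposal needs only the routine bookkeeping you acknowledge.
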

Note that in general $(A^*)^*\ncong A$ and ${}^*({}^*A)\ncong A$.
%

\chapter{Braided Graßmann and Gerstenhaber Algebras}\label{App02}
Let $\Bbbk$ be a commutative ring with unit $1$. A \textit{graded module}
over $\Bbbk$ is a direct sum $V^\bullet=\bigoplus_{k\in\mathbb{Z}}V^k$
of $\Bbbk$-modules. Note that $V^\bullet$ itself is a $\Bbbk$-module
with respect to the component-wise action. Remark that the notion of
graded modules is usually utilized in the context of graded rings
(see \cite{Bourbaki1989}~Chap.~II~Sec.~11). For our
purpose it is sufficient to restrict our consideration to usual rings
which can be seen as graded rings concentrated in degree zero.
A map $\Phi\colon V^\bullet\rightarrow W^\bullet$ between graded modules
$V^\bullet=\bigoplus_{k\in\mathbb{Z}}V^k$ and 
$W^\bullet=\bigoplus_{k\in\mathbb{Z}}W^k$ is said to
be \textit{homogeneous of degree $k\in\mathbb{Z}$} if
$\Phi(V^\ell)\subseteq W^{k+\ell}$. We often write
$\Phi\colon V^\bullet\rightarrow W^{\bullet+k}$ in this case.
As an example, consider the Graßmann algebra $(\Omega^\bullet(M),\wedge)$
of differential forms for a smooth manifold $M$. It is a graded 
$\mathbb{R}$-module as well as a graded $\mathscr{C}^\infty(M)$-module and
the de Rham differential $\mathrm{d}\colon\Omega^\bullet(M)
\rightarrow\Omega^{\bullet+1}(M)$ is a homogeneous map of degree $1$.
The \textit{graded commutator} of two homogeneous maps
$\Phi,\Psi\colon V^\bullet\rightarrow V^\bullet$ of degree $k$ and $\ell$
is defined by 
$$
[\Phi,\Psi]=\Phi\circ\Psi-(-1)^{k\ell}\Psi\circ\Phi.
$$
In Section~\ref{Sec3.3} braided differential forms
$(\Omega^\bullet_\mathcal{R}(\mathcal{A}),\wedge_\mathcal{R})$ of
a braided commutative algebra $\mathcal{A}$ for a triangular Hopf algebra
$(H,\mathcal{R})$ are introduced as a generalization to differential forms
on a smooth manifold. In the following lines we give the general framework
for this space to fit in (see also \cite{BZ2008}).
It is the generalization of Graßmann algebra in the category of
$H$-equivariant braided symmetric $\mathcal{A}$-bimodules.
Remark that braided Lie algebras and their quantum analogues can be
formulated in a more general categorical setting (see e.g.
\cite{GoMa2003,Ma94}). 

Fix a triangular Hopf algebra $(H,\mathcal{R})$ and a braided commutative
algebra $\mathcal{A}$. For any $H$-equivariant braided symmetric
$\mathcal{A}$-bimodule we are able to define the \textit{tensor algebra}
$$
\mathrm{T}^\bullet\mathcal{M}
=\bigoplus_{k\in\mathbb{Z}}\mathcal{M}^k
=\mathcal{A}\oplus\mathcal{M}\oplus(\mathcal{M}\otimes_\mathcal{A}\mathcal{M})
\oplus\cdots
$$
with respect to the tensor product $\otimes_\mathcal{A}$ over $\mathcal{A}$,
where by definition $\mathcal{M}^k=\{0\}$ if $k<0$ and
$\mathcal{M}^0=\mathcal{A}$. The tensor algebra $\mathrm{T}^\bullet\mathcal{M}$
is an associative unital algebra with respect to the product given by
the tensor product $\otimes_\mathcal{A}$ and the unit $1\in\mathcal{A}$.
\begin{lemmaApp}\label{lemma13}
The tensor algebra $\mathrm{T}^\bullet\mathcal{M}$ of an $H$-equivariant
braided symmetric $\mathcal{A}$-bimodule is an $H$-equivariant
braided symmetric $\mathcal{A}$-bimodule with respect to the following
module actions, given on factorizing elements $m_1\otimes_\mathcal{A}\cdots
\otimes_\mathcal{A}m_k$ by
\begin{align*}
    \xi\rhd(m_1\otimes_\mathcal{A}\cdots\otimes_\mathcal{A}m_k)
    =&(\xi_{(1)}\rhd m_1)\otimes_\mathcal{A}\cdots\otimes_\mathcal{A}
    (\xi_{(k)}\rhd m_k),\\
    a\cdot(m_1\otimes_\mathcal{A}\cdots\otimes_\mathcal{A}m_k)
    =&(a\cdot m_1)\otimes_\mathcal{A}\cdots\otimes_\mathcal{A}m_k,\\
    (m_1\otimes_\mathcal{A}\cdots\otimes_\mathcal{A}m_k)\cdot a
    =&m_1\otimes_\mathcal{A}\cdots\otimes_\mathcal{A}(m_k\cdot a)
\end{align*}
for all $\xi\in H$ and $a\in\mathcal{A}$, where $k\geq 0$.
\end{lemmaApp}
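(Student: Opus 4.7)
The plan is to verify in turn (i) that each of the three proposed actions descends from $\mathcal{M}^{\otimes k}$ to the balanced tensor product $\mathcal{M}^{\otimes_\mathcal{A} k}$, (ii) that together they satisfy the axioms of an $H$-equivariant $\mathcal{A}$-bimodule, and (iii) that this bimodule is braided symmetric with respect to $\mathcal{R}$. Because each action is defined slot-wise, most of the bookkeeping reduces to checking one tensor factor at a time and then invoking induction on the degree $k$; only the braided symmetry will genuinely require the hexagon relations.

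First I would check well-definedness of the $H$-action on $\mathcal{M}^{\otimes_\mathcal{A} k}$. The nontrivial balancing relation to test is $(m\cdot a)\otimes_\mathcal{A} n = m\otimes_\mathcal{A}(a\cdot n)$ across a pair of adjacent factors. Using that $\mathcal{A}$ is an $H$-module algebra and that $\mathcal{M}$ is an $H$-equivariant bimodule, both sides evaluate to $(\xi_{(1)}\rhd m)\otimes_\mathcal{A}((\xi_{(2)}\rhd a)\cdot(\xi_{(3)}\rhd n))$ after applying $\xi\in H$, so the $H$-action descends. The left and right $\mathcal{A}$-actions act only on the first, resp.\ last, tensor slot and so are trivially well-defined on the balanced tensor product; they commute because they touch disjoint factors, giving in both orders the element $(a\cdot m_1)\otimes_\mathcal{A}\cdots\otimes_\mathcal{A}(m_k\cdot b)$. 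The bimodule axioms (associativity and unitality) for each side follow directly from those of $\mathcal{M}$, and $H$-equivariance of the bimodule structure reduces, via the slot-wise definition, to the $H$-equivariance of the two actions on $\mathcal{M}$ itself.

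The main content is braided symmetry, i.e.\ proving
\begin{equation*}
a\cdot x \;=\; (\mathcal{R}_1^{-1}\rhd x)\cdot(\mathcal{R}_2^{-1}\rhd a)
\end{equation*}
for all $a\in\mathcal{A}$ and $x\in\mathrm{T}^k\mathcal{M}$. For $k=0,1$ this is just braided commutativity of $\mathcal{A}$ and braided symmetry of $\mathcal{M}$, which are hypotheses. I would then proceed by induction on $k$: writing $x=m\otimes_\mathcal{A} y$ with $m\in\mathcal{M}$ and $y\in\mathrm{T}^{k-1}\mathcal{M}$, apply braided symmetry of $\mathcal{M}$ to push $a$ past $m$, then the inductive hypothesis to push the resulting element of $\mathcal{A}$ past $y$, and finally use the hexagon relation $(\mathrm{id}\otimes\Delta)(\mathcal{R}^{-1})=\mathcal{R}^{-1}_{13}\mathcal{R}^{-1}_{12}$ to recombine the two $\mathcal{R}^{-1}$-matrices into a single one acting on $m\otimes_\mathcal{A} y$ with the correct coproduct. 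The triangularity $\mathcal{R}^{-1}=\mathcal{R}_{21}$ is not needed here, but the hexagon relation is essential for repackaging successive applications of the braiding.

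The step I expect to be most delicate is precisely this repackaging in the inductive step: keeping the Sweedler/leg indices straight when several primed copies of $\mathcal{R}^{-1}$ arise, and recognising the result as $(\Delta\otimes\mathrm{id})$ or $(\mathrm{id}\otimes\Delta)$ applied to a single $\mathcal{R}^{-1}$. Everything else, including the compatibility of left and right module actions with the $H$-action, is a routine verification of the form already carried out in the proof of Lemma~\ref{lemma08} for the particular case $\mathcal{M}=\mathrm{Der}_\mathcal{R}(\mathcal{A})$; the general proof follows the same pattern essentially verbatim, only replacing the explicit formulas for the actions on braided derivations by the abstract slot-wise formulas stated in the lemma.
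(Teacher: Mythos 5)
Your proposal is correct and follows the route the paper itself leaves implicit: the paper dismisses this lemma with ``It is an easy exercise to verify this lemma,'' and the computations you outline are exactly the ones carried out explicitly in the special case $\mathcal{M}=\mathrm{Der}_\mathcal{R}(\mathcal{A})$ in the proof of Lemma~\ref{lemma08} and, for the coproduct-of-$\mathcal{R}^{-1}$ bookkeeping, in Lemma~\ref{lemma14}. One small correction for when you write out the inductive step of the braided symmetry: since it is the \emph{first} leg of $\mathcal{R}^{-1}$ that gets distributed over the slots $m$ and $y$, the hexagon relation you need is $(\Delta\otimes\mathrm{id})(\mathcal{R})=\mathcal{R}_{13}\mathcal{R}_{23}$, whose inverse reads $\mathcal{R}_{1(1)}^{-1}\otimes\mathcal{R}_{1(2)}^{-1}\otimes\mathcal{R}_2^{-1}=\mathcal{R}_1^{'-1}\otimes\mathcal{R}_1^{-1}\otimes\mathcal{R}_2^{-1}\mathcal{R}_2^{'-1}$, rather than the relation $(\mathrm{id}\otimes\Delta)(\mathcal{R}^{-1})$ you quoted (and note that inverting a product reverses the order of the two legs). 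With that index fixed, your two-step push-past argument recombines into a single $\mathcal{R}^{-1}$ exactly as you describe.
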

It is an easy exercise to verify this lemma. Furthermore, 
there is an ideal $I$ in $(\mathrm{T}^\bullet\mathcal{M},
\otimes_\mathcal{A})$, generated by elements
$
m_1\otimes_\mathcal{A}\cdots\otimes_\mathcal{A}m_k
\in\mathrm{T}^k\mathcal{M}
$
which equal
\begin{align*}
    m_1\otimes_\mathcal{A}&\cdots\otimes_\mathcal{A}m_{i-1}
    \otimes_\mathcal{A}\bigg(\mathcal{R}_1^{'-1}\rhd\bigg(
    (\mathcal{R}_1^{-1}\rhd m_j)\otimes_\mathcal{A}
    (\mathcal{R}_2^{-1}\rhd(m_{i+1}\otimes_\mathcal{A}
    \cdots\otimes_\mathcal{A}m_{j-1}))\bigg)\bigg)\\
    &\otimes_\mathcal{A}(\mathcal{R}_2^{'-1}\rhd m_i)
    \otimes_\mathcal{A}m_{j+1}\otimes_\mathcal{A}\cdots
    \otimes_\mathcal{A}m_k
\end{align*}
for a pair $(i,j)$ such that $1\leq i<j\leq k$.
\begin{lemmaApp}\label{lemma14}
The left $H$-action and the left and right $\mathcal{A}$-actions
from Lemma~\ref{lemma13} respect the ideal $I$.
\end{lemmaApp}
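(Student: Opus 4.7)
The plan is to verify closure of $I$ under each of the three actions by recognising the generating relations as arising from morphisms in the braided monoidal category ${}_\mathcal{A}^H\mathcal{M}_\mathcal{A}^\mathcal{R}$ of Theorem~\ref{thm02}. Concretely, one identifies each generator of $I$ as a difference $x - \tilde{\sigma}_{i,j}(x)$, where $x = m_1 \otimes_\mathcal{A} \cdots \otimes_\mathcal{A} m_k$ and $\tilde{\sigma}_{i,j}$ is the iterated braided swap that moves position $j$ past positions $j-1,\ldots,i$ via repeated application of the single flip $\sigma_{\mathcal{M},\mathcal{N}}(m \otimes_\mathcal{A} n) = (\mathcal{R}_1^{-1} \rhd n) \otimes_\mathcal{A} (\mathcal{R}_2^{-1} \rhd m)$. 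A direct comparison, collapsing the iterated $\mathcal{R}$-matrix insertions via the hexagon relations into the nested coproduct expressions appearing in the statement, confirms this identification.

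Next I would check that $\tilde{\sigma}_{i,j}$ is $H$-equivariant and left/right $\mathcal{A}$-linear with respect to the outer bimodule structure on $\mathrm{T}^\bullet\mathcal{M}$. For the single flip $\sigma_{\mathcal{M},\mathcal{N}}$, the $H$-equivariance is an immediate consequence of quasi-cocommutativity $\mathcal{R}\Delta(\xi) = \Delta_{21}(\xi)\mathcal{R}$, while the $\mathcal{A}$-bilinearity follows from the braided symmetry axiom $b\cdot a = (\mathcal{R}_1^{-1}\rhd a)\cdot(\mathcal{R}_2^{-1}\rhd b)$ and its bimodule analogue. Both properties extend to the iterated $\tilde{\sigma}_{i,j}$ by induction on $j - i$, using naturality of the braiding in ${}_\mathcal{A}^H\mathcal{M}_\mathcal{A}^\mathcal{R}$. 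Granted these, for every $\xi \in H$, every $a \in \mathcal{A}$, and every generator $x - \tilde{\sigma}_{i,j}(x)$ of $I$, one computes
\[
\xi \rhd \bigl(x - \tilde{\sigma}_{i,j}(x)\bigr) = (\xi \rhd x) - \tilde{\sigma}_{i,j}(\xi \rhd x) \in I,
\]
and analogously $a \cdot (x - \tilde{\sigma}_{i,j}(x)) \in I$ and $(x - \tilde{\sigma}_{i,j}(x)) \cdot a \in I$. Since the three actions are $\Bbbk$-linear and $I$ is generated as a $\Bbbk$-submodule by these differences together with their tensor products with arbitrary factors on both sides (inherited from the ideal property in $(\mathrm{T}^\bullet\mathcal{M}, \otimes_\mathcal{A})$), closure on generators together with the bimodule compatibility in Lemma~\ref{lemma13} suffices.

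The main obstacle will be the bookkeeping when $i = 1$ (or symmetrically $j = k$), because passing an outer $\mathcal{A}$-factor through the iterated braiding $\tilde{\sigma}_{1,j}$ requires repeated invocation of braided commutativity of $\mathcal{A}$ and braided symmetry of $\mathcal{M}$, with nested $\mathcal{R}$-matrix components that must recombine via the hexagon relations. The most economical way to bypass the explicit computation is to appeal directly to the fact that $\sigma_{\mathcal{M},\mathcal{N}}$ is a morphism in ${}_\mathcal{A}^H\mathcal{M}_\mathcal{A}^\mathcal{R}$, whence $\tilde{\sigma}_{i,j}$ inherits both $H$-equivariance and $\mathcal{A}$-bilinearity automatically; once this is established, the closure of $I$ under all three actions becomes a purely formal consequence rather than a computation.
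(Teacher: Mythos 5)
Your proposal is correct in substance but organizes the argument differently from the paper. The paper proves Lemma~\ref{lemma14} by brute force: it takes a generator of $I$, i.e.\ an element $m_1\otimes_\mathcal{A}\cdots\otimes_\mathcal{A}m_k$ that \emph{equals} its braided transposition, applies each of the three actions in turn, and pushes the $\mathcal{R}$-matrix legs through using quasi-cocommutativity (for the $H$-action) and the hexagon relations together with the braided symmetry of $\mathcal{M}$ (for the outer $\mathcal{A}$-actions) until the defining relation reappears. You package exactly these three computations into the single statement that the braided flip $\sigma_{\mathcal{M},\mathcal{N}}$ is a morphism in ${}_\mathcal{A}^H\mathcal{M}_\mathcal{A}^\mathcal{R}$, so that the iterated transposition $\tilde\sigma_{i,j}$ commutes with all three actions and closure of $I$ becomes formal. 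This is legitimate --- the required properties of the braiding are available from Proposition~\ref{prop06} and Theorem~\ref{thm02}, which are proved independently of the appendix --- and it buys a cleaner, reusable argument at the cost of hiding the same $\mathcal{R}$-matrix gymnastics inside the verification that $\sigma$ is an $\mathcal{A}$-bimodule morphism; the $i=1$ and $j=k$ cases you single out as the main obstacle are precisely where the paper's direct computation is longest.

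One inaccuracy to correct: the generators of $I$ are not the differences $x-\tilde\sigma_{i,j}(x)$ but the \emph{fixed points} $x=\tilde\sigma_{i,j}(x)$ of the braided transpositions --- this is what yields the braided exterior rather than the braided symmetric algebra, as the sign appearing in the proof of Lemma~\ref{lemma09} confirms. Your closure argument survives unchanged, since a map commuting with $\tilde\sigma_{i,j}$ preserves the kernel of $\mathrm{id}-\tilde\sigma_{i,j}$ just as well as its image, but the description of the ideal should be fixed.
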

\begin{proof}
Let $\xi\in H$, $a\in\mathcal{A}$ and $m_1\otimes_\mathcal{A}\cdots
\otimes_\mathcal{A}m_k\in I$ for a $k>1$. Then
\begin{allowdisplaybreaks}
\begin{align*}
    (\xi_{(1)}\rhd& m_1)\otimes_\mathcal{A}\cdots\otimes_\mathcal{A}
    (\xi_{(k)}\rhd m_k)
    =\xi\rhd(m_1\otimes_\mathcal{A}\cdots\otimes_\mathcal{A}m_k)\\
    =&\xi\rhd\bigg(m_1\otimes_\mathcal{A}\cdots\otimes_\mathcal{A}m_{i-1}
    \otimes_\mathcal{A}\bigg(\mathcal{R}_1^{'-1}\rhd\bigg(\\
    &(\mathcal{R}_1^{-1}\rhd m_j)
    \otimes_\mathcal{A}
    (\mathcal{R}_2^{-1}\rhd(m_{i+1}\otimes_\mathcal{A}
    \cdots\otimes_\mathcal{A}m_{j-1}))\bigg)\bigg)\\
    &\otimes_\mathcal{A}(\mathcal{R}_2^{'-1}\rhd m_i)
    \otimes_\mathcal{A}m_{j+1}\otimes_\mathcal{A}\cdots
    \otimes_\mathcal{A}m_k\bigg)\\
    =&\xi_{(1)}\rhd\bigg(m_1\otimes_\mathcal{A}\cdots
    \otimes_\mathcal{A}m_{i-1}\bigg)
    \otimes_\mathcal{A}\bigg((\xi_{(2)}\mathcal{R}_1^{'-1})\rhd\bigg(\\
    &(\mathcal{R}_1^{-1}\rhd m_j)
    \otimes_\mathcal{A}
    (\mathcal{R}_2^{-1}\rhd(m_{i+1}\otimes_\mathcal{A}
    \cdots\otimes_\mathcal{A}m_{j-1}))\bigg)\bigg)\\
    &\otimes_\mathcal{A}((\xi_{(3)}\mathcal{R}_2^{'-1})\rhd m_i)
    \otimes_\mathcal{A}\xi_{(4)}\rhd\bigg(m_{j+1}\otimes_\mathcal{A}\cdots
    \otimes_\mathcal{A}m_k\bigg)\\
    =&\xi_{(1)}\rhd\bigg(m_1\otimes_\mathcal{A}\cdots
    \otimes_\mathcal{A}m_{i-1}\bigg)
    \otimes_\mathcal{A}\bigg(\mathcal{R}_1^{'-1}\rhd\bigg(\\
    &((\xi_{(3)}\mathcal{R}_1^{-1})\rhd m_j)
    \otimes_\mathcal{A}
    ((\xi_{(4)}\mathcal{R}_2^{-1})\rhd(m_{i+1}\otimes_\mathcal{A}
    \cdots\otimes_\mathcal{A}m_{j-1}))\bigg)\bigg)\\
    &\otimes_\mathcal{A}((\mathcal{R}_2^{'-1}\xi_{(2)})\rhd m_i)
    \otimes_\mathcal{A}\xi_{(5)}\rhd\bigg(m_{j+1}\otimes_\mathcal{A}\cdots
    \otimes_\mathcal{A}m_k\bigg)\\
    =&(\xi_{(1)}\rhd m_1)\otimes_\mathcal{A}\cdots
    \otimes_\mathcal{A}(\xi_{(i-1)}\rhd m_{i-1})
    \otimes_\mathcal{A}\bigg(\mathcal{R}_1^{'-1}\rhd\bigg(\\
    &((\mathcal{R}_1^{-1}\xi_{(j)})\rhd m_j)
    \otimes_\mathcal{A}
    (\mathcal{R}_2^{-1}\rhd((\xi_{(i+1)}\rhd m_{i+1})\otimes_\mathcal{A}
    \cdots\otimes_\mathcal{A}(\xi_{(j-1)}\rhd m_{j-1})))\bigg)\bigg)\\
    &\otimes_\mathcal{A}((\mathcal{R}_2^{'-1}\xi_{(i)})\rhd m_i)
    \otimes_\mathcal{A}(\xi_{(j+1)}\rhd m_{j+1})\otimes_\mathcal{A}\cdots
    \otimes_\mathcal{A}(\xi_{(k)}\rhd m_k)
\end{align*}
\end{allowdisplaybreaks}
for a pair $(i,j)$ such that $1\leq i<j\leq k$, which implies
$\xi\rhd I\subseteq I$. The left $\mathcal{A}$-module action is only affected
if $i=1$. In this case
\begin{allowdisplaybreaks}
\begin{align*}
    (a\cdot m_1)&\otimes_\mathcal{A}m_2\otimes_\mathcal{A}\cdots
    \otimes_\mathcal{A}m_k
    =a\cdot(m_1\otimes_\mathcal{A}\cdots\otimes_\mathcal{A}m_k)\\
    =&a\cdot\bigg(\bigg(\mathcal{R}_1^{'-1}\rhd\bigg(
    (\mathcal{R}_1^{-1}\rhd m_j)\otimes_\mathcal{A}
    (\mathcal{R}_2^{-1}\rhd(m_{2}\otimes_\mathcal{A}
    \cdots\otimes_\mathcal{A}m_{j-1}))\bigg)\bigg)\\
    &\otimes_\mathcal{A}(\mathcal{R}_2^{'-1}\rhd m_1)
    \otimes_\mathcal{A}m_{j+1}\otimes_\mathcal{A}\cdots
    \otimes_\mathcal{A}m_k\bigg)\\
    =&\bigg(
    a\cdot((\mathcal{R}_{1(1)}^{'-1}\mathcal{R}_1^{-1})\rhd m_j)
    \otimes_\mathcal{A}
    ((\mathcal{R}_{1(2)}^{'-1}\mathcal{R}_2^{-1})\rhd(m_{2}
    \otimes_\mathcal{A}
    \cdots\otimes_\mathcal{A}m_{j-1}))\bigg)\\
    &\otimes_\mathcal{A}(\mathcal{R}_2^{'-1}\rhd m_1)
    \otimes_\mathcal{A}m_{j+1}\otimes_\mathcal{A}\cdots
    \otimes_\mathcal{A}m_k\\
    =&\bigg(
    ((\mathcal{R}_{1(1)}^{''-1}\mathcal{R}_{1(1)}^{'-1}\mathcal{R}_1^{-1})
    \rhd m_j)
    \otimes_\mathcal{A}
    ((\mathcal{R}_{1(2)}^{''-1}\mathcal{R}_{1(2)}^{'-1}\mathcal{R}_2^{-1})
    \rhd(m_{2}
    \otimes_\mathcal{A}
    \cdots\otimes_\mathcal{A}m_{j-1}))\bigg)\\
    &\otimes_\mathcal{A}((\mathcal{R}_2^{''-1}\rhd a)
    \cdot(\mathcal{R}_2^{'-1}\rhd m_1))
    \otimes_\mathcal{A}m_{j+1}\otimes_\mathcal{A}\cdots
    \otimes_\mathcal{A}m_k\\
    =&\bigg(
    ((\mathcal{R}_{1(1)}^{'-1}\mathcal{R}_1^{-1})
    \rhd m_j)
    \otimes_\mathcal{A}
    ((\mathcal{R}_{1(2)}^{'-1}\mathcal{R}_2^{-1})
    \rhd(m_{2}
    \otimes_\mathcal{A}
    \cdots\otimes_\mathcal{A}m_{j-1}))\bigg)\\
    &\otimes_\mathcal{A}((\mathcal{R}_{2(1)}^{'-1}\rhd a)
    \cdot(\mathcal{R}_{2(2)}^{'-1}\rhd m_1))
    \otimes_\mathcal{A}m_{j+1}\otimes_\mathcal{A}\cdots
    \otimes_\mathcal{A}m_k\\
    =&\bigg(\mathcal{R}_1^{'-1}\rhd\bigg(
    (\mathcal{R}_1^{-1}\rhd m_j)\otimes_\mathcal{A}
    (\mathcal{R}_2^{-1}\rhd(m_{2}\otimes_\mathcal{A}
    \cdots\otimes_\mathcal{A}m_{j-1}))\bigg)\bigg)\\
    &\otimes_\mathcal{A}(\mathcal{R}_2^{'-1}\rhd(a\cdot m_1))
    \otimes_\mathcal{A}m_{j+1}\otimes_\mathcal{A}\cdots
    \otimes_\mathcal{A}m_k.
\end{align*}
\end{allowdisplaybreaks}
On the other hand, if $1\leq i<j=k$
\begin{allowdisplaybreaks}
\begin{align*}
    m_1\otimes_\mathcal{A}&\cdots\otimes_\mathcal{A}m_{k-1}
    \otimes_\mathcal{A}(m_k\cdot a)
    =(m_1\otimes_\mathcal{A}\cdots\otimes_\mathcal{A}m_k)\cdot a\\
    =&m_1\otimes_\mathcal{A}\cdots\otimes_\mathcal{A}m_{i-1}
    \otimes_\mathcal{A}\bigg(\mathcal{R}_1^{'-1}\rhd\bigg(
    (\mathcal{R}_1^{-1}\rhd m_k)\\
    &\otimes_\mathcal{A}
    (\mathcal{R}_2^{-1}\rhd(m_{i+1}\otimes_\mathcal{A}
    \cdots\otimes_\mathcal{A}m_{k-1}))\bigg)\bigg)
    \otimes_\mathcal{A}((\mathcal{R}_2^{'-1}\rhd m_i)\cdot a)\\
    =&m_1\otimes_\mathcal{A}\cdots\otimes_\mathcal{A}m_{i-1}
    \otimes_\mathcal{A}\bigg(
    ((\mathcal{R}_{1(1)}^{'-1}\mathcal{R}_1^{-1})\rhd m_k)
    \cdot(\mathcal{R}_1^{''-1}\rhd a)\\
    &\otimes_\mathcal{A}
    ((\mathcal{R}_{2(1)}^{''-1}\mathcal{R}_{1(2)}^{'-1}\mathcal{R}_2^{-1})
    \rhd(m_{i+1}
    \otimes_\mathcal{A}
    \cdots\otimes_\mathcal{A}m_{k-1}))\bigg)
    \otimes_\mathcal{A}((\mathcal{R}_{2(2)}^{''-1}\mathcal{R}_2^{'-1})
    \rhd m_i)\\
    =&m_1\otimes_\mathcal{A}\cdots\otimes_\mathcal{A}m_{i-1}
    \otimes_\mathcal{A}\bigg(
    ((\mathcal{R}_{1}^{'-1}\mathcal{R}_1^{-1})\rhd m_k)
    \cdot(\mathcal{R}_1^{''-1}\rhd a)\\
    &\otimes_\mathcal{A}
    ((\mathcal{R}_{2(1)}^{''-1}\mathcal{R}_{1}^{'''-1}\mathcal{R}_2^{-1})
    \rhd(m_{i+1}
    \otimes_\mathcal{A}
    \cdots\otimes_\mathcal{A}m_{k-1}))\bigg)\\
    &\otimes_\mathcal{A}((\mathcal{R}_{2(2)}^{''-1}
    \mathcal{R}_2^{'''-1}\mathcal{R}_2^{'-1})\rhd m_i)\\
    =&m_1\otimes_\mathcal{A}\cdots\otimes_\mathcal{A}m_{i-1}
    \otimes_\mathcal{A}\bigg(
    (\mathcal{R}_1^{-1}\rhd m_k)
    \cdot(\mathcal{R}_1^{''-1}\rhd a)\\
    &\otimes_\mathcal{A}
    ((\mathcal{R}_{1}^{'''-1}\mathcal{R}_{2(2)}^{''-1}
    \mathcal{R}_{2(2)}^{-1})
    \rhd(m_{i+1}
    \otimes_\mathcal{A}
    \cdots\otimes_\mathcal{A}m_{k-1}))\bigg)\\
    &\otimes_\mathcal{A}((\mathcal{R}_2^{'''-1}\mathcal{R}_{2(1)}^{''-1}
    \mathcal{R}_{2(1)}^{-1})\rhd m_i)\\
    =&m_1\otimes_\mathcal{A}\cdots\otimes_\mathcal{A}m_{i-1}
    \otimes_\mathcal{A}\bigg((\mathcal{R}_1^{-1}\rhd(m_k\cdot a))\\
    &\otimes_\mathcal{A}
    ((\mathcal{R}_{1}^{'''-1}\mathcal{R}_{2(2)}^{-1})
    \rhd(m_{i+1}
    \otimes_\mathcal{A}
    \cdots\otimes_\mathcal{A}m_{k-1}))\bigg)
    \otimes_\mathcal{A}((\mathcal{R}_2^{'''-1}
    \mathcal{R}_{2(1)}^{-1})\rhd m_i)\\
    =&m_1\otimes_\mathcal{A}\cdots\otimes_\mathcal{A}m_{i-1}
    \otimes_\mathcal{A}\bigg(((\mathcal{R}_1^{'-1}\mathcal{R}_1^{-1})
    \rhd(m_k\cdot a))\\
    &\otimes_\mathcal{A}
    ((\mathcal{R}_{2}^{'-1}\mathcal{R}_{1}^{'''-1})
    \rhd(m_{i+1}
    \otimes_\mathcal{A}
    \cdots\otimes_\mathcal{A}m_{k-1}))\bigg)
    \otimes_\mathcal{A}((\mathcal{R}_{2}^{-1}\mathcal{R}_2^{'''-1}
    )\rhd m_i)\\
    =&m_1\otimes_\mathcal{A}\cdots\otimes_\mathcal{A}m_{i-1}
    \otimes_\mathcal{A}\bigg(((\mathcal{R}_{1(1)}^{-1}\mathcal{R}_1^{'-1})
    \rhd(m_k\cdot a))\\
    &\otimes_\mathcal{A}
    ((\mathcal{R}_{1(2)}^{-1}\mathcal{R}_{2}^{'-1})
    \rhd(m_{i+1}
    \otimes_\mathcal{A}
    \cdots\otimes_\mathcal{A}m_{k-1}))\bigg)
    \otimes_\mathcal{A}(\mathcal{R}_{2}^{-1}\rhd m_i)\\
    =&m_1\otimes_\mathcal{A}\cdots\otimes_\mathcal{A}m_{i-1}
    \otimes_\mathcal{A}\bigg(\mathcal{R}_1^{'-1}\rhd\bigg(
    (\mathcal{R}_1^{-1}\rhd(m_k\cdot a))\\
    &\otimes_\mathcal{A}
    (\mathcal{R}_2^{-1}\rhd(m_{i+1}\otimes_\mathcal{A}
    \cdots\otimes_\mathcal{A}m_{k-1}))\bigg)\bigg)
    \otimes_\mathcal{A}(\mathcal{R}_2^{'-1}\rhd m_i)
\end{align*}
\end{allowdisplaybreaks}
shows that the right $\mathcal{A}$-action also respects the ideal.
This concludes the proof of the lemma.
\end{proof}
By Lemma~\ref{lemma13} and Lemma~\ref{lemma14} we conclude the
following statement.
\begin{propositionApp}\label{prop14}
The quotient
$\mathrm{T}^\bullet\mathcal{M}/I$ is a well-defined associative unital
graded algebra and an $H$-equivariant braided symmetric
$\mathcal{A}$-bimodule.
\end{propositionApp}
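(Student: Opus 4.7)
The plan is to assemble the conclusion from the structural results already established in Lemma~\ref{lemma13} and Lemma~\ref{lemma14}, treating this proposition as essentially a bookkeeping statement. First I would observe that $I$ is by construction a two-sided ideal in $(\mathrm{T}^\bullet\mathcal{M},\otimes_\mathcal{A})$, and that each of its generators lies entirely in some $\mathcal{M}^k$ with $k\geq 2$. Since a two-sided ideal generated by homogeneous elements is itself graded, the quotient inherits the grading $\mathrm{T}^\bullet\mathcal{M}/I=\bigoplus_{k\geq 0}\mathcal{M}^k/I^k$ and the induced product $\wedge_\mathcal{R}$ is associative with unit the class of $1\in\mathcal{A}$.

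Next, I would invoke Lemma~\ref{lemma14} directly: it states precisely that the $H$-action and the left and right $\mathcal{A}$-actions of Lemma~\ref{lemma13} preserve $I$. Hence each of these three actions descends unambiguously to the quotient, and compatibility of the descended actions with the induced product $\wedge_\mathcal{R}$ is inherited from the corresponding compatibility on $\mathrm{T}^\bullet\mathcal{M}$ established in Lemma~\ref{lemma13}, since the quotient map is by construction a homomorphism in each structure. This already gives $\mathrm{T}^\bullet\mathcal{M}/I$ the structure of an $H$-equivariant $\mathcal{A}$-bimodule and of an associative unital graded algebra on which the $H$- and $\mathcal{A}$-actions respect the product.

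For the braided-symmetric bimodule property, I would reduce it to the analogous property of $\mathrm{T}^\bullet\mathcal{M}$. On the tensor algebra, braided symmetry reads $b\cdot t=(\mathcal{R}_1^{-1}\rhd t)\cdot(\mathcal{R}_2^{-1}\rhd b)$ for $b\in\mathcal{A}$ and $t\in\mathrm{T}^\bullet\mathcal{M}$, which was verified degree by degree in Lemma~\ref{lemma13}. Since this identity is linear in $t$ and the quotient map is $\mathcal{A}$-bilinear and $H$-equivariant, applying the projection to both sides yields the same identity on $\mathrm{T}^\bullet\mathcal{M}/I$, which is all that is required.

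The main obstacle here is really hidden upstream: the nontrivial content lies entirely in Lemma~\ref{lemma14}, where the somewhat intricate generators of $I$ (with their nested $\mathcal{R}$-braidings across arbitrary splitting positions $1\leq i<j\leq k$) had to be shown stable under the three actions using repeated application of the hexagon relations and the anti-coalgebra property of $S$. Once that is granted, the present proposition reduces to standard quotient-algebra arguments; the only point worth verifying carefully is that the generators of $I$ are in fact homogeneous of fixed total degree $k$, which is clear from inspection of the defining expression.
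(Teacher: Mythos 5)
Your proposal is correct and matches the paper exactly: the paper offers no separate proof of this proposition, simply concluding it from Lemma~\ref{lemma13} and Lemma~\ref{lemma14}, which is precisely the reduction you carry out. Your added observations (homogeneity of the generators of $I$, hence gradedness of the quotient, and descent of the braided symmetry relation along the $\mathcal{A}$-bilinear, $H$-equivariant projection) are the routine details the paper leaves implicit, and they are all sound.
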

We denote the quotient by $\Lambda^\bullet\mathcal{M}$ and
the induced product by $\wedge_\mathcal{R}$.
On factorizing elements $m_1\wedge_\mathcal{R}\cdots\wedge_\mathcal{R}m_k
\in\Lambda^k\mathcal{M}$ the induced module actions read
\begin{align*}
    \xi\rhd(m_1\wedge_\mathcal{R}\cdots\wedge_\mathcal{R}m_k)
    =&(\xi_{(1)}\rhd m_1)\wedge_\mathcal{R}\cdots\wedge_\mathcal{R}
    (\xi_{(k)}\rhd m_k),\\
    a\cdot(m_1\wedge_\mathcal{R}\cdots\wedge_\mathcal{R}m_k)
    =&(a\cdot m_1)\wedge_\mathcal{R}\cdots\wedge_\mathcal{R}m_k,\\
    (m_1\wedge_\mathcal{R}\cdots\wedge_\mathcal{R}m_k)\cdot a
    =&m_1\wedge_\mathcal{R}\cdots\wedge_\mathcal{R}(m_k\cdot a),
\end{align*}
where $\xi\in H$ and $a\in\mathcal{A}$.
Note that the module actions respect the degree by definition.
\begin{definitionApp}
The associative unital graded algebra and $H$-equivariant braided symmetric
$\mathcal{A}$-bimodule $(\Lambda^\bullet\mathcal{M},\wedge_\mathcal{R})$
is said to be the braided Graßmann algebra or braided exterior algebra
of the $H$-equivariant braided symmetric $\mathcal{A}$-bimodule $\mathcal{M}$.
\end{definitionApp}
We prove that the product of a braided Graßmann algebra inherits the
braided symmetry from $\mathcal{M}$.
\begin{lemmaApp}\label{lemma09}
Let $\mathcal{M}$ be an $H$-equivariant braided symmetric
$\mathcal{A}$-bimodule.
The braided wedge product $\wedge_\mathcal{R}$ is graded braided
commutative, i.e.
$$
Y\wedge_\mathcal{R}X
=(-1)^{k\ell}(\mathcal{R}_1^{-1}\rhd X)
\wedge_\mathcal{R}(\mathcal{R}_2^{-1}\rhd Y)
$$
for all $X\in\Lambda^k\mathcal{M}$ and $Y\in\Lambda^\ell\mathcal{M}$.
\end{lemmaApp}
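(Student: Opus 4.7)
The plan is to prove graded braided commutativity by a two-step induction that reduces everything to the defining relation of the ideal $I$ and the hexagon identities for $\mathcal{R}^{-1}$.

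First I would establish the base case $k=\ell=1$, which is immediate: the degree-$2$ generators of the ideal $I$ encode precisely the relation $n\wedge_\mathcal{R}m + (\mathcal{R}_1^{-1}\rhd m)\wedge_\mathcal{R}(\mathcal{R}_2^{-1}\rhd n)=0$ in $\Lambda^\bullet\mathcal{M}$, which is the statement for $k=\ell=1$ with the expected Koszul sign. Next I would fix $k=1$ and induct on $\ell$. Writing $Y=n\wedge_\mathcal{R}Y'$ with $n\in\mathcal{M}$ and $Y'\in\Lambda^{\ell-1}\mathcal{M}$, the idea is to first apply the inductive hypothesis to move $X=m\in\mathcal{M}$ past $Y'$, and then apply the base case to move the result past $n$. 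This produces an expression involving two independent copies $\mathcal{R}^{-1}$, $\mathcal{R}'^{-1}$ of the universal $\mathcal{R}$-matrix, acting individually on the three factors $m$, $n$, and $Y'$.

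The crucial step is reassembling these two copies into a single $\mathcal{R}^{-1}$ acting on the wedge product $n\wedge_\mathcal{R}Y'$ as a whole. This is precisely what the hexagon relation $(\mathrm{id}\otimes\Delta)(\mathcal{R}^{-1})=\mathcal{R}_{12}^{-1}\mathcal{R}_{13}^{-1}$ accomplishes, once we recall that $\xi\rhd(n\wedge_\mathcal{R}Y')=(\xi_{(1)}\rhd n)\wedge_\mathcal{R}(\xi_{(2)}\rhd Y')$. The signs combine as $(-1)\cdot(-1)^{\ell-1}=(-1)^\ell$, which is what we need. For the general case of arbitrary $k$, I would induct on $k$: writing $X=m\wedge_\mathcal{R}X'$ with $m\in\mathcal{M}$ and $X'\in\Lambda^{k-1}\mathcal{M}$, one uses the $k=1$ result to move $m$ past $Y$, then the inductive hypothesis to move $X'$ past $Y$, and now the dual hexagon relation $(\Delta\otimes\mathrm{id})(\mathcal{R}^{-1})=\mathcal{R}_{23}^{-1}\mathcal{R}_{13}^{-1}$ absorbs the two resulting copies of $\mathcal{R}^{-1}$ into a single one acting on the wedge product $X=m\wedge_\mathcal{R}X'$.

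The main obstacle is bookkeeping: each inductive step introduces fresh copies of $\mathcal{R}^{-1}$ acting in various slots, and one has to verify at each stage that the hexagon relations collapse the accumulated braiding data back into a single $\mathcal{R}^{-1}$ acting on a tensor product, with the signs and the action on the correct factor. Triangularity ($\mathcal{R}^{-1}=\mathcal{R}_{21}$) is not strictly needed for this computation, but it does guarantee that the two possible ways of resolving the braiding agree, so there is no ambiguity. One could alternatively argue more conceptually by noting that the ideal $I$ is $H$-equivariant and compatible with the $\mathcal{A}$-bimodule structure (which is already verified in Lemma~\ref{lemma14} and Proposition~\ref{prop14}), so the braided wedge product is by construction the Koszul anti-symmetric quotient in the symmetric braided monoidal category ${}^H_\mathcal{A}\mathcal{M}^\mathcal{R}_\mathcal{A}$, and graded braided commutativity is then a formal consequence; however, the explicit inductive proof outlined above is more self-contained and produces the formula in a directly usable form.
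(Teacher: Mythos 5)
Your proof is correct and follows essentially the same route as the paper's: both reduce the statement to the adjacent braided transposition encoded in the degree-two generators of the ideal $I$ and then iterate, using the hexagon relations for $\mathcal{R}^{-1}$ to recombine the accumulated copies of the $\mathcal{R}$-matrix into a single one acting on the whole wedge product, so your double induction is just a more formal organization of the paper's direct iterative computation. One small caveat: the aside that triangularity is not strictly needed is not quite accurate, since verifying that $n\otimes_\mathcal{A}m+(\mathcal{R}_1^{-1}\rhd m)\otimes_\mathcal{A}(\mathcal{R}_2^{-1}\rhd n)$ really lies in $I$ (i.e.\ is fixed by the braided flip, which must therefore be an involution) uses $\mathcal{R}_{21}^{-1}=\mathcal{R}$.
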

\begin{proof}
Consider two factorizing elements
$X=X_1\wedge_\mathcal{R}\cdots\wedge_\mathcal{R}X_k
\in\Lambda^k\mathcal{M}$ and
$Y=Y_1\wedge_\mathcal{R}\cdots\wedge_\mathcal{R}Y_\ell
\in\Lambda^\ell\mathcal{M}$. First remark that
$$
X
=-X_1\wedge_\mathcal{R}\cdots\wedge_\mathcal{R}
X_{i-1}\wedge_\mathcal{R}
(\mathcal{R}_1^{-1}\rhd X_{i+1})\wedge_\mathcal{R}
(\mathcal{R}_2^{-1}\rhd X_i)\wedge_\mathcal{R}
X_{i+2}\wedge_\mathcal{R}\cdots\wedge_\mathcal{R}X_k,
$$
for any $1\leq i<k$ since
$$
X_1\otimes_\mathcal{A}\cdots\otimes_\mathcal{A}X_k
+X_1\otimes_\mathcal{A}\cdots\otimes_\mathcal{A}
X_{i-1}\otimes_\mathcal{A}
(\mathcal{R}_1^{-1}\rhd X_{i+1})\otimes_\mathcal{A}
(\mathcal{R}_2^{-1}\rhd X_i)\otimes_\mathcal{A}
X_{i+2}\otimes_\mathcal{A}\cdots\otimes_\mathcal{A}X_k
$$
is an element of $I$.
Then
\begin{align*}
    X\wedge_\mathcal{R}Y
    =&X_1\wedge_\mathcal{R}\cdots\wedge_\mathcal{R}X_k
    \wedge_\mathcal{R}Y_1\wedge_\mathcal{R}\cdots\wedge_\mathcal{R}Y_\ell\\
    =&(-1)^\ell X_1\wedge_\mathcal{R}\cdots\wedge_\mathcal{R}X_{k-1}\\
    &\wedge_\mathcal{R}(\mathcal{R}_{1(1)}^{-1}\rhd Y_1)
    \wedge_\mathcal{R}\cdots
    \wedge_\mathcal{R}(\mathcal{R}_{1(\ell)}^{-1}\rhd Y_\ell)
    \wedge_\mathcal{R}(\mathcal{R}_2^{-1}\rhd X_k)\\
    =&(-1)^{2\cdot\ell}
    X_1\wedge_\mathcal{R}\cdots\wedge_\mathcal{R}X_{k-2}
    \wedge_\mathcal{R}((\mathcal{R}_{1}^{'-1}\mathcal{R}_{1}^{-1})_{(1)}
    \rhd Y_1)
    \wedge_\mathcal{R}\cdots\\
    &\wedge_\mathcal{R}((\mathcal{R}_{1}^{'-1}
    \mathcal{R}_{1}^{-1})_{(\ell)}
    \rhd Y_\ell)
    \wedge_\mathcal{R}(\mathcal{R}_2^{'-1}\rhd X_{k-1})
    \wedge_\mathcal{R}(\mathcal{R}_2^{-1}\rhd X_k)\\
    =&(-1)^{2\cdot\ell}
    X_1\wedge_\mathcal{R}\cdots\wedge_\mathcal{R}X_{k-2}
    \wedge_\mathcal{R}(\mathcal{R}_{1(1)}^{-1}\rhd Y_1)
    \wedge_\mathcal{R}\cdots\\
    &\wedge_\mathcal{R}(\mathcal{R}_{1(\ell)}^{-1}\rhd Y_\ell)
    \wedge_\mathcal{R}(\mathcal{R}_2^{-1}\rhd(
    X_{k-1}\wedge_\mathcal{R}X_k))\\
    =&\cdots\\
    =&(-1)^{k\cdot\ell}(\mathcal{R}_1^{-1}\rhd Y)
    \wedge_\mathcal{R}(\mathcal{R}_2^{-1}\rhd X)
\end{align*}
follows.
\end{proof}
It remains to generalize the concept of Gerstenhaber algebra to our setting.
\begin{definitionApp}
An associative unital graded algebra and $H$-equivariant braided symmetric
$\mathcal{A}$-bimodule $(\mathfrak{G}^\bullet,\wedge_\mathcal{R})$
is said to be a braided Gerstenhaber algebra if the module actions respect
the degree and if there is an $H$-equivariant
graded (with degree shifted by $-1$) braided Lie bracket
$\llbracket\cdot,\cdot\rrbracket_\mathcal{R}
\colon\mathfrak{G}^k\times\mathfrak{G}^\ell\rightarrow\mathfrak{G}^{k+\ell-1}$,
i.e. 
$$
\llbracket X,Y\rrbracket_\mathcal{R}
=-(-1)^{(k-1)(\ell-1)}\llbracket(\mathcal{R}_1^{-1}\rhd Y),
(\mathcal{R}_2^{-1}\rhd X)\rrbracket_\mathcal{R}
$$
and
$$
\llbracket X,\llbracket Y,Z\rrbracket_\mathcal{R}\rrbracket_\mathcal{R}
=\llbracket\llbracket X,Y\rrbracket_\mathcal{R},Z\rrbracket_\mathcal{R}
+(-1)^{(k-1)(\ell-1)}\llbracket(\mathcal{R}_1^{-1}\rhd Y),
\llbracket(\mathcal{R}_2^{-1}\rhd X),Z\rrbracket_\mathcal{R}\rrbracket_\mathcal{R}
$$
satisfying a graded braided Leibniz rule
$$
\llbracket X,Y\wedge_\mathcal{R} Z\rrbracket_\mathcal{R}
=\llbracket X,Y\rrbracket_\mathcal{R}\wedge_\mathcal{R}Z
+(-1)^{(k-1)\ell}(\mathcal{R}_1^{-1}\rhd Y)\wedge_\mathcal{R}
\llbracket(\mathcal{R}_2^{-1}\rhd X),Z\rrbracket_\mathcal{R}
$$
with respect to $\wedge_\mathcal{R}$ in addition.
Above $X\in\mathfrak{G}^k$, $Y\in\mathfrak{G}^\ell$ and $Z\in\mathfrak{G}^\bullet$.
\end{definitionApp}
Let $\mathfrak{G}^\bullet$ be a braided Gerstenhaber algebra.
It follows that $\mathfrak{G}^0$ is an associative, braided commutative
$H$-module algebra and $\mathfrak{G}^1$ is a braided Lie algebra.
Moreover, $\mathfrak{G}^1$ is an $H$-equivariant,
braided symmetric $\mathfrak{G}^0$-bimodule and
$\mathfrak{G}^k$ is an $H$-equivariant, braided symmetric
$\mathfrak{G}^1$-bimodule.
This means that for any $X\in\mathfrak{G}^1$ we can define the 
\textit{braided Lie derivative}
$\mathscr{L}_X^\mathcal{R}=\llbracket X,\cdot\rrbracket_\mathcal{R}
\colon\mathfrak{G}^k\rightarrow\mathfrak{G}^k$
which is a braided derivation, i.e.
$$
\mathscr{L}_X^\mathcal{R}(Y\wedge_\mathcal{R}Z)
=\mathscr{L}_X^\mathcal{R}Y\wedge_\mathcal{R}Z
+(\mathcal{R}_1^{-1}\rhd Y)\wedge_\mathcal{R}
(\mathcal{R}_2^{-1}\rhd\mathscr{L}_X^\mathcal{R})Z
$$
for all $X\in\mathfrak{G}^1$ and $Y,Z\in\mathfrak{G}^\bullet$.
It furthermore satisfies
$\mathscr{L}^\mathcal{R}_{[X,Y]_\mathcal{R}}
=\mathscr{L}^\mathcal{R}_X\mathscr{L}^\mathcal{R}_Y
-\mathscr{L}^\mathcal{R}_{\mathcal{R}_1^{-1}\rhd Y}
\mathscr{L}^\mathcal{R}_{\mathcal{R}_2^{-1}\rhd X}$ for all
$X,Y\in\mathfrak{G}^1$.

\newpage

\fancyhead[RE]{BIBLIOGRAPHY}
\fancyhead[LO]{\leftmark}
\fancyhead[LE,RO]{\thepage}

\end{document}